\documentclass[reqno,12pt]{amsart}
\usepackage{amsthm,amsfonts,amssymb,euscript,mathrsfs,graphics,color,amsmath,amssymb,latexsym,marginnote}%,mathabx}
\usepackage[dvips]{graphicx}
\usepackage[margin=1in]{geometry}

\usepackage{hyperref}

\theoremstyle{plain}
\newtheorem{theorem}{Theorem}

\newtheorem{remark}[theorem]{Remark}
\newtheorem{proposition}[theorem]{Proposition}
\newtheorem{lemma}[theorem]{Lemma}

\numberwithin{equation}{section} 
\numberwithin{theorem}{section}

\newcommand{\comment}[1]{\vskip.3cm
\fbox{%
\parbox{0.93\linewidth}{\footnotesize #1}}
\vskip.3cm}

\newcommand{\mf}{\widetilde}

%%%%%%%%%%%%%%%%%%%%%%%%
%% MACROS added by Fabio
%%%%%%%%%%%%%%%%%%%%%%%%

\def\wtF{\widetilde{\mathcal{F}}}
\def\whF{\widehat{\mathcal{F}}}

\def\supp{\mathrm{supp}}
\def\e{{\varepsilon}}

\def\eps{{\epsilon}}
\def\epss{{\epsilon_1,\epsilon_2}}
\def\epsss{{\epsilon_1,\epsilon_2,\epsilon_3}}

\def\C{{\mathbb C}}
\def\Z{{\mathbb Z}}

\def\R{{\mathbb R}}

\def\jxi{\langle \xi \rangle}
\def\jeta{\langle \eta \rangle}
\def\jsigma{\langle \sigma \rangle}
\def\jsig{\langle \sigma \rangle}

\def\jrho{\langle \rho \rangle}

\def\jt{\langle t \rangle}
\def\js{\langle s \rangle}
\def\jx{\langle x \rangle}

\def\what{\widehat}

\def\Ftil{\wt{\mathcal{F}}}

\def\s{\sigma}

\def\jnab{\langle \nabla \rangle}

\def\pv{\mathrm{p.v.}}

\def\wt{\widetilde}

\def\tofill{\vskip20pt $\cdots$ To fill in $\cdots$ \vskip20pt}

%%%%%%%%%%%%%%%%%%%%%%%%
%%%%%%%%%%%%%%%%%%%%%%%%

\definecolor{red}{rgb}{1.0, 0.0, 0.7}

\definecolor{bluegreen}{rgb}{0.0, 0.3, 0.9}

\newcommand{\fp}[1]{\color{bluegreen} { #1} \color{black}  }

\begin{document}

\author{Tristan L\'eger}
\address{Tristan L\'eger, Princeton University, Department of Mathematics, Fine Hall, %Rm 6290, 
  Washington Road, Princeton, NJ, 08544, United States.}
\email{tleger@princeton.edu}

\author{Fabio Pusateri}
\address{Fabio Pusateri, University of Toronto, Department of Mathematics, 40 St George Street, %Rm 6290, 
  Toronto, ON, M5S 2E4, Canada.}
\email{fabiop@math.toronto.edu}

%\title[Quadratic Klein-Gordon with bound state]{Quadratic Klein-Gordon with bound state}
\title[Internal modes for quadratic KG]{Internal modes and radiation damping for
\\ quadratic Klein-Gordon in $3$d}

%\vspace{-0.3in}
\begin{abstract}
We consider Klein-Gordon equations with an external potential $V$ and a quadratic nonlinearity in $3+1$ space dimensions.
%Will formulate for general $N(u)$
We assume that $V$ is regular and decaying and that the (massive) Schr\"odinger operator $H = -\Delta + V + m^2$ 
has a positive eigenvalue $\lambda^2 < m^2$ with associated eigenfunction $\phi$. 
This is a so-called internal mode and gives rise to 
time-periodic and spatially localized solutions of the linear flow. %$e^{\pm it \lambda}\phi$.
We address the classical question of whether such solutions persist under 
the full nonlinear flow,
and describe the behavior of all solutions in a suitable neighborhood of zero.
%and what are the nonlinear dynamics

Provided a natural Fermi-Golden rule holds, %$2\lambda > m$
our main result shows that
a solution to the nonlinear Klein-Gordon equation can be decomposed into a discrete component $a(t)\phi$
where $a(t)$ decays over time, and a continuous component $v$ which has some weak dispersive properties.
We obtain precise asymptotic information on these components 
such as the sharp rates of %dissipation and 
decay  $|a(t)| \approx t^{-1/2}$ and ${\| v(t) \|}_{L^\infty_x} \approx t^{-1}$, 
(where the implicit constants are independent of the small size of the data)
as well as the growth of a natural weighted norm of the profile of $v$.
%We address meta-stability for this solution and answer it in the positive. 

%The heart of the analysis is based on a precise understanding of 
%the interaction of localized waves on the background of a large potential.

In particular, our result extends the seminal work of Soffer-Weinstein \cite{SWmain} 
for the cubic Klein-Gordon, 
and shows that radiation damping also occurs in the quadratic case.
%for general quadratic nonlinearities.

%Moreover \dots

%At the technical level we need to address many difficulties

%Even for quadratic KG the result seems new

%Stability of (topological) solitons

\end{abstract}

\maketitle

\tableofcontents

\medskip
\section{Introduction}

\subsection{Description of the problem and difficulties}
\color{black}
We consider the following initial value problem:\footnote{\textbf{MSC classes:} 35Q40, 35L70, 35P25, 37K55

\textbf{Subjects:} Partial differential equations, Mathematical Physics

}
\begin{equation}\label{introKG}
\begin{cases}
\partial_t ^2 u + (-\Delta + V(x) + m^2)u = u^2, 
\\
u(0,x) = u_0, \quad \partial_t u(0,x) = u_1,
\end{cases}
\end{equation}
for an unknown $u: \R_t \times \R^3_x \rightarrow \R$,
with a sufficiently regular and decaying external potential $V:\R^3\rightarrow \R$, 
and sufficiently smooth and localized initial data $u_0$ and $u_1$.
Precise assumptions on $V$ and the data will be given further below.
In the nonlinear Klein-Gordon equation \eqref{introKG} %the mass of the scalar field can be rescaled to be $1$;
one can rescale the mass of the scalar field and therefore, without loss of generality, we may assume that $m\equiv 1$  
from now on.
%we shall do this later but keep the mass parameter at the beginning of this introduction.
%but this can be obviously rescaled to any $m > 0$.
%We assume that $0$ energy is regular for $H := -\Delta+V$ 
%(see below for the exact definition and comments on this assumptions).

Under our assumption on the decay of $V$, 
the spectrum of the Schr\"odinger operator $H:= -\Delta + V$ consists of purely absolutely continuous 
spectrum in $[0,\infty]$ and a finite number of negative eigenvalues, 
with corresponding smooth and fast decaying eigenfunctions \cite{SimonSpec}.
We then further assume that 
%$H$ has only one negative eigenvalue as follows:
the operator $L^2 := -\Delta + V + 1$ has a unique strictly positive eigenvalue\footnote{Generalizations 
of our results to multiple eigenvalues may be possible under some suitable conditions, but we do not pursue this here. 
See the work of Bambusi and Cuccagna \cite{Bambusi-Cuccagna} for the cubic problem.}
%, see Remark \ref{remmulti}.}
with a corresponding normalized eigenfunction $\phi$: %cfp{positive?}
\begin{align}\label{introim}
(-\Delta + V + 1)\phi = \lambda^2 \phi, \qquad  1/2 < \lambda < 1, \qquad {\| \phi \|}_{L^2} = 1.
\end{align}
In particular we have $\lambda \notin \sigma_{ac}(L) %-\Delta + V + 1)
$ and $2 \lambda \in \sigma_{ac}(L).$
The eigenvalue $\lambda$ is usually called the ``internal frequency of oscillation'',
and $\phi$ is referred to as an ``internal mode'' of the dynamics.

\eqref{introim} gives rise to a two parameter family of solutions to the linear equation 
$\partial_t ^2 u + (-\Delta + V + 1)u=0$ of the form:
\begin{align}\label{phi0}
\phi_{A,\theta} (t,x) = A \cos(\lambda t + \theta) \phi(x), \qquad A,\theta \in \R.
\end{align}
These solutions - referred to as ``bound states'', or ``internal modes'' with a slight abuse -
are time-periodic, oscillating and spatially localized.
One is then interested in understanding what happens to these states under
%within 
the full nonlinear flow of \eqref{introKG}, and how general small solutions of \eqref{introKG} behave.
In this paper we answer these questions. Our main result can be informally stated as follows:

\begin{theorem}\label{thmintro}
Consider \eqref{introKG}-\eqref{introim}, for regular and decaying $V,$ 
under generic assumptions on $H,$ 
including the nonlinear Fermi-Golden rule \eqref{introFGR} below.

Then, for all data that are sufficiently small in a weighted Sobolev space,
solutions of \eqref{introKG}-\eqref{introim} decay over time.

More precisely, if we decompose $u(t,x) = a(t)\phi(x) + v(t,x)$ with $v\perp \phi$, we have
\begin{align}
|a(t)| \approx t^{-1/2}, %\frac{|a(0)|}{ \sqrt{1+|a(0)|^2t}}, 
  \qquad {\| v(t) \|}_{L^{\infty}} \approx t^{-1}, \quad t\gg 1.
\end{align}

\end{theorem}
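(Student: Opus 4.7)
\emph{Plan of proof.}
I would follow the scheme introduced by Soffer-Weinstein, suitably adapted to the quadratic setting. The first step is to recast \eqref{introKG} as a first-order Klein-Gordon type equation by setting $\Psi = u + i\langle\nabla\rangle_V^{-1}\partial_t u$ with $\langle\nabla\rangle_V := \sqrt{-\Delta+V+1}$, and decompose spectrally $\Psi = z(t)\phi + g(t,x)$ with $g$ in the range of $P_c(H)$. Introducing the profile $Z(t) = e^{i\lambda t} z(t)$ for the bound state and $h(t) = e^{it\langle\nabla\rangle_V} g(t)$ for the continuum, and expanding $u^2 = a^2\phi^2 + 2a\phi v + v^2$, the coupled system features forcing terms at frequencies $0,\pm\lambda,\pm 2\lambda,\pm 3\lambda$. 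All are non-resonant except for one combination: the zero-frequency part $|Z|^2$ of $a^2$ forces $v$ at the $\pm 2\lambda$ frequency, and the resulting component of $v$ couples back through $2a\phi v$ to produce a non-oscillatory contribution in the $Z$-equation.

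Next I would perform a Poincar\'e-Dulac normal form to absorb all non-resonant quadratic interactions into a bilinear correction of $v$, reducing the bound-state equation to
\[
\dot Z = -\Gamma\, |Z|^2 Z + (\text{faster-decaying remainder}),\qquad
\Gamma = c\,\mathrm{Im}\bigl\langle P_c(\phi^2),\,(H+1-4\lambda^2-i0)^{-1}P_c(\phi^2)\bigr\rangle.
\]
Since $4\lambda^2 - 1 > 0$ lies in $\sigma_{ac}(H+1)$, the Sokhotski-Plemelj formula for the limiting resolvent yields a non-trivial imaginary part, and the Fermi-Golden-Rule hypothesis \eqref{introFGR} ensures $\Gamma > 0$. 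Integrating gives $|Z(t)|^2 \sim 1/(2\Gamma t)$, i.e.\ $|a(t)| \approx t^{-1/2}$.

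For the continuous part I would close a bootstrap on three norms of $h$: a pointwise bound $\|v(t)\|_{L^\infty}\lesssim \varepsilon\, t^{-1}$; a weighted $L^2$ bound on the distorted-Fourier profile $\tilde h(t,\xi)$ allowed to grow mildly in time; and higher Sobolev energy bounds. The $t^{-1}$ decay of $\|v\|_{L^\infty}$ reflects the fact that, after the normal form, $v$ is dominated by its forced $\pm 2\lambda$-oscillating component $Z^2\, w_+ + \bar Z^2\, w_-$, with $w_\pm$ the fixed localized resolvents above, so the rate is inherited directly from $|Z|^2\sim t^{-1}$. Propagating this rigorously requires dispersive and local-decay estimates for $e^{-it\langle\nabla\rangle_V}$, available via wave operators and the distorted Fourier transform of $H$, together with stationary-phase analysis of the Duhamel integrals.

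The main obstacle is closing the weighted $L^2$ bootstrap against the continuum self-interaction $v^2$. Iterating Duhamel produces trilinear time integrals whose phases involve two continuous-spectrum dispersion relations and create genuine space-time resonances, analogous to those in the translation-invariant quadratic Klein-Gordon theory of Germain-Masmoudi-Shatah, but now with non-translation-invariant multipliers coming from the distorted Fourier transform. Since $\|v\|_{L^\infty}\sim t^{-1}$ sits well below the integrable $t^{-3/2}$ rate, any secular growth must be tracked carefully and shown to be absorbable into a phase/profile correction of $\tilde h$ (modified scattering on the continuous part) rather than breaking the bootstrap; carrying out this resonance analysis, while simultaneously handling the discrete-continuous coupling in $2a\phi v$, is where the bulk of the technical work lies.
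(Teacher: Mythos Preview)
Your outline captures the overall architecture but misses the central new idea that makes the quadratic case close. The weighted norm $\|\partial_\xi\widetilde{f}(t)\|_{L^2}$ of the full continuum profile (your $\widetilde{h}$) does not grow ``mildly'' --- it grows like $\sqrt{t}$, and this growth is \emph{not} absorbable into a phase correction or modified-scattering ansatz. The source is a genuine frequency-space singularity at the Fermi frequency $|\xi_0|=\sqrt{4\lambda^2-1}$: the forcing term $\int_0^t e^{-is(\langle\xi\rangle-2\lambda)}A(s)^2\,ds\,\widetilde{P_c\phi^2}(\xi)$ loses its oscillation when $\langle\xi\rangle=2\lambda$, and since $|A(s)|^2\sim s^{-1}$ the integral does not converge there. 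Differentiating in $\xi$ brings down a factor of $s$, yielding the $\sqrt{t}$ growth in $L^2$ (a lower bound, not just an upper bound). A bootstrap on the weighted norm of the full profile therefore fails.

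The paper's fix is to extract this singular piece explicitly: define a ``bad'' component $\widetilde{g}(t,\xi)$ as (essentially) $\chi(\langle\xi\rangle-2\lambda)\int_0^t e^{-is(\langle\xi\rangle-2\lambda)}B^2(s)\,ds\,\widetilde{P_c\phi^2}(\xi)$, with $B$ a renormalized amplitude, and bootstrap only on the remainder $h$, whose weighted norm grows at most like $t^\beta$ for small $\beta$. Propagating this through the self-interaction $v^2$ then requires a decomposition of the nonlinear spectral distribution $\mu(\xi,\eta,\sigma)=\int\overline{\psi(x,\xi)}\psi(x,\eta)\psi(x,\sigma)\,dx$ into a singular part (on whose support the Klein-Gordon phase is lower-bounded, enabling integration by parts in $s$) and a regular part (handled by frequency integration by parts), together with separate treatment of the $(g,g)$, $(g,h)$ and $(h,h)$ interactions; this is not the flat space-time resonance machinery but an analysis tailored to the distorted setting. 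Two further corrections: your ODE ``faster-decaying remainder'' hides the term $a(s)\bigl\langle\phi^2,\,L^{-1}\int_0^s e^{i(s-\tau)L}(\Im w/L)^2\,d\tau\bigr\rangle$, and showing it is $o(|A|^3)$ requires the Duhamel integral of $(\Im w/L)^2$ to decay faster than $s^{-1}$ in $L^\infty$ --- itself a nonlinear estimate as deep as the main weighted bound, not obtainable from linear decay or Strichartz alone. And $\|v(t)\|_{L^\infty}\approx t^{-1}$ carries no $\varepsilon$ factor for $t\gtrsim\varepsilon^{-2}$, since the forced component has amplitude $|A|^2\sim(\varepsilon^{-2}+t)^{-1}$.
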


%show that, under generic assumptions including a suitable Fermi-Golden rule, all small solutions of \eqref{introKG}
Theorem \ref{thmintro} implies that, in a neighborhood of zero, the bound states \eqref{phi0} %(in a suitable weighted Sobolev space)
do not persist under the nonlinear flow of \eqref{introKG}, 
nor continue to quasi-periodic solutions or other non-decaying solutions.
%such as almost periodic solutions or wobbling kinks.
However, the decay of solutions of \eqref{introKG}-\eqref{introim}, 
is much slower compared to linear solutions of $(\partial_t^2 - \Delta + 1 + V)u=0$
(for generic $V$ with no bound states), or to nonlinear solutions of
%\footnote{To the best of our knowledge, decay for solutions 
%of $(\Box + 1 + V)= u^2$ where $V$ has no bound states, is not known. 
%Of course our results also imply some decay for these solutions, although not optimal.}
$(\partial_t^2 - \Delta + 1) u = u^2$,
which all decay at the rate of $t^{-3/2}$ (in $L^\infty_x$).

\medskip
To the authors' knowledge, this result is the first construction and asymptotic description of dispersive solutions in $d=2,3$ for low powers (below the Strauss exponent) and non trivial spectrum, that is $\sigma(-\Delta + V) \neq \sigma(-\Delta).$ We remark that the linearization of the $\phi^4$ model near its kink solution in $1d$ fits that setting. This has prompted several groups to investigate the small data regime of such equations in $1d$, although the results currently available are either only valid on shorter than global time scales, or for local-in-space norms. We refer to Section \ref{sseclit} for more information on the related literature. 

\medskip
The question of the persistence for \eqref{phi0} is classical, 
and was first addressed by Sigal \cite{Sigal}, who showed instability
of the bound states for very general classes of equations. %for general Hamiltonian,
Soffer and Weinstein \cite{SWmain} then obtained asymptotics
in the case of cubic Klein-Gordon equations, that is, \eqref{introKG} with a $u^3$ nonlinearity.
We will discuss the literature around this problem and related ones later in Subsection \ref{sseclit}.
For the moment, we just mention that the case of a quadratic nonlinearity is substantially more difficult
than the cubic case, as it is to be expected in the context of the long-time analysis of dispersive and wave equations.
It is also worth noting that quadratic models appear naturally in the linearization around 
(topological) soliton solutions
of many physical systems,
and, therefore, are often more relevant in applications. 
%See the discussion in Subsection \ref{sseclit}.

\medskip
In short, the main difficulties in the analysis of \eqref{introKG}-\eqref{introim} are:

%1. Scattering problem for $V\neq 0$ even without bound states

%2. Bounds state and ODE dynamics hard to control

%3. Weighted estimates and singularity in Fourier / Lack of localization
\setlength{\leftmargini}{1.5em}
\begin{itemize}

\smallskip
\item[1.] A complicated ODE analysis for the amplitude of the discrete component, 
where separating the leading order damping dynamics
from remainders terms requires a more careful study of resonances and oscillations.
%space-time resonance type arguments.

\smallskip
\item[2.] Analyzing localized solutions
on the background of a large potential, and propagating weighted estimates
for low power nonlinearities. %(or dimensions);
In fact even when $H:=-\Delta + V$ does not have bound states, asymptotic stability was not previously known for quadratic Klein-Gordon with $V \neq 0.$

\smallskip
\item[3.] The slow decay of the discrete component creating a singularity in frequency space 
in the continuous component of the solution, that is, $v.$
%frequency space.
This originates precisely from %comes into the equation through 
the feedback of the discrete mode of the solution into the PDE satisfied by the radiation. 
%into the continuous spectrum. %(at the Fermi frequency)
%This makes the problem worse than in the case where the spectrum of $H$ only has a continuous component.
\end{itemize} \color{black}
\smallskip
Let us briefly elaborate on these points before explaining our approach to overcome them in the next subsection.

\setlength{\leftmargini}{1.5em}
\begin{itemize}

\smallskip
\item[(1)] The presence of a bound state for $H$ is an obvious %additional 
difficulty and can potentially
generate a non-decaying state within the full nonlinear flow, 
thus making the dynamics too complicated to control for long times.
Fortunately, it is well known, following the seminal work of Sigal \cite{Sigal},
that the solutions \eqref{introim} are unstable for the flow of \eqref{introKG}
under a suitable generic assumption, the so-called nonlinear `Fermi Golden rule' (see \eqref{introFGR}).

It turns out that the states \eqref{phi0} are in fact meta-stable, 
in the sense that the component of $u$ along $\phi$ will decay over time, 
but at an anomalously slow rate compared to the $t^{-3/2}$ decay
of linear solutions of the Klein-Gordon equation (KGE);
in the case of \eqref{introKG}, the rate that we prove is $t^{-1/2}$.
This meta-stability phenomenon was rigorously proven in \cite{SWmain} for the cubic KGE
(see also Remark \ref{remSW}).
%\smallskip
%\item 

To capture this %meta-stability %of the internal oscillations 
in the quadratic model \eqref{introKG}, one can start (as in the cubic case) by decomposing
\begin{align}\label{avintro}
u = a(t) \phi + v, \qquad a(t) := ( u(t), \phi ), \qquad v\perp \phi \quad (v = {\bf P}_c v).
\end{align}
See \eqref{proj} for definitions.
This leads to analyze an ODE for the amplitude of the internal oscillations 
(or discrete component) $a(t)$,
and a PDE for the `continuous component' $v$. Both equations are obviously quadratic; 
see \eqref{sysav} below.
After standard normal form transformations, and upon plugging in the leading order (singular component)
of $v$ in the ODE, this equation assumes %(resonant) cubic monomials in $a$ that contribute to 
the form $\dot{x} = c x^2 + R$, with $\Re(c) <0$,
where $x := |a|^2$ and $R$ %are (supposed to be) remainder terms.
%These that are not hard to deal with conceptually.
%These remainders involve 
includes several types of nonlinear expression in $a$ 
and (nonlinear expression in) $v$ (projected along $\phi$).
Since the equation is quadratic, many of these terms
are slowly decaying and cannot be treated perturbatively simply using linear estimates 
(e.g., linear decay, Strichartz, or local decay) as in the cubic case.

\smallskip
\item[(2)] %(The problem of scattering and asymptotics for $V\neq 0$).
In the basic case where $H=-\Delta + V$ has no bound states (for example when $V \geqslant 0$)
a strong notion of asymptotic stability for small solutions of quadratic KGE
in $3$d is not known, to the best of our knowledge. 
The existence of global solutions follows from local-wellposedness and conservation of energy;
however, proving time-decay and asymptotics is a non-trivial task.
%and it is needed to study the long-time dynamics

One should note that the case $V=0$ is classical (see Klainerman \cite{KKGE} and Shatah \cite{shatahKGE}),
but relies on methods (vectorfields or normal forms) which are not available as soon as $V\neq 0$.
Indeed, the presence of a (large) potential is a well-known difficulty in the analysis of the long-time 
behavior of nonlinear PDEs as it simultaneously destroys the invariance of the equation 
(e.g., the Lorentz invariance in the case of KGE) ruling out the direct use of vectorfields methods,
and perturbs the frequencies of interacting waves, 
thereby preventing standard (nonlinear) Fourier methods from being effective.

The combination of a potential and a quadratic nonlinearity 
naturally requires using a weighted norm which
(a) is strong enough to imply dispersive properties and (b) is sufficiently well-adapted to the problem
so that it can be controlled along the flow.
Our choice is to measure $\partial_\xi (\wtF{f})(t,\xi)$ in $L^2$, 
where $f$ is the profile of $v$ %the solution $u$ 
defined in \eqref{introf}, %($f=e^{it\sqrt{H+1}}(\partial_t - i\sqrt{H+1})u$),
and $\wtF$ denotes the distorted Fourier transform associated to the Schr\"odinger operator $H$ (see Section \ref{secdFT}).
This choice has been used in the $3$d works \cite{GHW,PS}, as well as in the $1$d works \cite{GPR,ChPu};
it essentially corresponds to %The norm $\| \wtF{f}) \|_{L^2_\xi}$ is the natural transposition of the 
standard weighted norms measuring $x f$ in $L^2$ (in physical space). %for unperturbed problems.
Such a norm fairly easily takes care of (a) above.
Controlling it over time turns out to be a difficult task, as the internal mode causes various growth phenomena.

\smallskip
\item[(3)] Indeed in our problem \eqref{introKG}-\eqref{introim} there are additional difficulties 
coming from the feedback of the internal mode $a(t)\phi$ 
into the equation governing the behavior of the continuous (spectral projection) component $v(t,x).$ %:= u(t,x) - a(t)\phi(x)$
The slow decay of $a$ in turn causes $v$ to have anomalously slow decay. 
More precisely, its $L^{\infty}$ norm decays at the rate $1/t$ (without smallness) 
instead of $\varepsilon /t^{3/2},$ where $\varepsilon$ denotes the size of the initial data, 
as would be the case for linear Klein-Gordon equations \cite{JSS,KKbook,Sch1}
as well as for quadratic KG with\footnote{We 
also expect the same rate of decay for \eqref{introKG} when $V$ has no bound states; see Remark \ref{KGnone}} 
$V=0$ \cite{shatahKGE}.
%in the case where $H$ does not have bound states.

Furthermore, it turns out that the slow decay of $a(t)$ also
creates a singularity in (distorted) frequency space
(essentially corresponding to a lack of good localization in physical space).
In fact, we prove that %heuristic calculations show that 
the natural $L^2(\jx^2 dx)$ weighted norm of the profile $f$
%(pull-back with respect to the linear flow) of $u$ 
is lower bounded by
%\footnote{A proof of this fact for the full nonlinear solution
%requires additional arguments than what we provide in this paper. 
%See Remark \ref{remggrowth} for some details.} 
$\sqrt{t}$. 
This means that parts of the solution are very badly behaved when measured in the
most natural norms that one would like to control. 
By contrast, in the case where $H$ does not have bound states, 
this weighted norm is expected to stay small for all times.
%and therefore need to be separated from good part
See Remark \ref{rembad1} for more on this.
\end{itemize}

\subsection{Strategy}
To address the issues described above we propose an approach based on some new ideas which we now describe.
%while also explaining some of the main steps of our proof: 
%following the following main steps:
%\setlength{\leftmargini}{1.5em}
%\begin{itemize}

\smallskip
\subsubsection{Discrete and Continuous decomposition, ODE dynamics, damping and remainders} 
%\item 
As it is standard, we begin by decomposing 
the solution as  in \eqref{avintro}. %$u = a(t)\phi + v$, where $v = {\bf P_c} v$ ($\langle v ,\phi \rangle =0$).
This leads to an ODE for $a$ and a PDE for $v$:
%This yields the coupled system that will be under investigation in this paper:
\begin{equation}\label{sysav}
\begin{cases}
\ddot{a} + \lambda^2 a = \big((a \phi + v)^2 ,\phi\big) 
\\
\partial_t ^2 v + L^2 v = {\bf{P_c}} \big((a \phi + v)^2 \big), \qquad L = \sqrt{-\Delta + V + 1}.
\end{cases}
\end{equation}
The analysis of the ODE in \eqref{sysav} is conceptually similar to that of Soffer-Weinstein \cite{SWmain}.
However we follow a slightly different approach based solely on the method of averaging for ODEs. 
This allows us to derive both upper and lower bounds for $a,$ proving sharpness of the decay rate.
More importantly, as pointed out above, %for \eqref{introKG}
it is quite harder here to separate the leading order (damping) dynamics,
that are responsible for the decay of $|a(t)|$, from the remainder terms. 
This is due to the quadratic nature of the nonlinearity.

To explain this, let us filter out the linear oscillations passing to the profile $A(t)$ via
\begin{align}\label{profiles0A}
\begin{split}
& A(t) := \frac{1}{2i\lambda}e^{-i\lambda t} (\dot{a} + i \lambda a),
\\
& (a(t) = A(t) e^{i\lambda t} + \overline{A}(t) e^{-i\lambda t},
  \quad \dot{A}(t) e^{i\lambda t} + \dot{\overline{A}}(t) e^{-i\lambda t} = 0).
\end{split}
\end{align}
The ODE in \eqref{sysav} is %and $A$ satisfies an ODE coupled to $w$
\begin{align}\label{Duhamela0}
\begin{split}
\dot{A}(t) & = \frac{1}{2i\lambda} e^{-i\lambda t} \Big( \big(a(t) \phi + v(t) \big)^2, \phi \Big),
\\
& = \frac{1}{2i\lambda} e^{-i\lambda t} a^2(t) \int_{\R^3} \phi^3\,dx 
  + \frac{1}{i\lambda} e^{-i\lambda t}  a(t) \int_{\R^3} v(t) \phi^2 \, dx + \cdots ,
\end{split}
\end{align}
having retained only some of the main terms for the purpose of our explanation.
The terms that are quadratic in $a$ on the right-hand side of \eqref{Duhamela0}
can be made cubic using standard ODE normal forms. 
Those can in turn be eliminated in favor of higher order remainder terms,
with the exception of the `integrable' ones, \textit{i.e} of the form $ i c |A|^2A$ for a constant $c\in\R$; 
%these only modulate the amplitude and keep $|A|$ invariant.

The leading order dynamics are coming from the second integral in \eqref{Duhamela0}.
In a perturbative regime one may expect the solution of the PDE in \eqref{sysav} 
to be given at leading order by the solution $z$ of $\partial_t^2 z + L^2 z = a^2 {\bf{P_c}}\phi^2$
(with the same initial data of $v$):
\begin{align}\label{introvmain0}
\begin{split}
z(t) & = \frac{1}{L}\Im \big(e^{itL}(v_t(0) + i Lv(0) \big)  
  + \frac{1}{L} \Im \int_0^t e^{i(t-s)  L  } a^2(s) \, ds \, {\bf{P_c}}\phi^2
  \\
  & = \frac{1}{L} \Im e^{itL} \int_0^t e^{-is(L-2\lambda)} A^2(s) \, ds \, {\bf{P_c}}\phi^2 + \cdots,
\end{split}
\end{align}
where in the last identity we used \eqref{profiles0A} and only wrote out explicitly the leading order\footnote{This 
term corresponds to the first integral in the right-hand side of \eqref{Duhamelw0}} term.
%\footnote{See the corresponding expression in Fourier space
%given by the first term on the right-hand side of \eqref{introD} below} into \eqref{Duhamela0} gives
Upon integration by parts in $s$, and disregarding less important contributions, we can write
\begin{align}\label{introvmain}
\begin{split}
\int_0^t e^{-is(L-2\lambda)} A^2(s) \, ds \, {\bf{P_c}}\phi^2 
  & = \lim_{\eta \rightarrow 0^+} 
  i\frac{e^{2i\lambda t} e^{-\eta t}}{L-2\lambda+i\eta} \, {\bf{P_c}}\phi^2 \, A^2(t) + \cdots
  \\
  & =  \pi e^{2i\lambda t} A^2(t) \delta(L-2\lambda) {\bf{P_c}}\phi^2 + \cdots.
\end{split}
\end{align}
Plugging \eqref{introvmain} into \eqref{introvmain0}, and then $z$ in place of $v$ 
in the last integral of \eqref{Duhamela0} leads to
\begin{align}\label{Duhamela0'}
\begin{split}
\dot{A}(t) = %\frac{1}{2i\lambda} e^{-i\lambda t} a^2 \int\phi^3\,dx +
  -\frac{\pi}{(2\lambda)^2} |A(t)|^2 A(t) \int \delta(L-2\lambda)  {\bf{P_c}}\phi^2 \, \phi^2 \, dx + R,
\end{split}
\end{align}
for some %remainder 
$R$.
%arrive at
%\begin{align}\label{introamplitude}
%& \dot{A}(t) := - \frac{\Gamma}{2\lambda}|A|^2 A + ic|A|^2 A + R ,
%\\
%\notag 
%& P:= \frac{1}{2i\lambda} e^{-i\lambda t} (A e^{i\lambda t} + \overline{A} e^{-i\lambda t})^2 \int \phi^3;
%\end{align}
%cfr. for example \eqref{amplitude-bis}, which still contains quadratic terms in $A$ (the term $P$) which
One then imposes that the Fermi Golden rule 
\begin{align*}%\label{introFGR}
%\Gamma := \frac{\pi}{2 \lambda} 
\Big({\bf{P_c}} \phi^2, \delta(L-2\lambda) {\bf P_c} \phi^2 \Big) > 0
\end{align*}
holds, so as to obtain damping for $|A|$ from the ODE \eqref{Duhamela0'},
provided $R$ is shown to be a remainder.

This turns out to be a non-trivial task. Indeed $R$ includes a term of the form
\begin{align}\label{RFtolaterintro0}
\int_{\R^3} \phi^2(x) \, \frac{1}{L} \Im \int_0^t e^{i(t-s)L} {\bf P_c} \big( v(s,x) \big)^2  ds \, dx,
\end{align}
which leads to an issue that is specific to the quadratic case: in order for this term to give a perturbative contribution to the dynamics of $A$ in \eqref{Duhamela0'}
one needs to show that it decays faster than $1/t$ (that is, $|A|^2$).
This amounts to showing 
%In other words, one needs to obtain the following control:
\begin{align}\label{RFtolaterintro}
{\Big\| \int_0^s e^{i(s-\tau)L} {\bf P_c} \big( v(\tau) \big)^2  d\tau \Big\|}_{L^\infty_x} 
  \lesssim  \frac{1}{s^{1+b}},
\end{align}
for some $b>0$, when $s \gg 1$.
\eqref{RFtolaterintro} is akin to a decay estimate for a quadratic Klein-Gordon equation,
which is classical for $V=0$ but not known for any $V\neq 0$.
A simple bootstrap argument is not sufficient, therefore we rely on 
multilinear analysis techniques in distorted Fourier space. %in the spirit of space-time resonances.

%See also \S\ref{ssecproof}.
%An additional difficulty in the ODE analysis is the derivation of asymptotics
% An interesting consequence is that we can prove growth for natural weighted norm of $v$.
% For related phenomena in $1$d see DM, LLS, GP in the context of the $\phi^4$ equation.

\smallskip
\subsubsection{Harmonic Analysis tools}
Our approach to %dealing with 
the PDE part of the problem is based on the use of (nonlinear) 
Fourier analysis techniques adapted to the context of the Schr\"odinger operator;
it follows the general philosophy 
of many recent works that used Fourier analysis in the flat case and were inspired 
by the space-time resonance method.
%thus the analysis is largely done in Fourier space. 
However this requires developing new tools adapted to the non flat background, 
namely a generalization of the Fourier transform, 
as well as a precise understanding of the interaction of three generalized eigenfunctions 
of the perturbed Laplacian $H =-\Delta + V.$ This is detailed in the remainder of this subsection.

\subsubsection*{Distorted Fourier Transform (dFT) and `continuous' interactions}
%\item 
Given a regular and decaying $V$, under our assumption \eqref{introim},
we have the canonical decomposition of $L^2$ into absolutely continuous and pure point subspaces:
$L^2(\R^d) = L^2_{ac}(\R^d) \oplus L^2 _{pp}(\R^d)$ where $L^2 _{pp}(\mathbb{R}^d) = \mathrm{span}(\phi)$. 
Letting $\psi(x,\xi)$ denote the generalized eigenfunctions %(or modified complex exponential) 
solving
\begin{align}\label{psieq0}
\begin{split}
& (-\Delta + V ) \psi(x,\xi) = \vert \xi \vert^2 \psi(x,\xi), \qquad \xi,x \in \R^3,
  \\
  & \mbox{with} \quad \big| \psi(x,\xi) - e^{ix\cdot \xi} \big| \longrightarrow 0,
  \quad \mbox{as} \quad |x|\rightarrow \infty,
\end{split}
\end{align}
%see \eqref{assV1} (or if one restricts to the the continuous spectrum)
the familiar formulas relating the Fourier transform and its inverse %(in dimension $d=3$) 
hold if one replaces (up to constants) $e^{ix\cdot \xi}$ by $\psi(x,\xi)$:
for any function $g\in L^2_{ac}$, there exists a unitary operator $\Ftil$ (the dFT) defined as 
\begin{align*}
\wt{\mathcal{F}}f(\xi) := \wt{f}(\xi) = \frac{1}{(2\pi)^{3/2}} \lim_{R \to + \infty} 
\int_{\vert x \vert \leqslant R} f(x) \overline{\psi(x,\xi)} dx,
\end{align*}
with inverse 
\begin{align*}
\big[\wt{\mathcal{F}}^{-1}f\big](x) = \frac{1}{(2\pi)^{3/2}} \lim_{R \to + \infty} 
\int_{\vert \xi \vert \leqslant R} f(\xi) \psi(x,\xi) d\xi.
\end{align*}
In particular, for any $g\in L^2$ we can write
%\begin{align}\label{dFT}
%\begin{split}
%& \wt{\mathcal{F}}g(\xi) := \widetilde{g}(\xi) = \frac{1}{(2\pi)^{3/2}}
%  \int_{\R^3} \overline{\psi(x,\xi)} g(x)\, \mathrm{d}x, \qquad 
%  \\ & \mbox{with} 
%  \qquad \Ftil^{-1}g (x) := \frac{1}{(2\pi)^{3/2}}\int_{\R^3} {\psi(x,\xi)} g(\xi) \,\mathrm{d}\xi,
%\end{split}
%\end{align}
\begin{align}\label{dFTinvintro}
\begin{split}
& g(x) = \wtF^{-1} \big( \wt{{\bf P}_c g} \big) + (g, \phi) \phi,
\end{split}
\end{align}
where ${\bf P_c}$ denotes the projection onto the continuous subspace $L^2_{ac}$.
Also, $\wtF$ diagonalizes the Schr\"odinger operator (on the continuous spectrum), $\Ftil {\bf{P_c}} H = |\xi|^2 \Ftil$,
and associated functional calculus ensues.

%where one can define the generalized eigenfunctions solving 
%\begin{align*}
%\forall k \in \R^3\smallsetminus\{0\}, \qquad %(-\Delta + V ) \psi(x,k) = |k|^2 \psi(x,k)$.
%\end{align*}
%that diagonalizes the Schr\"odinger operator: $\Ftil H = |\xi|^2 \Ftil$.
%See Proposition \ref{propdFT}.

%The continuous component $v = u - a\phi$ of a solution $u$ of \eqref{introKG}, solves
%\begin{align}
%(\partial_t^2 + H + 1) v = P_c(a\phi + v)^2. 
%\end{align}
%($a(t) = A(t) e^{i\lambda t} + \overline{A}(t) e^{-i\lambda t}$) 
%with $A'(t) e^{i\lambda t} + \overline{A}'(t) e^{-i\lambda t} = 0$.
We then look at the `profile' or `interaction variable' given by
\begin{align}\label{introf}
\begin{split}
%& f(t,x) := \big( e^{-itL} w(t,\cdot) \big) (x).
& f := e^{-itL} w = e^{-itL} (\partial_t + iL) v, \qquad v = \Im \big( \frac{1}{L} e^{itL} f \big).
\end{split}
\end{align}
Duhamel's formula for $f$ reads 
\begin{align}\label{Duhamelw0}
\begin{split}
%w(t) & = e^{iLt} w_0 + \int_0^t e^{i(t-s)L} \big( {\bf{P_c}} (a \phi + \frac{1}{2L} \Im w )^2  \big) ds
f(t) - f(0) & = \int_0^t e^{-isL}  {\bf{P_c}} \Big( a(s) \phi + \frac{1}{2iL} (w(s)-\bar{w}(s)) \Big)^2 \, ds
\\
& = \int_0^t e^{-is(L-2\lambda)} A(s)^2 \,ds \, {\bf{P_c}} \phi^2 
  - \frac{1}{4} \int_0^t e^{-isL} \big( L^{-1} e^{isL} f(s) \big)^2 \, ds + \cdots,
%=  e^{itL} w_0 
%\\
%\label{linear-res}
%& + \int_0 ^t e^{i(t-s)L} \big( a^2(s) {\bf{P_c}} \phi^2 \big) ds 
%\\
%\label{quadratic}
%& +2 \int_0 ^t a(s) e^{i(t-s)L} {\bf{P_c}}\big(  \frac{\phi}{L} \frac{w-\overline{w}}{2i} \big) ds 
%+ \int_0 ^t e^{i(t-s) L}{\bf{P_c}} \bigg( \frac{\Im w}{L} \bigg)^2 ds. 
\end{split}
\end{align}
having kept only some of the main terms above for the sake of explanation.
In distorted frequency space $\wt{f}(t,\xi) = e^{-it\jxi} \wt{w}(t,\xi), \jxi := \sqrt{1+|\xi|^2}$, 
%so that using the Fourier inversion for $w={\bf P_c} w$, we obtain %, after expanding the square above,
so that
\begin{align}\label{introD}
\begin{split}
& \wt{f}(t,\xi) - \wt{f}(0,\xi)  = \int_0^t e^{-is(\jxi-2\lambda)} A(t)^2 {\bf{P_c}} \wt{\phi^2} \, ds %+ M(t,\xi) 
  + F(t,\xi) + \cdots
\end{split}
\end{align}
where
\begin{align}\label{introD1}
F(t,\xi) & := -\frac{1}{4} \int_0^t \iint_{\R^3 \times \R^3} 
	e^{is (-\jxi + \jeta + \jsig)} \wt{f}(s,\eta) \widetilde{f}(s,\s)
  \, \frac{1}{\jeta\jsig}\mu(\xi,\eta,\s) \, d\eta d\s ds,
\\
\label{intromu}
\mu(\xi,\eta,\s) & := \frac{1}{{(2\pi)}^{9/2}} \int_{\R^3} \overline{\psi(x,\xi) }\psi(x,\eta) \psi(x,\s) \, dx.
\end{align}
%and `$\cdots$' include terms that are similar to $F$ (corresponding to quadratic terms in $w=e^{isL}$ from \eqref{Duhamelw0} 
%with at least one $\bar{w}$) and terms that are quadratic in $a$ and $w$. %The terms $M$ and '$\cdots$'

\eqref{introD}-\eqref{intromu} is the starting point for our analysis of the continuous component of the solution. 
For the moment let us just point out that there are two main terms:
the first time integral which gives the (main) feedback of the internal mode into the continuous part of the solution,
and the term $F$ in \eqref{introD1}-\eqref{intromu} which contains the main 
nonlinear interaction in the continuous component.

%\eqref{introD} is Duhamel's formula for \eqref{NLSV0} in distorted Fourier space.
\smallskip
\subsubsection*{The Nonlinear Spectral Distribution (NSD)}
%\item 
The distribution $\mu$ in \eqref{introD1}-\eqref{intromu} 
characterizes the interaction between the generalized eigenfunctions, 
and we call it the ``{\it Nonlinear Spectral Distribution}'' (NSD),
following \cite{GPR,PS}.
%comment/refs to overlap functions and other instances of this appearing?}
Note that in the unperturbed case $V=0$ the NSD is just a delta function $\delta(\xi-\eta-\sigma)$.
%in particular the two input frequencies must sum to the output frequency.
In contrast with this, in equations \eqref{introD1}-\eqref{intromu} %the difficulties mentioned above are already apparent:
%in \eqref{introD} 
all frequencies interact with each other without any a priori constraint.

To handle \eqref{introD1} and prove a priori estimates in weighted spaces one needs
to  exploit the oscillations of the phase in distorted frequency space,
through integrations by parts in $\eta, \s$. 
This requires information on the regularity of the integrand, and in particular of the distribution %measure 
$\mu.$ 
As it turns out, $\mu$ is not regular in any direction but can be decomposed in many pieces  
that are regular in at least one direction.
Oversimplifying, one can think that $\mu$ looks as follows
\begin{align}\label{intromu'}
\begin{split}
\mu(\xi,\eta,\s) = \delta(\xi-\eta-\s) + \pv \frac{1}{|\xi-\s| - |\eta|} + \pv \frac{1}{|\xi| - |\eta+\s|}
  + \cdots.
\end{split}
\end{align}
A precise version of this was shown by the second author and Soffer in \cite{PS}.
In the Klein-Gordon case we need to refine that analysis
%Here we refine the analysis there, 
and adapt the decomposition of $\mu$ %in a more efficient way,
%adapting its description 
to the geometry of the equation 
and of the nonlinear %oscillating 
phases $\Phi(\xi,\eta,\s):= -\jxi \pm \jeta \pm \jsig$.
%cfp{More on current improvements ?}
%The structure of $\mu$ then needs to be combined with the structure of the oscillating phase 
%$\Phi(\xi,\eta,\s):= -\jxi + \jeta + \jsig$.

One useful realization in dealing with the term $F$ above is that when $\mu$ is close to being singular 
(for example, looking at the second term on the right-hand side of \eqref{intromu'}, when $|\eta| \approx |\xi-\sigma|$)
then $\Phi$ is essentially lower bounded, and one can exploit oscillations in $s$.
When instead %$|\eta| \not\approx |\xi-\sigma|$, 
$\mu$ is regular, decay can be obtained from the integral in $\eta,\s$.
%This is quite technical in our $3$d setting, but a similar idea was also

Once we have dealt with regularity issues for the measure $\mu,$ we turn to the other terms that make 
up the integrand in \eqref{introD1}, namely the profiles $\wt{f}.$
%Surprisingly, 
Unlike in the case where no internal mode is present, $\partial_\xi \wt{f}$ is not that regular at all: 
it cannot be controlled uniformly along the flow, and in fact we show that it grows in time like $\sqrt{t}$.
This leads us to our last point in the strategy.

\smallskip
\subsubsection{Good vs. bad decomposition and the `Fermi frequency'} %contribution} 
%\item 
An important idea in our analysis is to further decompose the profile $f$ (see \eqref{introf})
%continuous component of the solution $v$ 
into a `good' part (that is regular in distorted frequency space), which we call $h$, 
and a `bad' or `Fermi' component (singular in Fourier space), which we call $g$.
Looking at \eqref{Duhamelw0} it is natural to let
\begin{align}\label{introf=g+h}
\begin{split}
f & = g + h, \qquad g(t,x) = \chi(|L-2\lambda|) \int_0^t e^{-is(L-2\lambda)} A(s)^2 \,ds \, {\bf{P_c}} \phi^2,
% - \frac{1}{4} \int_0^t e^{-isL} \big( L^{-1} e^{isL} f(s) \big)^2 \, ds + \cdots,
\end{split}
\end{align}
where $\chi$ is a cutoff with small support %(depending on $1-\lambda$) 
around the origin. Notice that $g$ has a fairly simple and explicit expression.
%In fact, since we can prove precise asymptotics for $A$ starting from \eqref{Duhamela0'},
%(see \eqref{BAlim}), we can get precise information on $g$.
Passing to Fourier space and differentiating we have
\begin{align}\label{introwtg}
\begin{split}
\partial_\xi \wt{g}(t,\xi) \approx \chi(|\jxi-2\lambda|) \int_0^t s\, 
  e^{-is(\jxi-2\lambda)} A(s)^2 \,ds \, \wt{{\bf{P_c}}\phi^2}(\xi).
% - \frac{1}{4} \int_0^t e^{-isL} \big( L^{-1} e^{isL} f(s) \big)^2 \, ds + \cdots,
\end{split}
\end{align}
We can then see that $g$ behaves poorly when the size of its (distorted) frequencies 
approaches the solution of $\jxi = 2\lambda$, i.e., the bad `Fermi frequency' $\xi_0 := \sqrt{(2\lambda)^2-1}$.
%Pointwise we have 
From rigorous asymptotics for $A$ %starting from \eqref{Duhamela0'},
%and additional heuristic calculations, one expects that
we show that 
\begin{align}\label{introggrowth}
{\| \partial_\xi \wt{g}(t,\xi) \|}_{L^2} \gtrsim \sqrt{t}.
\end{align}
See Remark \ref{rembad1} for more on this.
In particular $\wt{g}$ is quite irregular, which complicates the estimates of expressions like \eqref{introD1}.
%and has very poor localization in physical space

Once the bad component has been removed, the hope is that
the remaining component $h$ of the solution is more regular and has better localization properties.
We are indeed eventually able to show that, for $t\gg 1$,
%\begin{align}\label{introxh}
${\| \partial_\xi \wt{h}(t,\xi) \|}_{L^2} \lesssim t^\beta \e^{\beta}$, for some small $\beta$.

\smallskip
\subsection{The main result}
Let us introduce some notation and state our precise assumptions before giving the details of our main result.
%We denote $L^2 = -\Delta + V + 1.$  
We let ${\bf P_c}$ denote the projection onto the continuous spectral subspace, 
namely, for every $\psi \in L^2(\R^3;\R)$
\begin{align}\label{proj}
{\bf P_c} \psi := \psi - (\phi,\psi)\phi, \qquad (\psi_1,\psi_2) := \int_{\R^3} \psi_1 \psi_2 \, dx.
\end{align}
We make the assumptions that:

\begin{itemize}

\medskip
\item {\em Regularity and decay of $V$}: The potential $V$ is sufficiently smooth and decaying.
For simplicity we assume $V \in \mathcal{S}$, but a finite amount of regularity 
and decay would suffice.\footnote{The main property we need to guarantee is that wave operators %(see Theorem \ref{Wobd}) 
are bounded on sufficiently regular weighted Sobolev spaces, %and on $L^2(\jx^2 dx)$,
as in the statement of the Theorem \ref{Wobd}.} %\ref{maintheo} below.}

\medskip
\item {\em Coupling to continuous spectrum}: $\lambda \in (1/2,1)$ so that $\lambda \notin \sigma_{ac}(-\Delta + V + 1)$ 
  and $2 \lambda \in \sigma_{ac}(-\Delta + V + 1).$ 
  
\medskip
\item {\em Fermi Golden Rule}: The ``Fermi Golden Rule'' resonance condition holds:
\begin{align}\label{introFGR}
\Gamma:=\frac{\pi}{2 \lambda} \Big({\bf{P_c}} \phi^2, \delta(L-2\lambda) {\bf P_c} \phi^2 \Big) > 0.
\end{align}

\medskip
\item {\em Genericity of $V$}: the $0$ energy level is regular for $H := -\Delta+V$, 
that is, $0$ is not an eigenvalue, 
nor a resonance, i.e., there is no $\psi \in \jx^{1/2+} L^2(\R^3)$ such that $H\psi = 0$. 
Such a potential $V$ is said to be `generic'.

\end{itemize}

\smallskip
The first two conditions above are the same as in \cite{SWmain}, naturally transposed to the quadratic case.
The last condition ensures that one has the same linear decay for the propagator $e^{itL}{\bf P}_c$ 
as the one for $e^{it\langle \nabla \rangle}$.

\smallskip
Recall that we decompose a solution $u$ into a discrete and a continuous component:
\begin{align}\label{uav}
u(t) = a(t) \phi + v(t),
\end{align}
with $(v(t),\phi)=0$ for all $t.$ 
This yields the coupled system \eqref{sysav}. %that will be under investigation in this paper.
%\begin{equation}\label{sysav}
%\begin{cases}
%a'' + \lambda^2 a & = \big((a \phi + v)^2 ,\phi\big) 
%\\
%\partial_t ^2 v + L^2 v &= {\bf{P_c}} \big((a \phi + v)^2 \big) ,
%\end{cases}
%\end{equation}
We then pass to a first order system by introducing the new unknowns $A$ (see \eqref{Adotest}) and  
$w:=\partial_t v + i L v$ which satisfy \eqref{sysav}.
%\begin{equation}\label{sysav}
%\left\{
%\begin{array}{ll}
%a'' + \lambda^2 a & = \Big( \big(a \phi + \dfrac{1}{L} \Im w \big)^2, \phi \Big) 
%\\
%\partial_t w - i L w  & = {\bf{P_c}} \Big( \big(a \phi + \dfrac{1}{L} \Im w \big)^2 \Big) 
%\end{array}
%\right.
%\end{equation}

We call $v$ or, with a slight abuse of notation, $w$, the {\it radiation} or {\it field}
component of the solution $u$, and will call $a$ the (amplitude of the) 
{\it discrete component} or {\it internal mode} of the solution.

\begin{theorem}\label{maintheo}
Consider \eqref{introKG}, with initial data $u(0,x) = u_0(x)$, $u_t(0,x)=u_1(x)$ such that: 
\begin{align}\label{mtdata}
\begin{split}
%u_0 = a(0)\phi + v_0,
& | ( u_0, \phi) | + | ( u_1, \phi) | \leqslant \e_0,
\\
& {\|  (\jnab u_0, u_1) \|}_{W^{3,1}}+  {\| \jx (\jnab u_0, u_1) \|}_{H^3} + {\| (\jnab u_0, u_1) \|}_{H^N}  
  \leqslant \e_0 
%\qquad {\| \jx^2 u_1 \|}_{L^2} + {\| u_1 \|}_{H^N}
\end{split}
\end{align}
for\footnote{See \eqref{mteps} for a more quantitative, albeit far from optimal, choice.} $N \gg 1$.
Then, there exists $\overline{\e} \in (0,1)$ such that, for all $\e_0 \leqslant \bar{\e}$,

\setlength{\leftmargini}{1.5em}
\begin{itemize}

\smallskip
\item The equation \eqref{introKG} under the assumptions stated above, has a unique global solution 
$u \in C(\R; H^{N+1}(\R^3))$ such that the following hold:
$u = a(t) \phi  + v(t)$ with
\begin{align}
\label{mtadecay}
& |a(t)| \approx \e_0 (1+\e_0^2 t)^{-1/2}, 
\end{align}
and
\begin{align}
\label{mtvdecay}
& \jt {\big\| \partial_t v + i L v \big\|}_{L^\infty_x} \approx 1, 
  \qquad t \gtrsim \e_0^{-2}, %\qquad p>6. %\e^\beta \jt^{\delta},
\\
\label{mtSobolev}
& {\big\|\partial_t v + i L v \big\|}_{H^N_x} \lesssim  \e_0^{1-\delta},
\end{align}
for arbitrary small $\delta > 0$.

\smallskip
\item
Furthermore, we have the following asymptotic behavior: 
Let %\begin{align*}%\label{mtprof}
$f:= e^{-itL}(\partial_t + i L)v.$
%\end{align*}
Then, there exists $f_\infty \in \e_0^{1-\delta} H^N_x$, with $\delta >0$ arbitrarily small as above,
such that
%$f(t)$ has a limit $f_{\infty}$ as $t \rightarrow +\infty$ in $H^N,$ 
\begin{align}\label{mtscattHN}
{\| f(t) - f_\infty \|}_{H^N_x} \lesssim \e_0^{1-\delta} \jt^{-\delta'}
\end{align}
for some small $\delta' >0$.
%fp{which satisfies ${\Vert f_{\infty} \Vert}_{H^N} = O(\varepsilon_0^{1-\delta})$, for some $\delta >0$.}
%Then the following global bounds hold
%\begin{align}\label{fasyh}
%{\big\| \partial_\xi \wt{h}(t) \big\|}_{L^2_x} \lesssim \e_0^\beta \jt^{\beta} \quad (t \gtrsim \e_0^{-2}),
%\qquad {\| h(t) \|}_{H^N_x} \lesssim \e_0,
%\end{align}
%for some sufficiently small $\beta>0$.
%and
Finally, we have the asymptotic growth
\begin{align}\label{mtgrowth}
{\big\| \partial_\xi \wt{f}(t) \big\|}_{L^2} \gtrsim \jt^{1/2}, \qquad t \gtrsim \e_0^{-2}.
\end{align}

\end{itemize}

\end{theorem}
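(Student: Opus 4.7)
The plan is a global continuity/bootstrap argument in the radiation variables, coupled to an asymptotic ODE analysis for the discrete amplitude. Local wellposedness in $H^{N+1}(\R^3)$ is standard; I focus on the a priori estimates on a time interval $[0,T]$. Following \eqref{avintro}, \eqref{profiles0A} and \eqref{introf}, I track the scalar profile $A(t)$ of the discrete component and the profile $f(t) = e^{-itL}w(t)$ of the radiation, and split $f = g + h$ as in \eqref{introf=g+h} to separate the Fermi-singular piece $g$ (given explicitly in terms of $A$ and $\phi$) from the regular remainder $h$. I then bootstrap the four quantities
\begin{align*}
& N_1(T) := \sup_{t\leq T} \e_0^{-1}(1+\e_0^2 t)^{1/2} |A(t)|,
\qquad N_2(T) := \sup_{t\leq T} \jt\,\| w(t) \|_{L^\infty_x},
\\
& N_3(T) := \sup_{t\leq T} \e_0^{-(1-\delta)}\jt^{-\beta}\|\partial_\xi \wt{h}(t)\|_{L^2_\xi},
\qquad N_4(T) := \sup_{t\leq T} \e_0^{-(1-\delta)}\|f(t)\|_{H^N_x},
\end{align*}
for parameters $0<\beta\ll\delta\ll 1$, aiming to show each stays $\leq C$ whenever it is $\leq 2C$ on $[0,T]$. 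Together with the data bounds \eqref{mtdata}, this closes the upper bounds in \eqref{mtadecay}--\eqref{mtSobolev}.

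For the discrete component, I apply Poincar\'e--Birkhoff normal forms to \eqref{Duhamela0} to remove the non-resonant quadratic-in-$A$ terms; combined with the integration by parts in $s$ that produces \eqref{introvmain}, this leaves the cubic model
\[ \dot A = (-\Gamma + i\kappa)|A|^2 A + R(t), \]
with $\Gamma > 0$ from \eqref{introFGR}. The task is to show $R(t) = o(t^{-3/2})$. Most contributions to $R$ are handled by $N_1, N_2, N_4$ together with the rapid decay of $\phi$, but the expression \eqref{RFtolaterintro0} produces a genuine quadratic Klein--Gordon Duhamel integral and requires the decay bound \eqref{RFtolaterintro}. I establish \eqref{RFtolaterintro} by rewriting the inner integral in distorted Fourier space via \eqref{introD1}--\eqref{intromu} and running the NSD/phase analysis described below, using $N_2$, $N_3$ and $N_4$. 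Given control of $R$, a scalar comparison argument applied to $\dot X = -2\Gamma X^2 + O(t^{-3/2-\alpha})$ with $X = |A|^2$ yields both the upper and the sharp lower bound in \eqref{mtadecay}.

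The radiation is analyzed in distorted Fourier space via \eqref{introD}--\eqref{intromu}. The singular component $g$ is explicit: from \eqref{introwtg} and the ODE asymptotics, a stationary-phase analysis at the Fermi frequency $\xi_0 = \sqrt{(2\lambda)^2-1}$ (where $\jxi - 2\lambda$ vanishes to first order) yields both $\|\partial_\xi\wt g\|_{L^2}\lesssim\jt^{1/2}$ and the matching lower bound; the same concentration near $\xi_0$ also produces the lower bound $\jt\,\|w\|_{L^\infty_x} \gtrsim 1$ in \eqref{mtvdecay}. For $h$, the work is to control $\partial_\xi F$ in $L^2_\xi$, where $F$ is the bilinear term \eqref{introD1}. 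The strategy is to refine the NSD decomposition \eqref{intromu'} from \cite{PS} adapted to the Klein--Gordon phase $\Phi = -\jxi + \jeta + \jsig$, so that each piece of $\mu$ is smooth in at least one of $(\eta,\sigma)$, then alternate two mechanisms: where $\mu$ is smooth, integrate by parts in $(\eta,\sigma)$ against $\Phi$; where $\mu$ is singular, the geometric constraint forces $\Phi$ to be non-degenerate, so integrate by parts in $s$. This closes $N_3$ (modulo the main difficulty below), and $N_4$ follows from standard multilinear Sobolev estimates. $N_2$ then follows from $N_3$ and $N_4$ by a Klainerman--Sobolev-type embedding for the distorted Klein--Gordon flow, exploiting wave operator boundedness and the genericity of $V$. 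The scattering statement \eqref{mtscattHN} is immediate from the integrable decay of $\|\partial_t f\|_{H^N_x}$ thus obtained, and the lower bound \eqref{mtgrowth} follows from the triangle inequality $\|\partial_\xi\wt f\|_{L^2}\geq \|\partial_\xi\wt g\|_{L^2} - \|\partial_\xi\wt h\|_{L^2}\gtrsim \jt^{1/2} - \jt^\beta$.

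The main analytic obstacle is the closure of $N_3$: the singular $g$ enters $F$ through self- and cross-interactions, and naive estimates cost $\jt^{1/2}$ factors that would defeat the bootstrap. The key observation is that $\wt g$ is concentrated in a shrinking distorted-Fourier neighborhood of $\{\jxi = 2\lambda\}$, so in any interaction involving $g$ the multilinear phase $\Phi$ is bounded below on the relevant frequency support; this enables enough integrations by parts in $s$ to more than compensate the $\jt^{1/2}$ loss. This geometric mechanism, together with the quadratic Klein--Gordon local-decay estimate \eqref{RFtolaterintro} that loops back into the ODE analysis of the previous paragraph, is where the bulk of the work is concentrated.
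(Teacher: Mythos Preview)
Your overall architecture---bootstrap on $(A,h)$, split $f=g+h$, ODE normal forms plus FGR damping, NSD decomposition for the bilinear term $F$---matches the paper. But the mechanism you name as the ``key observation'' for closing $N_3$ is wrong on both counts. First, the distorted Fourier support of $\wt g$ is not shrinking: the cutoff $\chi_C$ in \eqref{def-h} is a fixed constant, so $\wt g$ lives on a fixed neighborhood of the Fermi sphere (only $\partial_\xi\wt g$ concentrates on a shrinking set). Second, the phase $\Phi_{\epss}$ is \emph{not} bounded below on interactions involving $g$: with both inputs at the Fermi frequency, $\Phi_{++}=\jxi-\jeta-\jsig$ vanishes at $\jxi=4\lambda\in(2,4)\subset\sigma_{ac}(L)$, so integration by parts in $s$ alone cannot close the $g\times g$ or $g\times h$ contributions to $\partial_\xi F$. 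The paper's actual mechanism is different. The phase lower bound (Lemma~\ref{lemphases}) holds on the support of the \emph{singular} part $\mu^S$ of the NSD, and it follows from the algebraic structure of $\mu^S$, not from any localization of the inputs; this enables IBP in $s$ on that piece for all input combinations. On the \emph{regular} part $\mu^R$ there is no phase lower bound; instead one integrates by parts in $(\eta,\sigma)$ using the smoothness of $\mu^R$ (see \S\ref{secRgg}--\ref{secRhh}), and for the Fermi contributions the crucial input is the $L^1_\xi$---not $L^2_\xi$---control $\|\partial_\xi\wt g\|_{L^1_\xi}\lesssim (\log t)^2$ of \eqref{dxiwtgest}. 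That bound does exploit $g$'s concentration near $\xi_0$, but through volume, not through any phase argument.

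A second, smaller gap: $N_2$ does not follow from $N_3,N_4$ by an embedding. A single weighted-$L^2$ norm does not control $L^\infty$ at the sharp Klein--Gordon rate in $3$d; the available linear estimate \eqref{decay6} gives only $L^6$ at rate $t^{-1}$ from $\|\partial_\xi\wt f\|_{L^2}$. The paper therefore does \emph{not} bootstrap $L^\infty$---Proposition~\ref{mtboot} runs only on $\rho=|A|^2$, $\|\partial_\xi\wt h\|_{L^2}$ and $\|h\|_{H^N}$, with dispersive information fed through the $L^6$ bound---and derives \eqref{mtvdecay} a posteriori via the Duhamel decomposition of \S\ref{mtvdecaypr}, where the quadratic-in-$w$ piece is exactly the hard estimate \eqref{RFtolater} (Proposition~\ref{mainquaddecay}) you already flag for the ODE. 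You could still keep $N_2$ in your bootstrap, but you must replace the embedding claim with that Duhamel argument.
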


%cfp{Would be nice to write the formula in real space variables (with $\cos$ etc... but let's leave this at the end
%and may not be so easy to do}

\medskip
The proof of our main theorem is based on a bootstrap argument for the 
amplitude of the internal mode $a(t)$ and for the `good' component of the profile $f$,
which we call $h$. Proposition \ref{mtboot} is our main bootstrap.

The bad part of the profile $g=h-f$ (see \eqref{def-h} and \eqref{defB} for the definition,
as well as the simplified version in \eqref{introf=g+h})
is the one responsible for the weak decay rate \eqref{mtvdecay} %(and for the $\delta$ in \eqref{mtSobolev})
and the growth \eqref{mtgrowth}. 
We discuss this in more details in Remark \ref{rembad1} below.

The decay \eqref{mtadecay} is part of the main bootstrap; see \eqref{main-amplconc0}.

The proof of \eqref{mtvdecay} is given in \S\ref{mtvdecaypr} as a consequence of the main bootstrap proposition.

\eqref{mtSobolev} is a direct consequence of \eqref{mtscattHN}.
The proof of the latter is discussed in \S\ref{mtSobolvpr};
there we show an analogous scattering statement with $g$ instead of $f$, 
while Proposition \ref{propHN} gives the (stronger) scattering statement for $h$.

The growth estimate \eqref{mtgrowth} is a consequence of the inequality
\begin{align}\label{introggrowth'}
{\big\| \partial_\xi \wt{g}(t) \big\|}_{L^2} \gtrsim \jt^{1/2},
\end{align}
which is proven in \S\ref{ssecgrowth}
and the bound ${\| \partial_\xi \wt{h}(t) \|}_{L^2} \lesssim \e^\beta \jt^\beta$, for small $\beta >0$,
in the main bootstrap Proposition \ref{mtboot}; see \eqref{bootconc0}.

\medskip
\subsection{Some remarks}
Here are a few remarks about our result and some related problems.

%\begin{itemize}

\smallskip
\begin{remark}[About energy and decay]
%\medskip
%\item[-] 
The energy of the solution, initially split between
discrete and continuous modes, slowly migrates to the continuous spectrum,
since the $H^N_x$ norm of $a(t)\phi$ decays to zero. 

%\medskip
%\item[-] 
The full solution $u$ decays over time as slowly as $a(t)$, that is, at the rate of $t^{-1/2}$, $t\gg1$. 
The internal mode is responsible for the slow decay,
while the radiation part decays faster, but still at a rate that is not integrable in time. 
Moreover, the $L^{\infty}_x$ decay rate comes without smallness.
On the other hand we can prove an (almost optimal) $L^6_x$ decay estimate %for $p\leqslant 6$ 
with some smallness:
\begin{align}\label{mtv=6decay}
{\big\| \partial_t v + i L v \big\|}_{L^6_x} \lesssim \e_0^\beta \jt^{-1+\delta}, 
\end{align}
for small $\delta \geqslant \beta + 3/N$, where $N$ is the Sobolev regularity
and $\beta$ is a small parameter (see \eqref{mteps}). %\eqref{fasyh}.
%By interpolation we then get some decay estimates in $L^p_x$, $p>6$ as well.
\eqref{mtv=6decay} is proven in \S\ref{mtv=6decaypr}.

%\medskip
%\item[-] 
Note that we prove sharpness of the decay both for the discrete 
component and for the radiation in $L^{\infty}_x.$ 
The Fermi Golden Rule is ultimately the reason for this sharpness, 
stressing the fact that the interaction between the discrete and continuous parts 
of the spectrum dictates the exact behavior of the solution.
%\end{itemize}

\end{remark}

\smallskip
\begin{remark}[The bad component: A $1$d ``singularity'' and \eqref{mtgrowth}]\label{rembad1}
As already mentioned, a key part of our analysis is to isolate the bad contributions 
to the radiation by splitting the profile $f$ into $h$ and $g$, see \eqref{def-h}. 
The bad part $g$ is quite explicit and it is apparent that it behaves badly
for small $\jxi - 2\lambda$, that is, as $|\xi| \longrightarrow \xi_0 := \sqrt{4\lambda^2-1}$,
as the integral lacks oscillations.
Notice that this is a $1$d singularity in %the radial direction for $\xi\in\R^3$, 
$|\xi|-\xi_0$.
In this respect our $3$d problem shares many features with its $1$d analogue
which arises in the study of the stability of the $1$d kink for the $\phi^4$ model.
See \S\ref{sseclit} for some literature and recent advances on this.

The growth of the bad component $g$ of the radiation profile $f$, as given in \eqref{mtgrowth},
%(see the precise definition in \eqref{def-h} and the essentially equivalent formula in \eqref{introf=g+h})
can be shown through accurate asymptotics for the amplitude of the internal mode $A$
(or, equivalently, of the renormalized amplitude $B$).
%as the difference between the two is lower order (see \eqref{defB}).
%More precisely, in \S\ref{ssecgrowth} we prove
As expected, this growth happens around the Fermi frequency $\xi_0$.
In \eqref{growth} we prove a similar upper bound up to a logarithm using just basic estimates. 
%We could likely get a matching upperbound but do not pursue this as it is not essential.

%Also note that, for similar reasons, we are not able to propagate any $L^p_x$ decay estimate with
%a rate faster than $t^{-1+\delta}$, not even for the good component $h$; 
%an $L^6_x$ estimate as in \eqref{mtvdecay} is the best that we are able to prove.

Similar growth phenomena (but with a small factor of $\e$)
were previously observed (though not rigorously proven) for equations without external potentials
in $2$d; %in the presence of a `space-time resonance', 
see \cite{DIP,DIPP} and references therein.
A related growth phenomenon is also expected in $1$d problems (with potentials) 
in the presence of internal modes
(see \cite{DMKink}) or zero-energy resonances (see the discussions in the works \cite{LLS} and \cite{GPKG}).
\end{remark}

\begin{remark}[Pointwise asymptotics and scattering]\label{remasyf}
It is also possible to show a more refined scattering statement in $L^\infty_\xi.$ 
We only state a result below and prove it rigorously in a separate note \cite{LPnote}.

First, by using the estimates (and variants of those) in Sections \ref{SecF}-\ref{secmixed}
one can show that there there exist $\mathcal{N}(\wt{f}) \in L^{\infty}_t  L^{\infty}_{\xi}$ and $h_\infty \in \e^\delta L^\infty_\xi$ such that 
\begin{align} \label{hinfty}
{\big\| \wt{h}(t) -  \mathcal{N}(\wt{f})(t) - h_\infty \big\|}_{L^\infty_\xi} \lesssim \e^\delta \jt^{-\delta}
\end{align}
for some $\delta >0.$

Then, combining this with the asymptotics \eqref{gasy},
letting $F_\infty(t) :=\mathcal{N}(\wt{f})(t) +  h_\infty - g_\infty,$ we can show that 
\begin{align}\label{fasy}
\begin{split}
\wt{f}(t,\xi) = \int_0^t \exp \Big(is(\jxi-2\lambda)
  + i \tfrac{2c_2\lambda}{\Gamma}\log\big(1 + \tfrac{\Gamma}{\lambda} Y_0^2 s\big) \Big) y(s)^2 ds \, e^{i 2\Psi_\infty} 
  \, \wt{{\bf P}_c\phi^2}(\xi)
  \\
  + F_\infty(t,\xi) %\, \wt{h_\infty}(t,\xi) \, 
  + \, O_{L^\infty_\xi}\big(\varepsilon^\delta \jt^{-\delta}\big),
\end{split}  
\end{align}
where $y(s) := Y_0 \, (1 + (\Gamma/\lambda) s Y^2_0)^{-1/2}$,
%\begin{align}
%y(s) := \frac{Y_0}{\sqrt{1 + \tfrac{\Gamma}{\lambda} s Y^2_0}},
%\end{align}
with $Y_0$ that can be determined in terms of the initial data 
(through \eqref{BYphi0} with \eqref{BW} and \eqref{defB}),
the phase $\Psi_\infty \in \e^\delta L^\infty$ 
(see \eqref{BPsi} with \eqref{Bphilim} and \eqref{Bpsiinfty}),
and $c_2$ is the constant in \eqref{BWdot}. %\eqref{Bconst}. 
Relying on new bilinear lemmas for operators with singular kernels, we show in \cite{LPnote}
that the correction $\mathcal{N}(\wt{f})$ is of size $\e^{\delta}.$

Next, note that the time integral in \eqref{fasy} is large when $\jxi \approx 2\lambda$: 
indeed, when $|\jxi - 2\lambda| \ll t^{-1}$, $t\gg1$, we have
\begin{align}\label{fasy1}
\begin{split}
& \int_0^t \exp \Big(is(\jxi-2\lambda)
  + i 2c_2\tfrac{\lambda}{\Gamma} \log\big(1 + \tfrac{\Gamma}{\lambda} Y_0^2 s\big) \Big) y(s)^2 ds \, e^{i 2\Psi_\infty} 
  \, \wt{{\bf P}_c\phi^2}(\xi)
  \\ 
  & \approx \int_0^t
  \exp \Big(i 2c_2\tfrac{\lambda}{\Gamma} \log\big(1 + \tfrac{\Gamma}{\lambda} Y_0^2 s\big) \Big) 
  \frac{Y_0^2}{(1 + \tfrac{\Gamma}{\lambda} s Y^2_0)} \, ds \, e^{i 2\Psi_\infty} 
  \, \wt{{\bf P}_c\phi^2}(\xi) + o(1),
  \\
  & = \frac{1}{2ic_2} \Big[ \exp \Big(i 2c_2\tfrac{\lambda}{\Gamma} 
  \log\big(1 +\tfrac{\Gamma}{\lambda} Y_0^2 t \big) \Big) - 1 \Big] e^{i 2\Psi_\infty} 
  \, \wt{{\bf P}_c\phi^2}(\xi) + o(1).
\end{split}  
\end{align}
The quantity above has modulus $O(1)$ for (arbitrarily large) times $t \gtrsim Y_0^{-2} \approx \e_0^{-2}$. 
Together with \eqref{hinfty} and the smallness of the correction, this entails that $\wt{f}(t,\xi)$ 
is also of size $O(1)$ around the frequency $\vert \xi \vert  = \sqrt{4 \lambda^2 - 1}.$
Note, however, that its $H^N_x$ norm is small,
as in \eqref{mtSobolev} and \eqref{mtscattHN}, since the singularity/growth happens on a lower dimensional set.
\end{remark}

\smallskip
\begin{remark}[The case without bound states]\label{KGnone}
If one considers \eqref{introKG} when $H=-\Delta + V$ has no bound states
(for example when $V \geqslant 0$ or has a small negative part) the problem is substantially easier. 
Nevertheless, as mentioned in the introduction, global decay of solutions in this case 
was not known before this work.
It is certainly possible that, in this special case one could actually prove stronger decay estimates
than just the almost sharp $L^6_x$ bound we have in \eqref{mtvdecay}.
The case of the Schr\"odinger equation with a $u^2$ nonlinearity was treated by
the second author and  Soffer in \cite{PS} where integrable-in-time decay
%with consequent linear scattering, 
was obtained.
\end{remark}

\smallskip
\begin{remark}[Comparing to Soffer-Weinstein 1: weighted norms]\label{remSW}
In the seminal paper \cite{SWmain}, the authors treated the cubic problem.
Our result settles the quadratic problem, and therefore 
the case of a generic (semilinear) Hamiltonian perturbation of the linear Klein-Gordon equation.

Our analysis is very much complicated by the need to estimate weighted norms to control quadratic interactions.
%This creates many difficulties and necessitates the development of a robust theory 
%to analyze nonlinear oscillations on the background of a large potential.
And, as observed, weighted norms naturally grow due to the interaction between discrete and continuous spectrum.
For the cubic problem, if one were to measure weighted norms, one would also see a large growth over time.
However, the cubic problem can be handled just using suitable $L^p_x$ type norms 
(for example bootstrapping sharp $L^8_x$ decay),
so that the main problems we face can be bypassed.
\end{remark}

\smallskip
\begin{remark}[Comparing to Soffer-Weinstein 2: an apparent paradox]\label{remSW2}
In comparing the result of \cite{SWmain} on the cubic problem
with our result, there is one apparent conceptual paradox that it is worth addressing.
One could think that the objective here is to destroy
coherent states (the bound states \eqref{introim}) and, therefore, a stronger perturbation (quadratic)
should be more favorable than a weaker one (cubic).
However, as it is clear from the approach that one naturally needs to follow, %here and in \cite{SWmain}, 
the heart of the matter is to control the dynamics of \eqref{sysav} for long-times
which this is substantially harder for a lower power nonlinearity. %(see also Remark \ref{KGnone}). 
\end{remark}

\smallskip
\begin{remark}
The analysis can be extended to the case where the nonlinearity is multiplied by a smooth bounded coefficient, provided the Fermi Golden rule is suitably adapted.
\end{remark}

\medskip
\subsection{Some literature}\label{sseclit}
As already mentioned at the beginning of this introduction,
our work is greatly inspired by the seminal paper \cite{SWmain} where the cubic KG problem was treated,
and the foundational paper by Sigal \cite{Sigal}.
We refer to the beautiful introductions of \cite{Sigal,SWmain} 
%and to the related work on the quantum resonance problem \cite{SofWeinA}
for a more thorough discussion about the history of the problem and of the physical background.
The problem of meta-stability in the presence of multiple bound states %and in the energy space 
in $3$d was treated by Bambusi-Cuccagna \cite{Bambusi-Cuccagna}.
For the nonlinear Schr\"odinger equation (NLS) the problem of meta-stability (for excited states) was
considered by Tsai-Yau \cite{TsaiYau}; see also the recent advances by Cuccagna-Maeda \cite{CuMa2},
their survey \cite{CuMaSurII} and references therein.
In one dimension, nonlinear Klein-Gordon equation with bound states were studied by 
Komech-Kopylova \cite{KomKop,KomKop2} in the context of the kink stability problem
for relativistic Ginzburg-Landau theories with sufficiently flat wells.
In a recent monograph \cite{DMKink}, Delort-Masmoudi studied this problem in the context of the
one dimensional $\phi^4$ equation and the stability of its kink solution under odd perturbations;
these authors were able to prove a meta-stability theorem for times 
of $O(\e_0^{-4})$, where $\e_0$ is the size of the perturbation.
See also the recent work by Cuccagna-Maeda on NLS \cite{CuMaNLS} and references therein.
%Buslaev-Sulem, Buslaev-Perelman in $1$d

Concerning the general theory of nonlinear PDE with potentials, let us focus on recent
works about the study of nonlinear resonances\footnote{Since 
it would be impossible for us to give a complete account of the literature,
we refer to the surveys \cite{WeinSur,Sof06,SchSur} and reference therein
for more  general discussions on PDE with potentials and various applications.}.
Germain-Hani-Walsh \cite{GHW} treated the quadratic NLS $i\partial_t u +(-\Delta+V)u = \bar{u}^2$ in $3$d and 
established a first set of useful Coifman-Meyer type multilinear bounds for the NSD $\mu$ (see \eqref{intromu}), 
together with a commutation identity for the weight $|x|$.
The first author proved global decay in the case of a $u^2$ nonlinearity with small $V$ \cite{L}.
Note that while a $\bar{u}^2$ nonlinearity is non-resonant (for any type of evolution), a $u^2$ nonlinearity has 
classical space-time resonances even in the case $V=0$.
See also \cite{Leger2} for the case of electromagnetic perturbations.

%In $3$d, some of the basis for this approach were laid out in \cite{GHW};
%Soffer and the second author \cite{PS} fully developed these ideas 
%and were able to exploit frequency oscillations in a Schr\"odinger model at the Strauss exponent
%(and with non-trivial resonances even for $V=0$).

More recently, Soffer and the second author \cite{PS}, following the initial ideas of \cite{GHW},
laid down the foundations for some of the harmonic analysis tools that we also use in
the present paper; these include a precise study of the singularities of $\mu$
and product estimates for the operators associated to the various terms in the expansion of $\mu$ 
(see \eqref{intromu'}).

While the approach based on the use of the distorted Fourier transform 
to study nonlinear oscillations in $d=3$ \cite{GHW} had not been fully developed prior to \cite{PS} and the present work, 
%nor in the presence of a bound state.
it was successfully used in several $1$d problems in recent years.
We remark that in one spatial dimension the analysis of $\mu$ is simpler
since the generalized eigenfunctions \eqref{psieq0} solve an ODE and differ from plane waves $e^{ix\xi}$ %exponential 
by a fast decaying term. Of course, in one dimension 
the dispersive properties of waves are weaker compared to the $3$d case. 
%which generally makes nonlinear estimates more difficult.
Concerning $1$d works that use the dFT in a spirit similar to ours,
we mention the papers on NLS \cite{GPR,ChPu,ChenNLSV} and on KG \cite{GPKG,GPZ}.
For other approaches to similar or related problems,
we refer the reader to the works %PDE with potentials (or in a non-constant coefficient setting)
by Delort \cite{DelortNLSV} and Naumkin \cite{NaumkinNLSV} on NLS, 
and by Lindblad-L\"uhrmann-Soffer \cite{LLS} and Lindblad-L\"uhrmann-Schlag-Soffer \cite{LLSS} on KG.
We also mention the major recent advances on the related problem of 
the stability of kinks by Kowalczyk-Martel-Mu\~noz \cite{KowMarMun},
Kowalczyk-Martel-Mu\~noz-Van de Bosch \cite{KMMV}, Delort-Masmoudi \cite{DMKink} and L\"uhrmann-Schlag \cite{LSSineG}.

\medskip
\subsection{The main bootstrap and proof of Theorem \ref{maintheo}}\label{secmtboot}
Our analysis will be carried out for the system for the profiles (of the first order equations) 
given by \eqref{profiles0A}-\eqref{Duhamela0} and \eqref{introf}-\eqref{Duhamelw0}.
In particular, our main bootstrap argument will be based on estimates for 
the amplitude $|A| \approx |\dot{a}| + |a|$, 
and norms of the distorted transform of $f = e^{-itL}w = e^{-itL}(\partial_t + iL)v$; 
see the main bootstrap Propositions \ref{main-ampl} and \ref{propboot}.
The estimates obtained through this argument will imply the 
decay properties in \eqref{mtadecay}-\eqref{mtvdecay} 
and the statements \eqref{mtscattHN} and \eqref{mtgrowth} as explained just after the main theorem.
%bounds %asymptotics \eqref{fasy}
We explain in detail our bootstrap below, together with the global structure of the proof.

%Natural and convenient to pass to a first order system to isolate

Since $u = a(t) \phi  + v(t)$ with $(v(t),\phi) = 0$,
the first inequality in \eqref{mtdata} implies
\begin{align}\label{mtdatapr1}
|a(0)| + | \dot{a}(0) | \leqslant \e_0,
\end{align}
and the second gives 
\begin{align}\label{mtdatapr2}
\begin{split}
& \Vert \partial_t v(0) + i \langle \nabla \rangle v(0) \Vert_{W^{3,1}} 
  + {\big\| \jx \big( \partial_tv(0) + i \jnab v(0) \big) \big\|}_{H^3} 
  \\
& \leqslant (1 + 2{\| \jx \phi \|}_{H^4} + 2 \Vert \phi \Vert_{W^{4,1}} ) \, \e_0,
\\
& {\big\| \partial_tv(0) + i \jnab v(0) \big\|}_{H^N} \leqslant (1 + 2 {\| \phi \|}_{H^{N+1}}) \e_0 .
%\qquad {\| \jx^2 u_1 \|}_{L^2} + {\| u_1 \|}_{H^N}
\end{split}
\end{align}
To transfer the assumption to $w$ (in Fourier space) we use
the boundedness of wave operators on $H^s$ (see Theorem \ref{Wobd}) which in particular implies,
for all $f \in L^2_{ac}$,
\begin{align}\label{mtdatawo}
{\| \jxi \nabla_\xi \wt{f} \|}_{L^2} + {\| \jxi \wt{f} \|}_{L^2} \lesssim {\| \jx Lf \|}_{L^2}.
\end{align}
Then, from \eqref{mtdatapr1}-\eqref{mtdatapr2} and \eqref{profiles0A} and \eqref{introf},
%, since  $w = (\partial_t + i L)v$ and 
we have
\begin{align}\label{mtdatapr3}
\begin{split}
|A(0)| \lesssim \e_0, \qquad {\| \jxi^3 \nabla_\xi \wt{w}(0) \|}_{L^2} + {\| w(0) \|}_{H^N}  \lesssim \e_0,
\end{split}
\end{align}
with $w(0) = w(0,x) = f(0,x)$.
%The implicit constants above depend on $\lambda$ for the first inequality,
%and on norms of $V$ through the boundedness properties of wave operators, see \ref{WObd}.

We define 
\begin{align}\label{mtrho}
\rho(t) := |A(t)|^2, \qquad \rho_0 := \rho(0) \lesssim \e_0^2. 
\end{align}
Recall that $f = e^{-itL} w$ and let
\begin{align}\label{mth}
\begin{split}
& \wt{h} := \wt{f} + \wt{g},
\\
& \wt{g}(t,\xi) := -\chi_C(\xi)
  \int_{0} ^t B^2(s) e^{-is(\jxi-2\lambda)} ds \, \wt{{\bf P}_c\phi^2}(\xi),
  \qquad \chi_C(\xi):=\varphi_{\leqslant -C}(\jxi - 2 \lambda)
\end{split}
\end{align}
for $C$ sufficiently large so that $2^{-C+10} \leqslant 2\lambda - 1 $,
where $B$ is defined from $A$ through \eqref{defB}; see \eqref{def-h}.
Notice that $\wt{g}(t,\xi)$ can be estimated using only information about $A(t)$.
Therefore, we can use $A$ and $\wt{h}$ as our main unknowns.

We pick a large enough absolute constant $C_0$ and set
\begin{align}\label{mteps}
\e := C_0\e_0, \qquad \beta=\frac{1}{300}, \qquad N = 10^6, \qquad \delta_N := 5/N.
\end{align}
$C_0$ is chosen to be at least larger than the implied constants in \eqref{mtdatapr3}.
%Recall the definition of $\Gamma$ from \eqref{introFGR}. 

We assume that local-in-time solutions for the system \eqref{sysav} 
with $(a,v) \in C^1([0,1]; \mathbb{R}) \times C([0,1];H^{N+1}_x)$ and $v_t \in C([0,1];H^N_x)$,
have been constructed by standard methods.
The following proposition is our main bootstrap. %based on estimates for $\rho(t)$ and $h$.

\begin{proposition}\label{mtboot}
%Let $\rho$ be defined as in \eqref{mtrho}.
Let the following a priori assumptions hold for some $T>0:$
\begin{align}\label{bootstrap-A-A0}
\sup_{t \in [0,T]} 
  \left| \big(1+\rho_0 \frac{\Gamma}{\lambda} t \big) \frac{\rho(t)}{\rho_0} -1 \right|
  < \frac{1}{2},
\end{align}
and
\begin{align}\label{boot0}
\begin{split}
\sup_{t \in [0,T]}  \langle t \sqrt{\rho(t)} \rangle^{-1} \rho^{\beta-1/2}(t) {\big\| \nabla_{\xi} \wt{h}(t) \big\|}_{L^2} 
  \leqslant 2\e^{\beta}, \\
\sup_{t \in [0,T]} {\| h(t) \|}_{H^N}  \leqslant 2 \e.
\end{split}
\end{align}
Then, there exists $\bar{\e}>0$ such that if $C_0^{-1}\e = \e_0 \leqslant \bar{\e}$, %($\rho_0 \lesssim \e_0^2$) 
we have the improved estimates
\begin{align}\label{main-amplconc0}
\sup_{t \in [0,T]} \Big| 
  \big(1+\rho_0 \frac{\Gamma}{\lambda} t \big) \frac{\rho(t)}{\rho_0} -1 \Big| < \frac{1}{4},
\end{align}
and
\begin{align}\label{bootconc0}
\begin{split}
\sup_{t \in [0,T]}  \langle t \sqrt{\rho(t)} \rangle^{-1} \rho^{\beta-1/2}(t) {\big\| \nabla_{\xi} \wt{h}(t) \big\|}_{L^2} \leqslant \e^{\beta}, \\
\sup_{t \in [0,T]} {\| h(t) \|}_{H^N}  \leqslant \e.
\end{split}
\end{align}
\end{proposition}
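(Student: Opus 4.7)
The plan is to close the three a priori bounds separately, using the coupled structure of the system for the renormalized amplitude $B$ (built from $A$ via \eqref{defB}) and the good profile $\wt{h}$. For \eqref{main-amplconc0}, I would rewrite \eqref{Duhamela0} after substituting $v = \Im(L^{-1}e^{itL}(g+h))$ and then apply Poincar\'e--Dulac-type normal form transformations (the method of averaging) to absorb every purely oscillatory quadratic-in-$A$ monomial into cubic remainders. The one contribution that survives comes from inserting the explicit expression of $\wt{g}$ in the $av$ coupling in \eqref{Duhamela0} and carrying out the $\eta \to 0^+$ computation sketched in \eqref{introvmain}; this produces the pure damping $-(\Gamma/\lambda)|B|^2 B$ plus a remainder $R_A$. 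Setting $\rho = |B|^2$, I obtain $\tfrac{d}{dt}(1/\rho) = 2\Gamma/\lambda + O(R_A/\rho^2)$. Integrating and estimating the contribution of $R_A$ via the bootstrap hypotheses on $\wt{h}$ and on $\rho$ itself then yields \eqref{main-amplconc0}, provided $R_A$ decays strictly faster than $|B|^3 \sim t^{-3/2}$.

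For the weighted bound in \eqref{bootconc0}, I would work from the equation for $\wt{h}$ obtained by subtracting the Duhamel contribution defining $\wt{g}$ from the Duhamel formula for $\wt{f}$ in \eqref{introD}--\eqref{intromu}. After this subtraction, the low-frequency region $|\jxi - 2\lambda| \ll 1$ of the discrete source is killed by the cutoff $\chi_C$, and what remains are the continuous--continuous interaction $F(t,\xi)$ from \eqref{introD1}, the mixed $a\phi v$ bilinear contribution, and the complementary high-frequency piece of the discrete source. After applying $\partial_\xi$ and taking $L^2$, I would decompose each bilinear integral according to the pieces of the NSD $\mu$ listed schematically in \eqref{intromu'}. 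On the pieces where $\mu$ is irregular, the phase $\Phi = -\jxi \pm \jeta \pm \jsig$ is essentially lower bounded, so integration by parts in $s$ trades one power of time; on the pieces where $\mu$ is smooth, integration by parts in $\eta$ or $\sigma$ produces either a bounded multiplier acting on $\nabla_\xi \wt{f}$ (absorbable by the bootstrap together with the decomposition $\wt{f} = \wt{h} - \wt{g}$) or a commutator handled by multilinear decay bounds. The factor $\langle t\sqrt{\rho(t)}\rangle$ on the right-hand side of \eqref{boot0} is tuned precisely to absorb the terms in which $\nabla_\xi \wt{g}$ gets hit by a kernel without gain, since by \eqref{introwtg} this quantity exhibits the $\sqrt{t}$ growth recorded in \eqref{introggrowth'}.

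The $H^N$ estimate on $h$ is the most routine of the three: a standard energy estimate, combined with the boundedness of wave operators on $H^s$, reduces it to controlling $\|{\bf P_c}(a\phi + v)^2\|_{H^N}$, which by product laws is bounded by $|a|\,\|\phi\|_{H^{N+1}}\,(|a|+\|v\|_{H^N}) + \|v\|_{L^\infty}\,\|v\|_{H^N}$; the slow $L^\infty$ decay of $v$ is just barely integrable after a small $\delta_N$-loss, and this fits in the target $\e$ with room to spare because $\e_0 = C_0^{-1}\e$. The main obstacle in the entire argument is showing that the residual $R_A$ in the amplitude ODE is genuinely perturbative: concretely, one must prove a quadratic Klein--Gordon decay estimate of the form \eqref{RFtolaterintro} with a rate strictly better than $1/t$ for $v$, and this is exactly where the splitting $f = g + h$, the multilinear analysis of $\mu$, and the growth allowance $\sqrt{t}$ in the weighted bootstrap must be orchestrated together, since a na\"ive application of linear dispersive bounds to $v^2$ would yield only the marginal $1/t$ decay that cannot close the ODE.
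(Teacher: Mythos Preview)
Your overall architecture matches the paper's: the amplitude bound \eqref{main-amplconc0} is obtained via normal forms on \eqref{amplitude-bis} leading to $\tfrac{d}{dt}(1/\rho) = \Gamma/\lambda + \text{remainder}$, the weighted bound on $\wt h$ is obtained by splitting $\mu = \mu^S + \mu^R + \mu^{Re}$ and integrating by parts in $s$ on the singular part (where the phase is bounded below) and in $\eta,\sigma$ on the regular part, and you correctly identify the quadratic decay estimate \eqref{RFtolaterintro} as the crux that couples the two arguments.

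There is, however, a genuine gap in your $H^N$ argument. You propose a direct energy estimate bounding $\int_0^t \|v\|_{L^\infty}\|v\|_{H^N}\,ds$ and claim the $L^\infty$ decay is ``just barely integrable after a small $\delta_N$-loss.'' This is false: by \eqref{mtvdecay} the $L^\infty$ decay of $w=(\partial_t+iL)v$ is exactly $\jt^{-1}$ \emph{without smallness}, so the time integral produces an unbounded $\log t$ factor multiplied only by $\e$, and a $\delta_N$-loss makes this worse, not better. The paper (Section~\ref{energyest}, Lemmas~\ref{en-mix} and~\ref{en-quad}) circumvents this by using inhomogeneous Strichartz estimates: for the quadratic-in-$w$ term one lands in $L^{6/5}_t L^{3/2}_x$ with $(\tilde q,\tilde r,\gamma)=(6,3,2/3)$, and for the mixed $a\phi v$ term one lands in $L^2_t L^1_x$, in both cases combined with the strong semilinearity (the factor $L^{-1}$ in $v = L^{-1}\Im w$ gives one derivative back) and Gagliardo--Nirenberg interpolation between $\|w\|_{L^6}$ and $\|w\|_{H^N}$. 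Without this Strichartz step the $H^N$ bootstrap does not close.
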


Note that \eqref{bootstrap-A-A0} implies
\begin{align*}
\rho(t) \approx \frac{\e_0^2}{1+\e_0^2 t}, \qquad |a(t)| + |\dot{a}(t)| \approx \frac{\e_0}{\sqrt{1+\e_0^2 t}}.
\end{align*}
Proposition \ref{mtboot} is proved through Proposition \ref{main-ampl} (which proves \eqref{main-amplconc0}) 
and Proposition \ref{propboot} (which proves \eqref{bootconc0}).
Note that the assumptions \eqref{bootstrap-A-w} of Proposition \ref{main-ampl} 
are stated in terms of norms of $w$ rather than $h$ for convenience; 
in Remark \ref{rembootAw} we show how \eqref{boot0} implies \eqref{bootstrap-A-w}.

\begin{remark}\label{remtimew}
The expression for the time-weights in \eqref{boot0} may seem slightly complicated.
This is due to the fact that $\rho(t) \approx \rho_0 (1+ \rho_0 t)^{-1}$
does not decrease until times of the order $\rho_0^{-1} \gtrsim \e_0^{-2}$,
and that we want to use a single bound to bootstrap for all times.
%Moreover, for the arguments it is convenient to have ${\| w \|}_{L^6_x} = O(\rho^{1/2}(t))$. 
%However, we can see that the bounds simplify in the relevant time regimes.
To fix ideas, notice that, in view of \eqref{bootstrap-A-A0}, we have the following bounds:

\begin{itemize}

\medskip
\item For times up to the standard local existence time $T_{loc} = O(\e_0^{-1})$ we have 
$t \sqrt{\rho(t)} \approx \sqrt{\rho_0} t \lesssim 1$ and therefore
\begin{align}\label{boot0'loc}
\begin{split}
& {\big\| \partial_{\xi} \wt{h}(t) \big\|}_{L^2} 
%+ {\| h(t) \|}_{H^N}
  %\leqslant %\langle t \sqrt{\rho(t)} \rangle \rho^{\frac{1-\beta}{2}}(t) \approx
  \lesssim \rho^{1/2-\beta}(t) \e^\beta \lesssim \e_0^{1-2\beta} \e^{\beta} \approx \e_0^{1-\beta}. 
\end{split}
\end{align}

\medskip
\item For intermediate times $t \in [c\e_0^{-1}, C\e_0^{-2}]$ we have
$t\sqrt{\rho(t)} \approx t \sqrt{\rho_0} \gtrsim 1$, and therefore
\begin{align*}
\begin{split}
& {\big\| \partial_{\xi} \wt{h}(t) \big\|}_{L^2} 
%+ {\| h(t) \|}_{H^N}
 \lesssim t \sqrt{\rho_0} \cdot \rho^{1/2-\beta}(t) 
 \cdot \e^{\beta} \lesssim %t \sqrt{\rho_0} \cdot \e_0^{1-2\beta} \e^\beta \approx 
 t \e_0^{2-\beta}.
\end{split}
\end{align*}
Note how the $-\beta$ exponent here gives us room to fit a $\log t$ or 
small polynomial-in-time growth for the norms above, as $t$ approaches $\e_0^{-2}$.

\medskip
\item For large times $t \geqslant C\e_0^{-2}$ we have $\rho(t) \approx t^{-1}$
and the bounds \eqref{boot0} read instead
\begin{align}\label{boot0'}
\begin{split}
& {\big\| \partial_{\xi} \wt{h}(t) \big\|}_{L^2} 
%+ {\| h(t) \|}_{H^N}
 \lesssim \sqrt{t} \cdot \rho^{1/2-\beta}(t) \cdot \e^{\beta}
 \lesssim t^{\beta} \e_0^\beta.
\end{split}
\end{align}
%Compare with \eqref{fasyh}.

\end{itemize}

In the course of the proof we will prove bounds taking care of all these time scales, 
and will only a few times simplify our notation by letting $t \gtrsim \e_0^{-2}$
when it is obvious how to take care of smaller times.
Nevertheless, we invite the reader to think that time $t$ is always large enough so that 
one can effectively work with the simpler looking bounds \eqref{boot0'}.
\end{remark}

Once the main bootstrap is proven, a standard continuity argument
%cfp{Mention weighted norm continuity...}
provides us with global solutions satisfying, in particular,
\begin{align}\label{main-amplconc0'}
\rho(t) \leqslant \frac{\rho_0}{1+\rho_0 \frac{\Gamma}{\lambda} t }
\end{align}
and
\begin{align}\label{bootconc0'}
{\big\| \partial_{\xi} \wt{h}(t) \big\|}_{L^2} \leqslant \langle t \sqrt{\rho(t)} \rangle \rho^{1/2-\beta}(t) \e^{\beta},
  \qquad {\| h(t) \|}_{H^N} \leqslant \e,
\end{align}
for all $t\in [0,\infty)$. 
%(The same bounds also hold for negative times by symmetry)
Then, \eqref{main-amplconc0'} and the definitions \eqref{mtrho} and \eqref{profiles0A} 
immediately imply \eqref{mtadecay}. 
%\eqref{bootconc0'} is exactly \eqref{fasyh}.

We complete the proof of Theorem \ref{maintheo} below by showing \eqref{mtvdecay}, 
\eqref{mtSobolev} and \eqref{mtscattHN} (or indicating where they are proven later in the paper). 
The lower bound \eqref{mtgrowth} is proven in \S\ref{ssecgrowth} through the proof of \eqref{introggrowth'}.

Without loss of generality we can restrict to times $t \geqslant 1$.
Recall that $(\partial_t+iL)v = w = e^{itL}f = e^{itL}h-e^{itL}g$, see \eqref{def-h}.

\subsubsection{Proof of \eqref{mtSobolev} and \eqref{mtscattHN}}\label{mtSobolvpr}
%We can restrict to times $t \geqslant 1$.
%Recall that $(\partial_t+iL)v = w = e^{itL}f = e^{itL}h-e^{itL}g$, see \eqref{def-h}.
%The Sobolev bound follows easily since we have the estimate for $h$ in \eqref{bootconc0'}
%and for the bad $g$ part we can bound directly as follows:
Recall from \eqref{mth} that $f=g-h$.
For $h$ we can use the statement of Proposition \ref{propHN} to see that there exists
$h_\infty \in \e H^N_x$ such that, for some small enough $\delta>0$,
\begin{align}\label{hscattmt}
{\| h(t) - h_\infty \|}_{H^N_x} \lesssim \e^{1+\delta} \jt^{-\delta}.
\end{align}
For $g$ we proceed as follows: from the definition \eqref{mth} 
using the boundedness of wave operators (Theorem \ref{Wobd}),
using $|B(s)|^2 \lesssim |A(s)|^2 = \rho(s)$ (see Lemma \ref{renorm-A} and \eqref{defB}), 
and the Strichartz estimate of Lemma \ref{Strichartz} with $(\wt{q},\wt{r},\gamma)=(2,\infty,0)$ 
(see the notation in Subsection \ref{secnotation}), we have, for all $0<t_1<t_2\leqslant T$,
\begin{align} \label{HNg}
\begin{split}
{\| g(t_1) - g(t_2) \|}_{H^N_x}
  & \lesssim {\Big\| %\varphi_{\leq -C}(L - 2 \lambda)  
  \int_0^t \mathbf{1}_{[t_1,t_2]}(s) e^{-isL} \big( L^{N} {\bf P}_c \phi^2 \big) \, | B(s)|^2 \, ds \Big\|}_{L^2_x}
  \\ 
  & \lesssim
  {\big\| {\bf P}_c \phi^2 \big\|}_{W^{N+2,1}_x} \, {\| \rho \|}_{L^2_t([t_1,t_2])}
  \lesssim \e_0^{1-\delta} t_1^{-\delta/2};
\end{split}
\end{align}
for the last inequality we have used $\rho(t) \lesssim \jt^{-1/2-\delta/2}\e_0^{1-\delta}$.
In particular $g$ is Cauchy in $H^N_x$ and %, by redefining $\delta$, 
there exists $g_\infty \in \e^{1-\delta} H^N_x $ such that
\begin{align}\label{gscattmt}
{\| g(t) - g_\infty \|}_{H^N_x} \lesssim \e_0^{1-\delta} \jt^{-\delta/2}.
\end{align}
\eqref{hscattmt} and \eqref{gscattmt} give us \eqref{mtscattHN} with $f_\infty := g_\infty - h_\infty$.
$\hfill \Box$

\iffalse
\begin{align} \label{HNg}
\begin{split}
{\| e^{itL}g \|}_{H^N_x}
  & \lesssim {\Big\| %\varphi_{\leq -C}(L - 2 \lambda)  
  \int_{0}^t e^{i(t-s)L} \big( L^{N} {\bf P}_c \phi^2 \big) \, | B(s)|^2 \, ds \Big\|}_{L^2_x}
  \\ 
  & \lesssim \e_0^2 + {\big\| {\bf P}_c \phi^2 \big\|}_{W^{N+2,1}_x} \, {\| \rho \|}_{L^{2}_t}
  \lesssim \e_0^{1-},
  %\lesssim \int_0^t \, \rho(s) ds \, {\big\| \phi^2 \big\|}_{H^N_x} \lesssim \log t.
\end{split}
\end{align}
\fi

\medskip
\subsubsection{Proof of \eqref{mtvdecay}}\label{mtvdecaypr}
Note that this $L^\infty_x$ decay estimate is not part of the bootstrap %(although it could)
and it is just obtained a posteriori.
%moreover, we stress that there is no smallness in $\e_0$ (the size of the data) for the bounds \eqref{mtvdecay}.
We start from Duhamel's formula for $w=(\partial_t+iL)v$ (see \eqref{sysav})
which, up to irrelevant constants, is
\begin{align}\label{mtvpr1}
\begin{split}
& w(t) = e^{itL} w(0) + D_1(t) + D_2(t) + D_3(t),
\\
& D_1(t) = e^{itL} \int_{0}^t e^{-isL} \, a^2(s) ds \, \theta, \qquad \theta := {\bf P}_c\phi^2,
\\
& D_2(t) = \int_{0}^t e^{i(t-s)L} \, a(s) {\bf P}_c \Big(\phi \, \Im \frac{w(s)}{L} \Big) ds,
\\
& D_3(t) = \int_{0}^t e^{i(t-s)L} \, {\bf P}_c \Big( \frac{\Im w(s)}{L} \Big)^2 ds.
\end{split}
\end{align}
The bound for the linear solution follows from \eqref{decay6}:
\begin{align}\label{D0}
\begin{split}
{\big\| e^{itL} w(0) \big\|}_{L^\infty} & \lesssim \jt^{-3/2} \Vert w_0 \Vert_{W^{3,1}} \lesssim \e_0 \jt^{-3/2}.
\end{split}
\end{align}
\iffalse 
\begin{align}\label{D0}
\begin{split}
{\big\| e^{itL} w(0) \big\|}_{L^\infty} & \lesssim {\big\| \jnab^{1/2+} e^{itL} w(0) \big\|}_{L^6}
\lesssim {\big\| e^{itL} L^{1/2+} w(0) \big\|}_{L^6}
\\
& \lesssim \jt^{-1} {\big\| \jxi^{5/3} \partial_\xi \jxi^{1/2+}\wt{w}(0) \big\|}_{L^2} 
%\lesssim \jt^{-1} {\big\| \jxi^2 \partial_\xi \wt{w}(0) \big\|}_{L^2}
\lesssim \jt^{-1} {\big\| \jx w(0) \big\|}_{H^3} 
\lesssim \e_0 \jt^{-1},
\end{split}
\end{align}
having used \eqref{mtdatapr3}.
\fi 

%cfp{Should be slightly more precise here or write a variant of \eqref{decay6}}
%cfp{Just need $5/3$ derivative on $x w(0)$, went for $2$\dots}
From the linear estimate \eqref{decay} and the bound \eqref{main-amplconc0'} we have
\begin{align}\label{D1}
\begin{split}
{\| D_1(t) \|}_{L^\infty_x} & \lesssim \int_0^t \frac{1}{\langle t-s \rangle^{3/2}}|a(s)|^2 ds 
  {\big\| {\bf P}_c\phi^2 \big\|}_{H^2 \cap W^{3,1}}
  \\
  & \lesssim \int_0^t \frac{1}{\langle t-s \rangle^{3/2}} \frac{\e_0^2}{1+\e_0^2 s} ds 
  \lesssim \jt^{-1}.
\end{split}
\end{align}
%cfp{make more precise} 
Similarly, we can estimate
\begin{align*}%\label{mtvpr2}
\begin{split}
{\| D_2(t) \|}_{L^\infty_x} & \lesssim \int_0^t \frac{1}{\langle t-s \rangle^{3/2}}|a(s)|
  {\big\| {\bf P}_c(\phi \, L^{-1} \Im w(s) \big\|}_{W^{3,1} \cap H^2} ds 
  \\
  & \lesssim \int_0^t \frac{1}{\langle t-s \rangle^{3/2}} \frac{\e_0}{\sqrt{1+\e_0^2 s}} 
  {\| w(s) \|}_{W^{2,6}} ds.
  %\lesssim \jt^{-1}
\end{split}
\end{align*}
The bound 
\begin{align}\label{D2}
{\| D_2(t) \|}_{L^\infty_x} \lesssim \jt^{-5/4},
\end{align} 
would then follow from the estimate
\begin{align}\label{mtvpr5}
{\| w(s) \|}_{W^{2,6}} \lesssim \js^{-1+\delta}. 
\end{align}
A stronger version of \eqref{mtvpr5} is \eqref{mtv=6decay} proven in \S\ref{mtv=6decaypr} below. 
%in the proof of \eqref{mtv=6decay}.
%(using the conclusion of the bootstrap Proposition \eqref{mtboot} and not 
%the desired conclusion \eqref{mtvdecay} itself).
%

An estimate for the last term in \eqref{mtvpr1}, in fact the stronger inequality 
\begin{align}\label{D3}
{\| D_3(t) \|}_{L^\infty_x} \lesssim \frac{\rho^{0+}(t)}{\jt},
\end{align}
is given in \eqref{RFtolater} and proven in Section \ref{SecF} (see Proposition \ref{mainquaddecay}).
The bounds \eqref{D0}-\eqref{D2} and \eqref{D3} give us one side of the approximate equality \eqref{mtvdecay}.
%${\| w(t) \|}_{L^\infty_x} \lesssim \jt^{-1}$.
%and interpolating this with the $L^2$ control ${\| w \|}_{L^2} \lesssim \e_0$ gives .

In view of \eqref{mtvpr1}, in order to prove the sharpness of the $L^{\infty}_x$ decay
it will suffice to show that ${\| D_1(t) \|}_{L^\infty_x} \gtrsim \jt^{-1}$, $t \gtrsim \e^{-2}$,
which in turn follows from the stronger inequality
\begin{align}\label{D1lb}
{\big\| \jx^{-5} D_1(t,\cdot) \big\|}_{L^\infty_x} \gtrsim \jt^{-1}.
\end{align}
We start by first expanding $a^2$ as in \eqref{profiles0A}, 
and subsequently integrate by parts in time, to find that
\begin{align}\label{D1split}
\begin{split}
D_1 & = i D_{1,0} -i D_{1,1} -i D_{1,2},
\\
& D_{1,0}(t) := A_0^2 \frac{e^{itL} \theta}{L-2\lambda}  
  + e^{itL} \Big(  2 \vert  A_0 \vert ^2 \frac{\theta}{L} + \overline{A_0}^2\frac{\theta}{L+2\lambda} \Big),
  \\
& D_{1,1}(t) := e^{2i\lambda t} A^2(t) \frac{\theta}{L-2\lambda} 
  + 2\vert A(t) \vert^2 \frac{\theta}{L} + e^{-2i\lambda t} \overline{A(t)}^2 \frac{\theta}{L+2\lambda},
  \\
& D_{1,2}(t) := e^{itL} \int_0 ^t e^{-isL} \bigg(e^{2i\lambda s} \frac{ (\dot{A} A)(s)}{L-2\lambda} + \frac{(\dot{A} \bar{A})(s) 
  + (\dot{\bar{A}}A)(s)}{L} + e^{-2i\lambda s} \frac{(\dot{\bar{A}} \bar{A})(s)}{L+2\lambda} \bigg) ds \, \theta.
\end{split}
\end{align}

It is not hard to see that the first and third terms are lower order.
Indeed, from the equation for $\dot{A}$ in \eqref{Duhamela0} with \eqref{mtvpr5} 
and \eqref{bootstrap-A-A0} we see that $\vert \dot{A} \vert \lesssim s^{-1}$; 
therefore, using the local decay estimates \eqref{localdecay} we get
\begin{align*}
%\Vert e^{itL} \int_0 ^t e^{-isL} \bigg( \frac{ (\dot{A} A)(s)}{L-2\lambda} 
  %+ \frac{(\dot{A} \bar{A})(s) + (\dot{\bar{A}}A)(s)}{L} + e^{-2i\lambda t} \frac{(\dot{\bar{A}} 
  %\bar{A})(s)}{L+2\lambda} \bigg) \theta ds \Bigg 
  {\big\| \jx^{-5} D_{1,0}(t) \big\|}_{L^\infty} 
  +
  {\big\| \jx^{-5} D_{1,2}(t) \big\|}_{L^\infty} \lesssim \jt^{-3/2}.
  %\\
  %\Bigg \Vert \frac{e^{itL} \theta}{L-2\lambda} \bigg \Vert_{\langle x \rangle^{-5} L^{\infty}} 
  %\lesssim \frac{1}{\langle t \rangle^{3/2}}.
\end{align*}

To deal with the leading order term $D_{1,1}$, 
assume by contradiction that there exists $\zeta \in \mathbb{C}, \vert \zeta \vert = 1$, such that 
\begin{align*}
\zeta^2 \frac{\theta}{L-2\lambda} + 2 \frac{\theta}{L} + \bar{\zeta}^2 \frac{\theta}{L+2 \lambda} = 0.
\end{align*}
Then 
\begin{align*}
\theta =-\zeta^{-2} (L-2\lambda) \bigg[ 2 \frac{\theta}{L} + \bar{\zeta}^2 \frac{\theta}{L+2 \lambda}  \bigg].
\end{align*}
On the Fourier side (see Proposition \ref{propdFT}),
after evaluating at the Fermi frequency $\vert \xi_0 \vert 
:= \sqrt{4 \lambda^2 -1},$ the above implies that $\wt{\theta} (\xi_0) = 0,$
which violates the Fermi Golden rule \eqref{introFGR}.
%Indeed the rule implies (using Plancherel) that \widehat{\theta}(2 \lambda) \int_{\mathbb{R}^3} \widehat{\theta} >0.
From this we infer
\begin{align*}
\inf_{\zeta \in \mathbb{C}, \vert \zeta \vert =1} \Bigg \Vert \zeta^2 \frac{\theta}{L-2\lambda} + 2 \frac{\theta}{L} 
+ \bar{\zeta}^2 \frac{\theta}{L+2 \lambda}  \Bigg \Vert_{\langle x \rangle^{5}L^{\infty}} \gtrsim 1.
\end{align*}
Therefore, applying this to $\zeta = e^{i\lambda t}A(t) \rho^{-1/2}$
(recall $\rho(t) := |A(t)|^2$), we have the lower bound
\begin{align*}
%\rho(t) \Bigg \Vert e^{2i\lambda t} \frac{A^2(t)}{\rho(t)} \frac{\theta}{L-2\lambda} 
%  + \frac{2\vert A \vert^2}{\rho(t)} \frac{\theta}{L} 
%  + e^{-2i\lambda t} \frac{\bar{A}^2(t)}{\rho(t)} \frac{\theta}{L+2\lambda} 
%  \Bigg \Vert_{\langle x \rangle^{-5}  L^{\infty}} \gtrsim \rho(t).
\rho^{-1}(t) {\big\| \jx^{-5} D_{1,1}(t) \big\|}_{L^\infty} \gtrsim 1
\end{align*}
and hence obtain \eqref{D1lb}. We have proved \eqref{mtvdecay}. $\hfill \Box$
%Taking into account \eqref{D2}, \eqref{D3} and standard decay estimates, this bound yields the desired result.

\smallskip
\subsubsection{Proof of \eqref{mtv=6decay} and \eqref{mtvpr5}}\label{mtv=6decaypr}
It suffices to show the slightly stronger estimate
\begin{align}\label{mtv=6decay'}
{\big\| w(t) \big\|}_{W^{2,6}_x} \lesssim \e_0^\beta \jt^{-1+\delta}, 
\end{align}
for, recall, $w = \partial_t v + i L v = e^{itL}h-e^{itL}g$ and some $\delta > \beta > 0$. 
%(where $N$ is the Sobolev regularity) and $\beta$ is the parameter in \eqref{fasyh}.
%In particular, this will imply \eqref{mtvpr5}, which was used in the proof of \eqref{mtvdecay} above,
%and \eqref{mtv=6decay}.
%We may assume that $t \geq \e_0^{-1}$.

Using Bernstein's inequality and the bounds \eqref{bootconc0'}
(here $P_k$ is the distorted Littlewood-Paley projection, see \eqref{defLP})
we can estimate
\begin{align}\label{mrvpr10}
\begin{split}
{\| e^{itL}h \|}_{W^{2,6}_x} & \lesssim 
  \sum_{0 \leqslant k \,:\, 2^k \lesssim t^\gamma} 2^{2k} {\| e^{itL}P_k h(t) \|}_{L^6_x} 
    + {\big\| e^{itL}P_{\geqslant t^\gamma}h(t) \big\|}_{H^3} 
  \\
  & \lesssim t^{11\gamma/3} t^{-1} {\| \partial_{\xi} \wt{h}(t) \|}_{L^2_\xi} + t^{-\gamma (N-3)} {\| h(t) \|}_{H^N}
  \\
  & \lesssim t^{-1+\beta+11\gamma/3} \e^\beta + t^{-\gamma (N-3)} \e.
\end{split}
\end{align}
This is consistent with the estimate in \eqref{mtv=6decay'} provided 
$\gamma$ is chosen small enough so that
$11\gamma/3 + \beta \leqslant \delta$ and $N$ is chosen large enough so that $\gamma (N-3) \geqslant 1$.

For the contribution from $g$ we can use the linear estimate \eqref{decay} together with Sobolev's embedding,
and the bound $|B(s)|^2 \lesssim |A(s)|^2 = \rho(s)$, see Lemma \ref{renorm-A} and  \eqref{defB}, %with \eqref{main-amplconc0'} 
to see that 
\begin{align}\label{mrvpr11}
\begin{split}
{\| e^{itL}g \|}_{W^{2,6}_x} & = {\Big\| %\varphi_{\leq -C}(L - 2 \lambda)  
  e^{itL} \int_{0}^t e^{-is(L-2\lambda)} \, B^2(s) ds \, {\bf P}_c \phi^2 \Big\|}_{W^{2,6}_x}
  \\
  & \lesssim \int_{0}^t \frac{1}{\langle t - s \rangle} \, \rho(s) ds \, 
  {\big\| \phi^2 \big\|}_{W^{11/3,6/5}_x \cap H^2_x} \lesssim \frac{\log(1+t)}{\jt}.
\end{split}
\end{align}
This proves \eqref{mtv=6decay'} if $t \gtrsim \e_0^{-1}$.
When instead $t \lesssim \e_0^{-1},$ \eqref{mtv=6decay'} holds in view of Sobolev's embedding,
and the a priori bounds, see \eqref{main-amplconc0} and \eqref{bootconc0},
provided we let $\delta \geqslant 2\beta$. $\hfill \Box$

%\begin{align}
%\begin{split}
%& A(t) = \frac{1}{2i\lambda}e^{-i\lambda t} (\dot{a} + i \lambda a)
%\\
%& \partial_t (v,a) = \big( \Re (e^{itL} f), 2\lambda \Re (i e^{i\lambda t}A) \big) 
%\end{split}
%\end{align}

\medskip
\subsection{Structure of the paper}\label{ssecproof}
Section \ref{secdFT} contains some preliminary material concerning the distorted Fourier transform
and linear estimates (pointwise decay and Strichartz).

Section \ref{secD} is dedicated to the discrete mode ODE analysis and the proof of radiation damping.
The main estimate is stated in the bootstrap Proposition \ref{main-ampl}.
To complete its proof we use the estimate \eqref{RFtolater} which is proven later on 
in Sections \ref{SecF}-\ref{secFR} along with weighted estimates.

Section \ref{secSetup} sets up the analysis for the continuous component of the solution $v$.
We split the profile into a `good' and a `bad' component and write out Duhamel's formula for 
the good component $h$; see Lemma \ref{decomposition}.
The main bootstrap estimate on weighted and Sobolev norms of $h$ is stated in Proposition \ref{propboot}.
The proof of this proposition occupies the rest of the paper.

Sections \ref{SecF}, \ref{secFS} and \ref{secFR} constitute the bulk of the proof and deal with the quadratic terms in the continuous component.
Here we prove the main weighted estimate as well as the decay estimates used 
in the ODE analysis in Section \ref{secD}, 
under the assumptions that all frequencies are upperbounded by a small power of the time variable. 
The argument uses a splitting of the NSD $\mu$ (see \eqref{intromu}) 
into a singular and a regular part which is described in \S\ref{SecFmu}-\ref{secFSRsplit}. 
We also give bilinear estimates for the corresponding pieces,
as well as pointwise estimates for the regular part. 
In Section \ref{secFS} we treat the terms corresponding to the singular part of the measure. 
In Section \ref{secFR} we handle the terms that come from the regular part. 
In both of these sections we distinguish the different types of interactions 
between the good and bad parts of the solution, since they require different strategies.

In Section \ref{secmixed} we estimate the mixed-type nonlinearity which is quadratic 
in the continuous-discrete components. This is not as hard as the quadratic terms that only
involve the continuous components, but it is still non-trivial; one of the main difficulties
is the need to apply normal forms which delocalize the bilinear operators 
and make the combined use of local-decay and Strichartz-type estimates %integration by parts in frequency type estimates 
less efficient.

In Section \ref{energyest} we deal with the high frequency estimates. 
First we control the Sobolev norm of $h$. 
Since the decay is not integrable we need to resort to Strichartz estimates here,
and exploit the fact that the nonlinearity is strongly semilinear. 
Then we prove a result which controls the interactions when the frequencies involved are 
larger than a small power of the time variable.
 
Appendix \ref{Appmu} and Appendix \ref{Appmu2} 
contain results on the nonlinear spectral distribution, bilinear product estimates 
for the relevant operators appearing in our analysis, and some other supporting material. 
The results there are mostly borrowed from \cite{PS} and \cite{GHW}.
%Appendix \ref{AppB} gathers some additional more supporting material.

%Finally we recall a useful decomposition of the nonlinear spectral distribution in the Appendix. 
%We also record bilinear estimates satisfied by the various pieces appearing in the decomposition. 
%Finally we state a basic distributional identity satisfied by the derivative of the spectral measure.

\medskip
\subsection{Notation}\label{secnotation}

We adopt the following notation, most of which are standard.

\smallskip
\noindent
$\langle x \rangle =: \sqrt{1+|x|^2}$.

\smallskip
\noindent
$a \vee b$ denotes the largest number between $a$ and $b$, $\max(a,b)$; 
$a \wedge b$ denotes the smallest number between $a$ and $b$, $\min(a,b)$.

\smallskip
\noindent
We let $a^+ = a \vee 0$ and $a^- = a \wedge 0$.

\smallskip
\noindent
For $x \in \R$ we denote its integer part (the largest integer smaller than $x$) by $\lfloor x \rfloor$.

\smallskip
\noindent
$\Re$ and $\Im$ denote the real and imaginary parts of a complex number.

\smallskip
\noindent
We use $a\lesssim b$ when $a \leqslant Cb $ for some absolute constant $C>0$ independent on $a$ and $b$.
%and similarly use $a \gtrsim b$ if $a \geq c b$.
%and similarly we define $\gtrsim$.
$a \approx b$ means that $a\lesssim b$ and $b\lesssim a$.
When $a$ and $b$ are expressions depending on variables or parameters, the inequalities
are assumed to hold uniformly over these.
%$a \sim b$ means that $a\lesssim b$ and $b\lesssim a$.

\smallskip
\noindent
We denote by $a+$, resp. $a-$, a number $b>a$, resp. $b<a$, that can be chosen arbitrarily close to $a$.

%\smallskip
%\noindent
%Given $c\in \R$, we will use the notation $c+$ to denote a number $d$ 
%larger than $c$ but that can be chosen arbitrarily close to it.
%Similarly we will use $c-$ for a number smaller than $c$ that can be chosen arbitrarily close to it;
%see for example \eqref{lemTboundmain}. %or \eqref{bilboundT}.
%We will sometimes use this convention also with $c=\infty$ to denote an arbitrarily large number
%%Note that when using this convention, the `arbitrarily close' will never depend on 
%%any of the relevant parameters in the problem.

\smallskip
\noindent
We use standard notation for Lebesgue, Sobolev and Besov norms, such as $L^p$, $W^{s,p}$ and $B^{s,p}_q$
with $H^s = W^{s,2}$.

\smallskip
\noindent
We denote by 
\begin{align}
\what{f} = \whF(f):= \frac{1}{(2\pi)^{d/2}} \int_{\R^d} e^{-ix\xi} f(x) \, dx
\end{align}
the standard Fourier transform of $f$.

\smallskip
\noindent
For a kernel $K:(x,y) \in \R^3 \times \R^3 \rightarrow \C$,
we denote its Schur norm by
\begin{align}\label{defSchur}
{\| K \|}_{Sch} := \Big( \sup_{x \in \R^3} \int_{\R^3} |K(x,y)| \, dy \Big)^{1/2}
  \cdot \Big( \sup_{y\in\R^3} \int_{\R^3} |K(x,y)| \, dx \Big)^{1/2}. 
\end{align}
This will always be used in conjunction with Schur's test:
if $(T_Kf)(x) := \int_{\R^3}K(x,y) f(y) \, dy$, then ${\|T_K\|}_{L^2(\R^3)\rightarrow L^2(\R^3)} \leqslant {\| K \|}_{Sch}$.

\smallskip
\noindent
We will often use a dot symbol `$\cdot$' to distinguish the bounds on various quantities 
involved in multilinear estimates; see for example the estimates after \eqref{ggbdpr4} or \eqref{ghbdpr12}.
This is meant to help the reader navigate some of the longer bounds.

\medskip
\noindent
{\it Cutoffs}.
We fix a smooth even cutoff function  $\varphi: \R \to [0,1]$ 
supported in $[-8/5,8/5]$ and equal to $1$ on $[-5/4,5/4]$.
For $k \in \Z$ we define $\varphi_k(x) := \varphi(2^{-k}x) - \varphi(2^{-k+1}x)$, 
%that equals $1$ for $|\xi|\leq 1$ and vanishes for $|\xi|\geq 2$, and define for $k\in\Z$
%\begin{equation}
%\varphi_k(\xi) = \varphi(2^{-k}\xi) - \varphi(2^{-k+1}\xi), 
%\quad \varphi_{\leq k}(\xi) = \sum_{j\leq k} \varphi_j(\xi), \quad \textrm{ etc.} 
%\end{equation}
so that the family $(\varphi_k)_{k \in\Z}$ forms a partition of unity,
\begin{equation*}
 \sum_{k\in\Z}\varphi_k(\xi)=1, \quad \xi \neq 0.
\end{equation*}
We let
\begin{align}\label{cut0}
\varphi_{I}(x) := \sum_{k \in I \cap \Z}\varphi_k, \quad \text{for any} \quad I \subset \R, \quad
\varphi_{\leqslant a}(x) := \varphi_{(-\infty,a]}(x), \quad \varphi_{> a}(x) = \varphi_{(a,\infty)}(x),
\end{align}
with similar definitions for $\varphi_{< a},\varphi_{\geqslant a}$.
We will also denote $\varphi_{\sim k}$ a generic smooth cutoff function 
that is supported around $|\xi| \approx 2^k$, e.g. $\varphi_{[k-2,k+2]}$ or $\varphi'_k$.

%To these cut-offs we associate frequency projections $P_k$ through
%\begin{equation*}
%P_k g:=\mathcal{F}^{-1}\left(\varphi_k(\xi)\what{g}(\xi)\right)
%\end{equation*}

\smallskip
\noindent
We denote by $P_k$, $k\in \Z$, the Littlewood-Paley projections adapted to the distorted Fourier transform:
\begin{equation}\label{defLP}
\wt{P_k f}(\xi) = \varphi_k(\xi) \wt{f}(\xi), \quad \wt{P_{\leqslant k } f}(\xi) 
  = \varphi_{\leqslant k}(\xi) \wt{f}(\xi), \quad \textrm{ etc.}
%\wt{P_k f}(\xi) = \varphi_k(\xi) \wt{f}(\xi), \quad \wt{P_{\leq k} f}(\xi) 
% = \varphi_{\leq k}(\xi) \wt{f}(\xi), \quad \textrm{ etc.}
\end{equation}
We will also sometimes use the notation $f_k$ for $P_kf$.
We will avoid using, as a recurrent notation, the flat analogue of these projections;
they may still appear but only when there is no risk of confusion,
or when they are equivalent to their distorted analogues in view of the boundedness of wave operators
(Theorem \ref{Wobd}).

\smallskip
\noindent
We also define the cutoff functions
\begin{equation}\label{cut1}
\varphi_k^{(k_0)}(\xi) = 
\left\{
\begin{array}{ll}
\varphi_k(\xi) \quad & \mbox{if} \quad k>\lfloor k_0 \rfloor,
\\        
\varphi_{\leqslant  \lfloor k_0 \rfloor }(\xi) \quad & \mbox{if} \quad k=\lfloor k_0 \rfloor,
\end{array}\right. 
\end{equation}
and
\begin{equation}\label{cut2}
\varphi_k^{[k_0,k_1]}(\xi) = 
\left\{
\begin{array}{ll}
\varphi_k(\xi) \quad & \mbox{if} \quad k \in (\lfloor k_0 \rfloor ,\lfloor k_1 \rfloor) \cap \Z,
\\        
\varphi_{\leqslant  \lfloor k_0 \rfloor }(\xi) \quad & \mbox{if} \quad k=\lfloor k_0 \rfloor,
\\
\varphi_{\geqslant \lfloor k_1 \rfloor}(\xi) \quad & \mbox{if} \quad k=\lfloor k_1 \rfloor.
\end{array}\right. 
\end{equation}
%and define similarly $P_{I}g:=\mathcal{F}^{-1}\left(\varphi_{I}(\xi)\what{g}(\xi)\right)$,
%$P_{\leq k}g:=\mathcal{F}^{-1}\left(\varphi_{\leq k}(\xi) \what{g}(\xi)\right)$, $k\in\Z$ etc.
Note that the indexes $k_0$ and $k_1$ above do not need to be integers.

\smallskip
\noindent
We will denote by $T$ a positive time, and always work on an interval $[0,T]$
for our bootstrap estimates; see for example Proposition \ref{propboot}.
%and consider spaces of functions from $[0,T]$ into spaces of functions defined over $\R$; 
%see for example the main space $W_T$ in \eqref{wnorm} below
To decompose the time integrals, such as \eqref{introD1}, for any $t \in [0,T]$,
we will use a suitable decomposition of the indicator function $\mathbf{1}_{[0,t]}$
by fixing functions $\tau_0,\tau_1,\cdots, \tau_{L+1}: \R \to [0,1]$, 
for an integer $L$ with $|L-\log_2 (t+2)| < 2$,
with the properties that 
\begin{align}\label{timedecomp}
\begin{split}
& \sum_{n=0}^{L+1}\tau_n(s) = {\bf{1}}_{[0,t]}(s),  
\qquad \supp (\tau_0) \subset [0,2], \quad  \supp (\tau_{L+1}) \subset[t-2,t],
\\
& \mbox{and} \quad \supp(\tau_n) \subseteq [2^{n-1},2^{n+1}], 
  \quad |\tau_n'(t)|\lesssim 2^{-n}, \quad \mbox{for} \quad n= 1,\dots, L.
\end{split}
\end{align}

\smallskip
\subsection*{Acknowledgements}
T.L. was supported by the Simons collaborative grant on weak turbulence.  
F.P. was supported in part by a start-up grant from the University of Toronto, 
and NSERC grant RGPIN-2018-06487. 

\noindent
We thank J. L\"uhrmann  and A. Soffer for useful comments on an earlier version of this paper.

%%%%%%%%%%%%%%%%%%%%%%%%%%%%%%%%%%%%%%%%%%%%%%%%%%%%%%%%%%%%%%%%%%%%%%%%%%
%%%%%%%%%%%%%%%%%%%%%%%%%%%%%%%%%%%%%%%%%%%%%%%%%%%%%%%%%%%%%%%%%%%%%%%%%%
%%%%%%%%%%%%%%%%%%%%%%%%%%%%%%%%%%%%%%%%%%%%%%%%%%%%%%%%%%%%%%%%%%%%%%%%%%
%%%%%%%%%%%%%%%%%%%%%%%%%%%%%%%%%%%%%%%%%%%%%%%%%%%%%%%%%%%%%%%%%%%%%%%%%%
%%%%%%%%%%%%%%%%%%%%%%%%%%%%%%%%%%%%%%%%%%%%%%%%%%%%%%%%%%%%%%%%%%%%%%%%%%
%%%%%%%%%%%%%%%%%%%%%%%%%%%%%%%%%%%%%%%%%%%%%%%%%%%%%%%%%%%%%%%%%%%%%%%%%%
%%%%%%%%%%%%%%%%%%%%%%%%%%%%%%%%%%%%%%%%%%%%%%%%%%%%%%%%%%%%%%%%%%%%%%%%%%

\medskip
\section{Preliminaries}\label{secdFT}
In this section we gather several tools from the linear theory. 
In Subsection \ref{ssecdFT} we state basic properties of the distorted Fourier transform, 
and introduce the wave operators. We state some of their boundedness properties on Sobolev and weighted spaces.
In Subsection \ref{ssecdecay} we state some $L^p_x$ decay estimates for the linear propagator $e^{itL}$,
classical Strichartz estimates, and a useful local decay estimate.

\subsection{Linear theory}\label{ssecdFT}
Let $\psi(x,\xi)$ denote the generalized eigenfunction (or modified complex exponential), 
that is, the solution to 
\begin{align}\label{psieq}
(-\Delta + V ) \psi(x,\xi) = \vert \xi \vert^2 \psi(x,\xi), \qquad \xi,x \in \R^3
\end{align}
with $\vert \psi(x,\xi) - e^{i x \cdot \xi} \vert \longrightarrow 0 $ as $\vert x \vert \rightarrow + \infty.$
We have the canonical decomposition $L^2(\mathbb{R}^d) = L^2_{ac}(\mathbb{R}^d) \oplus L^2 _{pp}(\mathbb{R}^d).$ 
Note that given our assumption on the potential and \eqref{introim},
we have $L^2 _{pp}(\mathbb{R}^d) = \mathrm{span}(\phi).$ 
In this context we can define a modified Fourier transform:

\begin{proposition}\label{propdFT}
Assume that $V = O(\vert x \vert^{-1-}).$ Let $f \in L^2_{ac}(\R^3) \cap \mathcal{S}.$ 
Define the distorted Fourier Transform (dFT) as
\begin{align}\label{Ftildef}
\wt{\mathcal{F}}f(\xi) := \wt{f}(\xi) = \frac{1}{(2\pi)^{3/2}} \lim_{R \to + \infty} 
\int_{\vert x \vert \leqslant R} f(x) \overline{\psi(x,\xi)} dx.
\end{align}
Then, the Plancherel identity holds: for any $f,g \in L^2_{ac}(\R^3) \cap \mathcal{S}$
\begin{align}\label{plancharel}
\langle \wtF f, \wtF g \rangle = \langle f, g \rangle,
\end{align}
and $\wtF$ has an isometric extension to $L^2 _{ac}(\mathbb{R}^d)$ with inverse
\begin{align}\label{Ftilinvdef}
\big[\wt{\mathcal{F}}^{-1}f\big](x) = \frac{1}{(2\pi)^{3/2}} \lim_{R \to + \infty} 
\int_{\vert \xi \vert \leqslant R} f(\xi) \psi(x,\xi) d\xi.
\end{align}

Moreover, the restriction of $\widetilde{\mathcal{F}}$ to $L^2_{ac}(\mathbb{R}^d)$ diagonalizes $-\Delta+V,$ 
in the sense that $\widetilde{\mathcal{F}} (-\Delta+V) \widetilde{\mathcal{F}}^{-1} = \vert \xi \vert^2.$
\end{proposition}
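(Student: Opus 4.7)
The plan is to identify $\wtF$ with the composition $\wtF = \Fhat \circ W_-^\ast$, where $\Fhat$ is the standard Fourier transform and $W_\pm := s\text{-}\lim_{t \to \pm\infty} e^{itH} e^{-itH_0}$ are the wave operators associated to $H_0 = -\Delta$ and $H = -\Delta + V$. Under the assumption $V \in \mathcal{S}$ together with the genericity of $V$ (no zero-energy eigenvalue or resonance), $W_\pm$ exist and are unitary maps from $L^2(\R^3)$ onto $L^2_{ac}(\R^3)$ (asymptotic completeness for short-range potentials), a fact I would quote from Agmon or Reed--Simon.

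First, I would establish the existence and suitable decay properties of the distorted plane waves $\psi(x,\xi)$ via the Lippmann--Schwinger equation
\begin{align*}
\psi(x,\xi) = e^{ix\cdot \xi} - \big(R_0(|\xi|^2 + i0) \, V \psi(\cdot,\xi)\big)(x), \qquad R_0(z) := (-\Delta - z)^{-1}.
\end{align*}
For $V \in \mathcal{S}$ this admits a unique solution for each $\xi \in \R^3 \setminus \{0\}$ by Fredholm theory combined with Agmon's limiting absorption principle for $H_0$. Substituting this equation into the definition \eqref{Ftildef} and interpreting the resolvent boundary values via Stone's formula, one recognizes $\wtF f$ as the stationary representation of $\Fhat W_-^\ast f$ (see, e.g., Reed--Simon vol.~III, Chapter~XI).

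Once this identification is in place, the three conclusions follow from general wave-operator machinery. The Plancherel identity \eqref{plancharel} is immediate since $\Fhat$ is unitary on $L^2(\R^3)$ and $W_-^\ast$ is a partial isometry with initial space $L^2_{ac}(\R^3)$ and final space $L^2(\R^3)$. The inversion formula \eqref{Ftilinvdef} is the adjoint statement: computing the $L^2$-adjoint of \eqref{Ftildef} produces exactly the integral against $\psi(x,\xi)$ displayed in \eqref{Ftilinvdef}, which by unitarity must coincide with $\wtF^{-1} = W_- \Fhat^{-1}$ on $L^2_{ac}(\R^3)$. The intertwining relation $H W_- = W_- H_0$ then conjugates under $\Fhat$ to the diagonalization $\wtF H \wtF^{-1} = |\xi|^2$; equivalently, this can be read off directly from \eqref{psieq} by formally applying $-\Delta + V$ under the integral sign in \eqref{Ftilinvdef} on the dense subspace $\mathcal{S} \cap L^2_{ac}(\R^3)$.

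The main technical obstacle is the rigorous justification of the $R \to \infty$ limits in \eqref{Ftildef} and \eqref{Ftilinvdef}, i.e., $L^2$-convergence of the truncated integrals for all data in $L^2_{ac}$ rather than just Schwartz data. This amounts to the limiting absorption principle for $H$ at all energies in $[0,\infty)$, and this is precisely where the genericity assumption enters: the absence of a zero-energy eigenvalue or resonance prevents the boundary values $R(\lambda \pm i0)$ from blowing up at the threshold $\lambda = 0$, and standard Agmon--Kato--Kuroda theory then yields uniform bounds in weighted $L^2$ spaces. A density argument from $\mathcal{S} \cap L^2_{ac}(\R^3)$ extends the isometry, the inversion formula, and the diagonalization to the full absolutely continuous subspace, completing the proof.
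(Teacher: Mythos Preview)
The paper does not prove this proposition; immediately after the statement it simply attributes the result to Ikebe, Alsholm--Schmidt, and Agmon, with a pointer to \cite[Theorem~2.1]{GHW}. Your outline via the wave-operator identification $\wtF = \Fhat\, W_-^\ast$, the Lippmann--Schwinger construction of $\psi(x,\xi)$, and the intertwining relation $HW_- = W_- H_0$ is exactly the standard argument contained in those references, so you are reconstructing what the paper chose to quote.

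One correction worth making: you invoke the genericity assumption (no zero-energy eigenvalue or resonance) as essential for the $R\to\infty$ limits and the limiting absorption principle. This overstates its role for the proposition as written. The Plancherel identity, inversion, and diagonalization on $L^2_{ac}$ hold without any threshold condition: Agmon--Kato--Kuroda gives the limiting absorption bounds at all positive energies for short-range $V$, embedded positive eigenvalues are ruled out by Kato's theorem, and the single point $\xi=0$ is a null set and thus irrelevant for the $L^2$ theory. In the paper, genericity is used only later for the dispersive and Strichartz estimates (Lemmas~\ref{decay}--\ref{localdecay}), not for Proposition~\ref{propdFT}, whose hypothesis is deliberately stated as merely $V = O(|x|^{-1-})$.
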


The above result is due several authors, including 
Ikebe \cite{Ikebe}, Alsholm-Schmidt \cite{AlS} and Agmon \cite{Agmon}.
See also \cite[Theorem 2.1]{GHW} and additional references therein. %for a similar statements
We will use the inversion formula \eqref{Ftilinvdef} to write Duhamel's formula for $f$ in distorted Fourier
space as in Subsection \ref{ssecDuh}.

A useful tool in the study of the operator $-\Delta+V$ is the wave operator, 
defined on $\wtF^{-1} L^2_{ac}(\mathbb{R}^d)$ as
\begin{align}\label{Wodef}
\mathcal{W} = \wtF^{-1} \whF, \qquad \mathcal{W}^\ast = \whF^{-1} \wtF,
\end{align}
where $\widehat{\mathcal{F}}$ denotes the flat ($V=0$) Fourier transform. 
The following result follows from the work of Yajima \cite{Y} and Germain-Hani-Walsh \cite{GHW}:
%\cite{Bec} proves \eqref{Wobdx} for less than $1$ weight.

\begin{theorem}[Boundedness of Wave Operators]\label{Wobd}
Assume that $V$ is generic, that for some $\delta>0,$ and for all $\vert \alpha \vert \leqslant l,$
\begin{align}\label{WobdV}
\langle x \rangle^{1+\delta} D^{\alpha} V \in L^2(\mathbb{R}^3), 
  \quad \langle x \rangle^{5 + \delta} D^{\alpha} V \in L^{\infty}(\R^3).
\end{align}
Then, the wave operators \eqref{Wodef} are bounded on $W^{k,p}$ for $0 \leqslant k \leqslant l$
and $1 \leqslant  p  \leqslant \infty$:
\begin{align}\label{WobdSob}
{\big\| \mathcal{W} f \big\|}_{W^{k,p}} + {\big\| \mathcal{W}^\ast f \big\|}_{W^{k,p}} 
  \approx {\| f \|}_{W^{k,p}}.
\end{align}

\medskip
Moreover, under the assumptions \eqref{WobdV} the wave operators are bounded on weighted $L^2$-spaces:
for all $f \in L^2_{ac}$
\begin{align}\label{Wobdx}
{\big\| \jx \mathcal{W} f \big\|}_{L^2} + {\big\| \jx \mathcal{W}^\ast f \big\|}_{L^2} \approx {\| \jx f \|}_{L^2}.
%+ {\| f \|}_{H^1}.
\end{align}
\end{theorem}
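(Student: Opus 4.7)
The plan is to handle the two assertions separately: the $W^{k,p}$ bounds will reduce to Yajima's $L^p$ theorem for wave operators combined with the intertwining identity, while the weighted $L^2$ bound will be obtained through a commutator identity between the position operator and $\mathcal{W}$, whose error term is an $L^2$-bounded operator by a kernel estimate.

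For the $W^{k,p}$ estimate \eqref{WobdSob}: I would start from Yajima's theorem, which under the decay hypotheses \eqref{WobdV} (genericity plus $\langle x \rangle^{5+\delta}V \in L^\infty$) gives the $L^p$-boundedness of $\mathcal{W}$ and $\mathcal{W}^\ast$ for all $1 \leq p \leq \infty$. To climb to $W^{k,p}$ one uses the intertwining relation $\mathcal{W}(-\Delta) = (-\Delta + V)\mathcal{W}$ on $\wtF^{-1}L^2_{ac}$: writing $D^\alpha \mathcal{W} f$ in terms of $\mathcal{W}D^\alpha f$ modulo commutators that bring in $D^\beta V \cdot (\text{lower order derivatives})$, one proceeds inductively on $|\alpha| \leq l$. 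The decay assumption on $D^\alpha V$ for $|\alpha| \leq l$ ensures each commutator correction remains $L^p$-bounded. The dual statement for $\mathcal{W}^\ast$ is obtained by the same argument, using that $\mathcal{W}^\ast = \whF^{-1}\wtF$ intertwines in the reverse direction.

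For the weighted $L^2$ bound \eqref{Wobdx}: The strategy is the identity, for each $j=1,2,3$,
\begin{align*}
x_j\, \mathcal{W} f = \mathcal{W}(x_j f) + S_j f,
\end{align*}
with $S_j$ bounded on $L^2$. To derive it, write $\mathcal{W} f(x) = (2\pi)^{-3/2}\int \psi(x,\xi)\what{f}(\xi)\, d\xi$ and use
\begin{align*}
x_j \psi(x,\xi) = -i\partial_{\xi_j}\psi(x,\xi) + R_j(x,\xi),
\qquad R_j := x_j \psi + i\partial_{\xi_j}\psi,
\end{align*}
where $R_j$ measures the deviation of $\psi$ from the plane wave $e^{ix\cdot\xi}$. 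Integrating by parts in $\xi_j$ converts the $-i\partial_{\xi_j}\psi$ contribution into $\mathcal{W}(x_j f)$, and the remainder defines $S_j f(x) = (2\pi)^{-3/2}\int R_j(x,\xi)\what{f}(\xi)\, d\xi$. The formula $\psi(x,\xi) = e^{ix\cdot\xi} - (-\Delta - |\xi|^2 - i0)^{-1}[V\psi(\cdot,\xi)](x)$ together with the genericity of $V$ gives quantitative control of $R_j$; a Schur test applied to the kernel of $\whF \, S_j$ (or equivalently to $S_j$ viewed as an operator composed with $\whF$) then yields $L^2$-boundedness, provided $V$ decays like $\langle x\rangle^{-5-\delta}$. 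The boundedness of $\mathcal{W}^\ast$ on $\jx L^2$ then follows by duality together with the already established $L^2$-boundedness.

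The main obstacle I foresee is the kernel estimate for $S_j$. One must show that the operator built from $R_j(x,\xi)$ is bounded on $L^2$, which requires decay both in $x$ and some integrability in $\xi$, including at the threshold $|\xi|=0$. The genericity assumption (absence of a zero eigenvalue or resonance) is precisely what allows the limiting resolvent $(-\Delta + V - 0)^{-1}$ and its $\xi$-derivative to be handled uniformly down to $|\xi|=0$, and the decay rate $\langle x\rangle^{5+\delta}$ is calibrated so that the iterated application of the free resolvent against $V\psi$ and its $\xi$-derivative still produces a Schur-bounded kernel. All of these ingredients are essentially present in \cite{GHW}, so the proof will largely consist of pointing to those estimates and verifying that our hypotheses are compatible with them.
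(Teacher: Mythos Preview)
Your approach is essentially correct and close in spirit to the paper's, though the paper proceeds more by citation than by direct argument.

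For \eqref{WobdSob}, note that Yajima's theorem \cite{Y} already states $W^{k,p}$-boundedness directly (the title of the paper is ``The $W^{k,p}$-continuity of wave operators for Schr\"odinger operators''), so the paper simply cites it without any inductive commutator argument. Your proposed climb from $L^p$ to $W^{k,p}$ via intertwining would work, but it re-proves part of what Yajima already established.

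For \eqref{Wobdx}, both you and the paper use the commutator $[\jx,\mathcal{W}]$ (or $[x_j,\mathcal{W}]$), but the paper applies a black-box result from \cite{GHW} (their Theorem~3.4): for radial $a$ with $|\nabla a|\leq 1$, the commutator $[a(x),\mathcal{W}]$ maps $L^p\to L^q$ whenever $1/p-1/q\leq\min(1/6,1/p)$ and $p<q$. Taking $p=2-$, $q=2$ and then using $\|f\|_{L^{2-}}\lesssim\|\jx f\|_{L^2}$ (H\"older) closes the estimate. Your plan instead unpacks the kernel and runs a Schur test directly; this is essentially what underlies the GHW result, so it is correct in principle, though you should be careful that the kernel $R_j(x,\xi)$ may not yield a straight $L^2\to L^2$ bound---the GHW statement is a genuine smoothing estimate with $p<q$---but that weaker $L^{2-}\to L^2$ mapping is all you need, and the same H\"older step closes. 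Either way the argument lands in the same place.
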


\begin{proof}
The results \eqref{WobdSob} is classical and, under the assumptions \eqref{WobdV},
it is due to Yajima \cite[Theorem 1.1]{Y}. %Several improvements...

The proof of \eqref{Wobd} is essentially a direct consequence of Theorem 3.4 in \cite{GHW} which states the following:
for all radial functions $a(x)$ that satisfy $|\nabla a(x)|\leqslant 1$ one has
\begin{align}\label{WoGHW1}
[a(x), \mathcal{W}], \, [a(x), \mathcal{W}^\ast] :  L^p\rightarrow L^q, 
  \qquad 1/p - 1/q \leqslant \min(1/6, 1/p), \quad 1\leqslant p < q \leqslant \infty,
\end{align}
where $[a,b] := ab - ba$. %is the standard commutator.
Using \eqref{WoGHW1} with $q=2$ and $p=2-$ we see that
\begin{align*}
{\| [\jx, \mathcal{W}] f\|}_{L^2} + {\| [\jx, \mathcal{W}^\ast] f\|}_{L^2}
  \lesssim {\| f \|}_{L^{2-}} \lesssim {\| \jx f \|}_{L^2}.
\end{align*}
In particular, the left-hand side of \eqref{Wobdx} is bounded by the right-hand side
thanks to the boundedness of wave operators on $L^2$.
The equivalence follows by letting $f$ be $\mathcal{W}f$ (or $\mathcal{W}^\ast f$).
\end{proof}

We have the following consequence of Theorem \ref{Wobd}:
%about weighted norms for the transform of complex conjugates:
\smallskip
\begin{lemma}[Weighted norms and complex conjugate]\label{lemconj}
Assume that \eqref{Wobdx} holds.
%Under the same assumptions on $V$ from Theorem \ref{maintheo} we have,
Then, for all $f \in L^2_{ac}$
\begin{align}\label{conjconc}
{\big\| \nabla_\xi \wt{\overline{f}} \big\|}_{L^2} + {\| f \|}_{L^2} 
  \approx {\big\| \nabla_\xi \wt{f} \big\|}_{L^2} + {\| f \|}_{L^2},
\end{align}
or, in other words, ${\| \wt{f} \|}_{H^1_\xi} \approx {\| \wt{\overline{f}} \|}_{H^1_\xi}$.
\end{lemma}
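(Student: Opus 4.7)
The plan is to reduce both sides of the equivalence to the common weighted norm $\|\jx\,\cdot\,\|_{L^2}$ in physical space, by means of the wave-operator factorization $\wtF = \whF\circ\mathcal{W}^\ast$ (which follows from $\mathcal{W}^\ast = \whF^{-1}\wtF$ in \eqref{Wodef}), and then to invoke the hypothesis \eqref{Wobdx} which says precisely that $\mathcal{W}^\ast$ is a topological isomorphism on $\jx L^2$.

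The first step, and the only conceptually non-trivial one, is to observe that complex conjugation preserves $L^2_{ac}$. Since $V$ is real-valued and $\lambda^2$ is a simple eigenvalue of $H$, the real and imaginary parts of $\phi$ both solve $H\psi=\lambda^2\psi$ and, by simplicity, must be proportional; thus $\phi$ may be chosen real-valued. Then $(f,\phi)=0$ implies $(\overline{f},\phi)=\overline{(f,\phi)}=0$, so $\overline{f}\in L^2_{ac}$ whenever $f$ is. This is the main obstacle, in the sense that without it one cannot legitimately apply \eqref{Wobdx} to $\overline{f}$.

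Next, combining $\wtF = \whF\mathcal{W}^\ast$ with the elementary flat identity $\nabla_\xi\whF h = -i\whF(xh)$ and Plancherel gives
\[
\|\nabla_\xi\wt{g}\|_{L^2_\xi}=\|x\,\mathcal{W}^\ast g\|_{L^2_x}, \qquad g\in L^2_{ac}.
\]
Since $\mathcal{W}^\ast$ is unitary from $L^2_{ac}$ onto $L^2$ (being the composition $\whF^{-1}\wtF$ of two unitaries on the respective $L^2$), we also have $\|\mathcal{W}^\ast g\|_{L^2}=\|g\|_{L^2}$. Adding squares and using $\jx^2 = 1+|x|^2$ together with the assumed equivalence \eqref{Wobdx} yields
\[
\|\nabla_\xi\wt{g}\|_{L^2}^2+\|g\|_{L^2}^2=\|\jx\,\mathcal{W}^\ast g\|_{L^2}^2\approx\|\jx g\|_{L^2}^2, \qquad g\in L^2_{ac}.
\]

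To conclude I would apply this equivalence twice, to $g=f$ and to $g=\overline{f}$ (both admissible by the first step), and use the trivial pointwise identities $\|\jx\overline{f}\|_{L^2}=\|\jx f\|_{L^2}$ and $\|\overline{f}\|_{L^2}=\|f\|_{L^2}$ to obtain
\[
\|\nabla_\xi\wt{\overline{f}}\|_{L^2}^2+\|f\|_{L^2}^2\;\approx\;\|\jx f\|_{L^2}^2\;\approx\;\|\nabla_\xi\wt{f}\|_{L^2}^2+\|f\|_{L^2}^2,
\]
which is exactly the claimed equivalence. Once the $L^2_{ac}$-invariance under conjugation is in hand, everything else is bookkeeping using the wave-operator identities from \eqref{Wodef} and the already-established weighted bound \eqref{Wobdx}.
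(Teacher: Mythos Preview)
Your proof is correct and follows essentially the same route as the paper's: both reduce the equivalence to the weighted physical-space norm $\|\jx f\|_{L^2}$ via the factorization $\wtF=\whF\mathcal{W}^\ast$ and the weighted boundedness \eqref{Wobdx}. Your explicit verification that $\overline{f}\in L^2_{ac}$ (via the reality of $\phi$) is a point the paper leaves implicit, and your symmetric presentation (showing both sides equivalent to $\|\jx f\|_{L^2}$ directly) is slightly cleaner than the paper's one-sided chain followed by a role-swap.
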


\begin{proof}
We first write 
\begin{align}\label{conj1}
\begin{split}
{\big\| \nabla_\xi \wt{\overline{f}} \big\|}_{L^2}
 & = {\big\| \nabla_\xi \whF \mathcal{W}^\ast \overline{f} \big\|}_{L^2} 
\\
 & \lesssim {\big\| x \mathcal{W}^\ast \overline{f} \big\|}_{L^2} 
 \lesssim {\big\| \jx \overline{f} \big\|}_{L^2} = {\big\| \jx f \big\|}_{L^2},
\end{split}
\end{align}
having used the boundedness of wave operators on weighted spaces for the second inequality.
Then, using again \eqref{Wobdx}, we see that
\begin{align}\label{conj2}
\begin{split}
{\big\| \jx f \big\|}_{L^2} & \lesssim {\big\| \jx \whF^{-1} \wt{f} \big\|}_{L^2}
  \lesssim {\big\| \nabla_\xi \wt{f} \big\|}_{L^2} + {\| f \|}_{L^2}.
\end{split}
\end{align}
Putting together \eqref{conj1} and \eqref{conj2} shows one of the inequalities in \eqref{conjconc}.
Exchanging the role of $f$ and $\overline{f}$ proves the equivalence.
\end{proof}

\begin{remark}\label{remconj}
Lemma \ref{lemconj} is useful since our bootstrap assumptions \eqref{boot0} 
involve weighted norms of $\wtF h$ in Fourier space, %, see \eqref{propdFT}, we only make a priori assumptions on $\wt{h}$ 
but the nonlinearity will also contain $\wtF(\overline{h})$; see for example \eqref{fDuh}. 
Thanks to Lemma \ref{lemconj} our bootstrap assumptions also control ${\| \nabla_\xi \wtF(\overline{h})\|}_{L^2}$
and, therefore, we may essentially disregard the conjugation.
Note that this simplifies the handling of expressions like those in the last line of \eqref{fDuh};
since the conjugation appears before the distorted transform,
%we just need to consider the single expression \eqref{muDuh} for $\mu$,
%and not various combinations involving the 
we do not need to conjugate %conjugation of 
the generalized eigenfunctions $\psi(x,\eta)$ and $\psi(x,\sigma)$.
\end{remark}

\medskip
\subsection{Decay, Strichartz estimates, and local decay}\label{ssecdecay}

An important identity related to the Wave operators is the intertwining property
\begin{align}\label{Woid}
a(-\Delta+V) = \mathcal{W} a(-\Delta) \mathcal{W}^{\ast},
  \qquad \wtF a(-\Delta+V)\wtF^{-1} = \whF a(-\Delta) \whF^{-1}. 
\end{align}
Combined with the boundedness of wave operators on $W^{k,p}$,
this formula allows us to bring linear estimates known in the flat case to the perturbed setting.   

We recall first some linear dispersive estimates.

\begin{lemma}[Linear dispersive estimates]  \label{decay}
We have
\begin{align}
\label{decayp}
\Vert e^{itL} f_k \Vert_{L^p _x} & \lesssim \langle 2^{5k/2(1/p'-1/p)} \rangle t^{-3(1/2-1/p)} \Vert f_k \Vert_{L^{p'}_x}, 
\\
\label{decay6}
\Vert e^{itL} f_k \Vert_{L^6_x} & \lesssim \langle 2^{5k/3} \rangle t^{-1} \Vert \partial_{\xi} \mf{f} \Vert_{L^2_\xi},
\end{align}
where $f_k := P_k f.$
\end{lemma}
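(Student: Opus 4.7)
\emph{Plan.} Both bounds are classical dispersive estimates for the Klein-Gordon flow, and my strategy is to reduce them to the flat ($V\equiv 0$) analogues via the wave operators and then apply stationary phase in $\xi$. The intertwining identity \eqref{Woid} gives $e^{itL}\mathbf{P}_c = \mathcal{W}\,e^{it\jnab}\,\mathcal{W}^\ast$, and Theorem \ref{Wobd} (through \eqref{WobdSob} and \eqref{Wobdx}) transfers flat-space $W^{k,p}$ and weighted-$L^2$ bounds to the perturbed setting with equivalent norms; in particular $\|\partial_\xi\wt f\|_{L^2}\approx \|\partial_\xi\what{\mathcal{W}^\ast f}\|_{L^2}$ modulo a harmless $\|f\|_{L^2}$ contribution that is absorbed by the frequency localization of $f_k$.

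\emph{Proof of \eqref{decayp}.} In the flat case, the kernel of $e^{it\jnab}\what P_k$ is
\[
K_t(x-y) = c\int_{\R^3} e^{i\Phi(\xi)}\varphi_k(\xi)\,d\xi,\qquad \Phi(\xi) := t\jxi + (x-y)\cdot\xi.
\]
For $|x-y|\gtrsim t$, $|\nabla_\xi\Phi|\gtrsim |x-y|$ and repeated integration by parts against $\nabla_\xi\Phi/|\nabla_\xi\Phi|^2$ yields arbitrary polynomial decay. For $|x-y|<t$, the phase has a unique critical point $\xi_\ast$ determined by $t\xi_\ast/\langle\xi_\ast\rangle = -(x-y)$, at which the Hessian satisfies $|\det D^2_\xi\Phi(\xi_\ast)| = t^3/\langle\xi_\ast\rangle^5$; the standard stationary-phase formula then gives $\|K_t\|_{L^\infty_{x,y}}\lesssim \langle 2^{5k/2}\rangle t^{-3/2}$, hence the $L^1\to L^\infty$ endpoint. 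Interpolating via Riesz-Thorin with the trivial $L^2\to L^2$ Plancherel bound delivers \eqref{decayp} for all $p\in[2,\infty]$.

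\emph{Proof of \eqref{decay6} and main obstacle.} Rather than routing through the $L^{6/5}$ endpoint, I would apply stationary phase directly to the oscillatory representation
\[
e^{it\jnab}f_k(x) = c\int_{\R^3} e^{i(t\jxi + x\cdot\xi)}\varphi_k(\xi)\what f(\xi)\,d\xi.
\]
For $|x|<t$ there is a unique critical point $\xi_\ast(x,t)$ solving $t\xi_\ast/\langle\xi_\ast\rangle = -x$, and stationary phase yields the pointwise bound $|e^{it\jnab}f_k(x)|\lesssim t^{-3/2}\langle\xi_\ast\rangle^{5/2}|\varphi_k(\xi_\ast)\what f(\xi_\ast)|$. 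Changing variables $x\mapsto \xi$ with Jacobian $dx = (t^3/\jxi^5)\,d\xi$ turns the $L^6_x$ norm into a weighted $L^6_\xi$ norm, so
\[
\|e^{it\jnab}f_k\|_{L^6_x}\lesssim t^{-1}\|\langle\xi\rangle^{5/3}\varphi_k\what f\|_{L^6_\xi}\lesssim \langle 2^{5k/3}\rangle t^{-1}\|\varphi_k\what f\|_{L^6_\xi}.
\]
Sobolev embedding $H^1_\xi(\R^3)\hookrightarrow L^6_\xi(\R^3)$ then controls the last norm by $\|\partial_\xi\what f\|_{L^2}$ (plus an innocuous $\|f\|_{L^2}$ absorbed as above). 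The main technical obstacles are the careful treatment of the exterior region $|x|\gtrsim t$, handled by non-stationary integration by parts as in the previous step and producing arbitrary polynomial decay, together with controlling the Sobolev commutator $(\partial_\xi\varphi_k)\what f$ at low frequencies $k\leq 0$, where the Hessian factor $\langle\xi_\ast\rangle^5$ is bounded and classical low-frequency arguments close the estimate without any high-frequency penalty, consistent with $\langle 2^{5k/3}\rangle\sim 1$ there.
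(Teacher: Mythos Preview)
For \eqref{decayp} your approach coincides with the paper's: reduce to the flat Klein--Gordon propagator via wave operators and invoke the standard frequency-localized $L^{p'}\to L^p$ dispersive estimate.

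For \eqref{decay6} your route is genuinely different, and as written it has a gap. The paper's argument (following \cite{KMV}) goes through Lorentz spaces: one uses $L^6\hookleftarrow L^{6,2}$, the dispersive bound $L^{6/5,2}\to L^{6,2}$, and then Lorentz--H\"older
\[
\|f_k\|_{L^{6/5,2}}\lesssim \||x|^{-1}\|_{L^{3,\infty}}\,\||x|f\|_{L^{2}}=\||x|^{-1}\|_{L^{3,\infty}}\,\|\partial_\xi\what f\|_{L^2}.
\]
The Lorentz refinement is essential here: the naive chain ``apply \eqref{decayp} at $p=6$, then bound $\|f_k\|_{L^{6/5}}\lesssim\|\partial_\xi\what f\|_{L^2}$'' fails because $|x|^{-1}\notin L^3(\R^3)$ (only $L^{3,\infty}$), so ordinary H\"older does not close. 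Your stationary-phase approach avoids this issue but creates another: the pointwise inequality $|e^{it\jnab}f_k(x)|\lesssim t^{-3/2}\langle\xi_*\rangle^{5/2}|\varphi_k(\xi_*)\what f(\xi_*)|$ is \emph{not} what stationary phase delivers for a nonconstant amplitude. Stationary phase gives this expression as the leading asymptotic, plus a remainder involving $\partial_\xi\what f$; you then owe a separate $L^6_x$ estimate for that remainder using only one $\xi$-derivative of $\what f$. This can be carried out (it is how one proves linear Klein--Gordon asymptotics), but you have not supplied it, and the change-of-variables and Sobolev-embedding steps you wrote only handle the main term. The paper's Lorentz argument sidesteps the remainder analysis entirely, which is what it buys.
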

\begin{proof}
The first inequality is a consequence of the standard $L^{p}$ decay estimate
for the Klein-Gordon propagator, and boundedness of wave operators on $W^{k,p}.$

For the second, we have to show the analog in the flat case, which is slightly less classical than the first. 
Following \cite{KMV}, we write
\begin{align*}
\Vert e^{it L} f_k \Vert_{L^6_x} & \lesssim \Vert e^{itL} f_k \Vert_{L^{6,2}_x} 
                                 \lesssim \langle 2^{5k/3} \rangle t^{-1} \Vert f_k \Vert_{L^{6/5,2}_x} \\
                                & \lesssim \langle 2^{5k/3} \rangle t^{-1} \Vert \vert x \vert^{-1} \Vert_{L^{3,\infty}_x} \Vert \vert x \vert f \Vert_{L^2_x} \\
                                & \lesssim \langle 2^{5k/3} \rangle t^{-1} \Vert \partial_{\xi} \widehat{\mathcal{F}}f \Vert_{L^2_{\xi}}.
\end{align*}
We used successively the following facts about Lorentz spaces:
\begin{itemize}
\item $L^{p,p}_x = L^p_x$ for $1\leqslant p < \infty, a<b.$
\item $L^{p,a}_x \hookrightarrow L^{p,b}_x$ for $1\leqslant p < \infty, a<b.$
\item $\Vert fg \Vert_{L^{p,a}_x} \lesssim \Vert f \Vert_{L^{q,b}_{ x }} \Vert g \Vert_{L^{r,c}_{ x }} $ where $1 \leqslant p,q,r < \infty$ and $1\leqslant a,b,c \leqslant \infty$ satisfy $\frac{1}{p}=\frac{1}{q}+\frac{1}{r}$ and $\frac{1}{a} = \frac{1}{b} + \frac{1}{c}.$ 
\end{itemize}
\end{proof}
We also record Strichartz estimates.
\begin{lemma}[Strichartz estimates] \label{Strichartz}
Let $u$ be a solution to the inhomogenenous equation
\begin{align*}
\partial_{t}^2 u + L^2 u = F, \ \ u(0)=u_0, \ \ \partial_t u(0) = u_1
\end{align*}
Let $0 \leqslant \gamma \leqslant 1,$ $2 \leqslant q, \widetilde{q} \leqslant \infty,$ and $2 \leqslant r, \widetilde{r} < \infty $ be exponents satisfying the scaling and admissibility conditions
\begin{align*}
\frac{1}{q} + \frac{d}{r}= \frac{d}{2}-\gamma = \frac{1}{\widetilde{q}'} + \frac{d}{\widetilde{r}'}-2, \ \ and \ \ \frac{1}{q} + \frac{d-1}{2r}, \frac{1}{\widetilde{q}} + \frac{d-1}{2 \widetilde{r}} \leqslant \frac{d-1}{4}.
\end{align*}
Then 
\begin{align*}
& \Vert \langle \nabla \rangle^{\gamma} u \Vert_{C_t L^2_x} + \Vert \langle \nabla \rangle^{\gamma-1} \partial_t u \Vert_{C_t L^2_x} 
  + \Vert u \Vert_{L^q_t L^r_x} 
\\
& \lesssim \Vert \langle \nabla \rangle^{\gamma} u_0 \Vert_{L^2_x} + \Vert \langle \nabla \rangle^{\gamma-1} u_1 \Vert_{L^2_x} + \Vert F \Vert_{L^{\widetilde{q}'}_t L^{\widetilde{r}'}_x}.
\end{align*}
\end{lemma}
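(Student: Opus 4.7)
The plan is to reduce the problem to the free Klein-Gordon equation via the intertwining property of the wave operators and then invoke the classical Strichartz estimates. First I would split $u = a(t)\phi + \mathbf{P_c} u$, where $a(t) = (\phi, u(t))$ satisfies the scalar ODE $\ddot{a} + \lambda^2 a = (\phi, F)$ with data $a(0) = (\phi,u_0)$, $\dot{a}(0) = (\phi, u_1)$. Duhamel's formula for $a$ together with $\phi \in \mathcal{S}$ and H\"older in time shows that the contribution of $a(t)\phi$ to every norm on the left-hand side is controlled by the corresponding norms of the data and by $\Vert F \Vert_{L^{\widetilde{q}'}_t L^{\widetilde{r}'}_x}$, with constants depending only on Schwartz seminorms of $\phi$. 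It therefore suffices to prove the estimate for $u = \mathbf{P_c} u$.

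For the continuous component, set $v := \mathcal{W}^* u$, $v_j := \mathcal{W}^* u_j$ for $j=0,1$, and $G := \mathcal{W}^* F$. By the intertwining identity \eqref{Woid} applied to the function $y \mapsto y+1$, the operator $L^2 = -\Delta + V + 1$ conjugates to $-\Delta + 1$, so $v$ solves the free Klein-Gordon equation
\begin{equation*}
\partial_t^2 v + (-\Delta + 1) v = G, \qquad v(0) = v_0, \quad \partial_t v(0) = v_1.
\end{equation*}
To this equation I would apply the classical Strichartz inequality for the free Klein-Gordon propagator (Ginibre-Velo, Keel-Tao, Ibrahim-Masmoudi-Nakanishi), which under exactly the stated scaling and admissibility conditions on $(q,r,\widetilde{q},\widetilde{r},\gamma)$ yields the same estimate with $(u,u_0,u_1,F)$ replaced by $(v,v_0,v_1,G)$.

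Finally, I would transfer back to $u$ using Theorem \ref{Wobd}. Boundedness of $\mathcal{W},\mathcal{W}^*$ on $L^p$ for $1\leqslant p \leqslant \infty$ gives $\Vert u \Vert_{L^q_t L^r_x} \approx \Vert v \Vert_{L^q_t L^r_x}$ and $\Vert F \Vert_{L^{\widetilde{q}'}_t L^{\widetilde{r}'}_x} \approx \Vert G \Vert_{L^{\widetilde{q}'}_t L^{\widetilde{r}'}_x}$, while boundedness of $\mathcal{W},\mathcal{W}^*$ on $H^\gamma$ (obtained by interpolation from \eqref{WobdSob}, since $0 \leqslant \gamma \leqslant 1 \leqslant l$) yields $\Vert \langle \nabla \rangle^\gamma u \Vert_{L^2} \approx \Vert \langle \nabla \rangle^\gamma v \Vert_{L^2}$, and likewise for the $\partial_t u$ term with exponent $\gamma - 1$ (using that $(-1)$-regularity is controlled through the factorization $L = \mathcal{W}\langle\nabla\rangle\mathcal{W}^*$ and the $L^2$-isometry of the wave operators). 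Combining these equivalences with the Strichartz estimate for $v$ gives the desired inequality. The only real obstacle is bookkeeping the precise reference for the free Klein-Gordon Strichartz estimates in the full stated range of $(q,r,\widetilde{q},\widetilde{r},\gamma)$, since the KG case has endpoint subtleties absent in the pure wave case; conceptually no new difficulty beyond the wave-operator conjugation arises.
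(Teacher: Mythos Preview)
Your core reduction---conjugating by the wave operators to transfer the problem to the free Klein--Gordon equation and then invoking the flat Strichartz estimates---is exactly the paper's approach; the paper's proof is the single sentence ``Follows from the analog in the flat case, see for example \cite{KSV}, Lemma 2.1.''

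Your added step handling the discrete component $a(t)\phi$, however, contains an error. The homogeneous part of $a$, namely $a(0)\cos(\lambda t) + \lambda^{-1}\dot a(0)\sin(\lambda t)$, is bounded but not in $L^q_t$ for any $q<\infty$ on an unbounded time interval, so your claim that $\|a(t)\phi\|_{L^q_t L^r_x}$ is controlled by the data norms fails whenever $q<\infty$. The lemma as stated is in fact false for general data with nontrivial bound-state component. The resolution is that throughout the paper the estimate is only applied with $u_0,u_1,F$ in the range of $\mathbf{P_c}$ (see e.g.\ its uses at \eqref{HNg} and \eqref{ghbdpr4}), so $a\equiv 0$ and the issue never arises; you should simply read the lemma with that implicit hypothesis and drop the discrete-component paragraph entirely.
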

\begin{proof}
Follows from the analog in the flat case, see for example \cite{KSV}, Lemma 2.1.
\end{proof}

%\subsection{Local decay estimate}
\medskip
Finally, we will also need the following local decay estimate.
\begin{lemma} \label{localdecay}
We have the bound, for $\phi, \zeta \in {\bf P_c} \mathcal{S}$,
\begin{align*}
\bigg \Vert \phi \frac{e^{itL}}{L-2\lambda+i0}  \zeta  \bigg \Vert_{L^1_x} 
  \lesssim t^{-3/2} \Vert \langle x \rangle^4 L^5  \zeta \Vert_{L^2_x} \Vert \langle x \rangle^4 L^5 \phi \Vert_{L^2_x} .
\end{align*}
\end{lemma}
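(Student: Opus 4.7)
The plan is to reduce the $L^1$ bound to a weighted $L^2$ operator estimate via Cauchy--Schwarz, and then to combine the intertwining property with a limiting absorption principle for the resolvent and a local decay estimate for the Klein--Gordon propagator.

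First, by Cauchy--Schwarz with weight $\langle x\rangle^{\pm 4}$,
\[
\Big\|\phi\,\frac{e^{itL}}{L-2\lambda+i0}\psi\Big\|_{L^1_x}\leqslant\|\langle x\rangle^4\phi\|_{L^2}\,\Big\|\langle x\rangle^{-4}\,\frac{e^{itL}}{L-2\lambda+i0}\psi\Big\|_{L^2}.
\]
The first factor is controlled by $\|\langle x\rangle^4 L^5\phi\|_{L^2}$ since $L\geqslant 1/2$ on $L^2_{ac}$ and since the boundedness of wave operators on weighted Sobolev spaces (Theorem~\ref{Wobd}) together with Lemma~\ref{lemconj} allows one to commute $\langle x\rangle^4$ past $L^5$ up to bounded remainders. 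It therefore suffices to show
\[
\Big\|\langle x\rangle^{-4}\,e^{itL}\,R(2\lambda-i 0)\,\psi\Big\|_{L^2_x}\lesssim t^{-3/2}\,\|\langle x\rangle^4 L^5\psi\|_{L^2},
\]
where $R(2\lambda-i0)=(L-2\lambda+i0)^{-1}$. Using $\wt{\mathcal{F}}L\wt{\mathcal{F}}^{-1}=\langle\xi\rangle$ and intertwining with the wave operators reduces the analysis to a flat oscillatory integral with phase $e^{it\langle\xi\rangle}$, amplitude involving $\wt\psi$, and the symbol $1/(\langle\xi\rangle-2\lambda+i0)$.

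Next, I split the resolvent spectrally: let $\chi=\varphi_{\leqslant -C}(\langle\xi\rangle-2\lambda)$ localize near the Fermi sphere and $1-\chi$ the complement. On the \emph{off-resonance} part $(1-\chi)(\langle\xi\rangle-2\lambda)^{-1}$ is smooth and uniformly bounded, so the operator reduces to $e^{itL}m(L)$ for a Schwartz multiplier $m$; the required $t^{-3/2}$ decay then follows from the standard local decay estimate
\[
\|\langle x\rangle^{-s}\,e^{itL}\,\langle x\rangle^{-s}\|_{L^2\to L^2}\lesssim t^{-3/2},\qquad s\geqslant 2,
\]
which is a consequence of Stone's formula and resolvent asymptotics for $L$ at non-threshold energies under the genericity of $V$ (so that zero-energy obstructions are ruled out). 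The $L^5$ regularity of the source is used to control the smoothness of $m(L)\psi$ and to absorb derivatives landing on $\chi$ in the splitting.

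The main obstacle is the \emph{near-resonance} piece. Here, one uses the limiting absorption principle $R(2\lambda-i0):\langle x\rangle^{\sigma}L^2_{ac}\to\langle x\rangle^{-\sigma}L^2$ for $\sigma>1/2$ (Agmon type), together with the Sommerfeld radiation condition characterizing $R(2\lambda-i0)\psi$ at infinity. Factor
\[
\langle x\rangle^{-4}\,e^{itL}\,\chi(L)R(2\lambda-i 0)\psi
=\big(\langle x\rangle^{-4}e^{itL}\langle x\rangle^{-2}\big)\,\big(\langle x\rangle^{2}\chi(L)R(2\lambda-i 0)\psi\big);
\]
the first factor enjoys the $t^{-3/2}$ local-decay bound, and for the second one exploits commutators of $\langle x\rangle^2$ with $\chi(L)R(2\lambda-i 0)$ that, combined with the high regularity $L^5\psi$ and decay $\langle x\rangle^4\psi$, transfer the weight across the resolvent at the cost of finitely many derivatives. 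The delicate point is that the resolvent singularity at $\langle\xi\rangle=2\lambda$ coincides in distorted frequency with the stationary points of $e^{it\langle\xi\rangle + ix\cdot\xi}$ exactly along the Fermi trajectory $|x|/t=\xi_0/(2\lambda)$; along this sphere the weight $\langle x\rangle^{-4}\sim t^{-4}$ comfortably absorbs any logarithmic loss produced by the principal-value/$\delta$ decomposition of the resolvent symbol, yielding the stated $t^{-3/2}$ bound.
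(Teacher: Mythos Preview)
Your off-resonance reduction is fine and matches the paper's treatment. The genuine gap is in the near-resonance piece. Your factorization
\[
\langle x\rangle^{-4}\,e^{itL}\,\chi(L)R(2\lambda-i 0)\psi
=\big(\langle x\rangle^{-4}e^{itL}\langle x\rangle^{-2}\big)\,\big(\langle x\rangle^{2}\chi(L)R(2\lambda-i 0)\psi\big)
\]
requires $\langle x\rangle^{2}R(2\lambda-i0)\psi\in L^2$, but this is false in general: the limiting absorption principle only gives $R(2\lambda-i0)\psi\in\langle x\rangle^{1/2+}L^2$, and the outgoing solution of $(L-2\lambda)u=\psi$ behaves like $e^{i|\xi_0||x|}/|x|$ at infinity, so $\langle x\rangle^{2}u$ grows like $|x|$. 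The commutator idea you mention does not rescue this: $[\langle x\rangle^{2},R(2\lambda-i0)]=R(2\lambda-i0)[\langle x\rangle^{2},L]R(2\lambda-i0)$ introduces a second resolvent rather than improving decay, and no finite amount of regularity on $\psi$ compensates for the radiation tail. The final remark about $\langle x\rangle^{-4}\sim t^{-4}$ on the Fermi trajectory is also off, since you have already spent the weight $\langle x\rangle^{-4}$ in the local-decay factor.

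The paper's argument avoids this obstruction by never splitting propagator and resolvent. Instead it writes, on the localized piece,
\[
\frac{e^{it\langle\xi\rangle}}{\langle\xi\rangle-2\lambda+i\eta}
= -i\,e^{2i\lambda t}\int_t^\infty e^{i\tau(\langle\xi\rangle-2\lambda)-\tau\eta}\,d\tau,
\]
so that the full expression becomes a single oscillatory integral with phase $e^{i\tau(\langle\xi\rangle-2\lambda)}$, $\tau\geqslant t$. Since $|\nabla_\xi\langle\xi\rangle|\approx 1$ near the Fermi sphere, three integrations by parts in $\xi$ (via the operator $\mathcal{L}f=\mathrm{div}(\xi\frac{\langle\xi\rangle}{|\xi|^2}f)$) produce a factor $\tau^{-3}$; the $\xi$-derivatives landing on $\psi(x,\xi)$ and $\wt\psi(\xi)$ generate the $\langle x\rangle^4$ weights on $\phi$ and $\psi$. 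Integrating $\int_t^\infty\tau^{-3}d\tau$ then gives $t^{-2}$, better than the claimed $t^{-3/2}$. The key is that the resolvent singularity is traded for a $\tau$-integral whose oscillation is \emph{coherent} with the propagator's, which your factorization cannot see.
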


%\begin{remark}
Note that the norms on the right-hand side of the above estimate are, of course, 
  far from optimal, but suffice for our purposes.
%\end{remark}

\begin{proof}
Introducing a cut-off $\varphi_{\leqslant -C}(\jxi-2\lambda)$ where $2^{-C+10} \leqslant 2\lambda-1,$
we split the above into a singular and a non-singular part. 
Standard dispersive estimates allow us to write that, for the non-singular part,
\begin{align*}
\bigg \Vert \phi (1-\varphi_{\leqslant -C}(L-2\lambda)) \frac{e^{itL}}{L-2\lambda+i0}  \zeta \bigg \Vert_{L^1_x} & \leqslant \Vert \phi \Vert_{L^1_x} \bigg \Vert (1-\varphi_{\leqslant -C}(L-2\lambda)) \frac{e^{itL}}{L-2\lambda+i0} \zeta \bigg \Vert_{L^{\infty}} \\
& \lesssim t^{-3/2} \bigg \Vert L^3 \frac{\varphi_{> -C}(L-2\lambda)}{L-2\lambda}  \zeta \bigg \Vert_{L^1_x}.
\end{align*}
For the more challenging singular part, we introduce a regularization of the above term. By the limiting absorption principle, it is sufficient to obtain estimates that are uniform in $\eta$ to conclude. We write, integrating by parts in $\xi$, that
\begin{align*}
&\varphi_{\leqslant -C}(L-2\lambda)\frac{e^{itL}}{L-2\lambda+i\eta} \zeta = \int_{\R^3} 
\psi(x,\xi) \frac{e^{it\langle \xi \rangle}}{\langle \xi \rangle - 2 \lambda+i\eta} \varphi_{\leqslant -C}(\jxi-2\lambda) 
\widetilde{ \zeta}(\xi) d\xi 
\\
& = -i e^{2it\lambda} \int_{t}^{\infty}  \int_{\R^3} \psi(x,\xi) e^{i\tau(\langle \xi \rangle -2 \lambda)-\tau \eta} 
\varphi_{\leqslant -C}(\jxi-2\lambda) \widetilde{ \zeta}(\xi) d\xi d\tau 
\\
& = i   e^{2it\lambda} \int_{t}^{\infty}  \int_{\R^3} \frac{e^{i\tau(\langle \xi \rangle -2 \lambda)-\tau \eta}}{(i \tau)^3} 
  \mathcal{L}^3 \bigg( \psi(x,\xi) \varphi_{\leqslant -C}(\jxi-2\lambda) \widetilde{\zeta}(\xi)  \bigg) d\xi d\tau,
\end{align*}
where the operator $\mathcal{L}$ is defined by its action on $\mathcal{S}$ as 
\begin{align*}
\mathcal{L}f:= {\rm{div}} \bigg( \xi \frac{\jxi}{\vert \xi \vert^2} f(\xi) \bigg).
\end{align*}
From Lemma 3.2 in \cite{PS},
\begin{align*}
\vert \nabla_{\xi}^{A} \psi(x,\xi) \vert 
  \lesssim (\vert \xi \vert / \langle \xi \rangle)^{1-\vert A \vert} \langle x \rangle^{\vert A \vert}, \qquad A \in \mathbb{N}.
\end{align*}
Moreover
\begin{align*}
\Bigg \vert \nabla_{\xi}^A \bigg( \big( \frac{\langle \xi \rangle}{\vert \xi \vert} \big)^A \varphi_{\leqslant -C}(\jxi-2\lambda)\bigg) \Bigg \vert \lesssim 1,
\end{align*}
hence we can conclude that for any $\eta>0,$
\begin{align*}
\Bigg \Vert \phi \frac{\varphi_{\leqslant -C}(L-2\lambda)}{L-2\lambda+i\eta} e^{itL} \zeta
  \Bigg \Vert_{L^1_x} 
  \lesssim \Vert \langle x \rangle^4 \phi \Vert_{L^2_x} \Bigg \Vert \langle x \rangle^{-4} 
  \varphi_{\leqslant -C}(L-2\lambda)\frac{e^{itL}}{L-2\lambda+i\eta}  \zeta \Bigg \Vert_{L^{2}_x} 
  \\
  \lesssim t^{-2} \Vert \langle x \rangle^4 \zeta \Vert_{L^2_x} .
\end{align*}
The desired result follows.
\end{proof}

%\begin{remark}
%Note that it is crucial that $\lambda$ is not a threshold eigenvalue here.
%\end{remark}

%%%%%%%%%%%%%%%%%%%%%%%%%%%%%%%%%%%%%%%%%%%%%%%%%%%%%%%%%%%%%%%%%%%%%%%%%%
%%%%%%%%%%%%%%%%%%%%%%%%%%%%%%%%%%%%%%%%%%%%%%%%%%%%%%%%%%%%%%%%%%%%%%%%%%
%%%%%%%%%%%%%%%%%%%%%%%%%%%%%%%%%%%%%%%%%%%%%%%%%%%%%%%%%%%%%%%%%%%%%%%%%%
%%%%%%%%%%%%%%%%%%%%%%%%%%%%%%%%%%%%%%%%%%%%%%%%%%%%%%%%%%%%%%%%%%%%%%%%%%
%%%%%%%%%%%%%%%%%%%%%%%%%%%%%%%%%%%%%%%%%%%%%%%%%%%%%%%%%%%%%%%%%%%%%%%%%%
%%%%%%%%%%%%%%%%%%%%%%%%%%%%%%%%%%%%%%%%%%%%%%%%%%%%%%%%%%%%%%%%%%%%%%%%%%
%%%%%%%%%%%%%%%%%%%%%%%%%%%%%%%%%%%%%%%%%%%%%%%%%%%%%%%%%%%%%%%%%%%%%%%%%%

\medskip
\section{Analysis for the discrete component}\label{secD}

In this section we analyze the ODE for the amplitude of the discrete component, and extract its driving behavior. 

\subsection{Set-up}
The computations in this section are analogous to those in \cite{SWmain}.
Anticipating that the discrete component will oscillate at frequency $\lambda,$ we start by writing that
\begin{align}\label{aAprof}
a(t) = A(t) e^{i\lambda t} + \overline{A}(t) e^{-i\lambda t},
\qquad A(t) = \frac{1}{2i\lambda} e^{-it\lambda} (\dot{a} + i\lambda a(t))
\end{align}
imposing that $A$ satisfies
\begin{align} \label{cancel-A}
\dot{A}(t) e^{i\lambda t} + \dot{\overline{A}}(t) e^{-i\lambda t} =0.
\end{align}
This yields the following equation for the profile of the amplitude of the discrete component:
\begin{align}\label{amplitude}
\dot{A}(t) = \frac{1}{2i\lambda} e^{-i\lambda t} \big((a \phi + \frac{1}{L} \Im w)^2 ,\phi\big).
\end{align}
To extract the leading order dynamics from \eqref{amplitude}
we first need to look at the equation for $w$, and identify the leading order terms there.

Duhamel's formula for the profile $f$ of the radiation $w$, see \eqref{Duhamelw0}, yields 
\begin{align}
\notag w(t) & = e^{iLt} w_0 
%+ \int_0 ^t e^{i(t-s)L} \big( {\bf{P_c}} (a \phi + \frac{1}{L} \Im w )^2  \big) ds =  e^{itL} w_0 
\\
\label{linear-res}
& + \int_0 ^t e^{i(t-s)L} \Big( a^2(s) {\bf{P_c}} \phi^2 \Big) ds 
\\
\label{quadratic}
&+2 \int_0 ^t a(s) e^{i(t-s)L} {\bf{P_c}}\big(  \frac{\phi}{L} \frac{w-\overline{w}}{2i} \big) ds 
+ \int_0 ^t e^{i(t-s) L}{\bf{P_c}} \bigg( \frac{\Im w}{L} \bigg)^2 ds. 
\end{align}
We further inspect \eqref{linear-res}.  Denoting $\theta:={\bf P_c} \phi^2,$
\begin{align}\label{main-linear-res} 
\eqref{linear-res} %\int_0^t e^{i(t-s)L} \big( a^2(s) \theta \big) ds 
  = e^{itL} \int_0^t e^{-is(L-2\lambda)} A^2(s) \theta ds +  e^{itL} R_w, %\lbrace \textrm{remainder terms} \rbrace,
\end{align}
where 
\begin{align}\label{Rw}
R_w %\lbrace \textrm{remainder terms} \rbrace 
  := 2 \int_0^t e^{-isL} %e^{i(t-s)L} 
  \,\vert A \vert^2(s) \, ds \, \theta 
  + \int_0^t e^{-is(L+2\lambda)} %e^{i(t-s)L} e^{-2i s\lambda} 
  \,\overline{A}^2(s) \, ds \, \theta .
\end{align}
After introducing a regularization of the above term ($\eta>0$) and integrating by parts in time we obtain 
\begin{align*}
\eqref{main-linear-res} = \lim_{\eta\rightarrow 0^+}\Big[ 
  i\frac{e^{2i\lambda t} e^{-\eta t}}{L-2\lambda+i\eta} \, \theta \, A^2(t)
 - i e^{itL} \frac{A_0^2}{L-2\lambda+i\eta}\theta 
 \\
 -i e^{itL} \int_0 ^t \frac{e^{-is(L-2\lambda)}e^{-s\eta}}{L-2\lambda+i\eta}  2 A(s) \dot{A}(s) \theta ds \Big]
 + e^{itL} R_{w}.
\end{align*}
We then use 
\begin{align*}
\frac{1}{x \pm i0^+} = \lim_{\eta \rightarrow 0^+} \frac{1}{x \pm i\eta} = \pv \frac{1}{x} \mp i \pi \delta,
\end{align*}
%cfp{Add a couple of lines of details? (although standard it would make it easier to read)}
and go back to \eqref{amplitude}, extract the dominant term in the right-hand side, and write
\begin{align}\label{amplitude-bis}
& \dot{A}(t) := P + \frac{1}{2i\lambda} (\Lambda - i \Gamma ) \vert A \vert^2 A + R ,
\\
\notag 
& P:= \frac{1}{2i\lambda} e^{-i\lambda t} (A e^{i\lambda t} + \overline{A} e^{-i\lambda t})^2 \int \phi^3, 
\end{align}
where we define the real constants 
\begin{align*}
\Lambda & := \Bigg(\theta; \frac{1}{L} \pv \frac{1}{L-2\lambda} \theta \Bigg) , \qquad
\Gamma:= \frac{\pi}{2 \lambda} \Bigg(\theta; \delta(L-2\lambda) \theta \Bigg) , 
\end{align*}
and the remainder term
\begin{align}\label{Drem}
R & := R_D + R_F,
\end{align}
with
\begin{align}
\label{DremD}
& R_D(t) := \frac{1}{2i\lambda} \bigg( e^{2i\lambda t} (\Lambda-i\Gamma) A^3-(\Lambda +i\Gamma) 
  \big( e^{-4i\lambda t} \overline{A}^3 +e^{-2i\lambda t} \vert A \vert^2 \overline{A} \big) \bigg),
\end{align}
and
\begin{align}\label{DremF}
\begin{split}
R_F(t) := \frac{1}{2i\lambda} e^{-i\lambda t}  
  & \Bigg[ \int_{\R^3} \big(\frac{1}{L} \Im w \big)^2 \phi 
  \\
  & + 2a(t) \int_{\R^3} \phi^2 \frac{1}{L} \,\Im \bigg( - ie^{itL} \frac{A_0^2}{L-2\lambda + i0^+} \theta 
  %+ i 
  \\
  & - i e^{itL} \int_0^t \frac{e^{-is(L-2\lambda +i0^+)} %e^{-s\varepsilon}
  }{L-2\lambda +i0^+} 2 A(s) \dot{A}(s) \, \theta \, ds \bigg)
  \\
  & + 2a(t) \int_{\R^3} \phi^2 \, \frac{1}{L} \,\Im \big( e^{itL} R_w
 + e^{itL} w_0 + \eqref{quadratic} \big) dx \Bigg] .
\end{split}
\end{align}

%\begin{align}
%\begin{split}
%\frac{1}{2i\lambda} e^{-i\lambda t} \,
%  & 2a(t) \int_{\R^3} \phi \frac{1}{L} \Im
% e^{itL} \Big(2 \int_0^t e^{-isL} |A|^2(s) \theta ds 
%  + \int_0^t e^{-is(L+2\lambda)} \overline{A}^2(s) \theta ds \Big) \, dx .
%\end{split}
%\end{align}

\smallskip
\subsection{Behavior of the modulus of the discrete component}
In this subsection we show the expected behavior $|a(t)|\approx \e (1+\e^2t)^{-1/2}$. 
The analysis differs from \cite{SWmain} or \cite{AS} in that more refined estimates are needed, including 
some that we postpone to Section \ref{SecF} where we will analyze more in details the 
interactions that are quadratic in the radiation component. Indeed, unlike in \cite{SWmain}, a bootstrap argument does not provide enough decay, and we have to rely on new ideas.

We start by multiplying \eqref{amplitude-bis} by $2 \overline{A}$ and taking the real part of the resulting equation
to obtain
\begin{align}\label{eqmodA0}
\frac{d}{dt} \big( \vert A \vert^2 \big) = 2 \Re (P \overline{A}) - \frac{\Gamma}{\lambda} \vert A \vert^4 
  + 2 \Re (R \overline{A}).
\end{align}
Let us denote 
\begin{align}
\rho(t) := |A(t)|^2, \qquad \rho_0:=\rho(0) \approx \e_0^2.
\end{align}
Then we have 
\begin{align} \label{eq-short-time}
-\frac{\rho'}{\rho^2} = -2\rho^{-2} \Re (P \overline{A}) + \frac{\Gamma}{\lambda} -2 \rho^{-2} \Re (R \overline{A}),
\end{align}
and after integrating, we obtain
\begin{align} \label{main-eq-rho}
\frac{1}{\rho(t)} - \frac{1}{\rho_0} = \frac{\Gamma}{\lambda} t - 2 \int_0 ^t \rho^{-2}(s) \Re (P \overline{A}(s)) + \rho^{-2}(s) \Re (R\overline{A}(s)) ds.
\end{align}
We will prove that the last two terms on the right-hand side are perturbations of the first term.
In particular the behavior of $\rho$ will be proven to be a perturbation of 
\begin{align}\label{rhoexp}
%\rho(t) = 
\frac{\rho_0}{1+\frac{\Gamma}{\lambda}\rho_0 t},
\end{align}
which is the solution of the above equation if the perturbative terms are set to 0.

Here is the key bootstrap proposition that gives us control on the amplitude of the discrete component:

\begin{proposition}\label{main-ampl}
With $\rho = |A|^2$, assume that for $T>0$
\begin{align} \label{bootstrap-A-A}
\sup_{t \in [0,T]} \bigg \vert \big(1+\rho_0 \frac{\Gamma}{\lambda} t \big) \frac{\rho(t)}{\rho_0} -1 \bigg \vert < \frac{1}{2},
\end{align}
and
\begin{align}\label{bootstrap-A-w}
\begin{split}
\sup_{t \in [0,T]} \rho^{\frac{2\beta-1}{2}}(t) \langle t \sqrt{\rho(t)} \rangle^{-1} 
 \jt^{1-10 \delta_N} \big \Vert w(t) \Vert_{L^6_x} 
%\qquad 
%\sup_{t \in [0,T]} \rho^{\frac{2 \beta-1}{2}}(t) 
  \lesssim \varepsilon^{\beta}, 
  \\
  \sup_{t \in [0,T]}   \Vert w(t) \Vert_{H^N_x} \lesssim \e^{1-\delta},
\end{split}
\end{align}
for $N \gg 1$, $\delta_N$ as in \eqref{mteps}, %\in \mathbb{N}$ 
$0< \beta \ll 1$, and $\delta$ sufficiently small.
Then, there exists $\varepsilon_0>0$ such that if $\varepsilon,\rho_0 < \varepsilon_0,$ then
\begin{align}\label{main-amplconc}
\sup_{t \in [0,T]} \bigg \vert \big(1+\rho_0 \frac{\Gamma}{\lambda} t \big) \frac{\rho(t)}{\rho_0} -1 \bigg \vert 
< \frac{1}{4}.
\end{align}
\end{proposition}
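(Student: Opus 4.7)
The plan is to start from the integral identity \eqref{main-eq-rho}, rewritten as
\begin{align*}
\Big(1+\rho_0 \frac{\Gamma}{\lambda} t\Big) \frac{\rho(t)}{\rho_0} - 1
  = \frac{\rho(t)}{\rho_0} \cdot \rho_0 \cdot \mathcal{E}(t),
  \qquad
\mathcal{E}(t):= 2\int_0^t \rho^{-2}(s) \big[\Re(P\overline{A}) + \Re(R\overline{A})\big](s) \, ds,
\end{align*}
and reduce the proof of \eqref{main-amplconc} to the error bound
$\rho_0 |\mathcal{E}(t)| \leqslant \tfrac{1}{8}\big(1+\rho_0 (\Gamma/\lambda) t\big)$
for $\e,\rho_0$ small enough, using \eqref{bootstrap-A-A} to absorb the $\rho(t)/\rho_0$ prefactor.
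Since, under \eqref{bootstrap-A-A}, we have $\rho(s) \approx \rho_0/(1+\rho_0 s)$, it will be enough to show
that each contribution to $\mathcal{E}(t)$ grows strictly slower than $t$ (with a gain in $\e$).

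First I would handle $P$ and $R_D$. Each of these is a finite sum of purely oscillating monomials of the form
$c\, e^{ik\lambda s} A^{p}\overline{A}^{q}$ with $k\in\{\pm1,\pm2,\pm3,\pm4\}$ and $p+q\in\{3,4\}$,
so the integrand $\rho^{-2}(s)\Re(P\overline A)$ and $\rho^{-2}\Re(R_D\overline A)$
consist of oscillating terms times powers of $\rho$ that grow at most like $\rho^{-1/2}(s)\sim s^{1/2}$.
I would integrate by parts in $s$ using the factor $e^{ik\lambda s}$, picking up a factor of $1/(ik\lambda)$.
The boundary terms at $s=t$ are bounded by $C \rho_0^{-1}\rho^{3/2}(t) \lesssim \sqrt{\rho_0 t}/\sqrt{\rho_0}$,
which after multiplication by $\rho_0$ yields $\sqrt{\rho_0}\sqrt{1+\rho_0 t}\leqslant \e(1+\rho_0(\Gamma/\lambda)t)$.
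The interior terms inherit a derivative $\dot{A}$ which, by \eqref{amplitude-bis}, is $O(|A|^2 + |R|)$;
plugging this in gives integrals with integrable (in fact decaying) kernels that are again $O(\e)$.
This is the standard ODE-normal-form argument also used in \cite{SWmain}.

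Next I would dispatch the terms in $R_F$ piece by piece. The principal value and delta pieces packaged as
$e^{itL}A_0^2/(L-2\lambda+i0)\theta$ and $e^{itL}\int_0^s e^{-i\tau(L-2\lambda+i0)}/(L-2\lambda+i0) \, 2A\dot{A}\theta\,d\tau$
are estimated by pairing them with $\phi^2/L$ and applying the local-decay estimate of Lemma \ref{localdecay};
the resulting kernels are $O(s^{-3/2})$ against which $a(t)\overline{A}(t)\rho^{-2}(s)$ integrates to $O(\e)$.
The terms $a(t)\int\phi^2 L^{-1}\Im(e^{itL}w_0)$ and $a(t)\int\phi^2 L^{-1}\Im(e^{itL}R_w)$ are controlled similarly,
using $\|\jx^4 L^5 w_0\|_{L^2}\lesssim \e_0$ and the explicit formula \eqref{Rw} together with
the oscillations $e^{-is(L\pm 2\lambda)}$ and integration by parts.
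The quadratic-in-$w$ local term $\int (L^{-1}\Im w)^2 \phi$ is handled via the bootstrap assumption \eqref{bootstrap-A-w}:
H\"older gives $\lesssim \|w\|_{L^6}^2 \|\phi\|_{L^{3/2}} \lesssim \e^{2\beta} s^{-2+20\delta_N}$,
whose contribution to $\mathcal{E}(t)$ is $O(\e^{2\beta} \rho_0^{-1+}\, t^{0+})$, which is again subleading.

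The main obstacle, and the reason this section interfaces with the PDE analysis, is the part of $R_F$ coming from
the quadratic-in-radiation term in \eqref{quadratic}, namely
\begin{align*}
2a(t) \int_{\R^3} \phi^2(x) \, \frac{1}{L}\Im \int_0^t e^{i(t-s)L}\, {\bf P}_c\big(L^{-1}\Im w(s)\big)^2 \, ds\,dx.
\end{align*}
A naive bound using $\|w\|_{L^6}\lesssim s^{-1+}$ and linear decay of $e^{i(t-s)L}$
loses logarithms and barely fails to beat the $1/t$ threshold coming from $|A|^2\sim 1/t$;
no bootstrap on the discrete component alone can close this gap.
Here I would invoke (as a black box at this stage) the quantitative decay estimate \eqref{RFtolater},
\begin{align*}
\Big\| \int_0^s e^{i(s-\tau)L}\,{\bf P}_c\big(v(\tau)\big)^2 \, d\tau \Big\|_{L^\infty_x}
  \lesssim s^{-1-b},
\end{align*}
proved in Sections \ref{SecF}--\ref{secFR} by the harmonic-analytic study of the nonlinear spectral distribution,
together with the good/bad decomposition $f=h-g$ introduced later.
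Pairing this against $\phi^2\in L^1_x$ and multiplying by $a(t)\overline A(t)\rho^{-2}(s)\sim s^{1/2}$
produces an integrand of size $s^{-1/2-b}$ whose time-integral is $O(t^{1/2-b})$,
yielding a contribution to $\rho_0|\mathcal{E}(t)|$ of size $\e^2 t^{1/2-b}\leqslant \e(1+\rho_0(\Gamma/\lambda)t)/8$
for $\e$ small. Combining all pieces closes the bootstrap and gives \eqref{main-amplconc}.
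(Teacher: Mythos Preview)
Your overall strategy and your treatment of $R_D$ and $R_F$ (including the black-box use of \eqref{RFtolater}) match the paper's. The gap is in your handling of the $P$ contribution. After a single integration by parts the interior term is
\[
\frac{1}{ik\lambda}\int_0^t e^{ik\lambda s}\,\partial_s\big[\rho^{-2}(s)A^p\overline A^{\,q}\big]\,ds,\qquad p+q=3,
\]
and since $\dot A=P+O(\rho^{3/2})+R$ with $|P|\approx\rho$ and $\rho'=2\Re(P\overline A)+O(\rho^2)$ with $|\Re(P\overline A)|\approx\rho^{3/2}$, both pieces of $\partial_s[\rho^{-2}A^p\overline A^{\,q}]$ have size $O(1)$, \emph{not} decaying. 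The integral is then $O(t)\approx\rho^{-1}(t)$, which after the prefactor $\rho_0$ is comparable to $1+\rho_0(\Gamma/\lambda)t$ with no smallness, and the bootstrap does not close. (For $R_D$, by contrast, the monomials are quartic in $A$ and one integration by parts does give an $O(\rho^{1/2})$ interior integrand, so your argument is fine there.)

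The paper closes this in its Step~3 by re-expanding $\dot A$ and $\rho'$ through the ODE itself, producing interior terms of the form $\rho^{-3}\widetilde A^{6}\,e^{i\alpha\lambda s}$, and then \emph{verifying by explicit computation} that every resulting monomial has $\alpha\neq 0$ --- flagged in the proof as a ``key cancellation''. Only after this check does a second integration by parts (the Step~2 mechanism, see \eqref{aim-A-integrals}--\eqref{lot}) bring the bound down to $O(\rho^{-1/2}(t))$. Your sentence ``plugging this in gives integrals with integrable (in fact decaying) kernels'' skips precisely this verification; without it the argument fails.
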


\begin{remark}\label{remwa}
Note that the quantity in \eqref{rhoexp} is approximately $\rho_0 = |A(0)|^2$ when $t \lesssim 1/\rho_0$, 
and it is $1/t$ if $t \gtrsim 1/\rho_0$. 

For simplicity, the reader is invited to think that time is large enough (past the `local' time-scale $1/\rho_0$)
so that $\rho(t) \approx \jt^{-1}$, and \eqref{bootstrap-A-w} implies that
for all $t \in [0,T]$
\begin{align*}
{\| w(t) \|}_{L^6_x} & \lesssim \jt^{-1+\beta + 10\delta_N} \varepsilon^{\beta}. 
\end{align*}
In particular, one should always think that the continuous/radiation component
is smaller (in the $L^6$ norm) than the amplitude of the discrete component:
${\| w(t) \|}_{L^6} = o(\rho^{1/2}(t))$.
\end{remark}

\begin{remark}\label{rembootAw}
Note that the assumptions \eqref{bootstrap-A-w} are stated in terms of $w$ rather than
in terms of $h$ as is done in the main bootstrap in Proposition \ref{mtboot}.
However it is easy to see how \eqref{bootstrap-A-w} follow from \eqref{boot0}.  
For the $L^6$ estimate the proof is identical to that of \eqref{mtv=6decay} in \S\ref{mtv=6decaypr}. 
For the $H^N$ estimate, recall that $w = e^{itL} f = e^{itL} g + e^{itL} h$; 
then we can conclude in view of \eqref{HNg}. 
\end{remark}

We start by proving some estimates on remainder terms $R_F$ in \eqref{DremF}; %\eqref{amplitude-bis}, 
this will allow us to restrict our attention to terms involving only the discrete component in the equation 
\eqref{amplitude-bis} for the proof of Proposition \ref{main-ampl}.
We naturally have that a term is a remainder for the leading order cubic dynamics in \eqref{amplitude-bis} 
essentially if it is $o(\rho^{3/2})$.
Correspondingly, when looking at \eqref{main-eq-rho} we aim to say that the lower order terms on the right-hand side 
are $o(\rho^{-1})$.

\begin{lemma} \label{remain-ampl}
Under the assumptions \eqref{bootstrap-A-A} and \eqref{bootstrap-A-w}, 
there exists $1/2<a<1$ such that we have for all $s \in [0,T]$
%cfp{need to take $a>1/2$ as well}
%\begin{align*}
%\int_0^t   \rho^{-2}(s) \vert \Re (\overline{A}(s) R_F(s) ) \vert ds \lesssim \rho^{-a}(t). 
%\end{align*}
\begin{align}\label{remain-amplbd}
\big| R_F(s) \big| \lesssim \rho^{3/2-a}(s)\js^{-1}. 
\end{align}
%\comment{Maybe $\rho^{3/2+}(s)$}

\end{lemma}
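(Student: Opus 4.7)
\emph{Plan.} I would decompose $R_F$ in \eqref{DremF} into its five natural pieces and bound each separately against the target, taking $a = 1/2 + 2\beta + 20\delta_N$, which lies strictly in $(1/2,1)$ for the parameters \eqref{mteps}. The pointwise quadratic-in-$w$ piece $\int (L^{-1}\Im w)^2 \phi$ is handled by H\"older:
\begin{align*}
\Big| \int \big(L^{-1}\Im w \big)^2\phi\,dx \Big| \lesssim {\| w\|}_{L^6}^2 {\| \phi \|}_{L^{3/2}},
\end{align*}
and the bootstrap \eqref{bootstrap-A-w} gives ${\| w(s) \|}_{L^6}^2 \lesssim \rho^{1-2\beta}(s)\langle s\sqrt{\rho(s)}\rangle^{2}\js^{-2+20\delta_N}\e^{2\beta}$; using $\langle s\sqrt{\rho(s)}\rangle^{2}\lesssim 1 + s^2\rho(s)$ one checks that this is $\lesssim \rho^{3/2-a}(s)\js^{-1}$, both for $s\lesssim\rho_0^{-1}$ (where smallness of $\e$ absorbs the loss) and for $s\gtrsim\rho_0^{-1}$ (where $\rho(s)\approx\js^{-1}$).

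For the terms in which $a(t)$ multiplies a propagator acting on the resolvent-weighted source $\theta/(L-2\lambda+i0^+)$ --- namely the $A_0^2$ boundary term and both pieces of $R_w$ in \eqref{Rw} --- I pair the localized weight $\phi^2/L$ with the propagator and apply the local decay estimate of Lemma \ref{localdecay} to extract an $\js^{-3/2}$ factor; since $|a(t)|\lesssim \rho^{1/2}(t)$ and $|A_0|^2\lesssim \e^2$, the resulting contribution is $\lesssim \rho^{1/2}(t)\e^2\jt^{-3/2}$, which is far better than the target. The $R_w$ pieces are analogous: the phases $L$ and $L+2\lambda$ are uniformly separated from $0$, so an integration by parts in $\tau$ against either phase, combined with local decay, gives the analogous control. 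For the cubic-in-amplitude piece $a(t)\int \phi^2 L^{-1}\cdot e^{itL}\int_0^t (L-2\lambda+i0^+)^{-1}e^{-i\tau(L-2\lambda)}2A(\tau)\dot A(\tau)\theta\,d\tau$, I first read off from \eqref{amplitude} and the bootstrap $L^6$ bound on $w$ that $|\dot A(\tau)|\lesssim \rho(\tau)$, then split the inner integral at $\tau=t/2$: on $[0,t/2]$ local decay delivers an integrable $(t-\tau)^{-3/2}$ against $\rho^{3/2}(\tau)$, while on $[t/2,t]$ I integrate by parts in $\tau$ using $2A\dot A = \rho'$ and $|\rho'|\lesssim \rho^2$, again a remainder. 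The linear term $a(t)\int\phi^2 L^{-1}\cdot e^{itL}w_0$ is controlled by the dispersive estimate \eqref{decayp} and the data bound \eqref{mtdatapr3}: $\lesssim \rho^{1/2}(t)\e\jt^{-3/2}$.

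The one genuinely delicate piece is the contribution of \eqref{quadratic}. Its mixed portion $\int_0^t a(\tau)e^{i(t-\tau)L}{\bf P}_c(\phi (w-\bar w)/(2iL))\,d\tau$ is handled by testing against the localized weight $\phi^2/L$ and invoking local decay (or, for high frequencies, Strichartz via Lemma \ref{Strichartz} with the $H^N$ bootstrap), exploiting the extra $\rho^{1/2}(\tau)$ provided by $a(\tau)$. The purely quadratic-in-$w$ portion $\int_0^t e^{i(t-\tau)L}{\bf P}_c(L^{-1}\Im w(\tau))^2 d\tau$ is the crux and admits no perturbative treatment: a direct bootstrap would produce a bound of order $\int_0^t \rho(\tau)\,d\tau\approx \log t$ on its $L^\infty_x$ norm, which fails to close. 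Instead one must appeal to the sharp time-integrated decay \eqref{RFtolater}, proven later as Proposition \ref{mainquaddecay}, giving
\begin{align*}
\Big\| \int_0^s e^{i(s-\tau)L} {\bf P}_c \big(L^{-1}\Im w(\tau)\big)^2 d\tau \Big\|_{L^\infty_x} \lesssim \js^{-1-b}
\end{align*}
for some $b>0$; paired with $\|\phi^2/L\|_{L^1}\lesssim 1$ and $|a(s)|\lesssim \rho^{1/2}(s)$ this gives the required remainder bound. \emph{Main obstacle.} This last step is the only genuine difficulty: it is effectively a sharp dispersive estimate for the quadratic Klein--Gordon Duhamel integral on a nontrivial potential background, without a direct analogue in the $V=0$ theory, and is precisely what forces one to develop the multilinear distorted Fourier analysis of Sections \ref{SecF}--\ref{secFR}.
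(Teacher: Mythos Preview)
Your proposal is correct and follows essentially the same decomposition and strategy as the paper's proof: H\"older for the $w^2\phi$ term, local decay (Lemma \ref{localdecay}) for the resolvent-weighted boundary and bulk terms, the observation $|\dot A|\lesssim\rho$ from \eqref{amplitude}, integration by parts in time for the nonresonant $R_w$ pieces, and---crucially---the forward reference to \eqref{RFtolater} (Proposition \ref{mainquaddecay}) for the irreducible quadratic Duhamel term. Your identification of that last piece as the genuine obstacle is exactly right.

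One minor difference worth noting: for the mixed part of \eqref{quadratic}, namely $\int_0^s a(\tau)e^{i(s-\tau)L}{\bf P}_c(\phi\, L^{-1}\Im w)\,d\tau$, the paper does \emph{not} pair with $\phi^2/L$ and use local decay as you suggest, but instead estimates the full $L^\infty_x$ norm via the global $L^1\!\to\! L^\infty$ dispersive bound (Lemma \ref{decay}), splits the time integral into $[0,1]$, $[1,s-1]$, $[s-1,s]$, and controls $\|L^{5/2}w(\tau)\|_{L^6}$ by Gagliardo--Nirenberg interpolation between the $L^6$ bootstrap and the $H^N$ bound (with exponent $\alpha=5/(2N-4)$). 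Your localized approach should also work, since the outer pairing with $\phi^2$ does provide spatial localization; but the paper's route avoids invoking a non-resolvent local-decay estimate and makes the derivative losses explicit. Either way, the extra $\rho^{1/2}(\tau)$ from $a(\tau)$ is what closes this piece, as you correctly observe.
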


\begin{proof}
We may assume $s\geqslant 1$, since for $s<1$ all estimates are simpler to establish.
Using the a priori estimate \eqref{bootstrap-A-w} we can bound the first term in \eqref{DremF} by
%\begin{align*}
%\bigg \vert \rho^{-2}(s) \Re \bigg[ \int \big( \frac{\Im w(s) }{L} \big)^2 \phi \ dx 
%  \overline{A}(s) \bigg] \bigg \vert & \lesssim \rho^{-3/2}(s) \Vert w(s) \Vert_{L^6_x}^2 \Vert \phi \Vert_{L^6_x} 
%  \lesssim \frac{\rho(s)^{-1/2-2\beta}}{\langle s \rangle} \varepsilon^{2(\beta-\gamma)}.
%\end{align*}
\begin{align*}
\int_{\R^3} \Big( \frac{\Im w(s) }{L} \Big)^2 \phi \ dx 
  & \lesssim {\| w(s) \|}_{L^6_x}^2 \Vert \phi \Vert_{L^{3/2}_x} 
  \lesssim \rho(s)^{2-2\beta} s^{20 \delta_N} \varepsilon^{2 \beta}.
\end{align*}
%This yields the bound
%\begin{align*}
%\int_0 ^t \bigg \vert \rho^{-2}(s) \Re \bigg[ \int \big( \frac{\Im w(s) }{L} \big)^2 \phi \overline{A}(s) \bigg] \bigg \vert ds \leqslant \rho^{-1/2-3\beta}(t),
%\end{align*}
%as can be seen be splitting the above time integral at $1/\rho_0.$  \\
For the second term in \eqref{DremF}, 
we use the local decay estimate from Lemma \ref{localdecay}:
\begin{align*}
& |a(s)| \Big| \int_{\R^3} \phi^2 \, \frac{1}{L} \Re \Big( e^{isL} \frac{A_0^2}{L-2\lambda+i0^+} \theta \Big) \, dx\Big|
  \lesssim \rho^{1/2}(s) %{\big\| \phi e^{isL} \frac{1}{L-2\lambda+i0^+} \theta  \big\|}_{L^1_x} 
  \cdot \rho_0 \, \js^{-3/2},
\end{align*}
which is enough.\\
To estimate the third term in \eqref{DremF}, we first observe that 
\begin{align}\label{Adotest}
\begin{split}
| \dot{A}(t) | & \lesssim |a^2(t)| + |a(t)| {\big\|\phi^2 L^{-1} \Im w(t) \big\|}_{L^1} 
  + {\big\|\phi \big(L^{-1} \Im w(t)\big)^2 \big\|}_{L^1}
  %\\ & + \rho(t) \rho(t)^{-\beta} \jt^{-1/2} \e^{\beta-\gamma},
  \lesssim \rho(t),
\end{split}
\end{align}
having used \eqref{bootstrap-A-A}-\eqref{bootstrap-A-w}; see also Remark \ref{remwa}.
%cfp{Put details ?}
Then, using the local decay estimate from Lemma \ref{localdecay} we write
%\begin{align*}
%&\rho^{-2}(s)\Bigg \vert \Re \Bigg( a(s) e^{-i\lambda s} \int_{\R^3} \frac{\phi^2}{L} e^{isL} \int_0 ^s \frac{e^{-i\tau(L-2\lambda)}e^{-\eta \tau}}{L-2\lambda+i\eta} A'(\tau) A(\tau) d\tau \theta \overline{A}(s)   \Bigg) \Bigg \vert \\
%& \lesssim \rho^{-1}(s) \Vert \phi \Vert_{L^{\infty}_x} \int_0 ^s \Bigg \Vert \phi \frac{e^{-i(s-\tau)(L-2\lambda)}e^{-\eta \tau}}{L-2\lambda+i\eta} \theta  \Bigg \Vert_{L^1_x} \rho^{3/2}(\tau) d\tau \\
%& \lesssim \rho^{1/2}(s).
%\end{align*}
\begin{align*}
& |a(s)| \Big| \int_{\R^3} \frac{\phi^2}{L} \Big( e^{isL} \int_0^s \frac{e^{-i\tau(L-2\lambda)} %e^{-\eta \tau}
  }{L-2\lambda+i0^+} 
  \dot{A}(\tau) A(\tau) d\tau \, \theta \Big) \, dx \Big|
\\
& \lesssim \rho^{1/2}(s) %\Vert \phi \Vert_{L^{\infty}_x} 
  \int_0^s \Bigg \Vert \phi \frac{e^{-i(s-\tau)(L-2\lambda)}e^{-\eta \tau}}{L-2\lambda+i0^+} 
  \theta \Bigg \Vert_{L^1_x} \rho^{3/2}(\tau) d\tau 
\\
&  \lesssim \rho^{1/2}(s) \int_0^s \langle s-\tau \rangle^{-3/2} \rho^{3/2}(\tau) \, d\tau
\lesssim \rho^{2}(s).
\end{align*}
The terms involving $R_w$ are easier to estimate than those just treated since 
we can integrate by parts in $s$ in the formula \eqref{Rw} without introducing any singular factor of $L-2\lambda$;
%(like $L-2\lambda$ above).
this gives quadratic boundary terms which contribute $O(\rho^{3/2})$ to $R_F$,
and cubic terms integrated in time which, upon using the linear estimate of Lemma \ref{decay},
produce a contribution to $R_F$ that is $O(\rho^2)$.

For the term involving the initial data, we write 
\begin{align*}
\vert a(t) \vert \bigg \vert \int_{\mathbb{R}^3} \phi^2 \frac{1}{L} \Im\big( e^{itL} w_0 \big) dx \bigg \vert 
  \lesssim \rho^{1/2}(t) \Vert e^{itL} w_0 \Vert_{L^{\infty}} \lesssim \rho^{1/2} (t) \jt^{-3/2} \varepsilon_0,
\end{align*}
which is sufficient to conclude.

We move on to more difficult terms in $R_F$ involving \eqref{quadratic} and begin by estimating 
%For the potential part, we write
%\begin{align*}
%& \rho^{-2}(s) \Bigg \vert \Re \Bigg( a(s) \overline{A}(s) e^{-i\lambda s} \int_{\R^3}  \frac{\phi^2}{L} \eqref{quadratic} dx \Bigg) \Bigg \vert \\
% & \lesssim \Vert \phi^2 \Vert_{L^1_x} \rho^{-1}(s) \Bigg[ \Bigg \Vert \int_0 ^s e^{i(s-\tau)L} a(\tau) {\bf P_c} \big( \frac{\phi}{L} \frac{w-\overline{w}}{2i} \big) d\tau \Bigg \Vert_{L^{\infty}_x}  + \Bigg \Vert \int_0 ^s e^{i(s-\tau)L} {\bf P_c} \bigg( \frac{\Im w}{L} \bigg)^2 \Bigg \Vert_{L^{\infty}_x}  \Bigg] 
%\end{align*}
\begin{align}
%\rho^{-2}(s) \Bigg \vert \Re \Bigg( a(s) \overline{A}(s) 
 \notag 
 & |a(s)| \Big| \int_{\R^3}  \frac{\phi^2}{L} \eqref{quadratic} dx \Big|
 \\
 & \lesssim \rho^{1/2}(s) \Bigg \Vert  \int_0 ^s e^{i(s-\tau)L} 
 a(\tau) {\bf P_c} \big( \phi \frac{w-\overline{w}}{2iL} \big) d\tau \Bigg \Vert_{L^{\infty}_x} 
 + \rho^{1/2}(s) \Bigg \Vert \int_0^s e^{i(s-\tau)L} 
 {\bf P_c} \bigg( \frac{\Im w}{L} \bigg)^2 d\tau \Bigg \Vert_{L^{\infty}_x} .
 \label{RF9}
\end{align}
We may assume $s\geqslant 2$, as the case of $s<2$ is easy to handle.
For the first term above, using the dispersive $L^1$-$L^\infty$ estimates from Lemma \ref{decay}
and Sobolev's embedding, we find that
%\begin{align*}
%& \rho^{-1}(s) \Bigg \Vert \int_0 ^s e^{i(s-\tau)L} a(\tau) {\bf P_c} \big( \frac{\phi}{L} \frac{w-\overline{w}}{2i} \big) d\tau \Bigg \Vert_{L^{\infty}_x}  \lesssim \rho^{-1}(s) \int_1 ^{s-1} \rho(\tau)^{1/2} \vert s-\tau \vert^{-3/2} \Vert L^{5/2} w \Vert_{L^6_x} d\tau \\
%& + \int_0^1 \rho^{1/2}(\tau) \vert s-\tau \vert^{-3/2} \Vert w \Vert_{H^2_x} d\tau + \int_{s-1}^s \rho^{1/2}(\tau) \Vert w(\tau) \Vert_{L^6_x} d\tau.
%\end{align*}
%\begin{align*}
%& \Bigg \Vert \int_0 ^s e^{i(s-\tau)L} a(\tau) {\bf P_c} \big( \frac{\phi}{L} \frac{w-\overline{w}}{2i} \big) 
%  d\tau \Bigg \Vert_{L^{\infty}_x}  
%  \lesssim \int_1 ^{s-1} \rho(\tau)^{1/2} \vert s-\tau \vert^{-3/2} \Vert L^{5/2} w \Vert_{L^6_x} d\tau 
%  \\
%& + \int_0^1 \rho^{1/2}(\tau) \vert s-\tau \vert^{-3/2} \Vert w \Vert_{H^2_x} d\tau 
%+ \int_{s-1}^s \rho^{1/2}(\tau) \Vert w(\tau) \Vert_{L^6_x} d\tau.
%\end{align*}
\begin{align}
\nonumber
& \rho^{1/2}(s) \Bigg \Vert \int_0^s e^{i(s-\tau)L} a(\tau) {\bf P_c} 
  \big( \phi \frac{w-\overline{w}}{2iL} \big) 
  d\tau \Bigg \Vert_{L^{\infty}_x}  
  \\
\label{RF10'}
& \lesssim \rho^{1/2}(s) \int_0^1 \rho(\tau)^{1/2} \vert s-\tau \vert^{-3/2} {\big\| w (\tau) \big\|}_{H^2_x} d\tau 
 \\ 
\label{RF10}
 & + \rho^{1/2}(s) \int_1^{s-1} \rho(\tau)^{1/2} 
 \vert s-\tau \vert^{-3/2} {\big\| L^{5/2} w (\tau) \big\|}_{L^6_x} d\tau
 \\
\label{RF10''}
& + \rho^{1/2}(s) \int_{s-1}^s \rho(\tau)^{1/2} {\big\| L^{5/2} w (\tau) \big\|}_{L^6_x} d\tau,
\end{align}
where the implicit constant depends on $\| (1-\Delta) \phi\|_{L^{6/5}\cap L^2}$.
The term in \eqref{RF10'} is bounded by 
$\rho^{1/2}(s) \cdot \rho_0^{1/2} \cdot \langle s \rangle^{-3/2} \cdot \varepsilon^{1-\delta}$, 
which is sufficient.
For the remaining terms, we notice that by the Gagliardo-Nirenberg-Sobolev interpolation inequality 
and the a priori bounds \eqref{bootstrap-A-w} we have for $\tau \geqslant 1$
\begin{align}\label{RF11}
\begin{split}
{\big\| L^{5/2} w (\tau) \big\|}_{L^6_x} & \lesssim 
  {\| w(\tau) \|}_{L^6_x}^{1-\alpha} {\| w(\tau) \|}_{H^N_x}^\alpha 
  \\
&  \lesssim \tau^{-1 + \alpha} \tau^{10(1-\alpha)\delta_N} \rho(\tau)^{(1/2-\beta)(1-\alpha)}
\langle \tau \sqrt{\rho(\tau)} \rangle^{1-\alpha} \e^{\beta(1-\alpha)}  \cdot \e^{\alpha(1-\delta)},
\end{split}
\end{align}
with $\alpha = 5/(2N-4)$.
\iffalse
Some additional details:
\begin{align*}
\Vert L^{5/2} w(s) \Vert_{L^6} \lesssim \Vert w(s) \Vert_{L^6} + \Vert D^{5/2} w(s) \Vert_{L^6},
\end{align*}
and splitting terms between frequencies that are larger or smaller than $A$, we can write that
\begin{align*}
\Vert D^{5/2} w(s) \Vert_{L^6} \lesssim A^{5/2} \Vert w(s) \Vert_{L^6} + A^{5/2+2-N} \Vert w(s) \Vert_{H^N} ,
\end{align*}
and optimizing over $A$ we find that
\begin{align*}
\Vert D^{5/2} w(s) \Vert_{L^6} \lesssim A^{5/2} \Vert w(s) \Vert_{L^6} + A^{5/2+2-N} \Vert w(s) \Vert_{H^N}.
\end{align*}
Optimizing over $A,$ we find $A = \bigg( \frac{\Vert w(s) \Vert_{H^N}}{\Vert w(s) \Vert_{L^6}}  \bigg)^{\frac{1}{N-2}},$
and we can conclude that 
\begin{align} \label{interpL6}
\Vert D^{5/2} w(s) \Vert_{L^6} \lesssim \Vert w(s) \Vert_{L^6}^{1-\alpha} \Vert w(s) \Vert_{H^N}^{\alpha} ,
\end{align}
with $\alpha = \frac{5}{2N-4},$
as claimed above.
\fi
%for $\alpha = 4/(2N-3)$. 
Then, taking $N$ large enough to make $\alpha$ sufficiently small,
and plugging this into \eqref{RF10''} we find that it is bounded by
\begin{align*}
\rho^{1/2}(s) \cdot \rho^{1/2}(s) \cdot s^{-1+\alpha + 10(1-\alpha) \delta_N} \cdot 
  \langle s \sqrt{\rho(s)} \rangle^{1-\alpha} \cdot \rho(s)^{(1/2-\beta)(1-\alpha)} 
  \e^{\beta + \alpha(1-\delta-\beta)},
\end{align*}
which yields the desired bound by the right-hand side of \eqref{remain-ampl}.

Next, we use inequality \eqref{RF11} in \eqref{RF10};
we estimate the integral in three parts, assuming that $s \geqslant \rho_0^{-1/2}$ and leave the 
easier case $s<\rho_0^{-1/2}$ to the reader.
%The bounds below are elementary, but we detail them to highlight the different regimes...
First, we have
\begin{align*}
\rho^{1/2}(s)
& \int_1^{\rho_0^{-1/2}} \rho(\tau)^{1/2 + (1/2-\beta)(1-\alpha)} \vert s-\tau \vert^{-3/2} 
  \langle \tau \rangle^{-1 + \alpha + 10 \delta_N} 
  d\tau \, \e^{\beta + \alpha(1-\delta -\beta)} 
  \\
& \lesssim \rho^{1/2}(s) \rho_0^{1-\beta} \int_1^{\rho_0^{-1/2}} 
  \vert s-\tau \vert^{-3/2} \tau^{-1+ \alpha + 10 \delta_N} d\tau
  \, \varepsilon^{\beta} 
  \\
& \lesssim  \rho^{1/2}(s) \rho_0^{1-\beta} \cdot s^{-3/2} \cdot \rho_0^{-5 \delta_N-\alpha/2} \varepsilon^{\beta}.
\end{align*}
%fp{When, instead $s \leqslant \rho_1^{-1/2}$ ...}

Second, we can also estimate %or the other piece, we use \eqref{RF11} to find that it is bounded by
\begin{align*}
& \rho^{1/2}(s)
\int_{\rho_0^{-1/2}}^{s/2} \rho(\tau)^{1/2} \vert s-\tau \vert^{-3/2} \tau^{-1+\alpha} 
  \cdot \tau^{10 \delta_N} \rho(\tau)^{(1/2 - \beta)(1-\alpha)} 
  \tau^{1-\alpha} \rho(\tau)^{\frac{1-\alpha}{2}} d\tau \, \e^{\beta + \alpha(1-\delta-\beta)} 
  \\
 & \lesssim s^{-3/2} \rho^{1/2}(s) \int_{\rho_0^{-1/2}}^{s/2} \rho(\tau)^{1/2 + (1-\beta)(1-\alpha)} 
 \tau^{10 \delta_N} d\tau \, \e^{\beta + \alpha(1-\beta)}  
 \\
& \lesssim  s^{-3/2} \rho(s)^{1/2}  \int_{\rho_0^{1/2}}^{s \rho_0/2} \bigg( \frac{\rho_0}{1 + \tau} \bigg)^{1/2 
  + (1-\beta)(1-\alpha)} \rho_0^{-10\delta_N}  \tau^{10 \delta_N} \frac{d\tau}{\rho_0} 
  \, \e^{\beta + \alpha(1-\delta-\beta)} 
  \\
& \lesssim s^{-3/2} \rho(s)^{1/2} \rho_0^{-1/2 + (1-\beta)(1-\alpha)-10\delta_N}
  \, \e^{\beta + \alpha(1-\delta-\beta)},
\end{align*}
which is sufficient to conclude, provided $\alpha,\beta$ and $\delta_N$ are small enough.
For the remaining piece the bound reads 
%(we start estimating as in the previous term, only the end of the argument is slightly different)
\begin{align*}
& \rho^{1/2}(s)
%\int_{\rho_0^{-1/2}}^{s/2} 
\int_{s/2}^{s-1} \rho(\tau)^{1/2} \vert s-\tau \vert^{-3/2} 
  \tau^{-1+\alpha} \cdot \tau^{10 \delta_N} \tau^{1-\alpha} \rho(\tau)^{\frac{1-\alpha}{2}} 
  \cdot \rho(\tau)^{(1/2 - \beta)(1-\alpha)} 
  d\tau \, \e^{\beta + \alpha(1-\delta-\beta)} 
  \\
 & \lesssim \rho^{1/2}(s) \rho(s)^{\frac{1}{2}+\frac{1-\alpha}{2} 
  + (1/2-\beta)(1-\alpha)} s^{10 \delta_N} \int_{s/2}^{s-1} \vert s-\tau \vert^{-3/2} d\tau.
\end{align*}
This is consistent with the desired \eqref{remain-amplbd}.

%Focusing on the main part, we write that
%\begin{align*}
%\rho^{-1}(s) \int_1 ^{s-1} \rho(\tau)^{1/2} \vert s-\tau \vert^{-3/2} \Vert L^{5/2} w \Vert_{L^6_x} d\tau 
%  & \lesssim \rho^{-1}(s) \int_1 ^{s-1} \rho(\tau)^{1/2} \langle s-\tau \rangle^{-3/2} 
%  \Vert L^{5/2} P_{\leqslant \langle \tau \rangle^{\delta}} w \Vert_{L^6_x} d\tau 
%  \\ 
%  & + \rho^{-1}(s) \int_1 ^{s-1} \rho(\tau)^{1/2} \langle s-\tau \rangle^{-3/2} 
%  \Vert L^{5/2} P_{\geqslant \langle \tau \rangle^{\delta}} w \Vert_{L^6_x} d\tau,
%\end{align*}
%where $P_k$ denotes the usual Littlewood-Paley restriction.  
%We focus on the worse term,  namely the first one. 
%If $s\leqslant \rho_0,$ using \eqref{bootstrap-A-w}
%we find that the above is bounded by $\rho_0^{-\beta} \langle s \rangle^{-1/2 +\delta}.$ 
%After integration in time, the resulting expression is bounded by $\rho(s)^{-1/2-\beta-\delta}$ which acceptable. 
%When $s \geqslant 1/\rho_0, $ the above is bounded by $\langle s \rangle^{-1/2+\beta + \delta}$ 
%which yields the same bound after integration. The other two integrals are easier to treat, and satisfy the same bound.

We are then left with the quadratic term in $w$ in \eqref{RF9}. We observe that
%\begin{align*}
%\bigg \vert \int_0^t \rho^{-2}(s) \Re\bigg(\overline{A}(s) a(s) \int \frac{\phi^2}{L} 
%\int_0 ^s e^{i(s-\tau)L} {\bf P_c} \big(\frac{\Im w}{L} \big)^2 \bigg) ds \bigg 
%\vert \lesssim \int_0 ^t \rho^{-1}(s) \bigg \Vert 
%  \int_0^s e^{i(s-\tau)L} {\bf P_c} \big(\frac{\Im w}{L} \big)^2 d\tau \bigg \Vert_{L^{6+b}_x} ds,
%\end{align*}
\begin{align*}
|a(s)| \Bigg| \int \frac{\phi^2}{L} 
\int_0 ^s e^{i(s-\tau)L} {\bf P_c} \big(\frac{\Im w}{L} \big)^2 d\tau dx \bigg \vert 
  \lesssim \rho(s)^{1/2}
  \bigg \Vert 
  \int_0^s e^{i(s-\tau)L} {\bf P_c} \big(\frac{\Im w}{L} \big)^2 d\tau \bigg \Vert_{L^{\infty}_x}.
\end{align*}

Therefore, the desired bound will follow from proving that there exists $\frac{1}{2}<a<1$ such that
\begin{align}
\label{RFtolater}
\bigg \Vert \int_0 ^s e^{i(s-\tau)L} {\bf P_c} \big(\frac{\Im w}{L} \big)^2  d\tau \bigg \Vert_{L^{\infty}_x} 
  \lesssim  \frac{\rho(s)^{1-a}}{\langle s \rangle }. 
%\frac{\rho_0^{a}}{\langle s \rangle^{1+a}} 
\end{align}
This estimate will be a consequence of the analysis carried out in Section \ref{SecF}, 
and is postponed until that section.
\end{proof}

We move on to the proof the main result of this section:

\begin{proof}[Proof of Proposition \ref{main-ampl}]

We proceed in three steps.

\medskip
\textit{Step 1: Acceptable growth rate}. 
The result will follow from showing that 
\begin{align} \label{aim-A}
\Big| \Re \int_0^t  \rho^{-2}(s) (P(s) \overline{A}(s))  
  + \rho^{-2}(s) (R(s) \overline{A}(s)) ds \Big| \lesssim \rho^{-a}(t),
\end{align}
%\comment{absolute value is needed outside (except for $R_F$)}
where $\frac{1}{2}<a<1$ and the implicit constant only depends on $\phi.$
Indeed, assume this result is proved. Then, we have, 
for some constant $C_1>0$ independent of $\rho_0,$ using \eqref{main-eq-rho} that 
\begin{align*}
\frac{1}{\rho_0} - C_1 \rho_0^{-a} \Big(\displaystyle 1 + \frac{\Gamma \rho_0}{ \lambda} t\Big)^{a} 
  - C_1 + \frac{\Gamma}{\lambda} t \leqslant \frac{1}{\rho(t)} 
  \leqslant \frac{1}{\rho_0} + C_1 \rho_0^{-a} \Big(\displaystyle 1 + \frac{\Gamma \rho_0}{ \lambda} t\Big)^{a} 
  + \frac{\Gamma}{\lambda} t + C_1,
\end{align*}
and we conclude
\begin{align*}
\frac{\rho_0}{1+\frac{\Gamma \rho_0}{2\lambda} t 
+  C_1 \rho_0^{1-a} \big(\displaystyle 1 + \frac{\Gamma \rho_0}{\lambda} t\big)^{a}  + C_1 \rho_0} 
\leqslant \rho(t) \leqslant  \frac{\rho_0}{1+\frac{\Gamma \rho_0}{\lambda} t 
-  C_1 \rho_0^{1-a} \big(\displaystyle 1 + \frac{\Gamma \rho_0}{ \lambda} t\big)^{a}  - C_1 \rho_0}. 
\end{align*}
The desired result \eqref{main-amplconc} follows provided $\rho_0$ is small enough.

\medskip
\textit{Step 2: Lower order terms}. 
We treat the easier terms on the right-hand side of \eqref{aim-A},
which are the ones involving $R$, see \eqref{Drem}.
Lemma \ref{remain-ampl} directly gives acceptable bounds for the $R_F$ part. %in view of \eqref{aim-A}. 
For the discrete part,  $\int_0^t \rho^{-2}(s) \vert \Re(\overline{A}(s) R_D(s)) \vert ds$
can be written as a sum of terms of the form
\begin{align}\label{aim-A-integrals}
\int_0 ^t e^{i \alpha \lambda s} \frac{A_{+}^i A_{-}^j}{\rho^3 (s)} ds, 
  \qquad i+j=6, \quad \alpha \in \mathbb{Z} \setminus \lbrace 0 \rbrace, \quad A_{+}:=A, \quad A_{-}:=\overline{A}.
\end{align}
Therefore it is sufficient to show how to deal with such oscillatory integrals.
Integrating by parts, with $\widetilde{A}$ denoting either $A_+$ or $A_-$, we obtain
\begin{align*}
\int_0 ^t e^{i\alpha\lambda s} \frac{\widetilde{A}^6(s)}{\rho^3(s)} ds 
  = \frac{e^{i\alpha\lambda t}}{i\alpha \lambda} \frac{\widetilde{A}^6(t)}{\rho(t)^3} 
  - \frac{1}{i\alpha \lambda} \frac{\widetilde{A}^6(0)}{\rho^3(0)} 
  - \int_0 ^t \frac{e^{i\alpha \lambda s}}{i \alpha\lambda} \partial_s \bigg( \frac{\widetilde{A}^6}{\rho^3} \bigg) ds.
\end{align*}
Bounding the remainder, using $\widetilde{A}' = O(\rho)$ (from \eqref{Adotest}), 
we see that the integrand is $O(\rho^{1/2}).$
We can conclude since, using \eqref{bootstrap-A-A}, %by an elementary computation
\begin{align} \label{lot}
\int_0^t \rho^{1/2}(s) ds & \lesssim %\int_0 ^t \frac{\sqrt{\rho_0}}{\sqrt{1+\frac{\Gamma \rho_0}{2\lambda} s}} ds 
%= \frac{1}{\sqrt{\rho_0} \Gamma} \int_0 ^{t \frac{\Gamma}{\lambda} \rho_0} \frac{1}{\sqrt{1+s}} ds  \leqslant 
\frac{1}{\Gamma \sqrt{\rho_0}} \sqrt{1+ \frac{\Gamma \rho_0}{\lambda} t} \lesssim \rho^{-1/2}(t).
\end{align}

\medskip
\textit{Step 3: Normal forms}. 
From \eqref{amplitude-bis} we see that 
the leading order terms on the right-hand side of \eqref{aim-A} are of the form 
\begin{align}
\int_0 ^t e^{is\alpha \lambda} A_{+}^i(s) A_{-}^j(s) \rho(s)^{-3} ds,
  \qquad i+j=5, \quad \alpha \in \mathbb{Z} \setminus \lbrace 0 \rbrace, \quad A_{+}:=A, \quad A_{-}:=\overline{A},
\end{align}
and a crude bound gives a growth at the rate $t^{3/2},$ which is not acceptable.  
To conclude the proof we show that these terms
can be written in the form of integrals treated in Step 2.

We use the definition of $P$ in \eqref{amplitude-bis}, to see that
\begin{align}\label{RPA}
2 \Re(P \overline{A} ) 
  %& = -\frac{c}{2i\lambda} \big(A^2 e^{2i\lambda s} + 2 \vert A \vert^2 
  %+ \overline{A}^2 e^{-2i\lambda s} \big) \big( e^{i\lambda s} A-e^{-i \lambda s} \overline{A} \big)
  %\\
  & = -\frac{c_0}{2i\lambda} \big(A^3 e^{3i\lambda s} - \overline{A}^3 e^{-3i\lambda s}\big)
  -\frac{c_0}{2i\lambda} |A|^2 \big(A e^{i \lambda s} - \overline{A}e^{-i\lambda s} \big),
  \qquad c_0 := \int \phi^3,
\end{align}
and write
\begin{align*}
& -2\int_0^t  \rho^{-2} \Re (P \overline{A}) ds 
  %& = \frac{c}{2i\lambda}\int_0 ^t \rho^{-2} (s) \big(A^2 e^{2i\lambda s} + 2 \vert A \vert^2 
  %+ \overline{A}^2 e^{-2i\lambda s} \big) [e^{i\lambda s} A(s)-e^{-i \lambda s} \overline{A}(s) ] ds
%\\
= c_0 \big( T_1 + T_2 \big),
\\
& T_1 := \frac{1}{2i\lambda}\int_0^t \rho^{-2}(s) \big(A^3 e^{3i\lambda s} 
  - \overline{A}^3 e^{-3i\lambda s}\big) ds, %\int \phi^3  
\qquad T_2 := %\frac{1}{2i\lambda}
   \frac{1}{2i\lambda} \int_0^t \rho^{-1}(s) \big(A e^{i \lambda s} - \overline{A}e^{-i\lambda s} \big) ds. %\int \phi^3
\end{align*}
Both terms can be dealt with in the same way. 
Integrating by parts, we obtain
\begin{align}\label{T2}
T_2 = %\int \phi^3 \Bigg[ 
  \frac{1}{2i\lambda} \Big[\rho^{-1} \frac{A e^{i\lambda s}}{i \lambda} 
  + \rho^{-1} \frac{\overline{A}e^{-i\lambda s}}{i \lambda} \Big]_0^t 
  & - \frac{1}{2i\lambda} \int_0^t \partial_s(\rho^{-1}) 
  \Big( \frac{A e^{i\lambda s}}{i\lambda} + \frac{\overline{A}e^{-i\lambda s}}{i \lambda} \Big) ds
\end{align}
where we used \eqref{cancel-A}.
The boundary terms satisfy the required bound. 
For the other term in \eqref{T2} we use \eqref{eq-short-time} and \eqref{RPA} to calculate
\begin{align*}
& \frac{1}{2 \lambda^2} \int_0^t \partial_s(\rho^{-1}) \big(A e^{i\lambda s} + \overline{A} e^{-i\lambda s} \big) ds 
  \\
  & =
  \frac{-i c_0}{4 \lambda^3} \int_0 ^t \rho^{-2}(s) \big(A e^{i \lambda s} + \overline{A} e^{-i \lambda s} \big) 
  \big( A^3 e^{3i\lambda s} - \overline{A}^3 e^{-3i\lambda s} + \rho^{-1}(s) (A e^{i \lambda s} 
  - \overline{A} e^{-i\lambda s})  \big) ds 
 + L_1(t)  %\lbrace \textrm{lower order terms} \rbrace_1 
\\
& = \frac{-ic_0}{4 \lambda^3} \int_0^t 2 \rho^{-1}(s) \big( A^2 e^{2i\lambda s} 
- \overline{A}^2 e^{-2i\lambda s} \big) + \rho^{-2}(s) \big( A^4 e^{4i\lambda s} - \overline{A}^4 e^{-4i\lambda s} \big) ds 
 + L_1(t), %\lbrace \textrm{lower order terms} \rbrace_1 ,
 \end{align*}
where
\begin{align*}
%\lbrace \textrm{lower order terms} \rbrace_1 
L_1(t) := \frac{1}{2 \lambda^2} \int_0^t \rho^{-2}(s) a(s) \bigg(\frac{\Gamma}{ \lambda} \rho^2(s) 
  - \Re (R \overline{A}(s)) \bigg) ds.
\end{align*}
All the integrals above are of the form \eqref{aim-A-integrals} as desired
(note that there is a key cancellation, which makes the end result of the acceptable form with non-vanishing oscillations),
and $L_1$ is a lower order term satisfying an acceptable bound, also in view of Step 2. 
%; see the notation from \eqref{amplitude-bis}.

%\begin{remark}
%Note that there is a key cancellation, which makes the end result of the acceptable form
%\end{remark}

We can treat $T_1$ similarly to $T_2$. Integrating by parts we get
\begin{align*}
2i\lambda T_1 & = %\int_0^t \rho^{-2} (s)\big( A^3 (s) e^{3i\lambda s} - \overline{A}^3(s) e^{-3i\lambda s} \big) ds := 
B - T_{21} -T_{22},
\\
B & := \frac{1}{3 i \lambda \rho^2(s)} \big(A^3(s) e^{3i\lambda s} 
  + \overline{A}^3 e^{-3i\lambda s} \big) \Big|_0^t,
\\
T_{21} &:= \frac{1}{3i \lambda} \int_0 ^t \partial_s \big( \rho^{-2} (s) \big) \big( A^3(s) e^{3i\lambda s} 
  + \overline{A}^3 e^{-3i\lambda s} \big) ds,
  \\
T_{22} &:= \frac{1}{i \lambda} \int_0^t \rho^{-2} (s) 
  \big(A'(s) A^2(s) e^{3i\lambda s} + \overline{A}'(s) \overline{A}^2(s) e^{-3i\lambda s} \big) ds.
\end{align*}
Then, using \eqref{eq-short-time}, \eqref{RPA}, and \eqref{amplitude-bis}, we get
(notice again a key cancellation of the terms without oscillations)
\begin{align*}
T_{21} & =- \frac{c_0}{3 \lambda ^2} \Bigg[ \int_0 ^t \rho^{-3} (s) \big( A^6(s) e^{6i \lambda s} 
  - \overline{A}^6 e^{-6i\lambda s} \big) + \rho^{-2}(s) \big( A^4(s) e^{4i \lambda s} 
  - \overline{A}^4 e^{-4i\lambda s} \big) 
  \\ 
& + \rho^{-1}(s) \big( - A^2(s) e^{2i \lambda s} + \overline{A}^2 e^{-2i\lambda s} \big) ds \Bigg]
  + L_2(t), %\lbrace \textrm{lower order terms} \rbrace_2
\\
T_{22} & = \frac{c_0}{2\lambda^2} 
  \Bigg[ \int_0 ^t \rho^{-2} (s) \big( A^4(s) e^{4i\lambda s} + e^{-4i\lambda s} \overline{A}^4(s)  \big) 
  + 2 \rho^{-1}(s) \big( A^2(s) e^{2i\lambda s} + \overline{A}^2(s) e^{-2i\lambda s} \big)  ds \Bigg]
  \\
& + L_3(t), %\lbrace \textrm{lower order terms} \rbrace_3,
\end{align*}
where 
\begin{align*}
%\lbrace \textrm{lower order terms} \rbrace_2 
L_2(t) & := -\frac{2}{3i\lambda} \int_0 ^t \rho^{-3}(s) \big(A^3(s) e^{3i\lambda s} + \overline{A}^3(s)e^{-3i\lambda s} \big) 
  \bigg(-\frac{\Gamma}{\lambda} \rho^2(s) +2 \Re (R \overline{A}(s)) \bigg)  ds,   
\\
% \lbrace \textrm{lower order terms} \rbrace_3 
L_3(t) & := \frac{1}{i\lambda} \int_0^t \rho^{-2}(s) 
  %\big(A^2(s) e^{3i\lambda s} + \overline{A}^2(s)e^{-3i\lambda s} \big) 
  %\big( \frac{1}{2i\lambda} (\Lambda - i \Gamma) \vert A \vert^2 A + R \big) ds.
  2 \Re \Big[ A^2(s) e^{3i\lambda s}
  \Big( \frac{1}{2i\lambda} (\Lambda - i \Gamma) \vert A \vert^2 A + R \Big) \Big] ds.
\end{align*}
%where we used notations from \eqref{amplitude-bis}. 
The integrals in $T_{21}$ and $T_{22}$ are of the acceptable form \eqref{aim-A-integrals}.
The integrands in the terms $L_2(t)$ and $L_3(t)$ that involve the part of $R$
coming from $R_D = O(\rho^{3/2})$ are $O(\rho^{1/2})$, so we can conclude given \eqref{lot}.
%\lbrace \textrm{lower order terms} \rbrace_i, i=1,2,3$  
%since are $O(\rho),$ thus we can conclude given \eqref{lot}. 
The parts involving $R_F$ can be seen to satisfy an acceptable estimate by the right-hand side of 
\eqref{aim-A} using the estimate \eqref{remain-amplbd}.
%For the remainder part, a straightforward adaptation of Lemma \ref{remain-ampl} 
%shows that this part is $O(\rho^{1/2-a}).$ Overall we obtain an acceptable bound.
\end{proof}

\smallskip
\subsection{Renormalized equation}
To simplify the dispersive analysis, we renormalize the equation for the amplitude 
$A$ by introducing a new variable which varies more slowly than $A$.
We will denote by $B$ this renormalized profile of the amplitude of the internal mode. 

\begin{lemma} \label{renorm-A}
Let
\begin{align}\label{defB}
B = F(A) :=  A + c_0\Big( \frac{e^{i\lambda t}}{2 \lambda ^2}A^2 
  - \frac{e^{-3i\lambda t}}{6 \lambda^2} \overline{A}^2 - \frac{e^{-i\lambda t}}{2 \lambda^2} |A|^2\Big), 
  \qquad c_0 := \int \phi^3.
\end{align}
%There exists a function $F: \mathbb{C} \rightarrow \mathbb{C}$ such that 
Then
\setlength{\leftmargini}{2em}
\begin{itemize}
\medskip
\item[(i)] $F(A)/A \rightarrow 1$ as $\vert A \vert \to 0$;

\medskip
\item[(ii)] $\vert (d/dt)F(A) \vert \lesssim \rho(t)^{3/2} + \rho(t)^{3/2-a} \jt^{-1},$ %\rho(t)^{1-2\beta}

\end{itemize}
where $\frac{1}{2}<a<1$ is as in Lemma \ref{remain-ampl}.

\end{lemma}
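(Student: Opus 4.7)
\smallskip
For part (i), the assertion is immediate from the definition \eqref{defB}: the correction terms are all quadratic in $A$, so $F(A)/A - 1 = O(|A|)$, which tends to $0$ as $|A|\to 0$.

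\smallskip
For part (ii), the plan is to compute $\dot B$ by the chain rule, substitute the equation \eqref{amplitude-bis} for $\dot A$, and verify that the quadratic terms in $P$ (which are $O(\rho)$, hence not acceptable) are exactly cancelled by the contributions coming from differentiating the explicit phases $e^{\pm i\lambda t}, e^{-3i\lambda t}$ in $F(A)$. Concretely, writing $a = Ae^{i\lambda t} + \overline{A}e^{-i\lambda t}$ and expanding, we have
\begin{align*}
P \;=\; \frac{c_0}{2i\lambda}\Big(A^2\,e^{i\lambda t} + 2|A|^2\,e^{-i\lambda t} + \overline{A}^2\,e^{-3i\lambda t}\Big),
\end{align*}
and the time derivative of the $j$-th quadratic correction term in \eqref{defB} produces, up to $A\dot A$ or $\overline{A}\dot{\overline{A}}$ corrections, exactly $-\frac{c_0}{2i\lambda}$ times the corresponding non-resonant monomial. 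This is precisely the normal-form choice (matching the coefficients $\tfrac{1}{2\lambda^2},\,-\tfrac{1}{6\lambda^2},\,-\tfrac{1}{2\lambda^2}$ corresponds to dividing by the respective non-vanishing phases $i\lambda,\,-3i\lambda,\,-i\lambda$). Carrying out the algebra term-by-term therefore eliminates $P$ from $\dot B$.

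\smallskip
After the cancellation, $\dot B$ reduces to three types of contributions: (a) the genuinely cubic resonant term $\tfrac{1}{2i\lambda}(\Lambda - i\Gamma)|A|^2 A$ from \eqref{amplitude-bis}; (b) the remainder $R = R_D + R_F$; and (c) terms of the form $A\dot A\,e^{\pm i\lambda t}$, $\overline{A}\dot{\overline{A}}\,e^{-3i\lambda t}$, $(\dot A \overline{A} + A\dot{\overline{A}})\,e^{-i\lambda t}$ arising from differentiating $A$ and $\overline{A}$ in the correction. The terms in (a) are trivially $O(\rho^{3/2})$; the term $R_D$ in (b) is $O(\rho^{3/2})$ by inspection of \eqref{DremD}, while $R_F$ obeys $|R_F| \lesssim \rho^{3/2-a}\jt^{-1}$ by Lemma \ref{remain-ampl}. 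For (c) we invoke the bound $|\dot A| \lesssim \rho$ from \eqref{Adotest} together with $|A|,|\overline A| = \rho^{1/2}$, giving $O(\rho^{3/2})$ for each such term. Summing yields the claim $|\dot B| \lesssim \rho^{3/2} + \rho^{3/2-a}\jt^{-1}$.

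\smallskip
The only step requiring care is the algebraic cancellation in (a), which is a routine but bookkeeping-heavy check that the three quadratic oscillatory monomials in $P$ are annihilated by the three correction terms. Once this is verified the remaining estimates are immediate consequences of \eqref{Adotest}, \eqref{DremD}, and Lemma \ref{remain-ampl}, with no further dispersive input needed.
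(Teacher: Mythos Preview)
Your proposal is correct and follows essentially the same approach as the paper: both recognize $F$ as the normal-form correction removing the non-resonant quadratic part $P$ from $\dot A$ (the paper does this by integrating \eqref{amplitude-bis} and integrating by parts in $s$, which is dual to your chain-rule computation), and both then bound the remaining cubic term, the $O(|A||\dot A|)$ contributions, and $R=R_D+R_F$ via \eqref{Adotest} and Lemma~\ref{remain-ampl}. One caveat for your ``routine bookkeeping check'': the coefficient on $|A|^2$ in \eqref{defB} appears to be a typo and should read $-1/\lambda^2$ rather than $-1/(2\lambda^2)$, as the paper's own derivation \eqref{renormAeq} confirms (matching the factor $2$ on $|A|^2 e^{-i\lambda t}$ in the expansion of $P$); with the printed coefficient your asserted cancellation would leave an uncancelled $O(\rho)$ piece.
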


\begin{proof}
The first property in the statement clearly holds true by definition.
Integrating \eqref{amplitude-bis} we obtain
\begin{align}\label{renormAeq}
\begin{split}
A(t)-A_0 &= \frac{1}{2i\lambda} \int_{0}^t \bigg( e^{i \lambda s} A^2 
  + e^{-3 i\lambda s} \overline{A}^2 + 2 e^{-i\lambda s} \vert A \vert^2 \bigg) ds \int \phi^3 
  \\
& + \frac{1}{2i\lambda} (\Lambda - i \Gamma) \int_0 ^t \vert A \vert^2 A  ds + \int_0 ^t R ds 
\\
& =: \bigg[-\frac{e^{i \lambda s}}{2 \lambda^2} A^2 + \frac{e^{-3i\lambda s}}{6 \lambda^2} \overline{A}^2
  + \frac{1}{\lambda ^2} e^{-i \lambda s} \vert A \vert^2(s)  \bigg]_0^t \, \int \phi^3
  + \int_0^t R_1(s)ds,
\end{split}
\end{align} 
that is, $F(A)-F(A_0) = \int_0^t R_1(s)ds$,
with
\begin{align}\label{renormAR}
\begin{split}
R_1(s) := \Big( \frac{e^{i \lambda s}}{2 \lambda^2} \frac{d}{ds}A^2(s) 
  - \frac{e^{-3i\lambda s}}{6 \lambda^2} \frac{d}{ds}\overline{A}^2(s) 
  - \frac{1}{\lambda ^2} e^{-i \lambda s} \frac{d}{ds}\vert A \vert^2(s) \Big) \int \phi^3
  \\ 
  + \frac{1}{2i\lambda} (\Lambda - i \Gamma) \vert A \vert^2 A + R (s) .
\end{split}
\end{align}
%Then, we let
%\begin{align*}
%F(A) := A + \Big( \frac{e^{i\lambda t}}{2 \lambda ^2}A^2 
%  - \frac{e^{-3i\lambda t}}{6 \lambda^2} \overline{A}^2 - \frac{e^{-i\lambda t}}{2 \lambda^2} \vert A \vert^2  
%  \Big) \int \phi^3.
%\end{align*}
We can then verify (ii) noticing that 
\begin{align*}
\frac{d}{dt}F(A) = R_1 = R_F + O \big(|A| |\dot{A}|\big) + O(|A|^3),
\end{align*}
where $R_F$ is defined in \eqref{DremF}, and $O(|A|^3)$ also includes $R_D$ in \eqref{DremD}.
The a priori assumption $|A|\lesssim \rho^{1/2}$, the estimate \eqref{Adotest} %\eqref{amplitude-bis}
and Lemma \ref{remain-ampl} give the conclusion.
\end{proof}

\smallskip
\subsection{Asymptotics for the internal mode}\label{secasyB}
We now obtain asymptotics for $A$, and then use this to get asymptotics for 
parts of $w$ as well. We proceed in a few steps.

\smallskip
{\it Equation for $B$}.
First, let us derive an equation for the renormalized amplitude $B$ as defined in \eqref{defB}.
In view of \eqref{renormAeq}-\eqref{renormAR}, and \eqref{Drem},
\begin{align}\label{B0}
\begin{split}
\dot{B}(s) =  c_0 \Big( \frac{e^{i \lambda s}}{\lambda^2} A(s)\frac{d}{ds}A(s) 
  - \frac{e^{-3i\lambda s}}{3 \lambda^2} \overline{A}(s)\frac{d}{ds}\overline{A}(s)
  - \frac{2}{\lambda ^2} e^{-i \lambda s} \Re\big( \overline{A}(s)\frac{d}{ds}A(s) \big) \Big)
  \\ 
  + \frac{1}{2i\lambda} (\Lambda - i \Gamma) |A|^2 A + R_D(s) + R_F(s).
\end{split}
\end{align}
Using the equation \eqref{amplitude-bis} we write the above identity more explicitly
by separating the terms that are cubic in $A$ from those that are faster decaying:
\begin{align}\label{B1}
\dot{B} & = C(A) + R_2,
\end{align}
where (recall also \eqref{DremD})
\begin{align}\label{BCA}
\begin{split}
C(A) & := c_0 \Big( \frac{e^{i \lambda s}}{\lambda^2} A(s)P(s)
  - \frac{e^{-3i\lambda s}}{3 \lambda^2} \overline{A}(s) \overline{P}(s)
  - \frac{2}{\lambda ^2} e^{-i \lambda s} \Re \big( \overline{A}(s) P(s) \big) \Big) %\, \int \phi^3
  \\
  & + \frac{1}{2i\lambda} (\Lambda - i \Gamma) \vert A \vert^2 (s) A (s)
  + R_D(s)
  %+ \frac{1}{2i\lambda} \bigg( e^{2i\lambda t} (\Lambda-i\Gamma) A^3-(\Lambda +i\Gamma) 
  %\big( e^{-2i\lambda t} \overline{A}^3 +e^{-2i\lambda t} \vert A \vert^2 \overline{A} \big) \bigg),
\end{split}
\end{align}
and $R_2$ is defined via \eqref{B0}-\eqref{BCA} and \eqref{amplitude-bis}-\eqref{DremF} and satisfies
\begin{align*}
R_2 & = c_0\Big( \frac{e^{i \lambda s}}{\lambda^2} A(s) R_F
  - \frac{e^{-3i\lambda s}}{3 \lambda^2} \overline{A}(s)\overline{R_F}(s)
  - \frac{2}{\lambda ^2} e^{-i \lambda s} \Re \big( \overline{A}(s) R_F \big) \Big) %\, \int \phi^3
  + R_F(s) + O(|A|^4).
\end{align*}
In particular the $O(|A|^4)$ appearing above has an explicit expression that can be calculated, but 
its exact form is unimportant. We will use such a notation for various other remainder terms in the 
rest of the argument below.
%In the above expression $O(|A|^4)$ is coming from...
In view of \eqref{remain-amplbd}, we have
\begin{align}\label{BR2}
R_2(s) =  O(\rho^{3/2-a}(s)\js^{-1}), \qquad 1/2<a<1.
\end{align}
In what follows, we will consider the right-hand side of \eqref{BR2} as an acceptable remainder.

\smallskip
{\it Cubic Normal Form and renormalization}.
Let us analyze more closely the cubic terms in \eqref{BCA}.
Recall that, see \eqref{amplitude-bis}
\begin{align*}
P & = \frac{c_0}{2i\lambda} \big( e^{i\lambda t} A^2 + 2e^{-i\lambda t}|A|^2 + e^{-3i\lambda t} \overline{A}^2 \big), 
  %e^{-i\lambda t}(A e^{i\lambda t} + \overline{A} e^{-i\lambda t})^2,
  %\qquad c_0:=\int \phi^3,  
%\\
%R_D(t) & = \frac{1}{2i\lambda} \bigg( e^{2i\lambda t} (\Lambda-i\Gamma) A^3 - (\Lambda +i\Gamma) 
%  \big( e^{-4 i\lambda t} \overline{A}^3 + e^{-2i\lambda t} |A|^2 \overline{A} \big) \bigg),
\end{align*}
so that, gathering terms with the same oscillation we can write
\begin{align}\label{CA}
C(A) & = \sum_{\epsss} c_\epsss A_{\eps_1} A_{\eps_2} A_{\eps_3} e^{i(-1+\eps_1+\eps_2+\eps_3)t}, 
  \qquad A_+ := A, \quad A_- := \overline{A}
\end{align}
where $c_\epsss \in \C$ are certain constants.
% whose values do not matter, except for $c_{+,+,-}$
%which is given by
We now write
\begin{align}\label{Bconst}
\begin{split}
C(A) &= c_1 \vert A \vert^2 A + \sum_{\epsilon_1 + \epsilon_2 + \epsilon_3 \neq 1} 
  c_{\epsss} A_{\epsilon_1} A_{\epsilon_2} A_{\epsilon_3} e^{i(-1 + \epsilon_1 + \epsilon_2 + \epsilon_3) t},
\\
c_1 & := \sum_{\epsilon_1 + \epsilon_2 + \epsilon_3 = 1} c_{\epsss} 
  = \frac{1}{2i\lambda}(\Lambda - i\Gamma) + \frac{8}{3}\frac{c_0^2}{i\lambda^3}.
%c_{+++} & = \frac{c_0}{2i\lambda} (\Lambda-i\Gamma) + \frac{c_0^2}{i\lambda^3} ...
%\\
  %\Big( 2 +\frac{1}{3} %- 1 + 1  \Big)
%\\
%c_{+--} & = - \frac{1}{2i\lambda} (\Lambda +i\Gamma) + \frac{c_0^2}{i\lambda^3} \Big( -1 -2 +1 ...  \Big) 
%\\
%c_{---} & = - \frac{1}{2i\lambda} (\Lambda +i\Gamma) + \frac{c_0^2}{2i\lambda^3} \Big(  \Big)
\end{split}
\end{align}

%Note how,  in the last parentheses of the four formulas above, we have gathered in a peculiar way the 
%coefficients from the contributions which arise when we plug in $P$ into \eqref{BCA},
%so that they can be easily checked;
%also note that their values would not be really relevant, except for the fact that $c_{++-}$
%is a purely imaginary constant plus a negative one.

We then eliminate all the cubic non-resonant terms from the right-hand side of \eqref{BCA},
that is, all oscillating terms in \eqref{CA}.
We let 
\begin{align}\label{BW}
\begin{split}
& W := B - N(A),
\\
& N(A)(t) := \sum_{\epsilon_1 + \epsilon_2 + \epsilon_3 \neq 1} \frac{c_\epsss}{i(-1+\eps_1+\eps_2+\eps_3)} 
  A_{\eps_1} A_{\eps_2} A_{\eps_3} e^{i(-1+\eps_1+\eps_2+\eps_3)t}, 
\end{split}
\end{align}
so that
\begin{align}\label{BWdot}
\begin{split}
\dot{W} & = c_1 |A|^2 A  + R_2 - \sum_{\epsilon_1 + \epsilon_2 + \epsilon_3 \neq 1} \frac{c_\epsss}{i(-1+\eps_1+\eps_2+\eps_3)} 
  \frac{d}{dt}\big( A_{\eps_1} A_{\eps_2} A_{\eps_3}\big)  e^{i(-1+\eps_1+\eps_2+\eps_3)t} 
  \\
  & =  -\frac{\Gamma}{2\lambda} |W|^2 W + ic_2 |W|^2W + R_2 + O(|A|^4), 
  \qquad c_2:= -\frac{\Lambda}{2\lambda} - \frac{8}{3}\frac{c_0^2}{\lambda^3},
\end{split}
\end{align}
\def\cone{-\frac{\Lambda}{2\lambda} - \frac{17}{6}\frac{c_0^2}{2\lambda^3}}
having used $B-A = O(A^2)$, $W-B = O(A^3)$, $\dot{A} = O(A^2)$, and \eqref{Bconst}.

We then gauge away the `rotating' terms $ic_2|W|^2W$ by setting
\begin{align}\label{BZ}
Z(t) := W(t) \exp \Big( -i c_2\int_0^t |W|^2(s) \,ds \Big).
\end{align}
Notice 
\begin{align}\label{modZ}
|Z| = |W| = O(\rho^{1/2}), \qquad |W - B| = O(\rho^{3/2}), \qquad |B - A| = O(\rho).
\end{align}
We then have
\begin{align*}%\label{B10}
\begin{split}
& \dot{Z} =  -\frac{\Gamma}{2\lambda} |Z|^2 Z + R_2' + O(|Z|^4),
  \qquad R_2' := \exp \Big( -i c_2\int_0^t |W|^2(s) \,ds \Big) R_2,
\end{split}
\end{align*}
which we rewrite as
\begin{align}\label{B11}
\begin{split}
\dot{Z} =  -\frac{\Gamma}{2\lambda} |Z|^2 Z + O(\rho^{3/2-a}\js^{-1}).
\end{split}
\end{align}

\smallskip
{\it Asymptotic equations}.
To find a limit equation for $Z$ we let 
\begin{align}\label{BZY}
Z(t) = Y(t) e^{i\vartheta(t)}, \qquad Y,\vartheta \in \R.
\end{align}
%Using the bound \eqref{main-amplconc} 
%\eqref{bootstrap-A-w} 
Observe that \eqref{modZ} gives $Y = O(\rho^{1/2})$ 
(recall that $\rho(t) := |A|^2$) and $\big| Y - \rho^{1/2} | = O(\rho)$;
in particular, we may assume that $Y_0 := Y(0) > 0$.
%we deduce that
%\begin{align}
%|Y| = \big| F(A) - N(A)(t)\big| 
%\end{align}
Taking real and imaginary parts in \eqref{B11}, we find
\begin{align}
\label{BYdot}
\dot{Y} & = -\frac{\Gamma}{2\lambda} Y^3 + O(\rho^{3/2-a}\js^{-1}),
\\
\label{Bphidot}
\dot{\vartheta} & = O(\rho^{1-a}\js^{-1}).
\end{align}
The data for these equations can be found from the initial data for the amplitude $(a(0),\dot{a}(0))$ 
(see \eqref{aAprof}) and the relations \eqref{defB}, \eqref{BW} and \eqref{BZ}:
\begin{align}
\label{BYphi0}
\begin{split}
Y_0 := Y(0) & = \big( F(A(0)) - N(A(0)) \big) e^{-i\vartheta(0)},
\\
%\label{Bphi0}
\vartheta_0 := \vartheta(0) & = \mathrm{arg} \big( F(A(0)) - N(A(0)) \big).
\end{split}
\end{align}

From \eqref{BYdot} we can write
\begin{align*}
-2\frac{\dot{Y}}{Y^3} & = \frac{\Gamma}{\lambda} + O(\rho^{-a}\js^{-1}),
\end{align*}
and integrating this we obtain
\begin{align}\label{y}
\frac{Y(t)}{y(t)} = \Big( 1 + y^2(t)\int_0^t O\big(\rho(s)^{-a} \js^{-1}\big) ds \Big)^{-1/2},
\qquad y(t) := \frac{Y_0}{\sqrt{1 + \tfrac{\Gamma}{\lambda} t Y^2_0}}.
\end{align}
Notice that 
\begin{align*}
y(t)^2\int_0^t O\big(\rho(s)^{-a} \js^{-1}\big) ds = O (\rho^{1-a}(t)) \, |\log \e|,
\end{align*}
and, therefore,
\begin{align}\label{BY/y}
\Big| \frac{Y(t)}{y(t)} - 1 \Big| = O\big(\rho^{1-a}(t) |\log\e|\big).
\end{align}

Next, since
\begin{align*}
\int_{t_1}^{t_2} \rho(s)^{1-a} \js^{-1} ds = O\big(\rho(t_2)^{1-a} \, |\log \e| \big)
\end{align*}
we see from \eqref{Bphidot} that we can define a limit
\begin{align}\label{Bphiinfty}
\vartheta_\infty := \int_0^\infty \dot{\vartheta}(s)ds, \qquad \vartheta_\infty = O(\e^{2(1-a)}|\log\e|),
\end{align}
and that we have
\begin{align}\label{Bphilim}
| \vartheta(t) - \vartheta_\infty | = O\big( \rho(t)^{1-a} |\log\e|\big).
\end{align}

We now go back to the complex variables $Z$ and $W$.
From \eqref{BZY} and the asymptotics \eqref{BY/y} and \eqref{Bphilim} it follows that, recall \eqref{y},
\begin{align}\label{BZlim}
%\big| |Z| - y(t) \big| = O\big(\rho^{3/2-a}(t) |\log\e|\big),
  \qquad \big| Z(t) - y(t) e^{i \vartheta_\infty} \big| = O\big(\rho^{3/2-a}(t) |\log\e|\big). 
\end{align}
This implies
\begin{align*}
\big| |W(t)|^2 - y(t)^2 \big|  = O\big(\rho^{2-a}(t) |\log\e|\big)
\end{align*}
and therefore
\begin{align*}
\int_{t_1}^{t_2} \big| |W(s)|^2 - y(s)^2 \big| \,  ds = O\big(\rho^{1-a}(t_1) |\log\e|\big)
\end{align*}
so that we can extract a limit
\begin{align}\label{Bpsiinfty}
\begin{split}
& \psi_\infty := \int_0^\infty \big( |W(s)|^2 - y(s)^2 \big) \, ds, \qquad \psi_\infty = O(\e^{2(1-a)}|\log\e|),
\end{split}
\end{align}
and
\begin{align}\label{Bpsilim}
& \Big| \int_0^t \big( |W(s)|^2 - y(s)^2 \big) \, ds - \psi_\infty \Big| 
  \lesssim \rho(t)^{1-a} |\log\e| \lesssim \frac{\e^\delta}{\jt^\delta},
\end{align}
for some small $\delta \in (0,1-a)$.
In particular, calculating the integral of $y^2$ (see \eqref{y}) we see that
\begin{align}\label{Bpsilim'}
& \int_0^t |W(s)|^2 ds =  \frac{\lambda}{\Gamma} \log \Big( 1 + \frac{\Gamma}{\lambda} t Y_0^2 \Big) 
  + \psi_\infty + O\big(\e^\delta \jt^{-\delta}\big).
\end{align}

With \eqref{Bphiinfty} and \eqref{Bpsiinfty} we define the total (constant) phase shift by
\begin{align}\label{BPsi}
\Psi_\infty = \vartheta_\infty + \psi_\infty.
\end{align}
Finally, putting together \eqref{modZ} (which gives us $|A-W| = O(\rho)$),
and the relation between $W$ and $Z$ in \eqref{BZ} with the asymptotics for $Z$ in \eqref{BZlim}
and for the nonlinear phase in \eqref{Bpsilim'} (see also \eqref{BPsi}),
we can conclude that
\begin{align}\label{BAlim}
\Big| A(t) - y(t) \exp\Big( i \Psi_\infty + i\frac{c_2 \lambda}{\Gamma} 
  \log\Big( 1+ \frac{\Gamma}{\lambda} Y_0^2 t\Big)
  \Big)  \Big| = O\Big( \rho^{1/2}(t) \e^{\delta}\jt^{-\delta} \Big),
\end{align}
for some small $\delta \in (0,1-a)$,
where we recall that $y$ is given by \eqref{y} with \eqref{BYphi0}, 
%$\varphi_\infty = O(\e^{2(1-a)} |\log \e|)$ (see \eqref{Bphiinfty}), 
and $c_2 \in \R$ is defined in \eqref{BWdot}.

\smallskip
\subsection{Asymptotic behavior of the Fermi component}\label{ssecgasy}
In this last subsection we derive asymptotics in $L^\infty_\xi$ 
for the bad part of the continuous component of the solution, 
that is, $g$ as defined in \eqref{mth}. %see \eqref{fasy} Remark \ref{remwtgasy} 
%Remark \ref{remggrowth}. 
We then prove %the scattering statement in $H^N_x$, and 
the growth of the weighted norm \eqref{introggrowth} and hence \eqref{mtgrowth}.

\smallskip
\subsubsection{Pointwise asymptotics for $\wt{g}$}
%We start with the derivation of asymptotics for the $g$ component in $L^{\infty}_{\xi}.$
In view of the definition \eqref{mth}, \eqref{BAlim} and \eqref{modZ},
we define 
\begin{align}\label{defG}
G(t,\xi) := -\int_0^t \exp\Big(is(2\lambda-\jxi)
  + i\frac{2 c_2 \lambda}{\Gamma} \log \Big( 1+ \frac{\Gamma}{\lambda} Y_0^2 s\Big) \Big) 
  y(s)^2 ds 
  \, e^{2i\Psi_\infty} \wt{{\bf P_c} \phi^2}(\xi).
\end{align}

Then, we see that, for $t_1<t_2$, we have
\begin{align}\label{gasy'}
\begin{split}
{\big\| \wt{g}(t_1,\xi) - G(t_1,\xi) - \big( \wt{g}(t_2,\xi) - G(t_2,\xi) \big) \big\|}_{L^\infty_\xi}
  \lesssim \e^\delta t_1^{-\delta}.
\end{split}
\end{align}
In particular $\wt{g}-G$ is Cauchy in time and there exists $g_\infty \in \e^\delta L^\infty_\xi$ such that
\begin{align*}
{\| \wt{g}(t,\xi) - G(t,\xi) - g_\infty \|}_{L^\infty} \lesssim \e^\delta t^{-\delta}, \quad t\geqslant 1.
\end{align*}
or, more explicitly, for all $\xi$,
\begin{align}\label{gasy}
\begin{split}
%& \wt{h} := \wt{f} + \wt{g}, \\
\wt{g}(t,\xi) %& = - \chi_C(\xi) \int_0^t B^2(s) e^{-is(\jxi-2\lambda)} ds \,  \wt{{\bf P_c} \phi^2}(\xi)
  %\qquad \chi_C(\xi):=\varphi_{\leq -C}(\jxi - 2 \lambda)
  %\\
  = - %\chi_C(\xi) 
  \int_0^t \exp\Big(is(2\lambda-\jxi)
  + i\frac{2c_2 \lambda}{\Gamma} \log \Big( 1+ \frac{\Gamma}{\lambda} Y_0^2 s\Big) \Big) 
  y(s)^2 ds \, e^{2i\Psi_\infty} \wt{{\bf P_c} \phi^2}(\xi)
  \\
  + g_\infty(\xi) + O_{L^\infty_\xi}(\e^\delta) \jt^{-\delta}.
\end{split}
\end{align}
\iffalse
\smallskip
\subsubsection{Scattering statement}
Next we prove the scattering statement in Theorem \ref{maintheo}.  
Note that putting \eqref{decompositionh} together with Lemmas \ref{en-source}, \ref{en-mix} and \ref{en-quad}, 
we see that for $t_1<t_2,$ we have $\Vert h(t_2) - h(t_1) \Vert_{H^N} 
\lesssim \sum_{m=\lfloor\log_2(t_1) \rfloor}^{+\infty} 2^{-\delta m} \e^{\beta + \delta} ,$ 
for some $\delta>0.$ Therefore $\big(h(t)\big)$ is Cauchy in $H^N$ and has a limit in $H^N.$ 

Similarly, we can write using the frequency localization of the $g$ component as well as 
Strichartz estimates from Lemma \ref{Strichartz} with exponents $(q,r,\gamma) = (2+,\infty-,0+)$ that 
\begin{align*}
\Vert \wt{g}(t_2,\xi) - \wt{g}(t_1,\xi) \Vert_{H^N} & \lesssim \bigg \Vert \chi_{C}(\xi) 
  \int_{t_1}^{t_2} e^{-is(\jxi - 2 \lambda)} B^2(s) ds \wt{ {\bf P_c} \phi^2 } \bigg \Vert_{L^2} 
  \\
& \lesssim \Vert y \Vert_{L^2_{t \geqslant t_1}} \big \Vert \big({\bf P_c } \phi^2 \big)_{\sim 1} 
  \big \Vert_{W^{1,0+}} = o_{t_1 \to \infty}(1),
\end{align*}
where the final bound is uniform on $t_2.$
Therefore $\big( g(t) \big)$ is Cauchy in $H^N$ and we can conclude that $f$ has a limit in that space.
%\begin{remark}\label{remwtgasy}
%From \eqref{gasy} we can see that close to the bad frequency $\xi_F = \sqrt{4\lambda^2 - m^2}$,
%let us say when $|\jxi - 2\lambda | \ll \jt^{-1}$, we have
%\begin{align}%\label{def-g}
%\begin{split}
%\wt{g}(t,\xi) = \frac{i}{2c_1}
%  \exp \Big( i\frac{2c_1 \lambda}{\Gamma} \log  \Big( 1+ \frac{\Gamma}{\lambda} Y_0^2 \, t\Big)
%  + 2i\varphi_\infty \Big) \, \wt{\phi^2}(\xi_F) + O(\e).
%\end{split}
%\end{align}
%\end{remark}
\fi

\subsubsection{Growth of the weighted norm}\label{ssecgrowth}
We now want to prove that the bad component $g$ as defined in \eqref{def-h} has a fast 
growing weighted norm, and the following lower bound holds:
\begin{align}\label{gg0}
{\| \partial_\xi \wt{g}(t) \|}_{L^2_\xi} \gtrsim \sqrt{t}, \qquad t \geqslant C\e^{-2}.
\end{align}
for some large constant $C$, so that $t \geqslant \lambda/(\Gamma Y_0^2)$.

Recall first that, using \eqref{defB}, $B$ satisfies the same asymptotics of $A$ in \eqref{BAlim} with \eqref{y}:
\begin{align}\label{ggb}
\begin{split}
& \big| B(t) - B_\infty(t) \big| \lesssim \rho^{1/2}(t) \frac{\e^\delta}{\jt^\delta},
\\
& B_\infty(t) := y(t) \exp\Big( i \Psi_\infty 
  + i\frac{c_2 \lambda}{\Gamma} \log\Big( 1+ \frac{\Gamma}{\lambda} Y_0^2 t\Big) \Big).
\end{split}
\end{align}

Let $\ell$ be such that $2^\ell = c\jt^{-1}$ with $c < \Gamma/\lambda$ sufficiently small.
Differentiating the formula \eqref{def-h} we then write, 
\begin{align}\label{ggdxi}
\begin{split}
& \varphi_\ell(\jxi-2\lambda) \partial_\xi \wt{g} = G_{1,\ell} + G_{2,\ell} + G_{3,\ell},
\\
& G_{1,\ell}(t,\xi) := \varphi_\ell(\jxi - 2\lambda) \frac{\xi}{i\jxi}
  \int_0^t \, s \, B_\infty^2(s) e^{-is(\jxi-2\lambda)} ds \, \wt{\phi^2}(\xi),
\\
& G_{2,\ell}(t,\xi) := \varphi_\ell(\jxi - 2\lambda)
  \int_0^t \,B^2(s) e^{-is(\jxi-2\lambda)} ds \, \partial_\xi \wt{\phi^2}(\xi),
\\
& G_{3,\ell}(t,\xi) :=	\varphi_\ell(\jxi - 2\lambda) \frac{\xi}{i\jxi}
  \int_0^t \, s \,\big( B^2(s) - B^2_\infty(s) \big) e^{-is(\jxi-2\lambda)} ds \, \wt{\phi^2}(\xi).
\end{split} 
\end{align}
We first notice that last two terms above satisfy the estimates
\begin{align*}%\label{gg1}
{\| G_{2,\ell}(t) \|}_{L^2} \lesssim 
  \int_0^t \, |B(s)|^2 ds \, {\| \partial_{\xi} \wt{\phi^2}(\xi)\|}_{L^2} \lesssim \log (1+t)
\end{align*}
and, using \eqref{ggb},
\begin{align*}
{\| G_{3,\ell}(t) \|}_{L^2} & \lesssim 
 2^{\ell/2} \int_0^t \, s \rho(s) \frac{\e^\delta}{\js^\delta} ds \, {\| \wt{\phi^2}(\xi) \|}_{L^\infty} 
 \lesssim \e^\delta \jt^{1/2-\delta}.
\end{align*}
Therefore, using also $L^2$-orthogonality, to show main bound \eqref{gg0} it will suffice to prove
\begin{align}\label{gg0main}
{\| G_{1,\ell}(t) \|}_{L^2} & \gtrsim \jt^{1/2}.
\end{align}

\smallskip
\begin{proof}[Proof of \eqref{gg0main}]
Using the explicit expression in \eqref{ggb},
and $\wt{\phi^2}(\xi_0) \neq 0, \vert \xi_0 \vert = \sqrt{4 \lambda^2-1}$ by the FGR \eqref{introFGR}, we write
\begin{align}\label{ggpr1}
\begin{split}
& \Big| G_{1,\ell}(t,\xi) \Big| 
\\
& \gtrsim \varphi_\ell(\jxi-2\lambda)
 \Big| \int_0^t \, \exp\Big(-is(\jxi - 2\lambda) 
 + ic_2 \frac{\lambda}{\Gamma} \log \big( 1 + \frac{\Gamma}{\lambda} s Y^2_0 \big) \Big) 
 \, \frac{sY_0^2}{1 + \frac{\Gamma}{\lambda} s Y^2_0} ds \Big|
 \\
 & \gtrsim \varphi_\ell(\jxi-2\lambda)
 \Big| \int_0^t \, \exp\Big(-is(\jxi - 2\lambda) 
 + ic_2 \frac{\lambda}{\Gamma} \log \big( 1 + \frac{\Gamma}{\lambda} s Y^2_0 \big) \Big) ds \Big|
 \\
 & - \varphi_\ell(\jxi - 2\lambda) O(\log t).
\end{split}
\end{align}
From now on we disregard the lower order $O(\log t)$ term above.
We then change variables to $u := Y_0^{-2} + \tfrac{\Gamma}{\lambda} s$ and denote 
\begin{align*}
X = X(\xi) = -(\jxi-2\lambda) \frac{\lambda}{\Gamma}, 
  %\qquad T = T(s) := 1 + \frac{\Gamma}{\lambda} s Y^2_0,  
  \qquad c_3 := c_2 \frac{\lambda}{\Gamma},
  \qquad T := Y_0^{-2} + \frac{\Gamma}{\lambda} t,
\end{align*}
so that
\begin{align}\label{ggpr2}
\Big| G_{1,\ell}(t,\xi) \Big| \gtrsim \varphi_\ell(\jxi-2\lambda)
 \Big| \int_{Y_0^{-2}}^T \, \exp\Big(iuX(\xi) + ic_3 \log u \Big) du \Big| 
\end{align}
where the implicit constant is independent of $Y_0 =O(\e)$.
Changing variable to $v = uX(\xi)$ we get
\begin{align}\label{ggpr3}
\Big| G_{1,\ell}(t,\xi) \Big| \gtrsim \varphi_\ell(\jxi-2\lambda) \, 2^{-\ell} \,
  \Big| \int_{Y_0^{-2}X}^{TX} \, \exp\big(iv + ic_3 \log v \big) dv \Big|.
\end{align}
%Notice that the lower limit is larger than an absolute constant under our assumption on $X\approx 2^\ell$ and $t$ (or $T$).
Finally we look at the integral in \eqref{ggpr3} and write,
for $X\jt \approx XT \in [c/2,2c]$, for small $c$,
\begin{align}\label{ggpr4}
\begin{split}
\int_{Y_0^{-2}X}^{TX} \, \exp\Big(iv + ic_3 \log v \Big) dv 
 & = \int_{Y_0^{-2}X}^{TX} \, e^{ic_3 \log v} dv  
 + \int_{Y_0^{-2}X}^{TX} \, \big[ e^{iv} - 1 \big] e^{ic_3 \log v} dv 
 \\
& \gtrsim TX + O(TX)^2
\end{split}
\end{align}
where the (real and imaginary parts of the) first of the two integrals 
above can be explicitly calculated with the substitution $\log v = w$ for example.
Therefore
\begin{align}\label{ggpr5}
{\big\| G_{1,\ell}(t,\xi) \big\|}_{L^2} \gtrsim 2^{-\ell/2} \, TX \approx \jt^{1/2}
\end{align}
as claimed.
\end{proof}

\medskip
\section{Dispersive analysis setup}\label{secSetup}
In this section we set up the analysis for the PDE in \eqref{sysav}.
In Subsection \ref{ssecDuh} we first write out Duhamel's formula for the profile $f=e^{-itL} w$ on the distorted Fourier side.
We then define a new variable $h$ by subtracting the most singular contribution, which we call $g$, from $f$. 
%It comes from the forcing exclusively due to the amplitude of the internal mode $a(t)$.
Lemma \ref{decomposition} rearranges all the terms according to the nature of the interactions
and the way they will be treated in the rest of the paper.
In \ref{ssecboot} we explain our main bootstrap argument for $h$.
%which is based on propagating bounds on $\wt{h}(t,\xi)$ and on $a(t)$; 
%see our main Proposition \ref{propboot} which, together with ... 
%will imply the main Theorem \ref{maintheo0}.
We then also give some simple consequences of our bootstrap assumptions
in Lemmas \ref{dispersive-bootstrap} and \ref{decay-der}.
%Finally, in \ref{secpropboot} we explain how the proof of Proposition \ref{propboot} is carried out in
%the remaining Sections \ref{secmixed}-\ref{energyest}. %\ref{SecF}.

\medskip
\subsection{Duhamel formula}\label{ssecDuh}
We start by writing Duhamel's formula in Fourier variables and in terms of the profile $f(t):=e^{-itL} w(t).$ 
For any function $a$, let us denote
\begin{align*}%\label{signnot}
a_+ = a, \qquad a_- = \bar{a}.
\end{align*}
%Sometimes we may denote $a_-$ as $(a)_-$.
With $\psi$ the generalized eigenfunctions from \eqref{psieq}, let us define
\begin{align}
\label{muDuh}
& \mu(\xi,\eta,\sigma) 
  := \frac{1}{(2 \pi)^{9/2}} \int_{\R^3} \overline{\psi(x,\xi)} \psi(x,\eta) \psi(x,\sigma) dx,
\\
\label{nuDuh}
& \nu(\xi,\eta) := \frac{1}{(2 \pi)^3} \int_{\R^3} \overline{\psi(x,\xi)} \psi(x,\eta) \phi(x) dx.
\end{align}
%\begin{align}\label{muDuh}
%& \mu_{\iota_1\iota_2}(\xi,\eta,\sigma) 
  %:= \int_{\R^3} \overline{\psi(x,\xi)} \psi_{\eps_1}(x,\eta) \psi_{\eps_2} (x,\sigma) dx, 
  %\qquad \epss \in \{ +, -\},
%\\
%\label{nuDuh}
%& \nu_\iota(\xi,\eta) := \int_{\R^3} \overline{\psi(x,\xi)} \psi_\iota(x,\eta) \phi(x) dx, \qquad \iota \in \{+,-\}.
%\end{align}

Using the inversion formula from Proposition \ref{propdFT}, and the definitions \eqref{muDuh}-\eqref{nuDuh},
we write Duhamel's formula \eqref{Duhamelw0} in terms of $\mf{f}$:
\begin{align}\label{Duhamelf}
\begin{split}
\mf{f}(t) & = \mf{f_0} + \int_0 ^t e^{-is\jxi} a^2 (s) \mf{{\bf P_c  }\phi^2}(\xi) ds 
\\
& -\frac{i}{2} \int_0^t a(s) e^{-is \jxi} \int_{\R^3} \bigg( \frac{e^{is \jeta} \mf{f}(s,\eta)}{\jeta} 
  - \frac{e^{-is \jeta} \mf{\overline{f}}(s,\eta)}{\jeta} \bigg) \nu(\xi,\eta) d\eta \, ds 
\\
& -\frac{1}{4} \int_0 ^t e^{-is\jxi} \int_{\R^6} \frac{\big(\mf{w}(\eta) - \mf{\overline{w}}(\eta) \big) \big(\mf{w}(\sigma) 
  - \mf{\overline{w}}(\sigma) \big)}{\jeta \langle \sigma \rangle} \mu(\xi,\eta,\sigma) d\eta d\sigma \, ds.
\end{split}
\end{align}

We renormalize the $a$ terms via $a(t) = A(t) e^{i\lambda t} + \overline{A}(t) e^{-i\lambda t}$,
see \eqref{profiles0A}, and keeping in mind \eqref{defB}, and write \eqref{Duhamelf} as
\begin{align}\label{fDuh}
\begin{split}
\mf{f}(t) & = \mf{f_0}  %-i 
+ \int_0 ^t e^{-is(\jxi-2\lambda)} B^2 (s) \mf{{\bf P_c  }\phi^2}(\xi) ds 
\\
&  %-i 
+ \int_0 ^t e^{-is(\jxi-2\lambda)} (A^2(s)-B^2 (s)) \mf{{\bf P_c  }\phi^2}(\xi) ds 
\\
 & %-i 
  + 2 \int_0 ^t e^{-is\jxi} \vert A \vert^2 (s) \mf{{\bf P_c  }\phi^2}(\xi) ds 
  + \int_0 ^t e^{-is(\jxi+2\lambda)} \overline{A}^2 (s) \mf{{\bf P_c  }\phi^2}(\xi) ds
 \\
& - \frac{i}{2} \int_0^t B(s) \int_{\R^3} e^{-is(\jxi - \jeta - \lambda)} 
  \jeta^{-1} \wt{f}(s,\eta) \nu(\xi,\eta) d\eta \, ds
  \\
& - \frac{i}{2} \int_0^t (A(s)-B(s)) \int_{\R^3} e^{is(\jxi - \jeta - \lambda)} \jeta^{-1} \wt{f}(s,\eta) \nu(\xi,\eta) 
  d\eta ds
\\
& - \frac{i}{2} \int_0^t \overline{A}(s) \int_{\R^3} e^{-is(\jxi - \jeta + \lambda)} 
  \jeta^{-1} \wt{f}(s,\eta) \nu(\xi,\eta) d\eta \, ds 
\\
& + \frac{i}{2} \int_0^t a(s) e^{-is \jxi} \int_{\R^3} \frac{e^{-is \jeta} \mf{\overline{f}}(s,\eta)}{\jeta} \nu(\xi,\eta) 
  d\eta \, ds 
\\
& - \frac{1}{4} \int_0 ^t e^{-is\jxi} \int_{\R^6} \frac{\big(e^{is\jeta}\mf{f}(\eta) - e^{-is\jeta}\mf{\overline{f}}(\eta) \big) 
  \big(e^{is\jsig}\mf{f}(\sigma) - e^{-is\jsig}\mf{\overline{f}}(\sigma) \big)}{\jeta \langle \sigma \rangle} 
  \mu(\xi,\eta,\sigma) d\eta d\sigma \, ds.
\end{split}
\end{align}

We introduce the new variable
\begin{align}\label{def-h}
\begin{split}
& \wt{h} := \wt{f} + \wt{g},
\\
& \wt{g}(t,\xi) := -\chi_C(\xi)
  \int_{0} ^t B^2(s) e^{-is(\jxi-2\lambda)} ds \, \wt{\phi^2}(\xi),
  \qquad \chi_C(\xi):=\varphi_{\leqslant -C}(\jxi - 2 \lambda)
\end{split}
\end{align}
for $C$ sufficiently large so that $2^{-C+10} \leqslant 2\lambda - 1$.

We now rephrase the above computations in terms of $h$ and $g$, 
and organize the various contributions into leading and remainder terms,
and according to the nature of the interactions.

%write more explicit expressions for the leading order terms that we are going to estimate.

\begin{lemma}\label{decomposition}
We have the decomposition
\begin{align}\label{decompositionh}
\mf{h}(t) & = \mf{f_0} + S + M + F,
\end{align}
where:

\setlength{\leftmargini}{1.5em}
\begin{itemize}

\item The source terms are defined as, for $\theta := {\bf P_c}\phi^2$, 
\begin{subequations}
\label{S}
\begin{align}
\nonumber
S & = S_1 + S_2 + S_3
\\
\label{Scubic}
& S_1 := %\chi_C(\xi) 
  \int_0 ^t e^{-is(\jxi-2\lambda)} (A^2(s)-B^2 (s)) \, \mf{\theta}(\xi) ds,
\\
\label{S1-chi}
& S_2 := (1-\chi_C(\xi))   
  \int_0 ^t e^{-is(\jxi-2\lambda)} B^2 (s) \, \mf{\theta}(\xi) ds,   
\\
\label{Snr}
& S_3 := \int_0 ^t e^{-is\jxi} \vert A \vert^2 (s) \, \mf{\theta}(\xi) ds 
  + \int_0 ^t e^{-is(\jxi+2\lambda)} \overline{A}^2 (s) \, \mf{\theta}(\xi) ds.
\end{align}
\end{subequations}

\smallskip
\item
The mixed terms are defined as 
\begin{subequations}
\label{M}
\begin{align}
\nonumber
M & = -\frac{i}{2} \big( M_1 - M_2 + RM \big),
\\
\label{M1}
& M_1 := \int_0 ^t B(s) \int_{\R^3} e^{-is(\jxi - \jeta - \lambda)} \jeta^{-1} \wt{h}(s,\eta) \nu(\xi,\eta) d\eta \, ds,
\\ 
\label{M2}
& M_2 := \int_0 ^t B(s) \int_{\R^3} e^{-is(\jxi - \jeta - \lambda)} \jeta^{-1} \wt{g}(s,\eta) \nu(\xi,\eta) d\eta \, ds,
\end{align}
\end{subequations}
where $RM$ denotes remainder terms given by
\begin{subequations}
\label{RM}
\begin{align}
\nonumber
& RM := RM_1 + RM_2 + RM_3,
\\
\label{RM1}
& RM_1 := \int_0 ^t \overline{A(s)} \int_{\R^3} e^{-is(\jxi - \jeta + \lambda)} 
  \jeta^{-1}\wt{f}(s,\eta) \nu(\xi,\eta) d\eta \, ds,
\\
\label{RM2}
& RM_2 := -\int_0^t a(s) \int_{\R^3} 
  e^{-is(\jxi + \jeta)} \jeta^{-1} \overline{\wt{f}}(s,\eta) \nu(\xi,\eta) d\eta \, ds 
\\
\label{RM3}
& RM_3 := \int_0 ^t (A(s)-B(s)) \int_{\R^3} e^{-is(\jxi - \jeta - \lambda)} \jeta^{-1} \wt{f}(s,\eta) \nu(\xi,\eta) d\eta \, ds .
\end{align}
\end{subequations}

\smallskip
\item The field self-interaction terms are defined by %(recall $f = h-g$ from \eqref{def-h}) 
\begin{align}
%\label{Fquad}
\label{Fepssquad}
\begin{split}
& F := -\frac{1}{4}\sum_{\epss \in \lbrace \pm \rbrace} \eps_1\eps_2 F_{\epss}\big(f_{\eps_1},f_{\eps_2}\big), \qquad f_+=f, \,\,\, f_- = \bar{f},
\\
& F_{\epss}(a,b) := \int_0^t \int_{\R^6} \frac{e^{-is \Phi_\epss (\xi,\eta,\sigma)}}{\jeta \jsig} 
    \wt{a}(s,\eta) \wt{b}(s,\sigma) \mu(\xi,\eta,\sigma) d\eta d\sigma {\fp \, %\tau_m(s) 
     ds},  
\end{split}
\end{align}
where the phases are defined by 
\begin{align}\label{phepss}
\Phi_{\epss}(\xi,\eta,\sigma) = \jxi -\eps_1\jeta -\eps_2\jsig.
\end{align}

\begin{proof}
We just need to verify that the terms on the right-hand side \eqref{decompositionh}
equal all the terms on the right-hand side of \eqref{fDuh} plus $\wt{g}$ as defined in \eqref{def-h}.
This is easily checked since the terms in \eqref{S} added to $\wt{g}$ match the first four integrals
on the right-hand side of \eqref{fDuh},
the term $F$ in \eqref{Fepssquad} matches the last integral in \eqref{fDuh},
and the terms \eqref{M} and \eqref{RM} match the remaining four integrals in \eqref{fDuh} (after using $f=h-g$).
\end{proof}

\end{itemize}

\end{lemma}

\medskip
\subsection{The main bootstrap}\label{ssecboot}

Here is our main bootstrap proposition for the radiation term:

\begin{proposition}\label{propboot}
Let $u = a(t)\phi + v(t)$ be a solution of \eqref{sysav},
let $f = e^{-itL}(\partial_tv + iLv)$, %be the profile of $v$ defined in \eqref{profiles0f},
and recall the definition of $h$ in \eqref{def-h}. %and \eqref{decompositionh}. 
Let $\rho(t) = |A(t)|^2$,
where $A$ is defined as in \eqref{profiles0A}, and assume that \eqref{bootstrap-A-A} holds, that is, 
for $T>0,$
\begin{align}\label{bootstrap-A-A'}
\sup_{t \in [0,T]} \Big| \big(1+\rho_0 \frac{\Gamma}{\lambda} t \big) \frac{\rho(t)}{\rho_0} - 1 \Big| < \frac{1}{2}.
\end{align}
Then, there exists $\bar{\e}>0$ such that, if $C_0^{-1}\e = \e_0 \leqslant \bar{\e}$, with $C_0 > 1$
(sufficiently large) and
\begin{align} \label{boot}
\begin{split}
\sup_{t \in [0,T]} \langle t \sqrt{\rho(t)} \rangle^{-1} \rho^{\frac{2\beta-1}{2}}(t)
 \Vert \partial_{\xi} \widetilde{h}(t) \Vert_{L^2}
  \leqslant 2\varepsilon^{\beta}, 
  \\
\sup_{t \in [0,T]} \Vert h(t) \Vert_{H^N} \leqslant 2 \varepsilon,
\end{split}  
\end{align}
for some small $\beta>0$, %$0 < \varepsilon \leqslant \varepsilon_0$
then
\begin{align} \label{bootconc}
\begin{split}
\sup_{t \in [0,T]} \langle t \sqrt{\rho(t)}  \rangle^{-1} \rho^{\frac{2\beta-1}{2}}(t)
  \Vert \partial_{\xi} \widetilde{h}(t) \Vert_{L^2}
  \leqslant \varepsilon^{\beta}, 
  \\
\sup_{t \in [0,T]} \Vert h(t) \Vert_{H^N} \leqslant \varepsilon.
\end{split}  
\end{align}
\end{proposition}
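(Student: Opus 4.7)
The strategy is to plug the decomposition from Lemma \ref{decomposition} into the equation for $\wt h$ and estimate each of the pieces
\begin{equation*}
\wt h(t) = \wt{f_0} + S_1 + S_2 + S_3 + M_1 + M_2 + RM + F
\end{equation*}
separately in the two norms appearing on the left-hand side of \eqref{bootconc}, namely the weighted norm $\|\partial_\xi \wt{\,\cdot\,}\|_{L^2_\xi}$ and the Sobolev norm $\|\cdot\|_{H^N_x}$. For each bound the goal is to improve the constant $2$ of the hypothesis \eqref{boot} to $1$, which will be possible provided $\bar\e$ is chosen small enough relative to the (finitely many) implicit constants coming from the individual estimates.

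\textbf{Data and source terms.} For the data, the initial bound \eqref{mtdatapr3} combined with Theorem \ref{Wobd} (boundedness of wave operators on weighted $L^2$) gives $\|\partial_\xi \wt{f_0}\|_{L^2} + \|f_0\|_{H^N} \lesssim \e_0$, which is much smaller than both $\e^\beta$ and $\e$ once $C_0$ in \eqref{mteps} is chosen large enough. For the source terms $S_1, S_2, S_3$ the analysis reduces to estimates involving only the internal-mode amplitudes. Using $|A - B| \lesssim |A|^2 = \rho$ from Lemma \ref{renorm-A} and \eqref{modZ}, the term $S_1$ is a cubic source which, together with the non-resonant $S_3$ (whose phase $\jxi$ or $\jxi+2\lambda$ is bounded below by $1$ and hence allows integration by parts in $s$), produces contributions of order $\e^{3-}$, easily controlled. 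The term $S_2$ is localized by $1-\chi_C$ away from the Fermi frequency $\jxi = 2\lambda$, so $\jxi - 2\lambda$ is bounded below; integration by parts in $s$, using $|B| \lesssim \rho^{1/2}$ and $|\dot B| \lesssim \rho$ from Lemmas \ref{renorm-A}--\ref{remain-ampl}, converts the linear-in-$t$ potential growth of $\partial_\xi$ hitting the phase into a bounded boundary term plus an integrable remainder, yielding an $O(\e)$ bound in both norms.

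\textbf{Mixed terms.} For $M_1, M_2$ and the three pieces of $RM$, the point is that each involves exactly one copy of the discrete amplitude (decaying like $\rho^{1/2}(s) \lesssim \e\langle \e^2 s\rangle^{-1/2}$) and one copy of $\wt f$ or $\wt g$. These will be handled as detailed in Section \ref{secmixed}: after a normal form that removes the time resonance at $\jxi - \jeta = \pm\lambda$, the boundary terms are controlled by the bootstrap on $h$ and the explicit form of $g$, while the resulting time-integrated cubic terms are controlled by combining the decay of the amplitude with Strichartz and local-decay estimates for $e^{isL}{\bf P}_c$ from Subsection \ref{ssecdecay}. The weight derivative $\partial_\xi$ falling on $\nu(\xi,\eta)$ is harmless by smoothness of $\nu$ (from regularity of $\psi(x,\xi)$ in $\xi$ away from $\xi=0$), while $\partial_\xi$ on the phase produces a factor of $s$, absorbed by the extra decay coming from the discrete mode.

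\textbf{Quadratic field term and main obstacle.} The bulk of the work is in estimating the quadratic term $F$ from \eqref{Fepssquad}, for which we split $f = h - g$ into four pieces: $hh$, $hg$, $gh$, $gg$. Following the plan in Sections \ref{SecF}--\ref{secFR}, we further decompose the NSD $\mu$ into a singular part $\mu_S$ (capturing the delta and principal-value contributions from \eqref{intromu'}) and a regular part $\mu_R$, and time-localize via the partition \eqref{timedecomp}. The main obstacle will be the $gg$ interaction: because $\partial_\xi \wt g$ grows like $\sqrt t$ near the Fermi frequency by \eqref{introggrowth'}, one cannot apply $\partial_\xi$ naively on $\wt g$. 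Instead, using the explicit form \eqref{def-h} of $g$, one trades the $\xi$-derivative on $\wt g$ for the known asymptotics of $A$ from Subsection \ref{secasyB}, and uses the support of $\chi_C$ combined with the structure of $\Phi_\epss$ to locate the resonant set where $\jeta \approx \jsig \approx 2\lambda$ and $\jxi \approx 4\lambda$ (or $0$); away from this set one gains from the phase via normal form, and on it one exploits the transversality and the three-dimensional volume to beat the $\sqrt t$ growth. The $hg$ and $gh$ terms are similar but easier since one factor is regular, and $hh$ is handled as in \cite{PS} using the bilinear bounds of Appendices \ref{Appmu}--\ref{Appmu2} together with the bootstrap assumption on $\partial_\xi \wt h$. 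The key decay estimate \eqref{RFtolater} used in Section \ref{secD} follows from the same analysis. The $H^N_x$ bound is handled separately in Section \ref{energyest} via Strichartz, exploiting that all quadratic interactions are semilinear with a derivative loss absorbed by $\jnab^{-1}$.
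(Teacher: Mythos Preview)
Your high-level structure matches the paper: decompose via Lemma \ref{decomposition}, split $f = h - g$ and $\mu = \mu^S + \mu^R$, and handle source, mixed and quadratic pieces separately, with the $H^N$ bound done via Strichartz in Section \ref{energyest}. The source terms and the data are essentially as in the paper.

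However, two of your key mechanisms do not match the paper and, as stated, would not close. For the mixed terms $M_1,M_2$, you say a normal form ``removes the time resonance at $\jxi - \jeta = \pm\lambda$'', but this phase genuinely vanishes on a codimension-one set and no global integration by parts in $s$ is available. The paper (Proposition \ref{propM1'}) instead localizes dyadically in $|\Phi|$: on $|\Phi| \lesssim 2^{-3m/5}$ one integrates by parts in $\eta$ and uses a Schur bound exploiting the smallness of the resonant slab; on $|\Phi| \gtrsim 2^{-3m/5}$ one does the normal form, but the resulting $\partial_s \wt h$ term contains $(L^{-1}\Im w)^2$, which must itself be fed back through the $\mu^S/\mu^R$ machinery (Lemma \ref{lemA}). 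For the quadratic term, the central structural fact you omit is Lemma \ref{lemphases}: on the support of $\mu^S$ the phase $\Phi_\epss$ is \emph{uniformly} bounded below by $\tfrac{1}{8}(\jxi+\jeta+\jsig)^{-1}$ for all signs and all inputs. This is what makes the normal form work on the entire singular part, including $gg$; after it, the $gg$ case (Lemma \ref{gg-no-t-res}) is controlled via the pointwise decay $\|e^{isL}\partial_s g\|_{L^p}\lesssim \rho(s)$ and the $L^1_\xi$ bound $\|\partial_\xi \wt g\|_{L^1} \lesssim m^2 2^m \rho(2^m)$ of \eqref{dxiwtgest}, never the growing $L^2$ norm. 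For the regular part (Lemma \ref{ggnuR}) the paper localizes $|\jeta - 2\lambda|,|\jsig - 2\lambda|$ and integrates by parts in frequency, again using $\|\partial_\xi \wt g\|_{L^1}$. Your proposed alternative --- unfolding $g$ explicitly and analyzing a cubic-in-$B$ phase near ``$\jxi \approx 4\lambda$ (or $0$)'' via transversality --- is a different route that might be viable, but the ``$0$'' clause is impossible since $\jxi \geq 1$, the transversality claim is not substantiated, and none of the asymptotics of $A$ from \S\ref{secasyB} are actually used in the paper's weighted estimates.
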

\iffalse
\begin{remark} \label{localT}
Note that for $t \geqslant 1/\sqrt{\rho_0},$ that is past the local time of existence, the condition becomes
\begin{align*}
{\big\| \partial_{\xi} \widetilde{h}(t) \big\|}_{L^2} + {\| h(t) \|}_{H^N} 
  \leqslant 2 t\rho^{1-\beta}(t) \varepsilon^{\beta} .
\end{align*}
Therefore in what follows we will always assume that this condition is satisfied.
\end{remark}

\begin{remark}
Moreover for $t \geqslant 1/\rho_0$, \eqref{bootstrap-A-A'} gives that 
$\rho(t) \approx t$ and, therefore, the bound given by \eqref{boot} becomes
\begin{equation*}
{\big\| \partial_{\xi} \widetilde{h}(t) \big\|}_{L^2} 
  \leqslant 2\varepsilon^{\beta} \, t^{\beta}. %\, \jt^{1/2} \rho^{\frac{1-\beta}{2}}(t)
\end{equation*} 
\end{remark}
\fi

Let us now explain the structure of the proof of Proposition \ref{propboot}.

\medskip
\begin{proof}[Proof of Proposition \ref{propboot}] %\label{secpropboot}%\subsubsection{Before local existence time}
First, when the time is smaller than the natural local time of existence $1/\e_0$
it is not hard to show the following local-in-time bootstrap:
%\begin{lemma}\label{localT}
assume that $T \leqslant 1/\e_0$, that \eqref{bootstrap-A-A0} holds and that 
\begin{align}\label{locboota}
\sup_{t \in [0,T]} \Vert \partial_{\xi} \wt{h}(t) \Vert_{L^2} \leqslant 2 \e^{1-\beta} , \qquad
  \sup_{t \in [0,T]} \Vert h(t) \Vert_{H^N} \leqslant 2 \e;
\end{align} 
%Then there exists $\overline{\varepsilon}>0$ such that if $C_0^{-1} \e = \e_0 \leqslant \overline{\e},$ 
then we have \eqref{main-amplconc} and 
\begin{align}\label{locbootc}
\sup_{t \in [0,T]} \Vert \partial_{\xi} \wt{h}(t) \Vert_{L^2} \leqslant \e^{1-\beta} , \qquad
  \sup_{t \in [0,T]} \Vert h(t) \Vert_{H^N} \leqslant \e.
\end{align} 
%\end{lemma}
%\begin{proof}
The proof of this is much simpler than for large times. 
In particular no precise information about the measure $\mu$ is needed
and the crude Lemma \ref{derM} suffices. See Subsection \ref{SsecH}. %verylowfreq}.

%\end{proof}

%As a first step, we prove in Lemma \ref{localT} that Proposition \ref{propboot} 
%holds for short times %assumption 
%$t \leqslant \varepsilon_0^{-1}.$
The rest of the paper is then dedicated to proving the result for the remaining times.
Note that in the regime $t > \varepsilon_0^{-1},$ the bootstrap assumptions \eqref{boot} read  
\begin{align*}
\sup_{t \in [\e_0^{-1},T]} t^{-1} \rho^{\beta-1}(t) \Vert \partial_{\xi} \wt{h} \Vert_{L^2_{\xi}} \leqslant 2 \e^{\beta},  
\qquad \sup_{t \in [\e_0^{-1},T]} \Vert h(t) \Vert_{H^N} \leqslant 2 \e.
\end{align*}

The weighted estimates for the three types of nonlinear terms on the right-hand side of \eqref{decompositionh},
that is, $S$, $M$ and $F$,
are performed, respectively, in \S\ref{source-terms}, Section \ref{secmixed} and Section \ref{SecF}. 
Together with \eqref{mtdatapr3}, 
the main inequalities \eqref{mainsource}, \eqref{mainquadbound} and \eqref{mixedest}, 
show that there exists a constant $C_1 >0$ and some $\delta>0$ such that
\begin{align*}
\Vert \partial_{\xi} \wt{h} \Vert_{L^2} \leqslant \e_0 + C_1 t \rho^{1-\beta + \delta}(t) \e^{\beta} 
  \leqslant  C_1 \bigg(\e_0 + \e_0^{2\delta} \big(t \rho^{1-\beta}(t) \e^{\beta} \big) \bigg).
\end{align*}
Therefore, since $t \rho^{1-\beta}(t) \geqslant (1/2)\e_0^{1-2\beta}$,
(for $\e_0$ small enough) %and $\rho\leqslant \e_0^2$
it suffices to choose $\bar{\e}$ such that $C_1 \big( (\overline{\e}/C_0)^{\beta} + \overline{\e}^{2\delta} ) < 1,$
to conclude that the first inequality in \eqref{bootconc} holds.

Finally, the bound on the $H^N$ norm in \eqref{bootconc} is a consequence of Proposition \ref{propHN}.
\end{proof}

\begin{remark}\label{localT}
In view of the local bootstrap \eqref{locboota}-\eqref{locbootc}, 
%This lemma allows us to control the solution up to the local time of existence.
from now on we will always assume that $t \geqslant \e_0^{-1}  \approx \rho_0^{-1/2}.$
Note that past this local time of existence, the first estimate in
\eqref{bootconc}
is implied by %there exists $\delta>0$ 
%such that 
$\Vert \partial_{\xi} \wt{h} \Vert_{L^2} \leqslant t \rho^{1-\beta}(t) \e^{\beta}$,
and it suffices to show that
there exists $\delta>0$ such that $\Vert \partial_{\xi} \wt{h} \Vert_{L^2} 
  \lesssim t \rho^{1-\beta+\delta}(t) \e^{\beta}$ (since $\rho(t) \lesssim \e_0^2$).
  
%Also note that, past the extended local existence time $1/\e^2$, we have 
%$t \rho^{1-\beta}(t) \e^{\beta} \geqslant t^{\alpha}$ for any $\alpha < \beta/2$;
%in particular bounds by arbitrary small powers of $t$ for the weighted norm 
%in the bootstrap %$\| \partial_{\xi} \wt{h} \|_{L^2}$
%are sufficient.
\end{remark}

%%%%%%%%%%%%%%%%%%%%%%%%%%%%%%%%%%%%%%%%%%%%%%%%%%%%%%%%%%%%%%%%%%%%%%%%%%
%%%%%%%%%%%%%%%%%%%%%%%%%%%%%%%%%%%%%%%%%%%%%%%%%%%%%%%%%%%%%%%%%%%%%%%%%%
%%%%%%%%%%%%%%%%%%%%%%%%%%%%%%%%%%%%%%%%%%%%%%%%%%%%%%%%%%%%%%%%%%%%%%%%%%
%%%%%%%%%%%%%%%%%%%%%%%%%%%%%%%%%%%%%%%%%%%%%%%%%%%%%%%%%%%%%%%%%%%%%%%%%%
%%%%%%%%%%%%%%%%%%%%%%%%%%%%%%%%%%%%%%%%%%%%%%%%%%%%%%%%%%%%%%%%%%%%%%%%%%
%%%%%%%%%%%%%%%%%%%%%%%%%%%%%%%%%%%%%%%%%%%%%%%%%%%%%%%%%%%%%%%%%%%%%%%%%%
%%%%%%%%%%%%%%%%%%%%%%%%%%%%%%%%%%%%%%%%%%%%%%%%%%%%%%%%%%%%%%%%%%%%%%%%%%

\medskip
\subsection{Preliminary estimates}

\smallskip
\subsubsection{Consequences of the a priori bounds}
We now summarize the main dispersive estimates that are a consequence of the a priori assumption \eqref{boot}.

\begin{lemma}\label{dispersive-bootstrap}
We have
\begin{align}\label{dispbootg}
\begin{split}
& \Vert e^{itL} g(t)  \Vert_{L^6_x}   \lesssim  \rho(t) \ln (1+t) \big( \Vert (1-\Delta) \phi \Vert_{L^{12/5}}^2 
  + \Vert \phi \Vert_{H^3_x}^2  \big),  
\\
& \Vert e^{itL} g(t)  \Vert_{L^3_x} \lesssim \rho(t) \ln (1+t) \langle \sqrt{t} \rangle 
  \big( \Vert (1-\Delta) \phi \Vert_{L^{12/5}}^2 + \Vert \phi \Vert_{H^3_x}^2  \big).
\end{split}
\end{align}
More generally,
\begin{align} \label{dispbootgbis}
\Vert e^{itL} g(t)  \Vert_{L^p_x}  \lesssim \rho(t) t^{1-3(1/2-1/p)} \ln (1+t),
\end{align}
%\comment{Write the above for general $p$}
and
\begin{align}\label{dispbootf}
\begin{split}
& \Vert e^{itL} h(t)  \Vert_{L^q_x}, \quad \Vert e^{itL} f(t)  \Vert_{L^q_x} 
	\lesssim \rho(t)^{1-\beta} t^{\frac{5(q-2)}{2q}\delta_N + \frac{6-q}{2q}}  \varepsilon^{\beta},
\end{split}
\end{align}
for $2 \leqslant q \leqslant 6.$
\end{lemma}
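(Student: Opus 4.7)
The plan is to establish the two sets of bounds separately: first the estimates \eqref{dispbootg}--\eqref{dispbootgbis} on $e^{itL}g$, which follow from the explicit formula \eqref{def-h} and the linear dispersive estimates of Lemma \ref{decay}, and then the $L^q$ bounds \eqref{dispbootf} on $e^{itL}h$ (from which the bounds on $e^{itL}f = e^{itL}(h-g)$ follow by combining with the previous step).

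For the bounds on $g$, I would commute $e^{itL}$ past the time integral in \eqref{def-h} and view the inner function $\chi_C(L-2\lambda){\bf P_c}\phi^2$ as a fixed, essentially Schwartz-type function (using the boundedness of wave operators from Theorem \ref{Wobd}). Combined with the bound $|B(s)|^2 \lesssim \rho(s)$ from Lemma \ref{renorm-A} and the amplitude bootstrap \eqref{bootstrap-A-A'}, the dispersive estimate \eqref{decayp} gives
$$\|e^{itL}g(t)\|_{L^p_x} \lesssim \int_0^t \langle t-s\rangle^{-3(1/2-1/p)}\rho(s)\,ds \cdot \|\chi_C(L-2\lambda){\bf P_c}\phi^2\|_{W^{k,p'}_x},$$
for a suitable $k$. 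For $p<6$ the kernel is integrable near $s=t$, and splitting the interval into $[0,t/2]$ (where the kernel is dominated by $t^{-3(1/2-1/p)}$) and $[t/2,t]$ (where $\rho(s) \approx \rho(t)$) will give the claimed bound, with the logarithmic factor arising from integrating $\rho$ on the low-$s$ piece. The main obstacle is the endpoint $p=6$, where the $|t-s|^{-1}$ kernel is not integrable at $s=t$: to handle it I would carve out a unit-length window $[t-1,t]$ and bound the contribution there by the Sobolev embedding $H^1 \hookrightarrow L^6$, so that the short-time piece reduces to $\int_{t-1}^t \rho(s)\,ds \cdot \|\phi\|_{H^3_x}^2 \lesssim \rho(t)$, while the remaining part $[0,t-1]$ is treated with the dispersive estimate and generates the $\log(1+t)$ factor in \eqref{dispbootg}.

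For the $L^q$ estimates on $h$, I would perform a distorted Littlewood--Paley decomposition at a threshold $2^K \sim t^\gamma$ and split
$$\|e^{itL}h\|_{L^6_x} \lesssim \sum_{k \leq K}\|e^{itL}P_k h\|_{L^6_x} + \|e^{itL}P_{>K}h\|_{L^6_x}.$$
The refined linear decay \eqref{decay6} applied to each low-frequency piece, together with the weighted bootstrap in \eqref{boot}, controls the first sum by $2^{5K/3}t^{-1}\langle t\sqrt{\rho(t)}\rangle\rho^{1/2-\beta}(t)\e^\beta$, while the high-frequency piece is controlled by Bernstein together with the $H^N$ bootstrap, contributing $2^{-(N-1)K}\e$. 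Choosing $\gamma$ comparable to $\delta_N$ and $N$ sufficiently large balances these two and produces \eqref{dispbootf} at $q=6$. The intermediate exponents $2 \leq q \leq 6$ then follow by interpolation against the trivial $L^2$ bound $\|h\|_{L^2_x}\leq \e$ from the second line of \eqref{boot}. The corresponding bounds on $e^{itL}f$ follow by combining the $h$ estimates with \eqref{dispbootgbis}, since the $g$ contribution is dominated by the $h$ contribution in the stated range of $q$. The only real technical point throughout is the endpoint $L^6$ bound for $g$, where the non-integrable $|t-s|^{-1}$ kernel forces a switch to a Sobolev-embedding argument near $s=t$; the remaining estimates are routine bookkeeping of integrable kernels, Littlewood--Paley sums, and interpolation.
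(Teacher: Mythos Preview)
Your proposal is correct and follows essentially the same approach as the paper's proof: for $g$ you split the time integral at $t-1$, using the dispersive estimate on $[0,t-1]$ (with the further $[0,t/2]\cup[t/2,t-1]$ decomposition) and Sobolev embedding on $[t-1,t]$; for $h$ at $q=6$ you split frequencies at $t^{\delta_N}$ and balance \eqref{decay6} against the $H^N$ bound, then interpolate against $L^2$ for general $q$. These are exactly the steps the paper carries out.
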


\begin{proof}
The proofs are standard. 
For \eqref{dispbootg}, splitting the integral $\int_0 ^t = \int_0^{t-1} + \int_{t-1}^{t}$ we treat the main piece using Lemma \ref{decay}:
\begin{align*}
\Vert e^{itL} g(t)  \Vert_{L^6_x} &\lesssim \int_0 ^{t-1} \frac{1}{t-s} \rho(s) ds \Vert (1-\Delta) \phi \Vert_{12/5}^2 
\\
& \lesssim \bigg[ \frac{1}{t} \int_0 ^{t/2} \rho(s)ds + \rho(t) \int_{t/2}^{t-1} \frac{1}{t-s} ds \bigg] \Vert (1-\Delta) \phi \Vert_{12/5}^2 .
\end{align*}
For the other piece we use Sobolev embedding to obtain the result.  The proof is the same for the second inequality on $g$.

The inequality on $e^{itL} h$ in \eqref{dispbootf} in the case $q=6$
is a consequence of the second decay estimate from Lemma \ref{decay} for frequencies less than $t^{\delta_N}$.
The inequality for the remaining frequencies is obtained using Sobolev's embedding and Bernstein's inequality
with the a priori bound on the Sobolev norm in \eqref{boot}.
The estimate for $f$ is obtained from summing the estimates for $h$ and $g$. 
%The decay estimate stated for $q=6$ directly follows from Lemma \ref{decay} after introducing a frequency cut-off.

Finally, to prove the $L^q$ estimates, we use the interpolation inequality
$$\Vert f \Vert_{L^{q}} \lesssim \Vert f \Vert_{L^6}^{\frac{3(q-2)}{2q}} \Vert f \Vert_{L^2}^{\frac{6-q}{2q}},$$ 
and the conclusion follows.
\end{proof}

The above bootstrap assumptions also imply a decay bound on $\Vert \partial_s \wt{f} \Vert_{L^2}:$

\begin{lemma} \label{decay-der}
We have 
\begin{align}\label{estdsh}
\Vert \partial_s h \Vert_{L^2_x} \lesssim \rho(s).
\end{align}
Also,
\begin{align}\label{estdsg}
{\big\| e^{isL} \partial_s g \big\|}_{L^p_x} \lesssim \rho(s), \qquad 1 \leqslant p \leqslant \infty.
\end{align}
\end{lemma}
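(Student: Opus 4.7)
The plan is to split $\partial_s h = \partial_s f + \partial_s g$ and handle the two pieces separately.

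For $\partial_s g$, the definition \eqref{def-h} is explicit, so differentiating $\wt{g}$ in $s$ gives
\begin{align*}
\partial_s \wt{g}(s,\xi) = -\chi_C(\xi)\, B^2(s)\, e^{-is(\jxi - 2\lambda)}\wt{\phi^2}(\xi).
\end{align*}
Multiplying by $e^{is\jxi}$ (equivalently, applying $e^{isL}$ in physical space) cancels the full $\xi$-dependent oscillation in $s$, leaving $B^2(s)\, e^{2i\lambda s}$ times the fixed Schwartz function whose dFT is $\chi_C\wt{\phi^2}$. The boundedness of wave operators on $W^{k,p}$ from Theorem \ref{Wobd}, together with $|B(s)|\lesssim |A(s)|$ from Lemma \ref{renorm-A}, then yields \eqref{estdsg}; in particular the $L^2_x$ case gives the $\partial_s g$ contribution to \eqref{estdsh}.

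For $\partial_s f$, a short computation from $f = e^{-isL}w$, $w = \partial_s v + iLv$, and the second equation in \eqref{sysav} gives $\partial_s w = iLw + {\bf P}_c (a\phi + v)^2$. Since $e^{-isL}$ commutes with $L$, the linear terms cancel and one obtains the clean identity
\begin{align*}
\partial_s f = e^{-isL} {\bf P}_c (a\phi + v)^2.
\end{align*}
Expanding the square, the $L^2_x$-norm is estimated in three pieces: the purely discrete term $a^2\phi^2$ is bounded trivially by $\rho(s)\|\phi^2\|_{L^2}$; the mixed term $av\phi$ is bounded via H\"older by $\rho^{1/2}(s)\|\phi\|_{L^3}\|v\|_{L^6}$; and $v^2$ is bounded by $\|v\|_{L^6}\|v\|_{L^3}$. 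The relation $v = L^{-1}\Im w = L^{-1}\Im(e^{isL}h - e^{isL}g)$ together with the boundedness of $L^{-1}$ on $L^p$, $1<p<\infty$, then reduces everything to the dispersive estimates \eqref{dispbootg}--\eqref{dispbootf}.

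The only step requiring care is the $v^2$ piece, where \eqref{dispbootf} at $q=3$ contributes a factor $s^{5\delta_N/6 + 1/2}$ that needs to be absorbed. Restricting to $s \geq \e_0^{-1}$ as in Remark \ref{localT}, so that $\rho(s)\lesssim \jt^{-1}$, and combining the $q=6$ and $q=3$ versions of \eqref{dispbootf} with the (easier) $g$-contributions from \eqref{dispbootg} yields, after substituting $\rho(s)\lesssim s^{-1}$, a bound of the form $\e^{\sigma}\rho(s)$ with $\sigma>0$, provided $\beta$ and $\delta_N$ are small enough --- precisely the regime of \eqref{mteps}. The same numerology handles the $av\phi$ term. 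Summing all contributions gives $\|\partial_s f\|_{L^2_x}\lesssim \rho(s)$, which together with the $\partial_s g$ estimate proves \eqref{estdsh}. No genuinely hard step appears beyond verifying that the small polynomial powers of $s$ produced by the dispersive estimates are absorbed by matching powers of $\rho(s)$ and a small factor of $\e$.
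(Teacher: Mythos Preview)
Your proof is correct and follows essentially the same approach as the paper: both differentiate the explicit formula for $\wt{g}$ to obtain \eqref{estdsg}, and both use the identity $\partial_s f = e^{-isL}{\bf P}_c(a\phi+v)^2$ together with H\"older and the dispersive bounds of Lemma~\ref{dispersive-bootstrap} to control each piece of the square. Your discussion of the $v^2$ term is slightly more explicit than the paper's, but the underlying computation is the same; note that $\rho(s)\lesssim \js^{-1}$ in fact holds for all $s$ (not just $s\geq \e_0^{-2}$) since $\rho(s)\approx \rho_0/(1+\rho_0 s)\leq \min(\rho_0,s^{-1})$, so your substitution is valid throughout.
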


\begin{proof}
From the definition \eqref{def-h}, and using the equation $\partial_s f = e^{-isL} (\partial_s -iL)w$,
we have
\begin{align}
\begin{split}
\partial_s \wt{h}(s,\xi) & = \partial_s \wt{f} - \chi_C(\xi) B^2(s) e^{-is(\jxi-2\lambda)} \, \wt{\phi^2}(\xi)
 \\
 & = e^{-is\jxi} \wtF \Big( \big(a \phi + \dfrac{1}{L} \Im w \big)^2 \Big) 
 - \chi_C(\xi) B^2(s) e^{-is(\jxi-2\lambda)} \, \wt{\phi^2}(\xi).
\end{split}
\end{align}
Then, from H\"older, the apriori bounds \eqref{boot}
and Lemma \ref{dispersive-bootstrap} we obtain
\begin{align}\label{estdsh0}
\Vert \partial_s \widetilde{h} \Vert_{L^2} \lesssim \rho(s) 
  +\rho(s)^{\frac{3-\beta}{2}} s^{3\delta_N} \varepsilon^{\beta} 
  +\rho(s)^{2-\beta} s^{1/2+3\delta_N}  \varepsilon^{2\beta},
\end{align}
from which \eqref{estdsh} directly follows.

\eqref{estdsg} follows directly from the definition \eqref{def-h} and the a priori bound \eqref{bootstrap-A-A'}
transferred to $B$ via \eqref{defB}.
\end{proof}

\smallskip
\subsubsection{Basic bounds on the Fermi/bad component} 
We collect basic properties satisfied by $g$:
\begin{lemma}
For $s\approx 2^m$, $m=1,2, \dots$, we have
\begin{align} \label{inftyfreqg}
{\Vert \wt{g}(s) \Vert}_{L^{\infty}_{\xi}} \lesssim m \rho(2^m) 2^m,
\end{align}
and, for integers $0 \leqslant n \leqslant N$,
\begin{align}\label{eng}
%{\| g \|}_{H^N} \approx 
{\Vert D^n \widehat{\mathcal{F}}^{-1} \wt{g}(s) \Vert}_{L^2} \lesssim  \e^{1-}, %m2^{m} \rho(2^m),
\qquad 
{\Vert D^n \widehat{\mathcal{F}}^{-1} \partial_\xi \wt{g}(s) \Vert}_{L^2} \lesssim 2^{m/2} . %2^{2m} \rho(2^m).
\end{align}
Moreover
\begin{align}\label{growth}
{\Vert \partial_{\xi} \wt{g}(s,\xi) \Vert}_{L^2_{\xi }} \lesssim m^2 2^{3m/2} \, \rho(2^m),
\end{align}
and
\begin{align}\label{dxiwtgest}
{\Vert \partial_{\xi} \wt{g}(s,\xi) \Vert}_{L^1_{\xi}} \lesssim 2^m \rho(2^m) m^2 .
\end{align}
Finally
\begin{align}\label{dxi2g}
\sup_{s\approx 2^m} {\| \partial_\xi^2 \wt{g}(s,\xi) \|}_{L^2_{ \xi}} \lesssim \rho(2^m) 2^{3m}.
\end{align}
\end{lemma}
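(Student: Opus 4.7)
The plan is to apply Leibniz's rule to $\partial_\xi^2 \wt{g}$ viewed as the product of three $\xi$-dependent factors: the cutoff $\chi_C(\xi)$, the time integral $I(s,\xi) := \int_0^s B^2(\tau) e^{-i\tau(\jxi-2\lambda)} d\tau$, and $\wt{\phi^2}(\xi)$. Each derivative hitting the phase produces a factor $-i\tau \cdot \xi/\jxi$, so the dominant contribution arises when both derivatives land on the exponential, yielding a factor $\tau^2 (\xi/\jxi)^2$ (a subleading term coming from $\partial_\xi^2 \jxi$ contributes only a factor of $\tau$). All other terms in the Leibniz expansion are better, since each derivative not landing on the phase saves a factor of $\tau$ and produces a smooth bounded function: the derivatives of $\chi_C$ and of $\wt{\phi^2}$ are smooth, with $\chi_C$ supported in a compact shell near $|\xi| = \sqrt{4\lambda^2-1}$.

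The main step is the pointwise-in-$\xi$ bound on the worst term,
\[
\left| \int_0^s \tau^2 B^2(\tau)\, e^{-i\tau(\jxi-2\lambda)}\, d\tau \right| \lesssim \int_0^s \tau^2 \rho(\tau)\, d\tau,
\]
using $|B|^2 \lesssim \rho$ from Lemma \ref{renorm-A} together with the a priori bound \eqref{bootstrap-A-A'}, which gives $\rho(\tau) \approx \rho_0/(1+\rho_0\tau)$. A direct computation, splitting at $\tau \approx \rho_0^{-1}$, yields $\int_0^s \tau^2 \rho(\tau)\, d\tau \lesssim s^3 \rho(s)$ uniformly in $s \geq 0$: in the regime $s \lesssim \rho_0^{-1}$ the integrand is bounded by $\rho_0 \tau^2$ and $\rho(s) \approx \rho_0$, while for $s \gtrsim \rho_0^{-1}$ the integral is $\approx s^2$ and $s^3 \rho(s) \approx s^2$. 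Since $\chi_C$ is compactly supported and $\wt{\phi^2}$ is bounded, transferring this pointwise bound to $L^2_\xi$ costs only an absolute constant.

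The remaining Leibniz terms are dominated by the same expression: those with exactly one derivative on the phase produce $\int_0^s \tau \rho(\tau)\, d\tau \lesssim s^2 \rho(s)$, and those with no derivative on the phase produce $\int_0^s \rho(\tau)\, d\tau \lesssim s \rho(s) \log(2+s)$, both of which are dominated by $s^3 \rho(s)$ for $s \approx 2^m \geq 1$. I do not foresee any essential obstacle here: the inequality is a crude one that entirely ignores oscillations in the $\tau$ integral, and the argument reduces to Leibniz's rule combined with the elementary computation of $\int_0^s \tau^2 \rho(\tau)\, d\tau$. The only mild book-keeping point is keeping track of the $\rho_0^{-1}$ time scale separating the two regimes of $\rho$, which is needed to verify that the estimate $\lesssim s^3 \rho(s)$ holds uniformly rather than only in one asymptotic regime.
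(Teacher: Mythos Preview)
Your proposal is correct and takes essentially the same approach as the paper, which simply says that the bound \eqref{dxi2g} ``is obtained more crudely just by differentiating the definition of $\wt{g}$.'' Your Leibniz expansion and the elementary computation $\int_0^s \tau^2 \rho(\tau)\,d\tau \lesssim s^3\rho(s)$ (checked in both regimes $s\lesssim \rho_0^{-1}$ and $s\gtrsim \rho_0^{-1}$) are exactly the content of that one-line remark.
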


\begin{proof}
The property \eqref{inftyfreqg} is straightforward from the definition of $g$ in \eqref{def-h} and using \eqref{bootstrap-A-A}.

The first inequality in \eqref{eng} follows from \eqref{HNg}, and the second can be proved similarly.
%(less than optimal) inequality follows from direct differentiation of \eqref{def-h} and integration.

To prove \eqref{growth} we split, for some $A<-C-1$ to be determined later,
\begin{align*}
\partial_{\xi} \wt{g}(s,\xi)  &= I_A(s,\xi) +  \sum_{k \geqslant A+1}^{-C} II_k(s,\xi) + \lbrace \textrm{easier terms} \rbrace, 
\\
I_A(s,\xi) &:=  \frac{\xi}{\jxi} \varphi_{\leqslant A}(\jxi - 2 \lambda)  \int_0^s \tau 
  B^2 (\tau) e^{-i\tau(\jxi - 2 \lambda)} d\tau \wt{\theta} (\xi) 
  \\
II_k(s,\xi) &:= \frac{\xi}{\jxi}  \varphi_{k}(\jxi - 2 \lambda)
  \int_0^s \tau B^2 (\tau) e^{-i\tau(\jxi - 2 \lambda)} d\tau \wt{\theta} (\xi),
\end{align*}
where $\textrm{easier terms}$ refers to cases where the derivative falls 
on either the $\varphi_{\leqslant -C}$ cut-off or on $\wt{\theta}.$
For the first piece we have the estimate $ \Vert I(s,\xi) \Vert_{L^2_{\xi}} \lesssim 2^{A/2+m}.$
\iffalse
Note the additional justification
\begin{align*}
\int_0^t s \frac{\rho_0}{1+\rho_0 s} ds  = t.
\end{align*}
\fi
For the second we integrate by parts in time and bound the two resulting pieces using Lemma \ref{renorm-A} and \eqref{bootstrap-A-A}:
\begin{align*}
\Vert II_k (s,\xi) \Vert_{L^2_{\xi}}  
  & \lesssim \bigg \Vert \frac{e^{-is(\langle \xi \rangle - 2 \lambda)}}{\langle \xi \rangle -2 \lambda} 
  \frac{\xi}{\langle \xi \rangle} s B^2(s) \widetilde{\theta}(\xi) \varphi_k (\jxi - 2 \lambda) \bigg \Vert_{L^2_{\xi}} 
  \\ 
&+ \bigg \Vert  \frac{\xi}{\langle \xi \rangle} \varphi_k (\jxi - 2 \lambda) 
  \int_0^s \tau B'(\tau) B(\tau) e^{-i\tau(\langle \xi \rangle - 2 \lambda)}  d\tau 
  \frac{\widetilde{\theta}(\xi)}{\langle \xi \rangle - 2 \lambda} \bigg \Vert_{L^2_{\xi}}
  \\
& \lesssim \bigg( \int_{\vert \langle \xi \rangle - 2\lambda \vert \approx 2^k} d\xi \bigg)^{1/2} 
  2^{-k} \bigg[ s \rho(s) + \ln (s \rho_0+1)  \bigg].
\end{align*}
\iffalse
Note the additional justification
\begin{align*}
t \rho(t) + \ln (t \rho_0 + 1) \leqslant t \rho(t) \ln t
\end{align*}
\fi
Optimizing, we choose $A = -m,$ and obtain \eqref{growth}.

For the \eqref{dxiwtgest} we have similarly
\begin{align*}
\Vert I_A (s,\xi) \Vert_{L^1_{\xi}} \lesssim 2^{A+m},
  \qquad \Vert II_k (s,\xi) \Vert_{L^1_{\xi}}  \lesssim \big[s \rho(s) + \ln (s\rho_0 + 1) \big] (-A),
\end{align*}
and the result follows with the same choice of $A$ as above.

The last bound is obtained more crudely just by differentiating the definition of $\wt{g}$ in \eqref{def-h}.
\end{proof}

\medskip
\subsubsection{Source terms} \label{source-terms}
We have the following bound on the `good' source terms \eqref{S}:

\begin{lemma} \label{source}
Under the a priori assumptions \eqref{bootstrap-A-A0}, we have, for some $\delta \in (0,\beta/2)$,
\begin{align} \label{mainsource}
 \Vert \partial_{\xi} S  \Vert_{L^2} \lesssim t \rho(t)^{1-\beta+\delta} \e^\beta.
\end{align}
\end{lemma}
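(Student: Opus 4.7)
The plan is to decompose $S = S_1 + S_2 + S_3$ as in \eqref{S} and estimate each $\partial_\xi S_i$ by combining integration by parts in $s$ with Plancherel's theorem in $s$. For each piece I would split $\partial_\xi S_i$ into an \emph{amplitude-derivative} part, where $\partial_\xi$ lands on $\wt{\theta}:=\wtF(\mathbf{P_c}\phi^2)$ or on the cutoff $\chi_C$, and a \emph{phase-derivative} part, where $\partial_\xi$ acts on the oscillating exponential and produces an extra factor $-is\,\xi/\jxi$ inside the $s$-integral. The amplitude-derivative terms are harmless: combining $\|\partial_\xi\wt{\theta}\|_{L^2}\lesssim 1$ with the elementary time bounds $\int_0^t |A^2-B^2|(s)\,ds \lesssim \int_0^t \rho^{3/2}\,ds \lesssim \e$ and $\int_0^t(|A|^2+|B|^2)(s)\,ds \lesssim \log(1+t)$, they contribute at most $O(\log t)$ in $L^2_\xi$, which is easily dominated by the right-hand side of \eqref{mainsource}.

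For the phase-derivative part of $S_3$ the phases $\jxi$ and $\jxi+2\lambda$ are bounded below by $1$, so I would integrate by parts once in $s$. This produces a boundary term of $L^2_\xi$-size $\lesssim t\rho(t)\lesssim 1$ and a remainder $\tfrac{1}{i\jxi}\int_0^t e^{-is\jxi}\bigl[|A|^2 + s\,\partial_s|A|^2\bigr]\,ds$, to which I would apply Plancherel in $\tau:=\jxi$. Using $|\partial_s|A|^2|\lesssim \rho^{3/2}$ from \eqref{Adotest}, Plancherel reduces the estimate to
\[
\int_0^t\bigl(|A|^4 + s^2|\partial_s|A|^2|^2\bigr)\,ds \;\lesssim\; \int_0^t(\rho^2 + s^2\rho^3)\,ds \;\lesssim\; \log(1+\e^2 t),
\]
while the Schwartz decay of $\wt{\theta}$ absorbs the Jacobian produced by the radial change of variables $\xi\mapsto\tau$. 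The phase-derivative part of $S_2$ is simpler: the cutoff $1-\chi_C$ enforces $|\jxi-2\lambda|\gtrsim 2^{-C}$, so one IBP in $s$ produces a bounded factor $1/(\jxi-2\lambda)$, and a direct estimate using $|\dot B|\lesssim \rho^{3/2}$ from Lemma \ref{renorm-A}(ii) gives $\int_0^t(|B|^2 + s|B\dot B|)\,ds \lesssim \log(1+\e^2 t)$.

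The main obstacle is the phase-derivative part of $S_1$: because $\wt{\theta}(\xi_0)\neq 0$ by the Fermi Golden Rule \eqref{introFGR}, the factor $1/(\jxi-2\lambda)$ that a naive IBP in $s$ would create is not locally square-integrable on $\mathbb{R}^3$ near the Fermi sphere $|\xi|=\xi_0$, and a pointwise modulus bound on the $s$-integral only gives $\int_0^t s\rho^{3/2}(s)\,ds \approx \sqrt{t}$, which exceeds the target by a factor $\sim t^{1/2-\beta+\delta}$. My approach is to bypass IBP altogether and apply Plancherel directly in $\tau:=\jxi-2\lambda$: setting $G(\tau) := \int_0^t s\,e^{-is\tau}(A^2-B^2)(s)\,ds$ and using $|A^2-B^2|\lesssim \rho^{3/2}$ (which follows from the normal form \eqref{defB}, giving $B-A = O(A^2)$ and hence $A^2-B^2 = (A-B)(A+B) = O(\rho^{3/2})$), one obtains
\[
\|G\|_{L^2_\tau(\mathbb{R})}^2 \;=\; 2\pi\int_0^t s^2|(A^2-B^2)(s)|^2\,ds \;\lesssim\; \int_0^t s^2\rho^3(s)\,ds \;\lesssim\; \log(1+\e^2 t).
\]
The radial change of variables on $\mathbb{R}^3$, combined with the Schwartz decay of $\wt{\theta}$ (which absorbs the Jacobian factor $(\tau+2\lambda)\sqrt{(\tau+2\lambda)^2-1}$), transfers this to $\|(\xi/\jxi)\,\wt{\theta}\cdot G(\jxi-2\lambda)\|_{L^2_\xi}\lesssim \sqrt{\log(1+\e^2 t)}$, replacing the $\sqrt{t}$ pointwise bound by a logarithmic integrated bound.

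Gathering the three contributions yields $\|\partial_\xi S\|_{L^2}\lesssim \log(1+\e^2 t)$, which is much smaller than the target $t\rho(t)^{1-\beta+\delta}\e^\beta$: for $t\gtrsim \e^{-2}$ the latter is comparable to $t^{\beta-\delta}\e^\beta$ and is at least $\e^{-\beta+2\delta}$ at $t=\e^{-2}$, while for smaller times it behaves like $t\,\e^{2-\beta+2\delta}$. A short case analysis, using $\delta < \beta/2$ so that $\beta-2\delta>0$, then verifies \eqref{mainsource} for $\e$ sufficiently small.
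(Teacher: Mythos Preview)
Your proposal is correct and gives a sharper bound than the target: all pieces are controlled by $O(\log(2+\e^2 t))$, which is comfortably dominated by $t\rho(t)^{1-\beta+\delta}\e^\beta$ (the ratio is at worst $\e^{\beta-2\delta}/(\beta-\delta)$, small for $\e$ small). The Plancherel argument for $S_1$ is sound: writing $G(\tau)=\int_0^t s\,e^{-is\tau}(A^2-B^2)(s)\,ds$ and changing variables radially via $\tau=\jxi-2\lambda$, the Jacobian $(\tau+2\lambda)\sqrt{(\tau+2\lambda)^2-1}$ is absorbed by the rapid decay of $\wt\theta$ (which follows from $\phi^2\in\mathcal{S}$ and the intertwining relation $|\xi|^{2k}\wtF f=\wtF((-\Delta+V)^kf)$), so $\|G(\jxi-2\lambda)\wt\theta\|_{L^2_\xi}\lesssim\|G\|_{L^2_\tau}$, and Plancherel gives $\|G\|_{L^2_\tau}^2=2\pi\int_0^t s^2\rho^3\,ds\lesssim\log(2+\e^2 t)$.

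Your route differs genuinely from the paper's. For $S_1$ the paper does not use Plancherel in $\xi$; instead it performs a normal-form expansion, writing $A^2-B^2=2B(A-B)+(A-B)^2$ and then expanding $A-B$ via \eqref{defB} into a finite sum of terms $c\,e^{i n\lambda s}A_\pm^2$ with $n\in\{1,-1,-3\}$. Each such term, when paired with $e^{-is(\jxi-2\lambda)}$, produces a shifted phase that is either bounded away from zero (handled by IBP in $s$ after dyadic time-localization as for $S_2$) or equals $\jxi-3\lambda$, which is treated by a frequency-splitting argument mimicking the proof of \eqref{growth}. For $S_3$ the paper first replaces $A$ by $B$ and recycles the $S_1$ and $S_2$ arguments; you instead integrate by parts once in $s$ and then apply Plancherel to the remainder. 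Your approach is more streamlined: it avoids the combinatorics of the normal-form expansion and the dyadic time decomposition, relying only on the single pointwise bound $|A^2-B^2|\lesssim\rho^{3/2}$. The paper's approach, by contrast, makes the oscillatory structure of $A^2-B^2$ explicit, which is more in line with the rest of the analysis and would be more robust if only a weaker bound (say $|A^2-B^2|\lesssim\rho^{1+}$) were available.
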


\begin{proof}
We start with the easier term $S_2$. 
We localize in time using \eqref{timedecomp} and treat the main contribution
integrating by parts in $s$ and using Lemma \ref{renorm-A}:
\begin{align*}
&\bigg \Vert (1- \chi_{C}(\xi)) \int_0^t \tau_m(s) s \frac{\xi}{\jxi}\, e^{-is(\langle \xi \rangle -2 \lambda)} B^2(s) 
\,  \widetilde{\theta(\xi)}  ds \bigg \Vert_{L^2} 
  \\
& \lesssim \bigg \Vert \tau_m(t) t B^2(t) \frac{\xi}{\langle \xi \rangle} 
  \frac{e^{-it(\langle \xi \rangle - 2 \lambda)}}{\langle \xi \rangle -2 \lambda} 
  (1-\chi_C(\xi)) \, \widetilde{\theta(\xi)} \bigg \Vert_{L^2} 
  \\
&+ \bigg \Vert  \frac{1-\chi_C(\xi)}{\langle \xi \rangle - 2 \lambda} 
  \frac{\xi}{\langle \xi \rangle} \int_0^t e^{-is(\langle \xi \rangle -2\lambda)} 
 \big( 2s B'(s) B(s) \tau_m(s) + s B^2(s) 2^{-m} \tau'_m(s) + B^2(s)\tau_m(s)\big) ds \, \widetilde{\theta(\xi)} \bigg \Vert_{L^2} 
  \\
& \lesssim \int_{s \approx 2^m} %s \rho(s)^2
 \rho(s) ds \lesssim \ln \langle 2^m \rho_0 \rangle.
\end{align*}
Summing over $m$ gives a bound by $\log \langle t \rho_0^2 \rangle$
which is acceptable for the right-hand side of \eqref{mainsource}.
The above integration by parts in time 
is the typical strategy that we can use to treat non-resonant terms with non-vanishing phases. 

For the term $S_1$ in \eqref{Scubic} we can estimate 
\begin{align*}
\Vert \partial_{\xi} S_1 \Vert_{L^2} \lesssim \sum_m \Bigg \Vert \frac{\xi}{\jxi} 
  \int_0^t \tau_m(s) s e^{-is(\jxi-2\lambda)} \big( A^2(s)- B^2(s) \big) \wt{\theta}(\xi) ds \Bigg \Vert_{L^2},
\end{align*} 
having disregarded the term where the derivative falls on $\wt{\theta}$ which is faster decaying
and easy to estimate. 
Now we write $A^2 - B^2 = 2B (A-B) +(A-B)^2.$
Using \eqref{defB}, we see that $(A-B)^2 = O(A^4),$ hence
\begin{align} \label{Seasy}
\Bigg \Vert \frac{\xi}{\jxi} \int_0 ^t s \tau_m(s) e^{-is(\jxi-2\lambda)} \big( A(s) - B(s) \big)^2 
  \wt{\theta}(\xi) ds \Bigg \Vert_{L^2} \lesssim 2^{2m} \rho^2(2^m),
\end{align}
which is sufficient.  For the other term with $B(A-B)$, we also use \eqref{defB} to expand $A-B$ and obtain three terms. 
The cases of the terms $B\overline{A}^2$ and $B \vert A \vert^2$ are non-resonant,
and the associated phase does not vanish %, and therefore faster decaying. 
and can be handled as $S_2$ above. 
We focus on the more difficult term with $BA^2$:
\begin{align*}
\Vert \partial_{\xi} S_1 \Vert_{L^2}  \lesssim \sum_m \Bigg \Vert \frac{\xi}{\jxi} 
\int_0^t s \tau_m(s) e^{-is(\jxi-3\lambda)} B(s) A^2(s) \wt{\theta}(\xi) ds \Bigg \Vert_{L^2}.
\end{align*}
As we did above, we can write $B A^2 = B (A^2 - B^2) + B^3,$ the first term having enough 
decay to be treated as \eqref{Seasy} above. 
For the remaining piece, we can proceed exactly as in the proof of 
\eqref{growth} above, replacing $2 \lambda$ by $3 \lambda$,
and obtain the desired bound.

For the remaining term $S_3$ in \eqref{Snr}, we start by replacing the $A$ terms by $B$ terms. 
The error terms are then of the same type as $S_1,$ but easier to treat since the phases are not resonant. 
The terms that remain are then of the same type as $S_2,$ in the sense that they 
are quadratic in $B$ with a non-resonant phase. Therefore, the same argument applies
and we can skip the details.
\end{proof}

\medskip
\section{Field self-interactions}\label{SecF}
Our main goal is to prove the main weighted bound for the terms that 
are quadratic in the continuous component $f$ of the solution. %=h-g$.
In (distorted) Fourier space this is the term $F$ from \eqref{Fepssquad} 
(or equivalently the last term of \eqref{Duhamelf})
which, recall, is given by
\begin{align}\label{secF00}
\begin{split}
& F = F(t,\xi) %= -\frac{1}{4}\sum_{\epss} \eps_1\eps_2 F_{\epss}\big(f_{\eps_1},f_{\eps_2}\big),
  = \wtF\Big( \int_0^t e^{-isL} \Big( \frac{\Im w(s)}{L}\Big)^2 ds \Big)(\xi), \qquad w = e^{itL} f.
\\
%& F_{\epss}(a,b)  := \int_0^t \int_{\R^6} \frac{e^{-is \Phi_\epss (\xi,\eta,\sigma)}}{\jeta \jsig} 
%    \wt{a}(s,\eta) \wt{b}(s,\sigma) \mu(\xi,\eta,\sigma) d\eta d\sigma, 
%    \\
%    & \qquad \Phi_\epss (\xi,\eta,\sigma):= \jxi -\eps_1\jeta -\eps_2\jsig,
%      \qquad f_+=f, f_- = \bar{f}.
\end{split}
\end{align}
The main result we want to prove is:

\begin{proposition}\label{mainquad}
Under the a priori assumptions \eqref{bootstrap-A-A'}-\eqref{boot} we have
\begin{align}\label{mainquadbound} 
{\| \partial_{\xi} F(t) \|}_{L^2} \lesssim \e^{\beta} t \rho^{1-\beta + \delta}(t),
\end{align}
for some $\delta \in (0,\beta/2).$
\end{proposition}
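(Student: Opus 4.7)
The plan is to decompose the bilinear structure in \eqref{Fepssquad} along two axes: the splitting $f = h - g$ of the profile into its regular and Fermi parts (see \eqref{def-h}), and the splitting $\mu = \mu_S + \mu_R$ of the nonlinear spectral distribution into its singular and regular pieces (cf. \eqref{intromu'} and Appendix \ref{Appmu}). First I would use the time cutoffs $\tau_m$ from \eqref{timedecomp} and reduce to the dyadic slab $s \approx 2^m$; throughout, I will restrict the input frequencies to the regime $|\eta|, |\sigma| \lesssim s^{\delta_N}$, since the complementary high-frequency contribution is handled in Section \ref{energyest} via energy/Strichartz arguments exploiting the Sobolev bound in \eqref{boot}. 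After this reduction, $F$ is a sum of sixteen dyadic-in-time bilinears indexed by $(\eps_1,\eps_2)$, by the pair of input types in $\{h,g\}^2$, and by $S$ vs $R$ on the measure.

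Each such piece will be differentiated in $\xi$ and the derivative distributed: it can land on the phase $e^{-is\Phi_\epss}$ producing a factor $is\, \xi/\jxi$; on the measure $\mu$; on the $1/(\jeta\jsig)$ denominators (harmless); or, after an integration by parts in $\eta$ for the $\mu_R$ piece, on one of the profiles $\wt{f_{\eps_j}}$. For the $hh$ contributions the dangerous case $\partial_\xi \to \mathrm{phase}$ is treated by integrating by parts in $s$: the factor $s$ is traded for $1/\Phi_\epss$ (bounded away from zero on the singular locus by the structure of $\mu_S$, as in \cite{PS}) or for Coifman--Meyer-type denominators on the regular side, and the resulting boundary and $\partial_s$ terms are controlled by \eqref{estdsh} combined with the a priori bounds on $\wt h$. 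When $\partial_\xi$ lands on $\wt h$, I would use the bilinear estimates for the kernels associated to $\mu_S$ and $\mu_R$ from Appendix \ref{Appmu2}, combining them with the dispersive bounds from Lemma \ref{dispersive-bootstrap} to absorb one profile in $L^6_x$ and the other in the weighted $L^2$. Each dyadic block then contributes $\lesssim 2^m\rho(2^m)^{1-\beta+\delta}\e^\beta$, which sums geometrically in $m$ to \eqref{mainquadbound}.

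The mixed $hg$, $gh$ and especially the $gg$ contributions are the heart of the argument, since $\wt g$ is badly behaved near the Fermi frequency (cf. \eqref{growth}). Here I will exploit the explicit form of $g$ from \eqref{def-h}: every appearance of $\wt g(s,\cdot)$ is an oscillatory integral in an inner time variable $\tau$ against $B^2(\tau)\wt{\phi^2}$, and the cutoff $\chi_C$ forces $\jeta \approx 2\lambda$ (resp. $\jsig \approx 2\lambda$) wherever $g$ appears. In the $gg$ piece this means that $\Phi_\epss(\xi,\eta,\sigma) = \jxi - \eps_1\jeta - \eps_2\jsig$ is non-resonant in most sign configurations; in the single borderline case $\eps_1 = \eps_2 = +$ with $\jxi$ near $2\lambda$, the output $\xi$ is confined to a shell of thickness $\lesssim s^{-1}$ at time scale $s$, which compensates the growth of $\partial_\xi \wt g$ from \eqref{growth}. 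Integration by parts in the inner time variable $\tau$, together with the renormalization Lemma \ref{renorm-A}, then produces an extra factor of $\rho(s)$ beyond the naive count. For $hg$ and $gh$ the same mechanism applies with one input replaced by $h$, for which the weighted bound in \eqref{boot} is directly available.

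The hardest point will be the $gg$ contribution paired with the singular part $\mu_S$ of the measure: one needs both the time-structure of $g$ and a careful tracking of how $\partial_\xi$ acts through the principal-value and delta components of $\mu$, since smoothness in $\eta,\sigma$ is unavailable there. I expect this is where the small $\delta$ gain in \eqref{mainquadbound} is consumed, and the analysis will split into subcases according to whether the output frequency $\xi$ lies near $\xi_0$, on $\mathrm{supp}\,\chi_C'$, or generic, and according to which of the singular kernels in \eqref{intromu'} is being considered. Once all sixteen cases are assembled and summed dyadically in $m$ up to $m \lesssim \log t$, we obtain \eqref{mainquadbound}.
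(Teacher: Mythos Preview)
Your overall architecture --- split $f=h-g$, split $\mu=\mu^S+\mu^R$, localize in time, restrict to moderate frequencies --- matches the paper's. Your treatment of the $\mu^S$ side is also essentially correct: the paper proves (Lemma \ref{lemphases}) that $|\Phi_\epss|\gtrsim (\jxi+\jeta+\jsig)^{-1}$ on the support of $\mu^S$ for \emph{all} sign combinations and all input types, so integration by parts in $s$ works uniformly there. In particular, the $gg$ contribution paired with $\mu^S$ is not the hardest case; once $1/\Phi_\epss$ is available, the bulk term has a $\partial_s g$ input which is $O(\rho)$ pointwise (see \eqref{estdsg}), and the estimate closes using the bilinear bounds of Theorem \ref{bilinearmeas}. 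Your proposed mechanism for this case --- exploiting the explicit oscillatory-in-$\tau$ structure of $g$, confining $\xi$ to a thin shell near $\jxi\approx 2\lambda$, and integrating by parts in the inner time --- is neither needed nor what the paper does (and note that with $\jeta,\jsig\approx 2\lambda$ and $\eps_1=\eps_2=+$, the vanishing would occur near $\jxi=4\lambda$, not $2\lambda$).

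The genuine gap is your handling of the $\mu^R$ side. You write that the factor of $s$ coming from $\partial_\xi$ hitting the phase is ``traded for $1/\Phi_\epss$ \dots\ or for Coifman--Meyer-type denominators on the regular side,'' i.e.\ you propose integrating by parts in $s$ there too. This fails: on the support of $\mu^R$ the phase $\Phi_\epss$ is \emph{not} bounded away from zero (that is precisely what distinguishes the regular from the singular part in the paper's decomposition), so no such denominator is available. The paper's mechanism on the regular side (Section \ref{secFR}) is instead to integrate by parts in the \emph{frequency} variables $\eta$ and $\sigma$, exploiting the smoothness of $\mu^R$ encoded in \eqref{nuRest}; this converts the factor $s$ into $\nabla_\eta\wt f$ or $\nabla_\sigma\wt f$, which are then controlled by the weighted bound on $h$ in \eqref{boot} or the $L^1_\xi$ bound on $\nabla_\xi\wt g$ in \eqref{dxiwtgest}. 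The case analysis is according to the sizes of the input frequencies (to ensure the frequency IBP gains enough), and it is the $gg$ and $gh$ interactions on the \emph{regular} side --- not the singular side --- that demand the most care.
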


Moreover, as mentioned in Section \ref{secD}, we also need to prove
the estimate \eqref{RFtolater}, which was used to obtain Lemma \ref{remain-ampl},
and whose proof was postponed to the current section. 
Expressed in terms of $F$, see \eqref{secF00}, the estimate we need is contained in the following:

\begin{proposition}\label{mainquaddecay}
%Under the assumptions \eqref{bootstrap-A-A} and \eqref{bootstrap-A-w}, 
Under the a priori assumptions \eqref{bootstrap-A-A'}-\eqref{boot}
there exists $\frac{1}{2}<a<1$ such that 
\begin{align}\label{quaddecaybound}
%\bigg \Vert \int_0^t e^{i(t-s)L} {\bf P_c} \big(\frac{\Im w(s)}{L} \big)^2  ds \bigg \Vert_{L^{\infty}_x} 
  {\big\| e^{itL}  \wtF^{-1}(F)(t) \big\|}_{L^\infty} \lesssim \frac{\rho_0^{a}}{\jt^{1+a}}.
\end{align}
\end{proposition}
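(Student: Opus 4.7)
\textbf{Proof plan for Proposition \ref{mainquaddecay}.}

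Rewriting the quantity to be bounded as $\mathcal{I}(t,x) = \int_0^t e^{i(t-s)L} {\bf P}_c \big(\Im w(s)/L\big)^2 ds$ with $w = e^{isL}(h-g)$, I would first split the time integral dyadically via the partition $\tau_m$ of \eqref{timedecomp}, so $s \approx 2^m$ for $m = 0, \dots, L+1$, and decompose the bilinear integrand into pieces of type $hh$, $hg$ and $gg$ (with the various complex conjugations). The target reduces to showing, for each dyadic piece and each type, a bound of the form $\rho_0^a \cdot 2^{-m(1+a)}$ when $2^m \ll t$, with an analogous bound involving $\langle t-s\rangle^{-1-a}$ in the final block $s \approx t$.

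For each dyadic piece, I would upgrade $L^6$-type bounds to $L^\infty$ by frequency truncation at a scale $2^K$. For $|\xi| \lesssim 2^K$, apply the Klein--Gordon dispersive estimate $\|e^{i(t-s)L}\|_{L^{6/5}\to L^6} \lesssim (t-s)^{-1}$ from Lemma \ref{decay} together with Bernstein $L^6 \to L^\infty$ at scale $2^{K/2}$; for $|\xi| \gtrsim 2^K$, use Bernstein $H^N \to L^\infty$ at scale $2^{-K(N-3/2)}$ combined with the bootstrap $\|h\|_{H^N} \lesssim \e$ from \eqref{boot} and the analogous bound \eqref{eng} on $g$. Optimising $K$ then produces an effective $L^\infty$ control close to $(t-s)^{-3/2}$ modulo harmless losses $s^{O(\delta_N)}$.

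The $hh$ and $hg$ contributions are then closed by Hölder together with the $L^p$ bounds from Lemma \ref{dispersive-bootstrap}: since $\|e^{isL}h\|_{L^q}$ and $\|e^{isL}g\|_{L^p}$ decay like $\rho(s)^{1-\beta}s^{O(\delta_N)}$ and $\rho(s)\log s$ respectively for suitable exponents, a paraproduct-style splitting of $w \cdot w$ yields an $L^{6/5}$-norm with decay $\rho(s)^{2-2\beta}s^{O(\delta_N)}\e^{2\beta}$, and combining with the $(t-s)^{-1}$ dispersive bound and interpolation with the $H^N$ bootstrap gives the desired $t^{-(1+a)}$ rate; the smallness factor $\rho_0^a$ is produced by the initial dyadic block $s \lesssim \rho_0^{-1}$, where $\rho(s) \approx \rho_0$.

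The main obstacle will be the $gg$ piece, because $g$ is concentrated near the Fermi sphere $|\xi| = \xi_0$ and has only the slow decay $\rho(s) \approx s^{-1}$ with no $\e^\beta$ smallness. To handle it I would pass to the distorted Fourier side, writing $(e^{isL}g)^2$ as a bilinear expression against the NSD $\mu$ of \eqref{intromu}, and integrate by parts in $s$ using that the phase $-\jxi + \jeta + \jsigma$ is bounded below on the support of $\wt{g}\otimes \wt{g}$ (where $\jeta, \jsigma \approx 2\lambda$ forces $\jeta + \jsigma \approx 4\lambda$ away from $\jxi$ for most $\xi$, while near $\jxi \approx 4\lambda$ one uses bilinear bounds on the regular part of $\mu$ as in Appendix \ref{Appmu}). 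This extracts an extra factor of $s^{-1}$ which, combined with $\|g\|_{L^2}\lesssim \e^{1-}$ from \eqref{eng} and the $(t-s)^{-3/2}$ dispersive decay, produces the required $\rho_0^a/\langle t\rangle^{1+a}$ bound. The delicate step will be to verify that the principal-value singularities of $\mu$, which schematically look like $\mathrm{p.v.}(|\xi-\sigma|-|\eta|)^{-1}$ and permutations thereof, interact with the narrow Fermi support of $\wt{g}$ without producing losses that would destroy the gain $a > 1/2$; here the singular/regular decomposition of $\mu$ sketched in \S\ref{SecFmu}--\ref{secFSRsplit} is essential.
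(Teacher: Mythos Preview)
Your approach has a genuine gap in the treatment of the $hh$ and $hg$ interactions.

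You claim that a paraproduct-style H\"older splitting puts $(L^{-1}\Im w)^2$ into $L^{6/5}$ with decay $\rho(s)^{2-2\beta}\,s^{O(\delta_N)}\,\e^{2\beta}$, i.e.\ with only small losses. This is not correct. Any H\"older pairing landing in $L^{6/5}$ needs exponents $p,q$ with $1/p+1/q=5/6$ and hence at least one of $p,q\leq 12/5<3$; but the bound \eqref{dispbootf} for $\|e^{isL}f\|_{L^q}$ carries the factor $s^{(6-q)/2q}$, which at $q=12/5$ is $s^{3/4}$, and interpolating instead with the sharper $L^2$ control $\|f\|_{L^2}\lesssim\e^{1-}$ only trades $s$-growth for weaker $\rho$-powers. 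A direct computation shows that combining the resulting $L^{6/5}$ bound with the $(t-s)^{-1}$ propagator estimate and your $H^N$ interpolation yields at best $t^{-1/2+}$ for the $L^\infty$ norm, not the required $t^{-1-a}$ with $a>1/2$. The paper makes exactly this point in the Remark following Proposition~\ref{mainquaddecay}: even the stronger route through the weighted bound \eqref{mainquadbound} and the sharp linear estimate \eqref{decay6} gives a rate only \emph{slightly slower} than $t^{-1}$, which is already insufficient.

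The missing ingredient is that the NSD decomposition $\mu=\mu^S+\mu^R+\mu^{Re}$ and the associated normal-form/frequency-space integration by parts must be applied to \emph{all} interaction types, not just $gg$. On the support of $\mu^S$ the phase $\Phi_{\epss}$ is uniformly lower bounded for every sign combination (Lemma~\ref{lemphases}), so one can integrate by parts in $s$ and reduce to trilinear expressions involving $\partial_s\wt{h}$ or $\partial_s\wt{g}$; these carry an extra $\rho(s)$ from Lemma~\ref{decay-der}, which is what produces the integrable rate via Lemma~\ref{basicL8}. On the regular and remainder parts the phase may vanish, but the measure is smooth, and integration by parts in the frequency variables (exploiting the pointwise bounds \eqref{nuRest}, \eqref{nuReest}) recovers the missing decay; this is the content of Lemma~\ref{lemgoal} and Lemma~\ref{L8-space}. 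Your sketch only invokes this machinery for $gg$, and even there the handling of the region $\jxi\approx 4\lambda$ is vague: the correct dichotomy is not ``most $\xi$'' versus ``$\xi$ near $4\lambda$'' but rather singular-versus-regular support of $\mu$, which is what makes the phase lower bound of Lemma~\ref{lemphases} available uniformly.
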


\smallskip
The proof of the main Propositions \ref{mainquad} and \ref{mainquaddecay} occupies all of 
the current section and Sections \ref{secFS} and \ref{secFR}.

\smallskip
\begin{remark}
Note that using the sharp linear estimate \eqref{decay6} (and Bernstein's inequality), 
together with \eqref{mainquadbound}, we could obtain a bound similar to \eqref{quaddecaybound} 
but with a decay rate slightly slower than $\jt^{-1}$.
Unfortunately, this is insufficient for the arguments in Section \ref{secD} where the rate needs to be integrable.
As it turns out, proving \eqref{quaddecaybound} requires additional arguments 
from those in the proof of \eqref{mainquadbound}. 
%We prove these two main estimates alongside each other in what follows.
\end{remark}

\medskip
\subsection{Set-up}\label{secFsetup}
In this set-up section we first explain how to deal with high and (very) low frequencies.
More precisely, we state results that prove \eqref{mainquadbound} and \eqref{quaddecaybound} 
when the input and output frequencies
%are high, respectively low, frequencies
are larger than a small power of time or smaller than a large negative power of time.
This is convenient as it will reduce the analysis to the case when all 
the input and output frequencies involved are bounded above by a small power of the time variable,
and are lower bounded as well, so that summations over their respective dyadic scales 
can then be done at the expense of innocuous logarithmic factors in time.

At the end of this subsection we discuss the different types of quadratic interactions that we are going
to need to estimate, and the strategy for the proofs of the main Propositions \ref{mainquad} and \ref{mainquaddecay}.

\medskip
\subsubsection{High frequencies}\label{easy-field-quad}
We distinguish between (very) high frequency and mid-low frequencies 
by introducing %inside the integral \eqref{secF0}
a parameter $M_0 \in \mathbb{N}$ such that $2^{M_0} \approx \js^{\delta_N}$
\begin{align}\label{HLsplit}
M_0 = \lfloor \delta_N \log_2 \js \rfloor , \qquad \delta_N = 5/N, % fix value later
\end{align}
%recall $N$ is 
and splitting  
%the frequency integral %measure $\mu(\xi,\eta,\sigma)$ appearing in 
$F$ as follows:
\begin{align}\label{FHL}
\begin{split}
& -4 F := F_H + F_L,
\\
& F_L(t,\xi) %:= \sum_{\epss} \eps_1\eps_2 F_{\epss}\big(f_{\eps_1},f_{\eps_2}\big),
  := -4 \int_0^t \varphi_{\leqslant M_0}(\xi) \, e^{-is\jxi} \wtF \Big( \frac{P_{\leqslant M_0} \Im w(s)}{L}\Big)^2 ds
\end{split}
\end{align}
with the natural definition of $F_H$.
Recall that $P_{\leqslant k}$ denotes the distorted Littlewood-Paley projector
with symbol $\varphi_{\leqslant k} = \sum_{k' \leqslant k} \varphi_{k'}$; see \S\ref{secnotation}.
%$\varphi_{\leqslant \js^{\delta_N}}(\jxi + \jeta + \jsig) \mu(\xi,\eta,\sigma)$ 
%and $\varphi_{> \js^{\delta_N}}(\jxi + \jeta + \jsig) \mu(\xi,\eta,\sigma)$. 
%The corresponding terms are denoted $F_{H}$ and $F_{L}$ respectively. 
Using the Fourier inversion (Proposition \ref{propdFT}) as done before, we can write 
the above expression in distorted Fourier space as 
\begin{align}\label{secF0}
\begin{split}
& (2\pi)^{9/2} F_\ast = \sum_{\epss \in \{+,-\}} 
  \eps_1\eps_2 F_{\ast,\epss}\big(f_{\eps_1},f_{\eps_2}\big),
  \qquad f_+=f, \,\, f_- = \bar{f}, \qquad \ast \in \{L,H\},
  %= \wtF\Big( \int_0^t e^{-isL} \Big( \frac{\Im w}{L}\Big)^2 ds \Big), \qquad w = e^{itL} f,
\end{split}
\end{align}
where
\begin{align}\label{secF0'}
\begin{split}
& F_{\ast,\epss}(a,b) := \int_0^t \int_{\R^6} \frac{e^{-is \Phi_\epss (\xi,\eta,\sigma)}}{\jeta \jsig} 
    \wt{a}(s,\eta) \wt{b}(s,\sigma) \mu_\ast (\xi,\eta,\sigma) d\eta d\sigma \, ds, 
    \\
    & \qquad \Phi_\epss (\xi,\eta,\sigma):= \jxi -\eps_1\jeta -\eps_2\jsig,
\end{split}
\end{align}
with
\begin{align}\label{secF0mu}
\begin{split}
\mu_L(\xi,\eta,\s) := \mu(\xi,\eta,\s) \varphi_{\leqslant M_0}(\jxi)\varphi_{\leqslant M_0}(\jeta)\varphi_{\leqslant M_0}(\jsig),
\qquad \mu_H := \mu - \mu_L,
\end{split}
\end{align}
recall \eqref{muDuh}.
Note that we are suppressing the dependence of $F_{\ast,\epss}(a,b)$ from its independent variables $(t,\xi)$;
we may sometimes indicate this dependence but will usually omit it, as it will be clear from the context.
Similarly, we may omit the dependence on the inputs $(f_{\eps_1},f_{\eps_2})$.
We remark that in what follows the signs $\epss$ will not play an essential role, 
since the phases $\Phi_{\epss}$
will have similar properties (in particular we will show they are suitably lower bounded 
on a certain region of frequency space, see Lemma \ref{lemphases}),
and the conjugation on $f$ (or $h$ or $g$) is irrelevant in view of Lemma \ref{lemconj}.

The estimate for the high frequency part is the following:

\begin{lemma}\label{lemmaH}
Under the a priori assumptions \eqref{bootstrap-A-A'}-\eqref{boot},
there exists $\delta'\in (\delta,\beta/2)$ such that
\begin{align}\label{FH1}
\Vert \partial_{\xi} F_{H}(t) \Vert_{L^2} \lesssim \e^{\beta} t \rho^{1-\beta+\delta'}(t), 
  %\jt^{\delta'},
\end{align}
and $a\in(0,1)$ such that
\begin{align}\label{FH2}
%\bigg \Vert \int_0^t e^{i(t-s)L} {\bf P_c} \big(\frac{\Im w(s)}{L} \big)^2  ds \bigg \Vert_{L^{\infty}_x} 
  {\big\| e^{itL}  \wtF^{-1}(F_H)(t) \big\|}_{L^\infty} \lesssim \frac{\rho_0^{a}}{\jt^{1+a}}.
\end{align}
\end{lemma}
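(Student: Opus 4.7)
The plan is to exploit the enormous smallness automatically available on $\supp \mu_H$, where at least one of $|\xi|, |\eta|, |\sigma|$ is $\gtrsim 2^{M_0} \approx \langle s\rangle^{\delta_N}$. By the bootstrap $\|h(s)\|_{H^N} \lesssim \varepsilon$ and the choice $\delta_N = 5/N$, the presence of such a high frequency produces a gain of $2^{-NM_0}\|h\|_{H^N} \lesssim \varepsilon \langle s\rangle^{-5}$ on that input. Since the target bounds in \eqref{FH1}--\eqref{FH2} demand at most $\jt^{-1-a}$ pointwise decay and polynomial growth in $t$ weighted by $\rho(t)^{1-\beta+\delta'}$, there is considerable slack to spare.

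For the weighted bound \eqref{FH1}, I would first split $f = h - g$. Contributions involving $g$ are harmless: $\wt g$ is supported in $\{|\jxi - 2\lambda|\lesssim 1\}$, so $g$ itself is never at a frequency $\gtrsim 2^{M_0}$, and the high-frequency support of $\mu_H$ must be carried by the other input or by the output, which are in $h$ and hence enjoy the Sobolev smallness. For the main $hh$-interaction, I would Littlewood--Paley decompose both inputs and the output, and for each dyadic triple $(k_1,k_2,k_3)$ with $\max k_i\gtrsim M_0$ apply the Coifman--Meyer--type bilinear estimates for operators with kernel $\mu$ recorded in Appendix \ref{Appmu}, \ref{Appmu2}. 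These estimates pair one $L^2$ factor with one $L^\infty$ factor; Bernstein at the dyadic scale converts the $L^\infty$ factor to $L^2$, after which the $H^N$ bound contributes $2^{-(N-O(1))\max k_i}$, summable in $k_i$. Distributing $\partial_\xi$ across \eqref{secF0'} produces three dangerous terms: $\partial_\xi$ hits the phase (gaining a factor $s\,\xi/\jxi$), the $\xi$-cutoffs defining $\mu_H$, or $\mu_H$ itself (the last is handled by the derivative bounds on $\mu$ recorded in the appendices). In every case, any polynomial loss in $s$ or $|\xi|$ is crushed by $2^{-NM_0}\lesssim\langle s\rangle^{-5}$, and integrating in $s\in[0,t]$ yields a bound of order $\varepsilon^2 \langle t\rangle^{-2}$, very much below the required $\varepsilon^\beta t\,\rho^{1-\beta+\delta'}(t)$.

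For the pointwise decay bound \eqref{FH2}, I would work in physical space via Duhamel's formula
\begin{align*}
e^{itL}\wtF^{-1}(F_H)(t) \;=\; \int_0^t e^{i(t-s)L}\, \mathcal{N}_H(s)\, ds,
\end{align*}
where $\mathcal{N}_H(s)$ is the physical-space counterpart of the high-frequency piece of $(L^{-1}\Im w(s))^2$ cut out by $\mu_H$. Applying the sharp $L^1\to L^\infty$ dispersive estimate \eqref{decayp} with $p'=1$ reduces matters to bounding $\|\mathcal{N}_H(s)\|_{W^{3,1}}$. Sobolev embedding, Bernstein, and Cauchy--Schwarz, combined with the high-frequency smallness, give $\|\mathcal{N}_H(s)\|_{W^{3,1}}\lesssim \varepsilon^2 \langle s\rangle^{-4}$; splitting the time integral into $[0,t/2]$ and $[t/2,t]$ then yields a bound of order $\varepsilon^2 \jt^{-3/2}$, which is dominated by $\rho_0^a \jt^{-1-a}$ for any $a\leq 1/2$ since $\rho_0\approx\varepsilon_0^2$.

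The main obstacle is purely bookkeeping: one must keep track of the signs $\epss$, the choice of which of $|\xi|,|\eta|,|\sigma|$ is largest, the location where $\partial_\xi$ falls, and the decomposition of $\mu_H$ into the $\delta$-like part and the principal-value-type pieces suggested by \eqref{intromu'}. Each branch closes comfortably thanks to the $2^{-NM_0}\sim\langle s\rangle^{-5}$ gain, so no refined phase-space information or null-structure analysis is needed at high frequencies; the delicate harmonic analysis is entirely reserved for the mid- and low-frequency regime handled by $F_L$ in the subsequent sections.
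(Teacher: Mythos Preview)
Your high-level idea---that the $H^N$ bound provides a gain of $2^{-NM_0}\sim\langle s\rangle^{-5}$ whenever a high frequency is present, overwhelming all polynomial losses---is correct and is indeed the mechanism behind Lemma~\ref{lemmaH}. However, two of your steps are genuine gaps, and your overall route is more complicated than the paper's.

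First, the claim that ``$\partial_\xi$ hits $\mu_H$ itself \dots\ handled by the derivative bounds on $\mu$ recorded in the appendices'' is not justified: $\mu$ contains a $\delta$ and a $\pv$, so there are no pointwise bounds on $\partial_\xi\mu$ as a whole. The appendices give pointwise estimates only for the \emph{regular} remainders ($\nu_R$, $\mu^{Re}$). The correct device is the distributional identity of Lemma~\ref{derM}, which converts $\partial_\xi\mu$ into $\partial_\eta\mu$ (or $\partial_\sigma\mu$) plus bounded error operators $E$, thereby moving the weight onto an input profile. Without this, your plan to decompose $\mu_H$ into $\delta$-like and $\pv$-type pieces would have to invoke the full machinery of Remark~\ref{remnuS} and the singular/regular splitting of \S\ref{secFSRsplit}, which is overkill here.

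Second, your handling of the case where the \emph{output} frequency $|\xi|$ is high but both inputs are low (the $(i,j,k)=(-,+,+)$ case in the paper's notation) is confused: you write ``the output, which are in $h$ and hence enjoy the Sobolev smallness,'' but the output is $F_H$, not $h$, and there is no direct Sobolev gain on it. The paper resolves this by noting that $\wtF^{-1}\mathcal{B}(\wt a,\wt b)=ab$ is an honest product in physical space, so $\|P_{>M_0}\mathcal{B}(\wt a,\wt b)\|_{L^2}\lesssim 2^{-NM_0}\|D^N(ab)\|_{L^2}$, and then Leibniz distributes $D^N$ onto the inputs, which are controlled in $H^N$. Your proposal does not supply this step.

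More broadly, the paper's approach avoids decomposing $\mu$ entirely: it uses Lemma~\ref{derM} once to handle $\partial_\xi$, then works with the physical-space product $(L^{-1}\Im w)^2$ via standard H\"older/Strichartz estimates, extracting the $2^{-NM_0}$ factor from whichever input (or output, via Leibniz) sits at high frequency. This is cleaner than threading Theorem~\ref{bilinearmeas} through each piece of $\mu$ while tracking the $2^{C_0A}$ losses, though your route could in principle be made to work with more care. Your argument for \eqref{FH2} via the $L^1\to L^\infty$ dispersive estimate is fine once the $(-,+,+)$ issue above is addressed.
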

The proof of this lemma is postponed to Subsection \ref{SsecH}.

\subsubsection{Low frequencies}
In view of Lemma \ref{lemmaH}, from \eqref{FHL}-\eqref{secF0mu},
we see that for \eqref{mainquadbound} it suffices to show that for all $\epss = \{+,-\}$
we have 
\begin{align}\label{mqbeps}
{\big\| \partial_{\xi} F_{L,\epss}(t) \|}_{L^2} \lesssim \e^{\beta} t \rho^{1-\beta+\delta'}(t);
\end{align}
similarly, \eqref{quaddecaybound} would follow from
\begin{align}\label{qdbeps}
{\big\| e^{itL} \wtF^{-1}\left(F_{L,\epss} \right)(t) \big\|}_{L^\infty} \lesssim \frac{\rho_0^{a}}{\jt^{1+a}}.
\end{align}
%In Lemma \ref{basicL8} we will further reduce \eqref{qdbeps} to an $L^p$ estimate for the integrand in $F_\epss$.

The next result, whose proof is postponed to Subsection \ref{Ssecverylowfreq},
reduces matters to profiles that are supported at frequencies that are not too small;
more precisely, we have:

\begin{lemma} \label{verylowfreq}
Under the a priori assumptions \eqref{bootstrap-A-A'}-\eqref{boot},
there exists $\delta' \in (0,\beta/2)$ and $a\in(0,1)$ such that
\begin{align*}
&\Vert \partial_{\xi} F_{L,\epss}(A_{\eps_1},B_{\eps_2}) (t) \Vert_{L^2} 
  \lesssim \varepsilon^{\beta} t \rho^{1-\beta+\delta'}(t), 
\\
& {\big\| e^{itL} \wtF^{-1}\left(F_{L,\epss}(A_{\eps_1},B_{\eps_2}) \right)(t) \big\|}_{L^\infty} 
  \lesssim \frac{\rho_0^{a}}{\jt^{1+a}},
\\ 
&
  \mbox{for all} \quad (A,B) \in \big\{ (f_{\leqslant \js^{-5}}, f_{\leqslant \js^{-5}}), 
  \, (f_{\leqslant \js^{-5}} , f_{\geqslant \js^{-5}}),
  \, (f_{\geqslant \js^{-5}} , f_{\leqslant \js^{-5}})  \big\}.
\end{align*}
Here $f_{\ast} := P_{\ast}f$, where $P$ is the distorted Littlewood-Paley projection (see \S\ref{secnotation}).
\end{lemma}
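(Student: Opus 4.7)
The plan is to exploit the very small volume of the distorted-frequency support $\{|\eta|\leq \langle s\rangle^{-5}\}$, which is of order $\langle s\rangle^{-15}$, to absorb all losses arising in the weighted and sup-norm estimates. This is essentially a soft Bernstein-type argument that does not require any fine structural information on the NSD $\mu_L$. A preliminary observation is that since $\wt{g}$ in \eqref{def-h} is supported in $|\jxi-2\lambda|\leq 2^{-C}$, so in particular at frequencies $|\xi|\gtrsim \xi_0=\sqrt{4\lambda^2-1}$, we have $\wt{f}=\wt{h}$ on the set $|\xi|\leq \langle s\rangle^{-5}$ for $s$ large, so the bootstrap bounds \eqref{boot} transfer directly to the low-frequency profiles.

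First, I would pass the weighted estimate to physical space. Using Proposition \ref{propdFT} and the boundedness of wave operators on weighted $L^2$ (Theorem \ref{Wobd}, equation \eqref{Wobdx}), $\|\partial_\xi F_{L,\epsilon_1\epsilon_2}(A,B)\|_{L^2_\xi}$ is controlled, up to easy lower-order terms, by $\|x \int_0^t e^{-isL}\mathcal{N}(A,B)(s)\,ds\|_{L^2_x}$, where $\mathcal{N}(A,B)(s)$ is the physical-space nonlinearity corresponding to the Fourier-side bilinear form, schematically $P_{\leq M_0}\bigl(L^{-1}\Im(e^{isL}A)\cdot L^{-1}\Im(e^{isL}B)\bigr)$. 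Commuting $x$ past $e^{-isL}$ (using $[x,L^2]=2\nabla$ and the intertwining property to produce errors of size $s\nabla/L$ modulo $V$-dependent terms which are themselves better) reduces matters to controlling $\int_0^t s\,\|\mathcal{N}(A,B)(s)\|_{L^2_x}\,ds$ and $\int_0^t \|x\,\mathcal{N}(A,B)(s)\|_{L^2_x}\,ds$.

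Next, I would apply Bernstein's inequality to the very-low-frequency factor. Suppose $A=P_{\leq \langle s\rangle^{-5}} f$. Then the boundedness of wave operators on $W^{k,p}$ (Theorem \ref{Wobd}) combined with Bernstein yields
\begin{equation*}
\|L^{-1} e^{isL}A(s)\|_{L^\infty_x} \lesssim \langle s\rangle^{-15/2}\|A(s)\|_{L^2_x}\lesssim \langle s\rangle^{-15/2}\e,
\end{equation*}
while $\|L^{-1}e^{isL}B(s)\|_{L^2_x}\lesssim \e$. H\"older in $L^\infty_x\cdot L^2_x$ gives $\|\mathcal{N}(A,B)(s)\|_{L^2_x}\lesssim \langle s\rangle^{-15/2}\e^2$, so $\int_0^t s\,\langle s\rangle^{-15/2}\e^2\,ds \lesssim \e^2$, which is comfortably better than the required $\e^\beta t\rho^{1-\beta+\delta'}(t)$. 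The second integral is handled analogously: $\|x\,A(s)\|_{L^2_x}$ is controlled by the weighted a priori bound \eqref{boot} transferred via the wave operators (and the remark above on $\wt{f}=\wt{h}$ at low frequencies), while the Bernstein gain $\langle s\rangle^{-15/2}$ from the other factor is still available, so the time integral converges absolutely. The remaining configurations in the lemma, $(A,B)=(f_{\leq\langle s\rangle^{-5}},f_{\leq\langle s\rangle^{-5}})$ and $(f_{\geq\langle s\rangle^{-5}},f_{\leq\langle s\rangle^{-5}})$, are treated by the identical argument (in the first case the Bernstein gain in fact doubles to $\langle s\rangle^{-15}$).

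For the sup-norm estimate in the second conclusion of the lemma, I would use the linear decay estimate \eqref{decayp} on the low-frequency piece to write $\|e^{itL}\wtF^{-1}(F_{L,\epsilon_1\epsilon_2}(A,B))(t)\|_{L^\infty_x}\lesssim \int_0^t \langle t-s\rangle^{-3/2}\|\mathcal{N}(A,B)(s)\|_{L^1_x}\,ds$, where Bernstein and H\"older give $\|\mathcal{N}(A,B)(s)\|_{L^1_x}\lesssim \langle s\rangle^{-15/2}\e^2$. The time integral is then $O(\langle t\rangle^{-3/2}\e^2)$, which dominates $\rho_0^a\langle t\rangle^{-1-a}$ for any $a\in(0,1/2)$ and $\e$ small. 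The only mildly technical issue is writing the Fourier-side bilinear expressions $F_{L,\epsilon_1\epsilon_2}(A,B)$ cleanly as physical-space operators through the NSD $\mu_L$; since $\mu_L$ is the restriction of $\mu$ to bounded frequencies, the product estimates collected in Appendix \ref{Appmu} suffice, and the enormous Bernstein gain easily swallows any derivative of $\mu_L$ that appears from $\partial_\xi$.
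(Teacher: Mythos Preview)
Your strategy is the right one and matches the paper's: the $\langle s\rangle^{-15}$ volume gain from the very-low-frequency support swamps every loss. The paper's proof (in \S\ref{Ssecverylowfreq}) is even shorter: it stays entirely in distorted Fourier space and invokes the Germain--Hani--Walsh identity \eqref{measure-weight} (Lemma \ref{derM}), which converts $\partial_\xi$ on $\mu$ directly into $\partial_\eta$ (or $\partial_\sigma$) on the profile \emph{of one's choice}, plus bounded error operators $E$. This is the precise version of the physical-space Leibniz heuristic you are invoking, and it lets one choose to differentiate the high-frequency profile while applying Bernstein to the low-frequency one, without ever commuting $x$ past $e^{-isL}$.

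There is one slip in your handling of $\int_0^t\|x\,\mathcal{N}(A,B)\|_{L^2}\,ds$: you place the weight on the low-frequency factor $A$ and claim a Bernstein gain ``from the other factor''. But in the case $B=f_{\geq\langle s\rangle^{-5}}$ the factor $B$ carries no small-frequency localization, so there is no Bernstein gain available there. The correct allocation is the opposite one: put $L^\infty$ (with Bernstein gain $\langle s\rangle^{-15/2}$) on $u_A$, and place $x$ on $u_B$, bounding $\|x\,L^{-1}e^{isL}B\|_{L^2}\lesssim s\|B\|_{L^2}+\|\partial_\xi\wt{B}\|_{L^2}\lesssim s\e+\langle s\rangle^{1/2+}$ via wave operators and the crude bound $\|\partial_\xi\wt f\|_{L^2}\lesssim\langle s\rangle^{1/2+}$ (from \eqref{growth} plus \eqref{boot}). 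The time integral then converges absolutely. This is exactly the choice of profile that the identity \eqref{measure-weight} lets the paper make cleanly; with that correction your argument goes through.
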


As a consequence of Lemma \ref{verylowfreq} the profiles we consider from now on 
carry a localization to frequencies larger than $\js^{-5}.$ 
Furthermore, since
\begin{align*}
{\big\| \partial_{\xi} \big( \wt{P_{\geqslant \js^{-5}} f } \big) \big\|}_{L^2} \lesssim 
  {\| \partial_{\xi} \wt{f} \|}_{L^2} ,
\end{align*}
which is obtained as a direct consequence of Hardy's inequality, 
we can safely suppress the notation $P_{\geqslant \js^{-5}}$ on the profiles,
and still denote by $F_{L,\epss}(t)$ the bilinear operator as in \eqref{secF0'}-\eqref{secF0mu}
with this additional restriction to $|\eta|,|\s| \gtrsim \js^{-5}.$

\subsubsection{Types of quadratic interactions and organization of the proof}\label{types}
Since $f=h-g$, we need to estimate the quadratic terms
%see formulas for $F$ in \eqref{Fepssquad} we see that we need to estimate the bilinear terms
\begin{align}\label{FsecF}
\begin{split}
F_{L,\epss}(a,b) %& = \int_0^t \int_{\R^6} \frac{e^{-is \Phi_\epss (\xi,\eta,\sigma)}}{\jeta \jsig} 
    %\wt{a}(s,\eta) \wt{b}(s,\sigma) \mu(\xi,\eta,\sigma) d\eta d\sigma, 
    %\\ & \qquad 
    \qquad \mbox{with} \qquad a \in\{h_{\eps_1}, g_{\eps_1}\}, \,\, b \in\{h_{\eps_2}, g_{\eps_2}\}.
    %\\
    %\Phi_\epss (\xi,\eta,\sigma) & := \jxi -\eps_1\jeta -\eps_2\jsig.
\end{split}
\end{align}
We then have three main kinds of interactions to analyze
which we denote as follows: 

\begin{itemize}

\smallskip
\item {\it Purely Fermi-type interactions} when $a,b = g_\pm$;

\smallskip
\item {\it Fermi-continuous interactions} when $\{a,b\} = \{h_\pm, g_\pm\}$;

\smallskip
\item {\it Purely continuous interactions} when $a,b = h_\pm$.

\end{itemize}

\smallskip
These three types of interactions require different treatments and we dedicate to each of them separate 
subsections within Sections \ref{secFS} and \ref{secFR}.
One important aspect of our analysis will be the splitting of the bilinear terms \eqref{FsecF}
- more precisely of their leading order contribution, corresponding to the leading order contribution
of $\mu$ -
into a {\it singular} and a {\it regular} part; see Proposition \ref{mudecomp} below,
and the consequent definition of a singular part $F^S$ and a regular part $F^R$ in \eqref{FS+FR}.
The singular, resp. regular, parts are treated in Section \ref{secFS}, resp. \ref{secFR}.

For the singular part of the bilinear interactions we can perform a normal form transformation.
The (time)-boundary terms are estimates in Subsection \ref{FSbdry},
and the three types of interactions described above are dealt with 
in Subsections \ref{secSgg}, \ref{secShg} and \ref{secShh}.
For the regular part, the three different kinds of interactions are 
estimated in Subsections \ref{secRgg}, \ref{secRhg} and \ref{secRhh}.

\medskip
\subsection{Decomposition of the NSD}\label{SecFmu}
In the previous subsection we have reduced the problem to the mid-low frequencies interaction term $F_L$;
in particular, all the bilinear operators that we are going to estimate will be supported (before time integration) 
in a region where all input and output frequencies are less than $2^{M_0} \approx \js^{\delta_N}$,
with an additional restriction on the inputs being projected to not-very-low frequencies via
implicit $P_{\geqslant  \langle s \rangle^{-5} }$ operators. 
Then, our goal is to prove \eqref{mqbeps} and \eqref{qdbeps} for $F_L$,
which, together with Lemmas \ref{lemmaH} and \ref{verylowfreq}, 
will imply Propositions \ref{mainquad} and \ref{mainquaddecay}.
%(We will often drop the subscript $L$ for better legibility.)

To prove our main results %Lemmas \ref{mainquad} and \ref{mainquaddecay}, 
we first need a precise description of the interaction 
of three distorted complex exponentials, encoded by the distribution $\mu(\xi,\eta,\sigma)$,
and a splitting of it into a singular and regular part, plus a suitable remainder.
%according to the almost resonances of the Klein-Gordon equation.
This is contained in the next proposition, which is mostly based on results from
\cite{PS} (see Proposition \ref{decomp-meas-PS} in Appendix \ref{Appmu}):

\begin{proposition}\label{mudecomp}
Let $\mu$ be the distribution defined in \eqref{muDuh}, 
with $\psi$ satisfying \eqref{psieq}
for a generic potential\footnote{As already mentioned, a bound on sufficiently
many Schwartz semi-norms of $V$ is enough. 
For example, we can assume that $V$ is in a weighted Sobolev space, say $\jx^N V \in H^N$, where $N$
is the Sobolev regularity parameter in \eqref{mteps}; see also Theorem \ref{Wobd}.} 
$V \in \mathcal{S}$. 
Let\footnote{In our application the parameter $M_0 \geqslant 1$ that appears here 
will coincide with the one in \eqref{HLsplit}.} 
$M_0$ be a fixed parameter, and let\footnote{The parameter $N_2$
can be fixed large enough as a fraction of $N$.} $N_2$ be a sufficiently large integer.
%cfp{Take care of $N_2$\dots Maybe fix at the beginning?
%Then say remainder estimates are for $N_2/C$ many derivatives}

We can decompose $\mu$ into a singular part (apex $S$), a regular part (apex $R$) 
and a remainder part (apex $Re$) 
\begin{align}\label{mudecomp00}
\mu = \mu^S + \mu^R + \mu^{Re}.
\end{align}
These three components can be further decomposed as follows, 
up to irrelevant multiplicative constants: %satisfy the following structural properties:
\begin{align}\label{mudecomp0}
\begin{split}
& \mu^S(\xi,\eta,\sigma) := \delta_0(\xi-\eta-\sigma) + \mu_1^S(\xi,\eta,\sigma) 
  + \mu_2^S(\xi,\eta,\sigma) + \mu_3^S(\xi,\eta,\sigma),
\\
& \mu^R(\xi,\eta,\sigma) :=  \mu_1^R(\xi,\eta,\sigma), 
%\\ 
%& \mu^{Re}(\xi,\eta,\sigma) := \mu_2^R(\xi,\eta,\sigma)+ \mu_3^R(\xi,\eta,\sigma) + \mu^{Re'}(\xi,\eta,\sigma), 
\end{split}
\end{align}
where the right-hand sides are defined as follows:

\smallskip
\setlength{\leftmargini}{1.5em}
\begin{itemize}

\smallskip
\item $\mu_1^S$ and $\mu_1^R$ are given by the formulas
%The singular and regular parts are further decomposed as
%\begin{align}
%& \mu^S(\xi,\eta,\sigma) := \delta_0(\xi-\eta-\sigma) + \mu_1^S(\xi,\eta,\sigma) 
%  + \mu_2^S(\xi,\eta,\sigma) + \mu_3^S(\xi,\eta,\sigma), 
%  \\
%  & \mu^R(\xi,\eta,\sigma) :=  \mu_1^R(\xi,\eta,\sigma),
%\end{align}
%where
\begin{align}\label{mu1SR}
\mu_1^{\ast}(\xi,\eta,\sigma) 
  & := \nu_1^{\ast} (-\xi +\eta,\s) +  \nu_1^{\ast} (-\xi +\s,\eta) +  \overline{\nu_1^{\ast} (-\eta-\s,\xi)}, 
  \qquad \ast \in \lbrace S,R \rbrace
\end{align}
with
%are defined in terms of the decomposition obtained in \cite{PS},
%see Proposition \ref{decomp-meas-PS} where the results are summarized. 
%First let us define (all quantities in the right-hand side are defined in Proposition \ref{decomp-meas-PS})
\begin{align}\label{nu1S}
\begin{split}
\nu_1^S(p,q) & := \varphi_{\leqslant -M_0-5}(\vert p \vert - \vert q \vert) \Big[ 
  \nu_0 (p,q)
  \\ & + \frac{1}{\vert p \vert} \sum_{a=1}^{N_2} \sum_{J \in \Z} %\geqslant M_0+5} 
  b_{a,J}(p,q) \cdot 2^J K_a \big(2^J (\vert p \vert - \vert q \vert) \big) \Big]
\end{split}
\end{align}
with 
\begin{align}\label{nu0}
\nu_0(p,q) :=  \frac{b_0(p,q)}{|p|} \Big[ i\pi \, \delta(|p|-|q|) + \pv \frac{1}{|p|-|q|} \Big],
\end{align}
where $b_0$ is a symbol satisfying \eqref{Propnu+1.1},
$K_a$ are Schwartz functions, $b_{a,J}$ are symbols satisfying the bounds stated in \eqref{Propnu+2.1},
and $\nu_1^R$ satisfies, for all $|p|\approx 2^P$, $|q|\approx 2^Q$, with $P,Q \leqslant M_0$
and for all $|a|+|b| \leqslant N_2$,
\begin{align} \label{nuRest}
\begin{split}
& \vert \nabla_{p}^a \nabla_{q}^b \nu_1^R (p,q) \vert
\\
& \lesssim \begin{cases}
2^{-2 P} \, 2^{-(|a|+ |b|)P} \cdot 2^{(2 + |a| + |b|)5M_0} & \textrm{if}  %A \mapsto M_0
  \quad |P-Q| < 5 
 \\
2^{-2 (P \vee Q)} \cdot 2^{-|a| (P \vee Q)} \max \lbrace 1 , 2^{(1-\vert b \vert) Q^-}  \rbrace
  \cdot 2^{(|a| + |b| + 2)5M_0} &  \textrm{if} \quad |P-Q|  \geqslant 5.
\end{cases}
\end{split}
\end{align}

\smallskip
\item $\mu_2^S$ %and $\mu_2^R$ 
is given by the formulas
\begin{align}\label{mu2SR}
\mu_2^{S}(\xi,\eta,\sigma) 
  & := \nu_{2}^{S,1} (\xi,\eta,\s) + \nu_{2}^{S,2} (\xi,\eta,\s) + \nu_{2}^{S,2} (\xi,\s,\eta), 
  %\qquad \ast \in \lbrace S,R \rbrace.
\end{align}
with
\begin{align}\label{nu2S1}
\begin{split}
\nu_{2}^{S,1}(\xi,\eta,\s) := \frac{1}{|\xi|} \sum_{i=1}^{N_2} 
 \sum_{J \in \Z %M_0+5
 } \varphi_{\leqslant -M_0-5} (|\xi|-|\eta|-|\s|)
 \\
 \times b_{i,J}(\xi,\eta,\s) \cdot K_i\big( 2^J (|\xi|-|\eta|-|\s|) \big),  
\end{split}
\end{align}
and %(all quantities in the right-hand side are defined in Proposition \ref{decomp-meas-PS})
\begin{align}\label{nu2S2}
\begin{split}
\nu_{2}^{S,2}(\xi,\eta,\s) = \frac{1}{\vert \eta \vert} \sum_{\epsilon \in \lbrace 1, -1 \rbrace} 
  \sum_{i=1}^{N_2} \sum_{J \in \Z %\geqslant M_0+5
  } \varphi_{\leqslant -M_0-5} (|\xi|+\epsilon|\eta|-|\s|) 
  \\ 
  \times b_{i,J}^{\epsilon}(\xi,\eta,\s) \cdot 
  K_i \big(2^J (|\xi|+\epsilon|\eta|-|\s|) \big), 
\end{split}
\end{align}
where $K_i$ are Schwartz functions and $b_{i,J}$ and $b_{i,J}^\epsilon$  
are symbols satisfying the bounds stated in \eqref{Propnu212} and \eqref{Propnu222}.

%\item The regular part can be written as
%\begin{align}
%\mu_1^{\ast}(\xi,\eta,\sigma) 
%  & := \nu_1^{\ast} (-\xi +\eta,\s) +  \nu_1^{\ast} (-\xi +\s,\eta) +  \overline{\nu_1^{\ast} (-\eta-\s,\xi)}, 
%  \qquad \ast \in \lbrace S,R \rbrace
%\end{align}

\smallskip
\item $\mu_3^S$ %and $\mu_3^R$ 
is given by the formula
\begin{align}\label{mu3S}
\begin{split}
\mu_{3}^S (\xi,\eta,\s) := \sum_{i=0}^{N_2} \sum_{J \in \Z %\geqslant M_0+5
  } 
  \varphi_{\leqslant -M_0-5}(|\xi|-|\eta|-|\s|) 
  \\ \times b_{i,J}(\xi,\eta,\s) \cdot K_i \big(2^J(|\xi|-|\eta|-|\s|) \big), 
\end{split}
\end{align}
where $K_i$ are Schwartz functions
and $b_{i,J}$ are symbols satisfying the bounds stated in \eqref{Propnu+2.1}.

\smallskip
\item $\mu^{Re}$ satisfies the following estimate:
for all $|\xi| \approx 2^{k_1}$, $|\eta| \approx 2^{k_2}$ and $|\s| \approx 2^{k_3}$ with $k_1,k_2,k_3 \leqslant M_0$
and for all $|a|+|\alpha|+|\beta| \leqslant N_2$,
%$$\mu^{Re '} &=  \nu_{2,R}^1 +  \nu_{2,R}^2 +  \nu_{2,R}^2+ \mu_{R,3}. $$
%\begin{align}\label{muRe'bd} 
%\end{align}
\begin{align}\label{nuReest}
\vert \nabla_{\xi}^a \nabla_{\eta}^{\alpha} \nabla_{\s}^{\beta} \mu^{Re} (\xi,\eta,\s) \vert 
  & \lesssim 2^{-2 \max \lbrace k_1,k_2,k_3 \rbrace} 
  \cdot 2^{-(\vert \alpha_{a} \vert + \vert \alpha_{e} \vert) \max \lbrace k_1, k_2,k_3 \rbrace}  
  \\ \notag & \cdot \max \big(1 , 2^{(1-\vert \alpha_{i} \vert) \min \lbrace k_1, k_2 ,k_3 \rbrace} \big) 
  \cdot 2^{5M_0(|a| + |\alpha| + |\beta| +1)},
\end{align}
where $\alpha_{i}$ denotes the order of differentiation of the variable that is located at the smallest frequency, 
$\alpha_{e}$ is similarly defined for the middle frequency and $\alpha_{a}$ for the largest one.
\end{itemize}

\end{proposition}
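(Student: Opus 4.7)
The plan is to build directly on Proposition \ref{decomp-meas-PS} from Appendix \ref{Appmu}, which is imported from \cite{PS} and furnishes the core structural decomposition of $\mu$ in the setting $V \in \mathcal{S}$ generic; the main additional work here is to track the precise derivative bounds, including all losses polynomial in $2^{M_0}$, and to organize the output in the form adapted to the Klein-Gordon phases $\Phi_{\epsilon_1,\epsilon_2}$. I would begin by recalling the Agmon-Kato-Kuroda representation $\psi(x,\xi) = e^{ix\cdot\xi} + \chi(x,\xi)$ with $\chi(x,\xi) = -R_0(|\xi|^2+i0^+) V \psi(\cdot,\xi)(x)$, and expanding the triple product $\overline{\psi(x,\xi)}\psi(x,\eta)\psi(x,\sigma)$ into $2^3$ terms. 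Only the pure plane-wave term $e^{ix\cdot(-\xi+\eta+\sigma)}$ contributes to the distribution $\delta_0(\xi-\eta-\sigma)$, which is the first summand of $\mu^S$ in \eqref{mudecomp0}.

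The remaining seven terms each contain at least one factor of $\chi$. For terms with exactly one such factor, one uses the explicit free-resolvent kernel $R_0(|\xi|^2+i0^+)(x,y) = (4\pi|x-y|)^{-1}e^{i|\xi||x-y|}$, whose $x$-integral against the remaining two plane waves produces denominators of the form $(|\eta+\sigma|^2-|\xi|^2-i0)^{-1}$. Sokhotski-Plemelj splits these into a principal value and a delta on the set $|\xi|=|\eta+\sigma|$: after the substitution $p=-\xi+\eta$ (or its cyclic permutations) this is exactly the distribution $\nu_0$ in \eqref{nu0}, and the three cyclic variants assemble $\mu_1^S$ as in \eqref{mu1SR}. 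Iterating the Born series one level further yields convolutions with $V$ and $R_0$ which, after frequency-localization in $|p|-|q|$ on dyadic scales $2^{-J}$, produce the Schwartz kernels $K_a$ and the dyadic sum in \eqref{nu1S}; the residual sum in $a,J$ is absolutely convergent and is absorbed into $\mu_1^R$, whose symbol bound \eqref{nuRest} comes from rewriting the surviving integrals as smooth oscillatory integrals and estimating by repeated integration by parts. The terms with two or three $\chi$'s are handled analogously: here the singular sets produced by Sokhotski-Plemelj take the form $|\xi|=|\eta|+|\sigma|$ and variants (rather than $|\xi|=|\eta\pm\sigma|$), giving rise to $\mu_2^S$ and $\mu_3^S$ as in \eqref{mu2SR}-\eqref{mu3S}, while the deepest iterates of the series yield a smooth remainder absorbed into $\mu^{Re}$.

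The main technical obstacle will be keeping careful track of the losses in $2^{M_0}$ in the derivative estimates \eqref{nuRest} and \eqref{nuReest}. These arise from two mechanisms: first, each $\xi,\eta,\sigma$ derivative hitting a cutoff $\varphi_{\leqslant -M_0-5}$ costs a factor of $2^{M_0}$; second, differentiating the generalized eigenfunction using the pointwise bound $|\nabla_\xi^a\psi(x,\xi)| \lesssim (|\xi|/\langle\xi\rangle)^{1-|a|}\langle x\rangle^{|a|}$ from \cite[Lemma 3.2]{PS} produces potential singularities at $\xi=0$ together with weights $\langle x\rangle^{|a|}$ that have to be absorbed by the decay of $V$ and of the resolvent convolution kernels. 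Because in our application all frequencies are restricted to $|\cdot|\lesssim 2^{M_0}$ and (via Lemma \ref{verylowfreq}) to $|\cdot|\gtrsim \langle s\rangle^{-5}$, both effects contribute only acceptable polynomial powers of $2^{M_0}$, ultimately bounded by $\langle s\rangle^{C\delta_N}$. Throughout, the symmetry of $\mu$ under permutations of its arguments (together with the conjugation on the first slot) is exploited to reduce the eight Born-expansion terms to the three cyclic families appearing in \eqref{mu1SR} and \eqref{mu2SR}. Once all these steps are organized, the decomposition \eqref{mudecomp00}-\eqref{mudecomp0} with the stated symbol bounds follows by assembling the corresponding pieces of Proposition \ref{decomp-meas-PS} and redistributing the $M_0$-dependent factors.
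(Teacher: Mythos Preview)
Your proposal is correct in outline but substantially more involved than what the paper does. The paper's proof treats Proposition \ref{decomp-meas-PS} entirely as a black box and never revisits the Born expansion, the resolvent kernel, or Sokhotski--Plemelj. Instead it simply inserts the cutoff $\varphi_{\leqslant -M_0-5}$ (and its complement $\varphi_{> -M_0-5}$) into each of the explicit formulas \eqref{Propnu+1}--\eqref{Propmu31}. For instance, $\nu_1^S$ is \emph{defined} by \eqref{nu1S}, and $\nu_1^R$ is defined as the complementary cutoff applied to $\nu_0+\nu_L$, plus the full remainder $\nu_R$ from \eqref{Propnu+1}. The bound \eqref{nuRest} then follows immediately: for the $\nu_R$ contribution it is literally \eqref{Propnu+3}--\eqref{Propnu+3imp}, and for the cutoff-complemented part it is a direct check using $\big||p|-|q|\big|\gtrsim 2^{-M_0}$ on the support, which tames the $\pv$ and $\delta$ singularities at the cost of the stated $2^{M_0}$ powers. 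The same splitting is carried out for $\mu_2$ and $\mu_3$, and $\mu^{Re}$ is defined as the explicit sum \eqref{prmuRe} of all the resulting regular and remainder pieces; the bound \eqref{nuReest} is then read off piece by piece from \eqref{Propnu213}, \eqref{Propnu223}, \eqref{Propmu3R}, and direct inspection of the formulas.

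In short, the whole argument is a bookkeeping exercise: write down the cutoff-split definitions, check they sum back to the pieces of Proposition \ref{decomp-meas-PS}, and quote the derivative bounds already supplied there. Your detour through the Agmon--Kato--Kuroda representation and the pointwise bounds on $\nabla_\xi^a\psi$ would essentially re-prove a large chunk of \cite{PS} rather than cite it; the paper opts for the latter, which is considerably shorter.
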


\smallskip
The above proposition essentially follows from Proposition \ref{decomp-meas-PS},
 whose content is taken from \cite{PS}, and which we rewrote in the Appendix for the convenience of the reader.
 Proposition \ref{decomp-meas-PS} gives a decomposition of $\mu(\xi,\eta,\s)$ in terms of the singularities
 in $|\xi-\eta| - |\s|$, $|\xi-\s| - |\eta|$ and so on.
 Understanding the precise structure of these singularities is one of the keys to
 extracting decay in distorted frequency space from the integrals \eqref{secF0'}.
 %and control products of localized waves.
 
 Here are a few comments on the statement of Proposition \ref{mudecomp}:

\begin{itemize}

 \smallskip
 \item Proposition \ref{mudecomp} goes a step further compared to Proposition \ref{decomp-meas-PS},
 and adapts $\mu$ and its various components to the nonlinear structure of the Klein-Gordon flow.
 In particular, the various pieces are decomposed into singular and regular parts.
 There is also an even more regular remainder part which we omit for the sake of this discussion.
 
 The distinction between singular and regular parts of $\mu$,
 which also gives an analogous splitting for the nonlinear terms (see \eqref{FS+FR})
 is made so that, on the support of the singular parts, we will be able to lower bound the 
 nonlinear Klein-Gordon phase $\Phi_{\epss}$ (see \eqref{secF0'});
 this is done in Lemma \ref{lemphases}. 

% \smallskip
% \item Notice that in the formula \eqref{nu1S}, for any $|p| \approx 2^P$ and $|q| \approx 2^Q$
% we have $|P-Q|\leqslant 5 $ in view of the cutoff appearing in front $\nu_0$,
% and the estimates on the coefficients $b_{a,J}$ from \eqref{Propnu+2.1} in Theorem \ref{decomp-meas-PS}.

 \smallskip
 \item The estimate \eqref{nuRest}, implies that, up to small losses for large frequencies
 (which will be innocuous for our arguments), the regular measure $\nu_1^R(p,q)$ behaves
 almost like a Coifman-Meyer multiplier times a factor of $(|p|\vee|q|)^{-2}$,
 and better (upon localization) than a multiplier in $\whF^{-1}_{(x,y)\rightarrow(p,q)} L^1_{x,y}$
 times a factor of $(|p|\vee|q|)^{-2}$.
 More importantly, it is not very singular so that we can also use these estimates pointwise.

 \smallskip
 \item The distributions $\nu_1^S$ are the most singular and arise from the interaction (in the definition
 of $\mu$) of two plane waves and one `perturbed' wave $\psi(x,\zeta) - e^{ix\cdot \zeta}$; see \eqref{psieq}.
 The distributions (measures, in fact) $\nu_2^{S,1}$ and  $\nu_2^{S,2}$  
 are a little more regular since they arise from the interaction of at least two perturbed waves.
 Estimates on the bilinear operators associated to these distributions are given in  Theorem \ref{bilinearmeas}.
 %However for most of our arguments, we will not need to distinguish between these two levels,
 %and it will suffices to use bilinear estimates on the associated operators as given in Theorem \ref{bilinearmeas}.
 %\smallskip
 %\item 
 The statement also contains bilinear estimates for the operators associated to
 $\mu^R$ and $\mu^{Re}$. %These bilinear estimates are proven in 
 
\end{itemize}

\iffalse

\smallskip
\setlength{\leftmargini}{1.5em}
\begin{itemize}
 
 \smallskip
 \item 
 
 \smallskip
 \item
 
 \smallskip
 \item
 
\end{itemize}
}
\fi

\begin{proof}

We look at Proposition \ref{decomp-meas-PS} %from \cite{PS} 
and extract all the distributions in the statement from the formulas there.

We first inspect the formula for $\mu_1$ in \eqref{mu1}, which gives 
\begin{align*}
\mu_1(\xi,\eta,\sigma) & = \nu_1(-\xi+\eta,\sigma) + \nu_1(-\xi+\sigma,\eta) + \overline{\nu_1(-\eta-\sigma,\xi)}
\end{align*}
with the decomposition of $\nu_1$ in \eqref{Propnu+1}, with \eqref{Propnu+1.0} and \eqref{Propnu+1.1},
\eqref{Propnu+2} and \eqref{Propnu+2.1}, and the bounds \eqref{Propnu+3}-\eqref{Propnu+3imp}.
Then, with the definition \eqref{nu1S}-\eqref{nu0} for $\nu_1^S$
and letting %Then state \eqref{nuRest} for 
\begin{align}\label{nu1R}
\begin{split}
\nu_1^R(p,q) & := \varphi_{> -M_0-5}(\vert p \vert - \vert q \vert) \Big[ \nu_0 (p,q) 
  \\ & + \frac{1}{\vert p \vert} \sum_{a=1}^{N_2} \sum_{J \in \Z} %< M_0+5} 
  b_{a,J}(p,q) \cdot 2^J K_a \big(2^J (\vert p \vert - \vert q \vert) \big) \Big] + \nu_R (p,q),
\end{split}
\end{align}
where $\nu_R (p,q)$ is the symbol satisfying \eqref{Propnu+3}-\eqref{Propnu+3imp},
%We then see that, with the same definition of $\nu_R$ in \eqref{nu1R} and in \eqref{Propnu+1}
we have that the sum of \eqref{nu1S} and \eqref{nu1R} gives us the term $\nu_1$ in \eqref{Propnu+1},
and therefore
%Then, with the definition in the above statement we have that 
\begin{align}\label{prmu1}
\mu_1(\xi,\eta,\sigma) = \mu_1^S(\xi,\eta,\sigma) + \mu_1^R(\xi,\eta,\sigma).
\end{align}
The bound \eqref{nuRest} for the expression in \eqref{nu1R}
holds as a result of \eqref{Propnu+3}-\eqref{Propnu+3imp} (which is the desired bound for $\nu_R$)
and since the same can be verified directly for the first expression on the right-hand side of \eqref{nu1R}.
%see also \eqref{Propnu+2.1} and notice that this bound is better than...

%The first part simply follows from the fact that $\big \vert 2^J (\vert p \vert - \vert q \vert) 
%K_a(2^J(\vert p \vert - \vert q \vert))  \big \vert \lesssim 1,$ 
%the fact that $\big \vert \vert p \vert - \vert q \vert \big \vert \gtrsim 2^{-M_0-5}.$ 

%The bound on $\nu_R$ in \eqref{nuRest} and \eqref{Propnu+3}-\eqref{Propnu+3imp} also match.
%where $\mu_1$  is defined as in Proposition \ref{decomp-meas-PS}.

To verify the properties in the second bullet,
we start by inspecting the formulas \eqref{Propnu210}-\eqref{Propnu211} for $\nu_2^1$,
with the bounds \eqref{Propnu212} and \eqref{Propnu213},
and the formulas \eqref{Propnu220}-\eqref{Propnu221}  for $\nu_2^2$, with the bounds \eqref{Propnu222}-\eqref{Propnu223}.
We then set
\begin{align}
\label{nu2R1}
\begin{split}
\nu_{2}^{R,1}(\xi,\eta,\s) := \frac{1}{\vert \xi \vert} \sum_{i=1}^{N_2} \sum_{J \in \Z %< M_0+5
  } \varphi_{>-M_0-5} (|\xi|-|\eta|-|\s|) 
  \\
  \times b_{i,J}(\xi,\eta,\s) \cdot K_i\big( 2^J (\vert \xi \vert - \vert \eta \vert - \vert \s \vert) \big),
\end{split}
\\
\label{nu2R2}
\begin{split}
\nu_{2}^{R,2}(\xi,\eta,\s) := \frac{1}{\vert \eta \vert} \sum_{\epsilon \in \lbrace 1, -1 \rbrace} 
  \sum_{i=1}^{N_2} \sum_{J \in \Z %< M_0+5
  } \varphi_{>-M_0-5} (|\xi|+\epsilon|\eta|-|\s|) 
  \\ \times b_{i,J}^{\epsilon}(\xi,\eta,\s) \cdot 
  K_i \big(2^J (\vert \xi \vert + \epsilon \vert \eta \vert - \vert \s \vert ) \big),
\end{split}
\end{align}
and see that we can write
\begin{align}
\nu_2^1(\xi,\eta,\sigma) = \nu_2^{S,1}(\xi,\eta,\sigma) + \nu_2^{R,1}(\xi,\eta,\sigma) + \nu_{2,R}^1(\xi,\eta,\sigma)
\end{align}
where (note the different placing of the indexes) $\nu_{2,R}^1$ is the remainder in \eqref{Propnu210}
satisfying the estimate \eqref{Propnu213}.
Similarly, %from \eqref{Propnu210}-\eqref{Propnu211} we can see the identities
\begin{align}
\nu_2^2(\xi,\eta,\sigma) = \nu_2^{S,2}(\xi,\eta,\sigma) + \nu_2^{R,2}(\xi,\eta,\sigma) + \nu_{2,R}^2(\xi,\eta,\sigma)
\end{align}
where (note again the different placing of the indexes) $\nu_{2,R}^2$ is the remainder 
appearing at the end of the formula \eqref{Propnu220} which satisfies the estimates \eqref{Propnu223}.
From \eqref{mu2} and the definitions in \eqref{mu2SR},
we then deduce that the $\mu_2$ component in \eqref{mu2} is given by 
\begin{align}\label{prmu2}
\begin{split}
\mu_2(\xi,\eta,\sigma) & = \mu_2^S(\xi,\eta,\sigma) %+ \mu_2^R(\xi,\eta,\sigma)
 \\
 & + \nu_2^{R,1}(\xi,\eta,\sigma) + \nu_2^{R,2}(\xi,\eta,\sigma) + \nu_2^{R,2}(\xi,\sigma,\eta)
 \\
 & + \nu_{2,R}^1(\xi,\eta,\sigma) + \nu_{2,R}^2(\xi,\eta,\sigma) + \nu_{2,R}^2(\xi,\sigma,\eta).
\end{split}
\end{align}
Similarly, by letting 
\begin{align*}%\label{mu3R}
\mu_{3}^R (\xi,\eta,\s) :=   \sum_{i=0}^{N_2} \sum_{J \in \Z %< M_0+5
  } \varphi_{>-M_0-5} (|\xi|+|\eta|-|\s|)
  b_{i,J}(\xi,\eta,\s) 
  \cdot K_i \big(2^J (\vert \xi \vert - \vert \eta \vert - \vert \s \vert) \big), 
\end{align*}
we can see that the term $\mu_3$ in \eqref{Propmu30} can be written as
\begin{align}\label{prmu3}
\begin{split}
\mu_3(\xi,\eta,\sigma) & = \mu_3^S(\xi,\eta,\sigma) + \mu_3^R(\xi,\eta,\sigma) + \mu_{3,R}(\xi,\eta,\sigma),
\end{split}
\end{align}
where $\mu_{3,R}$ (note again the different placing of the indexes)
is the last term in \eqref{Propmu30} and satisfies \eqref{Propmu3R}.

We then let
\begin{align}\label{prmuRe}
\begin{split}
\mu^{Re}(\xi,\eta,\sigma) & := \nu_{2,R}^1(\xi,\eta,\sigma) 
  + \nu_{2,R}^2(\xi,\eta,\sigma) +  \nu_{2,R}^2(\xi,\sigma,\eta) 
  \\
  & + \nu_2^{R,1}(\xi,\eta,\sigma) 
  + \nu_2^{R,2}(\xi,\eta,\sigma) + \nu_2^{R,2}(\xi,\sigma,\eta)
  \\
  & + \mu_3^R(\xi,\eta,\sigma) + \mu_{3,R}(\xi,\eta,\sigma)
\end{split}
\end{align}
so that the identities \eqref{mudecomp00}-\eqref{mudecomp0} hold true in view of 
\eqref{mudecomp-0} and \eqref{prmu1}, \eqref{prmu2} and \eqref{prmu3}.

To conclude we need to verify the bound \eqref{nuReest} for the remainder \eqref{prmuRe}.
Notice first that  \eqref{Propmu3R} gives a stronger bound for $\mu_{3,R}$.
Using the formula \eqref{prmu3} we can directly verify the bound for $\mu_3^R$.
We can then inspect the formulas \eqref{nu2R1} and \eqref{nu2R2} to check the bounds for the terms in
the second line of \eqref{prmuRe}.
Finally, for the terms on the first line of the right-hand side of \eqref{prmuRe}
we invoke \eqref{Propnu213} and \eqref{Propnu223}.
%cfp{check / details ?}
\end{proof}

%\begin{remark}
%Note that this estimate is slightly worse than reality due to the factor 
%$\max \big(1 , 2^{(1-\vert \alpha_{mi} \vert) \min \lbrace k_1, k_2 ,k_3 \rbrace} \big)$. 
%Could be replaced by med in some cases.
%\end{remark}

\medskip
\subsection{Bilinear operators: identities and bounds}\label{ssecid}
Recall that our main term to estimate is $F_L$ %$F_{L,\epss}(a,b)$ 
from \eqref{secF0}-\eqref{secF0'}.
Below we give some useful identities and bounds for 
bilinear terms as in \eqref{secF0'} that we are going to use in the rest of the arguments.

First, in view of Proposition \ref{mudecomp},
we define several bilinear operators, which correspond to the various pieces in which $\mu$ is decomposed.
For a general symbol $b=b(\xi,\eta,\s)$ let us denote
\begin{subequations}\label{idop}
\begin{align}
\label{idop0}
T_0[b](g,h)(x) & := \whF^{-1}_{\xi\rightarrow x} \iint_{\R^3\times\R^3} g(\eta) h(\s) 
  \,b(\xi,\eta,\s) \, \delta_0(\xi-\eta-\s) \, d\eta d\s,
\\
\label{idop11}
T_1^{\ast,1}[b](g,h)(x) & := \whF^{-1}_{\xi\rightarrow x} \iint_{\R^3\times\R^3} g(\xi-\eta) h(\sigma) 
  \,b(\xi,\eta,\sigma)\, \nu_1^{\ast}(\eta,\sigma) \, d\eta d\sigma, \quad \ast \in\{S,R\},
\\
\label{idop12}
T_1^{\ast,2}[b](g,h)(x) &:= \whF^{-1}_{\xi\rightarrow x} \iint_{\R^3\times\R^3} g(-\eta-\sigma) h(\sigma) 
  \,b(\xi,\eta,\sigma)\, \overline{\nu_1^{\ast}(\eta,\xi)} \, d\eta d\sigma, \quad \ast \in\{S,R\},
\\
\label{idop2}
T_2^{S} [b](g,h)(x) & := \whF^{-1}_{\xi\rightarrow x} \iint_{\R^3\times\R^3} g(\eta) h(\sigma) 
  \,b(\xi,\eta,\sigma)\, \mu_2^{S}(\xi,\eta,\sigma) \, d\eta d\sigma,
\\
\label{idop3}
 T_3^{S} [b](g,h)(x) & := \whF^{-1}_{\xi\rightarrow x} \iint_{\R^3\times\R^3} g(\eta) h(\sigma) 
  \,b(\xi,\eta,\sigma)\, \mu_3^{S}(\xi,\eta,\sigma) \, d\eta d\sigma,
\\
\label{theomuRe}
T^{Re}[b](g,h)(x) & := \whF^{-1}_{\xi\rightarrow x} \iint_{\R^3\times\R^3} g(\eta) h(\sigma) 
  \,b(\xi,\eta,\sigma)\, \mu^{Re}(\xi,\eta,\sigma) \, d\eta d\sigma.
\end{align}
\end{subequations}
The distributions appearing above are defined in Proposition \ref{mudecomp} and in its proof;
see \eqref{nu1S}, \eqref{mu2SR} and \eqref{mu3S}, \eqref{nu1R} and \eqref{prmuRe}.

Second, for a general symbol $b$, let us adopt the following notation for $b$ evaluated at
different inputs:
\begin{align}\label{idsym}
\begin{split}
b_1(\xi,\eta,\sigma) & = b(\xi,\xi-\eta,\sigma),
\\
%& b_{1,s}(\xi,\eta,\sigma)
b_1'(\xi,\eta,\sigma) & = b (\xi,\s,\xi-\eta),
\\
b_2(\xi,\eta,\sigma) & = b(\xi,-\eta-\sigma,\sigma).
\end{split}
\end{align}
With the definitions \eqref{idop} and the notation \eqref{idsym}, 
using the decomposition from Proposition \ref{mudecomp} and applying changes of variable,
we see that we can write bilinear operators of the type \eqref{secF0}-\eqref{secF0'}
%that also allow for additional general symbols in $(\xi,\eta,\sigma)$, 
as follows:
with $\wt{A_{\eps_1}} := e^{is\eps_1\jxi} \wt{a}$ and $\wt{B_{\eps_2}}:=  e^{is\eps_2\jxi} \wt{b}$ we have
\begin{align}\label{idbil}
\begin{split}
  %\int_0^t 
& e^{is \jnab} \what{\mathcal{F}}_{\xi\mapsto x}^{-1}  
  %\int_0^t 
  \int_{\R^6} e^{-is \Phi_\epss (\xi,\eta,\sigma)}
  \wt{a}(s,\eta) \wt{b}(s,\sigma) \mu^S (\xi,\eta,\sigma) \, m(\xi,\eta,\sigma) \, d\eta d\sigma %\, ds
\\
& = T_0[m] \big(\wt{A_{\eps_1}}, \wt{B_{\eps_{2}}}\big)(x)  
  %\big(e^{is\eps_1\jxi} \wt{a}, e^{is\eps_2\jxi} \wt{b}\big)(x) 
\\
& + T_1^{S,1}[m_1] \big(\wt{A_{\eps_1}}, \wt{B_{\eps_{2}}}\big)(x)
  + T_1^{S,1}[m_1'] \big( \wt{B_{\eps_{ 2 }}},\wt{A_{\eps_1}}\big)(x)
+ T_1^{S,2}[m_2] \big(\wt{A_{\eps_1}}, \wt{B_{\eps_{ 2 }}}\big)(x)
%\\
%& + T_1^{R,1}[m_1] \big(\wt{A_{\eps_1}}, \wt{B_{\eps_1}}\big)(x)
%  + T_1^{R,1}[m_1'] \big( \wt{B_{\eps_1}},\wt{A_{\eps_1}}\big)(x)
%  + T_1^{R,2}[m_2] \big(\wt{A_{\eps_1}}, \wt{B_{\eps_1}}\big)(x)
\\
& + T_2^{S}[m] \big(\wt{A_{\eps_1}}, \wt{B_{\eps_{2}}}\big)(x)
%  + T_2^{R}[m] \big(\wt{A_{\eps_1}}, \wt{B_{\eps_{2}}}\big)(x))
+ T_3^{S}[m] \big(\wt{A_{\eps_1}}, \wt{B_{\eps_{ 2}}}\big)(x),
%  + T_3^{R}[m] \big(\wt{A_{\eps_1}}, \wt{B_{\eps_{2 }}}\big)(x).
\end{split}
\end{align}
and
\begin{align}\label{idbilR}
\begin{split}
  %\int_0^t 
& e^{is \jnab} \what{\mathcal{F}}_{\xi\mapsto x}^{-1}  
  %\int_0^t 
  \int_{\R^6} \frac{e^{-is \Phi_\epss (\xi,\eta,\sigma)}}{\jeta \jsig} 
  \wt{a}(s,\eta) \wt{b}(s,\sigma) \mu^R (\xi,\eta,\sigma) \, m(\xi,\eta,\sigma) \, d\eta d\sigma %\, ds
\\
& = %T_0[b] \big(\wt{A_{\eps_1}}, \wt{B_{\eps_1}}\big)(x)  
  %\big(e^{is\eps_1\jxi} \wt{a}, e^{is\eps_2\jxi} \wt{b}\big)(x) 
%\\
%& + T_1^{S,1}[m_1] \big(\wt{A_{\eps_1}}, \wt{B_{\eps_1}}\big)(x)
%  + T_1^{S,1}[m_1'] \big( \wt{B_{\eps_1}},\wt{A_{\eps_1}}\big)(x)
%  + T_1^{S,2}[m_2] \big(\wt{A_{\eps_1}}, \wt{B_{\eps_1}}\big)(x)
%\\ & 
T_1^{R,1}[m_1] \big(\wt{A_{\eps_1}}, \wt{B_{\eps_{ 2 }}}\big)(x)
  + T_1^{R,1}[m_1'] \big( \wt{B_{\eps_{2}}},\wt{A_{\eps_1}}\big)(x)
  + T_1^{R,2}[m_2] \big(\wt{A_{\eps_1}}, \wt{B_{\eps_{2 }}}\big)(x).
%\\
%& %+ T_2^{S}[m] \big(\wt{A_{\eps_1}}, \wt{B_{\eps_1}}\big)(x)
%  + T_2^{R}[m] \big(\wt{A_{\eps_1}}, \wt{B_{\eps_1}}\big)(x))
%%\\
%%& + T_3^{S}[m] \big(\wt{A_{\eps_1}}, \wt{B_{\eps_1}}\big)(x) 
%  + T_3^{R}[m] \big(\wt{A_{\eps_1}}, \wt{B_{\eps_1}}\big)(x).
\end{split}
\end{align}

In particular, we can write $F_{\epss}(a,b)$
as the time integral of the sum of the expressions in \eqref{idbil} and \eqref{idbilR}
with $m(\xi,\eta,\sigma) = \langle \eta \rangle^{-1} \langle \sigma \rangle^{-1}$,
with similar formulas for $F_{L,\epss}$ (inserting the cutoffs $\varphi_{\leqslant M_0}$).

%\begin{align}\label{idbilF}
%\begin{split}
%F_{\ast,\epss}(a,b) & = \int_0^t \int_{\R^6} \frac{e^{-is \Phi_\epss (\xi,\eta,\sigma)}}{\jeta \jsig} 
%    \wt{a}(s,\eta) \wt{b}(s,\sigma) \mu_\ast (\xi,\eta,\sigma) d\eta d\sigma \, ds, 
%\\
%& = \int_0^t T_0[1]\big(\wt{a},\wt{b}\big)(x) \, ds
%\\
%& + \int_0^t \Big( T_1^{S,1}[1]\big(\wt{a},\wt{b}\big)(x) + (T_1^{S,1})'[1]\big(\wt{b},\wt{a}\big)(x)
%  + T_1^{S,2}[1]\big(\wt{a},\wt{b}\big)(x) \Big) \, ds
%\\
%& + \int_0^t \Big( T_1^{R,1}[1]\big(\wt{a},\wt{b}\big)(x) + T_1^{R,1}[1]\big(\wt{b},\wt{a}\big)(x)
%+ T_1^{R,2}[1]\big(\wt{a},\wt{b}\big)(x) \Big) \, ds
%\\
%& + \int_0^t \Big( T_2^{S}[1](\wt{a},\wt{b})(x) + T_2^{R}[1](\wt{a},\wt{b})(x)
%  + T_3^{S}[1](\wt{a},\wt{b})(x) + T_3^{R}[1](\wt{a},\wt{b})(x) \Big) \, ds.
%\end{split}
%\end{align}

\medskip
We now state the main bilinear estimates satisfied by the operators in \eqref{idop}:
%that have symbols given by the distributions defined in Proposition \ref{mudecomp} and appearing in the formulas above:

\begin{theorem}\label{bilinearmeas}
Let a symbol $b=b(\xi,\eta,\s)$ be given such that
\begin{align*}%\label{theomu1asb1}
\textrm{supp}(b) \subseteq \big\{ (\xi,\eta,\sigma) \in \mathbb{R}^9\,:\, |\xi|+|\eta|+|\sigma| \leqslant 2^A
  %, \, |\eta| \approx 2^{k_1}, \, |\sigma| \approx 2^{k_2}
  \big\},
\end{align*}
for some $A \geqslant 1$ and such that,
for $\vert \xi \vert \approx 2^K, \vert \eta \vert \approx 2^{L}, \vert \s \vert \approx 2^M$,
\begin{align}\label{bilmeasbest}
\big \vert \nabla_{\xi}^a \nabla_{\eta}^{\alpha} \nabla_{\s}^{\beta} 
  b(\xi,\eta,\s)  \big \vert \lesssim 2^{-\vert a \vert K - \vert \alpha \vert L - \vert \beta \vert M} 
  \cdot 2^{(\vert a \vert + \vert \alpha \vert + \vert \beta \vert)A},
  \qquad \vert a \vert , \vert \alpha \vert , \vert \beta \vert \leqslant 4.
\end{align}
\iffalse
\begin{align}
\label{theomu10b}
T_0[b](g,h)(x) & := \mathcal{F}^{-1}_{k\rightarrow x} \iint_{\R^3\times\R^3} g(\eta) h(\s) 
  \,b(\xi,\eta,\s) \, \delta_0(\xi-\eta-\s) \, d\eta d\s
\end{align}
and for $\star \in \lbrace R ,S \rbrace,$
\begin{align}
\label{theomu11b}
T_1^{\star,1}[b](g,h)(x) & := \mathcal{F}^{-1}_{\xi\rightarrow x} \iint_{\R^3\times\R^3} g(\xi-\eta) h(\sigma) 
  \,b(\xi,\eta,\sigma)\, \nu_1^{\star}(\eta,\sigma) \, d\eta d\sigma,
\\
\label{theomu12b}
T_1^{\star,2}[b](g,h)(x) &:= \mathcal{F}^{-1}_{\xi\rightarrow x} \iint_{\R^3\times\R^3} g(-\eta-\sigma) h(\sigma) 
  \,b(\xi,\eta,\sigma)\, \overline{\nu_1^{\star}(\eta,\xi)} \, d\eta d\sigma ,
  \\
  \label{theomu2}
  T_2^{\star} [b](g,h)(x) & := \mathcal{F}^{-1}_{\xi\rightarrow x} \iint_{\R^3\times\R^3} g(\eta) h(\sigma) 
  \,b(\xi,\eta,\sigma)\, \mu_2^{\star}(\xi,\eta,\sigma) \, d\eta d\sigma,
  \\
   \label{theomu3}
  T_3^{\star} [b](g,h)(x) & := \mathcal{F}^{-1}_{\xi\rightarrow x} \iint_{\R^3\times\R^3} g(\eta) h(\sigma) 
  \,b(\xi,\eta,\sigma)\, \mu_3^{\star}(\xi,\eta,\sigma) \, d\eta d\sigma,
\end{align}
and finally
\begin{align} \label{theomuRe}
T^{Re}[b](g,h)(x) & := \mathcal{F}^{-1}_{\xi\rightarrow x} \iint_{\R^3\times\R^3} g(\eta) h(\sigma) 
  \,b(\xi,\eta,\sigma)\, \mu^{Re}(\xi,\eta,\sigma) \, d\eta d\sigma.
\end{align}
\fi
Let $p,q,r \in (1,\infty),$ with 
$$\frac{1}{p} + \frac{1}{q} > \frac{1}{r},$$
and assume there is $10A \leqslant D \leqslant 2^{A/10}$ such that 
\begin{align}\label{bilmeasD}
\mathcal{D}(g,h) := \Vert g \Vert_{L^2} \Vert h \Vert_{L^2} 
  + \min \big( \Vert \partial_{\xi} g \Vert_{L^2} \Vert h \Vert_{L^2},  
  \Vert g \Vert_{L^2} \Vert \partial_{\xi} h \Vert_{L^2} \big) \leqslant 2^D.
\end{align}

Then, there exists an absolute constant $C_0$ (for example, $C_0:=65$ is suitable), such that 
the following bilinear bounds hold for the operators defined in \eqref{idop}:
\begin{subequations}\label{mainbilinest}
\begin{align}
\label{mainbilin0} 
& \big \Vert T_0[b](g,h) \big \Vert_{L^r} 
  \lesssim \Vert \widehat{g} \Vert_{L^p} \Vert \widehat{h} \Vert_{L^q} \cdot 2^{C_0 A} ,  
\\
\label{mainbilin1} 
& \big \Vert T_1^{S,1}[b](g,h) \big \Vert_{L^r} 
  + \big \Vert P_{K} T_1^{S,2} [b](g,h) \big \Vert_{L^r}
  \lesssim \Vert \widehat{g} \Vert_{L^p} \Vert \widehat{h} \Vert_{L^q} \cdot 2^{C_0 A} + 2^{-D} \mathcal{D} (g,h),  
\\
\label{mainbilinR}
& \big \Vert T_1^{R,1}[b](g,h) \big \Vert_{L^r} 
  + \big \Vert P_{K} T_1^{R,2} [b](g,h) \big \Vert_{L^r} 
  \lesssim \Vert \widehat{g} \Vert_{L^p} \Vert \widehat{h} \Vert_{L^q} \cdot 2^{C_0 A}
\\
& \label{mainbilin2}
\big \Vert P_K T_i^{S}[b](g,h) \big \Vert_{L^r} 
  \lesssim \Vert \widehat{g} \Vert_{L^p} \Vert \widehat{h} \Vert_{L^q} \cdot 2^{C_0 A}, 
  \qquad i=2,3,  %\, \ast \in \lbrace S,R \rbrace.
\\
\label{mainbilinRe}
& \big \Vert P_K T^{Re}[b](g,h) \big \Vert_{L^r} 
  \lesssim \Vert \widehat{g} \Vert_{L^p} \Vert \widehat{h} \Vert_{L^q} \cdot 2^{C_0 A}.
\end{align}
\end{subequations}

\end{theorem}

\begin{proof}
For the convenience of the reader we include a short explanation 
on how these estimates follow from (the proof of) Theorem \ref{theomu1}, which is in \cite{PS}. 

Consider first the operators $T_1^i$, $i=1,2$, as they appear in \eqref{theomu11bb}-\eqref{theomu12bb}. 
The proof of the estimates \eqref{theomu1conc}-\eqref{theomu1concT12} 
is based on a decomposition made according to the size/distance from the singularity
(e.g. depending on the size of $|\eta|-|\s|$ in the case of $T_1^1$). 
The only difference with the operators $T_1^{S,i}$, $i=1,2$, is the presence of an additional cut-off
$\varphi_{\leqslant -M_0 - 5}$ in \eqref{nu1S};
compare with %the definitions of $\nu_1^S$ from \eqref{nu1S} with
the definition of $\nu_1$ in \eqref{Propnu+1}.
%and of $\nu_0$ and $\nu_L$ in \eqref{Propnu+1.0} and \eqref{Propnu+2}.
These slight modifications do not change the estimate for the corresponding %individual dyadic 
bilinear operators, and only restrict some indices of summation in the proof of Theorem \ref{theomu1}.
The same holds true for the (complementary) operators $T_1^{R,i}$ associated to $\nu_1^R$;
see \eqref{mu1SR} and \eqref{nu1R}.

The proofs for the other operators $T_2^S$ and $T_3^S$ are also identical 
to those for the similar operators associated to \eqref{mu2} and \eqref{Propmu31}, 
since the extra cutoffs are again harmless. %; compare for example the formulas 

Finally, note that the $\mathcal{D}$ factor is only introduced to deal with the principal value singularity 
and is absent in \eqref{mainbilinR}, \eqref{mainbilin2} and \eqref{mainbilinRe}.
\end{proof}

\begin{remark}\label{remuse}
Using Theorem \ref{bilinearmeas} in combination with the identity \eqref{idbil},
and the boundedness of the wave operator $\whF^{-1}\wtF$, see Theorem \ref{Wobd},
will let us estimate the bilinear expressions that arise from $\partial_\xi F_{L}$,
and establish the desired bound \eqref{mqbeps}, as well as \eqref{qdbeps}.
\end{remark}

In our argument we will also need to deal with derivatives of the components of the measure $\mu$
and use the following remark.

\begin{remark}[Derivative of $\mu_1^S$]\label{remnuS}
When estimating $\partial_\xi$ of the expressions in \eqref{secF0} we also need to 
take care of the differentiation of the measure $\mu$, and, in particular, of the leading order 
singular component $\mu_1^S$, see \eqref{mu1SR}. 
Notice that derivatives of the regular component are not so harmful, essentially by definition;
see, for example, \eqref{nuRest}.

Note first that, for a function $b$, we can write 
\begin{align*}
\nabla_\xi b(|-\xi+\s| - |\eta|) & = \frac{\s-\xi}{|\s-\xi|} \, \frac{\eta}{|\eta|} \cdot \nabla_\eta b(|-\xi+\s| - |\eta|),
\\
\nabla_\xi b(|\eta+\s| - |\xi|) & = - \frac{\xi}{|\xi|} \, \frac{\eta+\s}{|\eta+\s|} \cdot \nabla_\eta b(|\eta+\s| - |\xi|).
\end{align*}

Second, from \eqref{nu1S} and \eqref{nu0} we see that $\mu_1^S$ is a linear combination 
of terms of the form 
\begin{align*}
& \nu_S(-\xi+\eta, \s) = \frac{1}{|-\xi+\eta|}\nu(|-\xi+\eta|-|\sigma|) m_0(-\xi+\eta,\s), 
\\
& \nu_S(-\xi+\s, \eta) = \frac{1}{|-\xi+\sigma|}\nu(|-\xi+\s| - |\eta|) m_0(-\xi+\sigma,\eta),
\\
& \nu_S(-\eta-\s, \xi) = \frac{1}{|-\eta-\sigma|}\nu(|-\eta-\sigma| - |\xi|) m_0(-\eta-\sigma,\xi),
\end{align*}
where $\nu$ is some distribution (either $\delta$ or $\pv$) or a Schwartz function,
and $m_0$ denote suitable symbols, as described in Proposition \ref{mudecomp} above;
for example, $m_0$ may be the coefficient/symbol $b_0$ in \eqref{nu0} satisfying \eqref{Propnu+1.1},
or any of the other rather harmless coefficients/symbols appearing in the statement.

Then, we have the following distributional identities 
for the derivatives of these singular expressions: %\eqref{mudecomp-3}:
\iffalse
\begin{align}%\label{dxinuS}
\begin{split}
\nabla_\xi \nu_S(-\xi+\eta,\s) & = -\nabla_\eta \nu_S(-\xi+\eta,\s),
\\
\nabla_\xi \nu_S(-\xi+\s,\eta) & = \frac{\s-\xi}{|\s-\xi|} \, \frac{\eta}{|\eta|} \cdot \nabla_\eta \Big[ 
  \nu_S(-\xi+\s,\eta) \frac{|-\xi+\s|}{m_0(-\xi+\s,\eta)} \Big] \, \frac{m_0(-\xi+\s,\eta)}{|-\xi+\s|}
  \\ & + \nu_S(-\xi+\s,\eta) \frac{|-\xi+\s|}{m_0(-\xi+\s,\eta)} \nabla_\xi \Big[ 
  \frac{m_0(-\xi+\s,\eta)}{|-\xi+\s|} \Big],
\\
\nabla_\xi \nu_S(-\eta-\s,\xi) & = \frac{\xi}{|\xi|} \, \frac{\eta+\s}{|\eta+\s|} \cdot \nabla_\eta \Big[ 
  \nu_S(-\eta+\s,\xi) \frac{|\eta+\s|}{m_0(-\eta-\s,\xi)} \Big] \, \frac{m_0(-\eta-\s,\xi)}{|\eta+\s|}
  \\ & + \nu_S(-\eta-\s,\xi) \frac{1%|\eta+\s|
  }{m_0(-\eta-\s,\xi)} 
  \nabla_\xi \Big[ %\frac{
  m_0(-\eta-\s,\xi)%}{|\eta+\s|} 
  \Big].
\end{split}
\end{align}
\fi
\begin{align}\label{dxinuS}
\begin{split}
\nabla_\xi \nu_S(-\xi+\eta,\s) & = -\nabla_\eta \nu_S(-\xi+\eta,\s),
\\
\nabla_\xi \nu_S(-\xi+\s,\eta) & = \frac{\xi-\s}{|\xi-\s|} \, \frac{\eta}{|\eta|} \cdot 
  \nabla_\eta \Big[ \nu(|-\xi+\s| - |\eta|) \Big] \frac{m_0(-\xi+\s,\eta)}{|-\xi+\s|} 
  \\ & + \nu(|-\xi+\s| - |\eta|) \nabla_\xi \Big[\frac{m_0(-\xi+\s,\eta)}{|-\xi+\s|} \Big],
  \\
\nabla_\xi \nu_S(-\eta-\s,\xi) & = - \frac{\xi}{|\xi|} \, \frac{\eta+\s}{|\eta+\s|} \cdot \nabla_\eta \Big[ 
  \nu(|\eta+\s| - |\xi|) \Big] \, \frac{m_0(-\eta-\s,\xi)}{|\eta+\s|}
  \\ & + \nu(|\eta-\s| - |\xi|) \frac{\nabla_\xi m_0(-\eta-\s,\xi)}{|\eta+\s|}.
\end{split}
\end{align}
The first formula above can be used directly to integrate by parts,
thus removing the $\nabla_\xi$ on the distribution by transforming it into a $\nabla_\eta$ derivative.

Next, let us inspect the second formula in \eqref{dxinuS}; 
the analysis for the third one is identical. 
We have 
\begin{align}
\label{dxinuS1}
\nabla_\xi \nu_S(-\xi+\s,\eta) & = \frac{\xi-\s}{|\xi-\s|} \frac{\eta}{|\eta|} \cdot \nabla_\eta \nu_S(-\xi+\s,\eta) 
  + \nu_S'(\xi,\eta,\sigma)
\end{align}
where
\begin{align}\label{dxinuS2}
\begin{split}
\nu_S'(\xi,\eta,\sigma) := & - \frac{\xi-\s}{|\xi-\s|} \, \nu(|-\xi+\s| - |\eta|) \, \frac{\eta}{|\eta|} 
  \cdot \frac{\nabla_\eta m_0(-\xi+\s,\eta)}{|-\xi+\s|}
  \\ & + \nu(|-\xi+\s| - |\eta|) \nabla_\xi \Big[\frac{m_0(-\xi+\s,\eta)}{|-\xi+\s|} \Big]
\end{split}
\end{align}
In other words, 
$\nabla_\xi \nu_S(-\xi+\s,\eta)$ can be written as the sum of two terms:

\smallskip
\noindent
1. $\nabla_\eta \nu_S(-\xi+\s,\eta)$ times harmless factors involving symbols of Riesz transforms, and

\smallskip
\noindent
2. A distribution, $\nu'_S$, that looks exactly like $\nu_S$ itself 
(it has the same singular structure) up to possible extra singular factor of $1/|\eta|$ or $1/|-\xi+\sigma|$
(these have the same size on the support of the $\nu_S$-type distributions under consideration).
%from the differentiation of the symbol $m_0$.

\smallskip
The first term will be handled by integrating by parts in $\eta$, and we will detail
how to bound the resulting contributions. The presence of the symbols of the Riesz transforms
is harmless since we will always be in a situation where the respective variables 
are localized at fixed dyadic scales. 
Also, the differentiation of the factor $\eta/|\eta|$ upon integration by parts 
can be handled via Hardy's inequality;
in particular it has the same effect of differentiating the input profile,
which is already included in our treatment %to consider anyways 
in view of the integration by parts in $\eta$ just described above.

The operators associated to the distribution $\nu_S'$ satisfy the same estimates  as that of the operators associated to $\nu_S$ up possible extra singular factors 
of $2^{-K},$ where $2^K$ is the size of the input 
variables $|-\xi+\sigma|\approx |\eta|$.
Once again, this extra factor can be handled through Hardy's inequality.
Therefore, we can systematically disregard the remainder-type distributions $\nu'_S$.
The above considerations show that we can effectively utilize the identity
%This means that $\partial_\xi$ derivatives may be transformed into $\partial_\eta$ or $\partial_\s$
%derivatives up to harmless factors .
\begin{align}\label{dximu1S}
\begin{split}
& \nabla_\xi \mu_1^S(\xi,\eta,\s) \approx \nabla_\eta \mu_1^S(\xi,\eta,\s) \approx 
  \nabla_\s \mu_1^S(\xi,\eta,\s),
\end{split}
\end{align}
where $\approx$ is up to harmless terms as described above.

Finally, since we can write
\begin{align*}
\nabla_{\xi} b (\vert \xi \vert + \epsilon_1 \vert \eta \vert + \epsilon_2 \vert \s \vert ) &
  =\epsilon_1 \frac{\xi}{\vert \xi \vert} \frac{\eta}{\vert \eta \vert} \cdot \nabla_{\eta} 
  b(\vert \xi \vert + \epsilon_1 \vert \eta \vert + \epsilon_2 \vert \s \vert ) 
  \\
& = \epsilon_2 \frac{\xi}{\vert \xi \vert} \frac{\s}{\vert \s \vert} \cdot \nabla_{\s} 
  b(\vert \xi \vert + \epsilon_1 \vert \eta \vert + \epsilon_2 \vert \s \vert ),
\end{align*}
we deduce, in the same sense of \eqref{dximu1S}, that
\begin{align}\label{dximuiS}
\begin{split}
& \nabla_\xi \mu_i^S(\xi,\eta,\s) \approx \nabla_\eta \mu_i^S(\xi,\eta,\s)
  \approx \nabla_\s \mu_i^S(\xi,\eta,\s) , \qquad i=2,3.
\end{split}
\end{align}
%which will be useful for $\mu_2^S$ and $\mu_3^S$.
\end{remark}

\medskip
\subsection{Singular vs. Regular decomposition of nonlinear terms}\label{secFSRsplit}
Consistently with the decomposition in Proposition \ref{mudecomp}, we now split the nonlinear terms $F_L$
(which we will often just call $F$ for better legibility). Recalling \eqref{secF0}-\eqref{secF0mu}, and recall the mid-low frequency localizations,
\begin{align}\label{FS+FR}
\begin{split}
& F_L = F^S + F^R +F^{Re}, 
  \qquad F^\ast = \sum_{\epss \in \lbrace \pm \rbrace } \eps_1\eps_2 F_{\epss}^\ast\big(f_{\eps_1},f_{\eps_2}\big),
\\
& F_{\epss}^\ast(a,b) := \int_0^t \int_{\R^6} \frac{e^{-is \Phi_\epss (\xi,\eta,\sigma)}}{\jeta \jsig} 
    \wt{a}(s,\eta) \wt{b}(s,\sigma) 
    \\ & \qquad \qquad \qquad \times %\varphi_{\leq 0}\big((\jxi+\jeta+\jsig)\js^{-\delta_N}\big)
    \varphi_{\leqslant M_0}(\jxi)\varphi_{\leqslant M_0}(\jeta)\varphi_{\leqslant M_0}(\jsig)
    \, \mu^\ast(\xi,\eta,\sigma) \, d\eta d\sigma ds,  \qquad \ast \in \{S,R, Re\}.
    %\\ & \qquad \Phi_\epss (\xi,\eta,\sigma):= \jxi -\eps_1\jeta -\eps_2\jsig,
\end{split}
\end{align}
The next lemma shows that, on the support of the singular distribution $\mu^S$ %(or $\delta_0$)
the phases of the operators \eqref{FsecF} are lower bounded, up to a small loss depending
on the high-low frequencies cutoff at scale $2^{M_0}$.
%which further justifies the above splitting.

\begin{lemma}\label{lemphases}
On the support of the integral defining $F^S$,
%$\delta_0(\xi-\eta-\sigma)$ and $\mu_1^{S}$, 
see \eqref{FS+FR} and the definition of $\mu^S$ in Proposition \ref{mudecomp}, 
we have (recall also the notation \eqref{secF0'})
\begin{align}\label{no-t-res}
\big| \Phi_\epss(\xi,\eta,\sigma) \big| \geqslant \frac{1}{8}
  \frac{1}{\langle \xi \rangle + \langle \eta \rangle + \langle \sigma \rangle},
\end{align}
for all $\eps_1,\eps_2 \in \{+,-\}$.
Moreover we have the (weak) symbol type bounds
\begin{align} \label{sym-Phi} 
\bigg \vert \varphi_{\leqslant M_0}(\jxi) 
  \varphi_{\leqslant M_0}(\jeta) \varphi_{\leqslant M_0}(\jsigma) \nabla_{\xi}^{a} \nabla_{\eta}^{\alpha} \nabla_{\sigma}^{\beta} 
  \frac{1}{\Phi_\epss(\xi,\eta,\sigma)} \bigg \vert \lesssim 
  2^{M_0(|a|+ |\alpha| + |\beta| + 1)}.
\end{align}
\end{lemma}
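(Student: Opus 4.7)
The plan is to verify the lower bound \eqref{no-t-res} case-by-case on each piece of the decomposition of $\mu^S$ from Proposition \ref{mudecomp}, and then deduce the symbol bounds \eqref{sym-Phi} by induction.

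First, I would catalog the support constraints. Inspecting \eqref{mu1SR}--\eqref{nu1S}, \eqref{mu2SR}--\eqref{nu2S2}, and \eqref{mu3S}, together with the Dirac part of $\mu^S$, the support of the integral defining $F^S$ is contained in the union of the loci
\begin{equation*}
\{\xi=\eta+\sigma\}, \quad
\{\,\bigl||\xi\!-\!\eta|-|\sigma|\bigr|\le 2^{-M_0-4}\,\}, \quad
\{\,\bigl||\xi\!-\!\sigma|-|\eta|\bigr|\le 2^{-M_0-4}\,\},
\end{equation*}
\begin{equation*}
\{\,\bigl||\eta+\sigma|-|\xi|\bigr|\le 2^{-M_0-4}\,\}, \quad
\{\,\bigl||\xi|\pm|\eta|\pm|\sigma|\bigr|\le 2^{-M_0-4}\,\},
\end{equation*}
all intersected with $|\xi|,|\eta|,|\sigma|\le 2^{M_0+2}$ coming from the low-frequency cutoffs in \eqref{FS+FR}. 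The proof consists in showing \eqref{no-t-res} on each of these loci, for every choice of signs $(\eps_1,\eps_2)$.

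The key algebraic identity I would use is
\begin{equation*}
\Phi_\epss(\xi,\eta,\sigma)\bigl(\jxi+\eps_1\jeta+\eps_2\jsig\bigr) = \jxi^2-\bigl(\eps_1\jeta+\eps_2\jsig\bigr)^2,
\end{equation*}
combined with the elementary but crucial inequality $\jxi\jeta \ge |\xi||\eta|+1$ (obtained by expanding $\jxi^2\jeta^2-(|\xi||\eta|+1)^2=(|\xi|-|\eta|)^2$). For example, on the locus $|\sigma|=|\xi-\eta|$ with signs $(+,+)$, a short computation gives $(\jxi-\jeta)^2-\jsig^2=1-2(\jxi\jeta-\xi\!\cdot\!\eta)\le -1$, hence $|\Phi_{++}|\ge 1/(\jxi+\jeta+\jsig)$. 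The other sign combinations and loci reduce to analogous squared identities by swapping variables or signs, and in each one the remaining term $\jxi\jeta-\xi\!\cdot\!\eta$, $\jeta\jsig-\eta\!\cdot\!\sigma$, etc., is bounded below by $1$. I would then handle the $2^{-M_0-4}$-thickened support by a perturbation argument: writing, e.g., $|\sigma|^2=|\xi-\eta|^2+\delta$ with $|\delta|\le 2|\xi-\eta|\cdot 2^{-M_0-4}\le 2^{-1}$ on the support, the clean $(\cdot)^2-(\cdot)^2$ expression is shifted by at most a bounded quantity that is strictly dominated by the baseline lower bound; one checks that in the worst sign configuration the squared phase difference is still $\ge 9/4$, which yields $|\Phi_\epss|\ge 1/(\jxi+\jeta+\jsig)$ on the full support, and in particular gives the constant $1/8$ of \eqref{no-t-res} with room to spare.

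Finally, for the symbol bounds \eqref{sym-Phi}, I would proceed by induction on $|a|+|\alpha|+|\beta|$. The derivatives of $\jxi$ satisfy $|\nabla_\xi^k\jxi|\lesssim 1$ for $k\ge 1$ (and in fact $\lesssim \jxi^{1-k}$, which is $\lesssim 1$ here), so all derivatives of $\Phi_\epss$ are $O(1)$. Combining with \eqref{no-t-res} and $\jxi+\jeta+\jsig\lesssim 2^{M_0}$ gives $|\Phi_\epss^{-1}|\lesssim 2^{M_0}$ on the support of the cutoffs $\varphi_{\le M_0}(\jxi)\varphi_{\le M_0}(\jeta)\varphi_{\le M_0}(\jsig)$. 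The Leibniz/Faà di Bruno expansion of a derivative of $1/\Phi_\epss$ expresses it as a sum of products of derivatives of $\Phi_\epss$ (each $O(1)$) divided by a power of $\Phi_\epss$ of total degree $|a|+|\alpha|+|\beta|+1$, producing the claimed $2^{M_0(|a|+|\alpha|+|\beta|+1)}$. The main obstacle throughout is bookkeeping: tracking the numerical constants in the perturbative step so that the final lower bound clears the stated factor $1/8$ uniformly in $(\eps_1,\eps_2)$ and in the support piece, but since the baseline inequality $\jxi\jeta-|\xi||\eta|\ge 1$ has a margin of $1$ while the admissible perturbation is $\le 1/2$, this causes no difficulty.
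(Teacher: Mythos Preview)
Your approach is correct and follows essentially the same route as the paper: both factor $\Phi_{\epss}$ via the difference-of-squares identity $\Phi_{\epss}\,(\jxi+\eps_1\jeta+\eps_2\jsig)=\jxi^2-(\eps_1\jeta+\eps_2\jsig)^2$, use the key inequality $\jxi\jeta\ge|\xi||\eta|+1$ to give the right-hand side a definite sign on each exact support locus, absorb the $O(2^{-M_0})$ perturbation, and then deduce \eqref{sym-Phi} from \eqref{no-t-res} together with the boundedness of derivatives of $\langle\,\cdot\,\rangle$. One correction: the worst-case lower bound for the squared quantity on the exact loci is $1$, not $9/4$ (for instance $\jxi^2-(\jeta-\jsig)^2=1$ at $\xi=0$, $\eta=-\sigma$ on the locus $|\xi|=|\eta+\sigma|$), so after the perturbation you get $\ge 1/2$ rather than $\ge 9/4$, but this still clears the stated constant $1/8$ with room to spare.
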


\begin{remark}
%Our problem is worse than in \cite{GHW} since some phases are time-resonant. 
%This is the reason why we introduce the decomposition above. %cfp{Expand explanation. Cfr. flat KG}
The bound \eqref{sym-Phi} is far from optimal since we allow growth for large frequencies, 
but it suffices for our purposes since frequencies are not very large compared to the time variable.
Moreover, we need to allow this type of weak symbol bounds in any case
when we differentiate the coefficients appearing in the distribution $\mu^S$.
\end{remark}

\begin{proof}
The estimate \eqref{no-t-res} in the case $(\epss) = (-,-)$ is obvious.
To prove \eqref{no-t-res} on the support of $\delta_0(\xi-\eta-\s)$ for the case $(\epss) = (-,+)$ we write 
%(other cases are either easier or follow from this one by permuting the variables $\xi,\eta$)
\begin{align*}
\Phi_{-,+}(\xi,\eta,\xi-\eta) & = \langle \xi \rangle + \langle \xi -\eta \rangle - \langle \eta \rangle 
  \geqslant \frac{1+2\big( - \vert \xi \vert \vert \xi-\eta \vert + \langle \xi-\eta \rangle \langle \xi \rangle \big)
  }{\langle \xi \rangle + \langle \xi-\eta \rangle + \langle \eta \rangle} 
\\
= & \frac{1}{\langle \xi \rangle + \langle \xi-\eta \rangle + \langle \eta \rangle} 
  \bigg(1 + 2 \frac{1+\vert \xi-\eta \vert^2 + \vert \xi \vert^2
  }{\vert \xi \vert \vert \xi-\eta \vert + \langle \xi \rangle  \langle \xi-\eta \rangle} \bigg) 
\\
\geqslant & \frac{1}{\langle \xi \rangle + \langle \xi-\eta \rangle + \langle \eta \rangle}.
\end{align*}
\eqref{no-t-res} for the case $(\epss) = (+,-)$ follows by exchanging the roles of $\eta$ and $\xi-\eta$,
and for the case $(\epss) = (+,+)$ by exchanging the role of $\xi$ and $\eta$.

Let us now look at the case of the principal value distribution $\mu_1^S$, 
%see \eqref{mudecomp-1}-\eqref{mudecomp-2}
see \eqref{mu1SR}-\eqref{nu0}.
We begin with the estimate on the support of $\nu_1^S(-\xi+\eta,\sigma)$.
In this region we have 
\begin{align*}
|\langle \xi-\eta \rangle - \jsig | 
  \leqslant ||\eta-\xi| - |\s|| \leqslant c 2^{-M_0} \leqslant c \langle \max(|\xi|,|\eta|,|\s|) \rangle^{-1},
\end{align*}
for some $c \leqslant 1/8$, so that
\begin{align*}
| \jxi + \langle \xi - \eta \rangle + \jeta | \leqslant 
  | \jxi + \jeta + \jsig | + c\langle \max(|\xi|,|\eta|,|\s|) \rangle^{-1} \leqslant 
  2| \jxi + \jeta + \jsig |.
\end{align*}
For the case $(\epss) = (-,+)$, using the inequalities above, we have
%(other cases being obtained similarly) %that after a change of variables $\eta \rightarrow \xi-\eta,$ 
%\begin{align*}
%\Phi_{-,+}(\xi,\eta,\sigma) 
%  & = \jxi + \langle \xi-\eta \rangle - \jeta + \jeta - \jsig
%  \geqslant \frac{1}{\jxi + \langle \xi-\eta \rangle + \jeta} 
%  + \langle \eta \rangle - \langle \sigma \rangle   \\
%& = \frac{1}{\langle \xi \rangle + \langle \eta \rangle + \langle \sigma \rangle} 
%+ (\vert \eta \vert - \vert \sigma \vert ) \frac{\vert \eta \vert + \vert \sigma \vert}{
%\langle \sigma \rangle + \langle \eta \rangle} \frac{(\langle \xi \rangle 
%+ \langle \eta \rangle + \langle \sigma \rangle)(\langle \xi \rangle + \langle \eta \rangle 
%+ \langle \eta \rangle)-1}{(\langle \xi \rangle + \langle \eta \rangle + \langle \sigma \rangle)
%(\langle \xi \rangle + \langle \eta \rangle + \langle \eta \rangle)},
%\end{align*}
\begin{align*}
\big| \Phi_{-,+}(\xi,\eta,\sigma) \big|
  & \geqslant \big| \jxi  + \jeta - \langle \xi-\eta \rangle \big| - \big| \langle \xi-\eta \rangle - \jsig \big|
  \\
  & \geqslant \frac{1}{\jxi + \langle \xi-\eta \rangle + \jeta} 
  - c \langle \max(|\xi|,|\eta|,|\s|) \rangle^{-1}
  %- \frac{1}{2^D\max(|\xi|,|\eta|,|\s|)}
  %\frac{|\xi-\eta| + |\s|}{\langle \xi-\eta \rangle + \jsig} 
  %\\ & 
\geqslant \frac{1}{4}\frac{1}{\jxi + \jeta + \jsig}.
\end{align*}
Since $\Phi_{+,+}(\xi,\eta,\s) = -\Phi_{+,-}(\eta,\xi,\s)$,
we are left with the case $(\epss) = (+,+)$. %or $(+,-)$ are symmetric since  
We can use an argument similar to the one above
by writing $\Phi_{+,+}(\xi,\eta,\s)  = \Phi_{+,+}(\xi,\eta,\xi-\eta) - \jsig + \langle \xi-\eta \rangle$,
and using the lower bound \eqref{no-t-res} with $\s = \xi-\eta$ just proved above.

The proof of \eqref{no-t-res} on the support of $\nu_S(-\xi+\s,\eta)$ follows similarly by writing 
\begin{align*}
\big| \Phi_{-,+}(\xi,\eta,\sigma) \big|
  & \geqslant \big| \Phi_{-,+}(\xi,\xi-\s,\s) %\jxi + \langle \xi -\s \rangle - \jsig 
  \big| - \big| \jeta - \langle \xi-\s \rangle \big|
\end{align*}
and using the bound \eqref{no-t-res} with $\eta=\xi-\s$ 
and $\big| \jeta - \langle \xi-\s \rangle \big| \leqslant c\langle \max(|\xi|,|\eta|,|\s|) \rangle^{-1}$
on the support of the distribution.
Exchanging the roles of $\xi$ and $\s$ proves the desired lower bound for 
$\Phi_{+,+}(\xi,\eta,\sigma)  = - \Phi_{-,+}(\s,\eta,\xi)$.
The remaining case of $\Phi_{+,-}$ can be treated relying again on
the lower bound for $\Phi_{+,-}(\xi,\xi-\s,\s)$.
Also on the support of $\nu_S(-\eta-\s,\xi)$ analogous arguments apply.

Now we prove \eqref{no-t-res} on the support of $\mu_i^S, i=2,3$; see \eqref{mu2SR} and \eqref{mu3S}. 
We start with the basic inequality 
\begin{align*}
\jeta + \jsigma - \langle |\eta| +|\s| \rangle 
  & = \jsigma - \frac{\vert \s \vert^2 + 2 |\eta||\s|}{\jeta + \langle |\eta| +|\s| \rangle} 
  \\
% &= \frac{\jsigma \jeta + \jsigma \langle |\eta| +|\s| \rangle 
%- |\s|^2 - 2 |\eta||\s|}{\jeta + \langle  |\eta| +|\s| \rangle} \\
& = \frac{1}{\jeta + \langle  |\eta| +|\s| \rangle} 
  \Bigg( \frac{1 +|\s|^2 + |\eta|^2}{\jsigma \jeta +|\s| |\eta|} 
  + \frac{1 +|\s|^2 + \vert |\eta| +|\s| \vert^2}{\jsigma \langle |\eta| +|\s| \rangle  + |\s|(|\s|+|\eta|)} \Bigg) 
\\
%& \geqslant \frac{1}{\jeta + \langle  |\eta| +|\s| \rangle} 
%\frac{1}{4} \frac{\langle \s \rangle^2 + \langle \eta \rangle^2}{\langle \s \rangle \langle \eta \rangle} \\
& \geqslant \frac{1}{4} \frac{1}{\jeta + \langle \s \rangle +  \langle  |\eta| +|\s| \rangle}.
\end{align*}

%keeping only the first term in the parenthsis for the second to last line, using $x+1/x \geqslant 2$ for the last line
Next, note that on the support of $\mu_i^S$ 
there exist $\epsilon_1' , \epsilon_2' \in \lbrace +,- \rbrace, (\epsilon_1' , \epsilon_2') \neq (+,+)$
such that $\big \vert \vert \xi \vert + \epsilon_1' |\eta| + \epsilon_2'|\s| \big \vert \leqslant c 2^{-M_0}$
for some small constant $c$.
Starting with the case $\epsilon_1'=\epsilon_2'=-$, we write that %for every $\epss \in \lbrace 1,-1 \rbrace$:
\begin{align*}
\vert \Phi_{+,+} (\xi,\eta,\s) \vert & \geqslant \big \vert \langle |\eta| +|\s| \rangle 
  + \epsilon_1 \jeta + \epsilon_2 \jsigma \big \vert - \big \vert \big \langle \xi \rangle 
  -  \langle |\eta| +|\s| \rangle \big \vert  
\\
& \geqslant \frac{1}{4} \frac{1}{\jeta + \jsigma + \langle |\eta| +|\s| \rangle} - c 2^{-M_0}
%\\ & 
\geqslant \frac{1}{8} \frac{1}{\langle \xi \rangle + \jeta + \jsigma}.
\end{align*}
The other cases are deduced from this computation after permutation of the variables.
To treat the case $(\eps_1,\eps_2)=(-,+)$ we exchange the roles of variables $\xi$ and $\s$ and write, 
using the previous inequality,  
\begin{align*}
\vert \Phi_{-,+} (\xi,\eta,\s) \vert 
  %= \vert \Phi_{\epsilon_1 \epsilon_2, \epsilon_2}(\s,\eta,\xi) \vert 
=  \vert \Phi_{+,+}(\s,\eta,\xi) \vert \geqslant \frac{1}{8} \frac{1}{\langle \xi \rangle + \jeta + \jsigma}.
\end{align*}
To treat the case $(\eps_1,\eps_2)=(+,-)$ we write $\vert \Phi_{+-} (\xi,\eta,\s) \vert  
=  \vert \Phi_{++}(\eta,\xi,\s) \vert $ 
and obtain the result exchanging the roles of $\xi$ and $\eta.$
The other combinations of signs  $\eps_1',\eps_2'$ are obtained similarly.

The weak symbol bound \eqref{sym-Phi} directly follows from \eqref{no-t-res}, 
simply by noting the basic fact that $\nabla_{x}^{a} \jx$ is uniformly bounded for $a \geqslant 1.$
%can be written as a linear combination of terms of the form
%\begin{align*}
%\eta_{i_1} ... \eta_{i_l} \big(1 + |\eta|^2 \big)^{b-l}, \ \ \  l=1,...,|a|.
%\end{align*}
%Together with \eqref{no-t-res}, this implies \eqref{sym-Phi}.
\end{proof}

\medskip
\section{Singular quadratic terms}\label{secFS}
The goal of this section is to estimate as in \eqref{mqbeps} and \eqref{qdbeps} 
the singular terms $F^S$ defined in \eqref{FS+FR}.
%with the definitions of the distributions in Theorem \ref{bilinearmeas}. 
%\eqref{defmuS}, and \eqref{mudecomp-2}-\eqref{mudecomp-3}

\subsection{Set-up}
%\begin{align}\label{defFS}
%\end{align}
We begin by decomposing the integrals inserting a time localization
(see the notation in \S\ref{secnotation}):
%for $m = 0,1,\dots, [\log_2 T]+1$, and $k_1, k_2 \in \Z \cap (-\infty, \delta_N m+5]$ we let
\begin{align}\label{defop1}
\begin{split}
F^S_{\epss} (G,H) & = \sum_{m=0}^{L+1} F^S_{\epss,m} (G,H) 
\\
F^S_{\epss,m} (G,H) & := %-i (2\pi)^{-3}  
  \int_0^t \tau_m(s) \int_{\R^6} \frac{e^{-is \Phi_\epss(\xi,\eta,\sigma)}}{\jeta \jsig} 
  \wt{G}(s,\eta) \wt{H}(s,\sigma) 
  \\
  & \qquad \times  \varphi_{\leqslant M_0}(\jeta) \varphi_{\leqslant M_0}(\jsigma)\varphi_{\leqslant M_0}(\jxi) 
  %+ \langle \eta \rangle + \langle \sigma \rangle) 
  \mu^S(\xi,\eta,\sigma) d\eta d\sigma ds,
\end{split}
\end{align}
recall \eqref{HLsplit}. %we re-define $M_0 = \delta_N m+3$. %+5$.

\begin{remark}[Notation simplification]
We may sometime dispense of some of the indexes, such as the `$m$' indicating the dyadic time scale
in our notation for the quadratic terms \eqref{defop1}, and for similar ones, in order to improve the legibility.
Also, since all bounds are obvious for $m=0$ (or finite fixed $m$), we may assume without loss of generality
that $m\geqslant 1$.

Since the signs $\epss$ also play a minor role we may sometimes omit them too.
We are also often omitting the dependence on $(t,\xi)$ when this is obvious, for example in the case of expressions like the one on the left-hand side of \eqref{defop1}.
We may adopt other similar simplifications in the course of our argument to lighten up the notation.
\end{remark}

\medskip
We learn from the inequalities in Lemma \ref{lemphases} 
that the phases in the operators $F^S_{\epss,m}$ are lower bounded;
more precisely they are lower bounded by $C 2^{-M_0}$.
Therefore, we can integrate by parts in $s$ to see that
\begin{align}\label{IBPs}
F^S_{\epss,m} (G,H) = T_{m}^{\epss,(1)}(G,H) + T_{m}^{\epss,(2)}(G,H) + \mbox{`symmetric and easier'},
\end{align}
where we defined
\begin{align}\label{Tij1}
%T_i^{(j,1)} 
T^{\epss,(1)}_{m}(G,H) & := \int_0^t \tau_m(s) \int_{\R^6} 
  \frac{e^{-is \Phi_\epss(\xi,\eta,\sigma)}}{i \Phi_\epss(\xi,\eta,\sigma) \jeta \jsig} 
  \varphi_{\leqslant M_0}(\jeta) \varphi_{\leqslant M_0}(\jsigma) 
  \\
\notag & \qquad \times \partial_s \wt{G}(s,\eta) \, \wt{H}(s,\sigma)
  \, \varphi_{\leqslant M_0}(\jxi)% + \jeta + \jsig) 
  \, \mu^S (\xi,\eta,\sigma) d\eta d\sigma ds,
\end{align}
`symmetric' denotes an analogous term where $\partial_s$ hits $\wt{H}$ instead of $\wt{G}$,
and 
\begin{align}\label{Tij2}
T^{\epss,(2)}_{m} (G,H) & := \tau_m(t) 
  \int_{\R^6} \frac{e^{-it \Phi_\epss(\xi,\eta,\sigma)}}{
  -i \Phi_\epss(\xi,\eta,\sigma) \jeta \jsig} 
  \varphi_{\leqslant M_0}(\jeta) \varphi_{\leqslant M_0}(\jsigma)  
  \\
\notag & \qquad \times \wt{G}(t,\eta) \wt{H}(t,\sigma)  \, \varphi_{\leqslant M_0}(\jxi) % + \langle \eta \rangle + \langle \sigma \rangle)  
  \, \mu^S(\xi,\eta,\sigma) d\eta d\sigma;
\end{align}
the `easier' term is the one where $\partial_s$ hits the cutoff $\tau_m$.
%and $ \widetilde{\mu}_2 (\xi,\eta,\sigma) = \mu_1^{(1)}(\xi,\eta,\sigma).$ %\\
The first operator is a bulk term (an integral over space and time), 
while the second is a (time-)boundary term.
We can obviously disregard the `symmetric and easier' terms without loss of generality.

To obtain the desired bounds in Propositions \ref{mainquad} and \ref{mainquaddecay}
for the singular bilinear terms $F^S$ in \eqref{FS+FR},
also
according to \eqref{defop1} and %\eqref{mqbeps}-\eqref{qdbeps}, \eqref{FsecF}, 
%\eqref{mudecomp0}, %\eqref{mudecomp-0}-\eqref{mudecomp-1}, 
%\eqref{mu1SR}, \eqref{mu2SR}, \eqref{mu3SR},
\eqref{IBPs}-\eqref{Tij2}, it then suffices to prove

\begin{proposition}[Estimates for the Singular part]\label{proFS}
Under the a priori assumptions \eqref{boot}, %there exists $\delta' \in [0,\gamma/2)$ such that,
for all $m=0,1,\dots, L+1$, for all $\eps_1,\eps_2 \in \{+,-\}$, and for $j=1,2$, 
%with $M_0 = \delta_N m + 3$, 
we have
\begin{align}\label{proFSest1}
 {\big\| \partial_\xi T^{\epss,(j)}_{m} (G_{\eps_1}, H_{\eps_2}) \big\|}_{L^2} 
  \lesssim \e^{\beta} 2^{m} \rho^{1-\beta+\delta'}(2^m),
  %2^{\delta' m}, 
  \qquad G,H \in \{g,h\},
\end{align}
for some $\delta'\in(0,\beta/2).$

Moreover, there exists $0<a<1$ %and $\delta' \in [0,\gamma/2)$ 
such that, for $j=1,2$,
%\begin{align}\label{lemmaFSest2.1}
%{\left\| e^{itL} \wtF^{-1}_{\xi\mapsto x} 
%  \Big( e^{it\jxi} \frac{d}{dt} T^{\epss,(j)} (G_{\eps_1}, H_{\eps_2}) \Big) \right\|}_{L^{\infty}_x} 
%  \lesssim \frac{\rho_0^{a}}{\jt^{1+a}}, 
%  \qquad G,H \in \{h,g\},
%\end{align}
%and
\begin{align}\label{proFSest2}
%\sum_{m=0}^{\log_2T} 
{\left\|  e^{itL} \wtF^{-1}_{\xi\mapsto x} \big[
  T^{\epss,(j)}_{m} (G_{\eps_1}, H_{\eps_2}) \big] \right\|}_{L^{\infty}_x} 
  \lesssim \frac{\rho_0^{a}}{\jt^{1+a+\delta'}}, \qquad G,H \in \{g,h\}.
\end{align}

\end{proposition}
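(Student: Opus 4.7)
The plan is to combine three ingredients: the decomposition of the singular measure from Proposition \ref{mudecomp}, the lower bound on the phases from Lemma \ref{lemphases}, and the bilinear estimates of Theorem \ref{bilinearmeas}. For each of the four summands of $\mu^S$ (that is, $\delta_0(\xi-\eta-\sigma)$, $\mu_1^S$, $\mu_2^S$, $\mu_3^S$) we rewrite $T_m^{\epss,(j)}$ using identity \eqref{idbil}, with the extra symbol
\begin{align*}
b(\xi,\eta,\sigma) := \frac{\varphi_{\leqslant M_0}(\jxi)\varphi_{\leqslant M_0}(\jeta)\varphi_{\leqslant M_0}(\jsig)}{i\Phi_\epss(\xi,\eta,\sigma)\jeta\jsig}
\end{align*}
(or evaluated at shifted arguments as in \eqref{idsym}). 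The bound \eqref{sym-Phi} shows that $b$ satisfies the hypothesis \eqref{bilmeasbest} with $A = O(M_0) = O(\delta_N\log\js)$, so each derivative only costs $\js^{C\delta_N}$; since we may take $\delta' \gg \delta_N$ these losses are absorbed in $\rho^{\delta'}(2^m)$.

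\emph{Boundary term $T_m^{\epss,(2)}$ (weighted estimate).} Differentiating in $\xi$ produces four types of contributions: (i) the main one is $\partial_\xi e^{-it\Phi} = -it\nabla_\xi\Phi \cdot e^{-it\Phi}$, producing an extra factor $t \lesssim 2^m$ times a bounded symbol; (ii) the derivative of $1/\Phi$ or the symbol, which only modify $b$ within the symbol-type bound \eqref{bilmeasbest}; (iii) the derivative of $\mu^S$, which by the identities \eqref{dximu1S}--\eqref{dximuiS} of Remark \ref{remnuS} can be replaced by $\nabla_\eta$ or $\nabla_\sigma$ derivatives of the profile after integrating by parts (with harmless Riesz-symbol factors handled by Hardy); (iv) the $\varphi_{\leqslant M_0}(\jxi)$ cut-off, which is bounded in $L^\infty$. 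In all cases one arrives at a bilinear expression to which \eqref{mainbilinest} applies, with inputs $\wt{G_{\eps_1}}(t),\wt{H_{\eps_2}}(t)$ or their $\nabla_\xi$-derivatives. Using the bootstrap \eqref{boot}, Lemma \ref{lemconj}, and the $g$-bounds \eqref{eng}, \eqref{growth}, \eqref{dxi2g}, the bound for the $(hh)$, $(hg)$ and $(gg)$ cases reduces to
\begin{align*}
2^m \cdot \big(\rho^{1/2-\beta}(2^m)\langle 2^m\sqrt{\rho(2^m)}\rangle\e^\beta + m\cdot 2^{3m/2}\rho(2^m)\big) \cdot \big(\text{$L^2$ norm of the other input}\big)
\end{align*}
times $\js^{C\delta_N}$, which is consistent with $\e^\beta 2^m \rho^{1-\beta+\delta'}(2^m)$ once we use $\langle 2^m\sqrt{\rho(2^m)}\rangle \lesssim 2^{m/2}\rho^{1/2}(2^m) + 1$ and $m\cdot 2^{3m/2}\rho(2^m) \lesssim 2^{m/2}\rho^{1-\beta+\delta'}(2^m)\e^{-\beta}$ on the relevant time regime $t\gtrsim\e_0^{-1}$ (cf.\ Remark \ref{localT}).

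\emph{Bulk term $T_m^{\epss,(1)}$ (weighted estimate).} Here the same case analysis is carried out, but one of the inputs is $\partial_s\wt{G}$. For $G=h$ we use the crucial gain $\|\partial_s\wt{h}\|_{L^2} \lesssim \rho(s)$ from \eqref{estdsh}, which after integrating over $s\sim 2^m$ produces $2^m\rho(2^m)\cdot\|\wt{H}\|_{(\cdot)}$; this absorbs the additional $s$ from $\partial_\xi e^{-is\Phi}$. For $G=g$ we use \eqref{estdsg} to bound $\|e^{isL}\partial_s g\|_{L^p}\lesssim \rho(s)$ uniformly in $p$, again giving enough decay. The derivatives on $\mu^S$ are again moved onto the profiles via Remark \ref{remnuS}; when they fall on $g$ we invoke \eqref{dxiwtgest} (or \eqref{dxi2g} after a second integration by parts in $\eta,\sigma$).

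\emph{$L^\infty$ decay estimate \eqref{proFSest2}.} For the boundary piece we apply the dispersive bound \eqref{decayp} with $p=\infty$, losing $\js^{-3/2}$; the remaining bilinear $L^1$-type norm is estimated by \eqref{mainbilinest} using a Schur/Young argument on the inputs, controlled by $\rho(2^m)$ (for purely $g$ interactions, which are the leading ones) or by $\rho(2^m)\cdot{\|h\|}_{H^N}\cdot\js^{C\delta_N}$ (for the other cases). Integration in $s\sim 2^m$ gives a polynomial loss $2^m$ absorbed against $\rho^2(2^m)\lesssim\rho_0^{2a}\jt^{-2a-1}$ for any $a<1$, thus producing $\rho_0^a/\jt^{1+a+\delta'}$ after summing in $m\leq L+1$. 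For the bulk term one uses the same strategy, replacing $\wt{G}$ by $\partial_s\wt{G}$ and applying Lemma \ref{decay-der}.

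\emph{Main obstacle.} The delicate point is the pure $(hh)$ interaction in the bulk term $T_m^{(1)}$ when $\partial_\xi$ hits $e^{-is\Phi}$ and simultaneously one of the inputs is already a full $\nabla_\xi\wt{h}$ (arising from the integration by parts used to handle a $\partial_\xi\mu^S$ term in the parallel estimate); here the bootstrap growth $\|\partial_\xi\wt{h}\|_{L^2}\lesssim \langle s\sqrt{\rho}\rangle\rho^{1/2-\beta}\e^\beta$ combined with $s$ from $\partial_\xi e^{-is\Phi}$ risks producing $s^2\rho$, which is only marginally acceptable. The saving is the $\rho(s)$-gain from $\partial_s\wt{h}$, but it must be carefully combined with a bilinear estimate that puts $\partial_\xi\wt{h}$ in $L^2$ and $\partial_s\wt{h}$ in $L^2$, using the $L^\infty$ or Schur structure of the kernel associated to each piece of $\mu^S$; verifying this for $\mu_1^S$ (the principal-value component) is where Theorem \ref{bilinearmeas} with the $\mathcal{D}$-factor in \eqref{bilmeasD} enters non-trivially.
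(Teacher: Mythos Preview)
Your outline captures the right building blocks (the $\mu^S$-decomposition, the phase lower bound from Lemma \ref{lemphases}, Theorem \ref{bilinearmeas}, and the $\partial_\xi\mu^S\approx\partial_\eta\mu^S$ device of Remark \ref{remnuS}), but there is a concrete gap in the bulk-term estimate that prevents the argument from closing.

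For $\partial_\xi T^{\epss,(1)}_m$ the paper separates a $C_1$-type piece (carrying the factor $s$ from $\partial_\xi e^{-is\Phi}$, with inputs $(\partial_s G,H)$) and a $C_2$-type piece (no factor $s$, with inputs $(\partial_s G,\partial_\sigma H)$); see \eqref{ggbdpr1}. Your proposal treats both by direct $L^\infty_s$-integration followed by the bilinear bounds of Theorem \ref{bilinearmeas}. For the $(g,g)$ case this does work (Lemma \ref{gg-no-t-res}), but for $(g,h)$ and $(h,h)$ it does \emph{not}: the crude bound
\[
\int_{s\sim 2^m} s\,\big\|T[b](\partial_s G,H)\big\|_{L^2}\,ds \;\lesssim\; 2^{2m}\,\rho(2^m)\cdot\|e^{isL}H\|_{L^{6-}}
\;\lesssim\; 2^{2m}\rho^{2-\beta}(2^m)\,\e^\beta
\]
only matches the target $\e^\beta 2^m\rho^{1-\beta+\delta'}(2^m)$ up to a factor $2^m\rho(2^m)\cdot\rho^{-\delta'}(2^m)$, and this is $\gtrsim 1$ in the regime $t\gtrsim\e_0^{-2}$ where $2^m\rho(2^m)\approx 1$. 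The paper says this explicitly: in the proof of Lemma \ref{gh-no-t-res} it notes that a direct application of Theorem \ref{bilinearmeas} ``would not suffice in this case,'' and instead applies the inhomogeneous Strichartz estimate of Lemma \ref{Strichartz} (with $(\tilde q,\tilde r,\gamma)=(2+,\infty-,0+)$ or $(6,3,2/3)$) to the $ds$-integral \emph{before} the bilinear bound. This replaces $2^{2m}$ by $2^{3m/2+}$ or $2^{11m/6}$, which is exactly the gain needed for the extra $\rho^{\delta'}$. The same mechanism is used in Lemma \ref{hh-no-t-res} for $(h,h)$, and again for the $C_2$-pieces in \eqref{ghbdpr11} and \eqref{hhbdpr12}. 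Your sketch omits Strichartz entirely, and the numerology you write (``$2^m\rho(2^m)\cdot\|\wt H\|$; this absorbs the additional $s$'') is precisely the borderline-failing estimate above.

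Two smaller points. First, your ``main obstacle'' paragraph describes a term in which \emph{both} a factor $s$ from $\partial_\xi e^{-is\Phi}$ \emph{and} an input $\nabla_\xi\wt h$ (from the $\partial_\xi\mu^S$ integration by parts) occur simultaneously; these are in fact mutually exclusive---they are the $C_1$- and $C_2$-pieces respectively---so the obstacle as stated never arises. Second, for \eqref{proFSest2} you propose the $L^1\!\to\! L^\infty$ dispersive estimate, but the bilinear operators in Theorem \ref{bilinearmeas} only map into $L^r$ with $r>1$, so the required $L^1_x$-norm is not available; the paper circumvents this via Lemmas \ref{basicL8} and \ref{basicL8'}, which use Bernstein and the $L^{8/7}\!\to\! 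L^8$ dispersive estimate instead.
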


%When $j=2,$ we will write
%\begin{align} \label{T-i21}
%T_{i}^{(2,1)} (G,H)&= \int_0 ^t \tau_m(s) \int_{\mathbb{R}^6} \frac{e^{-is \Phi_i(\xi,\xi-\eta,\sigma)}}{\Phi_i(\xi,\xi-\eta,\sigma) \langle \eta \rangle \langle \sigma \rangle} \partial_s \big( \wt{F}(s,\xi-\eta) \wt{G}(s,\sigma) \nu_1(\eta,\sigma) \big)d\eta d\sigma ds + \lbrace \textrm{similar terms} \rbrace,
%\end{align*}
%and similarly for $T_i^{(2,2)}.$

Let us introduce the following convenient quantity:
\begin{align}\label{Z}
Z = Z_\beta(\e,m) := \big(2^m \rho(2^m)\big)^{1-\beta} \big(2^m \varepsilon \big)^{\beta}. 
\end{align}
Recall that $\rho(2^m) \approx \min(2^{-m},\e^2)$
so that we have, for $2^m \approx t$,
\begin{align*}
Z \approx \left\{ 
\begin{array}{ll}
\e^\beta t^\beta, 
	& \quad t \gtrsim \e^{-2}, 
\\ 
\\
\e^{2-\beta} t,  & \quad t \lesssim \e^{-2}.
\end{array} 
\right.
\end{align*}
%$Z \approx \e^\beta t^\beta$ if $2^m \approx t \gtrsim \e^{-2}$,
%while $Z \approx \e^{2-\beta} t$ if $t \lesssim \e^{-2}$.

\begin{remark}[About \eqref{Z}, \eqref{proFSest1} and ``acceptable'' upperbounds]\label{remm}
%Notice that $Z \approx (2^m\e) \e^{1-\beta}$ when $2^m \lesssim \e^{-2}$,
%while $Z \approx (2^{m}\e)^\beta$ when $2^m \gtrsim \e^{-2}$.
%Moreover, 
First note that the a priori assumption \eqref{boot0} can be written as follows:
for all $t \approx 2^m$, with $2^m \gtrsim \rho_0^{-1/2} \approx \e^{-1}$  
(i.e., past the trivial local-existence time) 
\begin{align}\label{bootZ}
{\big\| \nabla_{\xi} \wt{h}(t) \big\|}_{L^2} 
  %\leqslant 2\e^{\beta} \langle t \sqrt{\rho(t)} \rangle \rho^{1/2-\beta}(t) 
  \leqslant C \e^{\beta} 2^m \rho^{1-\beta}(2^m) = C Z_\beta(\e,m).
\end{align}
%In what follows we will always have that $2^m$ is the approximate size of the time variable 
%as in \eqref{Tij1}-\eqref{Tij2}.

For $t \approx 2^m$, the right-hand side of \eqref{proFSest1}
in terms of $Z$ reads 
\begin{align}\label{Z1}
\e^{\beta} t \rho^{1-\beta+\delta'}(t) 
%\approx \e^{\beta} t^\beta \rho^{\delta'}(t) \big(t \rho(t) \big)^{1-\beta}(t)
\approx Z \cdot \rho^{\delta'}(t).
\end{align}
In the sequel we will encounter two types of
``acceptable'' upperbounds for the left-hand side of \eqref{proFSest1}:

\begin{itemize}

\smallskip
\item Bounds by $2^{-am} Z$ for some $a>0,$ 
which are stronger than the right-hand side of \eqref{mqbeps} for $a > 2\delta'$, see \eqref{Z1},
and since we will only consider times past $\rho_0^{-1/2}$; 
see Remark \ref{localT} which deals with the local existence times $t\lesssim \rho_0^{-1/2} \approx \e^{-1}$.

\smallskip
\item Bounds like $2^{-am} Z^2$ 
which are again stronger than the right-hand side of \eqref{proFSest1} 
provided $a > \beta + \delta'$, since $Z \lesssim 2^{\beta m} \e^{\beta}$,
and we can choose $2\delta' < \beta$.

\end{itemize}

Notice that we can pick $\beta$ and $\delta'$ arbitrarily small,
and, in particular, small enough depending on the exponent `$a$' appearing in the 
two types of bounds described above, which we will obtain from the nonlinear estimates.
A convenient choice is to let $2\delta' < \beta < a/2$.

\end{remark}

To prove \eqref{proFSest2} in the case of $j=1$, that is, for the bulk operator \eqref{Tij1}, 
%(and also \eqref{lammaFRest2} below) 
we will use the following result: %to reduce it to an $L^p$ estimate on a pseudo-product.

\begin{lemma}\label{basicL8}
Assume that for some $a \ll 1$ such that $a > 5\delta_N$ we have
\begin{align}\label{L8as}
{\left\| \whF^{-1}_{\xi\mapsto x} I(s) %\big( T^{\epss,(j)} (F, G) \big) 
  \right\|}_{L^{8/7}_x} \lesssim \frac{\rho_0^{a}}{\js^{1+2a}}, 
\end{align}
then 
\begin{align}\label{L8conc}
%\sum_{m=0}^{\log_2T} 
{\Big\| e^{itL} \wtF^{-1}_{\xi\mapsto x} \int_0^t \varphi_{\leqslant \delta_N m + 5}(\xi)
  e^{-is\jxi} I(s,\xi) \, \tau_m(s) ds
  %\big( T^{\epss,(j)} (F, G) \big) 	
  \Big\|}_{L^{\infty}_x} \lesssim \frac{\rho_0^{a}}{\jt^{1+a+\delta'}}, 
\end{align}
for $\delta'<\delta_N$.
\end{lemma}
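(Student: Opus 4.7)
\medskip
\noindent\emph{Proof strategy.} The plan is to combine the $L^{8/7}\to L^8$ linear dispersive estimate of Lemma~\ref{decay} with Bernstein's inequality, using the frequency cutoff $\varphi_{\leqslant \delta_N m+5}$ to pass from $L^8$ to $L^\infty$ at the cost of a small power of $2^{\delta_N m}$. The enhanced decay $\langle s\rangle^{-1-2a}$ from \eqref{L8as} (as opposed to the $\langle s\rangle^{-1}$ one would expect) is designed precisely to absorb this Bernstein loss, provided $a>5\delta_N$. First, using the intertwining identity \eqref{Woid} and writing $K:=\delta_N m+5$, I would rewrite the quantity to estimate as
\begin{align*}
\int_0^t e^{i(t-s)L}\, G(s)\, \tau_m(s)\,ds,
  \qquad G(s):=\wtF^{-1}\big[\varphi_{\leqslant K}(\xi)\,I(s,\xi)\big]=P_{\leqslant K}\wtF^{-1}I(s).
\end{align*}
By the boundedness of the wave operators on $L^{8/7}$ (Theorem~\ref{Wobd}) together with the $L^{8/7}$-boundedness of the flat Littlewood--Paley projector, the hypothesis \eqref{L8as} translates into ${\|G(s)\|}_{L^{8/7}}\lesssim \rho_0^{a}\langle s\rangle^{-1-2a}$, and by construction $G(s)$ is distorted-frequency localized at scales $\lesssim 2^K$.

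\medskip
For each $s$ in the support of $\tau_m$ I would establish two pointwise-in-$s$ bounds. For $|t-s|\geqslant 1$, Bernstein's inequality (upgrading from $L^8$ to $L^\infty$ at frequencies $\lesssim 2^K$) combined with the $L^{8/7}\to L^8$ estimate from Lemma~\ref{decay} (for which $3(1/2-1/8)=9/8$ and the low-frequency factor is $\langle 2^{15K/8}\rangle$) gives
\begin{align*}
{\big\| e^{i(t-s)L} G(s)\big\|}_{L^\infty}
  \lesssim 2^{3K/8}\cdot 2^{15K/8}\,|t-s|^{-9/8}\, {\|G(s)\|}_{L^{8/7}}
  = 2^{9K/4}\,|t-s|^{-9/8}\, {\|G(s)\|}_{L^{8/7}}.
\end{align*}
For $|t-s|<1$ the dispersive estimate is unusable, and one instead uses pure Bernstein from $L^{8/7}$ to $L^\infty$ (factoring through $L^2$ via the unitarity of $e^{i(t-s)L}$), yielding
\begin{align*}
{\big\| e^{i(t-s)L} G(s)\big\|}_{L^\infty}
  \lesssim 2^{3K(1-1/8)}\, {\|G(s)\|}_{L^{8/7}}
  = 2^{21K/8}\, {\|G(s)\|}_{L^{8/7}}.
\end{align*}

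\medskip
Inserting ${\|G(s)\|}_{L^{8/7}}\lesssim \rho_0^{a}\langle s\rangle^{-1-2a}$ and integrating in $s$, I would split the $s$-integral at $s=t-1$ and distinguish two regimes. If $t\geqslant 4\cdot 2^{m+1}$, then $|t-s|\approx t$ throughout $\supp\tau_m$ and the total is $\lesssim 2^{(9\delta_N/4-2a)m}\rho_0^{a}\,t^{-9/8}$. If instead $t\approx 2^m$, the ``far'' part $s\in[2^{m-1},t-1]$ contributes $\lesssim 2^{(9\delta_N/4-1-2a)m}\rho_0^{a}$ because $\int_{2^{m-1}}^{t-1}(t-s)^{-9/8}\,ds = O(1)$, while the ``near'' part $s\in[\max(2^{m-1},t-1),t]$, being an interval of length $\leqslant 1$, contributes $\lesssim 2^{(21\delta_N/8-1-2a)m}\rho_0^{a}$. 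The three exponents of $2^m$ are all $\leqslant -(1+a+\delta')$ with comfortable slack, since $a>5\delta_N$ gives $a-21\delta_N/8>19\delta_N/8>\delta_N>\delta'$ (and $a+\delta'\leqslant 1/8$ for $a$ small absorbs the $t^{-9/8}$-versus-$t^{-1-a-\delta'}$ comparison in the first regime).

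\medskip
The only delicate point is the near-diagonal regime $s\to t$, where $(t-s)^{-9/8}$ fails to be locally integrable; the purely Bernstein-based bound $2^{21K/8}$ covers it, and the $L^{8/7}$ hypothesis (rather than, say, $L^1$ or $L^2$) is precisely calibrated so that the resulting frequency loss remains within the budget afforded by $a>5\delta_N$.
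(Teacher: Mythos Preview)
Your proof is correct and follows essentially the same approach as the paper: Bernstein to pass between $L^\infty$ and $L^8$, the $L^{8/7}\to L^8$ dispersive estimate with rate $\langle t-s\rangle^{-9/8}$, and a separate $L^2$-unitarity argument for the non-integrable region $|t-s|<1$. Your bookkeeping is in fact slightly tighter than the paper's (you track the frequency loss as $2^{9K/4}$ rather than the paper's generous $\jt^{4\delta_N}$, and you explicitly separate the regimes $t\approx 2^m$ and $t\gg 2^m$), but the argument is the same.
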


We will apply this Lemma with
\begin{align}\label{L8appl}
I(t) & =  e^{it\jxi} \partial_t T^{\epss,(1)}_{m} (G_{\eps_1}, H_{\eps_2})(t),
%H & = e^{it\jxi}T^{\epss,(2)} (G_{\eps_1}, H_{\eps_2}), \qquad G,H \in \{h,g\}.
\end{align}
so that, after verifying the bound \eqref{L8as} for this choice of $I$,
and using the fact that for $s \approx 2^m$ the frequency integral in \eqref{Tij1} is already localized
in $|\xi| \leqslant 2^{\delta_N m + 3}$, i.e.
$$ \partial_t T^{\epss,(1)}_{m}(s) 
  = \varphi_{\leqslant \delta_N m + 5} %\delta_N\log_2(s) +10}
  (\xi)
  \partial_t T^{\epss,(1)}_{m}(s),$$ 
%given our localization 
we will obtain the desired 
%\begin{align*}%\label{L8conc}
%{\big\| e^{itL} \wtF^{-1}_{\xi\mapsto x} T^{\epss,(1)} (G_{\eps_1}, H_{\eps_2})(t)
%  \big\|}_{L^{\infty}_x} \lesssim \frac{\rho_0^{a}}{\jt^{1+a+\delta'}}, 
%\end{align*}
%consistently with 
\eqref{proFSest2}.

%tl{
%\begin{lemma} \label{basicL8}
%Assume that for some $a>0, t>s>0,$
%\begin{align*}
%\big \Vert P_{\lesssim \langle t \rangle^{\delta_N}} \mathcal{F}_{\xi \mapsto x}^{-1} 
%  \big( e^{-is \jxi } F(\xi,s) \big) \big \Vert_{L^{8/7}_x} \lesssim \frac{\rho_0^a}{\langle s \rangle^{1+a}}.
%\end{align*}
%Then 
%\begin{align*}
%\bigg \Vert e^{itL} P_{\lesssim \langle t \rangle^{\delta_N}} \int_0^t 
%\mathcal{F}_{\xi \mapsto x}^{-1} F(\xi,s) ds \bigg \Vert_{L^{\infty}_x} \lesssim  \frac{\rho_0^a}{\langle s \rangle^{1+a}}.
%\end{align*}
%\end{lemma}

%\begin{proof}
%Using Bernstein's inequality and Lemma \ref{decay} we find that 
%\begin{align*}
%\bigg \Vert e^{itL} P_{\lesssim \langle t \rangle^{\delta_N}} \int_0 ^t \mathcal{F}_{\xi \mapsto x}^{-1} F(\xi,s) ds \bigg \Vert_{L^{\infty}_x} & \lesssim \langle t \rangle^{5 \delta_N} \bigg \Vert \int_0 ^t e^{i(t-s)L} P_{\lesssim \langle t \rangle^{\delta_N}} \mathcal{F}_{\xi \mapsto x}^{-1} \big( e^{-is \jxi} F(\xi,s) \big) ds \bigg \Vert_{L^{8}_x} \\
% & \lesssim \langle t \rangle^{10 \delta_N} \int_0 ^t \frac{1}{\langle t-s \rangle^{9/8}} \big \Vert P_{\lesssim \langle t \rangle^{\delta_N}} \mathcal{F}_{\xi \mapsto x}^{-1} \big( e^{-is \jxi} F(\xi,s) \big) \big \Vert_{L^{8/7}_x} ds,
%\end{align*}
%and we conclude using that if $0<a<1/100,$
%\begin{align*}
%\int_0 ^t \frac{1}{\langle t-s \rangle^{9/8}} \frac{1}{\langle s \rangle^{1+a}} ds \lesssim \frac{1}{\langle t \rangle^{1+a}}.
%\end{align*}
%\end{proof}
%}

\begin{proof}[Proof of Lemma \ref{basicL8}]
Using the (distorted) Bernstein's inequality (with $|\xi| \lesssim \jt^{\delta_N}$), 
the $L^8$ decay estimate from Lemma \ref{decay},
and Bernstein again, we find that 
\begin{align*}
& {\Big\| e^{itL}
  \int_0^t \wtF_{\xi \mapsto x}^{-1} \big(\varphi_{\leqslant \delta_N m+5}(\xi)
  e^{-is\jxi} I(s,\xi) \big)  \, \tau_m(s) ds \Big\|}_{L^{\infty}_x} 
  \\
  & \lesssim \jt^{3\delta_N/8} 
  {\Big\| \int_0 ^t e^{i(t-s)L} P_{\leqslant \delta_N m + 5} 
    \wtF_{\xi \mapsto x}^{-1} \big( I(\xi,s) \big) \, \tau_m(s) ds \Big\|}_{L^8_x} 
  \\
  & \lesssim \jt^{2\delta_N} \int_0^t \frac{1}{\langle t-s \rangle^{9/8}} \js^{2\delta_N}
  {\big\| \wtF_{\xi \mapsto x}^{-1} I(\xi,s)  \big\|}_{L^{8/7}_x} \, \tau_m(s) ds.
\end{align*}
%(Here $P$ is the distorted Littlewood-Paley projection).
Notice that in the inequality above we have also used Bernstein to deal with the integral
from $t-1$ to $t$, passing from $L^8_x$ to $L_x^2$, using Minkowski, the unitarity of $e^{itL}$
and then Bernstein again from $L^2_x$ to $L^{8/7}_x$. 
Then we conclude using the boundedness of wave operators, the assumption \eqref{L8as}
and the fact that % if $0<a<1/100,$
\begin{align*}
\jt^{2\delta_N} \int_0^t \frac{1}{\langle t-s \rangle^{9/8}} \js^{2\delta_N}
  \frac{1}{\langle s \rangle^{1+2a}} ds \lesssim \frac{1}{\jt^{1+2a-4\delta_N}} \lesssim \frac{1}{\jt^{1+a+\delta'}},
\end{align*}
for $\delta'+4\delta_N < a$.
\end{proof}

Lemma \ref{basicL8} allows us to systematically reduce the proof of \eqref{proFSest2} 
%Lemma \ref{mainquaddecay} 
to $L^{8/7}_x$ bounds for the inverse transform of the right-hand side of \eqref{L8appl}, 
that is: %in the time non-resonant region:
\begin{align}\label{L8appl'}
\begin{split}
\tau_m(s)
   \whF^{-1} & \int_{\R^6} 
   \frac{\varphi_{\leqslant M_0}(\jxi) \varphi_{\leqslant M_0}(\jeta)
   \varphi_{\leqslant M_0}(\jsigma)}{\Phi_\epss(\xi,\eta,\sigma) \jeta\jsig}
   \, e^{i\eps_1s \jeta} e^{i\eps_2s\jsig} \\ & \times \partial_s \wt{G_{\eps_1}}(s,\eta) 
   \, \wt{H_{\eps_2}}(s,\s) 
   \mu^S(\xi,\eta,\s) d\eta d\s, 
   \qquad G,H \in \{g,h\}.
\end{split}
\end{align}
Note that this expression gives bilinear terms like those appearing in \eqref{idop}, %Theorem \ref{bilinearmeas}, 
where %, with the notation \eqref{theomu11b}-\eqref{theomu12b},
the inputs are (up to complex conjugation) of the form 
$(\wtF(e^{-is L} \partial_s G), \wtF(e^{-i s L}H))$.
Therefore, since $e^{-is L} \partial_s g$ and $e^{-is L} \partial_s h$  
are quadratic in $A$ and $f$, and in view of the product estimates of Theorem \ref{bilinearmeas} 
%\eqref{theomu1conc} and \eqref{theomu1concT2},
the terms \eqref{L8appl'} should be  essentially thought of as cubic terms in $A$ and $f$.

%Then, in what follows we will only focus on bounding the 

%tl{\begin{corollary}
%Assume that for some $0<a<1/100,$ 
%\begin{align} \label{L8-time-nonres}
%\big \Vert \mathcal{F}_{\xi \mapsto x}^{-1}  \wt{T}_{i}^{(j,2)}(F,G)(s) \big \Vert_{L^{8/7}_x} \lesssim \frac{\rho_0^a}{\langle s \rangle^{1+a}} ,
%\end{align}
%where, denoting $\wt{\Phi}_i(\eta,\sigma) = \Phi_i(\xi,\eta,\sigma) - \jxi,$
%\begin{align*}
%\wt{T}_{i}^{(j,2)}(s) &:=  \int_{\mathbb{R}^6} \frac{e^{-is \wt{\Phi}_i(\eta,\sigma)}}{\Phi_i(\xi,\eta,\sigma)\langle \eta \rangle \langle \sigma \rangle} \varphi_{k_1}(\eta) \varphi_{k_2}(\sigma) \\
%\notag& \times \partial_s \big( \wt{F}(s,\eta) \wt{G}(s,\sigma) \varphi_{\leqslant \langle s \rangle^{\delta_N}}(\langle \xi \rangle + \langle \eta \rangle + \langle \sigma \rangle) \big) \widetilde{\mu}_j (\xi,\eta,\sigma) d\eta d\sigma.
%\end{align*}
%Then
%\begin{align*}
%\bigg \Vert e^{itL} \sum_{m=0}^{\log_2 t+1} T_{i}^{(j,1)}(F,G) \bigg \Vert_{L^{\infty}_x} \lesssim \frac{\rho_0^a}{\langle s \rangle^{1+a}}.
%\end{align*}
%\end{corollary}
%}

\smallskip
A similar lemma that we will use to prove \eqref{proFSest2} 
for the boundary-type operators \eqref{Tij2} is the following:
\begin{lemma}\label{basicL8'}
Assume that %for some $a \ll 1$ such that $a > 5\delta_N$ we have
\begin{align}\label{L8'as}
{\left\| \whF^{-1}_{\xi\mapsto x} J(t) %\big( T^{\epss,(j)} (F, G) \big) 
  \right\|}_{L^{8/7}_x} \lesssim \rho_0^{a}, %{\jt^{2a}}, 
\end{align}
then, for $\delta_N$ small enough, one has 
\begin{align}\label{L8'conc}
{\Big\| e^{itL} \wtF^{-1}_{\xi\mapsto x} \big[ \varphi_{\leqslant \delta_N\log_2\jt +10}(\xi) J(t,\xi) \big]
  \Big\|}_{L^{\infty}_x} \lesssim \frac{\rho_0^{a}}{\jt^{10/9}}.
\end{align}
\end{lemma}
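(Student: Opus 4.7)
\medskip
\noindent\textbf{Proof proposal for Lemma \ref{basicL8'}.}
The plan is to mirror the structure of the proof of Lemma \ref{basicL8} given just above, but without the gain obtained there through time-integration of the free propagator against $\jt-\js$; since the target decay $\jt^{-10/9}$ is strictly weaker than the sharp $L^{8/7}\to L^8$ Klein--Gordon decay $\jt^{-9/8}$, we can afford to spend the missing $\jt^{-1/72}$ on frequency losses coming from Bernstein and from the factor $\langle 2^{5k/2(1/p'-1/p)}\rangle$ in the decay estimate \eqref{decayp}. First I would use the intertwining property \eqref{Woid} and the boundedness of wave operators on $L^p$ from Theorem \ref{Wobd} to rewrite
\begin{align*}
e^{itL}\wtF^{-1}\bigl[\varphi_{\leq \delta_N\log_2\jt+10}(\xi)\,J(t,\xi)\bigr]
=\mathcal{W}\,e^{it\langle\nabla\rangle}\,\whF^{-1}\bigl[\varphi_{\leq \delta_N\log_2\jt+10}(\xi)\,J(t,\xi)\bigr]
\end{align*}
and absorb $\mathcal{W}$ into the $L^{\infty}_x$ bound at the end. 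In frequency the distorted and flat Littlewood--Paley projections are conjugated by $\mathcal{W}$, so all subsequent frequency cutoffs may be taken in the flat sense.

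Next I would apply the distorted Bernstein inequality, exploiting the cutoff $\varphi_{\leq \delta_N\log_2\jt+10}$ to convert $L^\infty_x$ to $L^8_x$ at the cost of a factor $\jt^{3\delta_N/8}$ (from $N^{d/q-d/p}=N^{3/8}$ with $N\approx \jt^{\delta_N}$). Then I would invoke the flat $L^{p'}\to L^{p}$ Klein--Gordon dispersive estimate, equivalent to \eqref{decayp} via Theorem \ref{Wobd}, with $p=8$, $p'=8/7$, which yields decay $\jt^{-3(1/2-1/8)}=\jt^{-9/8}$ together with a frequency factor $\langle 2^{15k/8}\rangle\lesssim \jt^{15\delta_N/8}$ on the dyadic pieces of $\whF^{-1}J$. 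Combining these ingredients gives
\begin{align*}
\bigl\|e^{itL}\wtF^{-1}\bigl[\varphi_{\leq \delta_N\log_2\jt+10}(\xi)J(t,\xi)\bigr]\bigr\|_{L^\infty_x}
\lesssim \jt^{3\delta_N/8}\cdot \jt^{15\delta_N/8}\cdot \jt^{-9/8}\,\bigl\|\whF^{-1}J(t)\bigr\|_{L^{8/7}_x}
\lesssim \jt^{9\delta_N/4-9/8}\,\rho_0^{a},
\end{align*}
using the hypothesis \eqref{L8'as} in the last step.

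The desired bound \eqref{L8'conc} then follows provided $9/8-9\delta_N/4\geq 10/9$, i.e.\ $\delta_N\leq 1/162$, which is comfortably satisfied by the choice $\delta_N=5/N$ with $N=10^6$ in \eqref{mteps}. There is essentially no obstacle of conceptual nature here; the only subtlety is the bookkeeping of the two small frequency losses (the $\jt^{3\delta_N/8}$ from Bernstein and the $\jt^{15\delta_N/8}$ from the non-scale-invariant prefactor in \eqref{decayp}) and checking that their combined size $\jt^{9\delta_N/4}$ stays strictly below the gap $\jt^{9/8-10/9}=\jt^{1/72}$ between the available and the required decay. This is the only place one needs $\delta_N$ to be truly small, and it is the main (very mild) obstacle compared to Lemma \ref{basicL8}, where the time integration against $\langle t-s\rangle^{-9/8}$ provided extra room.
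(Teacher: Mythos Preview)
Your proposal is correct and follows essentially the same approach as the paper, which simply remarks that the proof ``follows the same steps as in the proof of Lemma \ref{basicL8}.'' Your bookkeeping of the losses $\jt^{3\delta_N/8}$ (Bernstein) and $\jt^{15\delta_N/8}$ (dispersive factor) against the available decay $\jt^{-9/8}$ is accurate, and the resulting constraint $\delta_N\leq 1/162$ is indeed easily met by the paper's choice $\delta_N=5/N$ with $N=10^6$; if anything, the argument here is simpler than in Lemma \ref{basicL8} since there is no time integral and hence no need for the extra Bernstein step near $s=t$.
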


The proof follows the same steps as in the proof of Lemma \ref{basicL8} so we skip it.

\medskip
In the rest of this subsection, the goal is to prove Proposition \ref{proFS}, which will then imply the main bounds
\eqref{mainquadbound} and \eqref{quaddecaybound} for the terms $F^S$ through \eqref{IBPs}. %\eqref{FS+FR}.

\medskip
\subsection{Estimates for the boundary terms}\label{FSbdry}
We start by treating the easier terms that involve the operators \eqref{Tij2}. %$T_{i}^{(j,2)}.$ 

\begin{lemma}\label{bdry-no-t-res}
Under the assumptions of Proposition \ref{proFS}, we have 
\begin{align}\label{lembdary1}
  {\big\| \partial_{\xi} T^{\epss,(2)}_{m} (G_{\eps_1},H_{\eps_2}) \big\|}_{L^2_{\xi}}
  \lesssim \e^{\beta} 2^{m} \rho^{1-\beta +\delta'}(2^m), \qquad G,H\in\{g,h\},
\end{align}
for some $0<\delta'<\beta/2$.

Moreover, there exists $a>\delta'$ such that
\begin{align}\label{lembdary2}
{\left\| e^{itL} \wtF^{-1}_{\xi\mapsto x}
  T^{\epss,(2)}_{m} (G_{\eps_1}, H_{\eps_2})\right\|}_{L^{\infty}_x} 
  \lesssim \frac{\rho_0^{a}}{\jt^{1+a+\delta'}}, 
  \qquad G,H \in \{h,g\}.
\end{align}
\end{lemma}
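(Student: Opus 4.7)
The plan is to distribute $\partial_\xi$ across the factors in $T^{\epss,(2)}_{m}$, rewrite the resulting bilinear forms via the identities \eqref{idbil} in terms of the singular operators $T_0, T_1^{S,i}, T_2^S, T_3^S$ of \eqref{idop}, and estimate them using Theorem \ref{bilinearmeas} together with dispersive $L^p$-bounds for $h$ and $g$ from Lemma \ref{dispersive-bootstrap}. When $\partial_\xi$ hits $e^{-it\Phi_\epss}$ we produce $-it\,\nabla_\xi\Phi_\epss$ with $|\nabla_\xi\Phi_\epss|\lesssim 1$, giving a factor $2^m$; this is the main contribution. When $\partial_\xi$ hits $1/\Phi_\epss$ or the cutoffs $\varphi_{\leqslant M_0}$, the bound \eqref{sym-Phi} produces only a $2^{C M_0}\lesssim \jt^{C\delta_N}$ loss and no $t$-growth. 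When $\partial_\xi$ hits $\mu^S$, Remark \ref{remnuS} (identities \eqref{dximu1S}--\eqref{dximuiS}) lets us transfer the derivative onto $\wt G$ or $\wt H$ via an integration by parts in $\eta$ or $\sigma$, at the cost of $\partial_\eta\wt G$ or $\partial_\sigma\wt H$, which are controlled by the bootstrap \eqref{boot} when the profile is $h$ and by \eqref{growth}, \eqref{dxiwtgest}, \eqref{dxi2g} when the profile is $g$.

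Once decomposed, we view the differentiated expression as the bilinear operators in \eqref{idop} with symbol $m(\xi,\eta,\sigma):=\jeta^{-1}\jsig^{-1}\Phi_\epss^{-1}\,\varphi_{\leqslant M_0}(\jxi)\varphi_{\leqslant M_0}(\jeta)\varphi_{\leqslant M_0}(\jsig)$, which satisfies \eqref{bilmeasbest} with $A\approx M_0$. We then apply Theorem \ref{bilinearmeas} in $L^2$ with dual exponents $p=q=3$ (so $1/p+1/q=2/3>1/2$). By Hausdorff--Young combined with the boundedness of wave operators (Theorem \ref{Wobd}), the relevant norms reduce to $\|\wt{G_\eps}(t)\|_{L^3}\lesssim \|G(t)\|_{L^{3/2}}$. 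Interpolating between the bootstrap \eqref{boot} and the decay estimate \eqref{dispbootf} gives $\|h(t)\|_{L^{3/2}}\lesssim \jt^{\delta}\rho^{1/2-\beta}(t)\e^\beta$, while \eqref{dispbootgbis} with $p=3/2$ yields $\|g(t)\|_{L^{3/2}}\lesssim \rho(t)\jt^{1/2}\log\jt$. In the three cases $(gg)$, $(gh)$, $(hh)$ one checks that the product bound, multiplied by $2^m$ and after absorbing the $\jt^{C\delta_N}$ losses into the $\rho^{\delta'}$ slack, yields $\e^\beta 2^m \rho^{1-\beta+\delta'}(2^m)$ for some $\delta'\in(0,\beta/2)$, proving \eqref{lembdary1}. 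The contributions where $\partial_\xi$ does not touch the phase are strictly better since they carry no $2^m$ factor.

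For \eqref{lembdary2} no differentiation in $\xi$ is needed, so Lemma \ref{basicL8'} reduces the claim to an $L^{8/7}$-bound on $\wtF^{-1}(T^{\epss,(2)}_m)$. We run the same machinery with $L^{8/7}$ in place of $L^2$, choosing $p,q$ with $1/p+1/q>7/8$ in Theorem \ref{bilinearmeas}; since $G,H\in\{g,h\}$ both obey $L^{p'}$-decay in $t$, and since no factor of $2^m$ appears, the resulting bound is of size $\rho_0^{a}$ for some $a\in(0,1)$, and Lemma \ref{basicL8'} concludes \eqref{lembdary2}.

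The main obstacle is the pure $hh$-interaction in the weighted estimate, where the dispersive decay of $\|h(t)\|_{L^{3/2}}$ is only $\jt^{-1/2+\delta}$, so the $2^m$ factor coming from $\partial_\xi$ hitting the phase must be balanced entirely against the a priori bounds for $h$; this works because $2^m\rho(2^m)$ is never large (indeed $\rho(2^m)\lesssim 2^{-m}$ once $2^m\gtrsim\e^{-2}$), and because one can exchange the Coifman--Meyer bound against the principal-value contribution $2^{-D}\mathcal{D}$ in \eqref{bilmeasD} by choosing $D$ optimally. The remaining two cases $(gg),(gh)$ are strictly easier as each factor of $g$ contributes an extra $\rho(t)$.
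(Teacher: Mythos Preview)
Your overall plan—distribute $\partial_\xi$, reduce to the operators in \eqref{idop} via \eqref{idbil}, and apply Theorem \ref{bilinearmeas}—matches the paper's approach, but the execution has a genuine gap in how you invoke the bilinear theorem. The estimates \eqref{mainbilinest} bound the operators by $\|\widehat g\|_{L^p}\|\widehat h\|_{L^q}$, where $g,h$ are the Fourier-side inputs and $\widehat g,\widehat h$ their \emph{physical-space} inverse transforms. In \eqref{idbil} the inputs are $e^{\eps it\jxi}\wt{G_\eps}$, so the relevant norm is $\|\whF^{-1}(e^{\eps it\jxi}\wt{G_\eps})\|_{L^p}\approx\|e^{\eps itL}G_\eps\|_{L^p}$ by boundedness of wave operators. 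You instead strip the unimodular factor and pass to $\|\wt{G_\eps}\|_{L^3}$, then Hausdorff--Young to $\|G\|_{L^{3/2}}$. This throws away the dispersive decay of $e^{itL}$: the profile $h$ obeys $\|h\|_{L^2}\lesssim\e$ uniformly in time, and there is no mechanism (and no result in the paper) giving $\|h(t)\|_{L^{3/2}}\lesssim\jt^\delta\rho^{1/2-\beta}\e^\beta$ as you claim. Likewise \eqref{dispbootgbis} controls $\|e^{itL}g\|_{L^p}$ for $p\ge2$, not $\|g\|_{L^{3/2}}$.

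Even repairing this and using the correct quantity $\|e^{itL}G\|_{L^p}$, your symmetric choice $p=q=3$ is too weak in the $(h,h)$ case: from \eqref{dispbootf} one gets $\|e^{itL}h\|_{L^3}^2\lesssim\rho^{2-2\beta}2^{m+}\e^{2\beta}=2^{-m+}Z^2$, and after the $2^m$ from differentiating the phase this is only $2^{0+}Z^2$, which (cf.\ Remark \ref{remm}) just misses the target $2^{-am}Z^2$ with $a>\beta+\delta'$. The paper closes exactly by using an \emph{asymmetric} H\"older split $L^3\times L^{6-}$, yielding $2^{-5m/4}Z^2$ before the $2^m$ factor; see \eqref{prbdary12}--\eqref{prbdary12'}. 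For the $B_2$-type term (derivative transferred to a profile), the paper uses $L^2\times L^6$ with $\|\partial_\xi\wt h\|_{L^2}$ or \eqref{growth}, \eqref{dxiwtgest} for $g$. Your treatment of \eqref{lembdary2} via Lemma \ref{basicL8'} is in the right spirit, but the same correction to the norms is needed there as well.
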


\begin{proof}
%To keep the proof more readable, let us assume $2^m \geqslant \rho_0^{-1}$, so that $\rho(2^m) = 2^{-m}$,
%and the right-hand sides of \eqref{lembdary1} can be replaced by $2^{\delta'm}$; see Remark \ref{remm}.
%Recall the notation \eqref{Tij2}.
We first prove the more difficult estimate \eqref{lembdary1}, and then the simpler \eqref{lembdary2}.

\smallskip
\noindent
{\it Proof of \eqref{lembdary1}}.
We subdivide the proof into three steps.

\smallskip
{\it Step 1}.
When applying $\partial_\xi$ to $T^{\epss,(2)}$, 
we claim that is suffices to focus on the worst case %$j=2,$ and 
when the derivative hits the exponential.  
In fact, the terms where the frequency derivative hits the symbol $1/\Phi$ or the cutoff $\varphi_{\leqslant M_0}$ 
are clearly much easier to treat, also in view of \eqref{no-t-res}. %since this only costs $\js^{\delta_M}$ factors; 
When instead $\partial_\xi$ hits the measure $\mu$ we can use Remark \ref{remnuS} to argue as follows:

\smallskip
1. For the $\delta$ contribution we integrate by parts using the distributional identity 
$\partial_\xi\delta(\xi-\eta-\s) = -\partial_\eta\delta(\xi-\eta-\s)$. 
This generates:

(a) a term where $\partial_\eta$ hits the exponential, 
which is essentially identical to the leading order term that we will treat, 

(b) some other easier terms where the symbols and cutoff are differentiated,
and 

(c) a term where $\partial_\eta$ hits $\wt{f}$.
This latter is the only non-trivial term, although it is still easier to estimate than
the term where the differentiation falls on the exponential; 
we will treat this term in details together with the main one.

\smallskip
2. For the $\mu_1^S$ parts, we can use the identities \eqref{dxinuS}
%\eqref{mudecomp-2}-\eqref{mudecomp-3} 
to do similar integration by parts and obtain terms like those described above.
The analogues of (a) and (b) above are all substantially easier to bound 
than the main term (where the derivative hits the phase) and the analogue of the term in (c);
below we are going to treat in full details these two types of terms.

\smallskip
According to the discussion above, 
the main terms to estimate from $\partial_{\xi} T^{\epss,(2)}_{m}(G_{\eps_1},H_{\eps_2})$ are 
\begin{align}\label{prbdary9}
\begin{split}
B_1(t,\xi) & := t \, \tau_m(t) \int_{\R^6} e^{-it \Phi_\epss(\xi,\eta,\sigma)}
  %{\Phi_\epss(\xi,\eta,\sigma) \jeta \jsig}
  \wt{G}(t,\eta) \wt{H}(t,\sigma)  \, %\varphi_{k_1}(\eta) \varphi_{k_2}(\sigma)  
  %\\ & \times \varphi_{M}(\jxi + \jeta + \jsig)  
  \, b(\xi,\eta,\s) \, \mu^S(\xi,\eta,\sigma) d\eta d\sigma,
\\
& b(\xi,\eta,\sigma) := %\langle 2^{k_1} \rangle \langle 2^{k_2} \rangle 
  \frac{\xi}{\jxi} 
  \frac{\varphi_{\leqslant M_0}(\jeta) \varphi_{\leqslant M_0}(\jsigma)
  \varphi_{\leqslant M_0}(\jxi)}{\jeta \jsigma \Phi_\epss(\xi,\eta,\s)},
  %+ \jeta + \jsig).
\end{split}
\end{align}
and
\begin{align}\label{prbdary9'}
\begin{split}
B_2(t,\xi) & := \tau_m(t) \int_{\R^6} e^{-it \Phi_\epss(\xi,\eta,\sigma)}
  %{\Phi_\epss(\xi,\eta,\sigma) \jeta \jsig}
  \partial_\eta \wt{G}(t,\eta) \wt{H}(t,\sigma)  \, %\varphi_{k_1}(\eta) \varphi_{k_2}(\sigma)  
  %\\ & \times \varphi_{M}(\jxi + \jeta + \jsig)  
  \, b'(\xi,\eta,\s) \, \mu^S(\xi,\eta,\sigma) d\eta d\sigma,
\end{split}
\end{align}
where $b'$ denotes any of the symbols that are
obtained when performing the integration by parts manipulation described above; 
see Remark \ref{remnuS} and \eqref{dximu1S},
and the exact definitions in \eqref{prbdarysym} below.

%No $\epps$ in $b$

\smallskip
{\it Step 2}.
Next, we compare the formula in \eqref{prbdary9} and the identity \eqref{idbil} 
with the definitions of the operators in \eqref{idop} %and \eqref{idsym},
and insert additional localization in $|\eta| \approx 2^{k_1}$ and $|\s| \approx 2^{k_2}$
in the integrals in their definitions; we can then estimate
%Next, we decompose $\mu_S$ according to Proposition \ref{mudecomp} 
%and insert frequency localizations to write 
\begin{align}\label{prbdary10}
\begin{split}
& \big \Vert \whF^{-1}_{\xi\mapsto x}\big( e^{it\jxi}  B_1(t) \big) \big \Vert_{L^2} 
\lesssim \sum_{k_1,k_2 \leqslant M_0} \big \Vert t \, T_0[b_0]
  \big( e^{\eps_1 it\jxi}\wt{G_{\eps_1}}, e^{\eps_2 it\jxi}\wt{H_{\eps_2}} \big) \big \Vert_{L^2}  
  \\
  & + \sum_{\vert k_1-k_2 \vert <5, k_2 \leqslant M_0 +5} \Big( \big\Vert t \, 
  T_1^{S,1}[b_1]\big( e^{\eps_1 it\jxi}\wt{G_{\eps_1}}, e^{\eps_2 it\jxi}\wt{H_{\eps_2}} \big) \big \Vert_{L^2}
  + \big \Vert t \, T_1^{S,1}[b_1']\big( e^{\eps_2 it\jxi}\wt{H_{\eps_2}}, e^{\eps_1 it\jxi}\wt{G_{\eps_1}} \big) 
  \big \Vert_{L^2} \Big) 
  \\
  & + \sum_{k_1,k_2 \leqslant M_0} \Big( \big \Vert t \, T_1^{S,2}[b_2]
  \big( e^{\eps_1 it\jxi}\wt{G_{\eps_1}}, e^{\eps_2 it\jxi}\wt{H_{\eps_2}} \big) 
  \big \Vert_{L^2}
  \\
  & + \big \Vert t \, T_2^S[b_0] \big( e^{\eps_1 it\jxi}\wt{G_{\eps_1}}, e^{\eps_2 it\jxi}\wt{H_{\eps_2}} \big) \big \Vert_{L^2} 
  + \big \Vert t \, T_3^S[b_0]\big( e^{\eps_1 it\jxi}\wt{G_{\eps_1}}, e^{\eps_2 it\jxi}\wt{H_{\eps_2}} \big) \big \Vert_{L^2}
  \Big)
\end{split}
\end{align}
where, using a notation similar to that of \eqref{idsym} (but with the additional frequency localization) we let
\begin{align}\label{prbdary11}
\begin{split}
b_0(\xi,\eta,\s) & := b(\xi,\eta,\s) \varphi_{k_1}(\eta) \varphi_{k_2}(\s),
\\
b_1(\xi,\eta,\s) & := b(\xi,\xi-\eta,\s) \varphi_{k_1}(\eta) \varphi_{k_2}(\s) ,
\\
b_1'(\xi,\eta,\s) & := b(\xi,\s,\xi-\eta) \varphi_{k_1}(\eta) \varphi_{k_2}(\s),  
\\
b_2(\xi,\eta,\s) & := b(\xi,-\eta-\s,\s) \varphi_{k_1}(\eta) \varphi_{k_2}(\s).
\end{split}
\end{align}
Note the following:

\begin{itemize}

\item[1.] We are not indicating explicitly the dependence of the symbols in \eqref{prbdary11} on $k_1,k_2$,
but we are dealing with integrals in $d\eta d\sigma$ (as in \eqref{idop}) with localization 
to $|\eta| \approx 2^{k_1}$ and $|\s| \approx 2^{k_2}$.

\item[2.]  The restriction $\vert k_1 - k_2 \vert <5$ 
for the $T_1^{S,1}$ operators comes from the formula \eqref{nu1S}, and the indicator functions 
in the bounds \eqref{Propnu+1.1} and  \eqref{Propnu+2.1}.

\item[3.] The restriction $k_2 \leqslant M_0$ (and, similarly, the restriction $k_1 \leqslant M_0$ for the other terms) 
comes from the restriction to inputs and outputs in the symbol of \eqref{prbdary9}.
Note that, to be precise, we should write $M_0 + 3$ instead of $M_0$ in some cases, 
but we do not make such a distinction as it is clearly not relevant.
%In what follows we may have similar restrictions for $k_1$.

\end{itemize}

%For the $T_1^{S,2}$ operator follows from \eqref{Propnu212} and \eqref{Propnu222}.}

%cfp{%not for $T_1^{S,2}$, right? See \eqref{Propnu212} and \eqref{Propnu222}
%Add these properties in Theorem \ref{bilinearmeas} or remark after that?}

\medskip
Using \eqref{sym-Phi}, 
we see that $b$ as in \eqref{prbdary9}
satisfies \eqref{theomu1asb2} in Theorem \ref{theomu1} with the choice of $A=M_0$,
and so do all the symbols \eqref{prbdary11}. 
In particular, we can apply Theorem \ref{bilinearmeas} to the terms in \eqref{prbdary10}.

More precisely, using \eqref{mainbilin1} we can write, for $|t|\approx 2^m$, 
\begin{align}\label{prbdary12}
\begin{split}
& t \, {\big\| T_1^{S,1}[b_1]\big( e^{\eps_1 it\jxi}\wt{G_{\eps_1}}, e^{\eps_2 it\jxi}\wt{H_{\eps_2}} \big) \big\|}_{L^2} 
  \\ 
  & \lesssim 2^m
  \cdot {\big\| e^{itL} G_{ \eps_1 } \big\|}_{L^3} {\| e^{itL} P_{k_2}H_{\eps_2 } \|}_{L^{6-}} \cdot 2^{C_0M_0} 
  +  2^{-D} \cdot \mathcal{D}\big(e^{\eps_1 it\jxi}\wt{G_{\eps_1}}, e^{\eps_2 it\jxi} \wt{H_{\eps_2}}\big),
\end{split}
\end{align}
having used the boundedness of wave operators as well. 
To estimate the first term on the right-hand side of \eqref{prbdary12} for any $G,H \in \{g,h\}$,
we use the bounds \eqref{dispbootf} from Lemma \ref{dispersive-bootstrap} to get, for $|t|\approx 2^m$,
\begin{align}\label{prbdary12'}
\begin{split}
{\big\| e^{itL} G \big\|}_{L^3} {\| e^{itL} P_{k_2}H \|}_{L^{6-}} 
 & \lesssim \rho(2^m)^{1-\beta} 2^{m/2 + 2\delta_N m} \e^{\beta}
 \cdot \rho(2^m)^{1-\beta} 2^{2\delta_N m} \e^{\beta}
\\
 %\lesssim \rho(2^m)^{2-\beta} 2^{m/2+10 \delta_N m} \varepsilon^{2\beta}, 
& \lesssim \rho(2^m)^{2-2\beta} 2^{3m/4} \varepsilon^{2\beta} 
  = 2^{-5m/4} Z^2,
  % \min \{ \rho_0^{2-\beta} t^{1/2+10 \delta_N} \varepsilon^{2\beta}  ,2^{-5m/4} \varepsilon^{2\beta} \}
\end{split}
\end{align}
where $Z$ is defined in \eqref{Z}.
Note that this bound is consistent with a bound by $2^{-a m} Z^2$ for the term in \eqref{prbdary12}, with $a=1/4$,
which is an acceptable bound (see Remark \ref{remm}).

\begin{remark}\label{finitesum}
On the support of $T^{S,1}_1[b_1]$ we have that either $k_2 \sim 1$ when $H=g$,
or that $-5m \leqslant k_2 \leqslant M_0$ when $H=h$ (see Lemma \ref{verylowfreq}). 
In all cases the sum has at most $O(m)$ terms since $\vert k_1 - k_2 \vert < 5.$ 
An acceptable bound like those in Remark \ref{remm} for each fixed pair $k_1,k_2$
will then yield the desired estimates. 
The same reasoning holds for $T_1^{S,1}[b_1']$ with $k_1$ instead of $k_2$ and $G$ instead of $H.$
\end{remark}

For the $\mathcal{D}$ term in \eqref{prbdary12'}, recall the definition \eqref{bilmeasD}, %\eqref{theomu1asgh},
we use the a priori bound on the weighted norm of $h$ from \eqref{boot},
and the larger bound on the weighted norm of $g$ from \eqref{growth}, as well as \eqref{inftyfreqg} and the 
$L^2$ bound on $h$ in \eqref{boot},
to see that, for all $G,H\in\{g,h\}$ and $|t|\approx 2^m$,
\begin{align}\label{prbdary15}
\begin{split}
\mathcal{D}\big(e^{\eps_1 it\jxi} P_{k_1}\wt{G_{\eps_1}}, e^{\eps_2 it\jxi} P_{k_2}\wt{H_{\eps_2}}\big) 
  & \lesssim \Big( {\big\| \partial_\xi \big(\varphi_{k_1}\wt{G_{\eps_1}} \big) \big\|}_{L^2} 
  + {\| \varphi_{k_1}\wt{G_{\eps_1}} \|}_{L^2} \Big) {\big\| \varphi_{k_2}\wt{H_{\eps_2}} \big\|}_{L^2}
  \\
  & \lesssim m^2 2^{3m/2} \rho(2^m) \cdot 
  \big[ m 2^{m} \rho(2^m) + \e^\beta 2^m \rho(2^m)^{1-\beta} \big] %\lesssim 2^m \rho(2^m)^{1-\beta} \e^{\beta}
\\
  & \lesssim \rho(2^m)^{2-2\beta} 2^{5m/2+} \varepsilon^{2\beta} \lesssim 2^{m/2+} Z^2. %2^m.
\end{split}
\end{align}
This bound is consistent with the assumption on $D$ (see above \eqref{bilmeasD})
%\eqref{theomu1asgh}
if we choose, for example, $D=5m$; indeed, our current choice is $A = M_0$, 
where $2^{M_0} \approx 2^{\delta_N m}$ (see \eqref{HLsplit}) 
with $\delta_N \ll 1$ that can be chosen sufficiently small
so to ensure $20M_0 \leqslant  5m \leqslant 2^{M_0/5}$ which suffices.
The second term on the right-hand side of \eqref{prbdary12} is then bounded by $2^{-4m} Z^2$.

The same argument used for $T_1^{S,1}[b_1]$ above applies verbatim to $T_1^{S,1}[b_1']$,
and to the simpler case of $T_0[b_0]$.
A similar argument can be applied to the operators $T_i^S[b_0], i=2,3$ as well.
The only difference is that there is no additional restriction on indices in the sum for these terms,
except for $-5m \leqslant k_1, k_2 \leqslant M_0$;
see the formulas \eqref{idop2}-\eqref{idop3}, the definition of $b_0$ in \eqref{prbdary11}
and Lemmas \ref{lemmaH} and \ref{verylowfreq}.
%We can then localize for free the first input profile at frequencies around $k_1$ and 
%the second at frequencies around $k_2$, and use Bernstein's inequality to ensure the convergence of the sums.
We can then use \eqref{mainbilin2} for each fixed pair $k_1$, $k_2$ and 
sum over these indexes at the expense of a harmless $O(m^2)$ factor.
Note that there are no $\mathcal{D}$ terms in \eqref{mainbilin2}.

For the last term $T_{1}^{S,2}$, we can use an almost identical argument 
based on the bilinear bound \eqref{mainbilin1}; 
the only minor difference is the need to additionally localize in $|\xi| \approx 2^K$ 
to be able to use \eqref{mainbilin1}. %\eqref{theomu1concT2}.
But this can also be easily taken care of applying Bernstein for $K\leqslant 0$ and estimating the $L^{2-}$ norm
of $P_K T_2[b_2]$, as well as using that $K \leqslant M_0$;
the (arbitrarily) small loss of $2^{M_0}$ resulting from this 
can be absorbed by the bounds \eqref{prbdary12'} and \eqref{prbdary15}.

In conclusion, 
putting together \eqref{prbdary12'} and \eqref{prbdary15}, and the similar bounds for all the other terms
on the right-hand side of \eqref{prbdary10}, we have obtained
\begin{align*}
\begin{split}
 {\| B_1(t) \|}_{L^2} 
&  \lesssim \big( 2^m 2^{C_0M_0} \cdot 2^{-5m/4} Z^2 + 2^{-4m} Z^2 \big) 2^{(0+)m}
%  \lesssim 2^m \cdot 2^{-5m/4} \varepsilon^{\beta-\gamma} \cdot 2^{C_0M_0}
  %\cdot {\big\| e^{itL} G \big\|}_{L^3} {\| e^{itL} H \|}_{L^{6-}} \cdot 2^{C_0M_0} 
  %+ 2^{-4m} \cdot \rho(2^m)^{2-2\beta} 2^{7m/2} \varepsilon^{2\beta} 
  %\\
  %2^{-D} \cdot \mathcal{D}\big(e^{\eps_1 it\jxi}\wt{G_{\eps_1}}, e^{\eps_2 it\jxi} \wt{H_{\eps_2}}\big). 
%\lesssim \big(2^{C_0 M_0} 2^{-m/4} + 2^{-5m/2} \big) Z^2;
\lesssim 2^{-m/5} Z^2,
\end{split}
\end{align*}
provided $C_0M_0$ is small enough (that is, $\delta_N$ is small enough), which is sufficient.

\smallskip
{\it Step 3.}
To estimate the $L^2$ norm of \eqref{prbdary9'} we can use very similar arguments.
Note that this terms does not carry a factor of $t$ in front, and is therefore easier to handle.
We begin with the analogue of \eqref{prbdary10}, that is,
\begin{align}\label{prbdary10'}
\begin{split}
& \big \Vert \whF^{-1}_{\xi\mapsto x}\big( e^{it\jxi}  B_2(t) \big) \big \Vert_{L^2} 
  \lesssim
  \sum_{k_1,k_2 \leqslant M_0} \big \Vert \, T_0[b_0]\big( e^{\eps_1 it\jxi} \partial_\xi \wt{G_{\eps_1}}, 
  e^{\eps_2 it\jxi}\wt{H_{\eps_2}} \big) \big \Vert_{L^2}  
  \\
  & + \big \Vert T_1^{S,2}[m_2^j]\big( e^{\eps_1 it\jxi} \partial_{\xi_j} \wt{G_{\eps_1}}, 
  e^{\eps_2 it\jxi} \wt{H_{\eps_2}} \big) \big \Vert_{L^2},
  \\
& + \big \Vert T_2^S[m_0^j] \big( e^{\eps_1 it\jxi} \partial_{\xi_j} \wt{G_{\eps_1}}, e^{\eps_2 it\jxi}\wt{H_{\eps_2}} \big) 
  \big \Vert_{L^2} 
+ \big \Vert T_3^S[m_0^j]\big( e^{\eps_1 it\jxi} \partial_{\xi_j} \wt{G_{\eps_1}}, e^{\eps_2 it\jxi}\wt{H_{\eps_2}} \big) 
  \big \Vert_{L^2}
  \\
& + \sum_{\vert k_1-k_2 \vert <5, k_2 \leqslant M_0 } \Big( 
  \big \Vert T_1^{S,1}[b_1]\big( e^{\eps_1 it\jxi} \partial_\xi \wt{G_{\eps_1}}, 
  e^{\eps_2 it\jxi}\wt{H_{\eps_2}} \big) \big \Vert_{L^2}
  \\
  & + \big \Vert\, T_1^{S,1}[m_1^j]\big( e^{\eps_2 it\jxi}\wt{H_{\eps_2}}, e^{\eps_1 it\jxi} 
  \partial_{\xi_j} \wt{G_{\eps_1}} \big) \big \Vert_{L^2} \Big)
\end{split}
\end{align}
where we are implicitly summing over $j=1,2,3$ in the last three lines, 
and the symbols are
\begin{align}\label{prbdarysym}
\begin{split}
m_0^{j}(\xi,\eta,\s) & = \frac{\xi}{\vert \xi \vert} \frac{\eta_j}{\vert \eta \vert} b_0 (\xi,\eta,\s),
\qquad 
m_1^j(\xi,\eta,\s) = \frac{\eta}{|\eta|} \frac{\s_j}{\vert \s \vert} b_1'(\xi,\eta,\s), 
\\ 
m_2^{j}(\xi,\eta,\s) & = %\bigg[ 
  \frac{\xi}{|\xi|} \frac{\eta_j}{\vert \eta \vert} 
  %+ \frac{\nabla_{\xi} m_0}{m_0} (\eta,\xi) \bigg] 
  b_2 (\xi,\eta,\s);
\end{split}
\end{align}
see Remark \ref{remnuS} and the definitions \eqref{prbdary11}.
%Note that there is an implicit summation on $j$ that we do not write explicitly.

To estimate the terms in \eqref{prbdary10'} we use the bilinear bounds \eqref{mainbilinest} 
proceeding as follows:
For the case when $G=h$ in \eqref{prbdary10'} we do an $L^2 \times L^6$ estimate
using the a priori bound on $\partial_\xi\wt{h}$ from \eqref{boot} 
and the $L^6$ decay bounds in Lemma \ref{dispersive-bootstrap};
for the case when $G=g$ in \eqref{prbdary10'} we can use instead \eqref{growth} 
and again the $L^6$ estimates in Lemma \ref{dispersive-bootstrap}.
The term $\mathcal{D}$ that appears on the right-hand side of \eqref{mainbilin1}
is a fast decaying remainder that
can be handled similarly to \eqref{prbdary15}; exploiting the minimum in the definition
\eqref{bilmeasD} we only differentiate the input that does not already carry the weight,
so to never have the appearance of $\partial_\xi^2\wt{h}$.

\medskip
{\it Proof of \eqref{lembdary2}}.
In view of Lemma \ref{basicL8'} it suffices to prove, see the formula \eqref{Tij2}, that
\begin{align}\label{lembdary2pr1}
{\big\| \whF^{-1}_{\xi\mapsto x} J(t) \big\|}_{L^{8/7}_x} \lesssim \rho_0^a,
\end{align}
where
\begin{align}\label{lembdary2pr2}
\begin{split}
J(t) & = \tau_m(t) 
  \int_{\R^6} \frac{%e^{-it \Phi_\epss(\xi,\eta,\sigma)}
   \varphi_{\leqslant M_0}(\jxi) \varphi_{\leqslant M_0}(\jeta) \varphi_{\leqslant M_0}(\jsigma)}{
  \Phi_\epss(\xi,\eta,\sigma) \jeta \jsig} 
  \\
  & \qquad \times e^{i\eps_1 t\jeta}\wt{G}(t,\eta) e^{i\eps_1 t\jsig}\wt{H}(t,\sigma) 
  \, \mu^S(\xi,\eta,\sigma) d\eta d\sigma.
\end{split}
\end{align}
%and the localization to $|\xi|\lesssim \jt^{\delta_N}$, it suffices to show that

%From the definition of the operators in \eqref{Tij2}, and of $\mu^S$ in \eqref{defmuS} 
%(see also \eqref{mudecomp-2}-\eqref{mudecomp-3})
Choosing $b$ as in \eqref{prbdary9}, but without the $\xi/\jxi$ factor, 
and with the same definitions \eqref{prbdary11}, we have identities analogous to \eqref{prbdary10}.
It follows that
\begin{align}\label{prbdary10''}
\begin{split}
& \big \Vert \whF^{-1}_{\xi\mapsto x} J(t) \big \Vert_{L^{8/7}_x} \lesssim 
 \\
&  \sum_{k_1,k_2 \leqslant M_0} \big \Vert T_0[b_0]\big( e^{\eps_1 it\jxi}\wt{G_{\eps_1}}, 
 e^{\eps_2 it\jxi}\wt{H_{\eps_2}} \big) \big \Vert_{L^{8/7}_x}
+ \big \Vert T_2^S[b_0] \big( e^{\eps_1 it\jxi}\wt{G_{\eps_1}}, 
 e^{\eps_2 it\jxi}\wt{H_{\eps_2}} \big) \big \Vert_{L^{8/7}_x}
\\
& + \big \Vert T_3^S[b_0]\big( e^{\eps_1 it\jxi}\wt{G_{\eps_1}}, 
  e^{\eps_2 it\jxi}\wt{H_{\eps_2}} \big) \big \Vert_{L^{8/7}_x} 
+ \big \Vert T_1^{S,2}[b_2]\big( e^{\eps_1 it\jxi}\wt{G_{\eps_1}}, 
  e^{\eps_2 it\jxi}\wt{H_{\eps_2}} \big) \big \Vert_{L^{8/7}_x}
  \\
& 
+ \sum_{\vert k_1-k_2 \vert <5, k_2 \leqslant M_0 } \big \Vert T_1^{S,1}[b_1]\big( e^{\eps_1 it \jxi} \wt{G_{\eps_1}}, 
  e^{\eps_2 it\jxi}\wt{H_{\eps_2}} \big) \big \Vert_{L^{8/7}_x}
  + \big \Vert T_1^{S,1}[b_1']\big( e^{\eps_2 it\jxi}\wt{H_{\eps_2}}, 
  e^{\eps_1 it\jxi}\wt{G_{\eps_1}} \big) \big \Vert_{L^{8/7}_x}. 
\end{split}
\end{align}

The estimate \eqref{mainbilin1} gives us
\begin{align}\label{lembdary2pr3}
\begin{split}
&  \big \Vert T_1^{S,1}[b_1]\big( e^{\eps_1 it \jxi} \wt{G_{\eps_1}}, e^{\eps_2 it\jxi}\wt{H_{\eps_2}} \big) \big \Vert_{L^{8/7}_x}
  \\
  & \lesssim {\big\| e^{itL} G \big\|}_{L^{8/3-}} {\| e^{itL} P_{k_2}H \|}_{L^{2}} \cdot 2^{(C_0+1)M_0} 
  +  2^{-D} \cdot \mathcal{D}\big(e^{\eps_1 it\jxi}\wt{G_{\eps_1}}, e^{\eps_2 it\jxi} \varphi_{k_2} \wt{H_{\eps_2}}\big). 
\end{split}
\end{align}
Then we can use the $L^q$ dispersive estimate in Lemma \ref{dispersive-bootstrap}
to obtain (recall Remark \ref{finitesum}), for all $t\approx 2^m$,
\begin{align*}
 {\big\| e^{itL} G \big\|}_{L^{8/3-}} {\| e^{itL} P_{k_2}H \|}_{L^{2}}  
%  \lesssim \e^{\beta-\gamma} 2^{-m/4}, \qquad G,H \in \{h,g\}.
  & \lesssim \rho(2^m)^{1-\beta} 2^{5m/8 + 10\delta_N m} \e^\beta \cdot \rho(2^m)^{1-\beta} 2^m \e^\beta
  \\
  & \lesssim \rho(2^m)^{2-2\beta} 2^{7m/4} \e^{2\beta} \lesssim \e^{2a},
\end{align*} 
for some $a > 2\beta$, which is consistent with \eqref{lembdary2pr1}.
All the other terms in \eqref{prbdary10''} can be estimated similarly as above, 
using \eqref{mainbilin0}-\eqref{mainbilin2}.
%and adding frequency localization (at the expense of arbitrarily small $2^{(0+)m}$ losses
%which are harmless) for the convergence of the sums.
The $\mathcal{D}$ factor is estimated by \eqref{prbdary15}.
These bounds and \eqref{lembdary2pr3} 
give us \eqref{lembdary2pr1} for $a$ small enough, provided $M_0$ is small enough.
%\begin{align}\label{lembdary2pr4}
%\begin{split}
%& {\left\| \whF^{-1}_{\xi\mapsto x}
%  T^{\epss,(2)}_{m,k_1,k_2} (G_{\eps_1}, H_{\eps_2}) \big)(t) \right\|}_{L^{8/7}_x} 
%  \\
%  & \lesssim {\big\| e^{itL} P_{k_1}G \big\|}_{L^{8/3-}} {\| e^{itL} P_{k_2}H \|}_{L^{2}} \cdot 2^{C_0M_0} 
%  +  2^{-D} \cdot \mathcal{D}\big(e^{\eps_1 it\jxi}\wt{G_{\eps_1}}, e^{\eps_2 it\jxi} \wt{H_{\eps_2}}\big). 
%  \\  & \lesssim \rho_0^{a}, 
%\end{split}
%\end{align}
\end{proof}

\medskip
In the remainder of this section we estimate the integrated terms \eqref{Tij1} %in the remainder of this subsection.
by distinguishing the three different types of interactions described in \S\ref{types}.
Recall the definition of the bulk operator \eqref{Tij1},
and that we are aiming to prove the $L^2$ weighted bound \eqref{proFSest1}
and the decay bound \eqref{proFSest2} (via Lemma \ref{basicL8}).

%The estimates needed are not so different than the ones we just performed, in that 
%we will use the bilinear bounds in Theorem \ref{theomu1}, and the dispersive estimates from Lemmas 
%\ref{dispersive-bootstrap} as well as \ref{decay-der}.

%%%%%%%%%%%%%%%%%%%%%%%%%%%%%%%%%%%%%%%%
%%%%%%%%%%%%%%%%%%%%%%%%%%%%%%%%%%%%%%%%
%%%%%%%%%%%%%%%%%%%%%%%%%%%%%%%%%%%%%%%%
%%%%%%%%%%%%%%%%%%%%%%%%%%%%%%%%%%%%%%%%
%%%%%%%%%%%%%%%%%%%%%%%%%%%%%%%%%%%%%%%%
%%%%%%%%%%%%%%%%%%%%%%%%%%%%%%%%%%%%%%%%
%%%%%%%%%%%%%%%%%%%%%%%%%%%%%%%%%%%%%%%%

\medskip
\subsection{Fermi interactions}\label{secSgg}
We first look at the interactions that involve only the Fermi component $g$.

\begin{lemma}\label{gg-no-t-res}
Under the assumptions of Proposition \ref{proFS}, for $\epss \in \{+,-\}$, %and $j=1,2$ that 
we have
\begin{align}
\label{ggbd1}
& \big \Vert \partial_{\xi} T^{\epss,(1)}_{m %,k_1,k_2
  }(g,g) \big \Vert_{L^2} 
  \lesssim 2^{m} \rho^{1-\beta + \delta'}(2^m) \e^{\beta},
\\
\label{ggbd2}
&  {\big\| \mathcal{F}^{-1}_{\xi \mapsto x} e^{it\jxi} 
  \partial_t T^{\epss,(1)}_{m %,k_1,k_2
  }(g,g) \big\|}_{L^{8/7}_x} 
  \lesssim \frac{\rho_0^{1/2}}{\langle t \rangle^{11/8}},
\end{align}
for some $\delta' >0$.
\end{lemma}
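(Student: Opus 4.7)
The plan is to exploit the essentially explicit structure of $g$. First, $\wt{g}(s,\eta) = -\chi_C(\eta)\wt{\phi^2}(\eta)\int_0^s B^2(\tau)e^{-i\tau(\jeta-2\lambda)}d\tau$ is supported in $|\eta|\approx \xi_0$, so only finitely many $(k_1,k_2)$ contribute and summation over these indices is trivial. Second, differentiating in time one has the explicit identity
\begin{align*}
\partial_s\wt{g}(s,\eta) = -\chi_C(\eta)\,B^2(s)\,e^{-is(\jeta-2\lambda)}\,\wt{\phi^2}(\eta),
\end{align*}
so that $e^{is\jeta}\partial_s\wt{g}(s,\eta) = -\chi_C(\eta)B^2(s)e^{2is\lambda}\wt{\phi^2}(\eta)$ is a fixed Schwartz function modulated by the scalar $B^2(s)=O(\rho(s))$. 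In particular $\|e^{-isL}\partial_s g(s)\|_{L^p}\lesssim \rho(s)$ and $\|e^{-isL}g(s)\|_{L^p}\lesssim \rho(s)\langle s\rangle^{1-3(1/2-1/p)}\ln\langle s\rangle$ by \eqref{dispbootgbis}.

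For \eqref{ggbd1}, I would apply $\partial_\xi$ to $T^{\epss,(1)}_m(g,g)$ and, as in Step 1 of Lemma \ref{bdry-no-t-res}, reduce to two types of terms: (a) the main contribution where $\partial_\xi$ hits the oscillating factor $e^{-is\Phi_\epss}$ (producing an extra factor $s$ inside the $ds$ integral), and (b) the contribution where, after using the identities \eqref{dxinuS}--\eqref{dximuiS} of Remark \ref{remnuS}, $\partial_\eta$ is transferred onto the first input $\wt{g}$ (while $\partial_\xi$ on the $1/\Phi$ factor or on the cutoffs is strictly easier by \eqref{sym-Phi}). For (a), I use the decomposition \eqref{idbil} to rewrite the bilinear expression with symbol $b(\xi,\eta,\sigma)=(\xi/\jxi)\varphi_{\leq M_0}/(\Phi_\epss\jeta\jsig)$, for which \eqref{sym-Phi} yields the symbol bound \eqref{bilmeasbest} with $A=M_0$; then Theorem \ref{bilinearmeas} with $p=q=r=2$ realized as $L^{3}\times L^{6-}$ gives
\begin{align*}
\int_0^t\!\! s\,\tau_m(s)\|e^{-isL}\partial_s g\|_{L^3}\|e^{-isL}g\|_{L^{6-}}ds\cdot 2^{C_0M_0}
\lesssim 2^m\cdot 2^m\rho(2^m)^2\cdot 2^{C_0 M_0}\ln\langle 2^m\rangle,
\end{align*}
which is dominated by $2^{-(1-\beta)m+\delta' m}Z^2$ and hence acceptable in the sense of Remark \ref{remm}. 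The $\mathcal{D}$ remainder in \eqref{mainbilin1} is controlled crudely using the growth bound \eqref{growth}, $\|\partial_\xi\wt{g}\|_{L^2}\lesssim m^2 2^{3m/2}\rho(2^m)$, and Lemma \ref{renorm-A} for $\partial_s\wt{g}$, then absorbed by the choice $D\sim 5m$. For contribution (b) there is no extra factor of $s$, so it is strictly easier and handled along the same lines but using the a priori weighted bound $\|\partial_\eta\wt{g}\|_{L^2}\lesssim m^2 2^{3m/2}\rho(2^m)$ instead of $L^p$ decay.

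For \eqref{ggbd2}, I would pass to $L^{8/7}_x$ estimates via the same identity \eqref{idbil} applied to $\partial_t T^{\epss,(1)}_m$, which removes the time integral and the extra factor $s$. The inputs are again $e^{-itL}\partial_t g$ (bounded by $\rho(t)$ in every $L^p$) and $e^{-itL}g$ (bounded by $\rho(t)\langle t\rangle^{1-3(1/2-1/p)}\ln\langle t\rangle$ in $L^p$). Writing $L^{8/7}\hookleftarrow L^{8/3-}\cdot L^2$ via H\"older and applying \eqref{mainbilin1}--\eqref{mainbilin2}, the two factors give
\begin{align*}
\|e^{-itL}\partial_t g\|_{L^{8/3-}}\,\|e^{-itL}g\|_{L^2}\cdot 2^{C_0M_0}\lesssim \rho(t)^{2}\langle t\rangle^{5/8+}\cdot 2^{C_0M_0}\lesssim \rho_0^{1/2}\langle t\rangle^{-11/8-},
\end{align*}
using $\rho(t)\lesssim \min(\rho_0,\langle t\rangle^{-1})$. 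The $\mathcal{D}$ remainder is again absorbed by the large negative power of $2^D$ with $D\sim 5m$.

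The main obstacle I anticipate is purely bookkeeping: the factor $s$ in the bulk integral \eqref{Tij1}, combined with the slow decay $\rho(s)\sim s^{-1}$ for $s\gtrsim \e^{-2}$, leaves essentially no margin, so one must truly use the cubic structure (the first input behaves like $\rho(s)$, not just $\sqrt{\rho(s)}$) to beat the target $Z\sim 2^m\rho(2^m)^{1-\beta}\e^\beta$. The explicit expression for $\partial_s g$ is precisely what makes this work and is the reason the Fermi--Fermi case is simpler than the $h$--$h$ case treated later in \S\ref{secShh}.
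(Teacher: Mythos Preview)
Your overall plan for \eqref{ggbd1} part (a) matches the paper: reduce via Remark \ref{remnuS} to the term where $\partial_\xi$ hits the phase, then apply the bilinear estimates of Theorem \ref{bilinearmeas} with an $L^3\times L^{6-}$ H\"older split, using \eqref{estdsg} and \eqref{dispbootg}. That part is fine (modulo the intermediate numerical claim ``$\lesssim 2^{-(1-\beta)m}Z^2$'', which is not quite right, though the actual quantity $2^{2m}\rho(2^m)^2\,m\,2^{C_0M_0}$ is acceptable as in Remark \ref{remm}).

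The genuine gap is in your part (b). First, a structural point: in $T^{\epss,(1)}_m$ the first input is $\partial_s\wt{g}(s,\eta)$, not $\wt{g}$; if you transfer $\partial_\xi$ to $\partial_\eta$ you would hit $\partial_s\wt g$ and pick up an \emph{additional} factor of $s$ from the explicit formula for $\partial_s\wt g$, defeating the purpose. The paper therefore transfers to $\partial_\sigma$ so that the weighted derivative lands on the undifferentiated input $\wt g(s,\sigma)$ (this is exactly the remark after \eqref{ggbdpr1}). Second, and more importantly, even with the derivative on the correct input, your proposed control via $\|\partial_\xi\wt g\|_{L^2}\lesssim m^2 2^{3m/2}\rho(2^m)$ is too weak: plugging it into the bilinear estimate yields
\[
2^m\cdot \rho(2^m)\cdot m^2\,2^{3m/2}\rho(2^m)\cdot 2^{C_0M_0}\approx m^2\,2^{m/2}\,2^{C_0M_0}\quad(\text{for }2^m\gtrsim\e^{-2}),
\]
which blows past the target $Z\,\rho^{\delta'}\sim 2^{(\beta-\delta')m}\e^\beta$ by a full factor $2^{m/2}$. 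The claim that (b) is ``strictly easier'' is therefore false. What closes the estimate is the \emph{$L^1_\xi$} bound \eqref{dxiwtgest}, $\|\partial_\xi\wt g\|_{L^1_\xi}\lesssim m^2\,2^m\rho(2^m)$, which the paper uses to control the second input in $L^\infty_x$ (see \eqref{ggbdpr4'} and the line below it). The $L^1$ norm beats $L^2$ by $2^{m/2}$ precisely because $\partial_\xi\wt g$ is concentrated on the codimension-one Fermi sphere $|\xi|=\xi_0$; this is the same mechanism behind the lower bound \eqref{introggrowth'}.

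For \eqref{ggbd2}, your $L^{8/3-}\times L^2$ split does not give the stated bound: with $\|e^{itL}g\|_{L^2}=\|g\|_{L^2}\lesssim\e^{1-}$ (or, if you use \eqref{dispbootgbis}, $\lesssim \rho(t)t$), the product $\rho(t)\cdot\|g\|_{L^2}$ does not dominate $\rho_0^{1/2}\langle t\rangle^{-11/8}$ uniformly in $t$. The paper instead uses $L^{24/17-}\times L^6$ and \eqref{dispbootg} to get $\rho(2^m)^2\,m\cdot 2^{C_0\delta_N m}$, which does suffice.
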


\begin{proof}
%Note that by definition, see \eqref{def-h}, $g$ is supported on (distorted)
%frequencies of size about $1$ so that $|k_1|,|k_2| \leq 3$ and we can disregard the sums these indexes.
%and omit the respective cutoffs.

\medskip
{\it Proof of \eqref{ggbd1}}.
Arguing as at the beginning of the proof of Lemma \ref{bdry-no-t-res}
we can reduce to bounding the $L^2$-norm of the two main terms,
that is those where $\partial_\xi$ hits the exponential in the formula for $T^{\epss,(1)}_{m %,k_1,k_2
}(g,g)$, see \eqref{Tij1},
and where the derivatives hits $\wt{g}$.
Let us denote these two types of terms by
\begin{align}\label{ggbdpr0.5}
C_1(t,\xi) := C_{1,\epss}[g_{\eps_1},g_{\eps_2}](t,\xi), \qquad C_{2}(t,\xi) := C_{2,\epss}[g_{\eps_1},g_{\eps_2}](t,\xi),
\end{align}
where we define
\begin{align}\label{ggbdpr1}
\begin{split}
C_{1,\epss}[G,H](t,\xi) := \int_0^t s \, \tau_m(s) \int_{\R^6} e^{-is \Phi_\epss(\xi,\eta,\sigma)}
  %{\Phi_\epss(\xi,\eta,\sigma) \jeta \jsig}
  \partial_s \wt{G}(s,\eta) \, \wt{H}(s,\sigma)  \, %\varphi_{k_1}(\eta) \varphi_{k_2}(\sigma)  
  %\\ & \times \varphi_{M}(\jxi + \jeta + \jsig)  
  \\
  \times \, b(\xi,\eta,\s) \, \mu^S(\xi,\eta,\sigma) d\eta d\sigma \, ds ,
\\
C_{2,\epss}[G,H](t,\xi) := \int_0^t \tau_m(s) \int_{\R^6} e^{-is \Phi_\epss(\xi,\eta,\sigma)}
  %{\Phi_\epss(\xi,\eta,\sigma) \jeta \jsig}
  \partial_s \wt{G}(s,\eta) \, \partial_\s \wt{H}(s,\sigma)  \, %\varphi_{k_1}(\eta) \varphi_{k_2}(\sigma)  
  %\\ & \times \varphi_{M}(\jxi + \jeta + \jsig)  
  \\
  \times \, b'(\xi,\eta,\s) \, \mu^S(\xi,\eta,\sigma) d\eta d\sigma \, ds,
\end{split}
\end{align}
%% not only cosmetic
and $b,b'$ are defined as in \eqref{prbdary9}, \eqref{prbdary9'} (see also
the more precise \eqref{prbdarysym}).
%We are omitting the dependence on $m$ and $\epss$ of the operators and symbol for lighter notation.
Notice that, as done before, when reducing to the terms \eqref{ggbdpr0.5}-\eqref{ggbdpr1} we have
taken into account the case where $\partial_\xi$ hits the distribution $\mu^S$; 
in this case the derivative can be converted
into $\partial_\s$ through formulas like \eqref{dxinuS}-\eqref{dximu1S} with the roles of $\eta$ and $\sigma$ exchanged,
and an integration by parts in $\s$ can be performed without the need to differentiate $\partial_s \wt{g}$ in $\eta$.

\medskip
{\it Estimates for the $C_1$ terms}.
We can proceed similarly to the proof of Lemma \ref{bdry-no-t-res}.
In particular, we let $b$ be the same symbol defined in \eqref{prbdary9}
and use the same notation in \eqref{prbdary11}
so that we can estimate similarly to \eqref{prbdary10}.
\iffalse
\begin{align}\label{ggbdpr2}
\begin{split}
\whF^{-1}_{\xi\mapsto x} C_1(t) 
  & = \int_0^t s \, \tau_m(s) e^{-is\jxi} 
    \, \Big[ T_0[b]\big( e^{\eps_1 is\jxi}\partial_s\wt{g_{\eps_1}}, e^{\eps_2 is\jxi}\wt{g_{\eps_2}} \big)
  \\
  & + T_1[b_1]\big( e^{\eps_1 is\jxi}\partial_s\wt{g_{\eps_1}}, e^{\eps_2 is\jxi}\wt{g_{\eps_2}} \big)
    + T_1[b_1']\big( e^{\eps_2 is\jxi}\wt{g_{\eps_2}}, e^{\eps_1 is\jxi}\partial_s \wt{g_{\eps_1}} \big)
  \\
  & + T_2[b_2]\big( e^{\eps_1 is\jxi} \partial_s \wt{g_{\eps_1}}, e^{\eps_2 is\jxi}\wt{g_{\eps_2}} \big) \Big] \, ds.
\end{split}
\end{align}
It follows that
\fi
As in \eqref{prbdary11}, we introduce frequency localizations 
in $\eta, \s$.
Notice that since $g$ is localized at frequency $\sim 1$ (see \eqref{def-h}),
%also in view of Remark \ref{finitesum}, 
the sums over $k_1$ and $k_2$ are finite for all the terms except $T_1^{S,2}.$ 
We have 
% There is a small difficulty for T_1 since the localization cannot be put on the profiles. However one can, and both frequences are separated by 5 at most.
\begin{align}\label{ggbdpr3}
\begin{split}
& {\Big\|\whF^{-1}_{\xi\mapsto x} C_1(t) \Big\|}_{L^2}
  \lesssim 2^{2m} \sup_{k_1,k_2 \approx 1} \sup_{s\approx 2^m} 
  \Big[ {\big\| T_0[b_0]\big( e^{\eps_1 is\jxi}\partial_s\wt{g_{\eps_1}}, 
    e^{\eps_2 is\jxi}\wt{g_{\eps_2}} \big) \big\|}_{L^2} 
  \\
  & + \sum_{i=2,3} {\big\| T_i^S[b_0]\big( e^{\eps_1 is\jxi}\partial_s\wt{g_{\eps_1}}, 
    e^{\eps_2 is\jxi}\wt{g_{\eps_2}} \big) \big\|}_{L^2} 
  \\
  & + {\big\| T_1^{S,1}[b_1]\big( e^{\eps_1 is\jxi}\partial_s\wt{g_{\eps_1}}, 
    e^{\eps_2 is\jxi}\wt{g_{\eps_2}} \big) \big\|}_{L^2}  + {\big\| T_1^{S,1}[b_1']\big( e^{\eps_2 is\jxi}\wt{g_{\eps_2}},  e^{\eps_1 is\jxi}\partial_s \wt{g_{\eps_1}} \big) \big\|}_{L^2} 
\\   
    &+ 2^{2m} \sup_{s \approx 2^m}  \sum_{k_1 <5, k_2 \approx 1} 
    {\big\| T_1^{S,2}[b_0]\big( e^{\eps_1 is\jxi} \partial_s \wt{g_{\eps_1}}, 
    e^{\eps_2 is\jxi}\wt{g_{\eps_2}} \big) \big\|}_{L^2} \Big].
\end{split}
\end{align}
%Using the definition \eqref{def-h} for $g$ we can write the operators in \eqref{ggbdpr2} as 
%\begin{align}
%T_1[b_1]\big( e^{\eps_1 is\jxi}\partial_s\wt{g_{\eps_1}}, e^{\eps_2 is\jxi}\wt{g_{\eps_2}} \big) 
%\end{align}

Let us look at the $T_1^{S,1}$ term on the right-hand side above first. Using \eqref{mainbilin1} we have
\begin{align}\label{ggbdpr4}
\begin{split}
{\big\| T_1^{S,1}[b_1]\big( e^{\eps_1 is\jxi}\partial_s\wt{g_{\eps_1}}, e^{\eps_2 is\jxi}\wt{g_{\eps_2}} \big) \big\|}_{L^2} 
  & \lesssim {\big\| e^{isL} P_{k_1}\partial_s g \big\|}_{L^{3-}} 
    {\| e^{isL} P_{k_2}g \|}_{L^6} \cdot 2^{C_0M_0} 
  \\
  & +2^{-D} \cdot \mathcal{D}\big(e^{\eps_1 is\jxi}\varphi_{k_1}\partial_s\wt{g_{\eps_1}}, 
    e^{\eps_2 is\jxi} \varphi_{k_2}\wt{g_{\eps_2}}\big).
\end{split}
\end{align}
From the dispersive estimates \eqref{dispbootg} and \eqref{estdsg}, we see that, for all $s \approx 2^m$,
\begin{align*}
\begin{split}
{\big\| e^{isL} \partial_s g \big\|}_{L^{3-}} 
  {\| e^{isL} g \|}_{L^6} \cdot 2^{C_0M_0} 
   & \lesssim \rho(2^m)^2 \cdot m \cdot 2^{C_0 \delta_N m} 
   \\
   & \lesssim 2^{-2m} \cdot \rho(2^m)^{\beta/2} m 2^{C_0 \delta_N m} 
   \cdot 2^m \rho(2^m) \cdot Z,
%&  \lesssim 2^{-m} \cdot 2^{-m}m \cdot 2^{C_0 \delta_N m} % \lesssim 2^{-2m} 2^{\delta' m},
\end{split}
\end{align*}
having used $\e^{-\beta} \rho(2^m)^{\beta/2} \lesssim 1$.
Multiplied by the factor of $2^{2m}$ which appears on the right-hand side of \eqref{ggbdpr3},
the above bound is acceptable (see Remark \ref{remm}) provided $\delta_N$ is small enough. 
%This is consistent with the factor of $2^{2m}$ in \eqref{ggbdpr3} 
%and the desired bound of $2^{\delta' m}$ in the regime $t \geqslant 1/\rho_0^{1/2}.$

To estimate the second line of \eqref{ggbdpr4} we use \eqref{growth} 
to bound $\partial_s \wt{g}$ in $L^2$, 
and use again \eqref{estdsg} to see that, recall the definition \eqref{bilmeasD}, %\eqref{theomu1asgh},  
\begin{align}
\mathcal{D}\big(e^{\eps_1 is\jxi}\varphi_{k_1}\partial_s\wt{g_{\eps_1}}, 
    e^{\eps_2 is\jxi} \varphi_{k_2}\wt{g_{\eps_2}}\big) \lesssim \rho(2^m)^2 \cdot 2^{3m/2}m^2.
 % \lesssim 2^{-m} \cdot 2^{m/2} m \lesssim 1.
\end{align}
We can then pick, say, $D = 5m$ (here $A = M_0 = m\delta_N$) so that the second line of \eqref{ggbdpr4}
is a fast decaying remainder.

Bounds like those just proved for 
$T_1^{S,1}[b_1]\big( e^{\eps_1 is\jxi}\partial_s\wt{g_{\eps_1}}, e^{\eps_2 is\jxi}\wt{g_{\eps_2}} \big)$
can be obtained identically for all the other terms in \eqref{ggbdpr3}, which concludes the estimate for $C_1$.

\iffalse
%We focus on the main part in \eqref{def-operator-2}, the other terms being treated similarly.  
%We also consider the worse case where the frequency derivative brings down an $s.$
tl{
%As before, we will localize in $\xi$ since the corresponding dyadic index has $O(m)$ possible values. 
%\\
%Recalling that the symbol $b$ has been defined in Lemma \ref{bdry-no-t-res} that similarly, 
%using Theorem \ref{theomu1} and Lemma \ref{dispersive-bootstrap} 
%(note that we abuse notation slightly here and do not distinguish the argument from its complex conjugate)
\begin{align*}
&\bigg \Vert \varphi_k(\xi) \partial_{\xi} T_i^{(2,1)}(g,g) \bigg \Vert_{L^2_{\xi}} \\
& \lesssim \bigg \Vert \varphi_{k}(\xi) \frac{\xi}{\jxi} \int_0 ^t \tau_m(s) s B^2(s) e^{-is (\jxi \pm 2\lambda)} \bigg(P_k T_1[b](e^{\pm is(\jxi -2 \lambda)} \wt{g},  \wt{\theta}) + P_k T_1[b](\wt{\theta},e^{\pm is(\jxi-2\lambda)} \wt{g}) \\
&+ P_k T_2[b](e^{\pm is(\jxi-2\lambda)} \wt{g},  \wt{\theta})+ P_k T_2[b](\wt{\theta},e^{\pm is(\jxi-2\lambda)} \wt{g}) \bigg)  ds \bigg \Vert_{L^2}+ \lbrace \textrm{easier terms} \rbrace \\
 &\lesssim 2^{2m} 2^{5 A} \langle 2^{k_1} \rangle^{-1} \langle 2^{k_2} \rangle^{-1} \bigg(  \Vert \theta \Vert_{L^{3}} \sup_{t \approx  2^m} \vert B^2(t) \vert  \bigg \Vert  \int_0 ^t e^{i(t-s)(L-2\lambda)} B^2(s) \phi^2 ds  \bigg \Vert_{L^6_x} \cdot 2^{2(C_0+1)A} \\
& + 2^{-D} \sup_{s \approx 2^m} \mathcal{D}( e^{\pm i s (L-2\lambda) } g ,\theta) \bigg) \\
& \lesssim 2^{\delta_N m}  \langle 2^{k_1} \rangle^{-1} \langle 2^{k_2} \rangle^{-1} 2^{2m} \min \lbrace 2^{-2m}, \rho_0^2 \rbrace,
\end{align*}
where $\lbrace \textrm{easier terms} \rbrace $ refer to 
the cases where the time derivative does not fall on the profiles, 
or when $\partial_{\xi}$ does not fall on the exponential.  
As in Lemma \ref{bdry-no-t-res} the second term term involving $\mathcal{D}$ is seen to satisfy better bounds.
\fi

\medskip
{\it Estimates for the $C_2$ terms}.
For the second integral in \eqref{ggbdpr1}, using the same notation as above (see \eqref{prbdarysym}), 
we see that
%arguments are $g$'s, and order flipped ?
\begin{align}\label{ggbdpr10}
\begin{split}
& {\Big\|\whF^{-1}_{\xi\mapsto x} C_2(t) \Big\|}_{L^2}
  \lesssim 2^{m} \sup_{k_1,k_2 \approx 1} \sup_{s\approx 2^m} 
  \Big[ \big \Vert T_0[b_0]\big( e^{\eps_1 is\jxi} \partial_s \wt{g_{\eps_1}}, 
  e^{\eps_2 is\jxi} \partial_\xi \wt{g_{\eps_2}} \big) \big \Vert_{L^2}
  \\
& + \big \Vert T_2^S[m_0^j]\big( e^{\eps_1 is\jxi} \partial_s \wt{g_{\eps_1}},
  e^{\eps_2 is\jxi} \partial_{\xi_j} \wt{g_{\eps_2}} \big) \big \Vert_{L^2} 
  + \big \Vert T_3^S[m_0^{j}] 
  \big( e^{\eps_1 is\jxi} \partial_s \wt{g_{\eps_1}}, 
  e^{\eps_2 is\jxi} \partial_{\xi_j} \wt{g_{\eps_2}} \big) \big \Vert_{L^2}
  \\
  & +\big \Vert T_1^{S,1}[b_1]\big( e^{\eps_1 is\jxi} \partial_s \wt{g_{\eps_1}}, 
  e^{\eps_2 is\jxi} \partial_\xi \wt{g_{\eps_2}} \big) \big \Vert_{L^2}
  + 
  \big \Vert T_1^{S,1}[m_1^j] \big( e^{\eps_2 is\jxi}\partial_{\xi_j} \wt{g_{\eps_2}}, 
  e^{\eps_1 is\jxi}\partial_s \wt{g_{\eps_1}} \big) \big \Vert_{L^2} \Big]
  \\
  & +2^m \sup_{s \approx 2^m} \sum_{k_1 <5, k_2 \approx 1}
  \big \Vert T_1^{S,2}[m_2^j]\big( e^{\eps_1 is\jxi} \partial_s \wt{g_{\eps_1}}, 
  e^{\eps_2 is\jxi} \partial_{\xi_j} \wt{g_{\eps_2}} \big) \big \Vert_{L^2}.
  \end{split}
\end{align}
Once again it suffices to look at the $T_1^{S,1}$ term above, since the others are identical;
the bilinear estimate \eqref{mainbilin1} and the boundedness of wave operators give
\begin{align}\label{ggbdpr4'}
\begin{split}
2^m {\| T_1^{S,1}[b_1]\big( e^{\eps_1 is\jxi}\partial_s\wt{g_{\eps_1}}, 
  e^{\eps_2 is\jxi} \partial_\xi \wt{g_{\eps_2}} \big) \|}_{L^2} 
 & \lesssim 2^m {\big\| e^{isL} \partial_s g \big\|}_{L^{2-}} 
    {\| \whF^{-1} e^{\eps_2is\jxi} (\partial_\xi \wt{g}) \|}_{L^\infty} \cdot 2^{C_0M_0} 
  \\
  & + 2^m  2^{-D} \cdot 
    \mathcal{D}\big(e^{\eps_1 is\jxi} \partial_s\wt{g_{\eps_1}}, 
    e^{\eps_2 is\jxi} \partial_\xi\wt{g_{\eps_2}}\big).
\end{split}
\end{align}
The first term above can be bounded using \eqref{estdsg} and \eqref{dxiwtgest}:
\begin{align*}
2^m {\big\| e^{isL} \partial_s g \big\|}_{L^{2-}} 
    {\| e^{\eps_2isL} \whF^{-1} (\partial_\xi \wt{g}) \|}_{L^\infty} \cdot 2^{C_0M_0} 
    & \lesssim 2^m \rho(2^m)  \cdot {\| \partial_\xi \wt{g} \|}_{L^1_\xi} \cdot 2^{C_0 \delta_N m} 
    \\
    & \lesssim \big( 2^m \rho(2^m) \big)^2 2^{(C_0+1) \delta_N m}
    \lesssim 2^{(C_0+1) \delta_N m} \rho(2^m)^{\beta/2} Z,
  %  \lesssim 2^{-m} \cdot {\| \partial_\xi g \|}_{L^1_\xi} \cdot 2^{C_0 \delta_N m} 
  %  \lesssim 2^{-m+\delta' m} 
\end{align*}
which suffices for our purposes (see \eqref{Z1} and Remark \ref{remm}). 
%Note that we used \eqref{lemmapsi} for the $L^{\infty}_x - L^1_{\xi}$ estimate.
\iffalse 
More details:
\begin{align*}
\big \vert e^{isL} \wt{\mathcal{F}}^{-1} \big(\partial_{\xi} \wt{g} \big) \big \vert 
= \bigg \vert \int_{\mathbb{R^3}} \psi(\xi,x) e^{is \jxi} \partial_{\xi} \wt{g}  d\xi 
  \bigg \vert \lesssim \Vert \partial_{\xi} \wt{g} \Vert_{L^1_{\xi}} 
\end{align*}
\fi

Next we estimate the remainder. We can pick $D = 5m$ as before and see that the last term on
the right-hand side of \eqref{ggbdpr4'} is fast decaying:
using \eqref{dxi2g} and \eqref{estdsg} we have
\begin{align}\label{Dgg}
\begin{split}
2^{-D} \mathcal{D}\big(e^{\eps_1 is\jxi} \partial_s\wt{g_{\eps_1}}, 
    e^{\eps_2 is\jxi} \partial_\xi \wt{g_{\eps_2}}\big) 
    & \lesssim 2^{-5m} \cdot {\| \partial_s\wt{g_{\eps_1}} \|}_{L^2} \cdot {\| \partial_\xi^2\wt{g_{\eps_1}} \|}_{L^2} 
    \\
    & \lesssim \rho(2^m)^2 2^{-2m} \lesssim 2^{-4m} Z.
%    \lesssim 2^{-5m},
\end{split}
\end{align}
This completes the estimate \eqref{ggbd1}.

\medskip
{\it Proof of \eqref{ggbd2}}.
For the second estimate, using the same notation for $b$ as in \eqref{prbdary9} %\eqref{ggbdpr1}
and \eqref{prbdary11}, but without the $\xi/\jxi$ factor, %and the notation \eqref{prbdary9}, 
we can write
%variable is $t$
\begin{align*}
&\sup_{t\approx 2^m} {\Big\| \whF^{-1}_{\xi \mapsto x} e^{it\jxi} 
  \partial_t T^{\epss,(1)}_{m %,k_1,k_2
  } (g,g) \Big\|}_{L^{8/7}_x} \lesssim
  \sup_{t \approx 2^m} \sup_{k_1,k_2 \approx 1} 
  \Big[ \sum_{i=0,2,3} {\big\| T_i^S[b_0]\big( e^{\eps_1 it \jxi}\partial_t\wt{g_{\eps_1}}, 
  e^{\eps_2 it\jxi}\wt{g_{\eps_2}} \big) \big\|}_{L^{8/7}_x} 
  \\
  & + {\big\| T_1^{S,1}[b_1]\big( e^{\eps_1 it\jxi}\partial_t\wt{g_{\eps_1}}, 
  e^{\eps_2 it\jxi}\wt{g_{\eps_2}} \big) \big\|}_{L^{8/7}_x}  
  + {\big\| T_1^{S,1}[b_1']\big( e^{\eps_2 it\jxi}\wt{g_{\eps_2}},
  e^{\eps_1 it\jxi}\partial_t \wt{g_{\eps_1}} \big) \big\|}_{L^{8/7}_x} \Big]
\\   
  & + \sup_{t \approx 2^m} \sum_{k_1 <5, k_2 \approx 1}
  {\big\| T_1^{S,2}[b_0]\big( e^{\eps_1 it\jxi} \partial_t \wt{g_{\eps_1}}, 
  e^{\eps_2 it\jxi}\wt{g_{\eps_2}} \big) \big\|}_{L^{8/7}_x}.
\end{align*}
We focus again on the $T_1^{S,1}[b_1]$ term since the others can be treated identically.
Using \eqref{mainbilin1}, and \eqref{Dgg},
\begin{align*}
\sup_{t\approx 2^m} {\big\| T_1^{S,1}[b_1]\big( e^{\eps_1 it\jxi}\partial_t\wt{g_{\eps_1}}, 
    e^{\eps_2 it\jxi}\wt{g_{\eps_2}} \big) \big\|}_{L^{8/7}_x}
  & \lesssim 2^{C_0\delta_N m} %\langle t \rangle^{5 \delta _N} 
  \sup_{t\approx 2^m} {\big\| e^{itL} \partial_t g \big\|}_{L^{24/17-}_x} {\big\| e^{itL} g \big\|}_{L^6_x}
  \\
  & \lesssim
  %\Vert \theta \Vert_{L^{24/19-}_x} \vert B^2(t) \vert  
  %\bigg \Vert  \int_0 ^t e^{i(t-s)(L-2\lambda)} B^2(s) \phi^2 ds  \bigg \Vert_{L^{6-}_x} 
 % 2^{C_0\delta_N m} \cdot 2^{-m} \cdot 2^{-m/2} m,
2^{C_0\delta_N m} \cdot \rho(2^m) \cdot \rho(2^m)m,
\end{align*}
having used \eqref{estdsg} and \eqref{dispbootg} in the last inequality.
This is sufficient to obtain a bound as in \eqref{ggbd2}.
\end{proof}

%%%%%%%%%%%%%%%%%%%%%%%
%%%%%%%%%%%%%%%%%%%%%%%
%%%%%%%%%%%%%%%%%%%%%%%
%%%%%%%%%%%%%%%%%%%%%%%

\medskip
\subsection{Fermi-continuous interaction}\label{secShg}
The goal of this subsection is to prove the following lemma
which deals with the bilinear (and trilinear) interaction of the good (continuous) and bad (Fermi) components:

\begin{lemma}\label{gh-no-t-res}
Under the assumptions of Proposition \ref{proFS}, for all $\epss \in \{+,-\}$, we have
\begin{align}\label{ghbd1}
\big \Vert \partial_{\xi} T^{\epss,(1)}_{m}(g,h) \big \Vert_{L^2_x}
  + \big \Vert \partial_{\xi} T^{\epss,(1)}_{m}(h,g) \big \Vert_{L^2_x}
  \lesssim 2^{m} \rho^{1-\beta+\delta'}(2^m) \e^{\beta},
\end{align}
for some $\delta'>0$,
and
\begin{align}\label{ghbd2}
{\big\| \mathcal{F}^{-1}_{\xi \mapsto x} e^{it\jxi}
  \partial_t T^{\epss,(1)}_{m}(g,h) \big\|}_{L^{8/7}_x}
  + {\big\| \mathcal{F}^{-1}_{\xi \mapsto x} e^{it\jxi}
  \partial_t T^{\epss,(1)}_{m}(h,g) \big\|}_{L^{8/7}_x} 
  \lesssim 2^{3m/4} \rho^{2-3\beta}(2^m). %\jt^{3/4} \rho^{2-3\beta}(t).
%\frac{\rho_0^{1/2}}{\langle t \rangle^{11/8}}
\end{align}
\end{lemma}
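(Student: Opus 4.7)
The proof proceeds in parallel with those of Lemmas \ref{bdry-no-t-res} and \ref{gg-no-t-res}, adapted to the mixed Fermi--continuous interactions. First I would apply $\partial_\xi$ to the bulk operator $T^{\epss,(1)}_m(G,H)$ with $(G,H) \in \{(g,h),(h,g)\}$. As in the earlier lemmas, the main contributions are of two types: a $C_1$-term where $\partial_\xi$ hits the exponential $e^{-is\Phi_\epss}$, producing a factor of $s$, and a $C_2$-term where $\partial_\xi$ hits the singular measure $\mu^S$, which by the identities \eqref{dxinuS}--\eqref{dximu1S} in Remark \ref{remnuS} is rewritten as $\partial_\eta\mu^S$ or $\partial_\s\mu^S$ and then transferred by integration by parts onto the second profile. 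Terms in which $\partial_\xi$ hits the symbol $1/\Phi_\epss$, the cutoffs $\varphi_{\leqslant M_0}$, or the coefficients in $\mu^S$ are all strictly easier, using the weak symbol bound \eqref{sym-Phi} together with the same bilinear apparatus.

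For the weighted bound \eqref{ghbd1}, I would use the identity \eqref{idbil} to rewrite each of these contributions as sums of the model operators $T_0,\, T_1^{S,1},\, T_1^{S,2},\, T_2^S,\, T_3^S$, and apply Theorem \ref{bilinearmeas} with appropriate H\"older exponents. The $(h,g)$ case is the more straightforward one: the Fermi component $g$ enjoys the strong dispersive bounds \eqref{dispbootg}--\eqref{dispbootgbis}, and pairing $\|\partial_s h\|_{L^2} \lesssim \rho(s)$ (from Lemma \ref{decay-der}) with $\|e^{isL} g\|_{L^\infty} \lesssim \rho(s)\,s^{-1/2}\log s$, then integrating $s\cdot\tau_m(s)$ over the dyadic window, gives a bound of order $2^{2m}\rho(2^m)^2 \cdot 2^{-m/2}m$, easily absorbed into the target $Z\rho^{\delta'}(2^m)$. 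The $C_2$-contribution in this case involves $\|\partial_\xi\wt{g}\|_{L^2}$, controlled by \eqref{growth}, paired with the dispersive bounds on $\partial_s h$; the remainder $\mathcal D$ appearing in \eqref{mainbilin1} is taken care of by choosing $D=5m$ and invoking \eqref{boot}, \eqref{growth} and \eqref{dxi2g}.

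For the $L^{8/7}$ decay bound \eqref{ghbd2}, the expression $\partial_t T^{\epss,(1)}_m(G,H)$ is, at time $t$, a boundary-type bilinear operator with no remaining time integration. I would apply Theorem \ref{bilinearmeas} with exponents $(p_1,p_2)=(2,\,8/3^-)$ satisfying $1/p_1+1/p_2>7/8$. For $(g,h)$, Lemma \ref{decay-der} gives $\|e^{itL}\partial_t g\|_{L^2} \lesssim \rho(t)$, while \eqref{dispbootf} with $q=8/3$ yields $\|e^{itL} h\|_{L^{8/3-}} \lesssim \rho^{1-\beta}(t)\,t^{5/8+5\delta_N/8}\e^\beta$; the product is $\rho^{2-\beta}(t)\,t^{5/8+5\delta_N/8}\e^\beta$, which at $t\approx 2^m$ with $\rho\approx 1/t$ is comparable to $t^{\beta-11/8+5\delta_N/8}\e^\beta$, well within $t^{3/4}\rho^{2-3\beta}(t) \approx t^{-5/4+3\beta}$ for $\beta,\delta_N$ sufficiently small. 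The $(h,g)$ case is symmetric, pairing $\|\partial_t h\|_{L^2}$ with a suitable $L^{8/3}$ bound on $e^{itL}g$ from \eqref{dispbootgbis}.

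The hard part will be the $C_1$-contribution in the $(g,h)$ interaction for the weighted $L^2$ bound. A naive H\"older pairing $\|e^{isL}\partial_s g\|_{L^{3-}}\|e^{isL} h\|_{L^6}$ produces, after the extra factor $s$ and the dyadic time integration, a bound that just misses the target by the $s^{5\delta_N/3}$ loss present in the $L^6$ bound of \eqref{dispbootf}. To overcome this I would exploit the very specific structure of $\partial_s g$: from \eqref{def-h} one has $\partial_s\wt{g}(s,\eta) = -\chi_C(\eta)\,B^2(s)\,e^{-is(\jeta-2\lambda)}\wt{\phi^2}(\eta)$, which has compact $\eta$-support and, crucially, an oscillation that combines with $e^{-is\Phi_\epss}$ to cancel the $\eta$-dependence of the total phase (at least when $\eps_1=+$). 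Carrying out the $\eta$-integration against $\mu^S(\xi,\eta,\s)$ then reduces the problem to an essentially linear operator in $\wt{h}(s,\s)$ with time-oscillating coefficient $B^2(s)e^{2i\lambda s}$ and a $(\xi,\s)$-symbol, on which a further integration by parts in $s$ gains a factor $1/(\jxi-\eps_2\jsig-2\lambda)$ that is bounded away from zero except near the Fermi surface; the contribution from that neighborhood is handled separately using Lemma \ref{localdecay}-type local decay estimates together with the compact $\eta$-support of $\chi_C$.
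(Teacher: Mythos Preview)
Your treatment of the $L^{8/7}$ bound \eqref{ghbd2} is essentially correct and matches the paper's approach. For the weighted bound \eqref{ghbd1}, however, there is a real gap in your handling of the $C_1[g,h]$ contribution.

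You correctly identify that the naive H\"older pairing $\|e^{isL}\partial_s g\|_{L^{3-}}\|e^{isL}h\|_{L^6}$ just misses the target, and you propose to rescue it by exploiting the explicit oscillatory structure of $\partial_s\wt g$ and performing a further integration by parts in $s$. This route is substantially more involved than necessary and has unresolved issues: the resulting phase $\jxi - 2\lambda - \eps_2\jsig$ does vanish on a codimension-one set when $\eps_2 = +$, and Lemma \ref{localdecay} requires both inputs to be Schwartz in $x$, which the profile $h$ is not. You also leave the case $\eps_1 = -$ (where your phase cancellation in $\eta$ fails) unexplained.

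The paper bypasses all of this by applying the inhomogeneous Strichartz estimate of Lemma \ref{Strichartz} with $(\tilde q,\tilde r,\gamma)=(2+,\infty-,0+)$ \emph{before} the bilinear bound. This trades $\big\|\int_0^t s\,\tau_m(s)\,e^{-isL}(\cdot)\,ds\big\|_{L^2_x}$ for roughly $2^{3m/2+}\sup_{s\approx 2^m}\|\cdot\|_{L^{1+}_x}$, gaining half a power of $2^m$ over the naive $L^\infty_t$ bound. One then pairs $\|e^{isL}\partial_s g\|_{L^{6/5-}}\lesssim \rho(2^m)$ from \eqref{estdsg} with $\|e^{isL}P_{k_2}h\|_{L^6}$ from \eqref{dispbootf}, and the estimate closes directly. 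The same Strichartz-then-bilinear mechanism (with $(\tilde q,\tilde r,\gamma)=(6,3,2/3)$) is used for $C_1[h,g]$ and both $C_2$ terms. Your direct $L^2\times L^\infty$ approach for $C_1[h,g]$ is morally right but needs a small fix: Theorem \ref{bilinearmeas} requires $p,q\in(1,\infty)$, so replace $L^\infty$ by $L^q$ with $q$ large and finite, invoking \eqref{dispbootgbis}; this still gives enough decay, so that point is only a technical slip.
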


\begin{proof}
We adopt similar notation to those in Lemmas \ref{bdry-no-t-res} and \ref{gg-no-t-res} above.
%In particular, we let $2^m \geqslant \rho^{-1}_0$ so that the right-hand sides of \eqref{ghbd1} and \eqref{ghbd2}
%can be replaced, respectively, by $2^{\delta'm}$ and $2^{(-5/4+3\beta) m}$.
%We also use the same definition of the symbol $b$ in \eqref{prbdary9} and of the other symbols in \eqref{prbdary11}.

\medskip
{\it Proof of \eqref{ghbd1}}.
Applying again the same reduction from the beginning of Lemma \ref{bdry-no-t-res},
and using the notation from \eqref{ggbdpr1}, we can 
focus on estimating the $L^2$-norms of the terms
\begin{align}\label{ghbdpr1}
\begin{split}
C_{1,\epss}[g_{\eps_1},h_{\eps_2}], \qquad C_{1,\epss}[h_{\eps_1},g_{\eps_2}],
\\
C_{2,\epss}[g_{\eps_1},h_{\eps_2}], \qquad C_{2,\epss}[h_{\eps_1},g_{\eps_2}].
\end{split}
\end{align}
%-\eqref{idsym} %\eqref{theomu10b}-\eqref{theomu12b},

\smallskip
{\it Estimates for the $C_1$ terms}.
We have analogues of the estimate \eqref{ggbdpr3}, with the definitions \eqref{idop} and \eqref{prbdary11}, %\eqref{ggbdpr2} 
for all the terms in \eqref{ghbdpr1}.
In particular, we see that the first term is bounded as follows: 
\begin{align}\label{ghbdpr3}
\begin{split}
& {\Big\| \whF^{-1}_{\xi\mapsto x} C_{1,\epss}[g_{\eps_1},h_{\eps_2}](t) \Big\|}_{L^2_x}
  \lesssim  %\sup_{s\approx 2^m} 
  \Big\| \int_0^t s \tau_m(s) e^{-isL} \Big[ 
  \sum_{k_1,k_2 \leqslant M_0} T_0[b_0]\big( e^{\eps_1 is\jxi}\partial_s\wt{g_{\eps_1}}, 
    e^{\eps_2 is\jxi}\varphi_{k_2}\wt{h_{\eps_2}} \big)
  \\
  & + T_1^{S,2}[b_0]\big( e^{\eps_1 is\jxi} \partial_s \wt{g_{\eps_1}}, 
    e^{\eps_2 is\jxi}\varphi_{k_2}\wt{h_{\eps_2}} \big)
    + \sum_{i=2,3} T_i^S[b_0]\big( e^{\eps_1 is\jxi}\partial_s\wt{g_{\eps_1}}, 
    e^{\eps_2 is\jxi}\varphi_{k_2}\wt{h_{\eps_2}} \big)
  \\
  & + \sum_{\vert k_1 - k_2 \vert<5,k_2 \leqslant M_0} 
  \Big( T_1^{S,1}[b_1]\big( e^{\eps_1 is\jxi}\partial_s\wt{g_{\eps_1}}, 
    e^{\eps_2 is\jxi}\varphi_{k_2}\wt{h_{\eps_2}} \big) 
  + T_1^{S,1}[b_1']
    \big( e^{\eps_2 is\jxi} %\varphi_{k_2}
    \wt{h_{\eps_2}}, e^{\eps_1 is\jxi}\partial_s \wt{g_{\eps_1}} \big) \Big)  \Big]\, ds \Big\|_{L^2}.
\end{split}
\end{align}

As before it suffices to treat the contribution from $T_1^{S,1}[b_1]$ since the others are almost identical.
An analogue of Remark \ref{finitesum} holds in this case and 
therefore we may reduce to sum over $k_1,k_2$ with only $O(m)$ terms.
Note also that a direct application of the bilinear bounds of Theorem \ref{bilinearmeas},
as done before in Lemma \ref{gg-no-t-res},  would not suffice in this case.
We first apply the Strichartz estimate from Lemma \ref{Strichartz}
with the indices $(\wt{q},\wt{r},\gamma)=(2+,\infty-,0+),$ 
and then use \eqref{mainbilin1} to obtain: %, and Lemma \ref{dispersive-bootstrap}: 
\begin{align}\label{ghbdpr4}
\begin{split}
& {\Big\| \int_0^t s \tau_m(s) e^{-isL} T_1^{S,1}[b_1]\big( e^{\eps_1 is\jxi}\partial_s\wt{g_{\eps_1}}, 
    e^{\eps_2 is\jxi} \varphi_{k_2}\wt{h_{\eps_2}} \big) \, ds \Big\|}_{L^2_x}
  \\ & \lesssim 2^{3m/2+} \cdot  \sup_{s\approx 2^m} 
  {\big\| T_1^{S,1}[b_1]\big( e^{\eps_1 is\jxi}\partial_s\wt{g_{\eps_1}},
    e^{\eps_2 is\jxi} \varphi_{k_2}\wt{h_{\eps_2}} \big) \big\|}_{L^{1+}_x}
  \\
  & \lesssim  2^{3m/2+} \sup_{s\approx 2^m} \Big[ {\big\| e^{isL} \partial_s g \big\|}_{L^{6/5-}} 
    {\| e^{isL} P_{k_2} h \|}_{L^6} \cdot 2^{C_0M_0} 
  + 2^{-D} \cdot 
    \mathcal{D}\big(e^{\eps_1 is\jxi} \partial_s\wt{g_{\eps_1}}, 
    e^{\eps_2 is\jxi} \varphi_{k_2}\wt{h_{\eps_2}}\big) \Big].
\end{split}
\end{align}
Using \eqref{estdsg} and \eqref{dispbootf} 
we obtain an estimate for the first term on the last line of \eqref{ghbdpr4} by
\begin{align*}
C 2^{3m/2} \cdot 2^{(C_0+1)\delta_N m} \cdot \rho(2^m) \cdot \rho(2^m)^{1-\beta} \e^{\beta}  %an m was missing
\end{align*}
which is sufficient for \eqref{ghbd1}.
To estimate the contribution from $\mathcal{D}$ (recall \eqref{bilmeasD}) %\eqref{theomu1asgh}) 
we can use once again the admissible choice $D=5m$,
and the a priori bound \eqref{boot} on $\|\partial_\xi \wt{h}\|_{L^2}$ and \eqref{estdsg}.
%The final bound in \eqref{ghbdpr4} clearly suffices.

To treat the term $ C_{1,\epss}[h_{\eps_1},g_{\eps_2}]$ we instead
use first the Strichartz estimates with indices $(\widetilde{q},\widetilde{r},\gamma) = (6,3,2/3)$:
\begin{align}\label{ghbdpr5}
\begin{split}
& {\Big\| \int_0^t s \tau_m(s) e^{isL} T_1^{S,1}[b_1]\big( e^{\eps_1 is\jxi}\partial_s \wt{h_{\eps_1}}, 
    e^{\eps_2 is\jxi} %\varphi_{k_2} \partial_s \wt{h_{\eps_2}}
    \wt{g_{\eps_2}} \big) \, ds \Big\|}_{L^2_x}
  \\ 
  & \lesssim 2^{11m/6 + \delta_N m} \cdot \sup_{s\approx 2^m} 
  {\big\| T_1^{S,1}[b_1]\big( e^{\eps_1 is\jxi} \partial_s \wt{h_{\eps_1}},
    e^{\eps_2 is\jxi} %\varphi_{k_2} \partial_s
    \wt{g_{\eps_2}} \big) \big\|}_{L^{3/2}_x}
  \\
  & \lesssim 2^{11m/6 + \delta_N m} \cdot \sup_{s\approx 2^m} \Big[ 
    {\| e^{isL} P_{k_2} \partial_s h \|}_{L^{2+}} {\big\| e^{isL} g \big\|}_{L^{6-}} \cdot 2^{C_0M_0} 
  \\ 
  & \qquad + 2^{-D} \cdot 
    \mathcal{D}\big(e^{\eps_1 is\jxi} \wt{g_{\eps_1}}, 
    e^{\eps_2 is\jxi} \varphi_{k_2} \partial_s\wt{h_{\eps_2}}\big) \Big].
\end{split}
\end{align}
Using \eqref{estdsh}, together with Bernstein and the fact that the frequencies of $h$ are bounded by $C2^{M_0}$,
and \eqref{dispbootg},
\begin{align*}%\label{ghbdpr5}
\begin{split}
{\big\| e^{isL} %P_{k_2} 
  \partial_s h \big\|}_{L^{2+}} 
  {\big\| e^{isL}g \big\|}_{L^{6-}} \lesssim \rho(2^m) \cdot \rho(2^m) \cdot 2^{2\delta_N m} 
  \lesssim 2^{-2m + 2\delta_N m} \rho(2^m)^{\beta/2} Z ,
%  \lesssim 2^{-m + \delta_N m} \cdot 2^{-m + \delta_N m} 
\end{split}
\end{align*}
which is sufficient for the desired bound, as per the usual Remark \ref{remm}.
Using also \eqref{growth} we can easily handle the remainder term $\mathcal{D}$ in \eqref{ghbdpr5}
by making the usual choice $D = 5m$.

\smallskip
{\it Estimates for the $C_2$ terms}.
Let us now look at the $C_2$ terms from \eqref{ghbdpr1}; recall the definition from \eqref{ggbdpr1}. 
We can proceed similarly to the proof above. 
%from in Lemma \ref{gg-no-t-res}. 
First, with the usual notation, we write the analogue of \eqref{ghbdpr3} 
(see also \eqref{ggbdpr10}) for the first term: 
\begin{align}\label{ghbdpr10}
\begin{split}
& {\Big\|\whF^{-1}_{\xi\mapsto x} C_{2,\epss}[g_{\eps_1},h_{\eps_2}](t) \Big\|}_{L^2}
  \lesssim
  \Big\| \int_0^t \tau_m(s) e^{-is L} \Big[ \sum_{k_1,k_2 \leqslant M_0} 
  T_0[b_0]\big( e^{\eps_1 is\jxi}\partial_s\wt{g_{\eps_1}}, 
    e^{\eps_2 is\jxi} \varphi_{k_2}\partial_\xi \wt{h_{\eps_2}} \big)
  \\
  & + \sum_{i=2,3} T_i^S[m_0^j]\big( e^{\eps_1 is\jxi}\partial_s\wt{g_{\eps_1}}, 
    e^{\eps_2 is\jxi} \varphi_{k_2}\partial_{\xi_j} \wt{h_{\eps_2}} \big) 
    + T_1^{S,2}[m_2^j]\big( e^{\eps_1 is\jxi} \partial_s \wt{g_{\eps_1}}, 
    e^{\eps_2 is\jxi} \varphi_{k_2}\partial_{\xi_j} \wt{h_{\eps_2}} \big) 
  \\
  & + \sum_{\vert k_1-k_2 \vert <5, k_2 \leqslant M_0}
  \Big( T_1^{S,1}[b_1]\big( e^{\eps_1 is\jxi}\partial_s\wt{g_{\eps_1}}, 
    e^{\eps_2 is\jxi} \varphi_{k_2}\partial_\xi \wt{h_{\eps_2}} \big)
  \\
  & + T_1^{S,1}[m_1^j]\big( e^{\eps_2 is\jxi}\varphi_{k_2}\partial_{\xi_j} \wt{h_{\eps_2}}, 
    e^{\eps_1 is\jxi}\partial_s \wt{g_{\eps_1}} \big) \Big)
  \Big] \, ds \Big\|_{L^2}.
\end{split}
\end{align}

Focusing on the usual main contribution,
an application of the Strichartz estimate with $(\wt{q},\wt{r},\gamma)=(6,3,2/3)$
and \eqref{mainbilin1} give us 
\begin{align}\label{ghbdpr11}
\begin{split}
& {\Big\| \int_0^t \tau_m(s) T_1^{S,1}[b_1]\big( e^{\eps_1 is\jxi}\partial_s\wt{g_{\eps_1}}, 
  e^{\eps_2 is\jxi} \partial_\xi \wt{h_{\eps_2}} \big) \, ds \Big\|}_{L^2_x} 
  \\
  & \lesssim 2^{5m/6} \cdot 2^{M_0} \cdot \sup_{s\approx 2^m} 
  {\big\| T_1[b_1]\big( e^{\eps_1 is\jxi} \partial_s\wt{g_{\eps_1}},
    e^{\eps_2 is\jxi} \varphi_{k_2} \partial_\xi \wt{h_{\eps_2}} \big) \big\|}_{L^{3/2}_x}
  \\
  & \lesssim 2^{5m/6} \cdot 2^{M_0} \cdot \sup_{s\approx 2^m} \Big[ {\big\| e^{isL} \partial_s g \big\|}_{L^{6-}} 
    {\big\| \whF^{-1} e^{\eps_2 is\jxi}(\varphi_{k_2} \partial_\xi \wt{h_{\eps_2}} ) \big\|}_{L^{2+}} \cdot 2^{C_0M_0} 
  \\
  & \qquad \qquad \qquad \qquad + 2^{-D} \cdot 
    \mathcal{D}\big(e^{\eps_1 is\jxi} \partial_s \wt{g_{\eps_1}}, 
    e^{\eps_2 is\jxi} \varphi_{k_2} \partial_\xi\wt{h_{\eps_2}}\big) \Big].
\end{split}
\end{align}
Using \eqref{estdsg} and \eqref{boot} we can bound by an absolute constant
the sum over $k_2$ of first term in the last expression above; %(always provided $\delta_N$ is small enough);
using \eqref{estdsg} and the bound %on the second derivative of $\wt{h}$ in \eqref{boot} 
\begin{align}\label{estdsdxig}
{\| \partial_\xi \partial_s \wt{g} \|}_{L^2} \lesssim 2^m \rho(2^m)
\end{align}
%cfp{put this somewhere?}
we can bound the remainder term in \eqref{ghbdpr11} with the usual choice $D=5m$.
%Similar bounds can be obtained for all the other terms in \eqref{ghbdpr10}.

For the last term $C_{2,\epss}[h_{\eps_1},g_{\eps_2}]$ from \eqref{ghbdpr1}
(recall the notation \eqref{ggbdpr1}) the roles of $h$ and $g$ are reversed
compared to the term just treated above. %but we can still use similar arguments.
Starting with the analogue of \eqref{ghbdpr10} matters are reduced to bounding
\begin{align}\label{ghbdpr12}
\begin{split}
& {\Big\| \int_0^t \tau_m(s) e^{isL} T_1^{S,1}[b_1]\big( e^{\eps_1 is\jxi}\varphi_{k_1}\partial_s\wt{h_{\eps_1}}, 
  e^{\eps_2 is\jxi} \partial_\xi \wt{g_{\eps_2}} \big) \, ds \Big\|}_{L^2_x}.
\end{split}
\end{align}
\iffalse
Because of the weak bounds we have on $\partial_\xi \wt{g}$, see \eqref{dxiwtgest}, 
we need to use more precise information for $\partial_s h$.
In particular, from \eqref{Duhamelf}-\eqref{def-h} we have 
\begin{align}\label{dswth0}
\begin{split}
\partial_s \wt{h}(s) & = e^{-is \jeta} \big[ \wt{\mathcal{F}}(a^2(s)\theta ) 
	+ \wt{\mathcal{F}}\big(a(s) \phi \, \frac{\Im w}{L}\big) 
	+ \wt{\mathcal{F}}\big(\big( \frac{\Im w}{L} \big)^2 \big)  \big] - \partial_s \wt{g}(s) 
\end{split}
\end{align}

We write this as
\begin{align*}
& e^{isL} \partial_s h = h_1 + h_2, 
\\
& h_1 := %P_{\neq} 
  \big(a^2(s) \theta \big) - e^{isL}\partial_s g,
  \qquad h_2 := a(s) \phi \, \frac{\Im w}{L} + \big( \frac{\Im w}{L} \big)^2.
\end{align*}
%where $P_{\neq}$ is the (distorted) Fourier multiplier operator with symbol $\varphi_{\neq}$
%(see the notation after \eqref{dswth}).
Using \eqref{bootstrap-A-A'} and \eqref{dispbootf} we see that 
\begin{align}\label{dswth''}
\begin{split}
{\| h_1(s) \|}_{L^2_x} & \lesssim \rho(s), %\quad p \in [1,\infty],
\\
\sup_{s\approx 2^m} {\| h_2(s) \|}_{L^q_x} & \lesssim \rho(2^m)^{2-2\beta} 2^{m+\beta m} \e^{2\beta} , \quad q \in [5/3-\delta,\infty],
% & \lesssim 2^{-m}  
\end{split}
\end{align}
for some small $\delta$ (and provided the Sobolev regularity index $N$ is large enough).

Going back to \eqref{ghbdpr12} the contribution to $e^{isL}h$ coming from $h_1$ can be estimated as follows:
\fi
Using \eqref{mainbilin1} we upperbound the above term by
\iffalse
\begin{align*}
\begin{split}
& {\Big\| \int_0^t \tau_m(s) e^{isL} T_1^{S,1}[b_1]\big( \varphi_{k_1}\wtF (h_1)_{\eps_1}, 
  e^{\eps_2 is\jxi} \partial_\xi \wt{g_{\eps_2}} \big) \, ds \Big\|}_{L^2_x}
  %\\
  %& \lesssim 2^m \cdot \sup_{s\approx 2^m} 
  %{\big\| T_1[b_1]\big( \wtF (h_1)_{\eps_1},
  %  e^{\eps_2 is\jxi} \varphi_{k_2} \partial_\xi \wt{g_{\eps_2}} \big) \big\|}_{L^2_x}
  \\
  & \lesssim 2^m \cdot \sup_{s\approx 2^m} \Big[ {\big\| P_{k_1} h_1 \big\|}_{L^2_x} 
    {\| \partial_\xi \wt{g} \|}_{L^1_{\xi}} \cdot 2^{C_0M_0} 
  +2^{-D} \cdot 
    \mathcal{D}\big(\varphi_{k_1} \wtF (h_1)_{\eps_1}, 
    e^{\eps_2 is\jxi} \partial_\xi\wt{g_{\eps_2}}\big) \Big].  
\end{split}
\end{align*}
\fi
\begin{align*}
\begin{split}
C 2^m \cdot \sup_{s\approx 2^m} \Big[ {\big\| e^{isL} P_{k_1} \partial_s h \big\|}_{L^2_x} 
    {\| \partial_\xi \wt{g} \|}_{L^{1+}_{\xi}} \cdot 2^{C_0M_0} 
  +2^{-D} \cdot 
    \mathcal{D}\big(e^{\eps_1 is\jxi}\varphi_{k_1} \partial_s \wt{h_{\eps_1}}, 
    e^{\eps_2 is\jxi} \partial_\xi\wt{g_{\eps_2}}\big) \Big].  
\end{split}
\end{align*}
%From here 
Then we can use %\eqref{dswth''} 
\eqref{estdsh}, and interpolate \eqref{growth} with \eqref{dxiwtgest} to estimate 
the sum over $k_1$ of the first term above by
\begin{align*}
 C 2^m \cdot 2^{(C_0+) \delta_N m} \cdot \rho(2^m) \cdot 2^m\rho(2^m) &  
 \lesssim  2^m \rho(2^m) \cdot 2^{(C_0+) \delta_N m} \rho(2^m)^{\beta/2} \cdot Z %\delta_N factors were missing
 \\ & \lesssim 2^{(C_0+) \delta_N m} \rho(2^m)^{\beta/2} \cdot Z, 
\end{align*}
where we used that $2^m \rho(2^m) \leqslant Z \rho(2^m)^{\beta/2}$ for the first line, and $2^m \rho(2^m) \lesssim 1$ for the second. Given our choices of parameters, this bound suffices by Remark \ref{remm}. 

Using \eqref{dxi2g} we can take care of the remainder in the usual way.
\iffalse
For the contribution of $h_2$ to \eqref{ghbdpr12} we use the Strichartz Lemma \ref{Strichartz} 
with indexes
$(\widetilde{q}',\widetilde{r}',\gamma) = (10/7,5/3,1)$:
%$(\widetilde{q},\widetilde{r},\gamma) = (10/3,5/2,1)$:
\begin{align*}
\begin{split}
& {\Big\| \int_0^t \tau_m(s)e^{isL} T_1^{S,1}[b_1]\big( \varphi_{k_1} \wtF (h_2)_{\eps_1}, 
    e^{\eps_2 is\jxi} \partial_\xi\wt{g_{\eps_2}} \big) \, ds \Big\|}_{L^2_x}
  \\ 
  & \lesssim 2^{7m/10 + M_0} \cdot \sup_{s\approx 2^m} 
  {\big\| T_1[b_1]\big( \varphi_{k_1} \wtF (h_2)_{\eps_1},
    e^{\eps_2 is\jxi} \partial_\xi\wt{g_{\eps_2}} \big) \big\|}_{L^{5/3}_x}
  \\
  & \lesssim 2^{7m/10 + M_0} 
    \cdot \sup_{s\approx 2^m} \Big[ {\big\| P_{k_1} h_2 \big\|}_{L^{5/3-}_x} 
    {\| \partial_\xi \wt{g} \|}_{L^1_\xi} \cdot 2^{C_0M_0} 
  + 2^{-D} \cdot 
    \mathcal{D}\big( \varphi_{k_1} \wtF (h_2)_{\eps_1}, 
    e^{\eps_2 is\jxi} \partial_\xi\wt{g_{\eps_2}}\big) \Big].
\end{split}
\end{align*}
Then we can use the bound for $h_2$ in \eqref{dswth''} and \eqref{dxiwtgest} to 
take care of the first term above, and \eqref{dxi2g} to handle the remainder.
\fi
This concludes the proof of \eqref{ghbd1}.

\medskip
{\it Proof of \eqref{ghbd2}}.
Moving on to the second estimate, we adopt the same notation above
%\eqref{ggbdpr1} and the notation \eqref{prbdary9}
and proceed similarly to the proof of \eqref{ggbd2}.
For the first term on the left-hand side of \eqref{ghbd2} we can write up to irrelevant constants
%variables adjusted
\begin{align*}
&\whF^{-1}_{\xi \mapsto x} e^{it\jxi} \partial_t T^{\epss,(1)}_{m} (g,h)
   = \sum_{k_1,k_2 \leqslant M_0} \Big[ T_0[b_0]\big( e^{\eps_1 it\jxi} \partial_t\wt{g_{\eps_1}}, 
   e^{\eps_2 it\jxi} \varphi_{k_2}\wt{h_{\eps_2}}\big) 
   \\
   & + \sum_{i=2,3}
   T_i^S[b_0]\big( e^{\eps_1 it\jxi} \partial_t\wt{g_{\eps_1}}, e^{\eps_2 it\jxi} \varphi_{k_2}\wt{h_{\eps_2}}\big)
   + T_1^{S,2}[b_0]\big( e^{\eps_1 it\jxi} \partial_t\wt{g_{\eps_1}}, 
  e^{\eps_2 it\jxi} \varphi_{k_2}\wt{h_{\eps_2}} \big) \Big]
   \\
& + \sum_{\vert k_1- k_2 \vert <5, k_2 \leqslant M_0} 
  \Big[ T_1^{S,1}[b_1]\big( e^{\eps_1 it\jxi} \partial_t\wt{g_{\eps_1}}, e^{\eps_2 it\jxi} \varphi_{k_2}\wt{h_{\eps_2}} \big)
  \\
  & + T_1^{S,1}[b_1']\big( e^{\eps_2 it\jxi} \varphi_{k_2}\wt{h_{\eps_2}}, 
  e^{\eps_1 it\jxi} \partial_t\wt{g_{\eps_1}} \big) \Big]
  .
\end{align*}
We focus again on the $T_1^{S,1}[b_1]$ term since the others can be treated identically, 
using Bernstein's inequality to ensure convergence of the sums.
Using \eqref{mainbilin1} %\eqref{theomu1conc} it follows that, up to the faster decaying term $2^{-D}\mathcal{D}(g,h)$
with the choice of $D=5m$, we have, up to a faster decaying term,
\begin{align*}
\sup_{t \approx 2^m} {\Big\| \whF^{-1}_{\xi \mapsto x} e^{it\jxi} 
  \partial_t T^{\epss,(1)}_{m} (g,h) \Big\|}_{L^{8/7}_x} 
  & \lesssim 2^{C_0\delta_N m}
  \sup_{t\approx 2^m} {\big\| e^{itL} \partial_t g \big\|}_{L^{24/17}_x} {\big\| e^{itL} h \big\|}_{L^{6-}_x}
  \\
 & \lesssim  2^{C_0 \delta_N m} \cdot \rho(2^m) \cdot \rho(2^m)^{1-\beta} 2^{10 \delta_N m} \varepsilon^{\beta},
 % & \lesssim 2^{C_0\delta_N m} \cdot 2^{-m} \cdot 2^{-m+} m,
\end{align*}
having used \eqref{estdsg} and \eqref{dispbootf}.
This suffices for \eqref{ghbd2}.

A similar estimate can be used to get the same bound when the roles of $g$ and $h$ are exchanged:
using Theorem \ref{bilinearmeas} followed by \eqref{estdsh} and \eqref{dispbootgbis}, we have
\begin{align*}
\sup_{t\approx 2^m} \sum_{k_1}  {\Big\| \whF^{-1}_{\xi \mapsto x} e^{it\jxi} 
  \partial_t T^{\epss,(1)}_{m,k_1,k_2} (h,g) \Big\|}_{L^{8/7}_x} 
  & \lesssim 2^{C_0\delta_N m}
  \sup_{t\approx 2^m} \sum_{k_1}  {\big\| \partial_t P_{k_1} h \big\|}_{L^{2+}_x} {\big\| e^{itL} g \big\|}_{L^{8/3-}_x}
  \\
  & \lesssim 2^{(C_0+2)\delta_N m} \cdot \rho(2^m) \cdot \rho(2^m) 2^{m} \cdot 2^{-3m/8+} .
%  & \lesssim 2^{(C_0+2)\delta_N m} \cdot 2^{-m} \cdot 2^{-3m/8+} .
\end{align*}
This suffices to bound the second term on the left-hand side  
of \eqref{ghbd2} as desired, and concludes the proof of the lemma.
%write similarly that (as before we can localize given the frequency restrictions we imposed)
%\begin{align*}
%\bigg \Vert P_k \mathcal{F}^{-1}_{\xi \mapsto x} \wt{T}_i^{(j)}(g,h) \bigg \Vert_{L^{8/7}_x}  
%& \lesssim \langle t \rangle^{5 \delta_N} 
%\bigg \Vert  \int_0 ^t B^2(s) e^{i(t-s)L} \theta ds \bigg \Vert_{L^{8/3}_x} 
%\Vert \partial_t h \Vert_{L^2_x}  \lesssim \langle t \rangle^{10 \delta_N-2\beta + 5/8} \rho^{2-2\beta}(t),
%end{align*}
%hence the desired estimate.
\end{proof}

\medskip
\subsection{Continuous interactions}\label{secShh}
We now deal with terms that are quadratic in $h.$ 
Note that these are %substantially 
easier than the ones treated before since $h$ 
essentially satisfies better estimates than $g$ does.

\begin{lemma}\label{hh-no-t-res}
For all $\epss \in \{+,-\}$ we have
\begin{align}\label{hhbd1}
  \big \Vert \partial_{\xi} T^{\epss,(1)}_{m}(h,h) \big \Vert_{L^2_x}
  \lesssim \e^{\beta} 2^{m} \rho^{1-\beta+\delta'}(2^m),
\end{align}
for some $\delta'>0,$ and
\begin{align}\label{hhbd2}
  {\big\| \mathcal{F}^{-1}_{\xi \mapsto x} e^{it\jxi}
  \partial_t T^{\epss,(1)}_{m}(h,h) \big\|}_{L^{8/7}_x}
  \lesssim \frac{\rho_0^{a}}{\jt^{5/4}}
\end{align}
for some $a>0$. %5\delta_N$.
%We have the estimate
%\begin{align*}
%\Vert \partial_{\xi} T_{i}^{(j)}(h,h) \Vert_{L^2_{\xi}}\lesssim \rho(2^m)^{1/2-\delta'} 2^{m/2} \varepsilon^{2(\beta-\gamma)}
%, \qquad \Vert \mathcal{F}^{-1}_{\xi \mapsto x} \wt{T}_i^{(j)}(h,h) \Vert_{L^{8/7}_x} 
%\lesssim \frac{\rho_0^{\delta'}}{\langle t \rangle^{5/4}},
%\end{align*}
%for some $0<\delta'<\beta.$
\end{lemma}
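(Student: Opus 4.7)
The plan is to mimic the proofs of Lemmas \ref{gg-no-t-res} and \ref{gh-no-t-res}, specializing to the case where both inputs are $h$. This is strictly the most favorable of the three kinds of interactions, since the bootstrap \eqref{boot} provides the strong bound $\|\partial_\xi\wt h\|_{L^2}\lesssim Z$, and \eqref{estdsh} gives $\|\partial_s h\|_{L^2}\lesssim \rho(s)$, so we can entirely sidestep the slowly growing weighted estimate \eqref{growth} for $\partial_\xi\wt g$ that made the previous two cases delicate. Following the reductions from the proofs of Lemmas \ref{bdry-no-t-res} and \ref{gh-no-t-res}, I would first split $\partial_\xi T^{\epss,(1)}_m(h,h)$ into two main contributions---$C_{1,\epss}[h_{\eps_1},h_{\eps_2}]$, where $\partial_\xi$ hits the exponential and produces an extra factor $s$, and $C_{2,\epss}[h_{\eps_1},h_{\eps_2}]$, where $\partial_\xi$ hits $\mu^S$ and is converted via Remark \ref{remnuS} and the identities \eqref{dxinuS}--\eqref{dximuiS} to a $\partial_\eta$ or $\partial_\sigma$ derivative---plus faster-decaying remainders in which $\partial_\xi$ lands on the cutoffs or on $1/\Phi_\epss$ and is estimated using \eqref{no-t-res}--\eqref{sym-Phi}. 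For each of the main terms I would expand via \eqref{idbil} according to Proposition \ref{mudecomp}, insert dyadic localizations $|\eta|\approx 2^{k_1}$, $|\sigma|\approx 2^{k_2}$ on the range $-5m\leqslant k_1,k_2\leqslant M_0$, and then apply Theorem \ref{bilinearmeas}.

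For the $C_1$ term I would use the same Strichartz pair $(\widetilde q,\widetilde r,\gamma)=(6,3,2/3)$ used in the proof of \eqref{ghbd1} for $C_1[h,g]$ (which produces an outer time factor $2^{11m/6}$ from the combined $s\,\tau_m(s)$), followed by \eqref{mainbilin1} with the bilinear pair $L^{2+}\times L^{6-}$. By Bernstein and \eqref{estdsh}, $\|e^{isL}\partial_s h\|_{L^{2+}}\lesssim \rho(s)$; by \eqref{dispbootf}, $\|e^{isL}h\|_{L^{6-}}\lesssim \rho(s)^{1-\beta}s^{O(\delta_N)}\e^\beta$. Multiplying these bounds gives a quantity comfortably dominated by the right-hand side of \eqref{hhbd1} in view of Remark \ref{remm}. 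For the $C_2$ term I would instead use Strichartz with $(\widetilde q,\widetilde r,\gamma)=(2+,\infty-,0+)$, producing the outer factor $2^{m/2+}$ without any extra $s$, and then apply \eqref{mainbilin1} with the pair $L^2\times L^2$; the first input is bounded in $L^2$ by $\rho(s)$ via \eqref{estdsh}, and the second, $\whF^{-1}(e^{\eps_2 is\jxi}\partial_\xi\wt h)$, is bounded in $L^2$ by $\|\partial_\xi\wt h\|_{L^2}\lesssim Z$ via \eqref{boot} and Theorem \ref{Wobd}. The Schur-type remainder $\mathcal{D}$ from \eqref{mainbilin1} is handled, exactly as in the previous lemmas, with the standard choice $D=5m$ using the a priori bounds together with Lemma \ref{decay-der}.

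For the decay estimate \eqref{hhbd2} I would expand $\whF^{-1}_{\xi\mapsto x}e^{it\jxi}\partial_t T^{\epss,(1)}_m(h,h)$ via \eqref{idbil}, following the template of the proof of \eqref{ghbd2}, and estimate each piece in $L^{8/7}_x$ via Theorem \ref{bilinearmeas} with the pair $L^{2+}\times L^{8/3-}$. Bernstein applied to \eqref{estdsh} gives $\|e^{itL}\partial_t h\|_{L^{2+}}\lesssim \rho(t)$, while interpolation of \eqref{dispbootf} at $q=8/3$ yields $\|e^{itL}h\|_{L^{8/3-}}\lesssim \rho(t)^{1-\beta}\jt^{5/8+O(\delta_N)}\e^\beta$; in the range $t\approx 2^m\gtrsim \e^{-2}$ the product is $\lesssim \jt^{-11/8+\beta+O(\delta_N)}\e^\beta$, which yields the desired bound $\lesssim \rho_0^a \jt^{-5/4}$ for some $a>0$ (essentially $a\approx 1/8$) provided $\beta$ and $\delta_N$ are taken sufficiently small.

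The only (mild) obstacle is really bookkeeping: ensuring that the polynomial-in-$\jt$ losses of size $2^{M_0}=\jt^{\delta_N}$ coming from \eqref{sym-Phi} and from the constants in Theorem \ref{bilinearmeas} are absorbed by the slack $\delta'$ in \eqref{hhbd1} and by the $3/8$ margin between the achievable $\jt^{-11/8}$ and the target $\jt^{-5/4}$ in \eqref{hhbd2}. Because the $(h,h)$ case avoids the slowly growing weighted bound \eqref{growth} for $\partial_\xi\wt g$ entirely, no substantively new tool beyond those developed in Sections \ref{secdFT}--\ref{secSetup} and already exploited in Lemmas \ref{gg-no-t-res}--\ref{gh-no-t-res} is required, and the present lemma is genuinely the easiest of the three.
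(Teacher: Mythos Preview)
Your proposal is correct and follows essentially the same approach as the paper. The only difference is cosmetic: for the $C_2$ term the paper uses the Strichartz pair $(\widetilde q,\widetilde r,\gamma)=(6,3,2/3)$ followed by an $L^{6-}\times L^{2+}$ bilinear estimate (with Bernstein to pass $\partial_s h$ from $L^2$ to $L^{6-}$), whereas you use $(\widetilde q,\widetilde r,\gamma)=(2+,\infty-,0+)$ followed by $L^2\times L^2\to L^{1+}$; both choices close with the same margin, and your $C_1$ and \eqref{hhbd2} arguments match the paper's line for line.
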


\begin{proof}
The proofs can be done similarly to those in Lemma \ref{gh-no-t-res}, and are in fact easier.
We provide some details for completeness.
%so we will skip some details. 
%
%Using the same notation from \eqref{ggbdpr1}, and assuming $2^m \geqslant \rho_0^{-1}$,
%we see that in order to prove \eqref{hhbd1} it suffices to show
%\begin{align}\label{hhbdpr1}
%\begin{split}
%{\big\| C_{1,\epss}[h_{\eps_1},h_{\eps_2}] \big\|}_{L^2} + 
%{\big\| C_{2,\epss}[h_{\eps_1},h_{\eps_2}] \big\|}_{L^2} \lesssim 2^{\delta'm}.
%\end{split}
%\end{align}
Following the same steps in the proof of \eqref{ghbd1} %Lemma \ref{gh-no-t-res}
we can write an estimate like \eqref{ghbdpr3} and reduce to estimating a term like the one in \eqref{ghbdpr4}
with $\wt{g_{\eps_1}}$ replaced by $\varphi_{k_1}\wt{h_{\eps_1}}$;
for such a term, using Strichartz estimates with $(\wt{q},\wt{r},\gamma)=(6,3,2/3)$,
and Theorem \ref{bilinearmeas} we have
\begin{align}\label{hhbdpr4}
\begin{split}
& {\Big\| \int_0^t s \tau_m(s) e^{-isL} T_1^{S,1}[b_1]\big( e^{\eps_1 is\jxi}\partial_s\wt{h_{\eps_1}}, 
    e^{\eps_2 is\jxi} \varphi_{k_2}\wt{h_{\eps_2}} \big) \, ds \Big\|}_{L^2_x}
  \\ & \lesssim 2^{11m/6 + \delta_Nm} \cdot  \sup_{s\approx 2^m} 
  {\big\| T_1[b_1]\big( e^{\eps_1 is\jxi}\varphi_{k_1}\partial_s\wt{h_{\eps_1}},
    e^{\eps_2 is\jxi} \varphi_{k_2}\wt{h_{\eps_2}} \big) \big\|}_{L^{3/2}_x}
  \\
  & \lesssim  2^{11m/6 + \delta_Nm}  
    \sup_{s\approx 2^m} \Big[ {\big\| e^{isL} P_{k_1} \partial_s h \big\|}_{L^{2+}} 
    {\| e^{isL} P_{k_2} h \|}_{L^{6-}} \cdot 2^{C_0M_0} 
  \\
  & \qquad \qquad \qquad \qquad + 2^{-D} \cdot 
    \mathcal{D}\big(e^{\eps_1 is\jxi} \varphi_{k_1} \partial_s\wt{h_{\eps_1}}, 
    e^{\eps_2 is\jxi} \varphi_{k_2} \wt{h_{\eps_2}}\big) \Big].
\end{split}
\end{align}
From this we can deduce the desired bound, using the decay from \eqref{estdsh} and \eqref{dispbootf},
and handling the remainder in the usual way.

%tl{
%For $T_{i}^{(j,1)}(h,h)$ we only treat the case where the time derivative falls on the profiles. 
%Using Theorem \ref{theomu1}, Strichartz estimates with $(\wt{q},\wt{r},\gamma)=(6,3,2/3)$ and Lemma \ref{decay-der}, 
%we can write that 
%\begin{align*}
%&\Vert \varphi_{k} (\xi) \partial_{\xi} T_{i}^{(j,1)}(h,h) \Vert_{L^2_{\xi}} 
%\lesssim 2^{11m/6} 2^{5/3 \delta_N m}  \langle 2^{k_1} \rangle^{-1} \langle 2^{k_2} \rangle^{-1} 
%\sup_{s \approx 2^m} \Vert \partial_s h \Vert_{L^2_x} \Vert e^{\pm isL} h \Vert_{L^{6}_x} \cdot 2^{(C_0+1)A} 
%\\
%&\lesssim  2^{11m/6} 2^{5/3 \delta_N m}  \langle 2^{k_1} \rangle^{-1} \langle 2^{k_2} \rangle^{-1} 
%  \min \lbrace \rho_0^{1/2-\beta} 2^{-m/2} \big(\rho_0 + \rho_0^{1-2\beta} 2^{-m/2} \big) , 
%  2^{-m+\beta m} 2^{-m+2\beta m} \rbrace  \varepsilon^{2 (\beta-\gamma)}.
%\end{align*}}

The estimate on the $C_2$ type terms follows by estimating
\begin{align}\label{hhbdpr12}
\begin{split}
& \sum_{\vert k_1-k_2 \vert <5, k_2 \leqslant M_0} {\Big\| \int_0^t \tau_m(s) e^{isL} T_1^{S,1}[b_1]\big( e^{\eps_1 is\jxi}\varphi_{k_1}\partial_s\wt{h_{\eps_1}}, 
  e^{\eps_2 is\jxi} \partial_\xi \wt{h_{\eps_2}} \big) \, ds \Big\|}_{L^2_x}.
\end{split}
\end{align}
This term is analogous to \eqref{ghbdpr12}, but here the bound
follows directly after using the Strichartz estimate with indices $(\widetilde{q},\widetilde{r},\gamma) = (6,3,2/3)$,
followed by an $L^{6-}\times L^{2+}$ product estimate via \eqref{mainbilin1}, 
Bernstein,  \eqref{estdsh} and \eqref{boot}. %to handle the remainder.

For \eqref{hhbd2}, using Theorem \ref{bilinearmeas} we can bound the term on the left-hand side,
up to a faster decaying remainder, by
\begin{align*}
C \sum_{k_1,k_2 \leqslant M_0} %\delta_N m} 
  \sup_{\approx 2^m} {\| \partial_t P_{k_1} h(t) \|}_{L^{2+}} {\| e^{itL} P_{k_2} h(t) \|}_{L^{8/3-}} \cdot 2^{C_0\delta_N m} 
  \\
 \lesssim 2^{(C_0+1)\delta_N m} \cdot \rho(2^m) \cdot \rho(2^m)^{1-\beta} 2^{5/8m + 10 \delta_N m} \varepsilon^{\beta},
%  \lesssim 2^{(C_0+1)\delta_N m} \cdot 2^{-m} \cdot 2^{-3m/8+\beta m},
\end{align*}
having used Bernstein and the bounds \eqref{estdsh} and \eqref{dispbootf} for the last inequality.
This suffices for \eqref{hhbd2} and concludes the proof of the Lemma.
%tl{
%For the second estimate, we write similarly that
%\begin{align*}
%\Vert P_k \mathcal{F}^{-1}_{\xi \mapsto x} \wt{T}_i^{(j)}(h,h) \Vert_{L^{8/7}_x} 
%  \lesssim \langle t \rangle^{5 \delta_N} \Vert e^{itL} h \Vert_{L^{8/3}_x} \Vert \partial_t h \Vert_{L^2_x} 
%  \cdot 2^{(C_0+1)A} \lesssim \rho^{3/2-3\beta}(t) \langle t \rangle^{1/8-2\beta+5\delta_N} \varepsilon^{2(\beta-\gamma)} ,
%\end{align*}
%which allows us to conclude.
%}
\end{proof}

\smallskip
With the proofs of Lemmas \ref{bdry-no-t-res}, \ref{gg-no-t-res}, \ref{gh-no-t-res} and \ref{hh-no-t-res} 
(see also Lemmas \ref{basicL8} and \ref{basicL8'}),
the proof of Proposition \ref{proFS} is complete.
To finish the proof of the main Propositions \ref{mainquad} and \ref{mainquadbound},
we are left with estimating the regular terms, see \eqref{FS+FR}.
We proceed to do this in the next section.

%%%%%%%%%%%%%%%%%%%%%%%%%%%%%%%%%%%%%%%%%%%%%%%%%%%%%%%%%%%%%%%%
%%%%%%%%%%%%%%%%%%%%%%%%%%%%%%%%%%%%%%%%%%%%%%%%%%%%%%%%%%%%%%%%
%%%%%%%%%%%%%%%%%%%%%%%%%%%%%%%%%%%%%%%%%%%%%%%%%%%%%%%%%%%%%%%%
%%%%%%%%%%%%%%%%%%%%%%%%%%%%%%%%%%%%%%%%%%%%%%%%%%%%%%%%%%%%%%%%
%%%%%%%%%%%%%%%%%%%%%%%%%%%%%%%%%%%%%%%%%%%%%%%%%%%%%%%%%%%%%%%%
%%%%%%%%%%%%%%%%%%%%%%%%%%%%%%%%%%%%%%%%%%%%%%%%%%%%%%%%%%%%%%%%
%%%%%%%%%%%%%%%%%%%%%%%%%%%%%%%%%%%%%%%%%%%%%%%%%%%%%%%%%%%%%%%%
%%%%%%%%%%%%%%%%%%%%%%%%%%%%%%%%%%%%%%%%%%%%%%%%%%%%%%%%%%%%%%%%

\medskip
\section{Regular quadratic terms}\label{secFR}

%\comment{
%Many of the estimates obtained in this section for the spatial localization are of the form
%\begin{align*}
%2^{-am} (\rho^{1-\beta}(2^m) 2^m)^{N} \varepsilon^{\beta},
%\end{align*}
%which is sufficient to conclude since we can 
%}

In this section we estimate the bilinear terms corresponding to the regular parts of the distribution. 
Looking at \eqref{FS+FR} we define
\begin{align} \label{defopFR}
\begin{split}
T_m^{\epss,(3)} (G,H) & := 
%  \frac{i}{2(2\pi)^{11/2}}   
\int_0 ^t \tau_m(s) \int_{\R^6} \frac{e^{-is \Phi_{\epss}(\xi,\eta,\sigma)}}{
  \jeta \jsig} \wt{G}(s,\eta) \wt{H}(s,\sigma) 
  \\
& \times  \varphi_{\leqslant M_0}(\jeta) \varphi_{\leqslant M_0} (\jsigma)  
  \varphi_{\leqslant M_0}(\jxi)  \mu^R(\xi,\eta,\sigma) 
  \, d\eta d\sigma ds,
\\
T_m^{\epss,(4)} (G,H) & := 
%  \frac{i}{2(2\pi)^{11/2}}   
\int_0 ^t \tau_m(s) \int_{\R^6} \frac{e^{-is \Phi_{\epss}(\xi,\eta,\sigma)}}{
  \jeta \jsig} \wt{G}(s,\eta) \wt{H}(s,\sigma)  
  \\
& \times \varphi_{\leqslant M_0}(\jeta) \varphi_{\leqslant M_0} (\jsigma) \varphi_{\leqslant M_0}(\jxi)  \mu^{Re}(\xi,\eta,\sigma) 
  \, d\eta d\sigma ds,
%\notag &= \frac{i}{2(2\pi)^{11/2}}   \int_0 ^t \tau_m(s) \int_{\mathbb{R}^6} %\frac{e^{-is \Phi_i(\xi,\xi-\eta,\sigma)}}{\langle \xi- \eta \rangle \langle \sigma \rangle} \wt{F}(s,\xi-\eta) \wt{G}(s,\sigma) \varphi_{k_1}(\xi-\eta)\\
%\notag& \times  \varphi_{k_2} (\sigma) \varphi_{M}(\jxi + \langle \xi -\eta \rangle + \jsigma)  \nu_2(\eta,\sigma) d\eta d\sigma ds \\
%\notag &+ \lbrace \textrm{similar terms} \rbrace,
\end{split}
\end{align}
where the distributions are defined in Proposition \ref{mudecomp}
and the phase is as in \eqref{secF0'}.
The goal of this section is to prove the analogue of Proposition \ref{proFS} for the regular terms defined above:

\begin{proposition}\label{proFR}
Under the assumptions \eqref{boot}, for all $m=0,1,...,L+1,$ 
for all $\epss \in \lbrace +,- \rbrace,$ we have for some $0<\delta<\beta$, and $j=3,4$:
\begin{align} \label{proFRest1}
\Vert \nabla_{\xi} T_{m}^{\epss,(j)} (G_{\epsilon_1}, H_{\epsilon_2}) \Vert_{L^2} 
  \lesssim \rho(2^m)^{1-\beta+\delta} 2^m \e^{\beta} , \qquad G,H \in \lbrace g,h \rbrace.
\end{align}
Moreover, there exist $0<a<1$ and $\delta'>0$ such that 
\begin{align}\label{proFRest2}
\bigg \Vert e^{itL} \wt{\mathcal{F}}^{-1}_{\xi \mapsto x} \big[ T_{m}^{\epss,(j)} 
  (G_{\epsilon_1},H_{\epsilon_2})   \big] \bigg \Vert_{L^{\infty}_x} 
  \lesssim \frac{\rho_0^{a}}{\jt^{1+a+\delta'}}, \qquad G,H \in \lbrace g,h \rbrace.
\end{align} 
\end{proposition}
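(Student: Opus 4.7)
The proof parallels that of Proposition \ref{proFS} but is structurally simpler. Since the phase $\Phi_\epss$ may vanish on the support of $\mu^R$ and $\mu^{Re}$, time integration by parts is unavailable; however, it is also unnecessary, because Theorem \ref{bilinearmeas} supplies clean product bounds \eqref{mainbilinR} and \eqref{mainbilinRe} for the operators associated to $\mu^R$ and $\mu^{Re}$ \emph{without} the corrective term $\mathcal{D}(g,h)$ that appeared in \eqref{mainbilin1}. My plan is to apply $\partial_\xi$ directly inside \eqref{defopFR}, rewrite the resulting bilinear via the decomposition \eqref{idbilR}, and then combine Theorem \ref{bilinearmeas} with the dispersive bounds of Lemma \ref{dispersive-bootstrap}.

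\textbf{Weighted estimate \eqref{proFRest1}.} Applying $\nabla_\xi$ under the integral produces three contributions: (a) the dominant one, where the derivative hits $e^{-is\Phi_\epss}$ and generates a factor $-is\,\nabla_\xi\Phi_\epss=-is\,\xi/\jxi$; (b) strictly lower order terms where $\partial_\xi$ hits the symbol $\jeta^{-1}\jsig^{-1}$ or the cutoff $\varphi_{\leqslant M_0}(\jxi)$; (c) terms where $\partial_\xi$ hits $\mu^R$ or $\mu^{Re}$, which by \eqref{nuRest} and \eqref{nuReest} cost only a harmless factor $2^{M_0}\lesssim\jt^{\delta_N}$ and are absorbed into the symbol. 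For (a) the task reduces to bounding, for $s\approx 2^m$,
\begin{align*}
s\,\bigl\|T_1^{R,i}[b]\bigl(e^{i\eps_1 s\jxi}\wt{G_{\eps_1}},\,e^{i\eps_2 s\jxi}\wt{H_{\eps_2}}\bigr)\bigr\|_{L^2_x}
\end{align*}
and the analogous $T^{Re}$ operator. I would split into the three cases $(G,H)\in\{(g,g),(g,h)\cup(h,g),(h,h)\}$ and apply \eqref{mainbilinR}--\eqref{mainbilinRe} with H\"older exponents tuned to each one. For $(g,g)$: $p=q$ slightly below $4$, yielding $\|e^{isL}g\|_{L^p}^2\lesssim \rho(s)^2 s^{1/2+}$ via \eqref{dispbootgbis}. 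For $(g,h)$ and $(h,g)$: $p=3$ and $q=6$, yielding $\rho(s)^{2-\beta}s^{1/2+\delta_N}\e^\beta$ via \eqref{dispbootg} and \eqref{dispbootf}. For $(h,h)$: $p=3-$ and $q=6$, yielding $\rho(s)^{2-2\beta}s^{1/2+}\e^{2\beta}$. In all three cases the bilinear bound is of the form $\rho(s)^{2-k\beta}s^{1/2+}\e^{k\beta}\,2^{C_0\delta_N m}$ with $k\in\{0,1,2\}$; multiplying by $s\approx 2^m$ and comparing to the target $2^m\rho(2^m)^{1-\beta+\delta}\e^\beta$ gives a ratio bounded by $\rho(s)^{1-(k-1)\beta-\delta}s^{1/2+}\e^{(k-1)\beta}2^{C_0\delta_N m}\lesssim 1$, for $\delta,\delta_N$ small enough, uniformly in the two time regimes $s\lesssim\e_0^{-2}$ and $s\gtrsim\e_0^{-2}$.

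\textbf{Decay estimate \eqref{proFRest2}.} Writing
\begin{align*}
e^{itL}\wtF^{-1}\bigl[T_m^{(j)}(G,H)\bigr](t,x)=\int_0^t\tau_m(s)\,e^{i(t-s)L}\,\mathcal{B}_s^{(j)}(G,H)(x)\,ds,
\end{align*}
where $\mathcal{B}_s^{(j)}$ is the physical-space bilinear associated to $\mu^R$ (respectively $\mu^{Re}$), I would apply the $L^{p'}\to L^\infty$ dispersive bound of \eqref{decayp} combined with Bernstein's inequality (frequencies are cut at $2^{M_0}\lesssim \jt^{\delta_N}$) to reduce matters to $L^{p}$ bilinear bounds with $p$ just above $1$. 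Theorem \ref{bilinearmeas} with $r$ slightly above $1$ and H\"older pairs chosen case by case---e.g.\ $(p,q)=(2-,2-)$ for $(g,g)$ combined with the $L^2$ bound \eqref{eng}, and higher exponents exploiting the $\e^\beta$-smallness of $h$ for the remaining cases---produces bilinear bounds of the form $\e_0^{2a}\rho(s)^{\alpha}\js^{\gamma}$ for suitable $\alpha,\gamma>0$. Integrating against the kernel $\langle t-s\rangle^{-3/2}$, with the usual splitting $|t-s|\lesssim 1$ versus $|t-s|\gg 1$, yields \eqref{proFRest2} for some $a\in(0,1)$ and small $\delta'>0$.

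\textbf{Main obstacle.} The genuine difficulty lies in the $(g,g)$ case: $g$ carries no $\e$-smallness and decays only like $\rho(s)\approx s^{-1}$, so the factor $s$ generated when $\partial_\xi$ hits the exponential must be recovered entirely from the combined dispersive decay of two copies of $g$. The exponent choice $p=q=4-$ sits exactly at the threshold where \eqref{dispbootgbis} provides the required $s^{1/2+}$ bound through $\|e^{isL}g\|_{L^{4-}}^2\lesssim \rho(s)^2\,s^{1/2+}\log^2 s$; any significantly worse pair fails. Moreover one must simultaneously control both time regimes $s\lesssim\e_0^{-2}$ and $s\gtrsim\e_0^{-2}$, using the two-sided relation $\rho(s)\approx \min(\e_0^2,s^{-1})$ implied by \eqref{bootstrap-A-A'}. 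Once this case is in hand, the mixed $(g,h)$ and pure $(h,h)$ cases follow comparatively easily thanks to the $\e^\beta$ smallness carried by each factor of $h$.
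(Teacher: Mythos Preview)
Your approach has a genuine gap that makes it fail by roughly half a power of $s$. After $\partial_\xi$ hits the phase you pick up a factor of $s$, and you still must integrate $\int_0^t\tau_m(s)\,ds\approx 2^m$; the quantity to bound is therefore $2^{2m}\sup_{s\approx 2^m}\|T_1^{R,i}[b](\cdot,\cdot)\|_{L^2}$, not $2^m$ times that bilinear norm. With your $(g,g)$ choice $p=q=4-$ this gives $2^{2m}\cdot\rho(2^m)^2 2^{m/2+}\approx 2^{5m/2+}\rho(2^m)^2$, which for $\rho(2^m)\approx 2^{-m}$ is $2^{m/2+}$, while the target is $\approx 2^{m\beta}\e^\beta$; the same $2^{m/2}$ deficit appears in the $(g,h)$ and $(h,h)$ cases. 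In other words, dispersive decay of $e^{isL}g$ and $e^{isL}h$ alone cannot compensate for both the $s$ from the weight and the $2^m$ from time integration.

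What the paper actually does is integrate by parts in the \emph{frequency} variables $\eta$ and $\sigma$, using the oscillations $e^{i\eps_1 s\jeta}$ and $e^{i\eps_2 s\jsig}$: each such IBP gains $s^{-1}$ at the price of a derivative on the profile or on the measure, and the point of $\mu^R,\mu^{Re}$ being ``regular'' is precisely that the pointwise bounds \eqref{nuRest}--\eqref{nuReest} make derivatives on the measure harmless. For $(g,g)$ one localizes near the Fermi frequency and uses $\|\nabla_\eta\wt g\|_{L^1}\lesssim 2^m\rho(2^m)m$ from \eqref{dxiwtgest}; for $(g,h)$ and $(h,h)$ one combines frequency IBP with Strichartz, splitting into several subcases according to the sizes of the input frequencies (small inputs cannot be integrated by parts but are handled by volume, and in certain regions the phase is in fact lower bounded so time IBP is again available). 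This frequency IBP, not product estimates, is the mechanism that recovers the missing $s^{-1}$. Your proposed route to \eqref{proFRest2} has the same deficit; the paper instead reduces \eqref{proFRest2} to the weighted $L^2$ estimates via an interpolation argument (Lemmas \ref{L8-space} and \ref{lemgoal}), so that no separate dispersive analysis is needed once \eqref{goalR}--\eqref{goalRbis} are in hand.
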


With Proposition \ref{proFS} and Lemmas \ref{lemmaH} and \ref{verylowfreq},
this will complete the proof of Propositions \ref{mainquad} and \ref{mainquaddecay}.

%where $\lbrace \textrm{similar terms} \rbrace$ refers to the other terms from \eqref{mudecomp-2}.  
\medskip
\subsection{Preliminaries}
We first reduce the proof of \eqref{proFRest1} and \eqref{proFRest2} to some 
relatively easier weighted $L^2$ bounds.
We start by reducing \eqref{proFRest2} to $L^p$ estimates for some $p>6/5$,
through the following analogue of Lemma \ref{basicL8}:

\begin{lemma} \label{L8-space}
Assume that there exist $0<a < b <1/10$, such that
\begin{align}\label{L8-spaceas}
\big \Vert \widehat{\mathcal{F}}^{-1}_{\xi \mapsto x} I(s) \big \Vert_{L^{6/(5+6b)}_x}  %{L^{\frac{6+b}{5+b}}_x} 
  \lesssim \frac{\rho_0^a}{\langle s \rangle^{1+2a}},
\end{align}
where
\begin{align} \label{defI}
I(s) := e^{is \jxi} \partial_s T^{\epss,(j)}_{m} (G_{\epsilon_1},H_{\epsilon_2}), \qquad j=3,4.
\end{align}
Then 
\begin{align*}
{\Big\| e^{itL} \wt{\mathcal{F}}^{-1}_{\xi \mapsto x} \big[T_{m}^{\epss,(j)}(G_{\epsilon_1},H_{\epsilon_2})  
  \big] \Big\|}_{L^{\infty}_x} \lesssim \frac{\rho_0^a}{\langle t \rangle^{1+a+\delta'}}, \qquad j=3,4.
\end{align*}
\end{lemma}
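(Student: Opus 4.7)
The plan is to follow the template of Lemma \ref{basicL8} essentially verbatim, replacing the $(L^{8/7}, L^8)$ Hölder pair by the $(L^p, L^{p'})$ pair with $p = 6/(5+6b)$ and $p' = 6/(1-6b)$, and checking that the arithmetic of decay rates still closes. The key observation is that the output frequency $\xi$ in \eqref{defopFR} is localized to $|\xi| \lesssim 2^{\delta_N m + 5} \approx \jt^{\delta_N}$ by the cutoff $\varphi_{\leqslant M_0}(\jxi)$, so distorted Bernstein (legitimized by Theorem \ref{Wobd}) lets us trade $L^\infty_x$ for $L^{p'}_x$ at the cost of a factor $\jt^{3\delta_N/p'} = \jt^{(1-6b)\delta_N/2}$, which is arbitrarily small if $\delta_N$ is small.

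My first step would be to write Duhamel for the already-integrated object: by the fundamental theorem of calculus and the definition of $I$ in \eqref{defI},
\begin{equation*}
e^{itL} \wt{\mathcal{F}}^{-1}_{\xi \mapsto x}\bigl[T_{m}^{\epss,(j)}(G_{\eps_1}, H_{\eps_2})\bigr]
= \int_0^t e^{i(t-s)L} P_{\leq \delta_N m + 5} \wt{\mathcal{F}}^{-1}_{\xi \mapsto x} I(s) \, \tau_m(s) \, ds,
\end{equation*}
up to an irrelevant constant. Taking $L^\infty_x$ and inserting Bernstein, the task reduces to controlling the $L^{p'}_x$ norm of the above Duhamel integral.

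Next I would split the time integration into the bulk region $s \in [2^{m-1}, t-1]$ and the endpoint layer $s \in [t-1, t]$ (which is empty unless $t \approx 2^m$). On the bulk, I apply the $L^p \to L^{p'}$ dispersive estimate from Lemma \ref{decay}, whose decay rate is $3(1/2 - 1/p) = 1+3b$, together with the hypothesis \eqref{L8-spaceas}. Splitting further at $s = 2^m/2$ and computing $\int \jts{t-s}^{-(1+3b)} \jts{s}^{-(1+2a)} \, ds$ produces a contribution of order $\rho_0^a \cdot 2^{-m(1+2a)}$ (since the integrable singularity of $\jts{t-s}^{-(1+3b)}$ at $s=t$ is removed by truncating at $t-1$). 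On the endpoint layer, I mimic the treatment in Lemma \ref{basicL8}: Minkowski, unitarity of $e^{-isL}$ on $L^2$, and Bernstein twice (first $L^{p'}_x \hookrightarrow L^2_x$ and then $L^2_x \hookrightarrow L^p_x$ on frequencies of size $\lesssim \jt^{\delta_N}$) absorb the singularity at the cost of an extra $\jt^{O(\delta_N)}$ factor, and the hypothesis \eqref{L8-spaceas} again yields $\rho_0^a \cdot 2^{-m(1+2a)}$.

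Combining the two pieces with the outer Bernstein prefactor produces a total bound of the shape $\rho_0^a \cdot \jt^{-(1+2a) + C \delta_N}$, where $C$ is an absolute constant depending only on $b$; the hypothesis $b < 1/10$ ensures $(1-6b)/2$ is bounded away from singular values and $C$ is effectively computable. Choosing $\delta_N$ small enough (which is already built into the parameter choice \eqref{mteps}), we may absorb the loss: for any $0 < \delta' < a$ with $C \delta_N < a - \delta'$, we obtain the claimed bound $\rho_0^a \jts{t}^{-(1+a+\delta')}$. I do not anticipate any substantial obstacle beyond bookkeeping; the proof is structurally identical to Lemma \ref{basicL8}, and the only subtle point is confirming that $b$ and $a$ are separated enough from the borderline to leave room for $\delta'$, which is guaranteed by the hypothesis $a < b < 1/10$.
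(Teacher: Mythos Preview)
Your proposal is correct and follows essentially the same approach as the paper, which explicitly notes that the proof is ``almost identical to the one of Lemma~\ref{basicL8}'' using Bernstein from $L^\infty$ to $L^q$ with $q = (1/6 - b)^{-1}$ and the dispersive decay rate $\jt^{-(1+3b)}$ together with $b > a$. One small slip: you wrote the decay exponent as $3(1/2 - 1/p)$ when you meant $3(1/2 - 1/p')$, but your numerical value $1+3b$ is correct.
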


\begin{proof}
The proof is almost identical to the one of Lemma \ref{basicL8}. 
In this case we first use Bernstein to go from $L^\infty$ to $L^{q}$ with $q:=(1/6-b)^{-1}$.
Notice that $q' = (5/6+b)^{-1} = 6/(5+6b)$, consistently with the assumption \eqref{L8-spaceas}.
Using that the linear decay rate for the propagator $e^{itL}$ in $L^q$ is 
$\jt^{-(3/2)(1-2/q)} = \jt^{-1-3b}$ and that $b>a$, we obtain the desired conclusion.
\end{proof}

We will also need a bilinear lemma for operators with symbol $\partial_{\xi} \nu_1^R$.
Recall the definition \eqref{idop11}-\eqref{idop12};
in what follows, whenever needed, we will write explicitly the dependence on the measures 
by denoting 
\begin{align}
\label{idop11R}
\begin{split}
T_1^{R,1}[\nu;b](g,h)(x) & := \whF^{-1}_{\xi\rightarrow x} \iint_{\R^3\times\R^3} g(\xi-\eta) h(\sigma) 
  \,b(\xi,\eta,\sigma)\, \nu(\eta,\sigma) \, d\eta d\sigma,
\\
%\label{idop12R}
T_1^{R,2}[\nu;b](g,h)(x) &:= \whF^{-1}_{\xi\rightarrow x} \iint_{\R^3\times\R^3} g(-\eta-\sigma) h(\sigma) 
  \,b(\xi,\eta,\sigma)\, \overline{\nu(\eta,\xi)} \, d\eta d\sigma,
\\
T[\mu,b](g,h)(x) & := \whF^{-1}_{\xi\rightarrow x} \iint_{\R^3\times\R^3} g(\eta) h(\sigma) 
  \, b(\xi,\eta,\sigma)\, \mu(\xi,\eta,\sigma) \, d\eta d\sigma.
\end{split}
\end{align}
We then have the following analogue of Theorem \ref{bilinearmeas}:

\begin{lemma} \label{bilindxiR}
With the definitions \eqref{idop11R}, for $b$ (and $A$) as in the statement of Theorem \ref{bilinearmeas},
and for all $p,q \geqslant 1, r>1$ with $1/p+1/q>1/r$, we have
\begin{align}\label{bilindxiR1}
{\big\| T_1^{R,1}[\partial_1 \nu^R_1;b] (G,H) \big\|}_{L^r}
+ {\big\| T_1^{R,2}[\partial_2\nu^R_1;b] (G,H) \big\|}_{L^r} 
  \lesssim 2^{C_0A} \Vert \widehat{G} \Vert_{L^p} \Vert \widehat{H} \Vert_{L^q},
\end{align}
for some absolute constant $C_0$.
Moreover, under the same assumptions, we have
\begin{align}
\label{bilindxiR2} 
{\big\| T[\partial_1 \mu^{Re};b] (G,H) \big\|}_{L^r}  \lesssim 2^{C_0A} 
 \Vert \widehat{G} \Vert_{L^p} \Vert \widehat{H} \Vert_{L^q}.
\end{align}
\end{lemma}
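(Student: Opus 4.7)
The plan is to deduce \eqref{bilindxiR1} and \eqref{bilindxiR2} by combining two observations. First, the distributions $\partial_1 \nu_1^R$, $\partial_2 \nu_1^R$ and $\partial_1 \mu^{Re}$ satisfy essentially the same pointwise/structural estimates as $\nu_1^R$ and $\mu^{Re}$ themselves, up to a multiplicative loss of $2^{O(M_0)} \lesssim 2^{O(A)}$. Second, the proofs of the bilinear estimates \eqref{mainbilinR} and \eqref{mainbilinRe} in Theorem \ref{bilinearmeas} (borrowed from \cite{PS}) depend only on such pointwise estimates on the measures together with the symbol bounds \eqref{bilmeasbest} on $b$, and do not use any principal-value-type cancellation (which is why no $\mathcal{D}$ term appears there). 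Once the first point is established, the desired bounds follow by rerunning those proofs verbatim and absorbing the extra $2^{O(M_0)}$ into the overall factor $2^{C_0 A}$ by slightly enlarging $C_0$.

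To carry out the first step I would inspect the explicit formula \eqref{nu1R} for $\nu_1^R$ and apply $\partial_1$ (or $\partial_2$) term by term, distinguishing four cases. (a) The derivative may fall on the cutoff $\varphi_{>-M_0-5}(|p|-|q|)$: this produces a distribution supported where $||p|-|q|| \sim 2^{-M_0}$, of size $\lesssim 2^{M_0}$, multiplying $\nu_0$ restricted to this region, where $\nu_0$ is pointwise bounded and the principal-value/$\delta$ singularity is no longer felt. (b) The derivative may fall on the coefficients $b_{a,J}$ or the factor $1/|p|$, both of which satisfy symbol-type bounds by \eqref{Propnu+1.1} and \eqref{Propnu+2.1}. (c) The derivative may fall on the Schwartz kernel $K_a(2^J(|p|-|q|))$, producing an extra factor $2^J$; the cutoff in \eqref{nu1R} restricts to $2^J \lesssim 2^{M_0+5}$, so this contributes at most a $2^{M_0}$ loss. (d) The derivative may fall on the Coifman--Meyer remainder $\nu_R$ of \eqref{nu1R}, which by \eqref{Propnu+3}--\eqref{Propnu+3imp} already carries enough spare derivatives. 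Summing these contributions, one checks that $\partial_1 \nu_1^R$ and $\partial_2 \nu_1^R$ obey the same pointwise estimate \eqref{nuRest} as $\nu_1^R$, up to a multiplicative $2^{O(M_0)}$. The case of $\partial_1 \mu^{Re}$ is simpler still, since \eqref{nuReest} is already formulated uniformly over an arbitrary number of derivatives.

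With these structural estimates in hand, the second step consists in redoing the proofs of \eqref{mainbilinR} and \eqref{mainbilinRe} with $\nu_1^R$ replaced by $\partial_j \nu_1^R$ (respectively, with $\mu^{Re}$ replaced by $\partial_1 \mu^{Re}$). No substantive modification is needed: one decomposes dyadically in the gap $|p|-|q|$ (resp.\ in the variables $\xi,\eta,\sigma$), applies Schur's lemma or Young's inequality on each piece as in Theorem \ref{theomu1}, and sums the resulting bounds. The fact that $A \geqslant M_0$ in all our applications (see \eqref{HLsplit} and the use of $A = M_0$ throughout Sections \ref{secFS}--\ref{secFR}) ensures that the additional $2^{O(M_0)}$ factor is absorbed into $2^{C_0 A}$ by enlarging $C_0$ appropriately.

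The main (relatively minor) obstacle in this plan is the careful bookkeeping of derivatives landing on the cutoff $\varphi_{>-M_0-5}(|p|-|q|)$, which is the least smooth ingredient appearing in $\nu_1^R$. One has to verify that its differentiation, combined with the principal-value-like singularity of $\nu_0$, truly yields only a constant power of $2^{M_0}$ loss and not a more singular object; this is where the restriction $|p|-|q|\sim 2^{-M_0}$ imposed by the derivative of the cutoff is essential, since on that set $\nu_0$ is pointwise of size $\lesssim 2^{M_0}/|p|$ and hence harmless. Once this point is checked, the remainder of the argument reduces to a direct invocation of the bilinear machinery already developed in \cite{PS} and recorded in Theorem \ref{bilinearmeas}.
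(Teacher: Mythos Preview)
Your approach is correct and ultimately arrives at the same conclusion as the paper, but you are working much harder than necessary in the first step. You correctly observe at the end of your second paragraph that ``\eqref{nuReest} is already formulated uniformly over an arbitrary number of derivatives'' --- but exactly the same is true of \eqref{nuRest}: it is stated for all $|a|+|b| \leq N_2$, so the pointwise bound on $\partial_1 \nu_1^R$ (take $|a|=1$, $|b|=0$) and on $\partial_2 \nu_1^R$ (take $|a|=0$, $|b|=1$) is already recorded there, with the $2^{O(M_0)}$ loss built into the factor $2^{(2+|a|+|b|)5M_0}$. There is therefore no need to go back to the explicit formula \eqref{nu1R} and differentiate term by term; your cases (a)--(d) are precisely the computation that was already absorbed into the statement of Proposition~\ref{mudecomp}. (Incidentally, your case analysis omits the case where the derivative hits the $\mathrm{p.v.}$ part of $\nu_0$ directly; on the support of $\varphi_{>-M_0-5}$ this yields a term of size $\lesssim 2^{2M_0}/|p|$, which is harmless, but it should be mentioned for completeness.)

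The paper's proof is accordingly two sentences: cite \eqref{nuRest} (respectively \eqref{nuReest}) with one derivative, and invoke the standard bilinear lemma from \cite{PS} exactly as in the proof of \eqref{mainbilinR} (respectively \eqref{mainbilinRe}). Your second step --- rerunning the bilinear argument --- matches this. So the substance is the same; you have simply reproved part of Proposition~\ref{mudecomp} that was already available.
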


\begin{proof}
\eqref{bilindxiR1} follows exactly as in the proof of \eqref{mainbilinR},
that is, from the standard bilinear Lemma 6.3 in \cite{PS} %cfp{Put in appendix ?}
and the estimate \eqref{nuRest} for the derivative of $\nu^R_1$.
\eqref{bilindxiR2} follows similarly using instead the estimate \eqref{nuReest}.
\end{proof}

Next, let us introduce the operators
which will appear when applying $\nabla_\xi$ to \eqref{defopFR}. Let 
\begin{align}\label{ggbdpr1R}
\begin{split}
D_{1,m,\epss}(G,H)%[G,H](t,\xi) 
  & := \int_0^t s \, \tau_m(s) \int_{\R^6} e^{-is \Phi_\epss(\xi,\eta,\sigma)}
  %{\Phi_\epss(\xi,\eta,\sigma) \jeta \jsig}
  \wt{G}(s,\eta) \, \wt{H}(s,\sigma)  \, %\varphi_{k_1}(\eta) \varphi_{k_2}(\sigma)   %s not t
  %\\ & \times \varphi_{M}(\jxi + \jeta + \jsig)  
  \, b(\xi,\eta,\s) \, \mu^R(\xi,\eta,\sigma) d\eta d\sigma \, ds,
\\
D_{2,m,\epss}(G,H) %[G,H](t,\xi) 
  & := \int_0^t \tau_m(s) \int_{\R^6} e^{-is \Phi_\epss(\xi,\eta,\sigma)}
  %{\Phi_\epss(\xi,\eta,\sigma) \jeta \jsig}
 \wt{G}(s,\eta) \, \wt{H}(s,\sigma)  \, %\varphi_{k_1}(\eta) \varphi_{k_2}(\sigma)  
  %\\ & \times \varphi_{M}(\jxi + \jeta + \jsig)  
  \, b'(\xi,\eta,\s) \, \nabla_{\xi} \mu^R(\xi,\eta,\sigma) d\eta d\sigma \, ds, 
\end{split}
\end{align}
where
\begin{align}\label{ggbdpr1Rsym}
b'(\xi,\eta,\s) & := \frac{\varphi_{\leqslant M_0}(\jeta) \varphi_{\leqslant M_0}(\jsigma) 
  \varphi_{\leqslant M_0}(\jxi)}{\jeta \jsigma}, \qquad  b(\xi,\eta,\s) := \frac{\xi}{\jxi} b'(\xi,\eta,\s),
\end{align}
and, for any $k_1,k_2 \leqslant M_0$
\begin{align}\label{ggbdpr1R'}
\begin{split}
E_{1,k_1,k_2,m,\epss}(G,H)%[G,H](t,\xi) 
  & := \int_0^t s \, \tau_m(s) \int_{\R^6} e^{-is \Phi_\epss(\xi,\eta,\sigma)}
  %{\Phi_\epss(\xi,\eta,\sigma) \jeta \jsig}
  \wt{G}(s,\eta) \, \wt{H}(s,\sigma)  \, %\varphi_{k_1}(\eta) \varphi_{k_2}(\sigma)  
  %\\ & \times \varphi_{M}(\jxi + \jeta + \jsig)  
  \\
  & \qquad \qquad \times \, b_{Re}(\xi,\eta,\s) \, \mu^{Re}(\xi,\eta,\sigma) d\eta d\sigma \, ds,
\\
E_{2,k_1,k_2,m,\epss}(G,H)%[G,H](t,\xi) 
  & := \int_0^t \tau_m(s) \int_{\R^6} e^{-is \Phi_\epss(\xi,\eta,\sigma)}
  %{\Phi_\epss(\xi,\eta,\sigma) \jeta \jsig}
 \wt{G}(s,\eta) \, \wt{H}(s,\sigma)  \, %\varphi_{k_1}(\eta) \varphi_{k_2}(\sigma)  
  %\\ & \times \varphi_{M}(\jxi + \jeta + \jsig)  
  \\
  & \qquad \qquad \times \, b'_{Re}(\xi,\eta,\s) \, \nabla_{\xi} \mu^{Re}(\xi,\eta,\sigma) d\eta d\sigma \, ds,
\end{split}
\end{align}
where 
\begin{align*}
b'_{Re}(\xi,\eta,s) & := \frac{\varphi_{k_1}(\eta) \varphi_{k_2}(\sigma)
  \varphi_{\leqslant M_0}(\jxi)}{\jeta \jsigma}, \qquad b_{Re}(\xi,\eta,\s) := \frac{\xi}{\jxi} b'_{Re}(\xi,\eta,s).
\end{align*}
Note that we are %using different parenthesis to denote the inputs $G$ and $H$
%when the variables $t$ and $\xi$ are indicated as well. If these latter are not explicitly indicated we 
%will write $(G,H)$ instead.
omitting the variables $(t,\xi)$ from the above expressions, as this should cause no confusion.
We are also omitting the $k_1,k_2$ dependence of the last two symbols.
%We are also omitting the dependence on $m$.

By definition we have
\begin{align}\label{iddxiFR}
\begin{split}
\nabla_\xi T_m^{\epss,(3)} (G,H) & = D_{1,m,\epss}[G,H](t,\xi) + D_{2,m,\epss}[G,H](t,\xi) 
\\
\nabla_\xi T_m^{\epss,(4)} (G,H) & = \sum_{k_1,k_2\leqslant M_0} 
  \Big( E_{1,k_1,k_2,m,\epss}(G,H) + E_{2,k_1,k_2,m,\epss}(G,H) \Big).
\end{split}
\end{align}
We can obviously bound $D_{2,m,\epss}$ and $E_{2,k_1,k_2,m,\epss}$, respectively, by the quantities
\begin{align}\label{D2'}
D_{2,m,\epss}'(G,H)%[G,H](t,\xi) 
  := 2^m \sup_{s \approx 2^m}
  \Big| \int_{\R^6} e^{is(\eps_1\jeta + \eps_2 \jsig)}
  %{\Phi_\epss(\xi,\eta,\sigma) \jeta \jsig}
 \wt{G}(s,\eta) \, \wt{H}(s,\sigma)  \, %\varphi_{k_1}(\eta) \varphi_{k_2}(\sigma)  
  %\\ & \times \varphi_{M}(\jxi + \jeta + \jsig)  
  \, b'(\xi,\eta,\s) \, \nabla_{\xi} \mu^R(\xi,\eta,\sigma) d\eta d\sigma \Big|
\end{align}
and
\begin{align}\label{E2'}
\begin{split}
E_{2,k_1,k_2,m,\epss}'(G,H)%[G,H](t,\xi) 
  := 2^m \sup_{s \approx 2^m}
  \Big| \int_{\R^6} e^{is(\eps_1\jeta + \eps_2 \jsig)}
  %{\Phi_\epss(\xi,\eta,\sigma) \jeta \jsig}
  \wt{G}(s,\eta) \, \wt{H}(s,\sigma)  \, %\varphi_{k_1}(\eta) \varphi_{k_2}(\sigma)  
  %\\ & \times \varphi_{M}(\jxi + \jeta + \jsig)  
  \\ \times \, b'_{Re}(\xi,\eta,\s) \, \nabla_{\xi} \mu^{Re}(\xi,\eta,\sigma) d\eta d\sigma % \, ds 
  \Big|. %removed ds
\end{split}
\end{align}

%The next step to reduce the proof of Proposition \ref{proFR} to weighted $L^2$ estimates alone, 
%(that is, estimates for the derivatives of the operators \eqref{defopFR}, and similar one)
%In particular we first reduce \eqref{proFRest2} to this kind of estimates.

We are now ready to reduce the proof of Proposition \ref{proFR}
to proving bounds on the operators defined above.
%$D_{1,\epss}[G,H](t,\xi)$ and $D_{2,\epss}'[G,H](t,\xi):$

\begin{lemma}\label{lemgoal}
With the definitions \eqref{ggbdpr1R}, \eqref{ggbdpr1R'} and \eqref{D2'}-\eqref{E2'},
assume that for all $m =0, 1,...,L+1$ 
and all $G,H \in \{ g,h \}$, the following estimates hold: %and $i=1,2$ 
\begin{align} 
\label{goalR} 
\Vert D_{1,m,\epss}(G_{\eps_1},H_{\eps_2})%[G,H] (t,\xi) 
  \Vert_{L^2}, 
  \sum_{k_1,k_2 \leqslant M_0} \Vert E_{1,k_1,k_2,m,\epss}(G_{\eps_1},H_{\eps_2}) \Vert_{L^2} & 
  \lesssim \rho(2^m)^{1-\beta+\delta'} 2^m \varepsilon^{\beta},
  \\
\label{goalRbis} 
\Vert D_{2,m,\epss}'(G_{\eps_1},H_{\eps_2})%[G,H](t,\xi)  
  \Vert_{L^2}, 
  \sum_{k_1,k_2 \leqslant M_0} \Vert E_{2,k_1,k_2,m,\epss}'(G_{\eps_1},H_{\eps_2}) \Vert_{L^2} & 
  \lesssim %\rho(2^m)^{1-\beta+\delta'} 2^m \varepsilon^{\beta} , \qquad G,H \in \lbrace g,h \rbrace,
 2^{-m/10} \cdot \rho(2^m) %^{1-\beta+\delta'} 
  2^m \varepsilon^{\beta},
\end{align}
for some $\delta' >\delta$.
Then, \eqref{proFRest1} and \eqref{proFRest2} hold.
\end{lemma}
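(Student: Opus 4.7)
The plan is to derive \eqref{proFRest1} and \eqref{proFRest2} as a largely book-keeping consequence of the decomposition \eqref{iddxiFR} combined with the assumed bounds \eqref{goalR}, \eqref{goalRbis} and the standard tools already at hand (Minkowski in time, Lemma \ref{L8-space}, and the bilinear and dispersive estimates of Theorem \ref{bilinearmeas} and Lemma \ref{dispersive-bootstrap}). The identity \eqref{iddxiFR} captures the two dominant contributions when differentiating \eqref{defopFR} in $\xi$: the derivative hitting the oscillation $e^{-is\jxi}$ (yielding $D_1,E_1$, after absorbing the factor $\xi/\jxi$ into the symbol $b$ as in \eqref{ggbdpr1Rsym}), and the derivative hitting the measure $\mu^R$ or $\mu^{Re}$ (yielding $D_2,E_2$). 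Contributions in which $\nabla_\xi$ falls on the cutoff $\varphi_{\leqslant M_0}(\jxi)$ are harmless: they produce operators of the same shape as $D_1,E_1$ but with an extra $2^{-M_0}$ gain, so they are absorbed by the very same estimates.

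For the weighted bound \eqref{proFRest1}, the $D_1$ and $E_1$ pieces are dispatched directly by \eqref{goalR}, since $\delta < \delta'$ and $\rho(2^m) \leq 1$ imply $\rho(2^m)^{1-\beta+\delta'} \leq \rho(2^m)^{1-\beta+\delta}$. For the $D_2$ and $E_2$ pieces, Minkowski in time yields
\begin{align*}
\|D_{2,m,\epss}(G,H)\|_{L^2_\xi} \lesssim D_{2,m,\epss}'(G,H), \qquad \|E_{2,k_1,k_2,m,\epss}(G,H)\|_{L^2_\xi} \lesssim E_{2,k_1,k_2,m,\epss}'(G,H),
\end{align*}
so that \eqref{goalRbis} furnishes the upper bound $2^{-m/10} \rho(2^m) 2^m \e^\beta$. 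Choosing $\delta < \beta$ (which is compatible with the other constraints in the paper, in particular $2\delta' < \beta$) gives $\rho(2^m) \leq \rho(2^m)^{1-\beta+\delta}$, and the claim follows with the additional $2^{-m/10}$ to spare, comfortably summable over the $O(1)$ (for $D_2$) or $O(m^2)$ (for $E_2$) dyadic pairs $k_1,k_2$.

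For the pointwise bound \eqref{proFRest2}, I would invoke Lemma \ref{L8-space} with some $0 < a < b < 1/10$ and reduce matters to
\begin{align*}
\big\| \widehat{\mathcal{F}}^{-1} I(s) \big\|_{L^{6/(5+6b)}_x} \lesssim \frac{\rho_0^a}{\langle s\rangle^{1+2a}}, \qquad I(s) = e^{is\jxi}\, \partial_s T_m^{\epss,(j)}(G_{\eps_1}, H_{\eps_2}).
\end{align*}
Since $T_m^{\epss,(j)}$ is a time integral against $\tau_m$, its $s$-derivative strips the integral, and after multiplying by $e^{is\jxi}$ one is left with a bilinear expression with symbol $1/(\jeta\jsig)$ and measure $\mu^R$ (resp. $\mu^{Re}$) evaluated on the free profiles $e^{i\eps_1 sL}G$ and $e^{i\eps_2 sL}H$. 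Applying the bilinear estimates \eqref{mainbilinR} (resp. \eqref{mainbilinRe}) paired with the dispersive decay of Lemma \ref{dispersive-bootstrap} with a suitable Hölder couple $(q_1,q_2)$ gives the required bound, the loss $2^{C_0 M_0}$ being absorbed by the slack in $a$ since $M_0 \approx \delta_N \log_2 \langle s\rangle$ and $\delta_N \ll a$. The main obstacle is the exponent arithmetic in the worst case $G = H = g$, where smallness in $\varepsilon$ is absent from the $L^q$ decay of $g$: here the two factors $\rho(s) \lesssim \rho_0 \langle s\rangle^{-1}$ from Lemma \ref{dispersive-bootstrap} must cover both the $\rho_0^a$ and the $\langle s\rangle^{-1-2a}$ factors, which constrains $a$ to be taken small, but compatibly with the range $a < b < 1/10$ permitted by Lemma \ref{L8-space}.
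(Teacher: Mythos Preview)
Your treatment of \eqref{proFRest1} is fine and coincides with the paper's: the identity \eqref{iddxiFR}, Minkowski in time, and the hypotheses \eqref{goalR}--\eqref{goalRbis} do the job.

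Your argument for \eqref{proFRest2}, however, has a genuine gap. The direct bilinear route via \eqref{mainbilinR}--\eqref{mainbilinRe} forces H\"older exponents with $1/p+1/q>(5+6b)/6$, and in the $(g,g)$ case the best available decay from \eqref{dispbootgbis} gives only
\[
\|e^{isL}g\|_{L^p}\,\|e^{isL}g\|_{L^q}\ \lesssim\ \rho(s)^2\,s^{\,3/2+3b+}\,(\ln s)^2,
\]
which for $s\gtrsim\rho_0^{-1}$ is $\approx s^{-1/2+3b+}$, far from the $\rho_0^{a}s^{-1-2a}$ required by Lemma~\ref{L8-space}. (Your bound $\rho(s)\lesssim\rho_0\langle s\rangle^{-1}$ is not correct --- one only has $\rho(s)\approx\rho_0/(1+\rho_0 s)$ --- but even granting it the arithmetic still fails.) A structural warning sign is that your proof of \eqref{proFRest2} never invokes \eqref{goalRbis}; if the direct approach worked, the extra $2^{-m/10}$ there would be superfluous, which it is not.

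The paper proceeds differently. It interpolates
\[
\|\whF^{-1}I\|_{L^{6/(5+6b)}}\ \lesssim\ \|\nabla_\xi I\|_{L^2}^{\alpha(b)}\,\|\nabla_\xi^2 I\|_{L^2}^{1-\alpha(b)},\qquad \alpha(b)\to 1\ \text{as}\ b\to 0,
\]
then observes that $\nabla_\xi I$ is exactly the integrand of $D_2'$ (resp.\ $E_2'$) up to the factor $2^m$, so \eqref{goalRbis} gives $\|\nabla_\xi I(s)\|_{L^2}\lesssim 2^{-m/10}\rho(2^m)\e^\beta$. The second factor only needs a crude polynomial bound $\|\nabla_\xi^2 I\|_{L^2}\lesssim 2^{3m}$, obtained from the pointwise estimates \eqref{nuRest}, \eqref{nuReest}; taking $b$ (hence $1-\alpha(b)$) small enough then closes the estimate. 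This is precisely where the hypothesis \eqref{goalRbis}, and its extra $2^{-m/10}$, is spent.
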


Notice the extra factor of $2^{-m/10}$ in \eqref{goalRbis}.

\begin{proof}
It is clear from \eqref{iddxiFR} and the definitions \eqref{D2'} and \eqref{E2'} 
that \eqref{goalR}-\eqref{goalRbis} imply \eqref{proFRest1}.
To show that \eqref{goalR}-\eqref{goalRbis} imply \eqref{proFRest2}
we use Lemma \ref{L8-space} and an interpolation argument. %for the $(L^{\frac{6+b}{5+b}})'$ norm.
First, we observe that 
\begin{align*}%\label{goalpr1}
{\| f \|}_{L^{\frac{6}{5+6b}}_x} %\lesssim {\|f\|}_{L^{6/5}_x}^{\alpha(b)} {\| f \|}_{L^{1}_x}^{1-\alpha(b)},
  \lesssim {\| \jx f \|}_{L^2_x}^{\alpha(b)} {\| \jx^2 f \|}_{L^2_x}^{1-\alpha(b)},
\end{align*}
where $0<\alpha(b) \rightarrow 1$ as $b\rightarrow 0$.
Then, using Lemma \ref{L8-space} we see that \eqref{proFRest2} would follow from
(we disregard the $L^2$-norms without derivatives which satisfy stronger bounds)
\begin{align}\label{goalpr1}
%{\| f \|}_{L^{\frac{6}{5+6b}}_x}
  %\lesssim {\| \jx \widehat{\mathcal{F}}^{-1}_{\xi \mapsto x} I \|}_{L^2_x}^{\alpha(b)} 
  %\cdot {\| \jx^2 \widehat{\mathcal{F}}^{-1}_{\xi \mapsto x} I \|}_{L^2_x}^{1-\alpha(b)} 
  %\lesssim 
  {\| \nabla_\xi I \|}_{L^2_x}^{\alpha(b)} \cdot {\| \nabla_\xi^2 I \|}_{L^2_x}^{1-\alpha(b)}
  \lesssim \frac{\rho_0^a}{\langle s \rangle^{1+2a}}, \qquad a\in(0,b),
\end{align}
where
\begin{align} \label{I} 
I(s) := e^{is \jxi} \partial_s T^{\epss,(j)}_{m} (G_{\epsilon_1},H_{\epsilon_2}), \qquad j=3,4.
\end{align}
Let us just concentrate on the case $j=3$, as the case $j=4$ is identical thanks 
to the estimate satisfied by $\mu^{Re}$; see \eqref{nuReest}.
In particular, the definition \eqref{D2'} and the assumption \eqref{goalRbis} imply that, for all $s\approx 2^m$,
\begin{align}\label{goalpr3} 
{\| \nabla_\xi I(s) \|}_{L^2} 
  \lesssim %\rho(2^m)^{1-\beta+\delta'} 2^m \varepsilon^{\beta} , \qquad G,H \in \lbrace g,h \rbrace,
  2^{-m/10} \cdot \rho(2^m) %^{1-\beta+\delta'} 
  \varepsilon^{\beta}.
\end{align}
Therefore, since $\alpha(b)$ can be made sufficiently close to $1$ 
by choosing $b$ small (so that $a$ is a also sufficiently small)
%, for $\beta$, $a$ and $b$ small enough, 
in order to obtain \eqref{goalpr1} it would suffice 
to show that
\begin{align}\label{goalprmain}
\sup_{s\approx 2^m}{\| \nabla_\xi^2 I(s) \|}_{L^2_x} \lesssim 2^{3m}.
\end{align}
This is of course far from optimal but suffices for our purpose since any growth in the above right-hand side
can be offset by an appropriate choice of small enough $b$.
%Note that as $b \to 0, \alpha(b) \to 1$ therefore we obtain the desired conclusion provided $b$ is chosen small enough.

%can conclude by interpolating \eqref{crudenuR2der}, \eqref{crudenuRe2der} 
%with $2^{-m} \cdot \eqref{goalRbis}$ using the inequality 

To prove \eqref{goalprmain} we start by differentiating \eqref{defI}. 
We can disregard the easier terms where $\nabla_\xi$ hits the symbol $b'$.
%a second derivative in $\xi.$ 
Using \eqref{mudecomp0}-\eqref{mu1SR} to expand $\mu^R=\mu^R_1$, 
exactly as we did for $\mu_1^S$ in \eqref{ggbdpr3}, we find that
\begin{align}\label{goalpr4}
\begin{split}
& \Bigg \Vert \int_{\R^6} e^{is(\eps_1\jeta + \eps_2 \jsig)} %e^{-is \Phi_\epss(\xi,\eta,\sigma)}
 \wt{G}(s,\eta) \, \wt{H}(s,\sigma)  \, %\varphi_{k_1}(\eta) \varphi_{k_2}(\sigma)  
  \, b'(\xi,\eta,\s) \, \nabla_{\xi}^2 \mu^R(\xi,\eta,\sigma) d\eta d\sigma \Bigg \Vert_{L^2}
\\
& %2^m \cdot 
 \lesssim \sup_{s \approx 2^m} \sum_{k_1,k_2 \leqslant M_0} 
 \big \Vert T^{R,1}_1[ \partial_1^2 \nu_1^R,b_1] (e^{is \eps_1 \jxi} \wt{G_{\eps_1}}, 
 e^{is \epsilon_2 \jxi} \wt{H_{\eps_2}}) \big \Vert_{L^2} 
 \\
 & + \big \Vert T^{R,1}_1 [ \partial_1^2 \nu_1^R;b_1'] (e^{is \epsilon_2 \jxi} \wt{H_{\eps_2}},
 e^{is \eps_1 \jxi} \wt{G_{\eps_1}}) \big \Vert_{L^2}
 \\
& + \big \Vert T_1^{R,2}[ \partial_2^2 \nu_1^R,b_2](e^{is \epsilon_1 \jxi} \wt{G_{\eps_1}},
  e^{is \epsilon_2 \jxi} \wt{H_{\eps_2}}) \big \Vert_{L^2},
\end{split}
\end{align}
where we are adopting the notation \eqref{idop11R},
using $\partial_j \nu_1^R$ to denote differentiation in the $j$-th variable,
and where the symbols are defined, as in \eqref{prbdary11}, by

\begin{align} \label{symbolnuR}
\begin{split}
& b_1(\xi,\eta,\s) = b'(\xi,\xi-\eta,\s)\varphi_{k_1}(\eta) \varphi_{k_2}(\s),
\\
& b_1'(\xi,\eta,\s) = b'(\xi,\xi-\s,\eta) \varphi_{k_1}(\eta) \varphi_{k_2}(\s),
\\ 
& b_2(\xi,\eta,\s) = b'(\xi,-\eta-\s,\s) \varphi_{k_1}(\eta) \varphi_{k_2}(\s). %\varphi_{k_2}(\xi).
\end{split}
\end{align}
We bound the expressions in \eqref{goalpr4} using
the pointwise bound \eqref{nuRest} which, in particular, implies, for all $|p|\approx 2^P$
and $|q|\approx 2^Q$ with $P,Q \leqslant M_0 + 10$ that
\begin{align}\label{nuRestsimp}
\begin{split}
& \vert \nabla_{p}^2 \nu_1^R (p,q) \vert \lesssim 2^{-4 (P \vee Q)} \cdot 2^{22M_0},
\\
& \vert \nabla_{q}^2 \nu_1^R (p,q) \vert \lesssim 2^{-2 (P \vee Q)} \cdot 2^{-Q} \cdot 2^{22M_0}.
\end{split}
\end{align}

Recalling the definition \eqref{idop11R},
using H\"older in $\xi$, and bounding ${\|b'\|}_{L^\infty}\lesssim 1$, we find that, for $s \approx 2^m$,
\begin{align}
\nonumber
& %2^m 
\big \Vert T^{R,1}_1[ \partial_1^2 \nu_1^R;b_1] (e^{is \epsilon_1 \jxi} \wt{G_{\eps_1}}, 
  e^{is \epsilon_2 \jxi} \wt{H_{\eps_2}}) \big \Vert_{L^2} 
  \\ \nonumber
& \lesssim %2^m \cdot 
  %\sup_{s \approx 2^m} 
  2^{3M_0/2}
  \cdot \sup_{\vert \xi \vert \lesssim 2^{M_0}} \bigg \vert \int_{\mathbb{R}^6} 
  e^{is(\eps_1 \langle \xi-\eta \rangle + \eps_2 \jsig)} b_1(\xi,\eta,\s) \wt{G_{\eps_1}}(s,\xi-\eta) 
  \wt{H_{\eps_2}}(s,\s) \, \nabla^2_{\eta} \nu_1^R(\eta,\s) \, d\eta d\s \bigg \vert 
  \\ \nonumber
  & \lesssim 2^{3M_0/2} \sup_{\vert \xi \vert \lesssim 2^{M_0}}
  \Big( \int_{\R^3} |\wt{G_{\eps_1}}(\xi-\eta)| \varphi_{k_1}(\eta)\, d\eta \Big)
  \Big( \int_{\R^3} |\wt{H_{\eps_2}}(\s)|\varphi_{k_2}(\s) d\s \Big) \cdot 2^{-4 (k_1 \vee k_2)} 2^{22M_0}
  \\ \nonumber
& \lesssim 2^{25M_0} \cdot 2^{-4(k_1 \vee k_2)} \cdot
  2^{5k_1/2} \Vert \wt{G_{\eps_1}} \Vert_{L^6} \cdot 2^{5k_2/2} \Vert \wt{H_{\eps_2}} \Vert_{L^6} 
  \\
& \lesssim 2^{25M_0} \cdot 2^{(5/2)(k_1+k_2)}  \cdot 2^{-4 (k_1 \vee k_2)} 
  \cdot \rho(2^m)^2 \cdot 2^{3m}m^4,
\label{goalpr5}
\end{align}
where we used Sobolev's embedding to get the crude bound
\begin{align*}
%\Vert e^{is \epsilon \jxi} \varphi_{k}(\xi) \wt{H}(\xi) \Vert_{L^2} 
\Vert \wt{F}(\xi) \Vert_{L^6} 
  \lesssim %2^{5k/2} 
  \Vert \nabla_{\xi} \wt{F} \Vert_{L^2}
  \lesssim %2^{5k/2} \cdot 
  \rho(2^m) 2^{3m/2} m^2, \qquad F \in \lbrace g,h \rbrace,
% \lesssim 2^{3m/2}  , \qquad G \in \lbrace g,h \rbrace.
\end{align*}
see \eqref{boot} and \eqref{growth}.

Similarly, we also have 
\begin{align}\label{goalpr6}
\begin{split}
& \big \Vert T^{R,1}_1 [\partial_1^2 \nu_1^R;b_1'] 
  (e^{is \epsilon_2 \jxi} \wt{H},e^{is \epsilon_1 \jxi} \wt{G}) \big \Vert_{L^2} 
  \\
  & \lesssim %2^m \cdot 
  2^{(5/2)(k_1 + k_2)} \cdot 2^{-4(k_1 \vee k_2)} 2^{25M_0} \cdot \rho(2^m)^2 \cdot 2^{3m} m^4.
\end{split}
\end{align}
The estimates \eqref{goalpr5}-\eqref{goalpr6} can then be summed over $k_1,k_2 \leqslant M_0$ to obtain
a bound consistent with \eqref{goalprmain}.

%Using in addition Schur's test, and 
Finally, recalling the definition \eqref{idop11R} and the second estimate in \eqref{nuRestsimp} 
we get, for $k\leqslant M_0$, $s\approx 2^m$,
\begin{align*}
& \big \Vert P_k T_1^{R,2}[ \partial_2^2 \nu_1^R;b_2](e^{is \epsilon_1 \jxi} \wt{G_{\eps_1}}, 
  e^{is \epsilon_2 \jxi} \wt{H_{\eps_2}}) \big \Vert_{L^2} 
  \\
  & \lesssim  2^{3k/2}
  \cdot \sup_{\vert \xi \vert \approx 2^k} \bigg \vert \int_{\mathbb{R}^6} 
  %e^{is(\eps_1 \langle \eta+\s \rangle + \eps_2 \jsig)} b_2(\xi,\eta,\s) 
  |\wt{G_{\eps_1}}(s,-\eta-\s) | \,
  |\wt{H_{\eps_2}}(s,\s)| \, |\nabla^2_{\xi} \nu_1^R(\eta,\xi)| \, \varphi_{k_1}(\eta)\varphi_{k_2}(\sigma) 
  \, d\eta d\s \bigg \vert
  %\\
  %& \lesssim 2^{3k/2} \cdot
  %\Big( \int_{\R^3} |\wt{G}(-\eta-\s)| \varphi_{k_1}(\eta)\, d\eta \Big)
  %\Big( \int_{\R^3} |\wt{H}(\s)|\varphi_{k_2}(\s) d\s \Big) \cdot 2^{-2 (k \vee k_2)} 2^{-k} 2^{22M_0}
  \\
  & \lesssim %2^m \cdot 
  2^{3k/2} 
  \cdot 2^{5k_1/2} \Vert \wt{G_{\eps_1}} \Vert_{L^6} 
  \cdot 2^{5k_2/2} \Vert \wt{H_{\eps_2}} \Vert_{L^6} 
  \cdot 2^{-2 (k \vee k_2)} 2^{-k} 2^{22M_0}
  \\
& \lesssim 2^{k/2} 2^{22M_0} \cdot 2^{5k_1/2} 2^{k_2/2} %2^{4k_2-} \cdot 2^{-2(k_1 \vee k)}
  \cdot \rho(2^m)^2 \cdot 2^{3m}m^4.
  %\rho(2^m)^{1-\beta} 2^m \e^{\beta} .
\end{align*}
This can be easily summed over the indexes $k,k_1,k_2$ to obtain a bound as in \eqref{goalprmain}.
%From the above three estimates, summing over the indexes, 
We have then obtained \eqref{goalprmain} and concluded the proof of the lemma.
\iffalse
\begin{align} \label{crudenuR2der}
 \Bigg \Vert \int_{\R^6} e^{-is \Phi_\epss(\xi,\eta,\sigma)}
  %{\Phi_\epss(\xi,\eta,\sigma) \jeta \jsig}
 \wt{G}(t,\eta) \, \wt{H}(t,\sigma)  \, %\varphi_{k_1}(\eta) \varphi_{k_2}(\sigma)  
  %\\ & \times \varphi_{M}(\jxi + \jeta + \jsig)  
  \, b'(\xi,\eta,\s) \, \nabla_{\xi}^2 \mu^R(\xi,\eta,\sigma) d\eta d\sigma \Bigg \Vert_{L^2}
\lesssim \rho(2^m)^2 2^{5m}.
\end{align}
By a similar reasoning, we obtain
\begin{align} \label{crudenuRe2der}
 \Bigg \Vert \int_{\R^6} e^{-is \Phi_\epss(\xi,\eta,\sigma)}
  %{\Phi_\epss(\xi,\eta,\sigma) \jeta \jsig}
 \wt{G}(t,\eta) \, \wt{H}(t,\sigma)  \, %\varphi_{k_1}(\eta) \varphi_{k_2}(\sigma)  
  %\\ & \times \varphi_{M}(\jxi + \jeta + \jsig)  
  \, b'_{Re}(\xi,\eta,\s) \, \nabla_{\xi}^2 \mu^{Re}(\xi,\eta,\sigma) d\eta d\sigma \Bigg \Vert_{L^2}
\lesssim \rho(2^m)^2 2^{5m}. 
\end{align}
\fi
\end{proof}

\medskip
In view of Lemma \ref{lemgoal}, in order to prove Proposition \ref{proFR}
%, and therefore conclude the proof of Proposition \ref{}
it suffices to prove the estimate \eqref{goalR} and \eqref{goalRbis} for the operators in \eqref{ggbdpr1R}-\eqref{ggbdpr1R'}
and \eqref{D2'}-\eqref{E2'}.
We first prove the bounds \eqref{goalR} and \eqref{goalRbis} for the $(g,g)$ interaction in Subsection \ref{secRgg}.
The bound \eqref{goalRbis} is proved in a unified manner for all remaining cases in Lemma \ref{lemgoalbis}.
We then prove \eqref{goalR} for the $(g,h)$ interaction in Subsection \ref{secRhg},
and for the $(h,h)$ interaction in Subsection \ref{secRhh}.

\medskip
\subsection{Fermi interactions}\label{secRgg}
The next lemma treats the discrete component self-interaction.

\begin{lemma} \label{ggnuR}
Under the assumptions of Proposition \ref{proFR}, the estimates \eqref{goalR} and \eqref{goalRbis} 
hold when $(G,H) = (g,g)$.
\end{lemma}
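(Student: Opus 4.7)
The plan is to follow the same template used in the singular case (Lemma \ref{gg-no-t-res}), but now exploiting the better pointwise/Schur-type information about $\mu^R = \mu_1^R$ and $\mu^{Re}$ provided by \eqref{nuRest}, \eqref{nuReest} and the bilinear bounds \eqref{mainbilinR}, \eqref{mainbilinRe}, \eqref{bilindxiR1}, \eqref{bilindxiR2}. A crucial simplification compared with the $(h,h)$ and $(g,h)$ cases is that $\wt{g}$ is supported in $|\xi|\lesssim 1$ (see \eqref{def-h} and the cutoff $\chi_C$), so after inserting Littlewood--Paley decompositions in $\eta,\sigma$ only $O(1)$ dyadic scales $k_1,k_2\approx 1$ contribute, and no summation losses arise; in particular this will automatically keep the parameter $A=M_0$ appearing in \eqref{bilmeasbest} under control and we can afford the $2^{C_0M_0}$ losses from Theorem \ref{bilinearmeas}.

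For $D_{1,m,\epss}(g,g)$ I would rewrite the operator via the identity \eqref{idbilR} as a time integral of $e^{-isL}$ applied to the bilinear operators $T^{R,1}_1[b_1],\,T^{R,1}_1[b_1'],\,T^{R,2}_1[b_2]$ with the symbols \eqref{prbdary11} (here $b$ is the regular analogue of \eqref{ggbdpr1Rsym}, and by construction $|b|\lesssim 1$ and it satisfies \eqref{bilmeasbest} with $A=M_0$). Then I would apply the Strichartz estimate of Lemma \ref{Strichartz} with $(\wt q,\wt r,\gamma)=(2+,\infty-,0+)$ to convert the $L^2_x$ bound into an $L^{1+}_x$ bound on the bilinear operator itself (at cost $2^{3m/2+}$), and conclude with \eqref{mainbilinR} by an $L^{6/5}_x\times L^{6}_x$ H\"older split:
\begin{align*}
\|D_{1,m,\epss}(g,g)\|_{L^2}
\lesssim 2^{5m/2+}\, 2^{C_0M_0}\sup_{s\approx 2^m}\|e^{isL}g\|_{L^{6/5+}_x}\|e^{isL}g\|_{L^{6-}_x},
\end{align*}
where the prefactor $2^m$ comes from $s\tau_m(s)$ and the extra $2^{3m/2+}$ from Strichartz. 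Invoking \eqref{dispbootgbis} with $p=6/5+$ and $p=6-$ yields $\|e^{isL}g\|_{L^{6/5+}}\|e^{isL}g\|_{L^{6-}}\lesssim\rho(2^m)^2\,m^2$, which closes the first bound in \eqref{goalR} with room to spare. The term $E_{1,k_1,k_2,m,\epss}(g,g)$ is handled in the exact same way using \eqref{theomuRe} and \eqref{mainbilinRe}; the sum over $k_1,k_2$ is finite since $\wt g$ forces $k_1,k_2\sim 1$.

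For $D'_{2,m,\epss}(g,g)$ and $E'_{2,k_1,k_2,m,\epss}(g,g)$, which contain $\nabla_\xi \mu^R$ and $\nabla_\xi \mu^{Re}$, I would avoid Strichartz altogether and work pointwise in $\xi$. Writing the integral on the Fourier side and using Lemma \ref{bilindxiR}, one bounds the $L^2_\xi$ norm by $2^m\sup_{s\approx 2^m} 2^{C_0M_0}\|\wt{g_{\eps_1}}\|_{L^{p}}\|\wt{g_{\eps_2}}\|_{L^{q}}$ for suitable $p,q\in(1,\infty)$; Plancherel plus Sobolev/Bernstein and the frequency localization of $g$ at scale $\approx 1$ give that these $L^p$-norms are bounded by $\rho(2^m)$ (using for instance the crude bound $\|\wt g\|_{L^\infty}\lesssim m\,\rho(2^m)\,2^m$ from \eqref{inftyfreqg} together with compact support to convert to $L^p$, or directly the $L^2$-bound $\|g\|_{L^2}\lesssim\e^{1-}$ coming from \eqref{eng}). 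One then gains the factor $2^{-m/10}$ required by \eqref{goalRbis} simply from $\rho(2^m)^{1-}\le\rho(2^m)\cdot 2^{-m/10}$ once $t\gtrsim\e^{-2}$ (see Remark \ref{remm}), which is enough since we only need a tiny power decay. The argument for $E'_2$ is identical, using \eqref{nuReest} instead of \eqref{nuRest}; the sum over $k_1,k_2$ is again effectively finite.

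The only mild subtlety I anticipate is the bookkeeping of the $2^{C_0M_0}=2^{C_0\delta_N m}$ losses coming from Theorem \ref{bilinearmeas} and Lemma \ref{bilindxiR}, which have to be absorbed by the gain $\rho(2^m)^{\beta/2}\lesssim 1$ (or by choosing $\delta'$ small in terms of $\delta_N$); this is already how we argued in the singular case so no new ideas are needed. Everything else is routine and strictly easier than the $(g,h)$ and $(h,h)$ cases treated in Lemmas \ref{gh-no-t-res} and \ref{hh-no-t-res}, because here both inputs are localized at frequency $\approx 1$ and enjoy the strong dispersive bounds \eqref{dispbootg}--\eqref{dispbootgbis}.
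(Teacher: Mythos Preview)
Your approach for $D_{1,m,\epss}(g,g)$ has a genuine gap: the Strichartz-plus-bilinear scheme does not close. The issue is quantitative. With $(\wt q,\wt r)=(2+,\infty-)$ the correct prefactor is $2^{3m/2+}$ (the $L^{2-}_s$ norm of $s\tau_m$), not $2^{5m/2+}$; but even with this smaller factor you need to control $\|e^{isL}g\|_{L^p}\|e^{isL}g\|_{L^q}$ with $1/p+1/q$ close to $1$, and the dispersive bound \eqref{dispbootgbis} is only proved (and only makes sense) for $p\geqslant 2$. At $p=6/5$ it formally gives $\rho(2^m)2^{2m-}$, not $\rho(2^m)$ as you claim; alternatively, the frequency localization of $g$ only yields $\|e^{isL}g\|_{L^{6/5}}\lesssim\|g\|_{L^2}\lesssim\e^{1-}$, which is again far from $\rho(2^m)$. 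With any admissible choice of exponents $p,q\geqslant 2$ satisfying $1/p+1/q\geqslant 1/2$ one finds $\|e^{isL}g\|_{L^p}\|e^{isL}g\|_{L^q}\gtrsim\rho(2^m)^2 2^{m/2}$, and the resulting bound on $\|D_1\|_{L^2}$ is at best $2^{5m/2+}\rho(2^m)^2\approx 2^{m/2+}$ for $t\gtrsim\e^{-2}$, which misses the target $\rho^{1-\beta+\delta'}2^m\e^\beta\approx 2^{m\beta}\e^\beta$ by a full $2^{m/2}$.

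The missing idea is integration by parts in the frequency variables $\eta,\sigma$ (not in $s$). Since $\wt g$ is supported where $|\eta|\approx|\sigma|\approx 1$, the phases $e^{is\jeta}$, $e^{is\jsig}$ have nonvanishing gradients, and each frequency IBP trades the bad factor $s$ for a derivative on $\wt g$; crucially $\|\varphi_\ell\nabla_\eta\wt g\|_{L^1}$ is controlled by \eqref{dxiwtgest}. This requires first localizing $\langle\eta\rangle-2\lambda$ and $\langle\sigma\rangle-2\lambda$ at dyadic scales $2^{\ell_1},2^{\ell_2}\geqslant 2^{\ell_0}$, $\ell_0\approx -m$, handling the small-$\ell$ region by direct integration using \eqref{inftyfreqg}, and exploiting the pointwise bound $|\nu_1^R(p,q)|\lesssim 2^{-2(P\vee Q)}2^{10M_0}$ rather than the bilinear operator bound. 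The $T_1^{R,2}$ piece of $D_2'$ also needs separate care because $\partial_2\nu_1^R(\eta,\xi)$ loses $2^{-(k\vee k_1)}$, which you recover using the $\xi/\jxi$ factor and a case split on the size of $k\vee k_1$. Your reasoning for $D_2'$ is also off: the inequality $\rho(2^m)^{1-}\leqslant\rho(2^m)\cdot 2^{-m/10}$ goes the wrong way since $\rho<1$.
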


\begin{proof}
Let us use the short-hand
\begin{align*}
D_1(t,\xi) := D_{1,m,\epss}[g_{\epsilon_1},g_{\epsilon_2}](t,\xi), 
  \qquad D_2'(\xi) := D_{2,m,\epss}'[g_{\epsilon_1},g_{\epsilon_2}](\xi)
\end{align*}
for the operators in \eqref{ggbdpr1R} and \eqref{D2'}.
%We expand $\mu^R=\mu^R_1$ using \eqref{mudecomp0}-\eqref{mu1SR} and 

\medskip
\noindent
{\it Estimate of $D_{1}$.}
From the identity \eqref{idbilR} (recall also the notation \eqref{idsym}), we have that
%similarly to what we did in \eqref{goalpr4}, and obtain the following analogue of \eqref{ggbdpr3}:
\begin{align}\label{ggnuRpr0}
\begin{split}
\Vert \widehat{\mathcal{F}}^{-1}_{\xi \mapsto x} D_{1}(t,\xi) \Vert_{L^2} 
  & \lesssim 2^{2m} \sum_{k_1,k_2 \leqslant M_0} \sup_{s \approx 2^m} 
  \bigg[\big \Vert T_1^{R,1}[b_1 ](e^{\epsilon_1 is \jxi } \wt{g_{\epsilon_1}}, 
  e^{\epsilon_2 is \jxi } \wt{g_{\epsilon_2}}) \big \Vert_{L^2_{\xi}}   
  \\
& + \big \Vert T_1^{R,1}[b_1' ]( e^{\epsilon_2 is \jxi } \wt{g_{\epsilon_2}},
  e^{\epsilon_1 is \jxi } \wt{g_{\epsilon_1}}) \big \Vert_{L^2_{\xi}}
  + \big \Vert T_1^{R,2}[b_2 ](e^{\epsilon_1 is \jxi } \wt{g_{\epsilon_1}}, 
  e^{\epsilon_2 is \jxi } \wt{g_{\epsilon_2}}) \big \Vert_{L^2_{\xi}}  \bigg],
\end{split}
\end{align}
where the symbols are defined as in \eqref{symbolnuR}.

Let $\ell_0:= \lfloor -m + \delta m \rfloor.$ 
We insert cut-offs $\varphi_{\ell_1}^{(\ell_0)}(\jeta -2\lambda)$ 
and $\varphi_{\ell_2}^{(\ell_0)} (\jsigma -2 \lambda)$ and write, 
using H\"older and the volume restriction $\vert \xi \vert \lesssim 2^{M_0},$
%(recall also the notation \eqref{prbdary11})
\begin{align}\label{ggnuRpr1}
\begin{split}
& 2^{2m} \big \Vert T_1^{R,1}[b_1](e^{\epsilon_1 is \jxi } \wt{g_{\epsilon_1}}, 
  e^{\epsilon_2 is \jxi } \wt{g_{\epsilon_2}}) \big \Vert_{L^2_{\xi}}  
  \\
  & \lesssim 2^{2m} \cdot 2^{3M_0/2} \cdot \sum_{k_1,k_2} \sum_{\ell_1, \ell_2 \geqslant \ell_0} 
  \sup_{\vert \xi \vert \leqslant 2^{M_0}} \sup_{s \approx 2^m} 
  \bigg \vert \int_{\R^6}
  e^{is \epsilon_{1 } \langle \eta \rangle} \big(\varphi_{\sim 0} \wt{g_{\epsilon_1}} \big)(s,\eta) 
  \, e^{is \epsilon_{2} \langle \sigma \rangle} \big(\varphi_{\sim 0} \wt{g_{\epsilon_2}} \big)(s,\sigma) 
  \\
  & \times 
  \varphi_{\ell_1}^{(\ell_0)}(\langle \eta \rangle - 2 \lambda) 
  \varphi_{\ell_2}^{(\ell_0)}(\langle \sigma \rangle - 2 \lambda) 
  \, b_1(\xi,\xi-\eta,\s) 
  \nu^R_1(\xi-\eta,\sigma) d\eta d\sigma \bigg \vert,
\end{split}
\end{align}
where we recall that, by our notation, 
$b_1(\xi,\xi-\eta,\s) := b(\xi,\eta,\s)\varphi_{k_1}(\xi-\eta)\varphi_{k_2}(\s)$,
%$b_1(x,y,z) = b_1(x,y,z)\varphi_{[k_1-5+k_1+5]}(y)\varphi_{[k_1-5+k_1+5]}(x)$ 
see \eqref{prbdary11} and \eqref{ggbdpr1Rsym}.
%fp{Note that we since $|\eta|,|\s| \lesssim 2^{M_0}$ there are at most $O(m^2)$ terms in the sum over $\ell_1,\ell_2$.}
We can then disregard the sum over $k_2$, since $|\s| \approx 1$ on the support of $\wt{g}(\s)$.
As for the sum over $k_1$, we may assume that $k_1\geqslant -10m$ since 
the complementary case can be treated directly integrating in $\eta$, and using the 
crude bound \eqref{dxi2g} to control ${\| \wt{g} \|}_{L^\infty}$,
and the bound from \eqref{nuRest}
\begin{align}\label{ggnuRpr2}
| \varphi_P(p) \varphi_Q(q) \nu_1^R (p,q) | \lesssim 2^{-2 (P \vee Q)} 2^{10M_0}.
\end{align}
Notice that in applying this bound to \eqref{ggnuRpr1} we have $2^Q = 2^{k_2} \gtrsim 1$.
We may then bound \eqref{ggnuRpr1} by a factor of $O(m)$ times the bound for
the integral expression with fixed $k_1,k_2$. 
%tl{Then note that since $\vert \s \vert \approx 1,$ in the case $\vert \s \vert \approx \vert \xi-\eta \vert,$ 
%we can disregard the sums over $k_1,k_2.$ 
%Moreover $k_1 \vee k_2 \gtrsim 1.$ 
%In the case where $\vert k_1 - k_2 \vert \geqslant 5,$ 
%we can also sum by noting that if $k_1 < -10m,$ we can integrate by parts directly in 
%$\eta$ to obtain the desired result.}

In the following argument we may assume without loss of generality that %By symmetry we assume 
$\ell_1 \geqslant \ell_2$ since the complementary case is analogous (although not exactly symmetric).
%Also note that $\ell_1,\ell_2<0$ automatically in view of the definition of $g$.

If $\ell_1=\ell_2=\ell_0$, we use \eqref{ggnuRpr2}
and \eqref{inftyfreqg} to integrate directly in \eqref{ggnuRpr1} and obtain the bound %, for all $s\approx 2^m$,
\begin{align}\label{ggnuRpr2'}
\begin{split}
%2^{2m} \Vert T_1^{R,1}[b_1 ](e^{\epsilon_1 is \jxi } \wt{g_{\epsilon_1}}, 
%  e^{\epsilon_2 is \jxi } \wt{g_{\epsilon_2}}) \Vert_{L^2} & \lesssim 
& C 2^{2m} \cdot 2^{2\ell_0}
  \cdot 2^{3M_0/2} \cdot 2^{3(k_1+k_2)} \cdot 2^{-2(k_1\vee k_2)} 2^{10M_0} \cdot  
  {\| \wt{g} \|}_{L^{\infty}_{\xi}}^2
  \\
  & \lesssim 2^{2\delta m} 2^{16M_0} \big( m 2^m \rho(2^m) )^2
  \lesssim 2^{2\delta m} 2^{16M_0} %2^{23M_0/2} 
  2^m \rho(2^m) \cdot Z \rho(2^m)^{\beta/2} m^2;
\end{split}
\end{align}
having used $2^m\rho(2^m) \leqslant Z \rho^{\beta/2}(2^m)$ (recall the definition \eqref{Z} and Remark \ref{remm});
the bound obtained above suffices for \eqref{goalR} since $\beta > 2\delta'$.

If $\ell_2 >\ell_0$ we integrate by parts in both $\eta$ and $\sigma$.
Also in view of \eqref{nuReest} which guarantees that 
differentiating $\nu_1^R$ is analogous to differentiating innocuous cutoffs,
we see that the worst term corresponds to the case where both derivatives hit the profiles,
which is bounded as follows, using \eqref{ggnuRpr2} and \eqref{dxiwtgest}:
\begin{align}\label{ggnuRpr3}
\begin{split}
%2^{2m} \big \Vert T_1^{R,1}[b_1 ](e^{\epsilon_1 is \jxi } \wt{g_{\epsilon_1}}, 
  %e^{\epsilon_2 is \jxi } \wt{g_{\epsilon_2}}) \big \Vert_{L^2} \lesssim 
C 2^{2m} \cdot 2^{3M_0/2} \cdot \sup_{s \approx 2^m} 
  \sup_{|\xi| \leqslant 2^{M_0}} \sum_{\ell_1, \ell_2 > \ell_0}
  \int_{\mathbb{R}^6} \frac{1}{s}\vert \big(\varphi_{\sim 0} \nabla_{\sigma} \wt{g} \big)(s,\sigma) \vert  
  \frac{1}{s} \vert \big(\varphi_{\sim 0} \nabla_{\eta} \wt{g} \big)(s,\eta) \vert
  \\
\times  
  %\varphi_{\ell_1}^{(\ell_0)}(\langle \eta \rangle - 2 \lambda)
  \varphi_{\ell_1}(\langle \eta \rangle - 2 \lambda) 
  %\varphi_{\ell_2}^{(\ell_0)}(\langle \sigma \rangle - 2 \lambda) 
  \varphi_{\ell_2}(\langle \eta \rangle - 2 \lambda) \big| \nu^R_1(\xi-\eta,\sigma) \big| d\eta d\sigma 
  \\
\lesssim 2^{23M_0/2} m^2
  \Vert \varphi_{\ell_1} \nabla_{\eta} \wt{g} \Vert_{L^1_{\eta}} 
  \Vert \varphi_{\ell_2} \nabla_{\sigma} \wt{g} \Vert_{L^1_{\s}} 
  \\
  \lesssim 2^{23M_0/2} m^4 \rho(2^m)^{\beta/2} \cdot 2^m \rho(2^m) \cdot Z.
\end{split}
\end{align}
%and we can conclude using \eqref{dxiwtgest}. 
As above, this last upperbound is sufficient.

The final case $\ell_0=\ell_2 < \ell_1$ is treated similarly, 
integrating by parts in $\eta$ and integrating directly in $\sigma.$
This will give a bound like \eqref{ggnuRpr2'} with an extra factor of $2^{\delta m}$,
which is acceptable for $\delta < \beta/2$.

The estimate for the second term in \eqref{ggnuRpr0} is similar by symmetry.
Also the term $T_{1}^{R,2}$ can be treated similarly with small modifications; 
the only difference is that for this term we need to additionally localize in $|\xi|\approx 2^k$
but can then also rely on the $\xi/\jxi$ factor in $b$ (see \eqref{ggbdpr1Rsym}).
%to ensure convergence of the sum over $k$. 
We first insert the cutoffs %$\varphi_{\ell_1}^{(\ell_0)}(\langle \eta \rangle - 2 \lambda)$
%and $\varphi_{\ell_2}^{(\ell_0)}(\langle \sigma \rangle - 2 \lambda)$ 
in the expression for $T_1^{R,2}$ (see \eqref{idop2}) as in \eqref{ggnuRpr1}
which leads to the analogous estimate
\begin{align}\label{ggnuRpr1'}
\begin{split}
& 2^{2m} \big \Vert P_k T_1^{R,2}[b_2](e^{\epsilon_1 is \jxi } \wt{g_{\epsilon_1}}, 
  e^{\epsilon_2 is \jxi } \wt{g_{\epsilon_2}}) \big \Vert_{L^2_{\xi}}  
  \\
  & \lesssim 2^{2m} \cdot 2^{3k/2} \cdot \sum_{k_1,k_2} \sum_{\ell_1, \ell_2 \geqslant \ell_0} 
  \sup_{|\xi|\approx 2^k} \sup_{s \approx 2^m} 
  \bigg \vert \int_{\R^6} \frac{\xi}{\jxi}
  e^{is \epsilon_{1} \langle \eta \rangle} \big(\varphi_{\sim 0} \wt{g_{\epsilon_1}} \big)(s,\eta) 
  \, e^{is \epsilon_{2} \langle \sigma \rangle} \big(\varphi_{\sim 0} \wt{g_{\epsilon_2}} \big)(s,\sigma) 
  \\
  & \times 
  \varphi_{\ell_1}^{(\ell_0)}(\langle \eta \rangle - 2 \lambda) 
  \varphi_{\ell_2}^{(\ell_0)}(\langle \sigma \rangle - 2 \lambda) 
  \, b_2(\xi,-\eta-\s,\s) 
  \overline{\nu^R_1(-\eta-\sigma,\xi)} d\eta d\sigma \bigg \vert,
\end{split}
\end{align}
where we recall our notation
$b_2(\xi,-\eta-\s,\s) := b(\xi,\eta,\s)\varphi_{k_1}(-\eta-\s)\varphi_{k_2}(\s)$,
%$b_1(x,y,z) = b_1(x,y,z)\varphi_{[k_1-5+k_1+5]}(y)\varphi_{[k_1-5+k_1+5]}(x)$ 
see \eqref{prbdary11} and \eqref{ggbdpr1Rsym}.
As before, we can dispense of the $k_2$ index and the associated summation,
and of the $k_1$ summation as well, up to an $O(m)$ factor.

In the case $\ell_1,\ell_2 > \ell_0$ we integrate by parts in $\eta$ and $\sigma$
and see that the main contribution to \eqref{ggnuRpr1'} in this case is bounded by
% \big \Vert T_1^{R,2}[b_2 ](e^{\epsilon_1 is \jxi } \wt{g_{\epsilon_1}}, 
%  e^{\epsilon_2 is \jxi } \wt{g_{\epsilon_2}}) \big \Vert_{L^2_{\xi}}  $P_k T_1^{R,2}(g,g)$ is given by
\begin{align}\label{ggnuRpr5}
\begin{split}
C 2^{2m} \cdot 2^{3k/2} 2^{\min(k,0)} \sum_{\ell_1, \ell_2 > \ell_0} 
  \sup_{\vert \xi \vert \approx 2^k} \sup_{s \approx 2^m} 
  \int_{\R^6} \frac{1}{s} | \varphi_{\sim 0} \nabla_\eta \wt{g_{\epsilon_1}}(s,\eta) |
  \, \frac{1}{s} |\varphi_{\sim 0} \nabla_\sigma\wt{g_{\epsilon_2}}(s,\sigma) |
  \\
  \times 
  \varphi_{\ell_1}(\langle \eta \rangle - 2 \lambda) 
  \varphi_{\ell_2}(\langle \sigma \rangle - 2 \lambda) 
  %\, b_1(\xi,\xi-\eta,\s) 
  \, |\nu^R_1(-\eta-\s,\xi)| \, d\eta d\sigma
  \\
  \lesssim 2^{3k/2} 2^{\min(k,0)} \cdot 
  {\| \varphi_{\ell_1} \nabla_{\eta} \wt{g} \|}_{L^1_{\eta}} 
  {\| \varphi_{\ell_2} \nabla_{\s} \wt{g} \|}_{L^1_{\s}}
  \cdot 2^{-2k} 2^{10 M_0},
\end{split}
\end{align}
having used \eqref{ggnuRpr2} to bound $|\nu^R_1(-\eta-\s,\xi)| \lesssim 2^{-2k}$;
we can then conclude as in \eqref{ggnuRpr3} using \eqref{dxiwtgest}.

The remaining cases $\ell_1 > \ell_2 = \ell_0$ and $\ell_1=\ell_2=\ell_0$ can be handled 
as explained before, with the additional exploitation of the extra $2^{k^-}$ factor.

%tl{(recalling that in this case $\vert \xi \vert \approx 2^{k_2}$)}
%\begin{align*}
%\big \Vert T_1^{R,2}[b_2 ](e^{\epsilon_1 is \jxi } \wt{g_{\epsilon_1}}, 
%e^{\epsilon_2 is \jxi } \wt{g_{\epsilon_2}}) \big \Vert_{L^2}  \lesssim 2^{3k_2/2} \cdot 2^{k_2^{-}} 
%  \cdot 2^{-2 (k_1 \vee k_2)} \cdot 2^{10 M_0} \cdot 
%  \Vert \varphi_{\ell_1} \nabla_{\eta} \wt{g} \Vert_{L^1_{\eta}} 
%  \Vert \varphi_{\ell_2} \nabla_{\s} \wt{g} \Vert_{L^1_{\s}}.
%\end{align*}
%Have to change variables $\eta+\s \mapsto \eta.$
%To conclude we must ensure convergence in $k_1,$ which can be done by noting that if $k_1 \leqslant -10 m,$ 
%then we can simply integrate the volume in $\eta$ instead of integrating by parts.

\medskip
\noindent
{\it Estimate of $D_2'(t,\xi)$.} 
We now show how to obtain \eqref{goalRbis} for the operator in \eqref{D2'} with \eqref{ggbdpr1Rsym}.
Expanding $\mu^R$ as above, see for example \eqref{goalpr4} 
where there are two $\xi$ derivatives instead of just one, as in the present case, we have
\begin{align}\label{goalpr4'}
\begin{split}
{\| \widehat{\mathcal{F}}^{-1}_{\xi \mapsto x}  D_{2}'(\xi) \|}_{L^2}%[G,H](t,\xi) 
  & \leqslant 2^m \sup_{s \approx 2^m} 
  {\Big\| \int_{\R^6} e^{is(\eps_1\jeta + \eps_2 \jsig)} %e^{-is \Phi_\epss(\xi,\eta,\sigma)}
  \wt{g_{\eps_1}}(t,\eta) \, \wt{g_{\eps_2}}(t,\sigma)  \, %\varphi_{k_1}(\eta) \varphi_{k_2}(\sigma)  
  \, b'(\xi,\eta,\s) \, \nabla_{\xi} \mu^R(\xi,\eta,\sigma) d\eta d\sigma \Big\|}_{L^2}
\\ 
& \lesssim 2^m 
  \sup_{s \approx 2^m} \sum_{k_1,k_2 \leqslant M_0} 
 \big \Vert T^{R,1}_1[\partial_1 \nu_1^R,b_1] (e^{is \eps_1 \jxi} \wt{g_{\eps_1}}, 
 e^{is \epsilon_2 \jxi} \wt{g_{\eps_2}}) \big \Vert_{L^2} 
 \\
 & + \big \Vert T^{R,1}_1 [\partial_1 \nu_1^R;b_1'] (e^{is \epsilon_2 \jxi} \wt{g_{\eps_2}},
 e^{is \eps_1 \jxi} \wt{g_{\eps_1}}) \big \Vert_{L^2}
 \\
& + \big \Vert T_1^{R,2}[\partial_2 \nu_1^R,b_2](e^{is \epsilon_1 \jxi} \wt{g_{\eps_1}},
  e^{is \epsilon_2 \jxi} \wt{g_{\eps_2}}) \big \Vert_{L^2}.
\end{split}
\end{align}
The symbols are as in \eqref{symbolnuR}, and we are adopting the notation \eqref{idop11R}.

We note that in \eqref{goalpr4'} the terms involving the operator $T_{1}^{R,1}$, see \eqref{idop11},
are such that $|\s| \approx 1$ on their support.
Then,  in view of \eqref{nuRest}, the corresponding measure $\partial_1 \nu^R_1(\eta,\sigma)$ satisfies the same 
pointwise estimate as $\nu^R_1(\eta,\sigma)$.
%see \eqref{nuRest} and $\vert p \vert  \vee \vert q \vert \gtrsim 1.$ 
Therefore the exact same estimates as those established for $D_1$ apply.
Comparing with \eqref{ggnuRpr0}, we see that in this case we have one less $2^{m}$ factor
hence the estimates already established give a bound of the form $2^{-m}$ times the right-hand side of \eqref{goalR} 
which is sufficient for \eqref{goalRbis}.

Next, we look at the operator $T_1^{R,2}[\partial_2 \nu_1^R,b_2]$ on the last line of \eqref{goalpr4'}.
This term can be treated similarly to the one in \eqref{ggnuRpr1'}.
The only difference is that we now have one additional advantageous factor of $2^{-m}$ 
but also an extra $\xi$ derivative on $\nu_1^R$,
which gives a $2^{-(k\vee k_1)}$ factor (see \eqref{ggnuRpr2})
compared with the bound in \eqref{ggnuRpr5}.
Proceeding as before, with the same notation used in \eqref{ggnuRpr1'}, we first estimate
\begin{align}\label{ggnuRpr1''}
\begin{split}
& 2^{m} \big \Vert P_k T_1^{R,2}[b_2](e^{\epsilon_1 is \jxi } \wt{g_{\epsilon_1}}, 
  e^{\epsilon_2 is \jxi } \wt{g_{\epsilon_2}}) \big \Vert_{L^2_{\xi}}  
  \\
  & \lesssim 2^{2m} \cdot 2^{3k/2} \cdot %\sum_{k_1,k_2} 
  \sum_{\ell_1, \ell_2 \geqslant \ell_0} 
  \sup_{|\xi|\approx 2^k} \sup_{s \approx 2^m} 
  \bigg \vert \int_{\R^6} \frac{\xi}{\jxi}
  e^{is \epsilon_{1} \langle \eta \rangle} \big(\varphi_{\sim 0} \wt{g_{\epsilon_1}} \big)(s,\eta) 
  \, e^{is \epsilon_{2} \langle \sigma \rangle} \big(\varphi_{\sim 0} \wt{g_{\epsilon_2}} \big)(s,\sigma) 
  \\
  & \times 
  \varphi_{\ell_1}^{(\ell_0)}(\langle \eta \rangle - 2 \lambda) 
  \varphi_{\ell_2}^{(\ell_0)}(\langle \sigma \rangle - 2 \lambda) 
  \, b_2(\xi,-\eta-\s,\s) 
  \partial_\xi \overline{\nu^R_1(-\eta-\sigma,\xi)} d\eta d\sigma \bigg \vert.
\end{split}
\end{align}
Note that we have disregarded the innocuous sum over $k_1,k_2$ as before.
For $\ell_1,\ell_2 > \ell_0$ we integrate by parts and obtain the bound (compare with \eqref{ggnuRpr5}) 
\begin{align}\label{ggnuRpr5'}
\begin{split}
C 2^{m} \cdot 2^{3k/2} 2^{\min(k,0)} \sum_{\ell_1, \ell_2 > \ell_0} 
  \sup_{\vert \xi \vert \approx 2^k} \sup_{s \approx 2^m} 
  \int_{\R^6} \frac{1}{s} | \varphi_{\sim 0} \nabla_\eta \wt{g_{\epsilon_1}}(s,\eta) |
  \, \frac{1}{s} |\varphi_{\sim 0} \nabla_\sigma\wt{g_{\epsilon_2}}(s,\sigma) |
  \\
  \times 
  \varphi_{\ell_1}(\langle \eta \rangle - 2 \lambda) 
  \varphi_{\ell_2}(\langle \sigma \rangle - 2 \lambda) 
  %\, b_1(\xi,\xi-\eta,\s) 
  \, |\partial_\xi \nu^R_1(-\eta-\s,\xi)| \, d\eta d\sigma
  \\
  \lesssim 2^{-m} \cdot 2^{3k/2} 2^{\min(k,0)} \cdot 
   \sum_{\ell_1, \ell_2 > \ell_0} {\| \varphi_{\ell_1} \nabla_{\eta} \wt{g} \|}_{L^1_{\eta}} 
  {\| \varphi_{\ell_2} \nabla_{\s} \wt{g} \|}_{L^1_{\s}}
  \cdot 2^{-3(k \vee k_1)} 2^{10 M_0}
  \\
  \lesssim  2^{-m+11M_0} \cdot 2^{-(1/2)(k\vee k_1)} \cdot (m2^m\rho(2^m))^2,
\end{split}
\end{align}
having used \eqref{ggnuRpr2} to bound, on the support of the integral, 
%(where $|\eta+\s| \approx 2^{k_1}$) 
$|\partial_\xi \nu^R_1(-\eta-\s,\xi)| \lesssim 2^{-3(k\vee k_1)}$ and \eqref{dxiwtgest}.
The bound \eqref{ggnuRpr5'} is more than sufficient 
for \eqref{goalRbis} provided, for example, that $k\vee k_1 \geqslant -m/4$.
%Recall $2^m \rho(2^m) \lesssim Z \rho(2^m)^{\beta/2}$

In the remaining case $k\vee k_1 < -m/4$, we only integrate by parts in $\s$ but not in $\eta$
and can bound as follows:
\begin{align*}%\label{ggnuRpr5'}
\begin{split}
C 2^{m} \cdot 2^{3k/2} 2^{\min(k,0)} \sum_{\ell_1, \ell_2 > \ell_0} 
  \sup_{\vert \xi \vert \approx 2^k} \sup_{s \approx 2^m} 
  \int_{\R^6} | \varphi_{\sim 0} \wt{g_{\epsilon_1}}(s,\eta) |
  \, \frac{1}{s} |\varphi_{\sim 0} \nabla_\sigma\wt{g_{\epsilon_2}}(s,\sigma) |
  \\
  \times 
  \varphi_{\ell_1}(\langle \eta \rangle - 2 \lambda) 
  \varphi_{\ell_2}(\langle \sigma \rangle - 2 \lambda) 
  %\, b_1(\xi,\xi-\eta,\s) 
  \, |\partial_\xi \nu^R_1(-\eta-\s,\xi)| \, d\eta d\sigma
  \\
  \lesssim 2^{3k/2} 2^{\min(k,0)} \cdot 
  \sum_{\ell_1,\ell_2} 2^{3k_1} {\| \varphi_{\ell_1} \wt{g} \|}_{L^\infty_{\eta}} 
  {\| \varphi_{\ell_2} \nabla_{\s} \wt{g} \|}_{L^1_{\s}}
  \cdot 2^{-3(k \vee k_1)} 2^{10 M_0}
  \\
  \lesssim 2^{11M_0} \cdot  2^{(5/2)k_1} 
  %\cdot \sum_{\ell_1,\ell_2}  {\| \varphi_{\ell_1} \wt{g} \|}_{L^\infty_{\eta}} 
  %{\| \varphi_{\ell_2} \nabla_{\s} \wt{g} \|}_{L^1_{\s}}
  \cdot (m2^m\rho(2^m))^2,
\end{split}
\end{align*}
having used \eqref{inftyfreqg} and \eqref{dxiwtgest}.
Since $k_1 \leqslant -m/4$ this is enough for \eqref{goalRbis}.

\iffalse
\comment{cfp{added details above}
tl{For $T_1^{R,2},$ we can integrate by parts in $\s$ and integrate directly in $\eta,$ 
(see also \eqref{inftyfreqg}) to obtain 
\begin{align*}
\Vert T_1^{R,2} [\nabla_{\xi} \nu^R ; b_2] (e^{\epsilon_1 is \jxi } 
  \wt{g_{\epsilon_1}}, e^{\epsilon_2 is \jxi } \wt{g_{\epsilon_2}})  \Vert_{L^2} 
  & \lesssim 2^{-m}  2^{3k_2/2}  2^{-3 (k_1 \vee k_2)} 2^{3 k_1} 2^{10 M_0} 
  \Vert \wt{g} \Vert_{L^{\infty}_\eta} \cdot \Vert \varphi_{\ell_1} \nabla_{\s} \wt{g} \Vert_{L^{1}_{\s}} 
  \\
& \lesssim 2^{-m} m^2 2^{3k_2/2}  2^{-3 (k_1 \vee k_2)} 2^{3 k_1}
  2^{10 M_0} \rho(2^m)^{\beta/2} \cdot 2^m \rho(2^m) \cdot Z,
\end{align*}
which is enough when $k_1 \wedge k_2 \leqslant -m/10. $

In the last case where $k_1 \wedge k_2 > -m/10,$ we integrate by parts in both 
$\eta$ and $\s$ (after changing variables $\eta-\s \mapsto \eta$) and conclude that
\begin{align*}
\Vert T_1^{R,2} [\nabla_{\xi} \nu^R;b_2] (e^{\epsilon_1 is \jxi } 
  \wt{g_{\epsilon_1}}, e^{\epsilon_2 is \jxi } \wt{g_{\epsilon_2}}) \Vert_{L^2} 
  \lesssim 2^{-2m}  2^{3k_2/2} 2^{k_{2,-}} 2^{-3(k_1 \vee k_2)} 2^{10 M_0} 
  \rho(2^m)^{\beta/2} \cdot 2^m \rho(2^m) \cdot Z.
\end{align*}}
}
\fi

\smallskip
The estimates for the operators $E$ with the $\mu^{Re}$ distribution, see \eqref{ggbdpr1R'} and \eqref{E2'},
can be done very similarly to $T^{R,1}_1$, using the estimate \eqref{nuReest}, 
therefore we can skip the details.
\end{proof}

\medskip
%\subsubsection{Bounding $D_{2,\epss}$}
Next, we prove \eqref{goalRbis} in a unified manner for all remaining cases.
%, using arguments similar to those employed in Lemma \ref{ggnuR} above.

\begin{lemma}\label{lemgoalbis}
Under the assumptions of Proposition \ref{proFR} the bound \eqref{goalRbis} holds for $(G,H) = (g,h),(h,g), (h,h).$
\end{lemma}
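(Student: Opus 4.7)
The plan is to exploit the fact that in each of the remaining cases at least one input is $h$ (or both are), so that the unfavorable bounds on weighted norms of $\wt g$ used heavily in Lemma \ref{ggnuR} (namely \eqref{growth}, \eqref{dxiwtgest}) are not needed. Instead we rely entirely on the $L^p_x$ dispersive estimates of Lemma \ref{dispersive-bootstrap} for $e^{itL}g$ and $e^{itL}h$, combined with the bilinear bound of Lemma \ref{bilindxiR} applied to the operators with symbols $\partial \nu_1^R$ and $\partial_1 \mu^{Re}$.

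First, using the formula \eqref{mu1SR} for $\mu^R=\mu_1^R$ and differentiating in $\xi$ (which either contributes a $\partial_1\nu_1^R$ or a $\partial_2\nu_1^R$ depending on the piece, by the chain rule), I would rewrite, up to harmless symbol changes of the type \eqref{prbdary11} with extra frequency cutoffs,
\begin{align*}
\widehat{\mathcal{F}}^{-1}_{\xi\to x} D_{2,m,\epss}'(G,H)
  & \lesssim 2^{m}\!\!\sum_{k_1,k_2\leqslant M_0}\sup_{s\approx 2^m}
  \Big[\big\| T_1^{R,1}[\partial_1\nu_1^R;b_1]\big(e^{i\eps_1 s\jxi}\wt{G_{\eps_1}},e^{i\eps_2 s\jxi}\wt{H_{\eps_2}}\big)\big\|_{L^2} \\
  & + \big\| T_1^{R,1}[\partial_1\nu_1^R;b_1']\big(e^{i\eps_2 s\jxi}\wt{H_{\eps_2}},e^{i\eps_1 s\jxi}\wt{G_{\eps_1}}\big)\big\|_{L^2}\\
  & + \big\| T_1^{R,2}[\partial_2\nu_1^R;b_2]\big(e^{i\eps_1 s\jxi}\wt{G_{\eps_1}},e^{i\eps_2 s\jxi}\wt{H_{\eps_2}}\big)\big\|_{L^2}\Big],
\end{align*}
with symbols as in \eqref{symbolnuR}, and an entirely analogous expansion for $E_{2,k_1,k_2,m,\epss}'$ via the pointwise bound on $\partial_\xi \mu^{Re}$ from \eqref{nuReest} and the corresponding estimate \eqref{bilindxiR2}.

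Next, applying Lemma \ref{bilindxiR} with $r=2$ and $(p,q)$ satisfying $1/p+1/q>1/2$, together with the boundedness of wave operators, each of the three bilinear pieces above is bounded by $2^{C_0M_0}\|e^{isL}G\|_{L^p}\|e^{isL}H\|_{L^q}$. I would then pick $(p,q)=(3,6-)$ (or its symmetric counterpart) and insert the bounds from Lemma \ref{dispersive-bootstrap}: in the $(h,h)$ case, using \eqref{dispbootf},
$$\|e^{isL}h\|_{L^3}\|e^{isL}h\|_{L^{6-}}\lesssim \rho(2^m)^{2-2\beta}\, 2^{m/2+O(\delta_N m)}\,\varepsilon^{2\beta};$$
in the $(g,h)$ and $(h,g)$ cases, using in addition \eqref{dispbootg},
$$\|e^{isL}g\|_{L^6}\|e^{isL}h\|_{L^{3-}}+\|e^{isL}g\|_{L^{3-}}\|e^{isL}h\|_{L^{6-}}\lesssim \rho(2^m)^{2-\beta}\,2^{m/2+O(\delta_N m)}\,m\,\varepsilon^{\beta}.$$
Multiplying by the prefactor $2^{m}\cdot 2^{C_0M_0}$ from $D_{2,m,\epss}'$, and recalling that $\rho(2^m)\lesssim \min(2^{-m},\e^2)$, each case yields a bound of the form $2^{(3/2 - (1-\beta))m + O(\delta_N m)}\rho(2^m)\cdot \text{small}$, which gains a factor of at least $2^{-m/3}$ over $2^m\rho(2^m)\e^\beta$ provided $\beta$ and $\delta_N$ are small enough; this is comfortably better than the required $2^{-m/10}$. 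The $E_{2,k_1,k_2,m,\epss}'$ terms are treated identically using \eqref{bilindxiR2}, and the sum over $k_1,k_2\leqslant M_0$ contributes only a benign $O(M_0^2)$ factor.

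The main (minor) obstacle is bookkeeping: one must verify that the identity \eqref{idbilR} transforms correctly under an $\xi$-differentiation of the NSD $\mu^R$ into operators of the form treated by Lemma \ref{bilindxiR}, i.e.\ that $\partial_\xi[\nu_1^R(-\xi+\eta,\sigma)]=-\partial_1\nu_1^R(-\xi+\eta,\sigma)$ and the analogous identities for the other two pieces in \eqref{mu1SR} produce exactly the symbols $\partial_1\nu_1^R$ and $\partial_2\nu_1^R$ for which Lemma \ref{bilindxiR} applies, with the $L^\infty$ symbol bound on $b$, $b_1$, $b_1'$, $b_2$, $b_{Re}$, $b'_{Re}$ satisfied with $A=M_0$. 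Everything else is a direct computation, and the required margin $2^{-m/10}$ follows with substantial room to spare.
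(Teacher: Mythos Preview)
Your proposal is correct and takes a genuinely more direct route than the paper's proof. The paper first integrates by parts in the frequency variable of the $h$-input (say $\sigma$), which produces a factor of $s^{-1}\jsig\sigma/|\sigma|^2\cdot\nabla_\sigma\wt h$; it then applies Lemma~\ref{bilindxiR} with an $L^{8/3}\times L^8$ H\"older split, using Bernstein on the second factor to access the a~priori weighted bound on $\nabla_\xi\wt h$. Because of the $|\sigma|^{-1}$ this generates, the paper must then treat the low-frequency range $k_2\leqslant -m/2+\delta m$ separately (by Bernstein and Hardy, without the integration by parts). Your approach bypasses all of this: you apply Lemma~\ref{bilindxiR} directly with an $L^3\times L^{6-}$ split and feed in the dispersive bounds \eqref{dispbootg}--\eqref{dispbootf}; the resulting estimate $2^m\cdot 2^{C_0M_0}\rho(2^m)^{2-2\beta}2^{m/2+O(\delta_N m)}\e^{2\beta}$ (or its $(g,h)$ analogue) beats $2^{-m/10}\cdot 2^m\rho(2^m)\e^\beta$ by a margin of roughly $2^{-m/3}$, as you claim. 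This is cleaner and avoids the case split.

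One minor correction: the sum over $k_1,k_2$ has $O(m^2)$ terms, not $O(M_0^2)$, since after Lemma~\ref{verylowfreq} the relevant frequencies range from about $-5m$ to $M_0$; this is still harmless. Also, your bookkeeping remark at the end is correct: differentiating the three pieces of $\mu_1^R$ in \eqref{mu1SR} gives exactly the $\partial_1\nu_1^R$ and $\partial_2\nu_1^R$ symbols handled by Lemma~\ref{bilindxiR}, and the same reasoning carries over verbatim to $E_2'$ via \eqref{bilindxiR2}.
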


\begin{proof}
Since $h$ has better localization than $g,$ we can integrate by parts and
%, when at least one of the inputs of the relevant operators is $h$
rely on the H\"older-type estimate associated to the operators 
$\partial_1\nu_1^R$,  $\partial_2\nu_1^R$ from Lemma \ref{bilindxiR}.
We use similar notation to the one in the proof of Lemma \ref{ggnuR},
and seek to estimate ${\| \widehat{\mathcal{F}}^{-1}_{\xi \mapsto x}  D_{2}'(\xi) \|}_{L^2}$,
%the term $D_2'$, 
see the definitions \eqref{D2'} and \eqref{ggbdpr1Rsym}, by the right-hand side of \eqref{goalRbis}.
For this it suffices to control, for $G\in\{g,h\}$, %Starting as in \eqref{goalpr4'}, 
\begin{align}\label{goalbispr1}
\begin{split}
%{\| \widehat{\mathcal{F}}^{-1}_{\xi \mapsto x}  D_{2}'(\xi) \|}_{L^2}%[G,H](t,\xi) 
  %& \lesssim 
  & 2^m \sup_{s \approx 2^m} 
  {\Big\| \int_{\R^6} e^{is(\eps_1\jeta + \eps_2 \jsig)} %e^{-is \Phi_\epss(\xi,\eta,\sigma)}
  \wt{G_{\eps_1}}(t,\eta) \, \wt{h_{\eps_2}}(t,\sigma)  \, %\varphi_{k_1}(\eta) \varphi_{k_2}(\sigma)  
  \, b'(\xi,\eta,\s) \, \nabla_{\xi} \mu^R(\xi,\eta,\sigma) d\eta d\sigma \Big\|}_{L^2}
  \\
  & \lesssim 2^m \sup_{s \approx 2^m} \sum_{k_1,k_2 \leqslant M_0} 
  {\big\| T^{R,1}_1[\partial_1 \nu_1^R,b_1] (e^{is \eps_1 \jxi} \wt{G_{\eps_1}}, 
  e^{is \epsilon_2 \jxi} \varphi_{\sim k_2} \wt{h_{\eps_2}}) \big\|}_{L^2} 
\\
  & + {\big\| T^{R,1}_1 [\partial_1 \nu_1^R;b_1'] (e^{is \epsilon_2 \jxi} \varphi_{\sim k_2} \wt{h_{\eps_2}},
  e^{is \eps_1 \jxi} \wt{G_{\eps_1}}) \big\|}_{L^2} 
\\
& + {\big\| T_1^{R,2}[\partial_2 \nu_1^R,b_2](e^{is \epsilon_1 \jxi} \wt{G_{\eps_1}},
  e^{is \epsilon_2 \jxi} \varphi_{\sim k_2} \wt{h_{\eps_2}}) \big\|}_{L^2} ;
\end{split}
\end{align}
see the analogous \eqref{goalpr4'} (and recall \eqref{symbolnuR} and \eqref{idop11R})
and notice that we have explicitly written out the localization at frequencies $\approx 2^{k_2}$ for the input $h$. 

Let us look at the first term on the right-hand side of \eqref{goalbispr1}.
Recalling the definition \eqref{idop11R} we integrate by parts in $\s$, the frequency variable for $h$,
when $k_2 \geqslant -m/2+\delta m$, for some small $\delta>0$.
%Upon inserting an additional localization for $h$, by considering $P_{k_3}h$, 
%we proceed by integrating by parts in $\s$ when $k_3 \geq -m/2+\delta m$, for some small $\delta>0$.
The main term that we obtain is the one where $\nabla_\s$ hits the profile, %is of the form
that is (once again we can disregard the summation over $k_1,k_2$)
\begin{align} \label{integgh}
\begin{split}
2^m \sup_{s \approx 2^m} %\sum_{k_1,k_2,k_3 \leqslant M_0} 
  {\big\| T^{R,1}_1[\partial_1 \nu_1^R,b_1] 
  \big( e^{is \eps_1 \jxi} \wt{G_{\eps_1}}, \,
  s^{-1} \, (\jxi \xi)/|\xi|^2 \, e^{is \epsilon_2 \jxi} \varphi_{k_2} \nabla_\xi \wt{h_{\eps_2}} \big) \big\|}_{L^2} .
\end{split}
\end{align}
%We integrate by parts in the frequency of the profile $h.$ 
%We denote $k_1$ the dyadic index corresponding to that frequency. 
Using Lemma \ref{bilindxiR},
the decay estimates \eqref{dispbootgbis}-\eqref{dispbootf}
which imply
\begin{align*}
\Vert e^{isL} G \Vert_{L^{8/3}} \lesssim \rho(2^m)^{1-\beta} 2^{5m/8+10 \delta_N m} %\e^{\beta}
 , \qquad G \in \lbrace g,h \rbrace,
\end{align*}
Bernstein's inequality, and the usual a priori bound \eqref{boot}, we control \eqref{integgh} by
\begin{align*}
  & C \Vert e^{isL} G \Vert_{L^{8/3}} 
  \cdot 2^{-k_2} \Vert P_{k_2} \wt{\mathcal{F}}^{-1} \nabla_{\xi} \wt{h} \Vert_{L^8} 
  \\
  & \lesssim 2^{(C_0+10)\delta_Nm} \cdot \rho(2^m)^{1-\beta} 2^{5m/8} \cdot 2^{-k_2} \cdot 2^{9k_2/8} 
  \rho(2^m)^{1-\beta} 2^m \e^\beta;
  %\\
  %& \lesssim 2^{-m} \cdot 2^{k_1/8} \cdot \rho(2^m)^{2-2\beta} 2^{13m/8+10 \delta_N m} \e^{\beta} 
  %\\
  %& \lesssim  2^{(C_0 + 12)\delta_N m} \cdot 2^{k_1/8} 2^{-3m/8} \cdot Z^2
\end{align*}
this is bounded by the right-hand side of \eqref{goalRbis} 
provided, as usual, that $\delta_N$ and $\beta$ are small enough.

In the case when $k_2 \leqslant -m/2 + \delta m$, we cannot integrate by parts; we instead 
use directly \ref{bilindxiR} followed by Bernstein's and Hardy's inequality 
to estimate (once again we disregard the $k_1$ summation up to an $O(m)$ factor)
\begin{align*}
\begin{split}
& 2^m \sup_{s \approx 2^m} \sum_{\substack{k_1 \leqslant M_0 \\ k_2 \leqslant -m/2+\delta m}} 
  {\big\| T^{R,1}_1[\partial_1 \nu_1^R,b_1] (e^{is \eps_1 \jxi} \wt{G_{\eps_1}}, 
  e^{is \epsilon_2 \jxi} \varphi_{\sim k_2} \wt{h_{\eps_2}}) \big\|}_{L^2} 
  \\
  & \lesssim 2^m \cdot 2^{(C_0+1)M_0}
  \cdot {\| G \|}_{L^2} \cdot \sum_{k_2\leqslant  -m/2+\delta m} {\| P_{k_2} h \|}_{L^{\infty-}}
  \\
  & \lesssim 2^m \cdot 2^{(C_0+2)M_0} 
  \cdot \sum_{k_2\leqslant -m/2+\delta m} 2^{(\frac{5}{2}-)k_2} {\| \nabla_\xi \wt{h} \|}_{L^2}
  \\
  & \lesssim 2^{(C_0+3)M_0} \cdot 2^{-m/4} Z,
\end{split}
\end{align*}
which suffices for \eqref{goalRbis}.

The second term on the right-hand side of \eqref{goalbispr1} is symmetric so we can skip it.
The third term can also be treated similarly integrating by parts in $\partial_{\eta-\sigma}$
(or changing variables $\eta \mapsto \eta-\sigma$, and then integrating by parts in $\s$ as above),
and using Lemma \ref{bilindxiR}.

We can use the same arguments to estimate the $E'$ term as well (see the definition \eqref{E2'})
since $T[\partial_1 \mu^{Re};b_{Re}']$ (see the definition \eqref{idop11}),
satisfies similar or better estimates than the operators just treated;
see the estimates \eqref{bilindxiR2} in Lemma \ref{bilindxiR}, and 
the pointwise bound \eqref{nuReest}. %, with \eqref{bilindxiR1} and \eqref{nuRest}.
\end{proof}

\medskip
\subsection{Fermi-continuous interaction}\label{secRhg}
%Now we deal with the part of the nonlinearity that is space non-resonant.  
%We start with cubic terms appearing in $Q$ (recall that this term is defined in Lemma \eqref{decomposition}).

We now treat the regular terms with Fermi-continuous type interactions. We will use 
several notation and arguments similar to those in Subsection \ref{secRgg} above.

\begin{lemma} \label{ghnuR}
With the assumptions and notation of Proposition \ref{proFR}, 
the bound \eqref{goalR} holds for $(G,H) = (g,h),(h,g).$
\end{lemma}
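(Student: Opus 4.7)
The plan is to follow the scheme of Lemma \ref{ggnuR}, with modifications that account for the weaker control available on $\wt h$ compared to $\wt g$. We focus on the case $(G,H)=(g,h)$; the case $(h,g)$ is treated by symmetric arguments (exchanging the roles of the two frequency variables), and the $E_{1,k_1,k_2,m,\epss}$ contributions are handled in parallel thanks to the stronger pointwise estimates \eqref{nuReest} on $\mu^{Re}$. The first step is to apply the identity \eqref{idbilR} to decompose $D_{1,m,\epss}[g,h]$ into contributions from $T_1^{R,1}[b_1]$, $T_1^{R,1}[b_1']$, and $T_1^{R,2}[b_2]$, with symbols defined as in \eqref{symbolnuR}. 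We then localize dyadically in $\eta$ and $\sigma$: because $\wt g$ is essentially supported near $|\eta|\approx 1$ (the Fermi frequency region), only $O(m)$ values of the $g$-index contribute effectively, while the $h$-index $k_2\in [-5m,M_0]$ must be summed.

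For the main $T_1^{R,1}[b_1]$ piece, following \eqref{ggnuRpr3}, the plan is to integrate by parts in \emph{both} frequency variables $\eta$ and $\sigma$, exploiting the oscillations $e^{i\epsilon_1 s \langle \xi-\eta\rangle}$ and $e^{i\epsilon_2 s\jsig}$ to produce two factors of $1/s\approx 2^{-m}$. For $k_2\geqslant \ell_0 :=\lfloor -m+\delta_1 m\rfloor$ (for a small parameter $\delta_1>0$), the gradient of the $\sigma$-phase is $\gtrsim 2^{\delta_1 m}$, so the integration by parts is effective. The dominant contribution is the one where both derivatives fall on the profiles. We will estimate it in $L^2_\xi$ by passing to $L^\infty_\xi$ (using the volume restriction $|\xi|\lesssim 2^{M_0}$) and integrating in $\eta,\sigma$, invoking: the $L^1_\eta$ bound $\|\partial_\eta \wt g\|_{L^1_\eta}\lesssim m^2\,2^m\rho(2^m)$ from \eqref{dxiwtgest}; the Cauchy--Schwarz consequence of the bootstrap \eqref{boot}, namely $\|\varphi_{k_2}\partial_\sigma \wt h\|_{L^1_\sigma}\lesssim 2^{3k_2/2}\|\partial_\sigma \wt h\|_{L^2}\lesssim 2^{3k_2/2}\,Z$; and the pointwise control \eqref{nuRest} on $\nu_1^R$. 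Together with the $2^{2m}$ prefactor from $s\,\tau_m(s)\,ds$ and the two $2^{-m}$ gains from the double integration by parts, this will yield, for each admissible pair $(k_1,k_2)$, an estimate of the form $m^{O(1)}\,2^{O(\delta_N) m}\,\rho(2^m)\cdot Z$. Summing over $k_1,k_2$ and over the Fermi-distance parameters $\ell_1,\ell_2\geqslant \ell_0$ (as in \eqref{ggnuRpr3}) produces a final bound by $m^{O(1)}\,2^{O(\delta_N) m}\,\rho(2^m)\cdot Z$, which is much stronger than the target $\rho(2^m)^{\delta'}Z$ for any $\delta'\in(\delta,\beta/2)$ once $\delta_N$ is small enough.

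The complementary regime $k_2<\ell_0$, where the $\sigma$-oscillation is too weak to support integration by parts, will be handled by direct integration combined with the small-support estimate $\int\varphi_{k_2}(\sigma)\,d\sigma \lesssim 2^{3k_2}$: Cauchy--Schwarz against $\|\varphi_{k_2}\wt h\|_{L^2}\lesssim \e$ produces a factor $2^{3k_2/2}\lesssim 2^{-3m/2+O(\delta_1 m)}$, more than compensating for the single (rather than double) frequency integration by parts. The subleading terms, where the integration-by-parts derivatives fall on the symbols $b_1,b_1',b_2$ (cost at most $2^{O(M_0)}$ by \eqref{sym-Phi}) or on $\nu_1^R$ (absorbed by \eqref{nuRest} at the price of a summable $2^{-(k_1\vee k_2)}$ factor), are easier. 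The operator $T_1^{R,2}[b_2]$ requires additional localization in $|\xi|\approx 2^k$ with $k\leqslant M_0$ and will be treated precisely as in the last step of Lemma \ref{ggnuR} (see \eqref{ggnuRpr1'}--\eqref{ggnuRpr5}); the $\xi/\jxi$ factor in $b$ supplies the additional decay needed when $k<0$. The $E_1$ contributions are dealt with by the same scheme, using \eqref{nuReest} in place of \eqref{nuRest}.

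The hard part will be to ensure that the double integration by parts actually delivers the needed gain beyond $Z$. Unlike in Lemma \ref{gh-no-t-res}, no time-derivative gain is available on either profile here, since the phase $\Phi_\epss$ is not lower-bounded on the support of $\mu^R$, so both integrations by parts must be performed in frequency space. The combination of the $L^1_\eta$ control on $\partial_\eta \wt g$ (much sharper than any analogous bound for $h$, which would only give weighted-$L^2$ information) together with the weighted bootstrap bound on $\partial_\sigma\wt h$ supplies exactly one full power of $\rho(2^m)$ beyond $Z$, leaving ample room to absorb the $m^{O(1)}\,2^{O(\delta_N)m}$ losses from summation and still obtain the required $\rho(2^m)^{\delta'}$ gain.
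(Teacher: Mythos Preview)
Your accounting of the double frequency integration by parts is off by a factor of $2^m$, and this is a genuine gap. After integrating by parts in both $\eta$ and $\sigma$ in the $T_1^{R,1}[b_1]$ contribution, the leading term carries the factors
\[
2^{2m}\cdot 2^{-m}\cdot 2^{-m-k_2^-}\cdot \|\partial_\eta\wt g\|_{L^1_\eta}\cdot \|\varphi_{k_2}\partial_\sigma\wt h\|_{L^1_\sigma}
\;\lesssim\; 2^{-k_2^-}\cdot m^2\,2^m\rho(2^m)\cdot 2^{3k_2/2}\,Z,
\]
which for $k_2\leqslant 0$ equals $m^2\cdot 2^m\rho(2^m)\cdot 2^{k_2/2}\cdot Z$ up to $2^{O(M_0)}$ losses. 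This is \emph{not} $m^{O(1)}2^{O(\delta_N)m}\rho(2^m)\,Z$ as you claim: the correct expression contains $2^m\rho(2^m)$, not $\rho(2^m)$, because $\|\partial_\eta\wt g\|_{L^1}\lesssim m^2\,2^m\rho(2^m)$ already spends one full power of $2^m$. For large times $2^m\rho(2^m)\approx 1$, so your bound reduces to $m^{O(1)}2^{O(\delta_N)m}\,2^{k_2/2}\,Z$, which fails to beat $\rho(2^m)^{\delta'}Z$ unless $k_2\lesssim -m/10$ or so. The paper's proof (see the case analysis around \eqref{ghnuRop}--\eqref{ghnupr8.5}) uses exactly your double-IBP argument only in the regime $-m/2+\delta m<k_2<-m/10$ (Case~4 there), where the $2^{k_2/2}$ factor supplies the missing decay.

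For the complementary regime $k_2\geqslant -m/10$, the paper does something you omit entirely: it integrates by parts only in $\sigma$ (putting the weight on $\wt h$), then applies a Strichartz estimate with $(q,r,\gamma)=(6,3,2/3)$ together with the bilinear bound \eqref{mainbilinR}, exploiting the $L^6_x$ decay $\|e^{isL}g\|_{L^6}\lesssim \rho(s)\log s$ from \eqref{dispbootg}; see \eqref{ghnupr6}--\eqref{ghnupr6.5}. This recovers a genuine power of $\rho(2^m)$ that pure frequency IBP cannot. You are also missing the paper's Case~1 ($k_1,k_2\leqslant -10$), where the phase $\Phi_\epss(\xi,\xi-\eta,\sigma)$ is uniformly lower bounded and one can integrate by parts in time instead, reducing to the singular-part analysis of Section~\ref{secFS}.
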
 

%\begin{remark}
%Note that for the first estimate the bound is enough to propagate the bootstrap assumption on the spatial localization.  
%\end{remark}

\begin{proof}
Recall the definition of the terms $D_1(G,H):= D_{1,m,\epss}(G,H)$ and $E_1(G,H):=E_{1,k_1,k_2,m,\epss}(G,H)$ 
from \eqref{ggbdpr1R} and \eqref{ggbdpr1R'}.
We will mostly concentrate on $D_1(g,h)$;
%by localizing in the frequency of the inputs and outputs
the treatment of the terms in $D_1(h,g)$ is similar (or simpler in many cases) 
and we will only provide some indications on how to handle them. %it at the end of the proof.
The starting point is the estimate
%(recall also the notation \eqref{idsym}), we have that
\begin{align}\label{ghnuRpr0'}
\begin{split}
\Vert \widehat{\mathcal{F}}^{-1}_{\xi \mapsto x} D_{1}(t,\xi) \Vert_{L^2} 
  & \lesssim \sum_{k_1,k_2 \leqslant M_0} \sup_{s \approx 2^m} 
  \int_0^t s \tau_m(s)\bigg[\big \Vert T_1^{R,1}[b_1 ](e^{\epsilon_1 is \jxi } \wt{g_{\epsilon_1}}, 
  e^{\epsilon_2 is \jxi } \wt{h_{\epsilon_2}}) \big \Vert_{L^2_{\xi}}   
  \\
& + \big \Vert T_1^{R,1}[b_1' ]( e^{\epsilon_2 is \jxi } \wt{h_{\epsilon_2}},
  e^{\epsilon_1 is \jxi } \wt{g_{\epsilon_1}}) \big \Vert_{L^2_{\xi}}
  \\
  & + \big \Vert T_1^{R,2}[b_2 ](e^{\epsilon_1 is \jxi } \wt{g_{\epsilon_1}}, 
  e^{\epsilon_2 is \jxi } \wt{h_{\epsilon_2}}) \big \Vert_{L^2_{\xi}}  \bigg] \, ds,
\end{split}
\end{align}
which is a consequence of the identity \eqref{idbilR} (see also the analogous \eqref{ggnuRpr0});
as before, the symbols are defined as in \eqref{symbolnuR} but with $b$ instead of $b'$
on the right-hand side, and with $b$ given by \eqref{ggbdpr1Rsym}; 
once again we left the $k_1,k_2$ dependence implicit.

\medskip
{\it Estimate of the $T_1^{R,1}[b_1]$ contribution}.
We begin by looking at the main contribution $T_1^{R,1}[b_1] = T_1^{R,1}[\nu_1^R;b_1]$.
The proof will proceed in several cases and subcases depending on the size of the 
frequencies of the inputs, of the output and of the integration variables.
%as it will be clear to the reader how to treat the others similarly. 
According to our usual notation \eqref{idop11R}, we localize the variables in 
$T_1^{R,1}[b_1]$ by defining the following operator:
for fixed small $\delta$ and $\underline{k} := (k,k_1,k_2,\ell)$ let
%and look at the localized version of $T_1^{R,1}[b_1]$ given by the operator
\begin{align}\label{ghnuRop}
\begin{split}
T_{\underline{k}}(t,\xi) := \int_0^t s \, \tau_m(s) \Big[ \int_{\R^6} e^{-is \Phi_\epss(\xi,\xi-\eta,\sigma)}
 \wt{g_{\epsilon_1}}(s,\xi-\eta) \wt{h_{\epsilon_2}}(s,\sigma)  \, %\varphi_{k_1}(\eta) \varphi_{k_2}(\sigma)
  \\
  \times \varphi_{\underline{k}}(\xi,\eta,\s) \, b_1(\xi,\eta,\s) \, \nu_1^R(\eta,\sigma) \, d\eta d\sigma \Big] \, ds,
\end{split}
\end{align}
where $\Phi_\epss(\xi,\xi-\eta,\s) = - \jxi + \eps_1 \langle \xi-\eta\rangle + \eps_2 \jsig$ as 
per our usual notation, and
\begin{align}\label{ghnuRop1}
\begin{split}
& \varphi_{\underline{k}}(\xi,\eta,\s) :=  \varphi_{\sim k_1}(\eta)\varphi_{\sim k_2}(\s) 
  %\varphi_{k_3}(\xi-\eta) 
   \varphi_k(\xi) \varphi_{\ell}^{(\ell_0)}( \langle \xi-\eta \rangle - 2\lambda), \qquad \ell_0 := -m + \delta m,
\\
& b_1(\xi,\eta,\s) = \frac{\xi}{\jxi} b'(\xi,\xi-\eta,\s) \varphi_{k_1}(\eta)\varphi_{k_2}(\s),
\end{split}
\end{align}
with $b'$ as in \eqref{ggbdpr1Rsym}.
The variables in \eqref{ghnuRop} are then localized as follows:
\begin{align}\label{ghnuRloc}
\begin{split}
& |\xi| \approx 2^k \qquad |\eta|\approx 2^{k_1}, \qquad |\s|\approx 2^{k_2}, 
  \qquad |\langle \xi-\eta \rangle - 2\lambda| \approx 2^{\ell}.
\\
& \mbox{with} \quad -5m < k,k_1,k_2 \leqslant M_0, \qquad \ell_0 < \ell < 0,
\end{split}
\end{align}
with $|\langle \xi-\eta \rangle - 2\lambda| \lesssim 2^{-m+\delta m}$ in the case $\ell=\ell_0$.

%The constraints for $k$ (output) and $k_1,k_2$ (input, frequency of $h$) are the usual ones.
%The constraint on $\ell$ is from the definition of $g$, $\ell < 0$; therefore $|\eta| \lesssim 2^k + 1$,
%so that $k_1 \leq M_0 + 10$ (we will drop the additive constant in what follows).
%\begin{align}\label{ghnuRloc'}
%\end{align}
Our goal is to estimate the sum over $k,k_1,k_2,\ell$ of the $L^2$ 
norm of $T_{\underline{k}}$ by the right-hand side of \eqref{goalR}.
Since the sums are over $O(m^4)$ parameters, it also suffices to estimate 
$\|T_{\underline{k}}\|_{L^2}$ for a single 4-tuple, 
provided we can gain an arbitrary $2^{-(0+)m}$ factor.

\medskip
{\bf Case 1: $k_1,k_2 \leqslant -10$}.\label{Case1} 
%{\it Subcase 2.1: $\vert k_1 - k_2 \vert < 5$.}
In this case $|\eta|,|\s|\leqslant 2^{-5}$ and the phase is lower bounded uniformly:
\begin{align}\label{ghnuRphase}
|\Phi_\epss(\xi,\xi-\eta,\s)| \gtrsim 1 .
\end{align}
%In this case note that the phase $\vert \Phi_{\epss} \vert \gtrsim 1.$ 
Indeed if $(\epss) = (-,-)$ then the result is obvious. 
The cases $(\epss) = (+,-), (+,+)$ can be treated similarly by noticing that
\begin{align*}
\vert \Phi_{+,\epsilon_2} (\xi,\xi-\eta,\s) \vert & \geqslant 
  1 - \big| - \jxi + \langle \xi-\eta\rangle | - |\jsig-1|
  \\ 
  & \geqslant
  1 - 2|\eta| - |\s| \gtrsim 1.
  %\big \vert \jxi - \langle \xi-\eta \rangle  \big \vert - \big(\jsigma -1 \big) \geqslant 1 - O(2^{-m/10}).
\end{align*}
Finally, the case $(\epss) = (-,+)$ can be handled by noticing that 
\begin{align*}
\vert \Phi_{-,+} (\xi,\xi-\eta,\s) \vert \geqslant \jxi + \langle \xi-\eta \rangle - 1 - |\jsig - 1|
  \gtrsim 1 - |\s|.
  %4 \lambda -1 - \big \vert \jsigma - 
  %1 \big \vert - 2 \big \vert \langle \xi -\eta \rangle - 2 \lambda \big \vert -
  %\big \vert \jxi - \langle \xi -\eta \rangle \big \vert \gtrsim 1.
\end{align*}
Then we can proceed as in the proof of \eqref{proFSest1} in Proposition \ref{proFS}.
More precisely, we can integrate by parts in $s$ without any loss, 
and reduce everything to estimating the same operators \eqref{Tij1} and \eqref{Tij2}
treated before, but with $\mu^R$ instead of $\mu^S$.
Since for the operators associated to the regular measure $\nu_1^R$ 
we have similar (in fact, better) bilinear bounds than for those associated to $\nu_1^S$
(compare \eqref{mainbilin1}-\eqref{mainbilin2} with \eqref{mainbilinR}),
the proof carries out verbatim as in Subsection \ref{secFS};
see the proof of the estimate \eqref{lembdary1} in Lemma \ref{bdry-no-t-res}
and of the estimate \eqref{ghbd1} in Lemma \ref{gh-no-t-res}.
%cfp{Technically need a little bit of additional room...}

\smallskip
From now on we may assume $k_1\wedge k_2 > -10$.
Our next goal is to try to integrate by parts in $\s$, but in order to do this,
we first need to deal with very small $k_2$. Let us fix a small $\delta>0$.

\medskip
{\bf Case 2: $k_2 \leqslant -m/2+\delta m$}.\label{Case2}
Notice that, in view of Case 1, we may assume $|\eta|\gtrsim 1$. 
In particular, from \eqref{nuRest} we 
have that $\nu_1^R(\eta,\s)$ is bounded by $2^{10M_0}$.
For $\ell > \ell_0$ we integrate by parts in $\eta$, using that $\partial_\eta \langle \xi-\eta \rangle \approx 1$
on the support of the integral \eqref{ghnuRop}.
We then obtain a main contribution when $\partial_\eta$ hits $\wt{g}$, which is bounded by
\begin{align}\label{ghnupr5}
\begin{split}
\int_0^t \tau_m(s) \int_{\R^6} %e^{-is \Phi_\epss(\xi,\xi-\eta,\sigma)}
 | \nabla_\eta \wt{g_{\epsilon_1}}(s,\xi-\eta)| \,|\wt{h_{\epsilon_2}}(s,\sigma)|
  \varphi_{\underline{k}}(\xi,\eta,\s) \, b_1(\xi,\eta,\s) \, |\nu_1^R(\eta,\sigma)| \, d\eta d\sigma  \, ds.
\end{split}
\end{align}
The other terms where $\partial_\eta$ hits $\nu_1^R$ or the cutoff (in which case we can 
repeat the integration by parts) are better. We then estimate
\begin{align}\label{ghnupr5.5}
\begin{split}
{\| \eqref{ghnupr5} \|}_{L^2} \lesssim 2^m 2^{3k/2} \sup_{s\approx 2^m}
 {\| \varphi_{\ell} \nabla_\eta \wt{g_{\epsilon_1}}(s) \|}_{L^1} 
 \cdot {\| \varphi_{k_2} \wt{h_{\epsilon_2}}(s) \|}_{L^1} \cdot 2^{10M_0}
 \\
 \lesssim 2^m 2^{12M_0} \cdot 2^m\rho(2^m)m \cdot 2^{5k_2/2} Z,
\end{split}
\end{align}
having used \eqref{dxiwtgest}, H\"older, and the a priori assumptions.
Since $k_2 \leqslant -m/2 +\delta m$ the above suffices.

When $\ell = \ell_0$ we do not need to integrate by parts in $\eta$, but can directly 
do the integration in \eqref{ghnuRop}-\eqref{ghnuRop1}, using \eqref{inftyfreqg}, 
and obtain the same bound (provided $\delta \leqslant \delta_N/2$):
\begin{align}\label{ghnupr5'}
\begin{split}
{\| T_{\underline{k}} \|}_{L^2} \lesssim 2^{2m} 2^{3k/2} \sup_{s\approx 2^m}
 {\| \varphi_{\leqslant \ell_0} \wt{g_{\epsilon_1}}(s) \|}_{L^1} 
 \cdot {\| \varphi_{k_2} \wt{h_{\epsilon_2}}(s) \|}_{L^1} \cdot 2^{10M_0}
 \\
 \lesssim 2^{12M_0} \cdot 2^m\rho(2^m)m \cdot 2^{5k_2/2} Z.
\end{split}
\end{align}

In what follows we may assume $k_2 \geqslant -m/2 + \delta m$,
and will always have the option of integrating by parts in $\s$.
We then distinguish two more cases depending on whether $k_2$ is relatively close to zero or not.

\medskip
{\bf Case 3: $k_2 \geqslant -m/10$}.\label{Case3} 
In this case $k_2$ is not small %(think $k_2\approx 0$) 
so that we can efficiently use integration by parts in $\s$, then Strichartz estimates,
our usual bilinear estimate, and exploit the decay of $e^{itL}g$ to conclude.
More precisely, integrating by parts in \eqref{ghnuRop} gives a main term of the form
\begin{align}\label{ghnupr6}
\begin{split}
\int_0^t \tau_m(s) \Big[ \int_{\R^6} e^{-is \Phi_\epss(\xi,\xi-\eta,\sigma)}
 \wt{g_{\epsilon_1}}(s,\xi-\eta)
 \, \frac{\sigma \jsig}{|\sigma|^2} \cdot \nabla_\s\wt{h_{\epsilon_2}}(s,\sigma) 
 \, %\varphi_{k_1}(\eta) \varphi_{k_2}(\sigma)
  \\
  \times \varphi_{\underline{k}}(\xi,\eta,\s) \, b_1(\xi,\eta,\s) \, \nu_1^R(\eta,\sigma) \, d\eta d\sigma \Big] \, ds.
\end{split}
\end{align}
For the other terms where the derivatives hit the cutoff $\varphi_{k_2}$ or,
equivalently, the factor $1/|\s|$ we have a net gain and can repeat the integration by parts;
if the derivatives hit $\nu_1^R$ we have a similar or better contribution, in view of \eqref{nuRest}.
We then write
\begin{align}\label{ghnupr6'} 
\begin{split}
& {\| \eqref{ghnupr6} \|}_{L^2}
 \\
 & \lesssim {\Big\| \int_0^t \tau_m(s) e^{-isL} P_k T_1^{R,1}[b_1]
  \Big( e^{is\epsilon_1 \jxi} \varphi_\ell^{[\ell_0,0]}(\jxi-2\lambda)\wt{g_{\eps_1}}, 
  e^{is\epsilon_2 \jxi} \varphi_{k_2}(\xi)\big(\frac{\xi \jxi}{|\xi|^2} \cdot \nabla \big) \wt{h_{\eps_2}} \Big) 
  ds \Big\|}_{L^2_x}
\end{split}
\end{align}
and estimate this using %After integrating by parts in $\s$, we use 
the Strichartz estimate (Lemma \ref{Strichartz}) with parameters $(q,r,\gamma) = (6,3,2/3)$,
i.e., landing in $L^{6/5}_s \langle D \rangle^{-2/3} L^{3/2}_x$,
and the bilinear estimate \eqref{mainbilinR}:
%We obtain (considering as usual the worse term when the derivative hits the profile)
\begin{align}\label{ghnupr6.5} 
\begin{split}
{\| \eqref{ghnupr6} \|}_{L^2}
%&\bigg \Vert \int_0^t \tau_m(s) \int_{\mathbb{R}^6} e^{is\Phi_{\epss}(\xi,\xi-\eta,\s)} \wt{g}(\xi-\eta) 
%  b_1(\xi,\xi-\eta,\s) \big(\frac{\s}{\vert \s \vert^2} \cdot \nabla_{\s} \big)
%  \wt{h} (s,\s) \jsigma \nu^R(\eta,\s) d\eta \, d\s ds \bigg \Vert_{L^2} 
%  \\
%& \lesssim \Bigg  \Vert  \int_0^t \tau_m(s) e^{isL} T_1^{R,1}[b_1]
%  \bigg( e^{is\epsilon_1 \jxi} \wt{g}, 
%  e^{is\epsilon_2 \jxi}\big(\frac{\xi}{\vert \xi \vert^2} \cdot \nabla \big) h\bigg) ds \Bigg \Vert_{L^2} 
%  \\
& \lesssim 2^{2k/3} {\Big\|P_k T_1^{R,1}[b_1]
  \Big( e^{is\epsilon_1 \jxi} \varphi_\ell^{[\ell_0,0]}(\jxi-2\lambda)\wt{g}, 
  e^{is\epsilon_2 \jxi} \varphi_{k_2}(\xi)\big(\frac{\xi \jxi}{|\xi|^2} \cdot \nabla \big) \wt{h} \Big) 
  \Big\|}_{L^{6/5}_{s\approx 2^m} L^{3/2}_x}
\\
& \lesssim 2^{(C_0+1) M_0} 
  \cdot \Vert e^{isL} g \Vert_{L^{6/5}_{s \approx 2^m} L^{6-}_x} 
  \cdot 2^{-k_2} \Vert \nabla \wt{h} \Vert_{L^{\infty}_s L^2_x} 
  \\
& \lesssim  2^{(C_0+1) M_0} \cdot \big(\rho^{6/5}(2^m)\, 2^{(0+)m} \, 2^m \big)^{5/6} \cdot 
  2^{-k_2} \cdot Z %\rho(2^m)^{1-\beta/2} 2^m  \e^{\beta},
% & \lesssim 2^{-k_2} \cdot 2^{(C_0+1)M_0} \cdot 2^{-m/6+} \cdot 2^{\beta m} \e^{\beta} 
\end{split}
\end{align}
%where $c_1 (\xi,\eta,\s) = \jsigma b_1 (\xi,\eta,\s).$ 
having used \eqref{dispbootgbis}.
This gives the desired estimate since $k_2 \geqslant -m/10$.

\medskip
{\bf Case 4: $-m/2+\delta m < k_2 < -m/10$}.\label{Case4}
In this case, $k_2$ is relatively small, but we can still 
integrate by parts in the frequencies %$\s$ and $\eta$ as well in \eqref{ghnuRop};
and, in addition, exploit the fact 
that we must have $|\eta|\gtrsim 1$ so that the measure $\nu_1^R$ does not create any harm.

We skip the case $\ell=\ell_0$ as it is similar to what we will do below, 
and the estimates can be done without integrating by parts in $\eta$.
For $\ell > \ell_0$ we integrate by parts in $\s$ in the formula \eqref{ghnuRop} 
obtaining a term as in \eqref{ghnupr6}, and then also integrate by parts in $\eta$, 
as done before in order to arrive at \eqref{ghnupr5}.
We then obtain the main contribution when both derivatives hit the profiles, and this is bounded by
\begin{align}\label{ghnupr8}
\begin{split}
\int_0^t \int_{\mathbb{R}^6} s \, \tau_m(s) %\Big[ \int_{\R^6} e^{-is \Phi_\epss(\xi,\xi-\eta,\sigma)}
 \frac{1}{s}| \nabla_\eta \wt{g_{\epsilon_1}}(s,\xi-\eta)| \, 
 \frac{1}{s} \frac{1}{|\s|} |\nabla_\s \wt{h_{\epsilon_2}}(s,\sigma)|  \, %\varphi_{k_1}(\eta) \varphi_{k_2}(\sigma)
 \, \varphi_{\underline{k}}(\xi,\eta,\s) \, |\nu_1^R(\eta,\sigma)| \, d\eta d\sigma \, ds.
\end{split}
\end{align}
%all other terms where the derivatives do not hit the profiles are easier to handle,
%similarly to what was already explained before.

%\medskip
%{\it Subcase 2.2: $ \vert k_1 - k_2 \vert \geqslant 5$} \\
%In this case note that case 2.1 still holds if $\vert \eta \vert < c$ for some small constant $c.$ 
%Therefore we can assume $\vert \eta \vert \geqslant c.$ 
%Next we insert a cut-off $\varphi_{\ell_1}^{(\ell_0)}(\langle \xi-\eta \rangle -2 \lambda), 
%\ell_0 := -m +  \delta m, \delta>0.$ \\
%If $\ell_1> \ell_0,$ we integrate by parts in $\eta$ and $\s.$ 
Using H\"older's inequality,
the pointwise bounds on the measure \eqref{nuRest}, \eqref{dxiwtgest} and \eqref{boot},
%$|\nu^R_1(\eta,\s)| \lesssim 2^{10M_0}$ 
we can estimate %(for $s \approx 2^m$):
\begin{align}\label{ghnupr8.5}
\begin{split}
%\Vert T^{R,1}_1 [b_1] (e^{is \jxi} \wt{g} , e^{is\jxi} \wt{h}) \Vert_{L^2} 
{\| \eqref{ghnupr8} \|}_{L^2} %& \lesssim 
  %2^{-2m} \bigg \Vert \int_{\mathbb{R}^6} \vert \varphi_{\ell_1}^{(\ell_0)} 
  %\nabla_{\eta} \wt{g} \vert \vert \nabla_{\s} \wt{h} \vert  \frac{\varphi_{k_1}(\s) 
  %\varphi_{k_2}(\eta)}{\vert \s \vert} \vert b_1(\xi,\eta,s) \vert  \vert  \nu^R(\eta,\s) \vert d\eta d\s \bigg \Vert_{L^2} 
%\\
& \lesssim %2^{-2m} \cdot 
  2^{3k^+/2} \cdot 2^{-k_2} \Vert \varphi_{k_2} \nabla_{\s} \wt{h_{\eps_2}} 
  \Vert_{L^1_{\s}} \cdot \Vert \varphi_{\ell} \nabla_{\eta} \wt{g_{\eps_1}} \Vert_{L^1_{\eta}}
\\
& \lesssim %2^{-2m} \cdot 
  2^{3M_0/2} \cdot 2^{k_2/2} Z %\rho(2^m)^{1-\beta} 2^m \varepsilon^{\beta} 
  \cdot \rho(2^m) 2^m m, 
%&\lesssim 2^{-2m} \cdot 2^{3M_0/2}  \cdot 2^{k_2/2} \cdot 2^{\beta m} \e^{\beta}.
\end{split}
\end{align}
which suffices since $k_2 < -m/10$.

%In the case where $\ell_1 = \ell_0,$ we only integrate by parts in $\s$ and integrate in $\eta$ directly.

\medskip
{\it Estimate of the $T_1^{R,1}[b_1']$ contribution}.
The estimates in this case are similar to the previous ones, but not symmetric;
in fact they are easier since the second input is $g$ which is localized around frequencies of size $\approx 1$,
and therefore the measure $\nu_1^R$ is always bounded by $2^{10M_0}$.
We provide some details for the convenience of the reader.

Similarly to \eqref{ghnuRop}-\eqref{ghnuRop1} it suffices to look at the operator
\begin{align}\label{ghnuRop'}
\begin{split}
T_{\underline{k}'}'(t,\xi) := \int_0^t s \, \tau_m(s) \Big[ \int_{\R^6} e^{-is \Phi_{\eps_2,\eps_1}(\xi,\xi-\eta,\sigma)}
 \wt{h_{\epsilon_2}}(s,\xi-\eta) \wt{g_{\epsilon_1}}(s,\sigma)  \, %\varphi_{k_1}(\eta) \varphi_{k_2}(\sigma)
  \\
  \times \psi_{\underline{k}'}(\xi,\eta,\s) \, b_1'(\xi,\eta,\s) \, \nu_1^R(\eta,\sigma) \, d\eta d\sigma \Big] \, ds,
\end{split}
\end{align}
where $\underline{k}' = (k,k_1,k_3,\ell)$ and
%where $b_1'(\xi,\eta,\s) = b'(\xi,\xi-\s,\eta) \varphi_{k_1}(\eta) \varphi_{k_2}(\s)$,
\begin{align}\label{ghnuRop1'}
\begin{split}
& \psi_{\underline{k}'}(\xi,\eta,\s) :=  \varphi_{\sim k_1}(\eta)
  \varphi_{k_3}(\xi-\eta) 
  \varphi_k(\xi) \varphi_{\ell}^{(\ell_0)}( \jsig - 2\lambda), \qquad \ell_0 := \lfloor -m + \delta m \rfloor, 
\\
& b_1'(\xi,\eta,\s) = \frac{\xi}{\jxi} b'(\xi,\xi-\s,\eta) \varphi_{k_1}(\eta)\varphi_{k_2}(\s).
\end{split}
\end{align}
As before, we may easily reduce to at most $O(m^4)$ terms in the sum over the indexes,
%since $k_3 \leq M_0 + 5$ on the support of \eqref{ghnuRop'}
and do the estimates for a fixed $4$-tuple.
%Also, we only look at the case $\ell > \ell_0$ since when $\ell = \ell_0$ the estimates are similar or easier.
We fix $\delta>0$ small, and look at three main cases depending on the size of $k_3$.

\medskip
{\bf Case 1b: $k_3 \leqslant -m/2 + \delta m$}.
In this case we cannot integrate by parts in $\eta$, but we integrate by parts in $\s$ and 
can proceed identically to Case 2 above, and obtain the same bounds as in \eqref{ghnupr5.5} and \eqref{ghnupr5'}
with $k_3$ instead of $k_2$; this suffices.

\medskip
{\bf Case 2b: $-m/2 + \delta m < k_3 < -m/10$}.
This case is analogous to Case 4 above: we integrate by parts in $\s$ and in $\eta$,
obtain a main term similar to \eqref{ghnupr8} with the roles of $g$ and $h$ exchanged,
a factor of $1/|\xi-\eta|$ instead of $1/|\s|$, and $\psi_{\underline{k}'}$ instead of $\varphi_{\underline{k}}$.
We can then use H\"older's inequality to obtain the same bound as in \eqref{ghnupr8.5}
with $k_3$ instead of $k_2$, which suffices.

\medskip
{\bf Case 3b: $k_3 \geqslant -m/10$}.
This case can be treated similarly to the previous Case 3. We integrate by parts only in $\eta$ and
obtain a main contribution similar to \eqref{ghnupr6}, which, similarly to \eqref{ghnupr6'}, 
is bounded in $L^2$ by
\begin{align}\label{ghnupr10} 
\begin{split}
%& {\| \eqref{ghnupr6} \|}_{L^2} \\ & \lesssim 
C {\Big\| \int_0^t \tau_m(s) e^{-isL} P_k T_1^{R,1}[b_1']
  \Big( e^{is\epsilon_2 \jxi} \varphi_{k_3}(\xi)\big(\frac{\xi \jxi}{|\xi|^2} \cdot \nabla \big) \wt{h_{\eps_2}},
  e^{is\epsilon_1 \jxi} \varphi_\ell^{(\ell_0)}(\jxi-2\lambda)\wt{g_{\eps_1}} 
  \Big) ds \Big\|}_{L^2_x}.
\end{split}
\end{align}
We can then estimate using the Strichartz estimate exactly as in \eqref{ghnupr6.5},
and obtain the same bound with $k_3$ instead of $k_2$.
Since $k_3 \geqslant -m/10$ this is enough to obtain 
a bound by the right-hand side of \eqref{goalR}, as desired.

%As in the previous case we can restrict to $k_3 \leqslant -m/10.$ 
%Then we integrate by parts in $\s, \eta$ and write
%\begin{align*}
%\Vert T_1^{R,1}[b_1] (h,g) \Vert_{L^2} & \lesssim 2^{3M_0/2} \cdot 2^{-k_3} \cdot  
%\sup_{\vert \xi \vert \leqslant 2^{M_0}}  \sup_{s \approx 2^m} \bigg \vert 
%\int_{\mathbb{R}^6} \vert \nabla_{\s} \wt{g} \vert \vert \varphi_{k_3} (\eta) 
%\nabla \wt{h} (\eta) \vert \varphi_{\sim 0} (\s) \nu^R(\xi-\eta,\s) d\eta d\s \bigg \vert  
%\\
%& \lesssim 2^{23M_0/2} \Vert \nabla_{\s} \wt{g} \Vert_{L^1_{\s}} \cdot 2^{k_3/2} \cdot 
%\Vert \nabla_{\eta} \wt{h} \Vert_{L^2},
%\end{align*}
%which allows us to conclude.

\medskip
Note that the estimates obtained so far also take care of the $T_1^{R,1}$ operators
when the roles of $h$ and $g$ are reversed.
%and this concludes the treatment of $T_^{R,1}$.
%we now discuss the case $(G,H) = (h,g)$.
To conclude the proof of the lemma we then need to estimate the $T_1^{R,2}$ contribution 
in the last line of \eqref{ghnuRpr0'}.

\medskip
{\it Estimate of the $T_1^{R,2}$ contribution}.
The estimates for this term are similar but not completely analogous to the previous ones, 
so we provide the details.
With our notation from \eqref{idop11R} and \eqref{ggbdpr1Rsym} we look at 
$T_1^{R,2}[b_2 ](e^{\epsilon_1 is \jxi } \wt{g_{\epsilon_1}}, e^{\epsilon_2 is \jxi } \wt{h_{\epsilon_2}})$ 
and begin by defining a suitable localized version of this operator,
as done similarly before in \eqref{ghnuRop} and \eqref{ghnuRop'}.

For $\underline{k}'' := (k,k_1,k_2,\ell)$ let 
\begin{align}\label{ghnuRopR2}
\begin{split}
T''_{\underline{k}''}(t,\xi) := \int_0^t s \, \tau_m(s) 
 \Big[ \int_{\R^6} e^{-is \Phi_\epss(\xi,-\eta-\sigma,\sigma)}
 \wt{g_{\epsilon_1}}(s,-\eta-\s) \wt{h_{\epsilon_2}}(s,\sigma)  \, %\varphi_{k_1}(\eta) \varphi_{k_2}(\sigma)
  \\
  \times \rho_{\underline{k}''}(\xi,\eta,\s) \, b_2(\xi,\eta,\s) \, \overline{\nu_1^R(\eta,\xi)} \, d\eta d\sigma \Big] \, ds,
\end{split}
\end{align}
where $\Phi_\epss(\xi,-\eta-\s,\s) = - \jxi + \eps_1 \langle -\eta -\s\rangle + \eps_2 \jsig$,
\begin{align}\label{ghnuRop1R2}
\begin{split}
& \rho_{\underline{k}''}(\xi,\eta,\s) :=  \varphi_{\sim k_1}(\eta)\varphi_{\sim k_2}(\s) 
  %\varphi_{k_3}(\xi-\eta) 
   \varphi_k(\xi) \varphi_{\ell}^{(\ell_0)}( \langle \eta+\s \rangle - 2\lambda), \qquad \ell_0 := \lfloor -m + \delta m \rfloor,
\\
& b_2(\xi,\eta,\s) = \frac{\xi}{\jxi} b'(\xi,-\eta-\s,\s) \varphi_{k_1}(\eta)\varphi_{k_2}(\s),
\end{split}
\end{align}
with $b'$ as in \eqref{ggbdpr1Rsym}.
As before, we can reduce to obtaining a slightly better bound than the claimed one for fixed $\underline{k}''$.
Also, we may assume that $\ell > \ell_0$, since the estimates for $\ell=\ell_0$
can be done without integrating by parts in $\eta$.

Notice that in contrast with the previous estimates, in what follows we rely on the 
$\xi/\jxi$ factor in the symbol $b_2$ in order to cancel
the mild singularity from the measure $\nu_1^R(\eta,\xi)$ of the form $(|\eta|\vee|\xi|)^{-2}$, 
see \eqref{nuRest}.

\medskip
{\bf Case 1c: $k_2 \leqslant -m/2 + \delta m$}.
This case is analogous to Case 2 and Case 1b above. 
For $\ell>\ell_0$ we integrate by parts in $\eta$ in \eqref{ghnuRopR2}
obtaining a contribution bounded pointwise, similarly to \eqref{ghnupr5}, by
\begin{align}\label{ghnupr5''}
\begin{split}
\int_0^t \tau_m(s) \int_{\R^6} %e^{-is \Phi_\epss(\xi,\xi-\eta,\sigma)}
 | \nabla_\eta \wt{g_{\epsilon_1}}(s,-\eta-\s)| \,|\wt{h_{\epsilon_2}}(s,\sigma)|
  \rho_{\underline{k}''}(\xi,\eta,\s) \, b_2(\xi,\eta,\s) \, |\nu_1^R(\eta,\xi)| \, d\eta d\sigma \, ds.
\end{split}
\end{align}
Taking the $L^2$ norm, using H\"older, the bound $|b_2| |\nu_1^R| \lesssim 2^k \cdot 2^{-2(k\vee k_1)}2^{10M_0}$, 
and the usual a priori bounds \eqref{dxiwtgest} and \eqref{boot}, we get, similarly to \eqref{ghnupr5.5},
\begin{align}\label{ghnupr5''.5}
\begin{split}
{\| \eqref{ghnupr5''} \|}_{L^2} \lesssim 2^m 2^{3k/2} \sup_{s\approx 2^m}
 {\| \varphi_{\ell} \nabla_\eta \wt{g_{\epsilon_1}}(s)\|}_{L^1} 
 \cdot {\| \varphi_{k_2} \wt{h_{\epsilon_2}}(s) \|}_{L^1} \cdot 2^k 2^{-2(k\vee k_1)} 2^{10M_0}
 \\
 \lesssim 2^m 2^{12M_0} \cdot 2^m\rho(2^m)m \cdot 2^{5k_2/2} Z,
\end{split}
\end{align}
which suffices.

\medskip
{\bf Case 2c: $-m/2 + \delta m < k_2 < -m/50$}.
In this case we can integrate by parts in $\s-\eta$ as well, using that 
$e^{-is \Phi_\epss(\xi,-\eta-\sigma,\sigma)} 
= (-is\eps_2\sigma/\jsig)^{-1}  (\partial_\s - \partial_\eta) e^{-is \Phi_\epss(\xi,-\eta-\sigma,\sigma)}$.
%and that the differentiation $(\partial_\s - \partial_\eta)$ does bring an extra derivative on $\wt{g}$.
We can then proceed as in Case 4 above, see \eqref{ghnupr8} and \eqref{ghnupr8.5}.
More precisely, taking also advantage of the extra $2^k$ factor, we get a bound by
\begin{align*}
%{\| T''_{\underline{k}''}(t) \|}_{L^2} & \lesssim
  C 2^{3k/2} \cdot {\| \varphi_{\ell} \nabla_{\eta} \wt{g_{\eps_1}} \|}_{L^1_{\eta}}
  \cdot 2^{-k_2} {\| \varphi_{k_2} \nabla_{\s} \wt{h_{\eps_2}} \|}_{L^1_{\s}}
  \cdot 2^k 2^{-2(k\vee k_1)} 2^{10M_0}
\\
\lesssim 2^{11M_0} \cdot 2^{k_2/2} Z %\rho(2^m)^{1-\beta} 2^m \varepsilon^{\beta} 
  \cdot \rho(2^m) 2^m m,
\end{align*}
which is stronger than the right-hand side of \eqref{goalR}.

\medskip
{\bf Case 3c: $k_2 \geqslant -m/50$}.
This case is similar to Cases 3 and 3b above. We integrate by parts only in $\s$,
obtaining the main contribution (compare with \eqref{ghnupr6'} and \eqref{ghnupr10})
\begin{align*}%\label{ghnupr6''} 
\begin{split}
%& {\| \eqref{ghnupr6} \|}_{L^2} \\ & \lesssim 
{\Big\| \int_0^t \tau_m(s) e^{-isL} P_k T_1^{R,2}[b_2]
  \Big(e^{is\epsilon_1 \jxi} \varphi_\ell(\jxi-2\lambda)\wt{g_{\eps_1}}, 
  e^{is\epsilon_2 \jxi} \varphi_{k_2}(\xi)\big(\frac{\xi \jxi}{|\xi|^2} \cdot \nabla \big) \wt{h_{\eps_2}}
  \Big) ds \Big\|}_{L^2_x},
\end{split}
\end{align*}
which, using Lemma \ref{Strichartz} with $(q,r,\gamma) = (6,3,2/3)$,
%i.e., landing in $L^{6/5}_s L^{3/2}_x$,
the bilinear estimate \eqref{mainbilinR}, \eqref{dispbootgbis} and \eqref{boot}, 
can be estimated (exactly as in \eqref{ghnupr6.5}) by
\begin{align*}%\label{ghnupr6.5'} 
\begin{split}
%& C 2^{2k^+/3} {\Big\|P_k T_1^{R,2}[b_2]
%  \Big( e^{is\epsilon_1 \jxi} \varphi_\ell^{[\ell_0,0]}(\jxi-2\lambda)\wt{g}, 
%  e^{is\epsilon_2 \jxi} \varphi_{k_2}(\xi)\big(\frac{\xi \jxi}{|\xi|^2} \cdot \nabla \big) \wt{h} \Big) 
%  \Big\|}_{L^{6/5}_{s\approx 2^m} L^{3/2}_x}
%\\
%& \lesssim 2^{(C_0+1) M_0} 
%  \cdot \Vert e^{isL} g \Vert_{L^{6/5}_{s \approx 2^m} L^{6-}_x} 
%  \cdot 2^{-k_2} \Vert \nabla \wt{h} \Vert_{L^{\infty}_s L^2_x} 
% \\
% & \lesssim  
  2^{(C_0+1) M_0} \cdot \big(\rho^{6/5}(2^m)\, 2^{(0+)m} \, 2^m \big)^{5/6} \cdot 2^{-k_2} \cdot Z
  \lesssim 
  2^{m(-1/6 + (C_0+2)\delta_N + \beta)} \cdot 2^{-k_2} \cdot \rho(2^m) 2^m \cdot \e^\beta.
%% & \lesssim 2^{-k_2} \cdot 2^{(C_0+1)M_0} \cdot 2^{-m/6+} \cdot 2^{\beta m} \e^{\beta} 
\end{split}
\end{align*}
This suffices for \eqref{goalR} provided $\delta_N$ and $\beta$ are small enough.

\iffalse
tl{Moving on to the $T_1^{R,2}$ term, we can integrate by parts in $\eta,\s$ 
then use Schur's lemma. Note that this allows us to handle both the $(h,g)$ 
case and the $(g,h)$ case.  We start by inserting a localization of $h$ at frequency $2^{k_3}.$ 
Note that $k_3 \leqslant 2 M_0.$ \\
We write (note that we rely again on the $\xi/\jxi$ factor in front)
\begin{align*}
& \Vert P_{k_1} T_1^{R,2} [b_2] (e^{is\epsilon_1 \jxi} G_{\epsilon_1}, 
  e^{is \epsilon_2 \jxi} H_{\epsilon_2})  \Vert_{L^2} \\
  & \lesssim 2^{-2m} \cdot 2^{k_{2,-}}
  \cdot 2^{3k_2/2} \cdot 2^{-2(k_1 \vee k_2)} \cdot 
  \Vert \varphi_{\ell_1}^{(\ell_0)} \nabla_{\eta} \wt{g} \Vert_{L^1_{\eta}} 
  \cdot 2^{-k_3} \Vert \varphi_{k_3} \nabla_{\s} \wt{h} \Vert_{L^1_{\s}} 
  \\
& \lesssim 2^{-2m} \cdot 2^{k_{2,-}+3k_2/2} \cdot 2^{-2(k_1 \vee k_2)} 
\cdot 2^{k_3/2} \cdot \rho(2^m)2^m m \cdot \rho(2^m)^{1-\beta/2} 2^m \e^{\beta} .
% & \lesssim 2^{-2m} \cdot 2^{k_{2,-}+3k_2/2} \cdot 2^{-3(k_1 \vee k_2)} \cdot 2^{k_3/2} \cdot \varepsilon^{\beta}.
\end{align*}
As we did previously, we can ensure convergence of the sum on $k_1$ by noting that if $k_1 \leqslant -10 m, 
$ then we integrate in $\eta$ directly and use \eqref{inftyfreqg}. \\
This yields the desired estimate if $k_3 \leqslant -m/50.$ In the case $k_3 > -m/50,$ 
we can proceed as for $T^{R,1}_1,$ case 1.\\
For the $\mu^{Re}$ part we can use a similar reasoning. 
In fact this is easier to deal with since $g$ is localized at frequency around $1,$ 
therefore $\max \{ \vert \xi \vert ,\vert \eta \vert ,|\s| \} \gtrsim 1.$ 
We thus skip the details.
\fi

\medskip
The same arguments above can be used to treat
$T_1^{R,2}[b_2 ](e^{\epsilon_1 is \jxi } \wt{h_{\epsilon_1}}, e^{\epsilon_2 is \jxi } \wt{g_{\epsilon_2}})$,
that is, the case when the roles of $g$ and $h$ are reversed,
by inserting a cutoff in the size of the frequency of $h$, 
letting $|\eta+\s| \approx 2^{k_3}$, and distinguishing the same three case depending on 
the size of $k_3$. %, as done just above (for $k_2$). 

\medskip
The estimates for the terms $E_1(G,H):=E_{1,k_1,k_2,m,\epss}(G,H)$, $G,H \in \{(h,g), (g,h)\}$
are easier. 
In fact, the operator associated to the measure $\mu^{Re}$ 
satisfies the same bilinear bounds used above (see \eqref{mainbilinR} and \eqref{mainbilinRe}),
and the measure $\mu^{Re}$ satisfies better pointwise bounds
than the measures $\nu_1^R$ that appeared above,
as we can see by comparing \eqref{nuRest} and \eqref{nuReest},
and noticing that, since the input $g$ is localized at frequency of size $\approx 1$,
on the support of $E_1$ we have 
(see the notation \eqref{ggbdpr1R'}) $\max \{ \vert \xi \vert ,\vert \eta \vert ,|\s| \} \gtrsim 1$.
We therefore obtain the same bounds for the $E_1$ terms and the proof of the lemma is complete.
\end{proof}

\medskip
\subsection{Continuous interactions}\label{secRhh}
To complete the proof of Proposition \ref{proFR} %, in view of Lemma \ref{lemgoal},
we are left with showing the following: 

\begin{lemma} \label{hhnuR}
Under the assumptions of Proposition \ref{proFR}, the bound \eqref{goalR} holds $(G,H) = (h,h).$
\end{lemma}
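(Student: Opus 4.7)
The plan is to follow the approach of Lemma \ref{ghnuR}, with appropriate modifications reflecting the fact that both inputs are now $h$ rather than $g$. Starting from the analogue of \eqref{ghnuRpr0'}, we reduce the estimate of $\|D_{1,m,\epss}(h,h)\|_{L^2}$ to contributions from the three bilinear operators $T^{R,1}_1[b_1]$, $T^{R,1}_1[b_1']$ and $T^{R,2}_1[b_2]$ applied to $e^{is\eps_j\jxi}\wt h$ and integrated in $s$, and introduce the usual dyadic frequency localizations $|\eta|\approx 2^{k_1}$, $|\sigma|\approx 2^{k_2}$, and (for the $T^{R,1}_1[b_1]$ contribution) $|\xi-\eta|\approx 2^\ell$ with $\ell_0\leqslant \ell\leqslant 0$, $\ell_0=\lfloor -m+\delta m\rfloor$. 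As before, we reduce to a single quadruple of indices, absorbing an $O(m^4)$ log factor into $2^{O(\delta_N m)}$.

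We then run the same four-case split as in Lemma \ref{ghnuR}. In Case 1, $k_1, k_2\leqslant -10$, the phase $\Phi_\epss$ is uniformly bounded below, and we integrate by parts in $s$; this reduces matters to bilinear operators involving $\partial_s h$, to which the proof of Lemma \ref{hh-no-t-res} applies directly via the Strichartz indices $(q,r,\gamma)=(6,3,2/3)$ combined with the decay $\|\partial_s h\|_{L^2}\lesssim\rho(s)$ from Lemma \ref{decay-der}. In Case 2, $k_2\leqslant -m/2+\delta m$, we integrate by parts in $\eta$ (using $\partial_\eta\Phi_\epss\approx 1$ on the support where $\ell\geqslant\ell_0$) and bound the resulting expression by Young's inequality in $\eta$; the pointwise estimate $|\nu^R_1(\eta,\sigma)|\lesssim 2^{-2(k_1\vee k_2)}2^{O(M_0)}$ from \eqref{nuRest}, the Hölder bound $\|\varphi_{k_2}\wt h\|_{L^1_\sigma}\lesssim 2^{3k_2/2}\e$, and $\|\nabla_\eta\wt h\|_{L^2}\lesssim Z$ combine to give a gain of $2^{5k_2/2}\lesssim 2^{-5m/4+O(\delta m)}$ that absorbs the $2^m$ coming from the time integration.

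In Case 3, $k_2\geqslant -m/10$, we integrate by parts in $\sigma$ to replace $\wt h(\sigma)$ by $\tfrac{\sigma\jsig}{|\sigma|^2}\cdot\nabla_\sigma\wt h(\sigma)$, then apply the Strichartz estimate with $(q,r,\gamma)=(6,3,2/3)$ together with the bilinear bound \eqref{mainbilinR} at exponents $(p,q,r)=(6-,2+,3/2)$; using $\|e^{isL}h\|_{L^{6-}}\lesssim\rho(s)^{1-\beta}\e^\beta 2^{O(\delta_N m)}$ from \eqref{dispbootf} and Bernstein on the frequency-localized second factor (where the $1/|\sigma|$ loss is $\lesssim 2^{m/10}$), the numerics produce a bound of order $2^{m(-1/15+\beta)+O(\delta_N m)}\e^\beta Z$ in the regime $2^m\gtrsim\e^{-2}$, acceptable for $\beta,\delta',\delta_N$ small; the analogous computation in the intermediate range $\e^{-1}\lesssim 2^m\lesssim\e^{-2}$ gives a bound of order $\e_0^{2/15-\beta}Z$, which is below $\rho^{\delta'}Z\approx\e_0^{2\delta'}Z$ provided $\beta+2\delta'<2/15$. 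Case 4, $-m/2+\delta m<k_2<-m/10$, is handled by integration by parts in both $\eta$ and $\sigma$ followed by a Young-type estimate in $\eta$, now invoking $\|\varphi_{k_2}\nabla_\sigma\wt h\|_{L^1_\sigma}\lesssim 2^{3k_2/2}Z$. The operators $T^{R,1}_1[b_1']$ and $T^{R,2}_1[b_2]$ are treated by the same arguments after permutation of variables, using the extra $\xi/\jxi$ factor in \eqref{ggbdpr1Rsym} to absorb the mild $(|\eta|\vee|\xi|)^{-2}$ singularity of $\nu_1^R$ that appears in $T^{R,2}_1$.

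The main obstacle is the global-in-time balancing in Case 3: unlike in the $(g,h)$ analysis, we can only rely on the $\rho^{1-\beta}$ decay of $e^{isL}h$ in $L^{6-}$ from \eqref{dispbootf}, rather than the full $\rho$ decay available for $g$; combined with the $2^{5m/6+O(\delta_N m)}$ loss from Strichartz and the $2^{m/10}$ loss from the $1/|\sigma|$ produced by integration by parts, the estimate closes only after the small parameters $\beta,\delta',\delta_N$ are taken sufficiently small, specifically so that $\beta+\delta'+O(\delta_N)<1/15$. Finally, for the $E_1$ contribution (associated with $\mu^{Re}$) the identical four-case argument applies and is strictly easier, since $\mu^{Re}$ satisfies the better pointwise estimate \eqref{nuReest} and its associated bilinear operator satisfies \eqref{mainbilinRe} without any principal-value correction term.
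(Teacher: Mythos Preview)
Your approach has a genuine gap stemming from transplanting the $(g,h)$ argument too literally. In Lemma \ref{ghnuR} the localization $\varphi_\ell^{(\ell_0)}(\langle\xi-\eta\rangle-2\lambda)$ with $\ell_0\leqslant\ell\leqslant 0$ is imposed because the first input is $g$, which is supported where $\langle\xi-\eta\rangle\approx 2\lambda$; this is what guarantees $|\xi-\eta|\approx 1$ and hence $|\partial_\eta\Phi_\epss|\approx 1$. For $(h,h)$ there is no such restriction: the frequency of the first input $h(\xi-\eta)$ ranges over all of $[2^{-5m},2^{M_0}]$, and your localization ``$|\xi-\eta|\approx 2^\ell$ with $\ell_0\leqslant\ell\leqslant 0$'' both misidentifies the variable being localized and omits the range $|\xi-\eta|>1$ entirely. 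More seriously, the claim ``$\partial_\eta\Phi_\epss\approx 1$ on the support where $\ell\geqslant\ell_0$'' in your Case~2 is false: $|\partial_\eta\Phi_\epss|=|\xi-\eta|/\langle\xi-\eta\rangle$, which can be as small as $2^{-m+\delta m}$. Integration by parts in $\eta$ therefore produces a factor $\langle\xi-\eta\rangle/|\xi-\eta|$ that you do not track, and the same omission affects your Case~4. (Your numerics in Case~2 are also off: $\|\varphi_{k_2}\wt h\|_{L^1}\lesssim 2^{3k_2/2}\e$ does not by itself yield the $2^{5k_2/2}$ gain you claim; one needs $\|\varphi_{k_2}\wt h\|_{L^2}\lesssim 2^{k_2}\|\nabla_\xi\wt h\|_{L^2}$ via Sobolev/Hardy.)

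The paper fixes this by replacing the Fermi-type $\ell$-localization with a genuine dyadic localization $|\xi-\eta|\approx 2^{k_3}$, $k_3\in[-5m,M_0]$, and splitting cases \emph{symmetrically} in $(k_2,k_3)$: if $k_2\vee k_3\leqslant -m/2+\delta m$ integrate directly (both inputs contribute smallness); if exactly one is small integrate by parts in the other variable; if both are $\geqslant -m/2+\delta m$ integrate by parts in both $\eta$ and $\sigma$. In the last subcase the paper further splits: when $k_2\wedge k_3\geqslant -m/10$ it applies Strichartz with $(\wt q,\wt r,\gamma)=(2+,\infty-,0+)$ and the $L^2\times L^2\to L^{1+}$ bilinear bound \eqref{mainbilinR}, obtaining $2^{-m/2+}2^{-k_2-k_3}Z^2$; when $k_2\wedge k_3<-m/10$ a direct H\"older/pointwise estimate gives $2^{(k_2+k_3)/2}Z^2$. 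This symmetric scheme never invokes the dispersive decay of $e^{isL}h$ (as your Case~3 does), relying only on weighted norms, which makes the numerics considerably cleaner and avoids the tight $\beta+\delta'<1/15$ constraint you identify.
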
  

\begin{proof}
We will use similar notation and arguments to those in the proof of Lemma \ref{ghnuR}.
We concentrate only on the $D_1$ terms as the $E_1$ terms can be treated in the same way
(using \eqref{mainbilinRe} instead of \eqref{mainbilinR}, and \eqref{nuReest} instead of \eqref{nuRest}).
Recall the notation \eqref{idop11}-\eqref{idop12} and the identity \eqref{idbilR};
using also the symmetry of the inputs, we can write (compare with \eqref{ghnuRpr0'}), 
\begin{align*}
& %\sum_{\epss \in \lbrace +,- \rbrace} 
{\big\| \whF^{-1}_{\xi \mapsto x} D_1 (t,\xi) \big\|}_{L^2} 
  \lesssim %\sup_{\epss \in \lbrace +,- \rbrace} 
  \sum_{k_1,k_2 \leqslant M_0} {\| I^{(1)}(t) \|}_{L^2} + {\| I^{(2)}(t) \|}_{L^2}, 
  \\
& I^{(j)}(t,\xi) := \int_0^t s \, \tau_m(s) e^{-isL} T_1^{R,j}
  [b_j]\big(e^{i\epsilon_1 s \jxi} \wt{h_{\epsilon_1}} , e^{i\epsilon_2 s \jxi} \wt{h_{\epsilon_2}}\big) \, ds.
\end{align*}
As before, the symbols $b_1,b_2$ (that depend on $k_1,k_2$)
are defined as in \eqref{prbdary11} and \eqref{ggbdpr1Rsym}, and we suppressed the dependence on
$m,\eps_1,\eps_2$ for lighter notation.
We proceed in parallel to the proof of Lemma \ref{ghnuR} by looking first at $I^{(1)}$ and then at $I^{(2)}$.

\medskip
{\it Estimate of $I^{(1)}$}.
Similarly to \eqref{ghnuRpr0'}, we define a localized version of $I^{(1)}$ as follows:
for $\underline{k}:=(k,k_1,k_2,k_3)$, let
\begin{align}\label{hhnuRop}
\begin{split}
I_{\underline{k}}(t,\xi) := \int_0^t s \, \tau_m(s) \Big[ \int_{\R^6} e^{-is \Phi_\epss(\xi,\xi-\eta,\sigma)}
 \wt{h_{\epsilon_1}}(s,\xi-\eta) \wt{h_{\epsilon_2}}(s,\sigma)  \, %\varphi_{k_1}(\eta) \varphi_{k_2}(\sigma)
  \\
  \times \varphi_{\underline{k}}(\xi,\eta,\s) \, b_1(\xi,\eta,\s) \, \nu_1^R(\eta,\sigma) \, d\eta d\sigma \Big] \, ds,
  \\
  \varphi_{\underline{k}}(\xi,\eta,\s) :=  \varphi_{\sim k_1}(\eta)\varphi_{\sim k_2}(\s) 
  \varphi_{k_3}(\xi-\eta) \varphi_k(\xi),
\end{split}
\end{align}
where $\Phi_\epss(\xi,\xi-\eta,\s) = - \jxi + \eps_1 \langle \xi-\eta\rangle + \eps_2 \jsig$,
and $b_1(\xi,\eta,\s) = \frac{\xi}{\jxi} b'(\xi,\xi-\eta,\s) \varphi_{k_1}(\eta)\varphi_{k_2}(\s)$ as before.
%with $b'$ as in \eqref{ggbdpr1Rsym}.
The variables in \eqref{ghnuRop} are localized as follows:
\begin{align}\label{hhnuRloc}
\begin{split}
& |\xi| \approx 2^k, \qquad |\eta|\approx 2^{k_1}, \qquad |\s|\approx 2^{k_2}, 
  \qquad |\xi-\eta| \approx 2^{k_3},
\\
& \mbox{with} \quad -5m < k,k_1,k_2,k_3 \leqslant M_0.
\end{split}
\end{align}
The lower bound on $k_1$ is not a consequence of our earlier reductions (see Lemma \ref{verylowfreq}),
but can be assumed by treating the case of $k_1 < -5m$ just using H\"older and a straightforward 
integration in \eqref{hhnuRop}.
%Also note that \eqref{hhnuRop} is related to the transform of the
%$P_k T_1^{R,1}$ operator with an extra $P_{k_3}$ on the first input.

We proceed to estimate ${\| I_{\underline{k}} \|}_{L^2}$ for fixed $\underline{k}$
showing that a bound by the right-hand side of \eqref{goalR} times a factor of $2^{-(0+)m}$ holds true.
We look at three main cases.

%To handle the first term, we start by localizing the profile $\wt{h_{\epsilon_1}}$ at frequency $2^{k_3}.$ 
%We note that $k_3 \leqslant 5 M_0$ and that we can reduce as we did above to the case where $k_3 \geqslant -10 m.$ 
%We then distinguish several cases: 

\medskip
{\bf Case 1: $k_1,k_2\leqslant -10$}. As in Case 1 in the proof of Lemma \ref{ghnuR}, we observe that if $|\eta|,|\sigma| \leqslant 2^{-8}$ the phase is uniformly lower bounded,
and can then proceed by integration by parts in $s$.
In what follows we may assume $k_1 \vee k_2 > -10$ so that, in particular, see \eqref{nuRest},
\begin{align}\label{hhnuRnu}
| \partial_\eta^a \partial_\s^b \nu_1^R (\eta,\s) | \lesssim %2^{-2(k_1\vee k_2)} 
  2^{(|a|+|b|+ 2)5M_0} \max\{ 1, 2^{(1-|b|)k_2^-} \}.
\end{align}

We fix $\delta>0$ small and distinguish cases depending on the sizes of the input frequencies.

\medskip
{\bf Case 2: $k_2 \vee k_3 \leqslant -m/2 + \delta m $}.
In this case, we cannot integrate by parts in either $\s$ or $\eta$.
However, we have $|\nu_1^R(\eta,\s)|\lesssim 2^{10M_0}$ (since $|\eta| \gtrsim 1$)
and direct H\"older estimates give us
\begin{align}\label{hhnuR1}
\begin{split}
{\| I_{\underline{k}}(t)\|}_{L^2} \lesssim 2^{2m} \cdot 2^{3k^+/2} 2^{k^-} \sup_{s\approx 2^m}
 {\| \varphi_{k_3} \wt{h_{\epsilon_1}}(s)\|}_{L^1} 
 \cdot {\| \varphi_{k_2} \wt{h_{\epsilon_2}}(s) \|}_{L^1} \cdot 2^{10M_0}
 \\
 \lesssim 2^{2m} 2^{12M_0} \cdot 2^{5k_3/2} Z \cdot 2^{5k_2/2} Z
\end{split}
\end{align}
which largely suffices in the current scenario.

\medskip
{\bf Case 3: $k_2 \wedge k_3 \leqslant -m/2 + \delta m$ and $k_2 \vee k_3 \geqslant -m/2 + \delta m$}.
In this case we can integrate by parts at least in one of the variables $\s$ or $\eta$. 
Let us assume without loss of generality that $k_3 \leqslant -m/2 + \delta m \leqslant k_2$.
We then integrate by parts in $\s$ in the formula \eqref{hhnuRop}; the leading order contribution 
is the one where $\partial_\s$ hits the profile, which is estimated using H\"older,
followed by \eqref{boot}, by
\begin{align}\label{hhnuR2}
\begin{split}
C 2^{2m} \cdot 2^{3k/2} \sup_{s\approx 2^m}
 {\| \varphi_{k_3} \wt{h_{\epsilon_1}}(s)\|}_{L^1} 
 \cdot 2^{-m-k_2} {\| \varphi_{k_2} \nabla_\s \wt{h_{\epsilon_2}}(s) \|}_{L^1} \cdot 2^{10M_0}
 \\
 \lesssim 2^m 2^{12M_0} \cdot 2^{5k_3/2} Z \cdot 2^{k_2/2} Z.
\end{split}
\end{align}
Since $k_3 \leqslant -m/2 + \delta m$ this bound suffices.

In what follows we may assume $k_2,k_3 \geqslant -m/2+\delta m$,
so that we can integrate by parts in both $\eta$ and $\s$.

\medskip
{\bf Case 4: $k_2 \wedge k_3 \geqslant -m/2 + \delta m $}. 
%In this case we always have the option of integrating by parts in both $\eta$ and $\s$,
%but, nevertheless these two straightforward integration do not suffice and we need to look at three sub-cases.
We look at two sub-cases:

\medskip
{\it Sub-case 4.1: $k_2 \wedge k_3 \geqslant -m/10 $}.
%Then we integrate by parts in $\eta,\s$ and 
We integrate by parts in $\eta$ and $\s$ in the formula \eqref{hhnuRop}, 
and obtain the following leading order contribution when both derivatives hit the profiles
(the terms where the cutoffs or the measure are hit all have better estimates):
\begin{align}\label{hhnuR5}
\int_0^t s^{-1} \, \tau_m(s) 
  e^{isL} T_1^{R,1} [b_1] \Big(e^{i\epsilon_1 s \jxi}\varphi_{k_3} \big(\frac{\jxi\xi}{|\xi|^2} 
  \cdot \nabla \big) \wt{h_{\epsilon_1}}, 
  e^{i\epsilon_2 s \jxi} \varphi_{k_2} \big(\frac{\jxi\xi}{|\xi|^2} 
  \cdot \nabla \big) \wt{h_{\epsilon_2}} \Big) ds.
\end{align}
%Note that we are still off from obtaining the desired bound by \eqref{goalR}
We then use Strichartz estimates with parameters $(2+,\infty-,0+)$,
followed by an $L^2\times L^2 \rightarrow L^{1+}$ bilinear estimate \eqref{mainbilinR} and \eqref{boot}, 
to bound
\begin{align}\label{hhnuR6}
\begin{split}
%\Vert I_m^{(1)}(t,\xi) \Vert_{L^2} 
& {\| \eqref{hhnuR5} \|}_{L^2} 
\\
& \lesssim
%& \lesssim \Bigg \Vert \int_0^t s^{-1} \tau_m(s) e^{isL} T_1^{R,1} [b_1]
%\bigg( e^{i\epsilon_1 s \jxi} \jxi \varphi_{k_3}(\xi) \big(\frac{\xi}{\vert \xi \vert^2} \cdot \nabla \big) 
%\wt{h_{\epsilon_1}} , e^{i\epsilon_2 s \jxi} \jxi \varphi_{k_2}(\xi) \big(\frac{\xi}{\vert \xi \vert^2} 
%\cdot \nabla \big) \wt{h_{\epsilon_2}} \bigg) ds \Bigg \Vert_{L^2} \\
%& \lesssim 2^{-m/2^{+}+(C_0+1) M_0} \rho(2^m)^{2-2\beta} 2^{2m} 2^{-k_3-k_2} %\varepsilon^{2\beta}
2^{(0+)M_0}{\Big\| s^{-1} \cdot T_1^{R,1} [b_1]
  \Big( e^{i\epsilon_1 s \jxi} \varphi_{k_3} \big(\frac{\jxi\xi}{\vert \xi \vert^2} \cdot \nabla \big) 
  \wt{h_{\epsilon_1}}, 
  e^{i\epsilon_2 s \jxi} \varphi_{k_2}\big(\frac{\jxi\xi}{\vert \xi \vert^2} 
  \cdot \nabla \big) \wt{h_{\epsilon_2}} \Big) \Big\|}_{L^{2-}_{s \approx 2^m} L^{1+}_x} 
%& \lesssim 2^{-m/2^{+}+(C_0+1) M_0} \rho(2^m)^{2-2\beta} 2^{2m} 2^{-k_3-k_2} %\varepsilon^{2\beta} 
\\
& \lesssim 2^{(C_0+1)M_0} \cdot 2^{-(m/2)+} \cdot 2^{-k_2} Z \cdot 2^{-k_3} Z.
% & \lesssim 2^{-m/2^{+}+2\beta+(C_0+1) M_0} 2^{-k_3-k_2} \varepsilon^{2\beta}.
\end{split}
\end{align}
Using $-k_2-k_3 \leqslant -m/5,$ this gives a sufficient bound.

%Ignoring the issue of convergence in k_1. Can be solved as above.

\medskip
{\it Sub-case 4.2: $k_2 \wedge k_3 < -m/10$}. % and $k_2 \vee k_3 \leqslant -m/20$}.
%{\it Subcase 2.1: } 
In this case we can use H\"older and \eqref{hhnuRnu} to write, 
for $s\approx 2^m$, %using \eqref{nuRest} that
\begin{align}\label{hhnuR7}
\begin{split}
%& \Vert I_m^{(1)}(t,\xi) \Vert_{L^2}
{\| \eqref{hhnuR5} \|}_{L^2} 
%\\
%& \lesssim \Bigg \Vert \int_0 ^t s^{-1} \tau_m(s) e^{isL} T_1^{R,1} [b_1]
%  (e^{i\epsilon_1 s \jxi} \jxi \varphi_{k_3}(\xi) \big(\frac{\xi}{\vert \xi \vert^2} \cdot \nabla \big) 
%  \wt{h_{\epsilon_1}} , e^{i\epsilon_2 s \jxi} \jxi \varphi_{k_2}(\xi) 
%  \big(\frac{\xi}{\vert \xi \vert^2} \cdot \nabla \big) \wt{h_{\epsilon_2}} ) ds \Bigg \Vert_{L^2} 
%  \\
& \lesssim 2^{3k/2} \cdot 2^{10 M_0} \cdot 2^{-k_2-k_3} 
  {\| \varphi_{k_3} \nabla_\eta \wt{h} \|}_{L^1} {\| \varphi_{k_2} \nabla_\s \wt{h} \|}_{L^1}
%\\
%& \lesssim 2^{-k_2-k_3} \cdot 2^{10 M_0} \bigg[\int_{\mathbb{R}^3} \bigg \vert \int_{\mathbb{R}^6}
%  \varphi_{k_1}(\eta) \varphi_{k_2}(\s) \varphi_{k_3}(\xi-\eta) \vert \nabla_{\eta} \wt{h}(\xi-\eta) 
%  \vert \vert \nabla_{\s} \wt{h} \vert d\eta d\s \bigg \vert^2 d\xi \bigg]^{1/2} 2^{-2 (k_1 \vee k_2)}
%  \\
%  & \lesssim 2^{3k_1/2} \cdot 2^{1/2k _3} \cdot 2^{10 M_0}
%  \Vert \nabla_{\eta} \wt{h} \Vert_{L^2} \cdot 2^{1/2k _2}
%  \Vert \nabla_{\s} \wt{h} \Vert_{L^2} \cdot 2^{-2 (k_1 \vee k_2)},
\\
& \lesssim 2^{12 M_0} \cdot 2^{(k_2+k_3)/2}  Z^2.
\end{split}
\end{align}
%where we used the Cauchy-Schwarz inequality in $\eta$ for the fourth line. 
This is enough to conclude.

\iffalse
\medskip
%{\it Sub-case 4.2: $k_2 \wedge k_3 \leqslant -m/5$ and $k_2 \vee k_3 > -m/20$}.
{\it Subcase 2.2: $k_2 \vee k_3 > -m/20$}
In this case we come back to the estimate in case 1 and write using \eqref{mainbilin1} that
\begin{align*}
&\Vert I_m^{(1)}(t,\xi) \Vert_{L^2} \\
& \Bigg \Vert \int_0 ^t s^{-1} \tau_m(s) e^{isL} T_1^{R,1} [b_1](e^{i\epsilon_1 s \jxi}
  \jxi \varphi_{k_3}(\xi) \big(\frac{\xi}{\vert \xi \vert^2} \cdot \nabla \big) \wt{h_{\epsilon_1}}, 
  e^{i\epsilon_2 s \jxi} \jxi \varphi_{k_2}(\xi) \big(\frac{\xi}{\vert \xi \vert^2} \cdot \nabla \big) 
  \wt{h_{\epsilon_2}} ) ds \Bigg \Vert_{L^2} 
  \\
& \lesssim 2^{C_0 M_0} \cdot 2^{-k_2 -k_3} \Vert P_{k_2 \vee k_3} \nabla_{\s} \wt{h} \Vert_{L^2} 
  \Vert P_{k_2 \wedge k_3} \nabla_{\eta} \wt{h} \Vert_{L^{\infty}} 
  \\
& \lesssim 2^{C_0 M_0} \cdot 2^{1/2 (k_2 \wedge k_3)} \cdot 2^{-(k_2 \vee k_3)} 
  \cdot \rho(2^m)^{2-2\beta} 2^{2m} \varepsilon^{2\beta} ,
\end{align*}
which allows us to conclude.  
\fi

\medskip
{\it Estimate of $I^{(2)}$}. 
For this second and last term we can use similar arguments.
Recalling the notation \eqref{idop12} with \eqref{prbdary11} and \eqref{ggbdpr1Rsym}, and arguing as before, 
we reduce to estimating the $L^2$ norm of the localized operator 
\begin{align}\label{hhnuRT2}
\int_0^t s \, \tau_m(s) e^{-isL} P_k T_1^{R,2}
  [b_2]\big(e^{i\epsilon_1 s \jxi} \varphi_{k_3}\wt{h_{\epsilon_1}} , 
  e^{i\epsilon_2 s \jxi} \varphi_{\sim k_2}\wt{h_{\epsilon_2}}\big) \, ds, 
\end{align}
for $m=1,2,\dots$ and fixed $k,k_1,k_2,k_3 \in [-5m,M_0]$.

We can proceed in parallel to the Cases 1-4 from the previous part of the proof of this lemma.
%
%\medskip
%{\bf Case 1: $k \wedge k_1 \leqslant -10$}.
We first observe that if $k \wedge k_1 \leqslant -10$ (which means $|\xi|,|\eta|\leqslant 2^{-5}$
with the variables as in \eqref{idop12}) then the phase of the operator in Fourier space is uniformly lower bounded:
$|- \jxi +\eps_1\langle \eta+\s\rangle +\eps_2 \jsig| \gtrsim 1$. 
Sufficient bounds can then be obtained by integration by parts in $s$;
see also Case 1 on page \pageref{Case1}.
In the complementary case when $k \vee k_1 \leqslant -10$ the measure $\nu_1^R(\eta,\xi)$
is uniformly bounded (up to innocuous $2^{M_0}$ factors) with all its derivatives on the support of \eqref{hhnuRT2}.
We then argue as follows:

%\medskip
%{\bf Case 2: $k_2 \vee k_3 \leqslant -m/2 + \delta m$}. 
\noindent
- For $k_2 \vee k_3 \leqslant -m/2 + \delta m$, for some small enough $\delta>0$, 
we estimate as in Case 2 from earlier in the proof of this lemma, and obtain the same bound \eqref{hhnuR1};
%The only difference is that the measure satisfies $|\nu_1^R(\eta,\xi)| \lesssim 2^{-2(k\vee k_2)} 2^{10M_0}$.
%Then we have a bound like the first inequality in \eqref{hhnuR1} but with an extra $2^{-2k}$ factor;
%but this is canceled by the factor of $2^{5k/2}$ which was not used in \eqref{hhnuR1}, to obtain the same final bound.

%\medskip
%{\bf Case 2: $k_2 \wedge k_3 \leqslant -m/2 + \delta m$, }. 
\noindent
- For $k_3 \leqslant -m/2 + \delta m \leqslant k_2$ we integrate by parts in $\eta-\s$ as in Case 3
from earlier in this proof, and estimate exactly as in \eqref{hhnuR2}.
The case $k_2 \leqslant -m/2 + \delta m \leqslant k_3$ can be handled in the same way integrating by parts in $\eta$;

\noindent
- For $k_3\wedge k_2 \geqslant -m/2 + \delta m$ we integrate by parts both in 
$\eta-\s$ and $\s$ as in Case 4, obtaining a term like \eqref{hhnuR6}.
We then estimate as in \eqref{hhnuR7} when $k_3\wedge k_2 \leqslant -m/10$,
while for $k_3\wedge k_2 \geqslant -m/10$ we use the same Strichartz estimate,
with the bilinear bound \eqref{mainbilinR}, to obtain the same final upper bound in \eqref{hhnuR6}.

The proof of the lemma and of the main Proposition \ref{proFR} is concluded. 
\end{proof}

\section{Mixed terms}\label{secmixed}
In this section we deal with the mixed terms identified in \eqref{M} and \eqref{RM}.
We will first concentrate on the leading order terms \eqref{M1} and \eqref{M2}
which, recall, are given by
\begin{align}
\label{mixedM1}
M_1(t,\xi) & := -i\int_0 ^t B(s) \int_{\R^3} e^{-is(\jxi - \jeta - \lambda)} 
  \jeta^{-1} \wt{h}(s,\eta) \nu(\xi,\eta) d\eta \, ds,
\\ 
\label{mixedM2}
M_2(t,\xi) & := i \int_0 ^t B(s) \int_{\R^3} e^{-is(\jxi - \jeta - \lambda)} 
  \jeta^{-1} \wt{g}(s,\eta) \nu(\xi,\eta) d\eta \, ds.
\end{align}
We aim to show the following main proposition:

\begin{proposition}\label{proM12}
Under the a priori assumptions of Proposition \ref{boot} we have
\begin{align}\label{mixedest}
{\| \partial_\xi M_1(t) \|}_{L^2} + {\| \partial_\xi M_2(t) \|}_{L^2} 
  \lesssim \rho(t) ^{1-\beta + \delta} t \varepsilon^{\beta},
\end{align}
for some $\delta >0$.
\end{proposition}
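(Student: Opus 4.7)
The plan is to apply $\partial_\xi$ to \eqref{mixedM1} and \eqref{mixedM2}, isolate the main contributions, and treat them by normal form in $s$. Differentiating $e^{-is(\jxi-\jeta-\lambda)}$ produces a factor $-is(\xi/\jxi)$, which is too costly to absorb directly against $|B(s)|\lesssim \rho^{1/2}(s)$ and the $L^2$ bound on the profile. When $\partial_\xi$ falls on the measure $\nu(\xi,\eta)$, however, we exploit the fact that the bilinear operator with kernel $\nu$ corresponds (up to phases) to multiplication by $\phi\in\mathcal{S}$ on the physical side, so $\partial_\xi\nu$ behaves like the kernel of multiplication by $x\phi$, still in $\mathcal{S}$; this easy contribution is then handled by H\"older using $|B(s)|\lesssim\rho^{1/2}(s)$, the a priori bound $\|h(s)\|_{L^2}\lesssim\e$, and the explicit bound $\|\wt g(s)\|_{L^\infty_\eta}\lesssim m\,\rho(2^m)2^m$ from \eqref{inftyfreqg}.

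For the genuinely delicate term in $\partial_\xi M_1$, the key identity is the normal form
\begin{align*}
-is\,e^{-is(\jxi-\jeta-\lambda)} \;=\; \frac{1}{\jxi-\jeta-\lambda}\Big[\partial_s\big(s\,e^{-is(\jxi-\jeta-\lambda)}\big)\;-\;e^{-is(\jxi-\jeta-\lambda)}\Big].
\end{align*}
Integrating by parts in $s$ trades the $s$ factor for the multiplier $(\jxi-\jeta-\lambda)^{-1}$, which is singular along the resonant set $\jxi=\jeta+\lambda$. Following the strategy of Section \ref{secFS}, I intend to split with a cutoff at scale $2^{-\delta m}$ on $|\jxi-\jeta-\lambda|$ (with $s\approx 2^m$ and $\delta$ small): on the regular region the multiplier costs only $2^{\delta m}$ and the IBP yields a boundary term at $t$, controlled by $|B(t)|\lesssim\rho^{1/2}(t)$ and $\|\wt h(t)\|_{L^2}\lesssim\e$, together with bulk terms where $\partial_s$ hits either $B$ (giving $\dot B=O(\rho^{3/2-a}\js^{-1})$, cf.\ Lemma \ref{renorm-A}) or $\wt h$ (controlled by Lemma \ref{decay-der}); the extra decay in both cases is enough to close after the $s$-integration. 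On the singular region, the $\eta$-shell $|\jxi-\jeta-\lambda|\lesssim 2^{-\delta m}$ has measure $O(2^{-\delta m})$ for fixed $\xi$, providing the volume gain needed for a direct Cauchy--Schwarz estimate to balance the $s$ factor.

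For $M_2$, the profile $\wt g$ is given by the explicit formula \eqref{def-h} and is supported in $|\jeta-2\lambda|\leqslant 2^{-C}$. On the support of the integrand the phase satisfies $\jxi-\jeta-\lambda\approx \jxi-3\lambda$, so the resonant set collapses to a neighborhood of $\jxi=3\lambda$ --- a bounded, time-independent shell in $\xi$-space. The same normal form as above applies; now the bulk term in which $\partial_s$ falls on $\wt g$ yields a cubic factor $B^3(s)=O(\rho^{3/2}(s))$, via $\partial_s\wt g(s,\eta)=-\chi_C(\eta)B^2(s)e^{-is(\jeta-2\lambda)}\wt{\phi^2}(\eta)$, so the combined decay $s\cdot\rho^{3/2}(s)$ is integrable up to a $\sqrt{t}$ loss which is absorbed by the volume gain from the localized singular region. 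The boundary term at $t$ is controlled using the weighted bound \eqref{growth} on $\partial_\xi\wt g$ restricted to the bounded-measure resonant shell, together with $|B(t)|\lesssim\rho^{1/2}(t)$.

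The remainder terms $RM_1,RM_2,RM_3$ in \eqref{RM} require no new ideas: $RM_1$ and $RM_2$ have phases $\jxi-\jeta+\lambda$ and $\jxi+\jeta$ that are uniformly bounded below (since $\jxi,\jeta\geqslant 1$ and $\lambda<1$), so a single IBP in $s$ without any singular splitting produces a fully regular multiplier and yields an acceptable bound; $RM_3$ carries the factor $A-B=O(|A|^2)$ from \eqref{defB}, providing an extra power of $\rho(s)$ that makes even a crude estimate sufficient. The main technical obstacle will be the normal-form step for $M_1$: tracking the interplay between the non-smooth multiplier $(\jxi-\jeta-\lambda)^{-1}$, the a priori weighted bound \eqref{boot} on $\partial_\xi\wt h$, and the boundedness of wave operators on weighted spaces, so as to avoid any loss that would not be compensated by the sharp decay of $B$ and $\dot B$.
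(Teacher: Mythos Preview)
Your framework is on the right track---localize in $|\Phi|$ and integrate by parts in $s$ on the large-$|\Phi|$ region---but none of the three pieces closes as stated. After the normal form $-is\,e^{-is\Phi} = \Phi^{-1}\big[\partial_s(s\,e^{-is\Phi}) - e^{-is\Phi}\big]$, the boundary term at $t$ still carries the factor $t$, so what you have to bound is $t\,B(t)\int\Phi^{-1}\wt h(t,\eta)\nu\,d\eta$. Using only $|B(t)|\lesssim\rho^{1/2}(t)$ and $\|\wt h(t)\|_{L^2}\lesssim\e$ gives at best $t\rho^{1/2}\cdot\e$, which for $t\gtrsim\e^{-2}$ is $\sim t^{1/2}\e$, far larger than the target $\rho^{1-\beta+\delta}t\,\e^\beta\sim t^\beta\e^\beta$. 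The same happens for the bulk term where $\partial_s$ hits $\wt h$: Lemma~\ref{decay-der} gives $\|\partial_s\wt h\|_{L^2}\lesssim\rho$, but then the bound is $\sim t^2\rho^{3/2}\sim t^{1/2}$, again too large. And on the singular region $|\Phi|\lesssim 2^{-\delta m}$ with $\delta$ small, the volume gain $2^{-\delta m}$ is nowhere near enough to compensate for the factor $s\approx 2^m$: one would need $\delta>3/2$.

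The paper repairs all three by systematically integrating by parts in $\eta$ as well, which trades a factor of $s$ for $\|\nabla_\eta\wt h\|_{L^2}\lesssim Z$ (the bootstrapped weighted norm). This handles the singular region (with a specific cutoff at $p_0=-3m/5$, see \eqref{M15}--\eqref{M15bound}), the boundary term (see \eqref{M1K4}--\eqref{M115}), and the $\dot B$ bulk term. The remaining bulk term with $\partial_s\wt h$ is substantially harder: after expanding $\partial_s h$ via Duhamel (see \eqref{dswth}), the piece coming from $(L^{-1}\Im w)^2$ produces a genuine trilinear expression in $B\cdot f\cdot f$, and estimating it requires the full NSD decomposition and a separate lemma (Lemma~\ref{lemA}, proved in \S\ref{lemmaA}). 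This is the real cost of the quadratic nonlinearity and is the longest part of the argument. Finally, your claim that the phase of $RM_1$ is uniformly bounded below is incorrect: $\jxi-\jeta+\lambda$ vanishes whenever $\jeta=\jxi+\lambda$, which is always attainable; the paper treats $RM_{1,1}$ exactly as $M_1$ (the sign of $\lambda$ plays no role in the estimates).
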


%There are two such terms, one that depends on $h$ (referred to as $M_1$) and the other on $g$ (referred to as $M_2$). 
Consistently with Remark \ref{localT}, the estimate \eqref{mixedest} 
suffices to obtain the desired result, as in \eqref{bootconc}, for these terms.
In Subsection \ref{ssecRM} we will discuss how to handle the remainder terms in \eqref{RM}.

\subsection{Preliminaries} \label{prelimMixmain}
We first recall some basic facts that will be useful in the rest of the section,
and then we set up the proof of Proposition \ref{proM12}. %analysis for the main mixed terms $M_1$ and $M_2.$

Recall that  we have defined
\begin{align}\label{Mnudef}
\nu(\xi,\eta) = \int_{\R^3} \overline{\psi(x,\xi)}\psi(x,\eta) \phi(x)\, dx.
\end{align}
In view of %Lemma \ref{lemmapsi}, it satisfies
the estimates
\begin{align}\label{psiLinfty}
& | \partial_x^\alpha \partial_\xi^\beta \psi(x,\xi) | 
  \lesssim_{\alpha,\beta} ( \jxi^{|\alpha|} + (|\xi|/\jxi)^{1-|\beta|} ) \jx^{|\beta|},
  \qquad 0 \leqslant |\alpha|,|\beta| \leqslant N,
\end{align}
for $V$ with sufficiently many bounded Schwartz semi-norms (see Lemma 3.2 in \cite{PS} for a precise statement),
%\begin{lemma}[Basic properties of $\psi$, Lemma 3.2 \cite{PS}]\label{lemmapsi}
%Let $\psi$ be defined as in \eqref{psi0} with $V$ satisfying \eqref{assV2}.
%Then:
%\begin{align}\label{psiLinfty}
%& | \partial_x^\alpha \partial_\xi^\beta \psi(x,\xi) | 
%  \lesssim ( \jxi^{|\alpha|} + (|\xi|/\jxi)^{1-|\beta|} ) \jx^{|\beta|},
%  \qquad 0 \leq |\alpha|,|\beta| \leq N,
%\end{align}
%\end{lemma}
we have
\begin{align}\label{Mnuest}
\begin{split}
& %| \nu(\xi,\eta) | 
  \big| \varphi_{k}(\xi) \varphi_{k_1}(\eta) \nabla_\xi^a \nabla^b_\eta \nu(\xi,\eta) \big| 
  \lesssim  1 \qquad |a|,|b|\leqslant 1,
\\
& \big| \varphi_{k}(\xi) \varphi_{k_1}(\eta)
  \nabla_\xi^a \nabla_\eta^b \nu(\xi,\eta) \big| \lesssim_{a,b} 
  %\max(1,2^{-k})^{|a|-1} \max(1,2^{-k_1})^{|b|-1}
  2^{-(|a|-1)k^-} 2^{-(|b|-1)k_1^-}, \qquad 1 \leqslant |a|,|b| \leqslant N.
\end{split}
 \end{align}
In particular, $\nu$ does not behave worse than cutoff functions, so we will not need to worry much about
derivatives in $\eta$ falling on it during our integration by parts procedures.

We also recall the following lemma that will be useful in this section:
\begin{lemma} \label{Kernel-Sch}
Let $F:\mathbb{R}^3 \rightarrow \mathbb{R}$ be smooth in a ball 
$B_R(z) \subset \mathbb{R}^3, z \in \mathbb{R}^3, R>0.$ Then 
\begin{align*}
\int_{B_R(z)} \varphi_{\leqslant \lambda} \big( F(x) \big) 
\varphi_{\geqslant \mu} \big( \nabla F (x) \big) dx \leqslant 2^{-\mu} 2^{\lambda} R^2.
\end{align*} 
\end{lemma}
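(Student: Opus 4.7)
The plan is to prove this via a slicing argument that decomposes the integration domain according to which partial derivative of $F$ dominates. On the support of the integrand we have $|\nabla F(x)| \gtrsim 2^{\mu}$, so at every such $x$ there exists an index $i\in\{1,2,3\}$ with $|\partial_{x_i}F(x)| \geq 2^{\mu}/\sqrt{3}$. I would first partition the support into three measurable sets $\Omega_1,\Omega_2,\Omega_3$ according to which index realizes this lower bound (breaking ties by smallest $i$). By symmetry of the argument, it then suffices to estimate the contribution from $\Omega_1$.

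Next I would apply Fubini to write
\begin{align*}
\int_{\Omega_1} \varphi_{\leqslant\lambda}(F(x))\,\varphi_{\geqslant\mu}(\nabla F(x))\,dx
  = \int_{\pi(B_R(z))} \bigg( \int_{I(x_2,x_3)} \varphi_{\leqslant\lambda}(f(x_1))\,dx_1 \bigg)\,dx_2\,dx_3,
\end{align*}
where $\pi$ denotes the projection onto the $(x_2,x_3)$-plane, $f(x_1):=F(x_1,x_2,x_3)$, and $I(x_2,x_3)$ is the intersection of $\Omega_1\cap B_R(z)$ with the line of fixed $(x_2,x_3)$. The projection $\pi(B_R(z))$ has area at most $\pi R^2$, which will produce the $R^2$ factor. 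The inner one-dimensional integral is then controlled using that, on any connected component of $I(x_2,x_3)$, one has $|f'|\geq 2^{\mu}/\sqrt{3}$ so that $f$ is strictly monotonic; hence $f$ injectively maps that component into an interval of length $\lesssim 2^{\lambda}$, forcing its Lebesgue measure to be $\lesssim 2^{\lambda-\mu}$.

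The main obstacle is controlling the total number of connected components of $I(x_2,x_3)$, since a generic smooth $F$ could in principle produce arbitrarily many such intervals on a fixed line. I would handle this either by appealing to the fact that the functions $F$ to which the lemma is applied in our setting (essentially phase-type expressions built from $\langle\xi\rangle$, $\langle\eta\rangle$ and the variables $\xi,\eta,\sigma$) have uniformly controlled higher-order regularity on $B_R(z)$, so the number of connected components of each slice is uniformly bounded; or, equivalently, by recasting the argument via the coarea formula
\begin{align*}
\int_{B_R(z)} \varphi_{\leqslant\lambda}(F)\,\varphi_{\geqslant\mu}(\nabla F)\,dx
  = \int_{|t|\lesssim 2^{\lambda}} \int_{F^{-1}(t)\cap B_R(z)\cap\{|\nabla F|\geqslant 2^{\mu}\}}
  \frac{d\mathcal{H}^{2}(x)}{|\nabla F(x)|}\,dt,
\end{align*}
and then bounding $1/|\nabla F|\leqslant 2^{-\mu}$ pointwise together with the standard regularity estimate $\mathcal{H}^{2}\big(F^{-1}(t)\cap B_R(z)\big)\lesssim R^{2}$. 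Either route yields the stated bound $2^{-\mu}\,2^{\lambda}\,R^{2}$.
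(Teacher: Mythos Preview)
Your slicing/Fubini argument is the standard route and is essentially what the reference \cite{PW} (to which the paper simply defers) carries out. You have also correctly isolated the one genuine issue: for a \emph{general} smooth $F$ neither the number of connected components of a slice $I(x_2,x_3)$ nor the area $\mathcal{H}^{2}\big(F^{-1}(t)\cap B_R(z)\big)$ is bounded by $O(R^2)$. Take $F(x)=A\sin(x_1/\varepsilon)$ with $\varepsilon\ll R$: the level set $F^{-1}(0)\cap B_R$ consists of $\sim R/\varepsilon$ parallel disks of area $\sim R^2$ each, so both your Fubini version and your coarea version break down. In particular the ``standard regularity estimate'' $\mathcal{H}^{2}(F^{-1}(t)\cap B_R)\lesssim R^{2}$ is false at this level of generality, and the coarea formula does not sidestep the component-counting problem.

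Your first fix is the correct one and is what the cited adaptation amounts to: for the phases actually used in the paper (built from $\langle\xi\rangle$, $\langle\eta\rangle$), the restriction of $F$ to any affine line is either monotone or has at most one turning point, so every slice $I(x_2,x_3)$ has $O(1)$ components and the one-dimensional bound $\lesssim 2^{\lambda-\mu}$ integrates against the transverse area $\lesssim R^2$ to give the stated estimate. Read the lemma with that implicit hypothesis on $F$.
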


\begin{proof}
The proof is a straightforward adaptation of Lemma 6.2 part (1) in \cite{PW}.
\end{proof}

We start the analysis of $M_1$ and $M_2$; differentiating and introducing
localizations in $s,\xi,\eta$, and write, for $i=1,2$ (up to irrelevant constants)
\begin{align}\label{M100}
\begin{split}
\partial_{\xi} M_i &=  \sum_{m,k,k_1} I_{m,k,k_1}(G_i) + II_{m,k,k_1}(G_i), 
  \qquad G_1 := h, \quad G_2:=g,
\\
I_{m,k,k_1}(G_i) &:= \varphi_{k}(\xi) \frac{\xi}{\jxi} \int_0^t s B(s) \int_{\R^3} e^{-is(\jxi - \jeta - \lambda)} \varphi_{k_1}(\eta) \wt{G_i}(s,\eta)
  %\int_0 ^s e^{-i\tau(\langle \eta \rangle-2\lambda)} B^2(\tau) d\tau \widetilde{\phi^2}(\eta) 
  %\jeta^{-1} 
  \nu(\xi,\eta) d\eta \, \tau_m(s) ds,  \\
II_{m,k,k_1}(G_i) &:=  \varphi_k(\xi) \int_0^t B(s) \int_{\R^3} e^{-is(\jxi - \jeta - \lambda)}  \varphi_{k_1}(\eta) \wt{G_i}(s,\eta)
  %\int_0 ^s e^{-i\tau(\langle \eta \rangle-2\lambda)} B^2(\tau) d\tau \widetilde{\phi^2}(\eta) 
  %\jeta^{-1} 
 \partial_{\xi} \nu(\xi,\eta) d\eta \, \tau_m(s) ds.
\end{split}
\end{align}
%We will often dispense with the index $i$ in the estimates below.

The terms $I_{m,k,k_1}(g)$ and $I_{m,k,k_1}(h)$ need quite different treatments, 
with the latter being more challenging. This will be carried out in Subsections \ref{ssecM2} and \ref{ssecM1}-\ref{lemmaA} respectively.
The terms $II_{m,k,k_1}(g)$ and $II_{m,k,k_1}(h)$ can essentially be handled in the same way. 
This will be done in Subsection \ref{ssecII}.
In the rest of the section we will drop the dependence of $I_{m,k,k_1}$ and $II_{m,k,k_1}$ 
on the profiles when there is no confusion.

\medskip
\subsection{Estimate of $I_{m,k,k_1}(g)$}\label{ssecM2}
We begin with $I_{m,k,k_1}(g)$ which is easier to handle than $I_{m,k,k_1}(h)$. 
The main result of this subsection is

\begin{lemma}
For $m=0,1,\dots$ we have for some $\delta>0$
\begin{align}\label{M2.1}
 \Big \Vert \sum_{k} I_{m,k,k_1}(g) \Big \Vert_{L^2_\xi} \lesssim \rho(2^m)^{1-\beta +\delta} 2^m\e^{\beta}.
\end{align} 
\end{lemma}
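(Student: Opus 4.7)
The plan is to exploit the explicit form of $\wt g$ and the fact that, after a suitable reshuffling, the integrand carries a phase that is uniformly bounded below, which allows a clean integration by parts in time.

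First, I would note that $\wt g(s,\eta)$ is spectrally localized at $|\jeta-2\lambda|\leqslant 2^{-C}$, i.e.\ at $|\eta|\sim\sqrt{4\lambda^2-1}$, so only finitely many $k_1$ contribute and we may assume $k_1\sim 0$. Substituting the explicit formula for $\wt g$ from \eqref{def-h} and exchanging the order of integration $\tau\leqslant s\leqslant t$, the expression becomes, up to signs and the harmless multiplier $\varphi_{\leqslant M_0}(\xi)\xi/\jxi$,
\begin{align*}
\int_0^t B^2(\tau) e^{2i\tau\lambda} \int_\tau^t s\, B(s)\, \tau_m(s)\, e^{-is(\jxi-\lambda)}\, K(s-\tau,\xi)\, ds\, d\tau,
\qquad K(r,\xi):=\int_{\R^3} e^{ir\jeta}\chi_C(\eta)\wt{\phi^2}(\eta)\nu(\xi,\eta)\,d\eta.
\end{align*}
The key point is that the phase of the inner $s$-integral is $-(\jxi-\lambda)$, which satisfies $\jxi-\lambda\geqslant 1-\lambda>0$ uniformly in $\xi$. (This is why no secondary-resonance issue at the $3\lambda$ sphere appears in this part of the analysis: the $\eta$-dependence has been absorbed into $K$.) Consequently $(\jxi-\lambda)^{-1}\in L^\infty_\xi$, and one integration by parts in $s$ removes the damaging factor of $s$ and produces boundary terms plus interior terms in which $\partial_s$ falls on $B$, $\tau_m$, or $K$.

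The second ingredient is a sharp $L^2_\xi$-decay for $K$. By the distorted Plancherel identity, $K(r,\xi)=\wtF[\phi\cdot e^{irL}(\chi_C(L)\mathbf{P_c}\phi^2)](\xi)$ up to constants, so H\"older together with the dispersive estimate \eqref{decay6} (applied to the Schwartz, frequency-localized function $\chi_C(L)\mathbf{P_c}\phi^2$, whose support lives around $|\xi|\approx\sqrt{4\lambda^2-1}$) gives
\begin{align*}
\|K(r,\cdot)\|_{L^2_\xi}\lesssim \|\phi\|_{L^3_x}\big\|e^{irL}\chi_C(L)\mathbf{P_c}\phi^2\big\|_{L^6_x}\lesssim \langle r\rangle^{-1}.
\end{align*}
The bound on $\partial_r K$ is analogous because on $\mathrm{supp}\,\chi_C$ one has $\jeta=2\lambda+O(2^{-C})$, so $\partial_r K\approx 2i\lambda K$; and $|\dot B|\lesssim \rho$ by Lemma \ref{renorm-A}. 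After this IBP, the $L^2_\xi$-norm is controlled by an integral of the form $\int_{s\approx 2^m}|B(s)|\int_0^s|B(\tau)|^2\langle s-\tau\rangle^{-1}d\tau\, ds$; splitting at $\tau=s/2$ and using $|B|^2\lesssim \rho$ with $\rho(\tau)\approx\rho_0(1+\rho_0\tau)^{-1}$ yields
\begin{align*}
\Big\|\sum_k I_{m,k,k_1}(g)\Big\|_{L^2_\xi}\lesssim 2^m\rho^{3/2}(2^m)\log(1+2^m),
\end{align*}
which for $2^m\gtrsim \e^{-2}$ is $\sim 2^{-m/2}\log 2^m$, comfortably smaller than the target $\rho^{1-\beta+\delta}(2^m)2^m\e^\beta\approx 2^{m(\beta-\delta)}\e^\beta$, and for $2^m\lesssim \e^{-2}$ is easily controlled since $\rho\approx\e^2$.

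The main obstacle is bookkeeping rather than a genuine analytic difficulty: one has to verify that every piece produced by the IBP (boundary contributions at $s=t$ and $s=\tau$, the terms $B\tau_m K$, $s\dot B\tau_m K$, $s B\tau_m' K$ and $s B\tau_m \partial_s K$) admits the same bound. Each of these reduces to a variant of the convolution integral above, using either the cheap $\langle r\rangle^{-1}$ decay of $K$ or the gain $|\dot B|\lesssim\rho$ (respectively the factor $2^{-m}$ from $\tau_m'$), so no new idea is needed beyond the substitution, IBP, and dispersive estimate for $K$ described above.
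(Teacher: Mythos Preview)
There is a genuine gap in your argument, and it sits precisely where you claim the $3\lambda$ resonance has disappeared. Your integration by parts in $s$ uses the phase $\jxi-\lambda$, so among the bulk terms you produce
\[
\frac{1}{i(\jxi-\lambda)}\int_\tau^t s\,B(s)\,\tau_m(s)\,e^{-is(\jxi-\lambda)}\,\partial_r K(s-\tau,\xi)\,ds,
\]
which still carries the factor of $s$. Your observation that $\partial_r K\approx 2i\lambda K$ is correct as a decay statement (so $\|\partial_r K(r,\cdot)\|_{L^2_\xi}\lesssim\langle r\rangle^{-1}$), but it does \emph{not} make this term smaller than the original: you have merely traded $K$ for $\partial_r K$ and multiplied by the bounded factor $(\jxi-\lambda)^{-1}$. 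Unwinding the definition of $K$, the effective $s$-phase is $\jxi-\lambda-\jeta$ with $\jeta\approx 2\lambda$ on $\mathrm{supp}\,\chi_C$, i.e.\ $\approx\jxi-3\lambda$, which vanishes on a sphere. The resonance at $3\lambda$ is not avoided; it is hidden inside $K$. Estimating this piece by Minkowski as you propose gives
\[
\int_{s\approx 2^m} s\,|B(s)|\int_0^s |B(\tau)|^2\langle s-\tau\rangle^{-1}\,d\tau\,ds \;\gtrsim\; 2^{2m}\rho^{3/2}(2^m)\,m
\;\approx\; 2^{m/2}m\quad\text{for }2^m\gtrsim\e^{-2},
\]
which exceeds the target $\rho(2^m)^{1-\beta+\delta}2^m\e^\beta\approx 2^{(\beta-\delta)m}\e^\beta$.

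The paper proceeds differently: it first integrates by parts in $\tau$ inside the formula for $\wt g$, producing the explicit ``boundary'' contribution $I^{(11)}_{m,k_1}(g)$ with phase $\jxi-3\lambda$ and amplitude $sB^3(s)$, plus non-resonant and remainder pieces. The point is that $I^{(11)}_{m,k_1}(g)$ is then bounded by the \emph{inhomogeneous Strichartz} estimate with dual pair $(\tilde q,\tilde r)=(2,\infty)$, which uses $\|sB^3\|_{L^2_{s\approx 2^m}}\lesssim \rho^{3/2}(2^m)2^{3m/2}$ rather than the $L^1_s$ norm $\rho^{3/2}(2^m)2^{2m}$ that your Minkowski-type bound would give. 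This $2^{m/2}$ gain from Strichartz is exactly what closes the estimate. If you want to salvage your rearrangement, you would need to recast the offending $\partial_s K$ term (and the boundary contribution at $s=\tau$, which has the same issue) in a form to which Strichartz applies; in effect this forces you back to isolating the $e^{-is(\jxi-3\lambda)}$ piece explicitly, as the paper does.
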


Note that since $g$ is localized at frequencies $\sim 1,$ the sum over the index $k_1$ is finite.

\begin{proof}
In this case we recombine the sum on $k,$ and denote $I_{m,k_1}:=\sum_{k} I_{m,k,k_1}$.
First, let us observe that if we restrict the frequency integral to 
the region where $|\jeta - 2\lambda| \leqslant 2^{-10m}$, which has approximate size $2^{-10m}$,
then \eqref{M2.1} can be obtained easily using $|\nu| \lesssim 1$ and the bound \eqref{inftyfreqg} 
which gives ${\| \wt{g}(s) \|}_{L^\infty} \lesssim m$ for $s\approx 2^m$.
Therefore, in the rest of the proof we may assume that $|\jeta - 2\lambda| \geqslant 2^{-10m}$;
in particular, %, when dividing by $\jeta - 2\lambda$ we do not need to worry about dealing 
%with $\delta$ and $\pv$ contributions, and 
integrating by parts in the formula for $\wt{g}$ we can slightly abuse notation inserting the corresponding cutoff,
and write
\begin{align}\label{g1g2}
\wt{g}(s,\eta) & = i \varphi_{[-10m,-C]}(\jeta - 2 \lambda) \big( \wt{g}_1(s,\eta) + \wt{g}_2(s,\eta) \big),
\\
\label{g1}
\wt{g}_1(s,\eta) & := \frac{e^{-is(\jeta - 2 \lambda)}}{\jeta - 2\lambda}  B^2(s) \wt{\theta}(\eta) 
 -  \frac{B^2(0)}{\jeta - 2 \lambda} \wt{\theta}(\eta), \qquad \theta := {\bf{P_c}} \phi^2,
\\
\label{g2}
\wt{g}_2(s,\eta) & := \int_0^s \dot{B}(\tau) B(\tau) e^{-i\tau(\jeta - 2 \lambda)}
  \, \frac{\wt{\theta}(\eta)}{\jeta - 2 \lambda} \,d\tau. 
\end{align}
For convenience, we may omit the cutoff $\varphi_{[-10m,-C]}$.
%and thus need to consider two terms
%\begin{align*}
%& \frac{e^{-it(\langle \eta \rangle - 2 \lambda)}}{\langle \eta \rangle -2 \lambda} 
%B^2(t) \widetilde{{\bf{P_c}} \phi^2}(\eta) 
%+ \frac{B^2(0)}{\langle \eta \rangle - 2 \lambda} \widetilde{{\bf{P_c}} \phi^2}(\xi)  + \int_0 ^t B'(s) B(s) 
%e^{-is(\langle \eta \rangle - 2 \lambda)}  ds \frac{\widetilde{{\bf{P_c}} \phi^2}(\eta)}{\langle \eta \rangle - 2 \lambda}. 
%\end{align*}
Plugging the contribution from \eqref{g1} into the integral \eqref{M100} gives the terms
\begin{align*}
I_{m,k_1}^{(11)}(g) := & \int_{0}^t s B^3(s) e^{-is(\jxi - 3 \lambda)} \int_{\R^3} 
  \frac{\wt{\theta}(\eta)}{\jeta - 2 \lambda} \nu(\xi,\eta) d\eta \,\tau_m(s) ds,
\\
I_{m,k_1}^{(12)}(g) := & B(0)^2 \int_{0}^t s B(s) \int_{\R^3} e^{-is(\jxi - \jeta - \lambda)} 
  \frac{\wt{\theta}(\eta)}{\jeta - 2 \lambda} \nu(\xi,\eta) d\eta \,\tau_m(s) ds.
\end{align*}
We bound the first term using Plancherel and the inhomogeneous Strichartz estimate in Lemma \ref{Strichartz} 
with $(q,r,\gamma) = (\infty,2,0)$ and $(\widetilde{q},\widetilde{r},\gamma)=(2,\infty,0)$: 
%for arbitrarily small $\delta$ we have
\begin{align*}
{\| I_{m,k_1}^{(11)}(g) \|}_{L^2_\xi} & =
  {\Big\| \int_{0}^t s B^3(s) e^{-is(L - 3 \lambda)} 
  \phi \, \frac{1}{L - 2 \lambda} \theta \, \tau_m(s) ds \Big\|}_{L^2_x} 
\\
&
\lesssim {\Big\| s B^3(s) \phi \, \frac{1}{L - 2 \lambda} \theta \, \tau_m(s) \Big\|}_{L^2_s L^1_x}
\\
& \lesssim \Vert s B^3(s) \Vert_{L^2_{s \approx 2^m}}
  {\Big\| \phi \frac{1}{L - 2\lambda} \theta \Big\|}_{L^1} \lesssim \rho(2^m)^{3/2} 2^{3m/2}    ,
\end{align*}
%choosing $\delta$ smaller than $\beta,$ and 
having used the boundedness of the resolvent on weighted $L^2$ spaces. Recalling Remark \ref{remm}, this is sufficient.

For the term $I_{m,k_1}^{(12)}(g)$ above
%\begin{align*}
%II := B_0^2 \int_0 ^t \tau_m(s) s B(s)  \int_{\R^3} e^{-is(\langle \xi \rangle - \langle \eta \rangle - \lambda)} %\frac{\widetilde{\phi^2}(\eta)}{\langle \eta \rangle - 2 \lambda} \nu(\eta,\xi) d\eta ds .
%\end{align*}
%In this case 
we can use again Lemma \ref{Strichartz} followed by the local decay estimates from Lemma \ref{localdecay}
to make up for the lack of a $B^2(s)$ factor compared to the term $I_{m,k_1}^{(11)}$:
\begin{align*}
{\| I_{m,k_1}^{(12)}(g) \|}_{L^2_\xi} %\lesssim {\| ... \|}_{L^2_x L^\infty_t} 
  & \lesssim \varepsilon_0 \Vert s B(s) \Vert_{L^2_{s\approx 2^m}} 
  \sup_{s\approx 2^m} 
  %\bigg \Vert \mathcal{F}^{-1}_{\xi} \int_{\R^3} 
  %e^{is \jeta} \frac{\wt{\theta}(\eta)}{\jeta - 2 \lambda} \nu(\eta,\xi) d\eta \bigg \Vert_{L^1_x}
  {\Big\|\phi \, e^{isL} 
  \, \frac{1}{L - 2\lambda} \theta \Big\|}_{L^1_x} 
  \\
  & \lesssim \varepsilon_0 \Vert s B(s) \Vert_{L^2_{s\approx 2^m}} 
  \cdot 2^{-(3/2)m}
  %\cdot 2^{C\delta m} \cdot \sup_{s\approx 2^m} \bigg \Vert \jx^{-10} e^{isB} \frac{\theta}{B - 2 \lambda} \bigg \Vert_{L^1_x} 
  \lesssim \varepsilon_0 \rho(2^m)^{1/2} 2^{3m/2} \cdot 2^{-(3/2)m},
\end{align*}
which is sufficient.

Plugging the contribution from \eqref{g2} into the integral in \eqref{M100},
and summing over $k$ as above, gives the term
\begin{align*}
& I_{m,k_1}^{(2)}(g) := \int_0^t e^{-is(\jxi-\lambda)} s B(s)  \int_{\R^3} \wt{G}(s,\eta)\, \nu(\xi,\eta) d\eta \, \tau_m(s) ds,
  \\
& \wt{G}(s,\eta) := \int_0^s  e^{2i\lambda\tau} \dot{B}(\tau) B(\tau) e^{i(s-\tau)\jeta}
  \, \frac{\wt{\theta}(\eta)}{\jeta - 2 \lambda} \,d\tau.
\end{align*}
Using Lemmas \ref{localdecay} and \ref{renorm-A}, for $s \approx 2^m$, we see that
\begin{align*}
{\| \jx^{-10} G(s) \|}_{L^1_x} \lesssim \int_0^s |\dot{B}(\tau)||B(\tau)|
  {\Big\| \jx^{-10} e^{i(s-\tau)L} \frac{\theta}{L- 2 \lambda}  \Big\|}_{L^1_x} \,d\tau
  \\ \lesssim \int_0^s \big( \rho^{2}(\tau) + \rho^{3/2-a}(\tau) \langle \tau \rangle^{-1}  \big) \langle s-\tau \rangle^{-3/2} d\tau 
  \lesssim \rho(2^m)^{3/2-}.
\end{align*}
Applying the same Strichartz estimate used for $II$ above gives
\begin{align*}
{\| II_{m,k_1}^{(2)}(g) \|}_{L^2_\xi}
  \lesssim {\| s B(s) \|}_{L^2_{s\approx 2^m}} 
  \sup_{s\approx 2^m} 
  %\bigg \Vert \mathcal{F}^{-1}_{\xi} \int_{\R^3} \wt{G}(s,\eta) \nu(\xi,\eta) d\eta \bigg \Vert_{L^1_x} 
  {\| \phi \, G(s) \|}_{L^1} \lesssim 2^{3m/2} \cdot \rho(2^m)^{2-},
\end{align*}
which suffices and concludes the proof of the lemma.
%and taking the $L^2$ norm we get
%\begin{align*}
%& \Bigg \Vert \int_0^t s B(s) e^{-isB}  
%  \bigg( \phi \int_0^s \dot{B}(\tau) B(\tau) \frac{e^{-i(\tau-s)B}}{B-2\lambda} \phi^2 d\tau \bigg) 
%  \,  \tau_m(s) ds \Bigg \Vert_{L^2_x} 
%\\
%& \lesssim 2^m \sup_{s \simeq 2^m} \int_0 ^s B'(\tau) B(\tau) \langle s-\tau \rangle^{-3/2} d\tau,
%\end{align*}
%using Lemma \ref{local-decay} again and taking an inverse Fourier transform.
\end{proof}

\medskip
\subsection{Estimate of $I_{m,k,k_1}(h)$}\label{ssecM1}
This term is harder to treat than the previous one and we need a more refined analysis.
We estimate the terms $I_{m,k,k_1}$ in the remainder of this subsection and the next,
and treat the simpler term $II_{m,k,k_1}$ in Subsection \ref{ssecII}.

The main goal of this subsection is then to prove the following:

\begin{proposition}\label{propM1}
Under the a priori assumptions of Proposition \ref{propboot},
and with the definition \eqref{M100}, %\eqref{M1I0}, 
we have, for all $m=0,1,\dots$, 
\begin{align}\label{propM1bd}
\sum_{k,k_1\in\Z}\Vert I_{m,k,k_1} (h) \Vert \lesssim \rho(2^m)^{1-\beta+\delta} 2^{m} \varepsilon^{\beta} ,
\end{align}
for some $\delta \in (0,\beta/2)$. 
\end{proposition}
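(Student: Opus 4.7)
\textbf{Proof plan for Proposition \ref{propM1}.}

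The plan is to perform a normal form in time combined with a phase‐space decomposition around the mixed resonance set $\Phi = 0$, where $\Phi(\xi,\eta):=\jxi - \jeta - \lambda$. The structure is parallel to the treatment of the $T_1^{S,1}$‐type terms in Subsection \ref{secShg}, with one less factor of $\rho^{1/2}$ available (since there is only a single $B(s)$ here) and a more delicate resonant set (a sphere in $\eta$ for each $\xi$ lying above the Fermi threshold).

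\emph{Step 1: Preliminary reductions.} As in Lemma \ref{lemmaH}, the contributions with $2^k \vee 2^{k_1} \gtrsim \js^{\delta_N}$ can be controlled using the Sobolev a priori bound $\|h\|_{H^N}\leq 2\e$ together with Bernstein's inequality; the innocuous cutoff $\varphi_{\leqslant M_0}$ from $M_0$ as in \eqref{HLsplit} can thus be inserted on the $\xi,\eta$ variables. Then, proceeding as in Lemma \ref{verylowfreq} and using Hardy's inequality, we reduce to $2^{k_1} \gtrsim \js^{-5}$ (and similarly for $\xi$ via the additional $\jxi^{-1}$ from the symbol $\xi/\jxi$). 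After these reductions we work with $-5m\leq k,k_1 \leq \delta_N m$, which gives only $O(m^2)$ dyadic pieces; hence a uniform power gain $2^{-\eta m}$ for a single $(k,k_1)$ suffices.

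\emph{Step 2: Phase localization.} We introduce the cutoff family $\chi_\ell := \varphi_\ell^{(\ell_0)}(\Phi(\xi,\eta))$ with threshold $\ell_0 := \lfloor -m + 2\delta m\rfloor$ for a small $\delta\in(0,\beta/2)$, and decompose
\begin{align*}
I_{m,k,k_1}(h) = \sum_{\ell\geq \ell_0} I_{m,k,k_1,\ell}^{NR}(h) + I_{m,k,k_1,\ell_0}^{R}(h),
\end{align*}
corresponding to the non-resonant ($\ell>\ell_0$) and resonant ($\ell=\ell_0$) parts of the phase. On the support of $\chi_\ell$ for $\ell>\ell_0$ we have $|\Phi|\approx 2^\ell\gtrsim 2^{\ell_0}$, while for $\ell=\ell_0$ we have $|\Phi|\lesssim 2^{-m+2\delta m}$, and the set $\{\eta:|\Phi|\leq 2^{\ell_0},\,|\eta|\approx 2^{k_1}\}$ has volume $\lesssim 2^{\ell_0+2k_1}$.

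\emph{Step 3: Non-resonant part.} For $\ell>\ell_0$ we integrate by parts in $s$ via $e^{-is\Phi}=(-i\Phi)^{-1}\partial_s e^{-is\Phi}$. The problematic factor of $s$ is converted to $s/\Phi$, and then $\partial_s$ applied to $sB(s)\wt h\tau_m$ produces four terms: $B\wt h\tau_m$ (no $s$ factor---the crucial gain), $s\dot B\wt h\tau_m$ (handled by Lemma \ref{renorm-A}: $|\dot B|\lesssim \rho$), $sB\partial_s\wt h\tau_m$ (controlled using Lemma \ref{decay-der}: $\|\partial_s\wt h\|_{L^2}\lesssim\rho$), and $sB\wt h\tau_m'$ (gains $2^{-m}$ from $\tau_m'$). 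Each of these, together with the boundary term at $s=t$, is reduced via Plancherel and the wave operator boundedness (Theorem \ref{Wobd}) to a bilinear operator in physical space with symbol $\nu$ and inputs $\phi$ (which is Schwartz) and $L^{-1}e^{isL}P_{k_1}h$; the latter is bounded using the dispersive estimate \eqref{decay6} to give $\|e^{isL}P_{k_1}h\|_{L^6}\lesssim 2^{5k_1/3}s^{-1}Z$, together with the Strichartz estimate of Lemma \ref{Strichartz} with admissible pair $(\tilde q,\tilde r)=(\infty,2)$ to avoid losing the time integral. Summing over $\ell\geq \ell_0$ is dominated by $\ell = \ell_0$ via the $\sum 2^{-\ell/2}$ tail.

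\emph{Step 4: Resonant part.} For $\ell=\ell_0$ we cannot exploit oscillation in $s$, and we rely instead on the smallness of the resonant set. Cauchy--Schwarz in $\eta$ gives
\begin{align*}
\Big|\int_{\R^3} e^{-is\Phi}\varphi_{k_1}(\eta)\wt h(s,\eta)\,\nu(\xi,\eta)\chi_{\ell_0}\,d\eta\Big|
  \lesssim \big(2^{\ell_0+2k_1}\big)^{1/2}\|\wt h\|_{L^2},
\end{align*}
where we used \eqref{Mnuest} to treat $\nu$ as bounded by $1$ on the relevant support. Taking $L^2_\xi$ on the region $|\xi|\approx 2^k$ contributes $2^{3k/2}$, and integrating $s\tau_m(s)B(s)$ over $s\approx 2^m$ contributes $\lesssim 2^{3m/2}\rho^{1/2}(2^m)$ via Cauchy--Schwarz in time. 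Using $\|\wt h\|_{L^2}\lesssim \e = \rho_0^{1/2}$ and writing $\e^{1-\beta}\lesssim \rho^{(1-\beta)/2}(2^m)$, we obtain an estimate consistent with the right-hand side of \eqref{propM1bd} provided $\delta,\delta_N \ll \beta$; this is where the smallness of the parameter $\beta$ in \eqref{mteps} is exploited.

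\emph{Main obstacle.} The delicate point is the simultaneous presence of the slowly decaying factor $sB(s)\sim s\rho^{1/2}(s)$ (which is \emph{worse} than $|A|^2 \sim \rho$ by a factor of $\sqrt{s}$) and of the mixed resonance $\Phi=0$, which is a codimension‐one submanifold rather than a discrete set. A naive Strichartz or Minkowski bound loses a factor of $2^{m/2}$ and does not close. The normal form in Step 3 is therefore essential to eliminate the $s$ factor in the non-resonant regime, while in Step 4 the volume estimate $\mathrm{vol}\{|\Phi|\lesssim 2^{\ell_0}\}\lesssim 2^{\ell_0}$ coupled with the free parameter $\ell_0$ is tuned so that the gain $2^{\ell_0/2}$ exactly offsets the loss $2^m$ from the $s$ factor (after using $\|\wt h\|_{L^2}\lesssim \e$ to trade $\e^{1-\beta}$ for $\rho^{(1-\beta)/2}$). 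Balancing these competing requirements fixes $\ell_0 = -m + 2\delta m$ with $\delta$ small in terms of $\beta$, and leaves only polynomial room in $m$ which is absorbed by the $\rho^{\delta}(2^m)$ factor allowed on the right‐hand side of \eqref{propM1bd}.
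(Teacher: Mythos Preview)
Your overall architecture (frequency reduction, phase localization, normal form in time for the non-resonant part, volume bound for the resonant part) matches the paper's, but two of your steps contain genuine gaps that prevent the argument from closing.

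\textbf{Step 4 does not close.} The inequality you invoke, $\e^{1-\beta}\lesssim \rho^{(1-\beta)/2}(2^m)$, is backwards: since $\rho(2^m)\leq\rho_0\approx\e^2$, one has $\rho^{(1-\beta)/2}(2^m)\lesssim\e^{1-\beta}$, not the reverse. Concretely, your Cauchy--Schwarz bound gives
\[
\|I^{R}_{m,k,k_1,\ell_0}\|_{L^2}\ \lesssim\ 2^{2m}\rho^{1/2}(2^m)\cdot 2^{3k/2}\cdot 2^{(\ell_0+2k_1)/2}\cdot\|\wt h\|_{L^2},
\]
and with $\ell_0=-m+2\delta m$ and $\rho(2^m)\approx 2^{-m}$ for large times this is $\approx 2^{m+\delta m+3k/2+k_1}\e$, a full power of $2^m$ above the target. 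The volume saving $2^{\ell_0/2}\approx 2^{-m/2}$ offsets only half of the $s$ factor, and you cannot recoup the other half from $\|\wt h\|_{L^2}$. The paper's remedy (Case~1 of Proposition~\ref{propM1'}) is to integrate by parts in $\eta$, using $\partial_\eta\Phi=-\eta/\jeta$, which cancels the $s$ factor entirely and replaces $\|\wt h\|_{L^2}$ by $\|\nabla_\eta\wt h\|_{L^2}\lesssim Z$. The threshold is then set at $p_0=\lfloor -3m/5\rfloor$ (not $-m$), so that the Schur kernel from the resonant cutoff contributes $2^{p_0}\approx 2^{-3m/5}$ and the bound $2^m\rho^{1/2}\cdot 2^{p_0}\cdot Z$ closes.

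\textbf{Step 3 is underdeveloped.} The term $sB\,\partial_s\wt h$ is not controlled simply by $\|\partial_s\wt h\|_{L^2}\lesssim\rho$. After the $s$-integration by parts the kernel is $\Phi^{-1}\varphi_\ell(\Phi)\varphi_{k_1}(\eta)\nu(\xi,\eta)$, whose Schur norm is $O(2^{C\beta m})$ \emph{independently of $\ell$} (see \eqref{K_3bound}); a straight $L^2$ estimate then gives $2^{2m}\rho^{3/2}(2^m)$, which is $\approx 2^{m/2}$ for large times. The paper handles this (term $I^{(22)}$ in \eqref{M172}) by expanding $\partial_s h$ via Duhamel \eqref{dswth}, treating the $O(\rho)\theta$ and $a\phi\cdot L^{-1}\Im w$ pieces with dispersive/Strichartz bounds after the Fourier-expansion trick \eqref{M172'}, and reducing the remaining quadratic-in-$w$ piece to Lemma~\ref{lemA}. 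That lemma in turn requires the full singular/regular decomposition of the NSD $\mu$ from Section~\ref{SecF}. Your sketch does not account for this; the sentence ``controlled using Lemma~\ref{decay-der}'' is not enough, and the Strichartz pair $(\tilde q,\tilde r)=(\infty,2)$ you mention does not absorb the loss coming from $\Phi^{-1}$ at scale $\ell$ near $\ell_0$.
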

In view of \eqref{M100}, the above result is consistent with the desired bound in the main Proposition \ref{proM12}.
Proposition \ref{propM1} will follow from Lemmas \ref{LemmaM1A} and \ref{LemM1k1} and 
Proposition \ref{propM1'}.

%We handle the more challenging term in this section. 
Let us first deal with very small or large frequencies. 
\begin{lemma}\label{LemmaM1A}
Under the assumptions of Proposition \ref{propM1}, we have 
%for $\min(k,k_1) < -3m$, or $\max(k,k_1) > \beta m$ that
\begin{align}\label{M1Abd1}
\sum_{\min(k,k_1) < -3m} 
  \Vert I_{m,k,k_1} (h) \Vert \lesssim \rho(2^m)^{1-\beta+\delta} 2^{m} \varepsilon^{\beta},
\end{align}
and
\begin{align}\label{M1Abd2}
\sum_{\max(k,k_1) > \beta m} 
  \Vert I_{m,k,k_1} (h) \Vert \lesssim \rho(2^m)^{1-\beta+\delta} 2^{m} \varepsilon^{\beta}.
\end{align}
\end{lemma}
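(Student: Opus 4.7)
My plan for Lemma \ref{LemmaM1A} is to dispatch both extremal-frequency bounds by crude estimates, avoiding any integration by parts in $s$ or $\eta$, and instead exploiting that the extreme frequency regimes supply either a small volume factor (small $k$ or $k_1$) or a Sobolev-regularity gain (large $k$ or $k_1$). The preliminaries I need are all already at hand: $|B(s)|\lesssim\rho(s)^{1/2}$ from Lemma \ref{renorm-A}, the pointwise bound $|\nu(\xi,\eta)|\lesssim 1$ from \eqref{Mnuest}, the Plancherel-type estimates
\[
{\|\nu(\xi,\cdot)\|}_{L^2_\eta}+{\|\nu(\cdot,\eta)\|}_{L^2_\xi}\lesssim{\|\phi\|}_{L^2}\lesssim 1
\]
(which follow from the isometry of $\wtF$ and the uniform bound $|\psi(x,\xi)|\lesssim 1$ in \eqref{psiLinfty}), and the a priori bound ${\|h\|}_{H^N}\lesssim\varepsilon$.

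For the small-frequency bound \eqref{M1Abd1} I apply Minkowski in $s$, then Cauchy--Schwarz in $\eta$ which gives ${\|\varphi_{k_1}\wt{h}\|}_{L^1_\eta}\lesssim 2^{3k_1/2}\varepsilon$, combined with the pointwise bound $|\varphi_k(\xi)\,\xi/\jxi|\lesssim 2^{k\wedge 0}$ and the volume bound ${\|\varphi_k\|}_{L^2_\xi}\lesssim 2^{3k/2}$. Together with $\int_0^t s\,|B(s)|\,\tau_m(s)\,ds\lesssim 2^{2m}\rho(2^m)^{1/2}$ this produces
\[
{\|I_{m,k,k_1}(h)\|}_{L^2_\xi}\lesssim 2^{2m}\rho(2^m)^{1/2}\cdot 2^{3k/2+k\wedge 0}\cdot 2^{3k_1/2}\,\varepsilon.
\]
Summing over $k<-3m$ gains a factor $2^{-15m/2}$ (from $\sum 2^{5k/2}$), and summing over $k_1<-3m$ gains $2^{-9m/2}$; either factor is much smaller than the target $\rho(2^m)^{1-\beta+\delta}2^m\varepsilon^\beta$, and the complementary summation over $k_1\in\Z$ (resp.\ $k\in\Z$) is controlled by the Sobolev bound on $h$ for positive $k_1$, by $\sum 2^{3k_1/2}$ for negative $k_1$, and by the decay of $\nu$ described below for the $k$-sum.

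For the large-frequency bound \eqref{M1Abd2} the easier case is $k_1>\beta m$: Cauchy--Schwarz in $\eta$ together with ${\|\nu(\xi,\cdot)\|}_{L^2_\eta}\lesssim 1$ and ${\|P_{k_1}h\|}_{L^2}\lesssim 2^{-Nk_1}\varepsilon$ gives $|\int\varphi_{k_1}\wt{h}\,\nu(\xi,\cdot)d\eta|\lesssim 2^{-Nk_1}\varepsilon$ uniformly in $\xi$, and then orthogonality in $k$ yields an $L^2_\xi$ bound decaying like $2^{-N\beta m}\varepsilon^{1/2}$, far beating the target once $N$ is large. The main obstacle is the asymmetric case $k>\beta m$ with $k_1\le\beta m$, since the Sobolev regularity of $h$ gives no gain here; to handle it I need Schwartz-type decay of $\nu$ when $|\xi|$ and $|\eta|$ sit at separated scales. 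The key identity, obtained from $(-\Delta+V)\psi(\cdot,\zeta)=|\zeta|^2\psi(\cdot,\zeta)$ and one integration by parts in $x$, is
\[
(|\xi|^2-|\eta|^2)\,\nu(\xi,\eta)=\int\overline{\psi(x,\xi)}\psi(x,\eta)(-\Delta\phi)(x)\,dx-2\int\overline{\psi(x,\xi)}\,\nabla\psi(x,\eta)\cdot\nabla\phi(x)\,dx,
\]
whose right-hand side is, thanks to $\phi\in\mathcal{S}$ and \eqref{psiLinfty}, controlled polynomially in $\jxi,\jeta$. Iterating this $N$ times produces the bound $|\varphi_k(\xi)\varphi_{k_1}(\eta)\nu(\xi,\eta)|\lesssim 2^{-2N(k\vee k_1)}$ whenever $|k-k_1|\ge 5$, which together with the trivial $\|\varphi_k\,\xi/\jxi\|_{L^2}\lesssim 2^{3k/2+k\wedge 0}$ gives summable decay in the asymmetric range; the residual sub-case $|k-k_1|<5$ forces $k_1>\beta m-5$ and so reduces to the Sobolev-gain case already treated.
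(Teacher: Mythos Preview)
Your strategy is sound but there is a genuine gap in the $k_1>\beta m$ branch of \eqref{M1Abd2}. The lemma demands the $\ell^1$ sum $\sum_k\|I_{m,k,k_1}\|_{L^2}$, whereas ``orthogonality in $k$'' only controls the $\ell^2$ sum $(\sum_k\|I_{m,k,k_1}\|_{L^2}^2)^{1/2}=\|\sum_k I_{m,k,k_1}\|_{L^2}$. Your uniform-in-$\xi$ bound on the inner integral gives $\|I_{m,k,k_1}\|_{L^2}\lesssim 2^{2m}\rho(2^m)^{1/2}\,2^{3k/2+k\wedge 0}\,2^{-Nk_1}\varepsilon$, and $\sum_{k\geq 0}2^{3k/2}$ diverges. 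The repair is easy and already in your toolbox: invoke the off-diagonal $\nu$ decay for this range as well, just as you do for the $k$-sum when $k_1<-3m$ and for the case $k>\beta m$, $k_1\le\beta m$. (As a minor point, the iterated bound after $n$ steps is $|\nu|\lesssim 2^{n(k\wedge k_1)^+-2n(k\vee k_1)}$ rather than $2^{-2N(k\vee k_1)}$, since each step places one $x$-derivative on a $\psi$; this is still amply sufficient.)

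The paper bypasses the $\nu$-iteration altogether by passing to physical space. Via Plancherel, $\|I_{m,k,k_1}(h)\|_{L^2}\lesssim 2^{k^-}\int_0^t s|B(s)|\,\|P_k(\phi\cdot P_{k_1}L^{-1}e^{isL}h)\|_{L^2}\,\tau_m(s)\,ds$; since $\phi\in\mathcal{S}$, multiplication by $\phi$ preserves $H^{N/2}$, so $\|P_k(\phi\cdot G)\|_{L^2}\lesssim 2^{-(N/2)k^+}\|G\|_{H^{N/2}}$ for free. Combined with $\|P_{k_1}h\|_{L^2}\lesssim(2^{-Nk_1^+}\wedge 2^{k_1})\,\rho^{1-\beta}t\,\varepsilon^\beta$ (the small-$k_1$ part coming from Sobolev embedding $\|\wt{h}\|_{L^6}\lesssim\|\nabla_\xi\wt{h}\|_{L^2}$), this yields a single estimate with factors $2^{k^-}2^{-(N/2)k^+}$ and $(2^{-(N/2)k_1^+}\wedge 2^{k_1})$, manifestly $\ell^1$-summable in $(k,k_1)$ over both extremal regimes at once. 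Your Fourier-side route is legitimate, but the commutator iteration it requires is made unnecessary by the product structure $\phi\cdot(\cdot)$.
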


\begin{proof}
Using H\"older, Sobolev's embedding, and the a priori estimates, we have %(here $P_k$ is the distorted LP projector)
\begin{align}\label{Pk1h}
{\| P_{k_1} h(t) \|}_{L^2} & \lesssim \min\big( 2^{-Nk_1^+} {\| h(t) \|}_{H^N}, 
  \, 2^{k_1} {\big\| \wt{h}(t) \big\|}_{H^1_\xi} \big)
  \lesssim \big( 2^{-Nk_1^+} \wedge 2^{k_1} \big) \rho(t)^{1-\beta} t \e^{\beta}.
\end{align}
Then we can estimate using Plancherel and \eqref{Pk1h}
\begin{align} \label{reuse}
{\| I_{m,k,k_1}(h) \|}_{L^2} & \lesssim 2^{k^-} \int_0^t s |B(s)| 
  {\big\| P_k ( \phi P_{k_1} L^{-1}e^{isL}h(s) ) \big\|}_{L^2} \tau_m(s) ds
\\
\nonumber
& \lesssim 2^{2m} \rho(2^m) \cdot 2^{k^-} 
  \sup_{s\approx 2^m} 2^{-(N/2)k^+} {\big\| P_{k_1} L^{-1}e^{isL}h(s) \big\|}_{H^{N/2}}
\\
\nonumber
& \lesssim 2^{k^-} 2^{-(N/2) k^+} \cdot \big( 2^{-(N/2)k_1^+} \wedge 2^{k_1} \big) 
  \cdot 2^{2m} \rho(2^m)^{2-\beta} 2^m \e^{\beta} .
% \notag& \lesssim 2^{3m/2 + \beta m} \cdot 2^{k^-} 2^{-(N/2) k^+} \cdot \big( 2^{-(N/2)k_1^+} \wedge 2^{k_1} \big)
\end{align}
This gives simultaneously %an estimate for the $L^2$-norm of $\sum_{m,k,k_1} \partial_\xi I_{m,k,k_1}$ 
\eqref{M1Abd1} as well as \eqref{M1Abd2}, since, by our choice \eqref{mteps}, we have $\beta N/2 > 1$.
%by the right-hand side of \eqref{mixedest} given our assumptions on $k,k_1$ 
%(recall our choice $N\beta \geq 10$).
%see the right-hand side of \eqref{mixedest}.
\end{proof}

Note that, in view of the above lemma, there are only $O(m^2)$ terms left 
to estimate in the sums in \eqref{propM1bd}; therefore, to obtain Proposition \ref{propM1} 
it suffices to show that for some $\delta' > \delta$
\begin{align}\label{M1est}
\begin{split}
{\| I_{m,k,k_1}(h) \|}_{L^2_\xi} \lesssim \rho(2^m)^{1+\delta^\prime-\beta} 2^m \e^{\beta}, 
  \qquad  -3m \leqslant k, k_1 \leqslant \beta m.
\end{split}
\end{align}

We make another reduction in the following lemma
by treating the case of relatively small $|\eta|$ or $|\xi|$.

\begin{lemma}\label{LemM1k1}
Let $\min(k,k_1) \geqslant -3m$ and $\max(k,k_1) \leqslant \beta m$,
and assume in addition that $\min(k,k_1) \leqslant - 5\beta m$. Then we have
\begin{align*}
{\| I_{m,k,k_1}(h) \|}_{L^2} \lesssim \rho(2^m)2^m.
\end{align*}
\end{lemma}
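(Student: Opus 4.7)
My approach is to split the proof into two subcases according to which of the two frequencies is small: either $k_1\leqslant -5\beta m$ or $k\leqslant -5\beta m$. In both cases the overall strategy is a direct estimate starting from the definition \eqref{M100}, using the pointwise control $|\nu|\lesssim 1$ from \eqref{Mnuest}, the bound \eqref{Pk1h} on $\Vert P_{k_1}h\Vert_{L^2}$, the bootstrap \eqref{boot} on $\Vert \partial_\xi\wt h\Vert_{L^2}$, and the pointwise bound $|\dot B|\lesssim \rho$ from \eqref{Adotest} together with $\Vert \partial_s \wt h\Vert_{L^2}\lesssim \rho$ from \eqref{estdsh}. Throughout I will use that $|B(s)|\lesssim \rho(s)^{1/2}$ and that, after integration over the time slab $s\approx 2^m$, one has $\int s|B(s)|\tau_m(s)\,ds\lesssim 2^{2m}\rho(2^m)^{1/2}$.

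In the subcase $k_1\leqslant -5\beta m$ I would estimate the inner $\eta$-integral by Cauchy--Schwarz: the factor $\varphi_{k_1}(\eta)$ gives the volume bound $\Vert \varphi_{k_1}\,\nu(\xi,\cdot)\Vert_{L^2_\eta}\lesssim 2^{3k_1/2}$, while $\Vert \varphi_{k_1}\wt h\Vert_{L^2_\eta}=\Vert P_{k_1}h\Vert_{L^2_x}\lesssim 2^{k_1}\Vert \wt h\Vert_{H^1_\xi}$ by \eqref{Pk1h}. The outer factor $\varphi_k(\xi)\,\xi/\langle\xi\rangle$ contributes a factor $\lesssim 2^{3k/2+k^-}$ to the $L^2_\xi$ norm. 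Combining with the time integration and the restrictions $k_1\leqslant -5\beta m$, $k\leqslant \beta m$ from Lemma~\ref{LemmaM1A} should give the desired bound $\rho(2^m)\,2^m$.

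In the subcase $k\leqslant -5\beta m$ the output frequency $|\xi|$ is very small, and the key observation is that on the support of the integrand the phase $\Phi(\xi,\eta):=\langle \xi\rangle-\langle \eta\rangle-\lambda$ is bounded away from zero by a positive absolute constant: indeed $\langle \xi\rangle\leqslant \sqrt{1+2^{2k}}$ is close to $1$ while $\langle \eta\rangle+\lambda\geqslant 1+\lambda>1$, so $|\Phi|\gtrsim \lambda - O(2^{2k})\gtrsim 1$ once $m$ is large enough that $2^{2k}\leqslant 2^{-10\beta m}$ is small. This allows a non-stationary phase integration by parts in $s$, trading the factor $s$ in front of $B(s)$ for the bounded multiplier $1/\Phi$, after which the time integrand is controlled pointwise by $|B|+s|\dot B|+s|B|\cdot\text{(term hitting $\wt h$)}$, all of which are of order $\rho^{1/2}$ (or better, after integration). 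One then proceeds as in the first subcase, this time gaining an extra factor $2^{5k/2}$ from the outer volume and the $\xi/\langle \xi\rangle$ multiplier.

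The main obstacle I foresee is the tight bookkeeping of the factors $\rho(2^m)$, $2^m$, $\varepsilon^\beta$ and the frequency localizations. The na\"ive Cauchy--Schwarz estimate is borderline: the product of $\int s|B|\,ds\sim 2^{2m}\rho^{1/2}$, the weighted bound $2^{k_1+\beta m}\varepsilon^\beta$ on $\Vert P_{k_1}h\Vert_{L^2}$, and the volume factors $2^{3k/2+k^-}\cdot 2^{3k_1/2}$ does not obviously fit inside the target $\rho(2^m)2^m\approx 1$ without further input. Closing the argument therefore crucially relies either on the integration by parts in $s$ (in the $k\leqslant -5\beta m$ case, where the phase is non-degenerate) or on a more careful use of Bernstein, wave-operator boundedness and the restriction $\max(k,k_1)\leqslant \beta m$ which keeps any residual losses subpolynomial in $2^m$; the sharp target $\rho(2^m)2^m$, although weak compared to the target of the main bootstrap (as discussed in Remark~\ref{remm}), is still small enough that no slack is wasted.
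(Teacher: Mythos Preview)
Your proposal has a genuine gap: the frequency smallness $2^{\min(k,k_1)}\leqslant 2^{-5\beta m}$ is far too weak (recall $\beta=1/300$) to close either subcase by itself. Concretely, in your first subcase ($k_1\leqslant -5\beta m$) the direct Cauchy--Schwarz bound gives at best
\[
\|I_{m,k,k_1}(h)\|_{L^2}\lesssim 2^{2m}\rho(2^m)^{1/2}\cdot 2^{5k_1/2}\cdot 2^{\max(0,3k/2)}\cdot Z,
\]
and for $t\gtrsim\varepsilon^{-2}$ (so $\rho\approx 2^{-m}$, $Z\approx 2^{\beta m}\varepsilon^\beta$) this is of order $2^{3m/2-O(\beta m)}\varepsilon^\beta$, which diverges. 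In your second subcase ($k\leqslant -5\beta m$) the integration by parts in $s$ is legitimate (the phase is indeed bounded away from zero), but it only replaces the prefactor $2^{2m}\rho^{1/2}$ by $2^m\rho^{1/2}+2^{2m}\rho^{3/2}\approx 2^{m/2}$; combined with $2^{5k/2}\leqslant 2^{-25\beta m/2}$ and $\|P_{k_1}h\|_{L^2}\lesssim\varepsilon$ you still obtain $2^{m/2-O(\beta m)}\varepsilon$, again divergent past $t\sim\varepsilon^{-2}$. The deficit in both cases is essentially $2^{3m/2}$, and no ``careful use of Bernstein'' or wave-operator boundedness will manufacture such a gain from the hypotheses you list.

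The paper closes this gap with two ingredients you are missing. First, it applies the Strichartz estimate (Lemma~\ref{Strichartz} with $(\tilde q,\tilde r,\gamma)=(2,\infty,0)$) to the full Duhamel expression, replacing the naive $2^{2m}\rho^{1/2}$ by $2^{3m/2}\rho^{1/2}$; this already recovers a factor $2^{-m/2}$. Second, and crucially, it exploits the oscillation $e^{is\langle\eta\rangle}$ in the \emph{frequency} variable: for $k_1\geqslant -m/2+\delta m$ one integrates by parts in $\eta$ (using $|\nabla_\eta\langle\eta\rangle|\approx 2^{k_1^-}$), gaining an additional $2^{-m}$ at the cost of $2^{-k_1}$ and bringing $\nabla_\eta\wt h$ into play, while for $k_1\leqslant -m/2+\delta m$ a straight Bernstein bound $\|e^{isL}P_{k_1}h\|_{L^\infty}\lesssim 2^{5k_1/2}Z$ suffices. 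After these two steps one obtains $2^{k^-}\cdot 2^{-m}\cdot 2^{k_1/2}\cdot Z$, and it is precisely at this point that the smallness $\min(k,k_1)\leqslant -5\beta m$ absorbs the residual $2^{\beta m}$ in $Z$. Your integration by parts in $s$ does not appear in the paper's argument at all; the two subcases $k$ small vs.\ $k_1$ small are treated symmetrically through the factor $2^{k^-}\cdot 2^{k_1/2}$ rather than by a phase lower bound.
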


\begin{proof}
We apply Plancherel and the Strichartz estimate from Lemma \ref{Strichartz} to estimate
\begin{align}\label{LemM1k10}
\begin{split}
{\|  I_{m,k,k_1}(h) \|}_{L^2_\xi} & = c {\Big\| P_k \frac{R |H|^{1/2}}{L}\int_0^t s B(s) e^{isL} 
  \big( \phi \, e^{isL} L^{-1} h \big) \, \tau_m(s) ds \Big\|}_{L^2}
  \\ 
  & \lesssim 2^{k^-} \cdot {\big\| s B(s) \big\|}_{L^2_{s\approx 2^m}} 
  \sup_{s\approx 2^m} {\big\|  \phi \, \big( L^{-1} e^{isL} P_{k_1} h \big) \big\|}_{L^1}
  \\ 
  & \lesssim 2^{k^-} \cdot 2^{3m/2} \rho(2^m)^{1/2} 
  \cdot \sup_{s\approx 2^m} {\big\| \jx^2 \phi \, ( L^{-1} e^{isL} P_{k_1} h ) \big\|}_{L^2}.
%  & \lesssim 2^m \cdot \sup_{s\approx 2^m} {\big\| \jx \phi \, ( L^{-1} e^{isL} P_{k_1} h ) \big\|}_{L^2}.
\end{split}
\end{align}
Here $R$ is the distorted Riesz transform $\wt{Rf}(\xi) = (\xi/|\xi|) \wt{f}(\xi)$
and $H=-\Delta+V$ the Schr\"odinger operator, $\wtF (|H|^\alpha f)(\xi) = |\xi|^\alpha\wt{f}(\xi)$.
Let us define
\begin{align}\label{LemM1k12}
H_{k_1}(s,\xi) = \widetilde{\mathcal{F}} \Big( \jx^2 \phi \, \big( L^{-1} e^{isL} P_{k_1} h\big) \Big)
 = %\varphi_k(\xi) 
 \int_{\R^3} e^{is\jeta} \varphi_{k_1}(\eta) \, \jeta^{-1}\wt{h}(s,\eta) \nu'(\xi,\eta) d\eta,
\end{align} 
where 
\begin{align*}
\nu'(\xi,\eta) := \int_{\mathbb{R}^3} \overline{\psi(x,\xi)} \psi(x,\eta) \langle x \rangle^2 \phi(x) \,dx.  
\end{align*}
Notice that $\nu'$ satisfies the estimates \eqref{Mnuest} as well.
By \eqref{LemM1k10} and Plancherel it 
suffices to show that, for $s\approx 2^m$, %$k_1 \leqslant -5\beta m$, %$k \leqslant \beta m$,
we have
\begin{align}\label{LemM1k11}
2^{k^-}{\big\| H_{k_1}(s) \big\|}_{L^2_\xi} \lesssim  2^{-m}, \quad \mbox{when} \quad \min(k,k_1) \leqslant -5\beta m.
\end{align}
%Plugging the first bound in the $\min$ above into \eqref{LemM1k10} will take care of the case $k\leq -5\beta m$;

%%%%%%%%%%%%%%%%%%%%%
%REMOVE BEGIN PROOF BELOW, LINE 5416
%%%%%%%%%%%%%%%%%%%%%

Since we can estimate, using Bernstein and \eqref{Pk1h},
\begin{align*}
{\| H_{k_1}(s) \|}_{L^2} \lesssim {\| e^{isL} P_{k_1} h \|}_{L^\infty} %\lesssim 2^{5k_1/2} {\| \wt{h} \|}_{L^6}
 \lesssim 2^{5k_1/2} \cdot Z, %{\| \nabla \wt{h} \|}_{L^2}
% \lesssim 2^{5k_1/2} \cdot \varepsilon^\beta 2^{\beta m}, %{\| \nabla \wt{h} \|}_{L^2}
\end{align*}
we immediately get \eqref{LemM1k11} provided 
$k_1 \leqslant -m/2 + \delta m$, for $\delta$ small enough (recall $2^m \geqslant \rho_{0}^{-1/2}$).

When instead $k_1 \geqslant -m/2 + \delta m$ we can integrate by parts and write
\begin{align*}
H_{k_1}(s,\xi) &:= H_{k_1}^{(1)}(s,\xi) + H_{k_1}^{(2)}(s,\xi) + \, \{\mbox{similar terms}\},
\\
H_{k_1}^{(1)}(s,\xi) & := \frac{i}{s} %\varphi_k(\xi) 
  \int_{\R^3} e^{is\jeta} \frac{\eta}{|\eta|^2} \cdot \nabla_{ \eta }\wt{h}(s,\eta) 
  \, \varphi_{k_1}(\eta) \nu'(\xi,\eta) d\eta,
  \\
H_{k_1}^{(2)}(s,\xi) & := 
  \frac{i}{s} \int_{\R^3} e^{is\jeta} \wt{h}(s,\eta)  \varphi_{k_1}(\eta) \, 
  \frac{\eta}{|\eta|^2} \cdot \nabla_\eta \nu'(\xi,\eta) d\eta,
%\end{split}
\end{align*} 
where ``similar terms'' are those with $\nabla_\eta$ hitting the cutoff or $\eta /\vert \eta \vert^2$; it will be
clear how such terms can be handled like the other ones. %or the measure $\nu$,

%Using H\"older's inequality in $\xi$ as well as in the $d\eta$ integral,
%and since $|\nu'|\lesssim 1$, we bound, for $s\approx 2^m$,
Then we bound, for $s\approx 2^m$,
\begin{align}\label{LemM1k13}
\begin{split}
{\| H_{k_1}^{(1)}(s) \|}_{L^2} & \lesssim %2^{3k/2} \cdot 
2^{-m}  {\big\| \wt{\mathcal{F}}^{-1} \big( \varphi_{k_1} e^{is \langle \eta \rangle} 
  \eta/|\eta|^{-2} \cdot \nabla_{\eta } \wt{h} \big) \big\|}_{L^\infty} 
\lesssim 
%2^{-m} 2^{-k_1} {\| \varphi_{k_1}\nabla \wt{h} \|}_{L^1} \lesssim %2^{3k/2} 
%2^{-m} 2^{k_1/2} \cdot 2^{\beta m} \lesssim 2^{-m}. %{\| \nabla \wt{h} \|}_{L^2}
2^{-m} 2^{k_1/2} \cdot Z. %\rho(2^m)^{1-\beta} 2^m \e^{\beta}. %\lesssim \rho(2^m)
\end{split}
\end{align}

Similarly, we can handle the remaining term using Plancherel and $|\nabla_\eta \psi| \lesssim 1$: %, see \eqref{Mnuest},
\begin{align}\label{LemM1k14}
\begin{split}
{\| H_{k_1}^{(2)}(s) \|}_{L^2} & \lesssim 
  2^{-m} { \Big\| \int_{\mathbb{R}^3} \overline{\psi(x,\xi)} \jx^2 \phi(x)
    \Big[ \int_{\R^3} e^{is\jeta} \wt{h}(s,\eta)  \varphi_{k_1}(\eta) \, 
  \frac{\eta}{|\eta|^2} \cdot \nabla_\eta \psi(x,\eta) d\eta \Big] \, dx \Big\|}_{L^2_\xi}
\\
& \lesssim 2^{-m} { \Big\| \int_{\R^3} e^{is\jeta} \wt{h}(s,\eta)  \varphi_{k_1}(\eta) \, 
  \frac{\eta}{|\eta|^2} \cdot \nabla_\eta \psi(x,\eta) d\eta\ \Big\|}_{L^\infty_x}
\\
& \lesssim 2^{-m} 2^{-k_1} {\| \varphi_{k_1}\wt{h} \|}_{L^1}
 \lesssim 2^{-m} 2^{k_1/2} \cdot Z. %\rho(2^m)^{1-\beta} 2^m \e^{\beta}.
\end{split}
\end{align}
In view of \eqref{LemM1k13} and \eqref{LemM1k14}, 
and since $2^{k^-} 2^{-m} 2^{k_1/2} \cdot \rho(2^m)^{1-\beta} 2^m \e^{\beta}
  \lesssim 2^{-m}$ when $\min(k,k_1) \leqslant -5\beta m$,
we obtain \eqref{LemM1k11} and conclude the proof.
\end{proof}

%\comment{Treating small $k_1$ in advance simplified the rest of the proof (especially first couple of pages)}

With Lemmas \ref{LemmaM1A} and \ref{LemM1k1} we have reduced the proof Proposition \ref{propM1}
to the proof of the following proposition, which will occupy the rest of this subsection.

\begin{proposition}\label{propM1'}
With the assumptions and notation of Proposition \ref{propM1},
assume that $-5\beta m \leqslant k,k_1 \leqslant \beta m$.
%and $k_1 > -5 \beta m.$ 
%Assume moreover that either $k_1 < -D$ or $k > -5 \beta m.$ 
Then
\begin{align}\label{propM1'bd}
\Vert I_{m,k,k_1}(h) \Vert_{L^2} \lesssim \rho(2^m)^{1-\beta+\delta'} 2^m \e^{\beta},
\end{align}
for some $\delta' >\delta$.
\end{proposition}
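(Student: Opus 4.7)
\textbf{Proof plan for Proposition \ref{propM1'}.} The phase of the mixed interaction is
\begin{align*}
\Phi(\xi,\eta) = \jxi - \jeta - \lambda,
\end{align*}
which vanishes on the ``mixed Fermi sphere'' $\{\jeta = \jxi - \lambda\}$ (possible only when $\jxi \geq 1+\lambda$). Since $|sB(s)| \lesssim 2^m \rho^{1/2}(2^m)$ is far too large for a direct H\"older estimate, the plan is to integrate by parts in $s$ to kill this $sB$ factor, paying a factor of $1/\Phi$. To handle the singularity on the resonance sphere we split $1 = \sum_{\ell \geq \ell_0}\varphi_{\ell}^{(\ell_0)}(\Phi)$ with $\ell_0 := \lfloor -m/2 + \delta_0 m\rfloor$ for a small parameter $\delta_0$; denote by $I_{m,k,k_1,\ell}$ the corresponding piece. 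I shall treat the non-resonant dyadic shells $\ell > \ell_0$ by IBP in $s$, and the single resonant piece $\ell = \ell_0$ by IBP in the radial $\eta$ direction.

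\emph{Non-resonant region ($\ell > \ell_0$).} After IBP in $s$ against $e^{-is\Phi}$, both the $s=t$ boundary term and the bulk terms feature the resolvent-type multiplier $\varphi_\ell^{(\ell_0)}(\Phi)/\Phi$ with modulus $\lesssim 2^{-\ell}$. Viewing the $\eta$-integration as a distorted Fourier transform, the spatial factor is (up to the $\xi/\jxi$ Riesz symbol) of the form $\widetilde{\mathcal F}(\phi \cdot (\jxi - L - \lambda)^{-1}\varphi_\ell^{(\ell_0)}(\jxi - L - \lambda)\cdot L^{-1}e^{isL}P_{k_1}h)$, where $\jxi$ is treated as a functional-calculus parameter. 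Using Plancherel, the boundedness of wave operators (Theorem~\ref{Wobd}), H\"older in physical space with $\|\phi\|_{L^3}$, and the $L^6$ decay
\begin{align*}
\|e^{isL}L^{-1}P_{k_1} h\|_{L^6} \lesssim s^{-1}\langle 2^{5k_1/3}\rangle\|\partial_\xi \wt h\|_{L^2} \lesssim s^{-1}Z,
\end{align*}
together with $|\partial_s(sB\tau_m)| \lesssim \rho^{1/2}(s)$ which follows from Lemma \ref{renorm-A} and $s\rho^{3/2}(s)\lesssim \rho^{1/2}(s)$, one obtains the $\ell$-fixed bound $2^{-\ell}\rho^{1/2}(2^m)Z$. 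Summing over $\ell_0 < \ell \lesssim \beta m$ loses only an $O(m)$ factor beyond $2^{-\ell_0}$ and yields
\begin{align*}
\Big\|\sum_{\ell>\ell_0}I_{m,k,k_1,\ell}\Big\|_{L^2} \lesssim 2^{m/2-\delta_0 m}\rho^{1/2}(2^m)\cdot Z\cdot m \lesssim 2^{(-\delta_0 + 0+)m}\cdot \rho^{1-\beta}(2^m)2^m\e^{\beta},
\end{align*}
which is consistent with \eqref{propM1'bd} for any $\delta'<\delta_0$.

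\emph{Resonant region ($\ell = \ell_0$).} On this shell $\eta$ is confined to a tubular annulus of thickness $\approx 2^{\ell_0 -k_1^-}$ around the sphere $\{\jeta = \jxi - \lambda\}$, whose trace $L^2_\xi$-size is controlled by Lemma~\ref{Kernel-Sch} (or by Cauchy--Schwarz + Fubini) with volume gain $\approx 2^{\ell_0/2}$. Since $\nabla_\eta\Phi = -\eta/\jeta$ has modulus $\approx 2^{k_1-k_1^+}\gtrsim 2^{-6\beta m}$ on our support, each IBP in the radial $\eta$-direction produces a factor $\lesssim (s|\nabla_\eta\Phi|)^{-1}\lesssim 2^{-m+6\beta m}$; derivatives falling on $\wt h$ are absorbed in $\|\partial_\xi\wt h\|_{L^2}\lesssim Z$, while derivatives falling on $\nu$ or on the cutoffs are harmless thanks to \eqref{Mnuest} and the elementary identity for radial cutoffs (see the proof of Lemma \ref{LemM1k1}). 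After one IBP in $\eta$ (and none in $s$) one finds, by Cauchy--Schwarz in $\eta$ and Minkowski in $\xi$,
\begin{align*}
\|I_{m,k,k_1,\ell_0}\|_{L^2}\lesssim 2^m|B(2^m)|\cdot 2^{-m+6\beta m}\cdot 2^{\ell_0/2}\cdot 2^{3k/2}\cdot \|\varphi_{k_1}\partial_\eta\wt h\|_{L^2}\lesssim 2^{(-\delta_0/2 + C\beta)m}\cdot Z,
\end{align*}
which is again absorbed in the right-hand side of \eqref{propM1'bd} provided $\beta, \delta_0$ are small enough relative to $\delta'$.

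\emph{The main obstacle.} The delicate piece is the contribution to the bulk term (after the $s$-IBP in the non-resonant region) in which $\partial_s$ falls on $\wt h$ rather than on $sB\tau_m$. The naive estimate $\|\partial_s h\|_{L^2}\lesssim \rho(s)$ from Lemma \ref{decay-der} combined with the $|sB|/|\Phi|$ weight produces an integral of size $2^{-\ell_0}\cdot 2^m\rho^{1/2}(2^m)\cdot\rho(2^m)\cdot 2^m$ which fails by a polynomial factor. To fix this one uses the explicit expression $\partial_s\wt h = e^{-is\jxi}\widetilde{\mathcal F}((a\phi + L^{-1}\Im w)^2) - \chi_C B^2 e^{-is(\jxi-2\lambda)}\wt{\phi^2}$ and inserts it back into the bulk: the contributions from $a^2\phi^2$ and from $\chi_C B^2\wt{\phi^2}$ are essentially integrable-in-time source terms controlled as in Section \ref{source-terms}, while the genuinely bilinear pieces $a L^{-1}\Im w \cdot \phi$ and $(L^{-1}\Im w)^2$ are treated by applying the inhomogeneous Strichartz estimate of Lemma \ref{Strichartz} with admissible pair $(\wt q,\wt r,\gamma) = (2,3,0)$, thereby landing in $L^{\wt q'}_s L^{\wt r'}_x = L^{2}_s L^{3/2}_x$ and exploiting the $L^6$-decay of $e^{isL}h$ to gain an extra $s^{-1}$. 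This is precisely the place where, as mentioned in the introduction, ``normal forms which delocalize the bilinear operators and make the combined use of local-decay and Strichartz-type estimates less efficient'' force the careful bookkeeping; the gain remains sufficient, yielding \eqref{propM1'bd} with some $\delta'>\delta$ once all parameters are calibrated ($\delta_0$ small, $\beta$ small, $\delta'<\delta_0/2$).
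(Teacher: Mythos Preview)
Your argument has a genuine gap in the resonant piece. After one IBP in $\eta$ the factor $(s|\nabla_\eta\Phi|)^{-1}$ cancels the explicit $s$ in $sB(s)$, leaving $\int_0^t B(s)\tau_m(s)\,(\cdot)\,ds\approx 2^m\rho^{1/2}(2^m)\cdot(\cdot)$; your displayed bound carries an extra spurious $2^{-m}$. With the correct bookkeeping (and using Schur's test, which is sharper than your $L^\infty_\xi\cdot\mathrm{vol}^{1/2}$ route) one gets
\[
\|I_{m,k,k_1,\ell_0}\|_{L^2}\lesssim 2^m\rho^{1/2}(2^m)\cdot 2^{\ell_0+C\beta m}\cdot Z,
\]
which for $\ell_0=-m/2+\delta_0 m$ and $\rho\approx 2^{-m}$ is $\approx 2^{\delta_0 m}Z$: a loss, not a gain. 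No choice of $\ell_0$ makes both sides close simultaneously with your scheme---$\ell_0\gtrsim -m/2$ is needed for the simple Strichartz treatment of the $\partial_s\wt h$ bulk term, while $\ell_0\lesssim -m/2$ is needed for the resonant shell.

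The paper breaks this deadlock by taking the deeper threshold $p_0=-3m/5$. The resonant shell then closes via Schur's test (kernel $K_2$, gaining $2^{p_0}$). But this pushes the non-resonant $\partial_s\wt h$ contribution into the range $-3m/5<p<-m/3$, where the crude Strichartz--$L^2$ bound you propose is insufficient. That range is handled by the substantial Lemma~\ref{lemA} (a full subsection): one expands $(L^{-1}\Im w)^2$ through the NSD decomposition $\mu=\mu^S+\mu^R+\mu^{Re}$, integrates by parts in $s$ once more on the singular part using the phase lower bound of Lemma~\ref{lemphases}, and integrates by parts in the inner frequencies on the regular parts. Your sketch omits this entire mechanism.

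Two smaller points. First, in the non-resonant region your claimed $2^{-\ell}$ gain is not immediate: the multiplier $\varphi_\ell(\Phi)/\Phi$ depends on $\xi$, so Plancherel in $\xi$ cannot be combined na\"ively with an $L^p_\eta$ multiplier bound. The paper makes this rigorous via the Fourier expansion $\varphi_p(\Phi)/\Phi=2^{-p}\int\check\psi(z)e^{iz2^{-p}\Phi}dz$ (see \eqref{M172'}), converting the $\xi$-dependence into a time shift. Second, the Strichartz pair $(\wt q,\wt r,\gamma)=(2,3,0)$ you quote is neither admissible nor scaling-consistent in $d=3$; the paper uses $(\wt q,\wt r,\gamma)=(2,\infty,0)$, landing in $L^2_sL^1_x$.
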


%:HERE
\begin{proof}
We introduce an additional localization in $\Phi.$ More precisely let us define, for $m = 1,2,\dots$
\begin{align}\label{M1I}
\begin{split}
I_{m,k,k_1,p}(t,\xi) := %2^{-k_1^+} 
  \frac{\xi}{\jxi} \varphi_k(\xi) \int_0^t s B(s) \int_{\R^3} e^{-is\Phi(\xi,\eta) } 
  \varphi_p^{(p_0)}(\Phi(\xi,\eta)) \varphi_{k_1}(\eta) \, \jeta^{-1}\wt{h}(s,\eta) \\
  \times  \nu(\xi,\eta) d\eta \, \tau_m(s) ds,
\end{split} 
\\
\nonumber
  & \Phi(\xi,\eta) := \jxi -\jeta - \lambda, \qquad p_0 := \lfloor -3m/5 \rfloor.
\end{align}
\iffalse
We restrict our attention to the case $p \leqslant -D$, for some sufficiently large absolute 
constant $D$ such that $|\jxi - 1 -\lambda| < 2^{-D+1}$ implies $|\xi| \approx 1$;
the case $p > -D$ is easier to handle so we will disregard it.
%by direct integration by parts in $s$
%In particular, we may also drop the dependence on $k$ in \eqref{M1I}.
Moreover, we will focus on $k_1 < -D$, and only add some comments about the case
$k_1 \geqslant -D$ when this cannot be handled by immediate extensions of the arguments presented. 
To complete the proof of the main estimate \eqref{mixedest}, it will suffice to show the following:
\begin{proposition}\label{propM1bound}
For all $m=0,1,2,\dots$ and
\begin{align}
-5\beta m < k_1 < -D,  \quad p_0 \leqslant p \leqslant -D,
\end{align}
we have
\begin{align}\label{M1Ibound}
{\big\| \partial_{\xi} I_{m,k,k_1,p} \big\|}_{L^2} \lesssim 2^{\delta' m} .
\end{align}
\end{proposition}
%Note how we have excluded the case $k_1 < -m$ since that is also easier to handle than the rest.
%We observe that the bound \eqref{M1Ibound} suffices to prove \eqref{M2.1} upon summing over the parameters 
\fi
We divide the proof in various cases and subcases depending on the sizes of the parameters
$p$, $2^p \approx |\Phi|$, and $k_1$, $2^{k_1} \approx |\eta|$.

\medskip
\noindent
{\bf Case 1: $p = p_0$.}
In this case the phase is small and there are few oscillations in $s$.
%\medskip
%{\bf Subcase 1.1: $k_1 \leqslant -m/2 + \gamma m$}.
%Note that in this frequencies configuration the contribution of \eqref{M1I} is nontrivial only if 
%$\jxi \approx 1+\lambda$, hence $2^k \approx 1$.
%Denoting $K_1(\xi,\eta):=  \varphi_k(\xi) \varphi_{[k_1-2,k_1+2]}(\eta) \varphi_{\leq p_0}(\Phi)$, we have
%\begin{align*}
%\int_{\R^3} \vert K_1(\xi,\eta) \vert d\eta  \lesssim 2^{3k_1},
%  \quad \int_{\R^3} \vert K_1(\xi,\eta) \vert d\xi  \lesssim 2^{p_0},
%  \qquad \Vert K_1 \Vert_{Sch} \lesssim 2^{p_0/2 + 3k_1/2}.
%\end{align*}
%
%When applying $\partial_\xi$ to \eqref{M1I} the worst term is the one where $\partial_\xi$ hits the exponential,
%and since that contributes a factor of $s$, we may assume that 
%$\| \partial_{\xi} I_{m,k,k_1,p} \|_{L^2} \lesssim 2^m \| I_{m,k,k_1,p} \|_{L^2}$.
%We can then estimate this last term using the apriori assumptions, Schur's Lemma, and H\"older and Sobolev, as follows:
%\begin{align*}
%%\Vert \partial_{\xi} I_{m,k,k_1,p} \Vert_{L^2} &  
%& \lesssim 2^{2m} \sup_{s \simeq 2^m} {\Big\| \int_{\R^3} |B(s)| \, K_1(\xi,\eta) \, |\wt{h}(s,\eta)| %\nu(\xi,\eta) 
%  d\eta \Big\|}_{L^2} 
%  \\
%& \lesssim 2^{3m/2} \cdot {\| K \|}_{Sch} \cdot {\big\| \wt{h}(s) \varphi_{k_1} \big\|}_{L^2}
%\\
%& \lesssim 2^{3m/2} \cdot 2^{p_0/2 + 3k_1/2} \cdot 2^{k_1} {\| \partial_{\xi} \wt{h} \|}_{L^2} 
%\\
%& \lesssim 2^{3m/2} 2^{-9m/20-5m/4 + 3\gamma m} \cdot 2^{\beta m} \varepsilon^\beta,
%\end{align*}
%which suffices.
%
%\medskip
%{\bf Subcase 1.2: $k_1 \geqslant -m/2 + \gamma m$}.
%In this case we 
We integrate by parts in frequency in \eqref{M1I} and write (up to irrelevant constants)
\begin{align}\label{M15}
\begin{split}
& I_{m,k,k_1,p_0} = J_{m,k,k_1,p} \, + \, \{\mbox{easier terms}\},
\\
& J_{m,k,k_1,p} := \frac{\xi}{\jxi} \int_{0}^t B(s) \int_{\R^3} e^{-is\Phi(\xi,\eta)}
  \frac{\eta}{|\eta|^2} \cdot \nabla_{\eta} \wt{h}(s,\eta) 
   \varphi_{k}(\xi) \varphi_{k_1}(\eta) \varphi_{p_0}(\Phi) \nu(\xi,\eta) \,d\eta \,\tau_m(s)ds.
\end{split}
\end{align}
The ``easier terms'' are those where $\nabla_\eta$ hits the cutoff $\varphi_{k_1}$ (or the factor of $\eta/|\eta|^2$),
or the cutoff $\varphi_p(\Phi)$, or the measure $\nu$; 
in all these cases there is a net gain in negative powers of $s$.
For example, we see that $\partial_\eta\varphi_p(\Phi) = \varphi_{\sim p}(\Phi) 2^{-p} \eta/\jeta$;
then, we can repeat the integration by parts in $\eta$ gaining an $s^{-1}$ factor, while we have only lost 
$2^{-p} \lesssim 2^{3m/5}$, and apply the same arguments given below.

Denote $$K_2(\xi,\eta) := 2^{-k_1} \varphi_k(\xi) \varphi_{k_1}(\eta) \varphi_{p_0} (\Phi) \nu(\xi,\eta).$$ 
Then, using Lemma \ref{Kernel-Sch},
\begin{align*}
 \int_{\R^3} \vert K_2(\xi,\eta) \vert d\eta & \lesssim 2^{p_0-k_1 -k_1^-} 2^{2 \beta m} 
 	\sup_{\vert \xi \vert \approx 2^k, \vert \eta \vert \approx 2^{k_1}} \vert \nu(\xi,\eta) \vert, %\wedge 2^{2k_1}
 \\ 
 \int_{\R^3} \vert K_2(\xi,\eta) \vert d\xi & \lesssim 2^{p_0-k_1} 2^{-k^-} 2^{2 \beta m} 
	\sup_{\vert \xi \vert \approx 2^k, \vert \eta \vert \approx 2^{k_1}} \vert \nu(\xi,\eta) \vert.
\end{align*}
%Let $D=D(\lambda)>0$ be a constant such that $|\jxi - 1 -\lambda| < 2^{-D+1}$ implies $|\xi| \approx 1.$ 
%Now if $k_1<-D$ then with this choice and the definition of $p_0,$ we have $\vert \xi \vert \approx 1$ and 
In particular, we see that (recall the notation \eqref{defSchur})
\begin{align*}
{\| K_2 \|}_{Sch} \lesssim 2^{p_0 - k_1 -k_1^{-}/2- k^{-}/2 } 2^{2 \beta m} %\wedge  2^{p_0/2+k_1/2} \lesssim 2^{-m/2-m/80},
	\lesssim 2^{-11m/20},
\end{align*}
provided $\beta$ is small enough.
%If $k_1 > -D$ then by Lemma \ref{small-xi} we have $k \geqslant -5 \beta m$ and the same bound holds.
%where the last inequality follows, for example, by looking at the cases $k_1 \geq -m/4$ and $k_1 \leq -m/4$.
As a consequence of Schur's test we get
% taking a derivative in $\xi$ in \eqref{M15} 
%and focusing once again on the worst term where it hits the exponential (see also the paragraph after \eqref{M15})
\begin{align}\label{M15bound}
{\big\| J_{m,k,k_1,p} \big\|}_{L^2} & 
  \lesssim 2^m \sup_{s \approx 2^m} 
  {\Big\| |B(s)| \int_{\R^3} \big| K_2(\xi,\eta) \big| \, |\nabla_{\eta} \wt{h}(s,\eta)| d\eta \Big\|}_{L^2} 
  \lesssim 2^m \rho(2^m)^{1/2} {\| K_2 \|}_{Sch} \cdot Z,
%   \lesssim 2^{m/2} {\| K_2 \|}_{Sch} \cdot  \varepsilon^\beta 2^{\beta m}.
%\\
%& \lesssim 
%\left\{ \begin{array}{ll}
%2^{-m/8 + (\delta + \gamma)m} 2^{\beta m} \varepsilon^2 & \textrm{if} \quad k_1 \leqslant -m/4  
%\\
%2^{-m/8 + (\delta + \gamma/2)m} 2^{\beta m} \varepsilon^2 & \textrm{if} \quad k_1 \geqslant -m/4
%\end{array} \right.
\end{align}
which is bounded by the right-hand side of \eqref{propM1'bd}.

%%%%%%%%%%%%%%%%%%%%%%%%%%%%%%%%%%%
\medskip
\noindent
{\bf Case 2: $p > p_0$}.
% tl{removed $p<-D$.  Make sure it is ok}
In this case we can also take advantage of integration by parts in time,
%We distinguish again two main subcases depending on the size of $|\eta|$.
%When arguments are similar to the previous ones, we will omit some details.
%\comment{To cancel if not needed
%
%As a first step we record the following basic bound that can obtained by integrating by parts in $\eta$
%and applying Schur's test, similarly to \eqref{M15}-\eqref{M15bound},
%\begin{align}\label{M1bb}
%\begin{split}
%{\big\| \partial_{\xi} I_{m,k,k_1,p} \big\|}_{L^2}
%  & \lesssim 2^m \sup_{s \approx 2^m} 
%  {\Big\| |B(s)| \int_{\R^3}  \varphi_{k}(\xi) \varphi_{k_1}(\eta) \varphi_{p}(\Phi) 
%  \, |\partial_{\eta} \wt{h}(s,\eta)| d\eta \Big\|}_{L^2} 
%  \\
%  & \lesssim 2^{m/2} \Big( 2^{p - 3k_1/2} \wedge 2^{p/2+k_1/2} \Big) \cdot  \varepsilon^\beta 2^{\beta m} .
%\end{split}
%\end{align}}
%\medskip
%{\bf Subcase 2.1: $k_1 \leqslant -m/2+\gamma m$.}
writing
\begin{align}%\label{M16.0}
\nonumber
&\vert I_{m,k,k_1,p} 
  \vert \leqslant \big\vert  I^{(1)}_{m,k,k_1,p} \big\vert 
 +  \big\vert I^{(2)}_{m,k,k_1,p}  \big \vert, %\, + \, \textrm{easier term},
\\
\label{M16}
& I^{(1)}_{m,k,k_1,p}(t,\xi) := \tau_m(t) \, t \frac{\xi}{\jxi} B(t) \varphi_k(\xi) \int_{\R^3} e^{-it\Phi} 
  \frac{\varphi_{p_0}(\Phi) }{\Phi} \varphi_{k_1}(\eta) \wt{h}(t,\eta) \jeta^{-1} \nu(\xi,\eta) d\eta,
\\
\label{M17}
& I^{(2)}_{m,k,k_1,p}(t,\xi) := \int_0^t \varphi_k(\xi) \frac{\xi}{\jxi} \int_{\R^3} e^{-is\Phi}
  \frac{\varphi_p(\Phi)}{\Phi} 
  \partial_s \big( s \, B(s) \tau_m(s) \wt{h}(s,\eta) \big) \varphi_{k_1}(\eta) \jeta^{-1} \nu(\xi,\eta) d\eta ds.
\end{align}
Denoting 
\begin{align}\label{K_3}
K_{3;p,k,k_1}(\xi,\eta) = K_3(\xi,\eta) := \frac{\varphi_{p}\big(\Phi(\xi,\eta)\big)}{\Phi(\xi,\eta)} 
  \varphi_{[k_1-2,k_1+2]}(\eta)\varphi_k(\xi) \jeta^{-1} \nu(\xi,\eta), 
\end{align}
we have
\begin{align*}
 \int_{\R^3} \vert K_3(\xi,\eta) \vert d\eta & \lesssim \min\big( 2^{-k_1^+ -k_1^-} 2^{2 \beta m}, 2^{-p + 3k_1} \big) 
 	\sup_{\vert \xi \vert \approx 2^k, \vert \eta \vert \approx 2^{k_1}} \vert \nu(\xi,\eta) \vert,
 \\ 
 \int_{\R^3} \vert K_3(\xi,\eta) \vert d\xi & \lesssim \min\big( 2^{-k_1^+} 2^{-k^-} 2^{2 \beta m}, 2^{-p + 3k} \big)
	\sup_{\vert \xi \vert \approx 2^k, \vert \eta \vert \approx 2^{k_1}} \vert \nu(\xi,\eta) \vert.
\end{align*}
Then, using \eqref{Mnuest},
%and Lemma \ref{Kernel-Sch}, 
%cfp{Make that lemma more precise (add ${\| \cdot \|}_{Sch}$ to definitions/notation) move it ?} 
we get (recall the definition \eqref{defSchur})
\begin{align}\label{K_3bound}%\label{M18}
{\| K_3 \|}_{Sch} \lesssim \min \big( 2^{(1/2)(-2p + 3k_1 + 3k)},
  2^{-(1/2)(k_1^- + k^-)} \big) 2^{2 \beta m} \lesssim 2^{7\beta m}, %\lesssim 2^{-m/4}.
\end{align}
%\begin{align*}
%\int_{\R^3} |K_3(\xi,\eta)| d\eta \lesssim 2^{-p+3k_1}, \quad
%  \int_{\R^3} |K_3(\xi,\eta)| d\xi  \lesssim 1 ,
%\end{align*}
%hence
where the last inequality follows from $k_1,k \geqslant -5\beta m$.
%To justify this last inequality, let $D=D(\lambda)>0$ be a sufficiently large absolute constant, depending on $\lambda$, 
%such that $|\jxi - 1 -\lambda| < 2^{-D+1}$ implies $|k| \leqslant 5$ ($|\xi| \approx 1$). 
%If $p>-D$ then with the restriction $\max(k ,k_1) \leqslant \beta m$ we can conclude. 
%If $p<-D$ and $k_1 <-D$ then we can conclude that $|k| \leqslant 5$ given the choice of the constant $D,$ see previous case. 
%Finally when $k_1>-D$ then by Lemma \ref{small-xi} we have that $k>-5 \beta m$ hence the desired result.

%Taking a derivative in $\xi$ and restricting our attention to the usual worst term, we obtain,
%for $t\approx 2^m$,
%\begin{align*}
%\Vert \partial_{\xi} I^{(1)}_{m,k,k_1,p} \Vert_{L^2} 
%& \lesssim 2^{m/2} {\| K_3 \|}_{Sch} {\big\| \wt{h}(t) \varphi_{k_1} \big\|}_{L^2} 
%  \lesssim 2^{m/2} \cdot 2^{-m/4} \cdot 2^{k_1} 2^{m\beta} \varepsilon^\beta,
%\end{align*}
%and the desired bound follows given our assumptions on $k_1$.

%\medskip
%\noindent
%{\bf Case 2.2: $k_1 > (-1+\gamma) m/2$}.
%This is the hardest case, since we cannot always take advantage of the smallness of $|\eta|$.
%On the other hand, 

\medskip
\noindent
{\it Estimate of \eqref{M16}}.
To treat $I^{(1)}_{m,k,k_1,p}$ in \eqref{M16} we integrate by parts in frequency %(recall that $k_1 > -5\beta m$), 
and find that
\begin{align*}
I^{(1)}_{m,k,k_1,p} = \tau_m(t) \frac{\xi}{\jxi} B(t) \varphi_k(\xi)
  \int_{\R^3} e^{-it\Phi} \frac{\varphi_{p}(\Phi) }{\Phi}  % -it\Phi
  \, \frac{\eta}{|\eta|^2} \cdot \nabla_{\eta} \wt{h}(t,\eta) 
  \varphi_{k_1}(\eta) \nu(\xi,\eta) d\eta \, + \, \{\mbox{easier terms}\}.
\end{align*}
The ``easier terms'' are similar to those in \eqref{M15}, 
and can be dealt with arguing as in the paragraph that follows that expression.
%\comment{tl{
%As above, the easier terms may require several integrations 
%by parts in the case where the derivative hits the cutoff, right ?
%}}
Denoting $K_4(\xi,\eta) := \varphi_{p}(\Phi) \Phi^{-1} \varphi_{k_1}(\eta) \, \eta |\eta|^{-2} \, \nu(\xi,\eta)$, 
we have, by the same reasoning as in \eqref{K_3bound},
%\begin{align*}
%\int_{\R^3} \vert K_{4}(\xi,\eta) \vert d\xi \lesssim 2^{-k_1}, \quad
%  \int_{\R^3} \vert K_4(\xi,\eta) \vert d\eta \lesssim 2^{-2k_1} \wedge 2^{-p} 2^{2k_1}, 
%\end{align*}
%hence 
\begin{align}\label{M1K4}
{\| K_4 \|}_{Sch} \lesssim \min \big( 2^{-k_1 -k_1^{-}/2-k^{-}/2},  2^{-p+k_1/2+3k/2 } \big) 2^{2 \beta m} 
	\lesssim 2^{12 \beta m}.
%\lesssim 2^{3m/8},
\end{align}
%as we can see by looking at the cases $k_1 \geq -m/4$ and $k_1\leq -m/4$.
Using this fact,  we can bound, for $|t| \approx 2^m$
%and focusing on the worst term resulting from the application of $\partial_\xi$ as usual,
\begin{align}\label{M115}
{\big\| I^{(1)}_{m,k,k_1,p}(t) \big\|}_{L^2} & \lesssim \rho^{1/2}(2^m) \cdot {\| K_4 \|}_{Sch}
	\cdot {\| \nabla_\eta \wt{h}(t) \|}_{L^2}
	\lesssim 2^{-m/2} \cdot 2^{12\beta m} \cdot Z ,
\end{align}
which is sufficient.
%\begin{align*}
%\Vert \partial_{\xi} I^{(1)}_{m,k,k_1,p}  \Vert_{L^2} & \lesssim \left\{ \begin{array}{ll}
%2^{-m/2 + \delta m} 2^{-3/2 k_1} 2^{\beta m} \varepsilon^2 & \textrm{if} \quad k_1 \geqslant -m/4 \\
%2^{-m/2 + \delta m} 2^{-p/2+k_1/2}  2^{\beta m} \varepsilon^2 & \textrm{if} \quad k_1 \leqslant -m/4
%\end{array} \right. \\
%& \lesssim 2^{-m/8+\delta m} 2^{\beta m} \varepsilon^2.
%\end{align*}

\medskip
\noindent
{\it Estimate of \eqref{M17}}.
Moving on to $I^{(2)}_{m,k,k_1,p},$ we start with the easier case where the time derivative falls 
on the amplitude $B(s)$. We denote the corresponding term
\begin{align*}%\label{M171}
I^{(21)}_{m,k,k_1,p}(t,\xi) :=
  \int_0^t \frac{\xi}{\jxi} \varphi_k(\xi) \int_{\R^3} e^{-is\Phi} \frac{\varphi_p(\Phi)}{\Phi}\,
  \frac{d}{ds} \big(s B(s)\big) \, \wt{h}(s,\eta) \varphi_{k_1}(\eta) \jeta^{-1} \nu(\xi,\eta) d\eta \, \tau_m(s) ds.
\end{align*}
%Using Lemma \ref{renorm-A}, and the usual Schur's estimate, H\"older and Sobolev, we can bound
%\begin{align*}
%{\big\| \partial_{\xi} I^{(21)}_{m,k,k_1,p} \big\|}_{L^2} 
%  & \lesssim 2^{2m} \sup_{s \simeq 2^m} {\Big\| \int_{\R^3} |K_3(\xi,\eta)| \,
%  |\dot{B}(s)| \, |\wt{h}(s,\eta)| d\eta \Big\|}_{L^2} 
%\\
%& \lesssim 2^{2m} \cdot {\| K_3 \|}_{Sch} \cdot 2^{-3m/2} \cdot 2^{k_1} 2^{\beta m} \varepsilon^\beta,
%\end{align*}
%and the desired bound follows from \eqref{M18} and the restriction on $k_1$.
Notice that this is similar to the boundary term just treated and, integrating by parts in $\eta$ and using Lemma \ref{renorm-A}, we find 
\begin{align*}
{\big\| I_{m,k,k_1,p}^{(21)} (t,\xi) \big\|}_{L^2} \lesssim 2^{m} 
  \cdot \big(\rho(2^m)^{3/2} + \rho(2^m)^{3/2-a} 2^{-m} + \rho(2^m)^{1/2} 2^{-m} \big) \cdot Z.
\end{align*}
This yields the desired result.

Next, we treat the case where the $s$-derivative in \eqref{M17} falls on the profile $\wt{h}$, that is,
\begin{align}\label{M172}
& I^{(22)}_{m,k,k_1,p}(t,\xi) := \int_0^t s \frac{\xi}{\jxi} \varphi_k(\xi) \int_{\R^3} e^{-is\Phi}
  \frac{\varphi_p(\Phi) }{\Phi} B(s) \tau_m(s) \, \partial_s \wt{h}(s,\eta) \varphi_{k_1}(\eta) \jeta^{-1} \nu(\xi,\eta) d\eta ds.
\end{align}
Let $\psi(x) = \varphi(x)/x \in \mathcal{S}$, we can write
(recall that $R$ is the distorted Riesz transform %$\wt{Rf}(\xi) = (\xi/|\xi|) \wt{f}(\xi)$
and $H=-\Delta+V$ the Schr\"odinger operator) %, $\wt{|H|^\alpha f}(\xi) = |\xi|^\alpha\wt{f}(\xi)$.}
\begin{align}
\nonumber
& I^{(22)}_{m,k,k_1,p}(t,\xi) 
%= \int_0^t  \varphi_k(\xi) s \frac{\xi}{\jxi} \int_{\R^3} e^{-is\Phi}
%  \frac{\varphi_p(\Phi) }{\Phi} B(s) \tau_m(s) \, \partial_s \wt{h}(s,\eta) \varphi_{k_1}(\eta) \nu(\xi,\eta) d\eta ds
  \\
  \nonumber
  & = \int_0^t  s \frac{\xi}{\jxi} \varphi_k(\xi) \int_\R 2^{-p} \check{\psi}(z) \Big[ \int_{\R^3} e^{-i(s+z2^{-p})\Phi}
  B(s) \tau_m(s) \,  \jeta^{-1}  \partial_s \wt{h}(s,\eta) \varphi_{k_1}(\eta) \nu(\xi,\eta) d\eta 
  \Big] dz ds 
  \\
  \nonumber
  & =  \int_\R 2^{-p} \check{\psi}(z) \, \wt{\mathcal{F}}^{-1} \Big[ P_k \frac{R |H|^{1/2}}{L} \int_0^t s \, e^{-i(s+z2^{-p})L} 
  \big( e^{-i(s+z2^{-p})\lambda} B(s) 
  \\ & \qquad \qquad \qquad \qquad \qquad \qquad \times \phi \,  e^{i(s+z2^{-p})L}  P_{k_1} L^{-1} \partial_s h(s)
  \big) \tau_m(s) ds \Big] dz . 
\label{M172'}
\end{align}
Therefore, we can write that for any $q \in [2,\infty],$
\begin{align}
\nonumber
& {\big\| I^{(22)}_{m,k,k_1,p}(t,\xi) \big\|}_{L^2} 
  %\\
  %& \lesssim \int_\R 2^{-p} \vert  \check{\psi}(z) \vert {\Big\| \int_0^t e^{-i(s+z2^{-p})L} 
  %\big( e^{-i(s+z2^{-p})\lambda} s B(s) 
  %\cdot \phi \, e^{-i(s+z2^{-p})L} P_{k_1} \partial_s h(s)
  %\big) \tau_m(s) ds \Big\|}_{L^2} dz
  \\
  \nonumber
  & \lesssim \int_\R 2^{-p} \vert \check{\psi}(z) \vert
  {\big\| sB(s) \cdot \phi \, e^{i(s+z2^{-p})L} P_{k_1} \partial_s h(s) \big\|}_{L^2_{s\approx 2^m} L^1_x} \, dz
  \\
  & \lesssim \int_\R 2^{-p} \vert \check{\psi}(z) \vert
  \cdot 2^{3m/2} \rho(2^m)^{1/2} %{\| sB(s) \|}_{L^2} 
  \cdot \sup_{s\approx 2^m} {\big\| e^{i(s+z2^{-p})L} P_{k_1} \partial_s h(s) \big\|}_{L^q_x} \, dz .
\label{I22bound}
\end{align}

Next, recall that (see \eqref{def-h} and \eqref{Duhamelf} or, equivalently, \eqref{Duhamelw0})
\begin{align}\label{dswth}
\begin{split}
\partial_s \wt{h}(s) &= e^{-is \jeta} \Big[ \wt{\mathcal{F}}(a^2(s) \theta) 
	+ \wt{\mathcal{F}}\big(a(s) \phi \, L^{-1} \Im w(s) \big) 
	+ \wt{\mathcal{F}}\big( \big( L^{-1}\Im w(s) \big)^2 \big) \Big]
	- \partial_s \wt{g}(s).
%	\\
%	&
%	tl{= e^{-is \jeta} \big[ \varphi_{\neq}(\eta)  \wt{\mathcal{F}}(a^2(s) \theta) 
%	+ \wt{\mathcal{F}}(a(s) \phi \, \frac{\Im w}{L}) + \wt{\mathcal{F}}\big( \big( \frac{\Im w}{L} \big)^2 \big)  \big] }
\end{split}
\end{align}
In particular, from the definition of $g$, we see that
\begin{align}\label{dswth'}
\begin{split}
\partial_s h(s) - e^{-is L} \big( L^{-1}\Im w \big)^2 
  = e^{-is L} O(\rho(s)) \theta + e^{-is L} \big( a(s) \phi \cdot  L^{-1}\Im w(s) \big).
\end{split}
\end{align}
We can then use \eqref{I22bound} 
to estimate the contributions of the two terms on the right-hand side above to \eqref{M172'}.
For the first term we use the estimate, for all $s\approx 2^m$,
\begin{align*}
{\| e^{i z 2^{-p}L} \rho(s) \theta \|}_{L^\infty_x} \lesssim (1 + z 2^{-p})^{-3/2} \rho(2^m),
\end{align*}
which, when plugged in \eqref{I22bound} with $q=\infty$, gives the upperbound
\begin{align*}
 \int_\R 2^{-p} \vert  \check{\psi}(z) \vert \cdot 2^{3m/2} \rho(2^m)^{3/2} (1+z2^{-p})^{-3/2} dz \lesssim 2^{(0+) m},
\end{align*} 
(recall that $\psi$ is Schwartz).
%this gives an acceptable $O(1)$ bound.
For the second term on the right-hand side of \eqref{dswth'} we can proceed similarly, using instead the estimate
\begin{align*}
{\big\| e^{i z 2^{-p}L}  a(s) \phi \cdot L^{-1} \Im w(s) \phi \big\|}_{L^\infty_x} & \lesssim 
	(1+z2^{-p})^{-3/2} {\big\| a(s) \phi \cdot L^{-1} \Im w(s) \big\|}_{W^{3,1}\cap H^2} 
	\\
	& \lesssim (1+z2^{-p})^{-3/2} \cdot \rho(2^m)^{1/2} \cdot \rho(2^m)^{1-\beta} 2^{5\delta_N m} \e^{\beta}
\end{align*}
relying on \eqref{decayp} (with $p=6$) and
Sobolev's embedding, the $H^N$ a priori bound and Sobolev-Gagliardo-Nirenberg interpolation
to deal with the couple extra derivatives.

To complete the estimate of $I^{(22)}_{m,k,k_1,p}(t,\xi)$ it then suffices to estimate
by the right-hand side of \eqref{propM1'bd} the remaining contribution to \eqref{M172} given by
\begin{align}
\label{M123}
& A_{m,k,k_1,p}(t,\xi) :=  \frac{\xi}{\jxi} \int_0^t s \int_{\R^3} e^{-is(\jxi-\lambda)} 
  B(s)\, \wtF \big( \big(L^{-1} \Im w \big)^2 \big)(s,\eta) K_3(\xi,\eta) d\eta \, \tau_m(s) ds,
\end{align}
where $K_3(\xi,\eta)$ is the kernel defined in \eqref{K_3} which satisfies \eqref{K_3bound}.
%\begin{align}\label{K_3bound}
%{\|K_3\|}_{Sch} \lesssim 2^{-p/2 + 3k_1/2} \wedge 2^{-k_1/2}.
%\end{align}

%Let us then take care of the remaining cases when either $|\eta|$ or $|\Phi|$ are not suitably small.
%The only case to consider is $\epsilon_1=\epsilon_2 = +$ for otherwise $|\Phi_{\epsilon_1\epsilon_2}| \gtrsim 1$.

%First, let us consider the case $|\eta| \gtrsim 1$ with $|\Phi| \approx 2^p \lesssim 2^{-C\beta m}$
%for an absolute $C>0$ to be determined.
%Let us also split the integral into regions with $|\Phi_{++}| \approx 2^q$, $q \leq 0$.
%In this case, we see that $|\jeta - 2\lambda| = |\Phi - \Phi_{++}| \lesssim 2^p + 2^q$.
%Then, in view of Lemma \ref{LemM1k1'}, we must have $2^q \gtrsim 2^{-C\beta m}$.

%\comment{Try argument with phase trick and then Strichartz. It should take care of $A_1$ in all cases.
%
%Should work for $p$ not too small in general, and so help for $A_2$ and $A_3$\dots}

We can treat right away the case of relatively large $p$ using \eqref{I22bound}. 
Indeed, using $L^6-L^3$ estimates from Lemma \ref{dispersive-bootstrap} we see that
\begin{align*}
{ \big\| e^{-i z 2^{-p}L}  \big(L^{-1} \Im w \big)^2(s) \big\|}_{L^2_x} \lesssim 
\rho(2^m)^{2-2\beta} \cdot 2^{m/2+2\delta_N m} \e^{2\beta},
\end{align*}
which plugged into \eqref{I22bound} (with $q=2$), gives an acceptable bound if, for example, $p\geqslant-m/3$.

To summarize, we see that in order to complete the estimate of $I^{(22)}_{m,k,k_1,p}(t,\xi)$, hence the proof of Proposition \ref{propM1'},
it will then suffice to show the following:

\begin{lemma}\label{lemA}
With the assumptions and notation of Proposition \ref{propM1'}, with the definition \eqref{M123},
for all $m=1,2,\dots$, and $-3m/5 < p < -m/3$ we have 
\begin{align}\label{lemAbd}
{\big\| A_{m,k,k_1,p} \big\|}_{L^2} \lesssim \rho(2^m)^{1-\beta+\delta'} 2^m \e^{\beta},
\end{align}
for some $\delta'>\delta$.
\end{lemma}
The proof of this lemma occupies the next subsection.

\medskip
\subsection{Proof of Lemma \ref{lemA}} \label{lemmaA}
%\textit{\bf Estimate for $A_3$.}
The last main term to treat for the proof of Proposition \ref{propM1} is %the operator 
\eqref{M123}.
%\comment{This seemed trickier than expected, but maybe I've missed something\dots 
%In practice it looks similar to the regular quadratic terms with a factor of $B(s) \Phi^{-1}$ that is possibly slightly
%growing. Plus, the division makes it de-localized so that Strichartz looks harder to apply\dots}
Expanding out $ \wtF(L^{-1} \Im w)^2$, disregarding irrelevant constants, we can write
\begin{align}\label{M123a}
\begin{split}
& A = A_{m,k,k_1,p} 
  = \sum_{\epsilon_1,\epsilon_2 \in \lbrace + , - \rbrace} A^{\epsilon_1 \epsilon_2}, 
  \qquad  A^{\epsilon_1 \epsilon_2} = 
  A_{S}^{\epsilon_1 \epsilon_2} + A_{R}^{\epsilon_1 \epsilon_2}
  + A_{Re}^{\epsilon_1 \epsilon_2}
\\
& A_{\ast}^{\epsilon_1 \epsilon_2}(t,\xi) =\int_0^t s \, B(s)\, \frac{\xi}{\jxi}\varphi_k(\xi) \int_{\R^9} 
  e^{-is\Phi^{\epsilon_1 \epsilon_2} (\eta,\s,\rho)} \jsigma^{-1}
  \wt{f_{\epsilon_1}}(s,\s) \, \jrho^{-1} \wt{f_{\epsilon_2}}(s,\rho) 
  \\
& \qquad \qquad \qquad \qquad \times  \mu^\ast(\eta,\s,\rho) 
  \, K_3(\xi,\eta) d\s d\rho \, d\eta  \, \tau_m(s) ds,  \qquad \ast \in \{S,R,Re\},
  \\ 
& f_{+} = f, \quad f_{-} = \overline{f}, \quad  
\\
& \Phi^{\epsilon_1 \epsilon_2} (\eta,\s,\rho) := \jxi- \lambda - \epsilon_1 \jsig - \epsilon_2 \langle \rho \rangle.
\end{split}
\end{align}
Note that we used Proposition \ref{mudecomp} to decompose the quadratic terms into 
%$A_{3,S}^{\epsilon_1 \epsilon_2}, A_{3,R}^{\epsilon_1 \epsilon_2},$ and $A_{3,Re}^{\epsilon_1 \epsilon_2}$ 
%where $A_{3,S}^{\epsilon_1 \epsilon_2}$ 
the piece $A_S^{\epsilon_1 \epsilon_2}$ corresponding to the singular part of the measure,
the piece $A_R^{\epsilon_1 \epsilon_2}$ corresponding to the regular part of the measure,
and remainders $A_{Re}^{\epsilon_1 \epsilon_2}$.
We treat these terms in separate sections below.

\medskip
\subsubsection{Estimate of $A_S^{\epsilon_1 \epsilon_2}$.}
We use \eqref{mudecomp0} to further decompose (again we disregard irrelevant constants)
%$A_{S,0}$ denote the part of $A_S$ with the delta measure that is the $\delta$ part of the measure.
\begin{align}\label{ASsplit}
A_{S}^{\epsilon_1 \epsilon_2} = A_{0}^{\epsilon_1 \epsilon_2} + A_{1}^{\epsilon_1 \epsilon_2},
\end{align}
where
\begin{align}\label{M123del}
\begin{split}
A_{0}^{\epsilon_1 \epsilon_2}(t,\xi) := \sum_{k_2,k_3 \in \Z} \int_0^t s \frac{\xi}{\jxi} 
  \int_{\R^6} e^{-is \Phi_{0}^{\epsilon_1 \epsilon_2}(\xi,\eta,\s)} 
  B(s)\, \jsigma^{-1} \big(\varphi_{k_2}\wt{f_{\epsilon_1}}\big)(\s)  
  \\
\qquad \qquad \times  \langle \eta - \s \rangle^{-1} 
 \big(\varphi_{k_3}\wt{f_{\epsilon_2}}\big)(s,\eta-\s) \, d\s \, K_3(\xi,\eta) d\eta  \, \tau_m(s) ds,
\end{split}
\\ 
\nonumber
& \Phi_{0}^{\epsilon_1 \epsilon_2}(\xi,\eta,\s) := \jxi-\lambda - \epsilon_1 \jsig - \epsilon_2 \langle \eta-\s \rangle,
\end{align}
with $K_3 = K_{3;p,k,k_1}$ defined in \eqref{K_3},
and where $A_{1}^{\epsilon_1 \epsilon_2}$ contains the terms corresponding to the $\nu_1$ measures; 
these latter are explicitly written out in \eqref{M123pv} and then estimated after that.
Note that we have inserted additional localization in 
$|\sigma| \approx 2^{k_2}$ and $|\eta-\sigma| \approx 2^{k_3}$.
We may also restrict to
\begin{align}\label{M123freq}
k_2,k_3 \in [-3m,\beta m] \cap \Z,
\end{align}
since very low and high frequencies are easier to estimate, as it will be clear to the reader.
%the case
%\begin{align}\label{M123p}
%p \leqslant - C\delta m.
%\end{align}
%and discuss the complementary case later.
Since the sums over $k_2$ and $k_3$ will then cost at most an $O(m^2)$ factor,
we will abuse notation slightly and discard them from \eqref{M123del}.
To improve legibility, we may sometime drop the indices $\epsilon_1, \epsilon_2$ 
in the estimates of $A_{0}^{\eps_1,\eps_2}=A_0$, 
since all the cases can be treated identically.
Recall that we have also restricted our estimates to $p < -m/3$ already.

\medskip
\noindent
{\it Estimate of $A_{0}$.}
We notice that the phase in \eqref{M123del} satisfies %, in view of Lemma \ref{lemphases},
\begin{align}\label{M123delph0}
|\Phi_{0}^{\epsilon_1 \epsilon_2}| \gtrsim |\jeta -\epsilon_1 \jsig - \epsilon_2 \langle \eta-\s \rangle \big|
  - \big| \jxi-\lambda -\jeta | 
  \gtrsim 2^{-\beta m} + O(2^p) \gtrsim 2^{-\beta m},
\end{align}
in view of %the first identity in the proof of 
Lemma \ref{lemphases}, inequality \eqref{no-t-res}, 
and the restrictions $|\eta|,|\s| \lesssim 2^{\beta m}$, as well as $p < -m/3.$
We can then integrate by parts in $s$ in \eqref{M123del} at very little cost. %obtaining a gain of at least $2^{-m/2}$.
Moreover, we have
\begin{align}\label{M123delph}
\big| \partial_\eta \Phi_{0}^{\epsilon_1 \epsilon_2} \big| \approx 2^{k_3}, %\frac{|\eta-\sigma|}{\langle \eta - \sigma \rangle},
\qquad \big| \partial_{\eta+\s} \Phi_{0}^{\epsilon_1 \epsilon_2} \big| \approx 2^{k_2}, %= \frac{|\eta|}{\jeta}.
\end{align}
and, therefore we can also integrate by parts in $\eta$ or $\eta+\s$ unless $k_3$ or $k_2$ are too small.
%Let us also assume without loss of generality that $k_3 = k_3\wedge k_2$. 

Integrating by parts in $s$ gives two terms that are similar (depending on which one of the two profiles 
gets differentiated) plus easier ones (when $B$ or $\tau_m$ are differentiated);
therefore, we can write %the main contribution to $A_{S,0}$ is upper bounded by
\begin{align*}%\label{M123IBPs}
\begin{split}
\Vert A_{0} \Vert_{L^2} & \lesssim 
\Bigg \Vert \int_0^t s B(s) \frac{\xi}{\jxi} \int_{\R^6} e^{-is \Phi_{0}} \jsigma^{-1}
  \, \partial_s \wt{f}(s,\s) \,  \langle \eta - \s \rangle^{-1} \wt{f}(s,\eta-\s) \, 
  \\
& \times  \frac{\varphi_{k_2}(\s)\varphi_{k_3}(\eta-\s)}{\Phi_{0}(\xi,\eta,\s)} 
  K_3(\xi,\eta) d\s d\eta  \, \tau_m(s) ds  \Bigg \Vert_{L^2} + \{ \textrm{similar or easier terms} \}.
\end{split}
\end{align*}
For the leading order term above we have the upper bound
%so that, focusing on the main term
\begin{align}\label{M123IBPs}
\begin{split}
& %\big|  A_{3,S,0}(t,\xi) \big| \lesssim 
  C 2^{2m} \rho(2^m)^{1/2} \cdot \sup_{s\approx 2^m} 
  {\Big\| \int_{\R^3}
  K^f(\xi,\s) \,  e^{is\jsig} \partial_s \wt{f}(s,\s) d\s \Big\|}_{L^2_\xi},
  \\
  & K^f(\xi,\s) : =  \jsigma^{-1} \int_{\R^3} e^{is\langle \eta -\s \rangle} 
  \langle \eta - \s \rangle^{-1} \wt{f}(s,\eta-\s) 
  \, \frac{\varphi_{k_2}(\s)\varphi_{k_3}(\eta-\s)}{\Phi_{0}(\xi,\eta,\s)} K_3(\xi,\eta) d\eta.
\end{split}
\end{align}

Starting from the above estimate we distinguish a few cases to eventually obtain a uniform bound 
on the $L^2$-norm of $A_{0}$. %, under the assumptions \eqref{M123freq}. %and \eqref{M123p}.
Since ${\| \partial_s f \|}_{L^2} \lesssim \rho(2^m)$, see Lemma \ref{decay-der}, 
applying Schur's test to \eqref{M123IBPs}, we see that it suffices to prove that
\begin{align}\label{M123Kest}
{\| K^f \|}_{Sch} \lesssim 2^{-m/2-10 \beta m}.
\end{align}

To prove \eqref{M123Kest} we split $f=h+g$, denoting $K^h$ and $K^g$ the corresponding kernels. 
To deal with $K^h$ we integrate by parts in frequency when $k_3 \geqslant -m/2 + \delta m$
for some small $\delta >0$, and obtain (again we disregard lower order terms)
%\comment{Variant of Lemma \ref{Schur},
%\begin{align}\label{Schur2}
%{\Big\| \int_{\R^3} \phi(\eta-\s) K(\xi,\eta) d\eta \Big\|}_{Sch}   \lesssim {\| K \|}_{Sch} \cdot {\| \phi \|}_{L^1}.
%\end{align}}
\begin{align}\label{M123Kest1}
{\|K^h(\xi,\s)\|}_{Sch}  &  \lesssim 2^{\beta m} \cdot 2^{-m} 2^{-k_3} {\Big\| 
  \int_{\R^3} | \nabla \wt{h}(s,\eta-\s)|\, %\varphi_{k_2}(\s)
  \varphi_{k_3}(\eta-\s) |K_3(\xi,\eta)| d\eta \Big\|}_{Sch}  
  \\
\notag & \lesssim 2^{\beta m -m} \cdot 2^{-k_3} \Vert K_3 \Vert_{Sch} 
  \Vert \varphi_{k_3} \nabla \wt{h} \Vert_{L^1} \lesssim 2^{11 \beta m- m} \cdot 2^{k_3/2} \cdot Z,
\end{align}
having used \eqref{K_3bound} and \eqref{M123delph0}. This yields \eqref{M123Kest}. 
If instead $k_3 \leqslant -m/2 + \delta m$, we just estimate
\begin{align*}%\label{M123Kest1}
{\|K^h(\xi,\s)\|}_{Sch} %&  \lesssim 2^{\beta m} {\Big\| 
  %\int_{\R^3} | \wt{h}(\eta-\s)|\, %\varphi_{k_2}(\s)
  %\varphi_{k_3}(\eta-\s) |K_3(\xi,\eta)| d\eta \Big\|}_{Sch}  
  %\\
  & \lesssim 2^{\beta m} \cdot \Vert K_3 \Vert_{Sch} 
  \Vert \varphi_{k_3} \wt{h} \Vert_{L^1} \lesssim 2^{\beta m} \cdot 2^{5k_3/2} \cdot Z,
\end{align*}
which again is more than sufficient.

For the $K^g$ kernel, we introduce a cut-off $\varphi_{\ell}^{(\ell_0)}(\langle \eta - \s \rangle - 2 \lambda), 
\ell \geqslant \ell_0 :=\lfloor -m + \delta m \rfloor$. 
We integrate directly in frequency when $\ell=\ell_0$ and integrate by parts in frequency for the other terms:
\begin{align}\label{K^g}
\begin{split}
\Vert K^{g}(\xi,\s) \Vert_{Sch} & \lesssim 2^{\beta m} 
  {\Big \Vert \int_{\mathbb{R}^3} \vert \wt{g}(s,\eta - \s)  \vert \varphi_{\ell_0} 
  \langle \eta - \s \rangle - 2 \lambda) |K_3(\xi,\eta)| d\eta \Big\|}_{Sch} 
  \\
& + 2^{\beta m} \cdot 2^{-m} \sum_{\ell_0<\ell<-10} {\Big\| 
  \int_{\R^3} |\nabla \wt{g}(s,\eta-\s)|\, %\varphi_{k_2}(\s)
 \vert \varphi_{\ell} (\langle \eta - \s \rangle - 2 \lambda) |K_3(\xi,\eta)| d\eta \Big\|}_{Sch} 
 \\
 & \lesssim 2^{11 \beta m} \cdot 2^{\ell_0} \cdot 2^m \rho(2^m) m + 2^{11 \beta m -m} %+ \delta m} 
 \cdot 2^m \rho(2^m) m ,  
\end{split}
\end{align}
where we used \eqref{K_3bound} and the bounds \eqref{inftyfreqg} and \eqref{dxiwtgest} 
for the last line. This yields \eqref{M123Kest} and concludes the estimate of $A_{0}$.

\medskip
\subsubsection{Estimate of $A_{1}^{\epsilon_1 \epsilon_2}$} 
We now look at the other terms in \eqref{ASsplit}. 
%parts of the singular measure.
%that is the $\mu_i^S$ part (see Lemma \ref{mudecomp} where this notation is defined). 
%We will focus on the $\nu_1$ since the other parts are easier to treat, 
%and can be handled following a similar (even simpler) strategy.
Inserting cutoffs $\varphi_{k_2}(\s)$ and $\varphi_{k_3}(\rho)$, 
similarly to the case of $A_{0}$ above, we can write 
(using the symmetry in $(\rho,\s)$, with a slight abuse of notation)
\begin{align}\label{M123pv}
\begin{split}
& \Vert A_{1}^{\epsilon_1 \epsilon_2} \Vert_{L^2} 
  \lesssim \Vert A_{1,1}^{\epsilon_1 \epsilon_2} \Vert_{L^2} 
  + \Vert A_{1,2}^{\epsilon_1 \epsilon_2} \Vert_{L^2}, 
  \\
  \\
\begin{split}
A_{1,1}^{\epsilon_1 \epsilon_2}(t,\xi) := \int_0^t s B(s) \frac{\xi}{\jxi} 
  \int_{\R^9} e^{-is \Phi_{1}^{\epsilon_1 \epsilon_2}(\xi,\s,\rho)} 
  \, \jsigma^{-1} \wt{f}(s,\s) \langle \rho \rangle^{-1} \wt{f}(s,\rho)  
  \\ 
\varphi_{k_2}(\s) \varphi_{k_3}(\rho) \nu_1^S(-\eta+\s,\rho) d\s d\rho \, K_{3;k,k_1,p}(\xi,\eta) d\eta  \, \tau_m(s) ds,  
\end{split}
\\
\begin{split}
A_{1,2}^{\epsilon_1 \epsilon_2}(t,\xi) := \int_0^t s B(s) \frac{\xi}{\jxi} 
  \int_{\R^9} e^{-is \Phi_{1}^{\epsilon_1 \epsilon_2}(\xi,\s,\rho)} 
  \, \jsigma^{-1} \wt{f}(s,\s) \langle \rho \rangle^{-1} \wt{f}(s,\rho)  
  \\ 
  \varphi_{k_2}(\s) \varphi_{k_3}(\rho) \overline{\nu_1^S(-\s-\rho,\eta)} d\s d\rho 
  \, K_{3;k,k_1,p}(\xi,\eta) d\eta  \, \tau_m(s) ds,
\end{split}
  \\
  \\
  & \Phi_{1}^{\epsilon_1 \epsilon_2}(\xi,\s,\rho) := \jxi-\lambda - \epsilon_1 \jsig - \epsilon_2 \langle \rho \rangle.
\end{split}
\end{align}
%We insert cutoffs $\varphi_{k_2}(\s)$ and $\varphi_{k_3}(\rho)$ similarly to the case of $A_{3,0}$,
As before, we may assume, %$k_2,k_3\in\Z$ and
\begin{align}\label{M123pvfreq}
-3 m \leqslant k_2 \leqslant k_3 \leqslant \beta m, \qquad p \leqslant -m/3.%-C\delta m.
\end{align}
%see in \eqref{M123p}. 
In what follows $A_{1,1}^{\epsilon_1 \epsilon_2}$ and $A_{1,2}^{\epsilon_1 \epsilon_2}$
will be treated very similarly.
Therefore, we will mostly focus on the first term, and then indicate how the argument 
can be adapted for $A_{1,2}^{\epsilon_1 \epsilon_2}$.

Recall the definition of $\nu_1^S$ in \eqref{nu1S}-\eqref{nu0}.
As it will be clear from the argument below, it suffices to look at the 
leading order contribution from 
the first term in \eqref{nu1S} containing the $\pv$ distribution; that is,
it suffices to look at \eqref{M123pv} with $\nu_1^S(p,q)$ replaced by 
\begin{align}
\varphi_{\leqslant -M_0-5}(|p|-|q|)\frac{b_0(p,q)}{|p|} %\Big[ i\pi \, \delta(|p|-|q|) + 
  \pv \frac{1}{|p|-|q|}. %\Big], 
\end{align}
The $\delta(|p|-|q|)$ part can be handled similarly (see also the proof of Theorem 6.1 from \cite{PS}).
Without loss of generality we may also replace $b_0$ by $1$.
%Recalling Lemma \ref{decomp-meas-PS} we isolate the principal value part in $\nu_1.$ 
One can easily handle the remaining parts from \eqref{nu1S} %that the does not contain $\pv$ 
using Lemma \ref{PSvol}, along with the fast decay of the $K_a$ functions appearing in \eqref{nu1S}, 
and the summation properties of $b_{a,J}$.

We insert a localization in the singularity of the measure above,
by decomposing

\begin{align*}
\varphi_{\leqslant -M_0-5}(\vert \rho \vert - \vert -\eta+\s  \vert)
  = \sum_{C \leqslant -M_0-5} \varphi_{C} (\vert \rho \vert - \vert   -\eta + \s \vert).
  %& = \sum_{B \leqslant B_0} \varphi_{B} (\vert \rho \vert - \vert   -\eta + \s \vert) 
  %\\
  %& + \sum_{B_0 + 1 \leqslant B \leqslant -M_0-5} \varphi_{B} (\vert \rho \vert - \vert -\eta + \s \vert),
  %\qquad B_0 := -500 m.
\end{align*}
%cfp{$B$ is not a great notation for the index, but okay for now}
We denote $A_{1,1;C}^{\epsilon_1 \epsilon_2}$ %and $A_{1,2;B}^{\epsilon_1 \epsilon_2}$ 
the terms correspond to the $\varphi_{C}$ cutoff, which is given by
(changing variable $\s \mapsto \s + \eta$ in \eqref{M123pv})
\begin{align}\label{A11B}
\begin{split}
A_{1,1,C}^{\epsilon_1 \epsilon_2}(t,\xi) := \int_0^t s B(s) \frac{\xi}{\jxi} 
  \int_{\R^3} e^{-is (\jxi - \lambda)} %\Phi_{1}^{\epsilon_1 \epsilon_2}(\xi,\s,\rho)
  R_{k_2,k_3,C} (f_1, f_2) (\eta) K_{3;k,k_1,p}(\xi,\eta) d\eta  \, \tau_m(s) ds,  
\end{split}
\end{align}
where
\begin{align*}
& R_{k_2,k_3,C} (f_1,f_2) (s,\eta) := \int_{\mathbb{R}^6}  
  \wt{f_1}(s,\s+\eta) \wt{f_2}(s,\rho) \frac{1}{\vert \s \vert}
  \frac{\varphi_{C} %^{(B_0)} 
  (|\s| - |\rho|)}{|\s|-|\rho|} %\nu_1(\s,\rho)
  d\s d\rho, 
  %\\& B_0 := -500m, %B \in (-\infty,-500 m] \cap \mathbb{Z},
  \\ 
  & \wt{f_1}(s,\s+\eta) := \langle \s +\eta \rangle^{-1} \varphi_{k_2}(\s+\eta) 
  e^{-is\eps_1\langle \s +\eta\rangle} \wt{f_{\eps_1}}(s,\s+\eta),
  \\
  & \wt{f_2}(s,\rho) := \langle \rho \rangle^{-1} \varphi_{k_3}(\rho) e^{-is\eps_2 \langle \rho \rangle} \wt{f_{\eps_2}}(s,\rho).
\end{align*}
We look at two cases:

\medskip
{\it %Subcase A.2.1
Case 1: $C \leqslant C_0 := -500 m$}.
%$\big \vert \vert \s \vert - \vert \rho \vert \big \vert \leqslant 2^{-500 m}.$} 
%We handle this term along similar lines as $T_{\epsilon,\textrm{low}}$ in the proof of Theorem 6.1 from \cite{PS}.  
%More precisely, w
We first decompose in physical space
\begin{align*}
f_1 := \varphi_{\geqslant X} f_1 + \varphi_{<X} f_1  =: f_{11} + f_{12}, 
  \qquad X:=10 m-\frac{1}{100} C,
\end{align*}
and then treat the two pieces above separately.

%\medskip
%{\it Treating $R_{m,k_2,k_3,B} (f_{11},f_2)$}.
For the term with $f_{11}$ we bound, using the product estimate from Lemma \ref{PSvol},
(recall that we have $k_1,k_2,k_3 \leqslant \beta m$) and the boundedness of wave operators
on weighted spaces \eqref{Wobdx}:
%and then Bernstein's inequality:
\begin{align*}
\sum_{C \leqslant C_0} \Vert R_{k_2,k_3,C} (f_{11},f_2) \Vert_{L^2} 
  & \lesssim \sum_{C \leqslant C_0} %2^{2k_2} \cdot 2^{\beta m}  \cdot
  2^{10\beta m} %\cdot 2^{-B} 
  \cdot {\big\| \wt{f_{11}} \big\|}_{L^2} {\big\| \wt{f_2} \big\|}_{L^2} 
  \\
  & \lesssim
   \sum_{C \leqslant C_0} %2^{2k_2} \cdot 2^{\beta m}  \cdot
  2^{10\beta m} %\cdot 2^{-B} 
  \cdot 2^{-X} \cdot \Vert \jx f_{11} \Vert_{L^2} \Vert f_2 \Vert_{L^2} 
  \\
  & \lesssim
   \sum_{C \leqslant C_0} %2^{2k_2} \cdot 2^{\beta m}  \cdot
  2^{10\beta m} %\cdot 2^{-B} 
  \cdot 2^{-10m + \frac{1}{100}C} \cdot \Vert \nabla \wt{f_{1}} \Vert_{L^2} 
  \Vert f_2 \Vert_{L^2} 
  \lesssim 2^{-10 m} Z,
\end{align*}
which is sufficient.

%\medskip
%{\it Treating $ R_{m,k_2,k_3,B} (f_{12},f_2).$} 
For the term involving $f_{12}$ we write 
\begin{align*}
& R_{k_2,k_3,C} (f_{12},f_2) = R^{(1)} %_{k_2,k_3,B} 
  (f_{12},f_2) + R^{(2)} (f_{12},f_2) + R^{(3)} (f_{12},f_2),
\\
& R^{(1)} (f_{12},f_2) := \int_{\R^6} \big[ \wt{f_{12}}(s,\eta+\s) 
- \wt{f_{12}}(s,\eta + \s \vert \rho \vert / \vert \s \vert) \big] \wt{f_2}(s,\rho) \frac{1}{\vert \s \vert}
\frac{\varphi_{C} \big( \vert \s \vert - \vert \rho \vert \big)}{\vert \s \vert  - \vert \rho \vert} d\s d\rho,  
\\
& R^{(2)} (f_{12},f_2) := \int_{\R^6}  \wt{f_{12}}(s,\eta+\s) \wt{f_2}(s,\rho) 
  \frac{1}{\vert \s \vert^2} \varphi_{C} \big( \vert \s \vert - \vert \rho \vert \big) d\s d\rho,
\\
& R^{(3)} (f_{12},f_2) = \int_{\R^6}  \wt{f_{12}}(s,\eta + \s |\rho|/|\s|) \wt{f_2}(s,\rho) 
  \frac{|\rho|}{|\s|^2} \frac{\varphi_{C} \big( \vert \s \vert - \vert \rho \vert \big)}{\vert \eta \vert - \vert \s \vert} d\s d\rho.
\end{align*}
Notice that the last term equals zero, as it can be seen passing to polar coordinates in $\s$.
For the second term, using again Lemma \ref{PSvol}, we find that
\begin{align*}
\Vert  R^{(2)} (f_{12},f_2) \Vert_{L^2} 
  \lesssim 2^{10\beta m} \cdot 2^{C} \cdot 
  \Vert f_{12} \Vert_{L^2} \Vert f_2 \Vert_{L^2}, %\lesssim 2^{\beta m} \cdot 2^B \cdot \varepsilon^2,
\end{align*}
which is enough to conclude.
For the remaining term we first write that 
\begin{align*}
\wt{f_{12}}(\eta + \s) - \wt{f_{12}}(\eta + \theta \vert \rho \vert) = \int_0^1 \nabla 
  \wt{f_{12}} (\eta + t \s + (1-t) \vert \rho \vert \theta ) \cdot \theta (\vert \s \vert - \vert \rho \vert) ,
\end{align*}
with $\theta:=\s/\vert \s \vert.$ Using the symbol bounds on this expression ensuing from 
$| \nabla^\alpha \wt{f_{12}} | \lesssim 2^{|\alpha|X}$ in combination with Lemma \ref{PSvol},
we can obtain
\begin{align*}
\sum_{C \leqslant C_0} {\Vert R^{(1)} (f_{12},f_2)  \Vert}_{L^2} 
  & \lesssim 2^{10 \beta m} \sum_{C \leqslant C_0} 2^{15 X} %(10 m - \frac{1}{100} B )} 
  \cdot 2^{C} \cdot \Vert \nabla \wt{f_{12}} \Vert_{L^2} \Vert f_2 \Vert_{L^2} 
  \lesssim 2^{-10m} Z
\end{align*}
in view of our choices of $X$ and $C_0$.
We have then taken care of the region very close to the singularity of the $\pv$

\medskip
{\it Case 2: $C_0 < %\leqslant 
C \leqslant -M_0-5$%$2^{-500 m+1} \leqslant  | |\s| - |\rho| | \leqslant 2^{-M_0-5}$
}.
For this case we use that the phase, see \eqref{M123pv}, is uniformly lower bounded:
\begin{align}\label{M123pvph}
|\Phi_{1}^{\epsilon_1 \epsilon_2} (\xi,\s,\rho) | \gtrsim |\jeta - \epsilon_1 \jsig - \epsilon_2 \jrho \big| - \big| \jxi-\lambda -\jeta| 
  \gtrsim 2^{-\beta m} + O(2^p) \gtrsim 2^{-\beta m};
\end{align}
see Lemma \ref{lemphases}.
Once again we will drop the signs $\epsilon_1,\epsilon_2$ %to improve legibility,
since all cases can be treated identically.
Thanks to the lower bound on the phase, 
we can then proceed very similarly to the case of $A_{0}$ above; see the argument after \eqref{M123delph0}.

More precisely, we integrate by parts in time in \eqref{A11B} and obtain
\begin{align*}
\Vert A_{1,1;C} \Vert_{L^2} \lesssim \Big \Vert
  \int_0^t s B(s) \frac{\xi}{\jxi} \int_{\mathbb{R}^9} e^{-is\Phi_{1}^{\epss}} \jsigma^{-1} 
  \partial_s \wt{f_{\epsilon_1}}(s,\s) \langle \rho \rangle^{-1} \wt{f_{\epsilon_2}}(s,\rho) 
  \frac{\varphi_{k_2}(\s) \varphi_{k_3}(\rho)}{\Phi_{1}(\xi,\sigma,\rho)}
 \\
\times \varphi_{C}(\vert \eta - \s \vert - \vert \rho \vert) \nu_1^S(-\eta + \s,\rho) d\s d\rho 
  K_{3;k,k_1,p}(\xi,\eta) d\eta \, \tau_m(s) ds \Big \Vert_{L^2}  
 \\ 
  + \lbrace \textrm{similar or easier terms} \rbrace.
\end{align*}
The ``similar or easier terms" include the boundary terms, which can be treated in the same way as the main term,
and the terms where $\partial_s$ hits $sB(s)$ that are completely analogous.
%In what follows the value of $\epsilon_1, \epsilon_2$ will be irrelevant since all cases are treated the same way. 
%Therefore we will drop the dependence on these indices to improve legibility.  
We can also drop the dependence on the index $C$ since in this case there are $O(m)$ terms in the sum,
therefore a gain of a factor $2^{-\delta m}$ for some $\delta>0$ will ensure convergence.

We write, as in the case of $A_0$, see \eqref{M123IBPs}, that the main contribution to 
${\big\|  A_{1,1}(t,\cdot) \big\|}_{L^2}$ is bounded by a numerical constant times
\begin{align}\label{M123IBPs'}
\begin{split}
% {\big\|  A_{1,1}(t,\cdot) \big\|}_{L^2} & \lesssim 
& 2^{2m} \rho(2^m)^{1/2} \cdot \sup_{s\approx 2^m} 
  {\Big\| \int_{\R^{ 3}} K^f(\xi,\s) \,  e^{is\jsig} \partial_s \wt{f}(s,\s) d\s \Big\|}_{L^2_\xi},
  \\
\begin{split}
K^f(\xi,\s) : =  \jsigma^{-1} \int_{\R^{ 6 }} e^{is\langle \rho \rangle} \langle \rho \rangle^{-1} \wt{f}(s,\rho) 
  \, \frac{\varphi_{k_2}(\s)\varphi_{k_3}(\rho)}{\Phi_{1}(\xi,\s,\rho)} K_3(\xi,\eta) 
  \\
  \times  \varphi_{C}(\vert \eta - \s \vert - \vert \rho 
\vert) \nu_1^S(-\eta+ \s , \rho) d\rho d\eta .
\end{split}
\end{split}
\end{align}
Note that we can insert a cut-off $\varphi_{\sim k_3}(-\eta + \s)$ in $K^f$ given the support property of $\nu_1^S$,
see \eqref{nu1S} and \eqref{Propnu+1.1}.
Once again, in view of Lemma \ref{decay-der}, it suffices to show 
\begin{align}\label{M123Kest'}
{\| K^f \|}_{Sch} \lesssim 2^{-m/2-10 \beta m}.
\end{align}

We split again $f = g + h$ and denote $K^g$ and $K^h$ the corresponding kernels from \eqref{M123IBPs'}.
For $K^h,$ if $k_3 \leqslant -2m/5,$ we can estimate, using \eqref{K_3bound},
\begin{align} \label{Kh1}
\Vert K^h \Vert_{Sch} \lesssim 
  2^{\beta m} \cdot {\| K_3 \|}_{Sch} {\| \varphi_{k_3} \wt{h} \|}_{L^1} \cdot 2^{-k_3} 
  \lesssim 2^{8\beta m} \cdot 2^{-k_3} \cdot 2^{5k_3/2} Z,
\end{align}
which is acceptable. %since the sum in $\ell$ has $O(m)$ terms.
Note the factor of $2^{-k_3}$ coming from the $1/|\eta-\s|$ factor in 
the leading order expression for $\nu_1^S(-\eta+ \s , \rho)$, see \eqref{nu1S}.

\iffalse
To take care of the more singular part, we argue along similar lines as $T_{\epsilon,\textrm{low}}$ 
in the proof of Theorem 6.1 from \cite{PS}.  \\
The key identity that we use is (see (6.49) in \cite{PS})
\begin{align*}
&\int_{\mathbb{R}^6} F(\eta - \s) G(\rho) b(\xi,\eta,\s,\rho) \frac{1}{\vert \s \vert} \frac{\varphi_{B}(\vert \s \vert  - \vert \rho \vert)}{\vert \s \vert  - \vert \rho \vert} d\s d\rho = T_1 (\xi,\eta) + T_2(\xi,\eta) + T_3 (\xi,\eta) \\
T_1 (\xi,\eta) & := \int_{\mathbb{R}^6} F(\eta - \s) G(\rho) b(\xi,\eta,\s,\rho) \frac{1}{\vert \s \vert^2} \varphi_{B}(\vert \s \vert  - \vert \rho \vert) d\s d\rho , \\
T_2 (\xi,\eta) & := \int_{\mathbb{R}^6} F(\eta - \s) G(\rho) \big[ b(\xi,\eta,\s,\rho) - b(\xi,\eta, \vert \rho \vert \s / \vert \s \vert,\rho) \big] \frac{\vert \rho \vert}{\vert \s \vert^2}  \frac{\varphi_{B}(\vert \s \vert  - \vert \rho \vert)}{\vert \s \vert  - \vert \rho \vert} d\s d\rho , \\
T_3 (\xi,\eta) &:= \int_{\mathbb{R}^6} \big[ F(\eta - \s) - F(\eta - \vert \rho \vert \s / \vert \s \vert) \big] G(\rho) b(\xi,\eta, \vert \rho \vert \s / \vert \s \vert,\rho) \frac{\vert \rho \vert}{\vert \s \vert^2}  \frac{\varphi_{B}(\vert \s \vert  - \vert \rho \vert)}{\vert \s \vert  - \vert \rho \vert} d\s d\rho .
\end{align*} 
Next note that
 \tofill
}
\fi

If $k_3 > -2m/5,$ we integrate by parts in the formula for $K^h$ using the identity,
for a differentiable function $F$,
\begin{align} \label{idIPP1}
\bigg(\frac{\eta - \s}{\vert \eta - \s \vert} \cdot \nabla_{\eta} 
  + \frac{\rho}{\vert \rho \vert} \cdot \nabla_{\rho} \bigg) 
  F\big( \vert \eta - \s \vert - \vert \rho \vert \big) 
  = 0,
\end{align}
and 
\begin{align} \label{idIPP2}
\bigg(\frac{\eta - \s}{\vert \eta - \s \vert} \cdot \nabla_{\eta} + \frac{\rho}{\vert \rho \vert} \cdot \nabla_{\rho} \bigg) e^{is\langle \rho \rangle} = is \frac{\vert \rho \vert}{\langle \rho \rangle} e^{is \langle \rho \rangle}.
\end{align}
We can then estimate (disregarding faster decaying contributions)
\begin{align} \label{Kh2}
\Vert K^h \Vert_{Sch} \lesssim 2^{\beta m}2^{-k_3} \cdot 2^{-m} 2^{-k_3} 
  \cdot \Vert K_3 \Vert_{Sch} \cdot 2^{3k_3/2} \Vert \nabla \wt{h} \Vert_{L^2}
  \lesssim 2^{-3m/5} Z.
\end{align}

\medskip
\noindent
%{\it Estimate of $A_{1,2}^{\epsilon_1 \epsilon_2}$}. 
\begin{remark}[About the term $A_{1,2}$]
Note that the case of the term $A_{1,2}$ in \eqref{M123pv} 
is simpler since on its support we have $\vert \eta \vert \gtrsim 2^{-5 \beta m}$ and,
therefore,
$\vert \s + \rho \vert \geqslant \vert \eta \vert - \big \vert \vert \s + \rho \vert - \vert \eta \vert \big \vert 
\gtrsim 2^{-5 \beta m}$, whenever we are close to the singularity,
say $| |\s+\rho|-|\eta| | \approx 2^{C}$ with $C < -10\beta m$.
Moreover, the factors of $2^{-k_3}$ in the bounds corresponding to \eqref{Kh1} and \eqref{Kh2} 
can be replaced by a harmless $2^{5 \beta m}$ factor.
\end{remark}

To estimate the kernel $K^g$ we first note that we have $k_3 \sim 0$,
and we can also insert a cut-off $ \varphi_{\sim 0} (\eta - \s).$
We then localize further introducing a cut-off 
$\varphi_{\ell}^{(\ell_0)}(\langle \rho \rangle - 2 \lambda)$,
with $\ell \geqslant \ell_0 := \lfloor -m + \delta m \rfloor$, $\delta>0$ small.
We denote the corresponding kernel by $K^g_{\ell}$ and then estimate similarly to \eqref{K^g}:
for $\ell = \ell_0$ we have
%\begin{align*}
%1 = \sum_{\ell_1 > \ell_0} \varphi_{\ell_1} (\langle \eta - \s \rangle - 2 \lambda) 
%+ \varphi_{\leqslant \ell_0} (\langle \eta - \s \rangle - 2 \lambda), \ \ \ell_0:=-m + \delta_1 m,
%\end{align*}
%where $\delta_1>0$ will be taken suitably small. We denote $K_{\ell_1}^g$ the corresponding kernels. 
%(Note the slight abuse of notations $\varphi_{\ell_0} = \varphi_{\leqslant \ell_0}$).
%If $\ell_1 =\ell_0,$ we integrate directly and obtain
\begin{align*}
\Vert K^g_{\ell_0} \Vert_{Sch} & \lesssim 2^{\beta m-k_3} \Vert K_3 \Vert_{Sch} 
  \Vert %\varphi_{k_3} 
  \varphi_{\leqslant \ell_0}(\langle \rho \rangle - 2\lambda) \wt{g} \Vert_{L^1_{\rho}} 
  \\
  & \lesssim 2^{11 \beta m} %\cdot \big( 2^{-m + \delta_1 m} 2^{-k_3} \wedge 2^{2 k_3} 2^{\delta m} \big) 
  \cdot 2^{\ell_0} \cdot 2^m \rho(2^m) 
  %\\ & 
  \lesssim 2^{-4m/5} 2^m \rho(2^m),
\end{align*}
which is sufficient for \eqref{M123Kest'};
%The second bound comes from taking $\wt{g}$ in $L^{\infty}_{\rho}.$
if instead $\ell > \ell_0$, %first note that we can insert a cut-off $ \varphi_{\sim 0} (\eta - \s).$ Then 
we integrate by parts using identities \eqref{idIPP1} and \eqref{idIPP2} and obtain the same estimate.

\iffalse
Note that we also know $\vert \s \vert \gtrsim 2^{\beta m}$ since $\vert -\eta + \s \vert \sim \vert \rho \vert$
\fi

\medskip
\subsubsection{Estimate of $A^{\epss}_{R}$}\label{ssecAR}
Recall the definition in \eqref{M123a}.
For the term $A^{\epss}_{R}$ the measure $\mu^R$ is smooth (up to small losses);
see \eqref{mu1SR} and the pointwise estimates for $\nu_1^R$ in \eqref{nuRest}. 
Then, we can directly exploit integration by parts in both $\s$ and $\rho$ unless their size is too small.
Since the estimates will be independent of the signs $\epsilon_1, \epsilon_2$ we will drop these indices.

The arguments that follow are similar to those in Section \ref{secFR}, 
see in particular the integration by parts arguments 
in the proofs of Lemmas \ref{ggnuR}, \ref{ghnuR} and \ref{hhnuR} (and recall that $K_3$ satisfies \eqref{K_3bound}).
%and in particular the estimates following \eqref{}, \eqref{ghnupr8} and ...

We split again $f= g + h $ and denote $A_{R}^{gg},  A_{R}^{gh}, A_{R}^{hg}$ and $A_{R}^{hh}$ 
the bilinear terms of the form \eqref{M123a} with arguments $(g,g)$, $(g,h)$, $(h,g)$ and $(h,h)$ respectively.
Also, it suffices to look at the case where $\mu^R(\eta,\s,\rho)$ is replaced by $\nu^R_1(-\eta+\s,\rho)$,
(see \eqref{mu1SR}) since the other cases are completely analogous or easier.
Therefore, with the notation in \eqref{M123a} we identify, for $G,H\in \{g,h\}$,
\begin{align}\label{M123a'}
\begin{split}
& A_{R}^{GH}(t,\xi) %^{\epsilon_1 \epsilon_2} 
  = \int_0^t s \, B(s)\, \frac{\xi}{\jxi}\varphi_k(\xi) e^{-is (\jxi - \lambda)} \int_{\R^3}
  D_{k_2,k_3,k_4}^{GH}(s,\eta) \, K_{3;k,k_1,p}(\xi,\eta) d\eta \, \tau_m(s) ds,  %\qquad \ast \in \{S,R,Re\},
  \\ 
%& f_{+} = f, \quad f_{-} = \overline{f}, \quad  
%\\
%& \Phi^{\epsilon_1 \epsilon_2} (\eta,\s,\rho) := \jxi- \lambda - \epsilon_1 \jsig - \epsilon_2 \langle \rho \rangle.
\begin{split}
D_{k_2,k_3,k_4}^{GH}(s,\eta) :=
  \int_{\R^6}  \jsigma^{-1} e^{is\jsig}
  \varphi_{k_2}(\s) \wt{G_{\epsilon_1}}(s,\s) \, 
  \varphi_{k_3}(\rho) \jrho^{-1} e^{is\langle \rho \rangle} \wt{H_{\epsilon_2}}(s,\rho)
  \\ \times \varphi_{k_4}(\eta-\s) \nu_1^R(-\eta+\s,\rho)  d\s d\rho.
\end{split}
\end{split}
\end{align}
Once again we have inserted additional localization in frequency 
to $|\s|\approx 2^{k_2}, |\rho|\approx 2^{k_3}$ and $|\eta-\s| \approx 2^{k_4}$ with $k_2,k_3,k_4 \in [-3m,\beta m]$, 
and omitted the summation up to a negligible loss of $O(m^3)$.
Note that we have 
\begin{align}\label{AR0}
\begin{split}
{\big\| A_{R}^{GH}(t) \big\|}_{L^2_\xi} & \lesssim  2^{2m} \rho^{1/2}(2^m)  \cdot 2^{k^-} \cdot {\| K_{3;k,k_1,p} \|}_{Sch}
	\sup_{s\approx 2^m} {\big\|   D_{k_2,k_3,k_4 }^{GH}(s) \big\|}_{L^2_\eta}
	\\
	& \lesssim 2^{3m/2 + 10\beta m} %\rho^{1/2}(2^m)  \cdot 2^{k_-}
	\sup_{s\approx 2^m} {\big\|   D_{k_2,k_3,k_4}^{GH}(s) \big\|}_{L^2_\eta},
\end{split}
\end{align}
and that we can bound
\begin{align}
\label{AR0'}
& {\big\|  D_{k_2,k_3,k_4}^{GH}(s) \big\|}_{L^2_\eta} \lesssim {\| \varphi_{k_2} \wt{G} \|}_{L^2}
	\Big\| \int_{\R^3} \frac{\varphi_{k_3}(\rho)}{\jrho} e^{is\jrho} \wt{H}(s,\rho) 
	\, \varphi_{k_4}(\eta-\s) \nu_1^R(-\eta+\s,\rho) d\rho \Big\|_{Sch},
\\
\label{AR0''}
& {\big\|   D_{k_2,k_3,k_4}^{GH}(s) \big\|}_{L^2_\eta} \lesssim {\| \varphi_{k_3} \wt{H} \|}_{L^2}
	\Big\| \int_{\R^3} \frac{\varphi_{k_2}(\s)}{\jsig} e^{is\jsig} \wt{G}(s,\s)
	\, \varphi_{k_4}(\eta-\s) \varphi_{\sim k_3}(\rho) \nu_1^R(-\eta+\s,\rho) d\s \Big\|_{Sch}.
\end{align}

\medskip
{\it Case 1: Estimate of $A_{R}^{hh}$}.
In this case we can proceed similarly to the proof of Lemma \ref{hhnuR};
compare the expressions \eqref{hhnuRop} and \eqref{M123a'}.
%We insert an additional cutoff $\varphi_{k_4}(\eta - \s)$ for $k_4 \in [-10m,\beta m+10]$
%and estimate each one of the resulting $O(m)$ terms, which we still denote by $A_{R}^{hh}$.
We fix $\delta>0$ small and look at a few subcases.
%integrate by parts in both variables. 

%We also insert a cut-off $\varphi_{k_4} (\eta - \s).$ 
%We abuse notations slightly and still denote $A_{3,R}^{hh}$ the corresponding term, discarding the sum on $k_4$. 

\smallskip
{\it Subcase 1.1: $k_2 \vee k_3 \leqslant -m/2 + \delta m$}.
In this configuration we use \eqref{nuRest} to see that
\begin{align}\label{ARnu}
{\big\| \varphi_{k_4}(x) \varphi_{k_3}(y) \nu_1^R(x,y) \big\|}_{Sch} \lesssim 2^{-2(k_3\vee k_4)} 2^{10M_0}
  \cdot 2^{(3/2)(k_3+k_4)}
	%= 2^{-(1/2)(k_3\vee k_4)} \cdot 2^{(3/2)(k_3\wedge k_4)} 
	\lesssim 2^{k_3\wedge k_4} 2^{10M_0},
\end{align}
and \eqref{AR0'} to obtain %and we proceed similarly to \eqref{hhnuR1}.
\begin{align*}
{\| D_{k_2,k_3,k_4}^{hh}(s) \|}_{L^2_\eta} & \lesssim %2^{3\beta m/2} \cdot 2^{5k_2/2} Z \cdot  2^{5k_2/2} Z.
	{\| \varphi_{k_2} \wt{h} \|}_{L^2} \cdot {\| \varphi_{k_3} \wt{h} \|}_{L^1} \cdot 2^{k_3\wedge k_2} \cdot 2^{10M_0}
	\lesssim 2^{k_2} Z \cdot 2^{5k_3/2} Z  \cdot 2^{k_3\wedge k_2} 2^{10M_0}
\end{align*}
which, plugged into \eqref{AR0} is sufficient for $k_2,k_3 \leqslant -m/2 + \delta m$ with $\delta$ small enough.

%From \eqref{AR0} this implies that
%\begin{align*}
%{\| A_{R}^{hh}(t) \|}_{L^2} \lesssim 2^{3\beta m} \cdot 2^{2m} \cdot \rho^{1/2}(2^m) 
%  \cdot 2^{(5/2)(k_2+k_3)} Z^2
%\end{align*}
%which is more than enough for the current frequency configuration. 

%\begin{remark}[About $\nu_1^R(-\s-\rho,\eta)$]
%\end{remark}

\smallskip
{\it Subcase 1.2:  $k_2 \vee k_3 \geqslant -m/2 + \delta m$ and $k_2 \wedge k_3 \leqslant -m/2 + \delta m$}.
By symmetry we may assume that $k_2 \leqslant -m/2 + \delta m \leqslant  k_3$.
We then integrate by parts in $\rho$ in the formula \eqref{M123a'}.
This leads to a main term where $\partial_\rho$ hits the profile $\wt{h}(\rho)$.
Estimating in the same way that led to \eqref{AR0'}, we obtain an inequality like the second one in \eqref{AR0'}
with $\wt{H}$ replaced by $s^{-1} \rho/|\rho|^2 \cdot \nabla \wt{h}(\rho)$; 
this shows, using also \eqref{ARnu},  that for all $s \approx 2^m$,
\begin{align*}
& {\| D_{k_2,k_3,k_4 }^{hh}(s) \|}_{L^2_\eta} 
\\
& \lesssim %2^{3\beta m/2} \cdot 2^{5k_2/2} Z \cdot  2^{5k_2/2} Z.
	2^{-m-k_3} {\big\| \varphi_{k_3} \nabla \wt{h} \big\|}_{L^2} 
	\Big\| \int_{\R^3} \varphi_{k_2}(\s)  \jsig^{-1} e^{is\jsig} \wt{h}(s,\s) 
	\varphi_{k_4}(\sigma-\eta) \varphi_{\sim k_3}(\rho) \nu_1^R(-\eta+\s,\rho) d\s \Big\|_{Sch}
\\
& \lesssim 2^{-m-k_3} Z \cdot {\| \varphi_{k_2} \wt{h} \|}_{L^1} \cdot {\| \varphi_{k_4}(x) \varphi_{k_3}(y) \nu_1^R(x,y) \|}_{Sch}
\\
& \lesssim 2^{-m-k_3} Z \cdot 2^{5k_2/2} Z \cdot 2^{k_2} 2^{10M_0}
\end{align*}
which is sufficient. 

\smallskip
{\it Subcase 1.3: $k_2 \wedge k_3 \geqslant -m/2 + \delta m$}.
Without loss of generality, we may assume $k_2 \leqslant k_3$.
In this case we can integrate by parts in both $\rho$ and $\s$ and, estimating similarly to the previous cases, we see that
\begin{align*}
& {\| D_{k_2,k_3,k_4}^{hh}(s) \|}_{L^2_\eta} 
\\
& \lesssim %2^{3\beta m/2} \cdot 2^{5k_2/2} Z \cdot  2^{5k_2/2} Z.
	2^{-m-k_3} {\big\| \varphi_{k_3} \nabla \wt{h} \big\|}_{L^2} 
	\Big\| \int_{\R^3} \varphi_{k_2}(\s) e^{is\jsig} \frac{1}{s} \frac{\s}{|\s|^2} \cdot \nabla _\s \wt{h}(s,\s) \varphi_{k_4}(\s-\eta) \varphi_{\sim k_3}(\rho)  \nu_1^R(-\eta+\s,\rho) d\s \Big\|_{Sch}
\\
& \lesssim 2^{-m-k_3} Z \cdot 2^{-m-k_2} {\| \varphi_{k_2} \nabla \wt{h} \|}_{L^1} \cdot {\| \varphi_{k_4}(x) \varphi_{k_3}(y) \nu_1^R(x,y) \|}_{Sch}
\\
& \lesssim 2^{-m-k_3} Z \cdot 2^{-m+k_2/2} Z \cdot 2^{k_3} 2^{10M_0} \lesssim Z^2 2^{-2m + \beta m +10M_0}.
\end{align*}
Inserting this bound in \eqref{AR0} gives the desired estimate for $A_R^{hh}$.

\medskip
{\it Case 2: Estimate of $A_{R}^{gg}$.}
The case of $(g,g)$ interactions is easier than the previous one since, on the support of \eqref{M123a'},
when $G=H=g$, we must have $|\s|,|\rho|\approx 1$.
In particular, $\nu_1^R$ is pointwise upper bounded by $2^{10M_0}$.

%As before, we insert an additional cutoff $\varphi_{k_4}(\eta - \s)$ for $k_4 \in [-10m,\beta m+10]$
%and estimate the $L^2_\xi$-norm of each of the resulting $O(m)$ terms, which we still denote by $A_{R}^{gg}$.
We have, using Schur's test, and H\"older with $|\eta| \lesssim 2^{\beta m}$,
%Since the $h$ components can be handled as above, we only focus on the term where both components 
%are $g$ and skip the mixed $(g,h)$ terms. 
%It is sufficient to bound uniformly the $L^2$-norm of 
\begin{align}\label{ARgg}
\begin{split}
& {\big\| A_R^{gg}(t) \big\|}_{L^2} = 
  \Big\| \int_0^t s B(s) \int_{\R^9} e^{-is \Phi_{1}} 
  \, \wt{g}(\s)  \wt{g}(\rho) \, \varphi_{\sim 0}(\s)\varphi_{\sim 0}(\rho) 
  \\ & \times \varphi_{k_4}(\eta -\s) \nu_1^R(-\eta+\s,\rho) d\s d\rho \, K_{3;k,k_1,p}(\xi,\eta) d\eta  
  \, \tau_m(s) ds \Big\|_{L^2}
  \\
  & \lesssim 2^{2m} \rho(2^m)^{1/2} \cdot {\| K_{3;k,k_1,p} \|}_{Sch} \cdot
  2^{(3/2)\beta m}
  \\
  & \times \sup_{s\approx 2^m} \sup_{|\eta| \lesssim 2^{\beta m}} \Big| \int_{\R^6}
  \, e^{is\jsig} \wt{g}(\s) \, e^{is\jrho} \wt{g}(\rho) \, \varphi_{\sim 0}(\s)\varphi_{\sim 0}(\rho) 
  \varphi_{k_4}(\eta -\s) \nu_1^R(-\eta+\s,\rho) d\s d\rho \Big|.
\end{split}
\end{align}
Then, in view of \eqref{K_3bound}, it suffices to show that the quantity on the last line above has 
an upper bound by, say, $2^{-5m/3}$.
To see that this is the case, we insert cutoffs in the size of $\jsig - 2\lambda$ and $\jrho-2\lambda$, 
%we have
%\begin{align*}
%\begin{split}
%& \big| J^{gg}(t,\xi) \big| \lesssim  \big(2^m \rho(2^m) \big)^{1/2} \sum_{\ell_2,\ell_3 \geqslant \ell_0, k_4 \in \mathbb{Z}} \sup_{s\approx 2^m} 
%  \Big| \int_{\R^3} %e^{-is \Phi_{3,1}} 
%  K_{3;k,k_1,p}(\xi,\eta) \, \psi_{\ell_2,\ell_3}(\eta) d\eta  \Big|,
%\end{split}
%\end{align*}
and aim to bound %with
\begin{align}\label{M123J1}
\begin{split}
  \psi_{\ell_2,\ell_3}(\eta) & := %\varphi_{\sim k_1}(\eta) 
  \int_{\R^6} 
  e^{is\jsig} (\varphi_{\sim 0}\wt{g})(\s)  e^{is\jrho} (\varphi_{\sim 0}\wt{g})(\rho)
  \, \varphi_{\ell_2}^{(\ell_0)}(\jsig - 2\lambda) \, 
  \\ & \times  \varphi_{\ell_3}^{(\ell_0)}(\jrho - 2\lambda)
\varphi_{k_4}(\eta -\s)  \nu_1^R(-\eta+\s, \rho) d\s d\rho,
  \qquad \ell_0 := \lfloor -m + \delta m \rfloor,
\end{split}
\end{align}
with $\delta>0$ sufficiently small.
%Applying Schur's Lemma it suffices to estimate $ 2^{3 \beta m/2} {\| \psi_{\ell_2,\ell_3} \|}_{L^\infty_\eta} .$ 

In the case $\ell_2=\ell_3=\ell_0$ we use \eqref{inftyfreqg}, and integrate directly to get 
\begin{align*}
{\| \psi_{\ell_2,\ell_3} \|}_{L^\infty_\eta} 
  & \lesssim 2^{\ell_0} {\|\wt{g}\|}_{L^\infty} \cdot 2^{\ell_0} {\| \wt{g}\|}_{L^\infty} \cdot 2^{10M_0}
  \\ 
  & \lesssim 2^{-2m + 2\delta m + 10M_0} \big(2^m \rho(2^m)\big)^2,
\end{align*}
which suffices for $\delta$ small enough.

If $\ell_2,\ell_3 > \ell_0$ we can resort to integration by parts in both $\rho$ and $\s$
in the formula \eqref{M123J1}, and estimate, up to faster decaying terms,
%See \eqref{M123wtfL1}
\begin{align*}
\begin{split}
 \big| \psi_{\ell_2,\ell_3}(\eta) \big| & \lesssim %\varphi_{\sim k_1}(\eta) 
 2^{-m} {\| \varphi_{\ell_2} \nabla_\s \wt{g} \|}_{L^1}  
 \cdot 2^{-m} {\| \varphi_{\ell_3} \nabla_\rho \wt{g} \|}_{L^1} \cdot 2^{10M_0} 
 %\min \{ 2^{-m} {\| \varphi_{\ell_2} \nabla_\rho \wt{g} \|}_{L^1} ; 2^{3k_4} \Vert \wt{g} \Vert_{L^{\infty}_{\s}} \}. 
 \\ 
 & \lesssim 2^{-2m + 10M_0} \big(m2^m \rho(2^m)\big)^2,
\end{split}
\end{align*}
having used \eqref{dxiwtgest}. %and picking $\delta_1$ small enough.

The cases $\ell_3 = \ell_0 < \ell_2$ and $\ell_2 = \ell_0 < \ell_3$ can be treated similarly using direct
integration in the variable corresponding to the index $\ell_0$,
%where we have smallness coming from $\ell_0$ 
and integration by parts in the other.

\medskip
{\it Case 3: Estimate of $A_{R}^{gh}$ and  $A_{R}^{hg}$.}
These terms are all treated using strategies already employed in previous two cases, 
and we only give the argument for the slightly worse case of $A_{R}^{gh}$. 
We start by looking at the formula \eqref{M123a'} with $G=g$ and $H=h$, and introduce
a cut-off $\varphi_{\ell_2}^{(\ell_0)}(\jsig - 2 \lambda)$, with the usual choice $\ell_0=\lfloor -m+\delta m \rfloor$.
%and cutoff a $\varphi_{k_4}(\eta-\s)$.

\smallskip
{\it Subcase 1.1: $k_3 \leqslant -m/2 + \delta m$}.
In this case we use first \eqref{AR0} and then \eqref{AR0''} to obtain 
\begin{align}
\nonumber
{\big\| A_{R}^{gh}(t) \big\|}_{L^2_\xi} & \lesssim 2^{3m/2 + 10\beta m}
  \sup_{s\approx 2^m} {\big\|   D_{k_2,k_3, k_4}^{gh}(s) \big\|}_{L^2_\eta}
  \\
  \label{AR0gh}
  & \lesssim 2^{3m/2 + 10\beta m}
  \cdot {\| \varphi_{k_3} \wt{h} \|}_{L^2}
  \\ 
  & \times {\Big\| \int_{\R^3} \frac{\varphi_{k_2}(\s)}{\jsig} e^{is\jsig} \wt{g}(\s)
  \, \varphi_{\ell_2}^{(\ell_0)}(\jsig - 2 \lambda) \varphi_{k_4}(\eta-\s) \varphi_{k_3}(\rho) 
  \nu_1^R(-\eta+\s,\rho) d\s \Big\|}_{Sch}.
\nonumber
\end{align}
For the Schur norm in \eqref{AR0gh}, resorting to integration by parts if $\ell_2>\ell_0$,
and using \eqref{dxiwtgest} and \eqref{inftyfreqg}, we have a bound of
\begin{align}\label{AR0gh1}
C 2^{-m+\delta m} m\rho(2^m)2^m \cdot {\big\| \varphi_{k_4}(x)\varphi_{k_3}( y)\nu_1^R(x,y)  \big\|}_{Sch}
  \lesssim2^{-m+2\delta m} \cdot 2^{k_3}.
\end{align}
Using this and ${\| \varphi_{k_3} \wt{h} \|}_{L^2} \lesssim 2^{k_3}Z$ in \eqref{AR0gh1} 
gives a sufficient bound, since $k_3 \leqslant -m/2 +\delta m$.

\smallskip
{\it Subcase 1.2: $k_3 \geqslant -m/2 + \delta m$}.
In this case we can proceed as above, but first integrate by parts in $\rho$
(the frequency variable of $\wt{h}$) in the formula for $D^{gh}$ in \eqref{M123a'}.
Then we can bound using \eqref{AR0} and \eqref{AR0''} as done in \eqref{AR0gh} above,
and obtain the same estimate but with
$2^{-m-k_3} {\| \varphi_{k_3} \nabla \wt{h} \|}_{L^2}$ instead of ${\| \varphi_{k_3} \wt{h} \|}_{L^2}$;
since $2^{-m-k_3} {\| \varphi_{k_3} \nabla \wt{h} \|}_{L^2} \lesssim 2^{-m/2- \delta m} Z$ in the present case,
we can then conclude as above.

%Then we estimate exactly as in \eqref{ARgg} and reduce matters
%to estimating the following analogue of \eqref{M123J1}:
%\begin{align}\label{M123J1'}
%\begin{split}
%  \psi_{\ell_2,k_3}(\eta) & := %\varphi_{\sim k_1}(\eta) 
%  \int_{\R^6} 
%  e^{is\jsig} (\varphi_{\sim 0}\wt{g})(\s) e^{is\jrho} \, \varphi_{k_3}(\rho)\wt{h}(\rho) 
%  \, \varphi_{\ell_2}^{(\ell_0)}(\jsig - 2\lambda) \, 
%  \\ & \times \varphi_{k_4}(\eta -\s)  \nu_1^R(-\eta+\s, \rho) d\s d\rho,
%  \qquad \ell_0 := -m + \delta m.
%\end{split}
%\end{align}
%It then suffices to show that ${\| \psi_{\ell_2,k_3} \|}_{L^\infty} \lesssim 2^{-5m/3}$.

%tl{Previous argument} 
%integrate by parts in $\sigma$ and bound the corresponding expression $\psi_{\ell_2}$ 
%(defined by analogy with \eqref{M123J1}) 
%\begin{align*}
%\vert \psi_{\ell_2} (\eta) \vert \lesssim 2^{\ell_0 \wedge (3k_4)} 
%  \cdot 2^{-m + 10 \beta m} \cdot 2^{k_3/2} 2^{-2(k_3 \vee k_4)} \cdot Z
%\end{align*}
%if $\ell_2 = \ell_0$ and 
%\begin{align*}
%\vert \psi_{\ell_2} (\eta) \vert \lesssim 2^{-m + 10 \beta m} \cdot 2^{1/2 k_3} 2^{-2(k_3 \vee k_4)} 
%  \min \big \lbrace 2^{-m} \Vert \nabla_{\s} \wt{g} \Vert_{L^1_{\s}}, 
%  2^{3k_4} \Vert \wt{g} \Vert_{L^{\infty}_{\s}} \big \rbrace \cdot Z
%\end{align*}
%otherwise.  Following the same argument as in Subcase B.1 we see that $k_2 \vee k_4 \geqslant -10 \beta m$ 
%hence the desired bound.
%The second term $A_{3,R}^{hg}$ is bounded similarly, therefore we skip the details.

\medskip
\subsubsection{Estimate of $A^{\epss}_{Re}$} 
The estimates for $A^{\epss}_{Re}$ are substantially easier than those for $A^{\epss}_{R}$.
Looking at the formula \eqref{M123a}, and comparing with \eqref{M123a'}, 
we just need to replace $\mu^{Re}(\eta,\s,\rho)$ for $\nu_1^R(-\eta+\sigma,\rho)$ in \eqref{M123a'}.
Then we recall that, in view of the assumptions of Proposition \ref{propM1'}, 
on the support of the integral we must have  $\vert \eta \vert \gtrsim 2^{-5\beta m}$;
therefore, $\max \{\vert \eta \vert ,\vert \s \vert, \vert \rho \vert \} \gtrsim 2^{-5\beta m},$ 
and in view of \eqref{nuReest}, we have a negligible loss coming from the measure.
Therefore we can follow the same arguments in \S\ref{ssecAR} above, 
based on integration by parts in frequency. %Schur's lemma. 
We skip the details.
\end{proof}

\medskip
\subsection{Estimate of $II_{m,k,k_1}(G), G \in \lbrace g,h \rbrace$}\label{ssecII}
We prove the following:

\begin{lemma}
Under the a priori assumptions of Proposition \ref{propboot},
and with the definition \eqref{M100}, we have, for all $m=0,1,\dots$,
\begin{align}
\sum_{k,k_1\in\Z} {\big\| II_{m,k,k_1}(G) \big\|}_{L^2} 
\lesssim 2^m \rho(2^m)^{1-\beta + \delta} \e^{\beta}, \qquad G \in \lbrace g,h \rbrace
\end{align}
for some $\delta \in (0,\beta/2)$.
\end{lemma}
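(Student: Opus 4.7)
The plan is to treat $II_{m,k,k_1}(G)$ by following the same strategy developed for $I_{m,k,k_1}(G)$ throughout \S\ref{ssecM2}--\ref{lemmaA}, exploiting the crucial structural simplification that $II_{m,k,k_1}$ carries no extra factor of $s$ in front. Since the kernel estimate \eqref{Mnuest} gives
$\big|\varphi_k(\xi)\varphi_{k_1}(\eta)\nabla_\xi^a\nabla_\eta^b\partial_\xi\nu(\xi,\eta)\big|\lesssim 1$ for $|a|,|b|\leqslant 1$, the new symbol $\partial_\xi\nu$ behaves essentially like $\nu$ at the level of the bilinear kernels appearing throughout the proof of Proposition \ref{propM1}. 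In particular, $\partial_\xi\nu$ verifies the same Schur-type bounds used to estimate the kernels $K_2$, $K_3$, $K_4$ introduced in the proof of Proposition \ref{propM1'}. Consequently, every intermediate bound obtained for $I_{m,k,k_1}$ gains a factor of $2^{-m}$ when applied to $II_{m,k,k_1}$, which more than accounts for the desired right-hand side.

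For $G=g$, I would first decompose $\wt{g} = \wt{g}_1+\wt{g}_2$ as in \eqref{g1g2}--\eqref{g2} (after localizing $|\jeta-2\lambda|\gtrsim 2^{-10m}$, handling the complementary region directly by H\"older and \eqref{inftyfreqg}) and run the same Strichartz/local-decay argument of \S\ref{ssecM2}. The $L^2_s$ norm of $B(s)\tau_m(s)$ (without the $s$ factor) gains $2^{-m}$ compared to $sB(s)\tau_m(s)$, so all the resulting bounds are stronger than needed.

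For $G=h$, I would mirror the three-step reduction of Proposition \ref{propM1}. First, the analog of Lemma \ref{LemmaM1A} disposes of $\min(k,k_1)<-3m$ and $\max(k,k_1)>\beta m$ via the trivial bound $\|II_{m,k,k_1}(h)\|_{L^2}\lesssim 2^{k^-}\int_0^t|B(s)|\|P_k(\phi P_{k_1}L^{-1}e^{isL}h)\|_{L^2}\tau_m(s)ds$, using \eqref{Pk1h} and $\beta N/2 > 1$; the absence of the $s$ factor makes this step strictly easier. Second, for $\min(k,k_1)\leqslant -5\beta m$ I would repeat the Schur argument of Lemma \ref{LemM1k1} on the kernel analog of \eqref{LemM1k12}, again benefiting from one less power of $s$. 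Third, for $-5\beta m\leqslant k,k_1\leqslant\beta m$, I would introduce the phase cutoff $\varphi_p^{(p_0)}(\Phi)$ with $p_0=\lfloor -3m/5\rfloor$ exactly as in \eqref{M1I}: in the case $p=p_0$ integrate by parts in $\eta$ and apply Schur's test using $\|K_2\|_{Sch}\lesssim 2^{-11m/20}$; in the case $p>p_0$ integrate by parts in $s$, obtaining a boundary term (handled via Schur with $\|K_4\|_{Sch}\lesssim 2^{12\beta m}$) and a bulk term where $\partial_s$ either hits $B(s)\tau_m(s)$ (handled by the same integration by parts in $\eta$ combined with Lemma \ref{renorm-A}) or hits $\wt{h}$.

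The only genuinely delicate contribution is, as before, the one where $\partial_s\wt{h}$ produces the quadratic-in-$f$ term $\big(L^{-1}\Im w\big)^2$, i.e.\ the analog of the operator $A_{m,k,k_1,p}$ from \eqref{M123}--\eqref{M123a}. I would decompose this analog along the $\mu = \mu^S+\mu^R+\mu^{Re}$ splitting of Proposition \ref{mudecomp}, reproducing verbatim the arguments of \S\ref{lemmaA}: the kernel $K_3$ of \eqref{K_3} with its Schur bound \eqref{K_3bound} is simply replaced by the analogous kernel with $\partial_\xi\nu$ in place of $\nu$, which enjoys the same estimates; the integration-by-parts and principal-value analyses of $A_0$, $A_{1,1;B}$, $A_R$, $A_{Re}$ carry over unchanged, and each final bound is smaller than the corresponding one for $A_{m,k,k_1,p}$ by a factor of $2^{-m}$ because of the missing $s$ in front. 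Summing the $O(m^2)$ remaining indices $(k,k_1)$ after Lemmas \ref{LemmaM1A} and \ref{LemM1k1} then yields the claimed bound $2^m\rho(2^m)^{1-\beta+\delta}\varepsilon^{\beta}$. The main conceptual obstacle is purely bookkeeping: ensuring that every place in the proof of Proposition \ref{propM1'} where a factor of $s$ was exploited (or absorbed into $\|sB(s)\|_{L^2_s}$) continues to work after its removal; however, since every such step was used only to pay for a negative power $2^{-m}$ gained elsewhere, the bounds consistently improve by a factor of $2^{-m}$, which is precisely the gap between the statement of Proposition \ref{propM1} and the lemma at hand.
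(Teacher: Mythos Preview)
Your proposal takes a much heavier route than the paper, and contains a genuine gap in the reduction to bounded frequencies.

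\textbf{The gap.} Your ``analog of Lemma \ref{LemmaM1A}'' claims the bound
\[
\|II_{m,k,k_1}(h)\|_{L^2}\lesssim 2^{k^-}\int_0^t|B(s)|\,\|P_k(\phi\, P_{k_1}L^{-1}e^{isL}h)\|_{L^2}\,\tau_m(s)\,ds,
\]
but this is the bound for $I$, not $II$. In $I$ the factor $2^{k^-}$ comes from $\xi/\jxi$, and the $P_k(\phi\cdot)$ product structure (which yields the crucial $2^{-(N/2)k^+}$ decay via Plancherel) comes from the symbol being $\nu$ itself. In $II$ the symbol is $\partial_\xi\nu$, so neither ingredient is available: there is no $\xi/\jxi$, and the inner integral is $\partial_\xi\wtF(\phi\,P_{k_1}L^{-1}e^{isL}h)(\xi)$, for which you have not justified any decay in $k$. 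The paper resolves this with the identity \eqref{meas-trans},
\[
|\xi|^2\,\partial_\xi\nu(\xi,\eta)=\nu_1(\xi,\eta)+\nu_2(\xi,\eta)-2\xi\,\nu(\xi,\eta),
\]
obtained by integrating by parts in $x$ against the eigenvalue equation, which recovers two powers of $|\xi|$ and makes the large-$k$ sum converge.

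\textbf{The paper's route is far simpler.} Once \eqref{meas-trans} is in hand, the paper avoids everything you propose from Proposition \ref{propM1'} onward: there is \emph{no} integration by parts in $s$, no phase localization $\varphi_p^{(p_0)}(\Phi)$, no treatment of $\partial_s\wt{h}$, and hence no need for the entire $A_S/A_R/A_{Re}$ analysis of \S\ref{lemmaA}. Instead, the paper works with a generic template $III_{m,k,k_1}$ (with symbol $\nu'$) and establishes two elementary bounds---a direct H\"older estimate \eqref{lsk1} and a single integration-by-parts-in-$\eta$ estimate \eqref{medk1}--\eqref{medk1bis}---then runs a short case analysis on the sizes of $k$ and $k_1$, invoking \eqref{meas-trans} when $k\geqslant k_1$ (for $h$) or $k\geqslant m/10$ (for $g$). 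The absence of the extra $s$ means these crude bounds already suffice; the whole cubic machinery you invoke was needed for $I$ precisely \emph{because} of that $s$.

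In short: your strategy of rerunning \S\ref{ssecM1}--\S\ref{lemmaA} with one fewer power of $s$ would eventually work for mid-range frequencies, but it is vast overkill, and your handling of large $k$ is incorrect as written. The identity \eqref{meas-trans} is the missing ingredient, and once you have it the argument collapses to a page.
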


\begin{proof}
To show estimates on $II_{m,k,k_1}$ we first consider a slightly more general term of the form
\begin{align*}
III_{m,k,k_1} :=\varphi_{k}(\xi)  \int_0^t B(s) \int_{\R^3} e^{-is\Phi(\xi,\eta)} 
  \varphi_{k_1}(\eta) \, \jeta^{-1}\wt{G}(s,\eta) \nu'(\xi,\eta) d\eta \, \tau_m(s) ds, 
    \qquad  G \in \lbrace g,h \rbrace,
\end{align*}
for some $\nu'$, and establish some general bounds for it.

First, we write that
\begin{align}
\notag
& \Vert III_{m,k,k_1} (t,\xi) \Vert_{L^2} 
\\
\notag
& \lesssim 2^m \rho(2^m)^{1/2} \cdot 2^{3k/2} \cdot \sup_{s \approx 2^m} \sup_{\vert \xi \vert \approx 2^k} \bigg \vert \int_{\R^3} e^{-is\Phi(\xi,\eta)} 
  \varphi_{k_1}(\eta) \, \jeta^{-1}\wt{G}(s,\eta) \nu'(\xi,\eta) d\eta  \bigg \vert 
  \\
 \label{lsk1}  
 & \lesssim 2^m \rho(2^m)^{1/2} \cdot 2^{3k/2} \Vert \varphi_{k_1}(\eta) \wt{G}(\eta) \Vert_{L^1_{\eta}} 
 \sup_{\vert \xi \vert \approx 2^k, \vert \eta \vert \approx 2^{k_1}} \vert \nu'(\xi,\eta) \vert.
\end{align}

Second, we can integrate by parts in $\eta$ and obtain
\begin{align*}
\Vert III_{m,k,k_1} \Vert_{L^2} & \lesssim \Vert III_{m,k,k_1}^{(1)} \Vert_{L^2} + \Vert III_{m,k,k_1}^{(2)} \Vert_{L^2},
\\
III_{m,k,k_1}^{(1)} &:= \varphi_{k}(\xi) \int_0 ^t \tau_m(s) s^{-1} B(s) 
  \int_{\mathbb{R}^3} \big(\frac{\eta}{\vert \eta \vert^2} \cdot \nabla \big) 
  \wt{G}(s,\eta) \nu'(\xi,\eta) \varphi_{k_1}(\eta) d\eta ds,
\\
III_{m,k,k_1}^{(2)} &:= \varphi_{k}(\xi) \int_0 ^t \tau_m(s) s^{-1} B(s) \int_{\mathbb{R}^3} 
  \wt{G}(s,\eta) \textrm{div} 
  \bigg( \frac{\eta}{\vert \eta \vert^2} \nu'(\xi,\eta) \varphi_{k_1}(\eta) \bigg) d\eta ds .
\end{align*}
%Using \eqref{Mnuest}, we find
Therefore we can write as above that
\begin{align}\label{medk1}
{\Vert III_{m,k,k_1}^{(1)} \Vert}_{L^2} \lesssim \rho(2^m)^{1/2} 
\bigg \Vert \varphi_{k_1}(\eta)\frac{ \nabla \wt{G}(s,\eta)}{\vert \eta \vert} 
\bigg \Vert_{L^1_{\eta}} \sup_{\vert \xi \vert \approx 2^k, \vert \eta \vert \approx 2^{k_1}} \vert \nu'(\xi,\eta) \vert.
\end{align}

In the same vein, we can write that 
\begin{align}  \label{medk1bis}
\begin{split} 
 \Vert III^{(2)}_{m,k,k_1}(t,\xi) \Vert_{L^2} & \lesssim \rho(2^m)^{1/2} 2^{3k/2} \max \bigg( \sup_{\vert \xi \vert \approx 2^k, 
  \vert \eta \vert \approx 2^{k_1}} \vert \nabla_{\eta} \nu'(\xi,\eta) \vert, \, 
  2^{-k_1} \sup_{\vert \xi \vert \approx 2^k, \vert \eta \vert \approx 2^{k_1}} \vert \nu'(\xi,\eta ) \vert \bigg) \\
& \times  \bigg \Vert \varphi_{k_1}(\eta) \frac{\wt{G}(s,\eta)}{\vert \eta \vert} \bigg \Vert_{L^1_{\eta}}  .
\end{split}
\end{align}

We need one more ingredient to deal with large outputs $|\xi|$.
Notice that
\begin{align*}
- \Delta_x \big( \partial_{\xi} \psi(x,\xi) \big) + V \partial_{\xi} \psi(x,\xi) 
  = 2 \xi \psi(x,\xi) + \vert \xi \vert^2 \partial_{\xi} \psi(x,\xi),
\end{align*}
so that, integrating by parts in $x$, we obtain
\begin{align}\label{meas-trans}
\begin{split}
& \vert \xi \vert^2 \partial_{\xi} \nu(\xi,\eta) 
\\
& = - \int_{\R^3} \Delta \big( \partial_{\xi} \overline{\psi(x,\xi)}\big) \psi(x,\eta) \phi(x) dx 
  + \int_{\R^3} \partial_\xi \overline{ \psi(x,\xi) } \psi(x,\eta) V(x) \phi(x) dx - 2 \xi \nu(\xi,\eta) 
  \\
& = -\int_{\R^3} \partial_{\xi} \overline{\psi(x,\xi)} \Delta \big( \psi(x,\eta) \phi(x) \big) dx 
  + \int_{\R^3} \partial_\xi \overline{\psi(x,\xi)}\psi(x,\eta) V(x) \phi(x) dx - 2\xi \nu(\xi,\eta) 
  \\
& := \nu_1 (\xi,\eta) + \nu_2 (\xi,\eta) + \nu_3 (\xi,\eta).
\end{split}
\end{align}
We are now ready to estimate $II_{m,k,k_1}(G), G \in \lbrace g,h \rbrace$.  
We distinguish two main cases, depending on whether the input is $g$ or $h.$ 

\medskip
\textit{Case $G=h$}.
We distinguish two main cases depending on the relation between $k$ and $k_1$.

\medskip
{\it Case 1: $k<k_1$}.
We look at two subcases.

\smallskip
\noindent
{\it Subcase 1.1: $k_1 > 10m/N$ or $k_1 \leqslant -5 m$}.
Using \eqref{lsk1} with $\nu' = \partial_{\xi} \nu$ 
together with \eqref{Mnuest} as well as the bootstrap assumption on the $H^N$ norm \eqref{boot0}, we find 
\begin{align*}
\Vert II_{m,k,k_1} (t,\xi) \Vert_{L^2} & \lesssim 2^m \rho(2^m)^{1/2} 
  \cdot 2^{3k/2} 2^{3 k_1 /2} 
  \Vert \varphi_{k_1}(\eta) \wt{h}(s,\eta) \Vert_{L^2} 
  \\
& \lesssim 2^m\rho(2^m)^{1/2} \cdot 2^{3k/2} 2^{3k_1/2} \cdot \big(2^{-Nk_1^{+}/2} \wedge 2^{k_1} \big) \cdot Z,
	%\big(2^m \rho(2^m)^{1-\beta/2} \e^{\beta} \big) ,
\end{align*}
which allows us to conclude in this case.

\smallskip
\noindent
{\it Case 1.2: $-5m< k_1 \leqslant 10m/N$}.
Integrating by parts in the formula \eqref{M100} for $II_{m\color{red},k \color{black},k_1}$,
and using \eqref{medk1} and \eqref{medk1bis} with $\nu' = \partial_{\xi} \nu$ together with \eqref{Mnuest}, we find 
\begin{align*}
\Vert II_{m,k,k_1} %^{(i)} 
  (t,\xi) \Vert_{L^2} & \lesssim \rho(2^m)^{1/2} \cdot 2^{3k/2} 
  \cdot %2^{-k_1-k_1^{-}} \Vert \varphi_{\sim k_1} (\eta) \Vert_{L^{6/5}_{\eta}}
  %\big \Vert \varphi_{\sim k_1}(\eta) \wt{h}(s,\eta) \big \Vert_{L^{6}_{\eta}}  
2^{-k_1} \cdot \big( 2^{-k_1} {\| \varphi_{\sim k_1} (\eta) \|}_{L^{6/5}} {\| \wt{h} (\eta) \|}_{L^6}
  + {\| \varphi_{k_1} \nabla \wt{h} (\eta) \|}_{L^1} \big)
  \\
  & \lesssim \rho(2^m)^{1/2} 2^{3k/2} 2^{k_1/2}  \cdot Z , %\ \ i=1,2,
\end{align*}
which is enough to conclude in this case.

\medskip
{\it Case 2: $k \geqslant k_1 $}. In this case we also distinguish two subcases between medium and other frequencies.

\smallskip
\noindent
{\it Case 2.1: $k_1 > 10m/N$ or $k_1 \leqslant -5 m$}.
In this case we consider $|\xi|^2 II_{m,k,k_1}$; using \eqref{meas-trans} we have three terms to estimate, 
corresponding to $\nu_i, \, i=1,2,3$, which we denote by $\vert \xi \vert^2 II_{m,k,k_1}^{(i)}$.
%in other words, we have $|II_{m,k,k_1}| \lesssim |\xi|^{-2} | II_{m,k,k_1}^{(1)}| + ...$
We write, using \eqref{psiLinfty}, \eqref{Mnuest} and \eqref{lsk1}
\begin{align*}
\big \Vert  \vert \xi \vert^2 II_{m,k,k_1}^{(i)} (t,\xi) \big \Vert_{L^2} \lesssim 2^{m} \rho(2^m)^{1/2} 
	2^{3k/2} 2^{3k_1/2} \big(2^{-(N/2)k_1^{+}} \wedge 2^{k_1} \big)  2^{2k_1^{+}} \cdot Z , \qquad i=1,2.
\end{align*}
For $i=3,$ we can bound using Plancherel similarly to \eqref{reuse}
\begin{align*}
\big \Vert \vert \xi \vert^2 II_{m,k,k_1}^{(3)} (t,\xi) \big \Vert_{L^2} \lesssim 2^{m} \rho(2^m)^{1/2} \cdot 2^k \cdot 2^{-(N/2)k^{+}} 
	\cdot \big( 2^{-(N/2) k_1^{+}} \wedge 2^{k_1} \big) \cdot Z .
\end{align*}
Putting the above two estimates together with an $L^2$-bound from \eqref{lsk1}, we conclude that 
\begin{align*}
\Vert II_{m,k,k_1}^{(i)} \Vert_{L^2} \lesssim 2^{-2k^{+}} \cdot 2^{m} \rho(2^m)^{1/2} 
	\cdot  \big(2^{-(N/2)k_1^{+}} \wedge 2^{k_1} \big) \big(  2^{3k/2 + 3k_1/2} 2^{2k_1^{+}} + 2^k 2^{-(N/2)k^{+}}  \big) \cdot Z.
\end{align*}
Given our assumptions on $k,k_1$ this is sufficient to conclude.

\smallskip
\noindent
{\it Case 2.2: $-5m < k_1 \leqslant 10m/N$}.
We make use of \eqref{medk1}, \eqref{medk1bis} with $\nu' = \nu_i, i=1,2 $ to write that
\begin{align*}
\Vert \vert \xi \vert^2 II_{m,k,k_1}^{(i)} \Vert_{L^2} \lesssim \rho(2^m)^{1/2} \cdot 2^{3k/2} 2^{k_1/2} %\big( 
	2^{2k_1^{+}} 
	%+ 2^k  \big)
	\cdot Z . %\big(2^m \rho(2^m)^{1-\beta/2} \e^{\beta} \big).
\end{align*}
For $i=3,$ we use the Strichartz estimate (Lemma \ref{Strichartz})
with parameters $(\wt{q},\wt{r},\gamma) = (2,\infty,0)$ and %Lemma \ref{dispersive-bootstrap}
\eqref{dispbootf} to obtain
\begin{align*}
\Vert \vert \xi \vert^2 II_{m,k,k_1}^{(3)} \Vert_{L^2} \lesssim  2^k \cdot 2^{m/2} \rho(2^m)^{1/2} %2^{m/2} 
	\cdot 2^{-k_1^{+}} \Vert P_{k_1} e^{isL} h \Vert_{L^6_x} 
	\lesssim 2^k \cdot 2^{-m + 10 \delta_N m} \cdot Z. 
\end{align*}
The desired bound follows in all cases.

\medskip
\textit{Case $G=g$.}
Note that in this case $k_1 \sim 0,$ therefore we will disregard the sum over that index. 
We insert cutoffs $\varphi_{\ell}^{(\ell_0)}(\jxi - 2\lambda), \ell_0:=\lfloor -m+\delta m \rfloor,$ 
where $\delta>0$ is chosen suitably small.

\medskip 
\noindent
{\it Case 1: $k < m/10$}.  
If $\ell > \ell_0$ we integrate by parts in $\eta$ in the formula \eqref{M100}.
Using \eqref{medk1}, \eqref{medk1bis}, as well as \eqref{dxiwtgest} and \eqref{inftyfreqg}, we find 
\begin{align*}
\Vert II_{m,k}(t,\xi) 
  \Vert_{L^2_{\xi}} & \lesssim \rho(2^m)^{1/2} \cdot 2^{3k/2} 
  \big( \Vert \varphi_{\ell}(\jxi-2\lambda) \nabla_\eta \wt{g}(s,\eta) \Vert_{L^1_{\eta}} 
  + {\| \wt{g} \|}_{L^1} \big)
  \\
  & \lesssim \rho(2^m)^{1/2} \cdot 2^{3m/20} \cdot 2^m \rho(2^m) m^2,
\end{align*}
which suffices.
A similar estimate holds if $\ell=\ell_0,$ integrating directly using \eqref{inftyfreqg}.

\medskip
\noindent
{\it Case 2: $k \geqslant m/10$}.  
In this case we use the identity \eqref{meas-trans} and the same notation in Case 2.1 above. 
As above we integrate by parts in frequency if $\ell>\ell_0$ and integrate directly otherwise. 
Using \eqref{medk1}, \eqref{medk1bis} as well as \eqref{dxiwtgest} and \eqref{inftyfreqg}, we find, for $i=1,2$,
\begin{align*}
\big \Vert \vert \xi \vert^2 II_{m,k}^{(i)} \big \Vert_{L^2_{\xi}} 
& \lesssim \rho(2^m)^{1/2} 
  \cdot 2^{3k/2} \big( \Vert \varphi_{\ell}(\jxi-2\lambda) \nabla_{\eta} \wt{g}(s,\eta) \Vert_{L^1_{\eta}}
  + {\| \wt{g} \|}_{L^\infty} \big)
  \\
  & \lesssim \rho(2^m)^{1/2} \cdot 2^{3k/2} (2^m \rho(2^m) m^2 ).
\end{align*}
For the last case,  we can conclude as we did for the profile $h$ above using Strichartz 
estimates as well as \eqref{dispbootg}:
\begin{align*}
\big \Vert \vert \xi \vert^2  II_{m,k}^{(3)}(t,\xi) \big \Vert_{L^2_{\xi}} \lesssim 2^k 
  \cdot 2^{m/2} \rho(2^m)^{1/2} \Vert e^{itL} g \Vert_{L^6_x} \lesssim 2^k \cdot \rho(2^m) m.
\end{align*}
Upon dividing by $2^{2k}$ and summing over $k \geqslant m/10$, the bound above is more than sufficient to conclude.
\end{proof}

\medskip
\subsection{The remainder terms}\label{ssecRM}
Here we explain how to estimate the remainder terms of mixed type $RM$ in \eqref{RM};
since these are easier to treat compared to $M_1$ and $M_2$ above, we will leave some details to the reader.

\subsubsection*{The term \eqref{RM1}}
Using \eqref{defB}, the term \eqref{RM1} can be written as
\begin{align*}
\nonumber
RM_1 & = RM_{1,1} + RM_{1,2} + RM_{1,3},
\\
RM_{1,1} & := \int_0^t \overline{B(s)} \int_{\R^3} e^{-is(\jxi - \jeta + \lambda)} 
  \jeta^{-1}\wt{h}(s,\eta) \nu(\xi,\eta) d\eta \,  ds,
\\
RM_{1,2} & := \int_0^t \overline{B(s)} \int_{\R^3} e^{-is(\jxi - \jeta + \lambda)} 
  \jeta^{-1}\wt{g}(s,\eta) \nu(\xi,\eta) d\eta \, ds,
\\
%\label{RM13}
RM_{1,3} & := \int_0^t \big( \overline{A(s)} - \overline{B(s)} \big) \int_{\R^3} e^{-is(\jxi - \jeta + \lambda)} 
  \jeta^{-1}\wt{f}(s,\eta) \nu(\xi,\eta) d\eta \, ds.
\end{align*}

The term $RM_{1,1}$ is similar to the term $M_1$ treated in subsections above, 
where the only difference is the opposite sign of the oscillation 
of the amplitude of the internal mode $e^{-is\lambda}$.
The arguments above are clearly insensitive to this change,
and we can then apply the same estimates verbatim.
%, while some require very minor modifications, mostly as far as notation is concerned.
%For example, the phase of $A_1$ in \eqref{M121} becomes $\jxi - (-1+\epsilon_1+\epsilon_2) \lambda$, 
%and it is still lower bounded by a positive constant, since an inequality like \eqref{M1phase} 
%still holds up to changing the sign in front of  $\lambda$.
%A similar minor adjustment can be made for the argument around \eqref{M121k_1}.
%For example, the contributions corresponding to \eqref{M122'} now have phases $\jxi - (-1+\epsilon_1)\lambda - \epsilon_2\jsig$
%but the same arguments leading to \eqref{M122end} apply verbatim.
The starting point is the analogue of \eqref{M1I}
with the phase now given by $\Phi(\xi,\eta) = \jxi  + \lambda -\jeta$.
For example, the contributions corresponding to \eqref{M123a} 
now have phases $\jxi + \lambda - \epsilon_1\jsig - \epsilon_2\jrho$.
This does not impact the estimates of the term \eqref{M123del} since \eqref{M123delph}
still holds by changing the sign of $\lambda$
(we are restricting to $|\jxi + \lambda - \jeta| \approx 2^p$ in this case).
The same goes for \eqref{M123pv} and \eqref{M123pvph}. %can also be estimated in the same way as before.

The term $RM_{1,2}$ enjoys the same estimates satisfied by $M_2$,
which are based on Strichartz estimates and are not affected by the different sign of the oscillation
of the discrete component.

Using  $A-B = O(A^2)$, we see that $RM_{1,3}$ has the form
\begin{align}\label{RM13}
\int_0^t O(A^2(s)) \int_{\R^3} e^{-is(\jxi - \jeta - \lambda)} \jeta^{-1} \wt{f}(s,\eta) \nu(\xi,\eta) d\eta ds.
\end{align}
After taking a $\partial_\xi$ derivative at the cost of adding an $s$ factor, 
we can directly apply the usual Strichartz estimate from Lemma \ref{Strichartz},
as in the estimate of $M_2$,
and use the $L^6_x$ a priori decay estimate for $e^{itL}f$, see Lemma \ref{dispersive-bootstrap}, 
and $O(A^2(s)) = \js^{-1}$, to get 
\begin{align*}
\big\| \partial_\xi RM_{1,3} \big\|_{L^2} \lesssim \sum_{m} \| sA^2(s) \|_{L^2_{s\approx 2^m}} 
  \sup_{s\approx 2^m} \| \phi \, e^{isL} f \|_{L^1} \lesssim 
  \sum_m 2^{m/2} \cdot 2^{(-1+\beta + \delta_N)m} \varepsilon^{\beta} \lesssim 1.
\end{align*}

\subsubsection*{The term \eqref{RM2}}
Writing \eqref{RM2} in terms of $A$ gives expressions of the form
\begin{align*}
%& RM_2 := -\int_0^t a(s) e^{-is \jxi} \int_{\R^3} e^{-is \jeta} \jeta^{-1} \overline{f}(s,\eta) \nu(\xi,\eta) d\eta ds
\int_0^t A_{\epsilon_1}(s) \int_{\R^3} e^{-is (\jxi - \epsilon_1\lambda + \jeta)} 
  \jeta^{-1} \overline{\wt{f}}(s,\eta) \nu(\xi,\eta) d\eta ds, 
\end{align*}
and we see that the oscillating phase is lower bounded by $\max(\jeta,\jxi)$.
Then this term is clearly easier to bound than $M_1$ and $M_2$ above.

\subsubsection*{The term \eqref{RM3}}
This is almost identical to the term $RM_{1,3}$ in \eqref{RM13}, and can be handled \color{red} as \color{black} above.

%%%%%%%%%%%%%%%%%%%%%%%%%%%%%%%%%%%%%%%%%%%%%%%%%%%%%%%%%%%%%%%%%%%%%%%%%%
%%%%%%%%%%%%%%%%%%%%%%%%%%%%%%%%%%%%%%%%%%%%%%%%%%%%%%%%%%%%%%%%%%%%%%%%%%
%%%%%%%%%%%%%%%%%%%%%%%%%%%%%%%%%%%%%%%%%%%%%%%%%%%%%%%%%%%%%%%%%%%%%%%%%%
%%%%%%%%%%%%%%%%%%%%%%%%%%%%%%%%%%%%%%%%%%%%%%%%%%%%%%%%%%%%%%%%%%%%%%%%%%
%%%%%%%%%%%%%%%%%%%%%%%%%%%%%%%%%%%%%%%%%%%%%%%%%%%%%%%%%%%%%%%%%%%%%%%%%%
%%%%%%%%%%%%%%%%%%%%%%%%%%%%%%%%%%%%%%%%%%%%%%%%%%%%%%%%%%%%%%%%%%%%%%%%%%
%%%%%%%%%%%%%%%%%%%%%%%%%%%%%%%%%%%%%%%%%%%%%%%%%%%%%%%%%%%%%%%%%%%%%%%%%%
%%%%%%%%%%%%%%%%%%%%%%%%%%%%%%%%%%%%%%%%%%%%%%%%%%%%%%%%%%%%%%%%%%%%%%%%%%
%%%%%%%%%%%%%%%%%%%%%%%%%%%%%%%%%%%%%%%%%%%%%%%%%%%%%%%%%%%%%%%%%%%%%%%%%%
%%%%%%%%%%%%%%%%%%%%%%%%%%%%%%%%%%%%%%%%%%%%%%%%%%%%%%%%%%%%%%%%%%%%%%%%%%
%%%%%%%%%%%%%%%%%%%%%%%%%%%%%%%%%%%%%%%%%%%%%%%%%%%%%%%%%%%%%%%%%%%%%%%%%%

\medskip
\section{Energy estimates} \label{energyest} %for the radiation}

In this section we first prove energy estimates for $h$,
along with the $H^N$ scattering statement \eqref{mtscattHN} in Theorem \ref{maintheo}.
Then we prove Lemma \ref{lemmaH} about control of the weighted norm for high frequencies.

\subsection{Sobolev norms and scattering}

We prove the following:

\begin{proposition}\label{propHN}
Under the a priori assumption of Proposition \ref{propboot}, there exists $\delta>0$
such that, for any $1\leqslant t_1<t_2 \leqslant T$ we have
%\begin{align*}
%\sup_{t\in[0,T]} {\| h(t) \|}_{H^N} \leqslant \e.
%\end{align*}
\begin{align}\label{hscatt}
{\| h(t_1) - h(t_2) \|}_{H^N} \lesssim \e^{1+\delta} \, t_1^{-\delta} \color{red}. \color{black}
\end{align}
\end{proposition}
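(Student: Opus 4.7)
The plan is to prove the Cauchy property in $H^N$ starting from the Duhamel decomposition of Lemma \ref{decomposition}, namely
\begin{align*}
\widetilde{h}(t_2) - \widetilde{h}(t_1) = [S(t_2) - S(t_1)] + [M(t_2) - M(t_1)] + [F(t_2) - F(t_1)],
\end{align*}
and estimating each piece in $H^N$ by the right-hand side of \eqref{hscatt}. The global strategy is to apply the inhomogeneous Strichartz estimate of Lemma \ref{Strichartz} on the interval $[t_1,t_2]$ (or a dyadic time decomposition thereof), using the decay of $\rho(s) \lesssim \langle s\rangle^{-1}$ for $s\gtrsim \varepsilon_0^{-2}$ to gain an $L^2_s$ in time factor $t_1^{-\delta}$. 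The a priori bounds \eqref{boot} on $\|h\|_{H^N}$ and the boundedness of wave operators (Theorem \ref{Wobd}) will let us freely pass between weighted and unweighted Sobolev norms.

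For the source terms $S$, the leading contribution is $S_2$ of \eqref{S1-chi}, namely $\int_{t_1}^{t_2} e^{-is(L-2\lambda)} B^2(s)\,\theta\, ds$ (the terms $S_1$ and $S_3$ are non-resonant and can be handled by integration by parts in $s$). Applying the Strichartz estimate with admissible pair $(\tilde q, \tilde r, \gamma)=(2, \infty-, 0)$ yields
\begin{align*}
\big\| S(t_2) - S(t_1) \big\|_{H^N_x} \lesssim \|B^2\|_{L^2_{s\in[t_1,t_2]}} \cdot \|\theta\|_{W^{N,1+}} \lesssim t_1^{-1/2},
\end{align*}
since $|B(s)|^2 \lesssim \rho(s)$ and $\|\rho\|_{L^2_{[t_1,\infty)}} \lesssim t_1^{-1/2}$, and $\theta = \mathbf{P}_c \phi^2$ is Schwartz. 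For the mixed term $M$ (and the remainders $RM$), the dominant piece is $\int_{t_1}^{t_2} B(s)\,\mathbf{P_c}\big(\phi \cdot L^{-1}e^{isL} h(s)\big) ds$ with its $g$-counterpart, to which the same Strichartz estimate applies: using $|B(s)|\lesssim \rho(s)^{1/2}$ together with the a priori $H^N$ bound on $h$ and the $H^N$ bound on $g$ already derived in \S\ref{mtSobolvpr}, one gains $\|\rho^{1/2}\|_{L^2_{[t_1,\infty)}}\cdot\|\phi\|_{W^{N+2,2+}}\cdot\varepsilon \lesssim \varepsilon^{1+\delta} t_1^{-\delta'}$ for some $\delta'>0$.

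The main difficulty lies in the field self-interaction term $F$, which is purely quadratic in $f = h - g$ and carries no small factor from the discrete component. I would first reduce to mid-low frequencies exactly as in \S\ref{secFsetup}, handling the high-frequency piece $F_H$ via (an $H^N$-version of) Lemma \ref{lemmaH}, and then split $F_L$ according to Proposition \ref{mudecomp} into singular, regular and remainder pieces, and according to the three interaction types $(g,g)$, $(g,h)$ and $(h,h)$ of \S\ref{types}. The $(g,g)$ interaction is handled directly from the explicit formula for $g$, using $\rho(s)\lesssim \langle s\rangle^{-1}$ and $\|\mathbf{P}_c\phi^2\|_{W^{N+2,1}}\lesssim 1$, precisely as in \eqref{HNg}; the $(g,h)$ interaction is the bilinear version of the same idea, exploiting the $L^6_s$ decay of $e^{isL} h$ from Lemma \ref{dispersive-bootstrap}.

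The core obstacle is the purely continuous $(h,h)$ piece, which should be treated by mimicking the proof of Proposition \ref{mainquaddecay} but in the $H^N$ topology. The key point is that the nonlinearity $(L^{-1}\Im w)^2$ is \emph{strongly semilinear}: all $N$ derivatives can be placed on one factor while the other is controlled in $L^6_x$. Concretely, using Strichartz with pair $(\tilde q,\tilde r,\gamma) = (6,3,2/3)$ to land in the dual space $L^{6/5}_s W^{N-2/3,3/2}_x$, and then H\"older's inequality in the bilinear output, one gets a contribution bounded by
\begin{align*}
\Big(\int_{t_1}^{t_2} \|e^{isL}f(s)\|_{L^{6-}_x}^{6/5} \|e^{isL}f(s)\|_{W^{N+1,\infty-}_x}^{6/5} ds\Big)^{5/6} \lesssim \varepsilon \int_{t_1}^{\infty} \rho(s)^{1-\beta} s^{10\delta_N} ds,
\end{align*}
after controlling the $W^{N+1,\infty-}$ norm by interpolating the a priori $H^N$ bound \eqref{boot} against $\|e^{isL}f\|_{L^\infty}$ at the cost of $s^{\delta_N}$. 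Provided $N$ is chosen large enough relative to $\delta_N$ and $\beta$ (which is the content of \eqref{mteps}), the last integral is bounded by $\varepsilon^\delta t_1^{-\delta}$ for some $\delta>0$, giving the desired bound. Summing all contributions and using the time decomposition \eqref{timedecomp} concludes the proof.
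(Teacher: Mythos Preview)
Your overall plan (Duhamel decomposition plus Strichartz on dyadic time intervals) matches the paper's, but two of your three main estimates fail as written.

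For the mixed term $M$, your bound $\|\rho^{1/2}\|_{L^2_{[t_1,\infty)}}\cdot\varepsilon$ is divergent: since $\rho(s)\sim\langle s\rangle^{-1}$, one has $\int_{t_1}^{t_2}\rho(s)\,ds\sim\log t_2$, so $\|\rho^{1/2}\|_{L^2}$ is not even finite on $[t_1,\infty)$, and on $[t_1,t_2]$ it grows like $(\log t_2)^{1/2}$. Pairing $|B(s)|\lesssim\rho(s)^{1/2}$ with only the $H^N$ bound $\|h\|_{H^N}\lesssim\varepsilon$ cannot close. The missing idea is the \emph{semilinear gain}: the factor $L^{-1}$ in $\phi\cdot L^{-1}\Im w$ means only $\|w\|_{H^{N-1}}$ is needed, and the paper (Lemma~\ref{en-mix}) interpolates this via Gagliardo--Nirenberg, $\|w\|_{H^{N-1}}\lesssim\|w\|_{H^N}^{(N-2)/(N-1)}\|w\|_{L^6}^{1/(N-1)}$, to extract an extra $\rho^{(1-\beta)/(N-1)}$ from the $L^6$ decay. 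This is exactly what makes the time integral convergent.

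For the field term $F$, your approach is both over-engineered and incorrect in its final step. The decomposition into $\mu^S,\mu^R,\mu^{Re}$ and interaction types $(g,g),(g,h),(h,h)$ is unnecessary here: since no weighted norm is involved, the paper (Lemma~\ref{en-quad}) handles $F_m$ in a few lines by Strichartz $(\tilde q,\tilde r,\gamma)=(6,3,2/3)$, distributing the $L^N$ and using the product estimate $\|L^{N-1}w\cdot L^{-1}w\|_{L^{3/2}}\lesssim\|L^N w\|_{L^2}\|w\|_{L^6}$. More seriously, your claimed bound $\varepsilon\int_{t_1}^\infty\rho(s)^{1-\beta}s^{10\delta_N}\,ds$ diverges (the integrand behaves like $s^{-(1-\beta)+10\delta_N}$, which is not integrable), and the norm $\|e^{isL}f\|_{W^{N+1,\infty-}}$ is not controlled by your a priori assumptions. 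Finally, for $S_2$: the direct Strichartz estimate you wrote yields only $\|\rho\|_{L^2_{[t_1,\infty)}}\lesssim\varepsilon^{1-\delta}t_1^{-\delta}$, missing the required $\varepsilon^{1+\delta}$; the fix (as the paper does, and as you already noted for $S_1,S_3$) is to integrate by parts in $s$ using the non-resonance $|\langle\xi\rangle-2\lambda|\gtrsim 1$ on the support of $1-\chi_C$.
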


The above proposition implies the conclusion \eqref{bootconc} for the $H^N$ norm, 
as well as scattering in $H^N$ for $h(t)$.
Recall that $h$ satisfies \eqref{decompositionh}.
In the next two subsections, we will then estimate the $H^N$ norm of the terms appearing in \eqref{decompositionh}.

\smallskip
\subsubsection{Source terms}%\label{source}
We start by inserting the usual $\tau_m$ cutoffs (see \eqref{timedecomp}) in the time integral defining
the source terms \eqref{S},
and denote $S_{i}^m$, $i=1,2,3$ the corresponding terms. 
%The small exponential gain in time exhibited in the following lemma, 
%is important for the proof of scattering. We prove the following estimate:
The following lemma gives an estimate consistent with \eqref{hscatt} for these source terms.

\begin{lemma} \label{en-source}
Under the assumptions of Proposition \ref{propboot}, 
there exists $\delta>0$ such that, for all $m=0,1,\dots$, we have, for $i=1,2,3,$ 
\begin{align}
& \Vert S_i^m \Vert_{H^N} \lesssim 2^{-\delta m} \e^{1 + \delta}.
\end{align}
\end{lemma}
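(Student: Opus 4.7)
My approach is to apply the inhomogeneous Strichartz estimate from Lemma \ref{Strichartz} with dual exponents $(\widetilde q', \widetilde r') = (2, 1)$, intertwining (via \eqref{Woid}) with $L^N$ to transform the $H^N$-norm of each Duhamel-type source term into
\begin{equation*}
\Vert S_i^m \Vert_{H^N} \lesssim \Vert (\text{amplitude factor}) \cdot \tau_m(s) \Vert_{L^2_s} \cdot \Vert \theta_i \Vert_{W^{N+2,1}_x},
\end{equation*}
for an appropriate Schwartz function $\theta_i$ built from $\theta = {\bf P_c}\phi^2$. Because the spatial factor depends only on $\phi$, the problem reduces to bounding the $L^2_s$-norm of the amplitude on a dyadic time window of length $2^m$.

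For $S_1$, by the definition of $B$ in \eqref{defB}, $A - B = O(|A|^2)$, so $|A^2 - B^2| = |A-B|\cdot|A+B| \lesssim |A|^3 \lesssim \rho^{3/2}$. Strichartz applied directly to \eqref{Scubic} then yields
\begin{equation*}
\Vert S_1^m \Vert_{H^N} \lesssim \Vert (A^2 - B^2)\tau_m \Vert_{L^2_s} \cdot \Vert \theta \Vert_{W^{N+2,1}_x} \lesssim 2^{m/2} \rho(2^m)^{3/2}.
\end{equation*}
For $S_2$ and $S_3$, the relevant phases, namely $\jxi - 2\lambda$ restricted to $\supp(1-\chi_C)$, $\jxi$, and $\jxi + 2\lambda$, are uniformly bounded below by a positive constant. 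I therefore integrate by parts in $s$ to transfer a factor $1/\Phi(\xi)$ onto $\wt\theta$ (still producing a Schwartz function in $W^{N+2,1}_x$ after inverting the distorted Fourier transform via Theorem \ref{Wobd}). The boundary contributions (from $\tau_m'$) amount to $\lesssim \rho(2^m)$, while the bulk contributions use $|\dot B(s)| \lesssim \rho(s)$ from Lemma \ref{renorm-A} (together with \eqref{Adotest}), so that $|\dot B B| \lesssim \rho^{3/2}$, and Strichartz on the remaining integral produces again $\lesssim 2^{m/2}\rho(2^m)^{3/2}$.

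Combining gives the uniform bound $\Vert S_i^m \Vert_{H^N} \lesssim \rho(2^m) + 2^{m/2} \rho(2^m)^{3/2}$ for $i=1,2,3$. It remains to check that this is dominated by $2^{-\delta m} \e^{1+\delta}$ for some $\delta > 0$. Using $\rho(2^m) \approx \min(\e^2, 2^{-m})$ from \eqref{bootstrap-A-A'}, one splits into the regimes $2^m \leq \e^{-2}$ (where both terms are $\lesssim \e^2$, and $\e^2 \leq 2^{-\delta m} \e^{1+\delta}$ reduces to $2^{\delta m} \leq \e^{\delta - 1}$, which holds for $\delta \leq 1/3$ since $2^m \leq \e^{-2}$) and $2^m \geq \e^{-2}$ (where both terms are $\lesssim 2^{-m}$, and $2^{-m} \leq 2^{-\delta m} \e^{1+\delta}$ reduces to $2^{(1-\delta)m} \geq \e^{-(1+\delta)}$, which also holds for $\delta \leq 1/3$). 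A choice such as $\delta = 1/10$ works.

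The main technical issue is making sure that the integration by parts for $S_2$ and $S_3$ produces spatial multipliers that are still in $W^{N+2,1}_x$. For $S_2$ this is the hardest point: one must check that $(1-\chi_C(\xi))/(\jxi - 2\lambda) \cdot \wt\theta(\xi)$, when inverted to physical space, is a Schwartz function — which follows because $\chi_C$ is a smooth cutoff at scale $2^{-C}$ near the Fermi frequency $\jxi = 2\lambda$, and the complementary cutoff keeps us uniformly away from the singularity. An analogous (easier) statement holds for $S_3$ with the multiplier $1/\jxi$ or $1/(\jxi + 2\lambda)$. Beyond this, the argument is a routine combination of IBP, Strichartz, and the dyadic time decomposition.
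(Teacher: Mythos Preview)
Your proof is correct and takes a somewhat different route from the paper's. For $S_1^m$ the paper splits according to whether $|\jxi - 2\lambda| \leq 2^{-m/10}$: near the Fermi frequency it uses the small volume together with $\|\wt{S_1^m}\|_{L^\infty_\xi} \lesssim 2^m\rho^{3/2}(2^m)$, and away from it integrates by parts in $s$. For $S_2^m$ and $S_3^m$ the paper integrates by parts in $s$ and then estimates the remaining time integral directly in $L^1_s$, exploiting the sharper bound $|\dot B| \lesssim \rho^{3/2} + \rho^{3/2-a}\js^{-1}$ from part~(ii) of Lemma~\ref{renorm-A} to reach $\lesssim 2^m\rho^2(2^m)$. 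Your Strichartz-based approach is more uniform: it handles $S_1^m$ without any frequency splitting, and for $S_2^m, S_3^m$ the cruder bound $|\dot B|, |\dot A| \lesssim \rho$ (obtained by differentiating \eqref{defB} termwise and invoking \eqref{Adotest}) already suffices, since the $L^2_s L^1_x$ Strichartz pairing gains a factor $2^{-m/2}$ over a naive $L^1_s$ bound. Both routes arrive at estimates of the form $\rho(2^m) + 2^{m/2}\rho(2^m)^{3/2}$, and your final case analysis converting this to $2^{-\delta m}\e^{1+\delta}$ is correct. The paper's argument is slightly more elementary (no Strichartz), while yours treats all three terms on the same footing and avoids the ad hoc splitting for $S_1^m$.
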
 

\begin{proof}
We first look at 
\begin{align*}
S_1^m(t,\xi) := %\chi_C(\xi) 
  \int_0^t e^{-is(\jxi-2\lambda)} (A^2(s)-B^2 (s)) \tau_m(s)ds \, \mf{\theta}(\xi),
\end{align*}
see \eqref{Scubic},
% and use \eqref{defB} to deduce that 
%\begin{align*}
%{\| S_1^m(t) \|} _{L^2} \lesssim \int_0^t \rho^{3/2}(s) \tau_m(s)ds \lesssim \rho^{1/2}(2^m).
%\end{align*}
and split it as
\begin{align*}
S_1^m = S_-^m + S_+^m, \qquad \wt{S_-^m}(t,\xi) := \varphi_{\leqslant -m/10}(\jxi-2\lambda) \wt{S_1^m}(t,\xi).
\end{align*}
We then estimate directly by H\"older and \eqref{defB}
\begin{align*}
{\| S_-^m(t) \|}_{H^N_x} \lesssim 2^{-m/20} {\| \wt{S_1^m} \|}_{L^\infty} \lesssim 
  2^{-m/20} \cdot 2^m \rho^{3/2}(2^m),
\end{align*}
which suffices since $\rho \lesssim \e^2$.
For $S_+^m$ we instead integrate by parts in $s$ to obtain
\begin{align*}
{\big\| S_+^m(t) \big\|}_{H^N_x} & \lesssim %\chi_C(\xi) 
  {\Big\| \frac{\varphi_{>-m/10}(\jxi-2\lambda)}{\jxi-2\lambda}
  \Big| \int_0^t e^{-is(\jxi-2\lambda)} \frac{d}{ds}\Big[ (A^2(s)-B^2 (s))
  \tau_m(s) \Big] ds \, \jxi^N \mf{\theta}(\xi) \Big| \Big\|}_{L^2_\xi}
  \\
  & \lesssim 2^{m/20} 2^m \rho^2(2^m),
\end{align*}
which is also sufficient.
For $S_2^m$, see \eqref{S1-chi}, we can also integrate by parts at no cost since $|\jxi-2\lambda|\gtrsim 1$,
and then use \eqref{defB} and \eqref{renorm-A} to obtain an even stronger bound:
\begin{align*}
{\| S_2^m(t) \|}_{H^N_x} \lesssim 2^m \rho^2(2^m).
\end{align*}

The same arguments apply to the two terms in \eqref{Snr}. Looking at the first one (the other can be treated identically) we write 
\begin{align*}
S_3^m %& = \int_0^t e^{-is\jxi} \vert A \vert^2 (s) \, \tau_m(s) ds \mf{\theta}(\xi) 
  %+ \int_0 ^t e^{-is(\jxi+2\lambda)} \overline{A}^2 (s) \, \mf{\theta}(\xi) ds. 
  %\\
  & = \int_0^t e^{-is\jxi} \big( |A|^2 (s) - |B|^2(s) \big) \, \tau_m(s) ds \mf{\theta}(\xi) 
  + \int_0^t e^{-is\jxi} |B|^2(s) \, \tau_m(s) ds \mf{\theta}(\xi) 
\end{align*}
and then proceed as for $S_1^m$ to deal with the first of the two terms above (this is in fact even easier 
since the phase does not vanish), and just integrate by parts, as we did for $S_2^m$, to deal with the second. 
%Finally by writing $ \vert A \vert^2 = \vert A \vert^2 
%- \vert B \vert^2 + \vert B \vert^2 $ in $S_3,$ we see that those terms 
%are written as a sum of terms of the same form as $S_1$ and $S_2,$ hence the conclusion.
\end{proof}

\smallskip
\subsubsection{Mixed terms}\label{EEmixed}
As in the previous paragraph, we localize the mixed terms \eqref{M}-\eqref{RM} in time by inserting time cutoffs. 
We denote the corresponding terms $M_i^m, i=1,2$ and $RM_i^m, i=1,2,3$ respectively. %, see \eqref{defMixed}  
We prove a similar result as in the previous subsection:

\begin{lemma} \label{en-mix}
We have for $m=0,1, \dots$
\begin{align*}
& \Vert (M_1^m - M_2^m) \Vert_{H^N} \lesssim 2^{-\delta m} \e^{1+\delta}, 
\\
& \Vert RM_i^m \Vert_{H^N} \lesssim 2^{-\delta m} \e^{1+\delta}, %^{\beta}, 
  \quad i=1,2,3,
\end{align*}
for some $\delta > 0 .$ 
\end{lemma}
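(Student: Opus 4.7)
All four expressions in Lemma \ref{en-mix} admit the same physical-space representation: applying the inverse distorted Fourier transform and using that $e^{-isL}$ is bounded on $H^N$ (by Theorem \ref{Wobd}), Minkowski's inequality reduces matters to estimating
$$\int_0^t \tau_m(s)\, |C(s)|\, \bigl\|\phi \cdot L^{-1} w(s)\bigr\|_{H^N}\, ds,$$
where $C(s)$ is $B(s)e^{is\lambda}$ for $M_1-M_2$, $\overline{A(s)}e^{-is\lambda}$ for $RM_1$, $(A-B)(s)e^{is\lambda}$ for $RM_3$, and a variant with a uniformly non-resonant phase $\jxi+\jeta$ for $RM_2$. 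The target is a bound by $2^{-\delta m}\e^{1+\delta}$.

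\textbf{The main term $M_1^m - M_2^m$.} The plan exploits the \emph{strong semilinearity} coming from the fact that the measure $\nu$ contains a Schwartz factor $\phi$. Expanding $\jnab^N(\phi\cdot L^{-1}w)$ by Leibniz, every piece in which at least one derivative falls on $\phi$ admits the H\"older estimate $\|\jnab^j\phi\|_{L^{3/2}}\|\jnab^{N-j-1}L^{-1}w\|_{L^6}$, which, via Sobolev embedding $H^{N-j}\subset W^{N-j-1,6}$ and the bootstrap $\|w\|_{H^N}\lesssim\e$, reduces to $\|w(s)\|_{L^6}$ and therefore inherits the dispersive rate $\e^\beta\js^{-1+\delta_N}$ from \eqref{mtv=6decay}. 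Combined with $|B(s)|\lesssim\min(\e,s^{-1/2})$, this yields
$$\int\tau_m|B|\|w\|_{L^6}\,ds\;\lesssim\;\e^\beta\min\bigl(\e\cdot 2^m,\,2^{m/2}\bigr)\cdot 2^{m\delta_N}\;\lesssim\;2^{-\delta' m}\e^{1+\delta'}$$
for suitable $\delta'>0$, in both regimes $2^m\lessgtr\e^{-2}$. The single Leibniz piece where all derivatives fall on $L^{-1}w$ gives only $\|w\|_{H^{N-1}}\lesssim\e$ and does not decay in $s$; for this term I would integrate by parts in $s$ against the phase $\Phi(\xi,\eta)=\jxi-\jeta-\lambda$, which, after dyadic frequency localization $|\xi|\approx 2^k,|\eta|\approx 2^{k_1}$, is bounded below by $\max(2^k,2^{k_1})$ whenever $|k-k_1|\gtrsim 1$. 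On the diagonal region, the small-$|\Phi|$ contribution is handled by volume arguments in $\xi$ analogous to those used in the proof of Lemma \ref{en-source} (splitting into $|\Phi|\lessgtr 2^{-m/10}$), and the non-resonant part gains a $1/\Phi$-smoothing from the IBP which effectively cancels one derivative on $w$.

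\textbf{Remainder terms.} For $RM_2^m$, the phase $\jxi+\jeta\geq 2$ is uniformly lower-bounded, so a single IBP in $s$ costs nothing and converts the term into boundary plus cubic-bulk expressions carrying a $1/\jxi$-smoothing factor; these are then controlled by the same semilinear estimates. For $RM_3^m$, the amplitude factor $A-B=O(A^2)=O(\rho(s))$ (by \eqref{defB}) provides an extra $\sqrt{\rho(s)}$ relative to $M_1-M_2$, so the same argument yields the claimed bound at once. Finally, $RM_1^m$ has the same structure as $M_1^m-M_2^m$ with $\overline{A(s)}$ in place of $B(s)$ and the shifted phase $\jxi-\jeta+\lambda$, which has the same qualitative size properties, and is treated identically.

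\textbf{Main obstacle.} The principal difficulty is that neither $|B(s)|\sim\js^{-1/2}$ nor $\|w(s)\|_{H^N}\sim\e$ is separately integrable in $s$ with the required weight, so the $m$-decay must come from combining the $L^6$-dispersive decay of $w$ with the Schwartz structure of $\phi$. The most delicate piece is the ``all derivatives on $L^{-1}w$'' contribution, where the semilinear trick fails and one must rely on the $s$-IBP against the resonant phase $\Phi$; carefully tracking the balance between the small-$|\Phi|$ volume loss and the non-resonant $1/\Phi$ gain — while simultaneously maintaining the $\e^{1+\delta}$ power, rather than merely $\e^\beta$ — is the technical crux.
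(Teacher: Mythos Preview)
Your approach diverges from the paper's and contains a real gap. The paper avoids the entire IBP-in-$s$ machinery by combining two ingredients you do not use. First, instead of Minkowski in time (which gives $\int_{s\approx 2^m}|B(s)|\,ds\sim 2^{m/2}$ for large $m$), the paper applies the inhomogeneous Strichartz estimate of Lemma~\ref{Strichartz} with $(\tilde q,\tilde r,\gamma)=(2,\infty,0)$, landing in $L^2_s L^1_x$; this replaces the non-integrable $|B|$ by $\|B\|_{L^2_{s\approx 2^m}}\lesssim(2^m\rho(2^m))^{1/2}\lesssim 1$, which is \emph{bounded}. Second, the paper does not split the Leibniz sum: all terms are controlled uniformly by $\|\phi\|_{L^2}\|w\|_{H^{N-1}}$, and then $\|w\|_{H^{N-1}}$ is interpolated between $\|w\|_{H^N}\lesssim\e$ and $\|w\|_{L^6}$ via Gagliardo--Nirenberg (see \eqref{Gagliardo-L2}), yielding a factor $\rho(2^m)^{(1-\beta)/(N-1)}$ which already decays in $m$.

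Your proposal falters in two places. For the $j\ge 1$ Leibniz pieces, the chain ``$H^{N-j}\subset W^{N-j-1,6}$ and $\|w\|_{H^N}\lesssim\e$ reduces to $\|w(s)\|_{L^6}$'' is wrong: that Sobolev embedding gives only the non-decaying bound $\e$, not the dispersive rate. One needs Gagliardo--Nirenberg interpolation anyway, and the resulting exponent on the $L^6$ factor is only $j/(N-1)$, tiny when $j=1$. For $j=0$, the analogy to Lemma~\ref{en-source} breaks down: in that lemma the phase $\jxi-2\lambda$ depends on $\xi$ alone, so the small-$|\Phi|$ set is a thin $\xi$-shell and the $L^2_\xi$ volume argument is immediate; here $\Phi=\jxi-\jeta-\lambda$ depends on the integration variable $\eta$, and for every $\xi$ with $\jxi>1+\lambda$ the resonant set in $\eta$ is nonempty, so there is no small-volume gain in $\xi$ alone. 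Making this work would require a genuinely bilinear Schur-type argument (cf.\ the much longer treatment in Section~\ref{ssecM1}), which you have not supplied. The paper's Strichartz-plus-interpolation route bypasses all of this in a few lines.
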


\begin{proof}
We first show how to bound the $H^N$ norm of the (time-localized) mixed term %$M_1^m,$ that is
\begin{align}
(M_1^m - M_2^m)(t) 
  = \int_0^t e^{-isL} e^{-is\lambda} B(s) \big( \phi(x) \, L^{-1} \Im w(s) \big) \,\tau_m(s) ds, \quad m=0,1,2,\dots.
\end{align}
Recall that $f = h-g$ and $e^{itL}f = w = (\partial_t + iL)v$;
note also that with the same argument below we could estimate $M_1^m$ and $M_2^m$ separately as well.
%the other term $M_2^m$ being treated similarly. 
%We denote it $M_m$ for ease of notation. 
%Recall that it is defined as
The idea here is to use Strichartz estimates %in Lemma \ref{Strichartz} 
%we cannot close the bound directly, essentially only by a logarithmic factor; but then, using
and the gain of one derivative on $w$
- due to the  `strongly semilinear' nature of \eqref{sysav} - 
followed by interpolation through the Gagliardo-Nirenberg inequality for $1 \leqslant N_1 \leqslant N$
\begin{align} \label{Gagliardo-L2}
\Vert w \Vert_{H^{N_1}} & \lesssim \Vert w \Vert_{H^{N}}^{\frac{N_1-1}{N-1}} 
  \Vert w \Vert_{L^{6}}^{\frac{N-N_1}{N-1}} 
  \lesssim \varepsilon^{\frac{N_1-1}{N-1}} \cdot 
  \rho(2^m)^{(1-\beta)\frac{N-N_1}{N-1}} 2^{ \frac{5}{3}m \delta_N \frac{N-N_1}{N-1}} 
  \varepsilon^{\beta \frac{N-N_1}{N-1}},
%\\
%& \lesssim 2^{-\frac{1-\beta}{2}\frac{N-N_1}{N-1} m } 2^{\frac{10}{3}\delta_N m \frac{N-N_1}{N-1}} \e,  
%\\
%\Vert u \Vert_{H^N} & \lesssim \Vert u \Vert_{H^{N-1}}^{\frac{2N-5}{2N-3}} \Vert u \Vert_{L^{\infty}}^{\frac{2}{2N-3}},
\end{align}
where we used \eqref{dispbootf} for the last part of the inequality. 
%as well as the basic observation that $\rho(t) \leqslant \varepsilon t^{-1/2}.$  

Relying on the boundedness of wave operators, followed by Lemma \ref{Strichartz}, and using \eqref{Gagliardo-L2}, 
we find
\begin{align*}
{\| (M_1^m-M_2^m)(t) \|}_{H^N} & \lesssim \sum_{n_1+n_2 \leqslant N } 
  {\Big\| \int_0^t B(s) e^{-isL} \big( D^{n_2} \phi(x) \cdot L^{-1} D^{n_1} \Im w(s) \big) \, \tau_m(s) ds \Big\|}_{L^2}
\\
& \lesssim \sum_{n_1+n_2 \leqslant N } 
  {\Big\| B(s) \tau_m(s) \, \langle D \rangle^{n_2}\phi \, %L^{-1} 
  \cdot \langle D \rangle^{n_1-1} \Im w(s) \Big\|}_{L^2_tL^1_x}
\\
& \lesssim {\| B(s) \|}_{L^2_{s\approx 2^m}} %\sum_{n_1 \leq N} 
  \cdot \sup_{s\approx 2^m } 
  %{\big\| \langle D \rangle^{n_1-1} w(s) \big\|}_{L^2_x}
  {\| w(s) \|}_{H^{N-1}_x}
\\
%& \lesssim  \big(2^m \rho(2^m) \big)^{1/2} \big(2^m \rho(2^m)^{1-\beta}  \varepsilon^\beta \big)^{1-1/N} \cdot \varepsilon_0^{1/N} \\
& \lesssim {\| B(s) \|}_{L^2_{s\approx 2^m}} %\sum_{n_1 \leq N-1} 
  \cdot \e^{\frac{N-2}{N-1}}  \cdot \Big( \rho(2^m)^{1-\beta} 
  2^{\frac{5}{3} \delta_N m} \e^{\beta} \Big)^{\frac{1}{N-1} }
  \\
& \lesssim \big(2^m \rho(2^m) \big)^{1/2} \cdot \e^{\frac{N-2}{N-1}} \cdot \e^{\frac{1-\beta}{N-1}}
  2^{-\frac{1-\beta}{2(N-1)} m } 
  2^{\frac{5\delta_N }{3(N-1)}m } \e^{\frac{\beta}{N-1}}
\\ 
  & 
 \lesssim \e \cdot  2^{-\frac{1-\beta}{2(N-1)} m} 2^{\frac{5\delta_N }{3(N-1)}m },
\end{align*}
where we used $\rho(t) \leqslant \e t^{-1/2}$ for the line before last.
Since $(1-\beta)/2 > 5\delta_N/3$, this concludes the proof of the first estimate.

The remainder terms can be treated following the same strategy decomposing
$A = A-B + B = O(\rho^{1/2})$ (recall \eqref{defB}), therefore we can skip the details.
\end{proof}

\smallskip
\subsubsection{Field interactions}
Finally, we look at the nonlinear interactions that are quadratic in the field $w$, 
and show  the following: %(note that again we have localized in time).

\begin{lemma} \label{en-quad}
Under the a priori assumptions of Proposition \ref{propboot}, we have for $m=0,1,\dots$
\begin{align}\label{EEfields}
\Vert F_m \Vert_{H^N} & \lesssim 2^{-\delta m} \e^{1+\delta}, \qquad
  F_m := \int_0^t \tau_m(s) e^{-isL}  \big( L^{-1} \Im w(s) \big)^2 ds ,
\end{align}
for some $\delta >0.$
\end{lemma}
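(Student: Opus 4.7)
The plan is to estimate $\|F_m\|_{H^N}$ using Strichartz estimates (Lemma \ref{Strichartz}) applied to the Klein--Gordon flow, in the same spirit as the proof of Lemma \ref{en-mix}, exploiting the fact that the nonlinearity $(L^{-1}\Im w)^2$ is strongly semilinear: the two factors of $L^{-1}$ each yield a gain of one derivative. Since $F_m = e^{-itL} u_m$ where $u_m$ is the $s\approx 2^m$ contribution to $w$ driven by the source $(L^{-1}\Im w)^2$ (via $v$ solving the inhomogeneous second-order equation), I would apply Strichartz in a suitable admissible dual form, e.g. $(\widetilde{q}',\widetilde{r}') = (1,6/5)$, paired with $(q,r,\gamma) = (\infty,6,1)$ and applied to $\langle \nabla\rangle^{N'} v$ for $N'\in\{N,N+1\}$, so that
\begin{align*}
\|F_m\|_{H^N} \lesssim \|\tau_m (L^{-1}\Im w)^2\|_{L^{\widetilde{q}'}_t W^{N',\widetilde{r}'}_x}.
\end{align*}

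Next I would apply the Kato--Ponce fractional Leibniz rule to $(L^{-1}\Im w)^2$ and distribute the $N'$ derivatives: the $L^{-1}$ on the factor carrying all derivatives effectively reduces the Sobolev index on $w$ by one, and Hölder in space reduces matters to products of the form $\|w\|_{H^{N-1}_x}\cdot\|w\|_{L^{p}_x}$ for suitable $p\in [2,6]$. The first factor is handled by the Gagliardo--Nirenberg inequality \eqref{Gagliardo-L2} interpolating the bootstrap bound $\|w\|_{H^N}\lesssim\e$ against the dispersive $L^6$ decay from Lemma \ref{dispersive-bootstrap}, which for $N$ large gives essentially $\e$ with an arbitrarily small polynomial-in-$s$ loss controlled by $\delta_N$ and $\beta$. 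The second factor is controlled directly by \eqref{dispbootf} and contributes the dispersive decay $\rho^{1-\beta}(2^m)$ up to $2^{(0+)m}$ losses. The exchange of a full power of derivative for a full power of dispersive decay (per $L^{-1}$) is precisely what ``strongly semilinear'' buys us here.

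The main obstacle is to balance the time integration factor from Strichartz against the space decay so as to land on the summable bound $2^{-\delta m}\e^{1+\delta}$: since \eqref{dispbootf} decays faster at larger $q$ while the admissibility of dual Strichartz pairs in dimension three restricts the allowed $\widetilde{r}'$ (and in particular forces $L^1_t$ or endpoint $L^2_t$ on the source), the derivative gain from the two copies of $L^{-1}$ is essential to compensate the time cost. The argument naturally splits into the early regime $2^m\lesssim\rho_0^{-1}\approx\e_0^{-2}$, where $\rho(s)\approx\e^2$ and the quadratic nature of the nonlinearity together with Sobolev embedding already yields a bound by $\e^{3-O(\beta)}$, comfortably below $\e^{1+\delta}$ once multiplied by the length $\lesssim\e_0^{-2}$ of the time interval, and the late regime $2^m\gtrsim\e_0^{-2}$, where $\rho(2^m)\approx 2^{-m}$ delivers the genuine $2^{-\delta m}$ factor after absorbing the $\beta$-- and $\delta_N$--losses into the final choice of $\delta>0$ (taken suitably small depending on $\beta,\delta_N$, and $N$).
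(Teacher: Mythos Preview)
Your overall strategy---Strichartz estimate followed by distributing derivatives and using the $L^{-1}$ gain---is exactly what the paper does. However, your specific choice of Strichartz exponents is where the argument breaks down, and this is not merely cosmetic.

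First, the pair $(\widetilde{q}',\widetilde{r}')=(1,6/5)$ is inconsistent with $\gamma=1$: the scaling condition $1/\widetilde{q}'+3/\widetilde{r}'-2=3/2-\gamma$ forces $\gamma=0$ for this pair, not $\gamma=1$. More importantly, with $\widetilde{q}'=1$ the time integration over $s\approx 2^m$ costs a full factor of $2^m$. After H\"older in space (say $L^{6/5}=L^2\cdot L^3$) you are left with a product like $\|w\|_{H^{N-1}}\cdot\|w\|_{L^3}$; the $L^3$ norm from \eqref{dispbootf} behaves like $\rho^{1-\beta}\,2^{m/2+}$, so altogether you get $2^{3m/2+}\rho(2^m)^{1-\beta}\,\e$. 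For late times $\rho(2^m)\approx 2^{-m}$ this is $\gtrsim 2^{m/2}\e$, which diverges rather than decays. No amount of Gagliardo--Nirenberg refinement on the $H^{N-1}$ factor can rescue this, because the obstruction is the time integrability.

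The paper instead uses the pair $(\widetilde{q},\widetilde{r},\gamma)=(6,3,2/3)$, i.e.\ $(\widetilde{q}',\widetilde{r}')=(6/5,3/2)$. This costs only $2^{5m/6}$ in time, and the $L^{3/2}_x$ norm splits cleanly as $L^2\times L^6$, so one factor uses the full $H^N$ bound $\|L^N w\|_{L^2}\lesssim\e$ and the other uses the sharpest dispersive estimate $\|w\|_{L^6}\lesssim\rho^{1-\beta}2^{(0+)m}\e^\beta$. This gives $2^{5m/6}\rho(2^m)^{1-\beta}\,\e^{1+}$, and the paper closes with the single interpolation $\rho(2^m)\lesssim 2^{-\alpha m}\e^{2(1-\alpha)}$ (valid for any $\alpha\in[0,1]$), choosing $\alpha$ slightly above $5/6$---this replaces your proposed early/late regime splitting in one line. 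The Gagliardo--Nirenberg step on $\|w\|_{H^{N-1}}$ that you borrow from Lemma~\ref{en-mix} is not needed here; the paper simply uses the $H^N$ bootstrap bound directly, since the extra smallness comes from the $\rho$-interpolation rather than from lowering the Sobolev index.
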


\begin{proof}
%In this proof we will use the following Gagliardo-Nirenberg inequality %, derived as \eqref{Gagliardo-L2}:
%\begin{align} \label{Gagliardo-L6}
%\Vert D^{N_1} u \Vert_{L^6} & \lesssim \Vert u \Vert_{H^N}^{\frac{N_1}{N-1}} \Vert u \Vert_{L^6}^{\frac{N-1-N_1}{N-1}} 
%  \lesssim  \varepsilon^{\frac{N_1}{N-1}} \cdot \rho(2^m)^{(1-\beta)\frac{N-1-N_1}{N-1}} 
%  2^{\frac{5}{3}\delta_N m \frac{N-1-N_1}{N-1}}  
%  \e^{\beta \frac{N-1-N_1}{N-1}}.
%%\\
%%& \lesssim 2^{-\frac{1-\beta}{2}\frac{N-1-N_1}{N-1}m} 2^{\frac{10}{3} \delta_N \frac{N-1-N_1}{N-1} m} \e .
%\end{align}
Using the boundedness of wave operators and distributing the derivatives, it suffices to show that
\begin{align}\label{EEfields1}
\begin{split}
{\Big\| \int_0^t e^{-isL} L^N \big( %D^{n_1} 
	L^{-1} \Im w(s) \big) \big( %D^{n_2} 
	L^{-1} \Im w(s) \big)
  \, \tau_m(s) ds\Big\|}_{L^2} \lesssim 2^{-\delta m}  \varepsilon^{1+\delta} \color{red}. \color{black}%\varepsilon^{\beta+\delta}, 
  %\\ n_1+n_2 \leq N,  \quad \delta>0, \quad m=1,2,\dots.
\end{split}
\end{align}
%Let us assume without loss of generality that $n_1 \leq N/2$.
Using Lemma \ref{Strichartz} with $(\tilde{q},\tilde{r},\gamma)=(6,3,2/3)$, 
together with the boundedness of wave operators and a Sobolev product estimate,
we have that the right-hand side of \eqref{EEfields1} is bounded by
%using also the Gagliardo-Nirenberg inequalities \eqref{Gagliardo-L2} and \eqref{Gagliardo-L6},
\begin{align}\label{EEfields2}
\begin{split}
& C \sum_{k \geqslant 0} 2^{-k/3} {\big\| P_k L \big[ L^N \big( %D^{n_1} 
	L^{-1} \Im w \big) 
  	\big( %D^{n_2} 
	L^{-1} \Im w \big) \big] \big\|}_{L^{6/5}_{s\approx 2^m}L_x^{3/2}}
%  \\
%  & \lesssim {\big\| D^{n_1+1} L^{-1} w \big\|}_{L^{6/5}_{s\approx 2^m} L^6_x} 
%  {\big\| D^{n_2} L^{-1} w \big\|}_{L^{\infty}_{s\approx 2^m}L^2_x} 
%  + {\big\| D^{n_1} L^{-1} w \big\|}_{L^{6/5}_{s\approx 2^m} L^6_x} 
%  {\big\| D^{n_2+1} L^{-1} w \big\|}_{L^{\infty}_{s\approx 2^m}L^2_x}
  \\
  & \lesssim {\big\| w \big\|}_{L^{6/5}_{s\approx 2^m} L^6_x} 
  {\big\| L^{N} w \big\|}_{L^{\infty}_{s\approx 2^m}L^2_x} 
%  \\
%  & \lesssim 2^{5m/6} \bigg(\rho(2^m)^{1-\beta} 2^{\frac{5}{3} \delta_N m } 
%  \e^{\beta} \bigg)^{\frac{2N-n_1-n_2}{N-1}} \e^{\frac{n_1 + n_2-2}{N-1}}
  \\
  & \lesssim 2^{5m/6} \rho(2^m)^{1-\beta} 2^{\frac{5}{3} \delta_N m } \cdot \e.
\end{split}
\end{align}
%where $\alpha := \frac{11}{12} \frac{N-1}{2N - n_1-n_2}$ 
%and we used $\rho(t) \leqslant \e^{2 \alpha} t^{-\alpha}$ to write the last line.
Using that $\rho(2^m) \lesssim 2^{-\alpha m} \e^{2(1-\alpha)}$ 
with $\alpha = 5/6 + 10\beta$, with $\beta$ small enough, gives us the desired estimate.

%\begin{align*}
%2(1-\alpha) \frac{2N-n_1-n_2}{N-1}  + \frac{n_1+n_2-2}{N-1}  = \frac{2}{N-1} \big( 2 N - (n_1 +n_2) - \frac{11}{12} (N-1)
%  \big) - \frac{2}{N-1}.
%\end{align*}
%Therefore we obtain the desired conclusion as long as 
%for $N$ is large enough and $\beta$ small enough (since $n_1+ n_2 \leqslant N$).
\end{proof}

\medskip
\subsection{Weighted norm at high frequency: Proof of Lemma \ref{lemmaH}}\label{SsecH}
%In this section we prove Lemma \ref{lemmaH}. 
%\begin{proof}[Proof of \eqref{FH1}]
We focus on proving the hardest estimate \eqref{FH1} since \eqref{FH2} can be proven with similar arguments.
%We denote
We split $F_H$ as a sum of the terms
\begin{align*}
\begin{split}
& F_{H,\epss}^{(i,j,k)}(a,b) := \int_0^t \int_{\R^6} \frac{e^{-is \Phi_\epss (\xi,\eta,\sigma)}}{\jeta \jsig} 
    \wt{a}(s,\eta) \wt{b}(s,\sigma) \mu_H^{(i,j,k)} (\xi,\eta,\sigma) d\eta d\sigma \, ds, 
    \\
    & \qquad \Phi_\epss (\xi,\eta,\sigma):= \jxi -\eps_1\jeta -\eps_2\jsig,
\end{split}
\end{align*}
with
\begin{align*}
\begin{split}
\mu_H^{(i,j,k)}(\xi,\eta,\s) := \mu(\xi,\eta,\s) \varphi_{i}(\jxi)\varphi_{j}(\jeta)\varphi_{k}(\jsig),
\end{split}
\end{align*}
where $i,j,k \in \lbrace +,- \rbrace,(i,j,k) \neq (+,+,+),$ 
and we have defined $\varphi_{+} := \varphi_{\leqslant M_0}$ (frequencies $\lesssim 2^{M_0}$)
and $\varphi_{-} := \varphi_{> M_0}$ (frequencies $\gtrsim 2^{M_0}$).
%By definition, at least one of the signs is $-$.
%$\xi$ (resp. $\eta, \s$) is localized at frequencies $\leqslant M_0$ (positive sign) or $>M_0$ (negative sign).  
%In the sequel we will write $\varphi_{i}(\xi)$ for $\varphi_{\leqslant M_0} (\xi)$ or $\varphi_{>M_0}$, and similarly for $j,k.$ 
Define also 
\begin{align*}
\mathcal{B}(f,g):= \int_{\mathbb{R}^6} f(\eta) g(\s)  \mu(\xi,\eta,\s) d\eta d\s .
\end{align*}
%\comment{Maybe better to write this in physical space ?}

We rely on the distributional identity in Lemma \ref{derM} 
(ignoring the easier case where the derivative falls on the outer localizer). 
Recall that $\mathcal{R}^{\alpha}$ denotes the euclidean Riesz transform. 
In the sequel we will slightly abuse notations and still write $\mathcal{R}^{\alpha}$ for the symbol of that operator,
i.e., $\mathcal{R}^{\alpha}_\xi = \xi^\alpha/|\xi|$.
%, where $\xi^\alpha$ is the $\alpha \in \{1,2,3\}$ coordinate of $\xi$.
Using Lemma \ref{derM} we estimate (note that we will mostly drop the dependence on the index $m$ below to improve legibility)
\begin{align*}
\Vert \partial_{\xi} F_{H}^{(i,j,k)} \Vert_{L^2} & \lesssim \sum_{m} \Vert I_{H}^{(i,j,k)} \Vert_{L^2}
  + \Vert II_{H}^{(i,j,k)} \Vert_{L^2} +  \Vert III_{H}^{(i,j,k)} \Vert_{L^2}, 
\end{align*}
with
\begin{align*}
I_{H}^{(i,j,k)}(\xi) & := \mathcal{R}_{\xi}^{\alpha} \varphi_i (\xi) \int_0^t 
 e^{-is \jxi} 
 \\ & \times \mathcal{B} \bigg( \partial_{\eta^{\beta}} 
  \bigg( \mathcal{R}_{\eta}^{\beta} \frac{e^{is \epsilon_1 \jeta}}{\jeta} \varphi_{j}(\eta) \wt{f_{\epsilon_1}} (s,\eta) \bigg),
  \frac{e^{is \epsilon_2 \jsigma}}{\jsigma} \varphi_{k}(\s) \wt{f_{\epsilon_2}} (s,\s) \bigg) \tau_m(s )ds,
\\
II_{H}^{(i,j,k)}(\xi) & := \mathcal{R}_{\xi}^{\alpha} \varphi_i (\xi) \int_0^t e^{-is \jxi} 
  \\ & \qquad \qquad \times \mathcal{B} 
  \bigg( E_{\eta} \bigg( \frac{e^{is \epsilon_1 \jeta}}{\jeta} \varphi_{j}(\eta) \wt{f_{\epsilon_1}} (s,\eta) \bigg), 
  \frac{e^{is \epsilon_2 \jsigma}}{\jsigma} \varphi_{k}(\s) \wt{f_{\epsilon_2}} (s,\s)  \bigg)  \tau_m(s) ds
\\
& + \mathcal{R}_{\xi}^{\alpha} E_{\xi} \varphi_i (\xi) \int_0^t e^{-is \jxi} \mathcal{B} 
  \bigg( \frac{e^{is \epsilon_1 \jeta}}{\jeta} \varphi_{j}(\eta) \wt{f_{\epsilon_1}} (s,\eta),
  \frac{e^{is \epsilon_2 \jsigma}}{\jsigma} \varphi_{k}(\s) \wt{f_{\epsilon_2}} (s,\s)\bigg)  \tau_m(s ) ds,
\\
III_{H}^{(i,j,k)}(\xi) & := \mathcal{R}_{\xi}^{\alpha} \varphi_i (\xi) \int_0^t is \frac{\xi^{\alpha}}{\jxi} e^{-is \jxi} 
  \\ 
  & \qquad \qquad \times \mathcal{B} \bigg( \frac{e^{is \epsilon_1 \jeta}}{\jeta} \varphi_{j}(\eta) \wt{f_{\epsilon_1}} (s,\eta), 
  \frac{e^{is \epsilon_2 \jsigma}}{\jsigma} \varphi_{k}(\s) \wt{f_{\epsilon_2}} (s,\s) \bigg) \tau_m(s)ds .
\end{align*}
Note that we can also alternatively differentiate the other profile and  write 
\begin{align*}
\Vert \partial_{\xi} F_{H}^{(i,j,k)} \Vert_{L^2} & \lesssim \sum_{m} \Vert I_{H}^{(i,j,k)'} \Vert_{L^2} 
  + \Vert II_{H}^{(i,j,k)'} \Vert_{L^2} +  \Vert III_{H}^{(i,j,k)} \Vert_{L^2}, 
  \\
I_{H}^{(i,j,k)'}(\xi) & := \mathcal{R}_{\xi}^{\alpha} \varphi_i (\xi) \int_0^t e^{-is \jxi} 
  \\
  & \times \mathcal{B} \bigg(  \frac{e^{is \epsilon_1 \jeta}}{\jeta} \varphi_{j}(\eta) \wt{f_{\epsilon_1}} (s,\eta),
  \partial_{\sigma^{\beta}} \bigg( \mathcal{R}_{\s}^{\beta} \frac{e^{is \epsilon_2 \jsigma}}{\jsigma} \varphi_{k}(\s) \wt{f_{\epsilon_2}} (s,\s) \bigg) \bigg) \tau_m(s )ds,  
\\
II_{H}^{(i,j,k)'}(\xi) & := \mathcal{R}_{\xi}^{\alpha} \varphi_i (\xi) \int_0^t e^{-is \jxi} 
  \\
  & \times \mathcal{B} \bigg( \frac{e^{is \epsilon_1 \jeta}}{\jeta} \varphi_{j}(\eta) \wt{f_{\epsilon_1}} (s,\eta), 
  E_{\s} \bigg( \frac{e^{is \epsilon_2 \jsigma}}{\jsigma} \varphi_{k}(\s) \wt{f_{\epsilon_2}} (s,\s) \bigg) \color{red} \bigg) \color{black} \tau_m(s ) ds, 
\\
& + \mathcal{R}_{\xi}^{\alpha} E_{\xi} \varphi_i (\xi) \int_0^t e^{-is \jxi} 
  \\
  & \times \mathcal{B} \bigg( \frac{e^{is \epsilon_1 \jeta}}{\jeta} \varphi_{j}(\eta) \wt{f_{\epsilon_1}} (s,\eta),
  \frac{e^{is \epsilon_2 \jsigma}}{\jsigma} \varphi_{k}(\s) \wt{f_{\epsilon_2}} (s,\s)\bigg)  \tau_m(s ) ds.
\end{align*}

\smallskip
\textit{Estimate of $II_H^{(i,j,k)}$.} 
We use either dispersive bounds or energy bounds depending on whether the input is low or high frequency.  
More precisely, we use for $2 < p \leqslant 6$ and $s \approx 2^m$ that, for $j=+,-$,
\begin{align} \label{Lpen}
\Vert \wt{\mathcal{F}}^{-1} \big( \varphi_{j} (\eta) e^{is \jeta} \wt{f}(s,\eta) \big) \Vert_{L^p} 
  \lesssim 2^{\frac{6-3p}{2p} m + 5 M_0} \cdot Z,
\end{align}
which is a direct consequence of \eqref{dispbootf}, and of Sobolev's embedding with the Sobolev a priori bound in \eqref{boot}
and \eqref{eng}.
Then, using Lemma \ref{ER}, we obtain that
\begin{align*}
\Vert II_{H}^{(i,j,k)} \Vert_{L^2} & \lesssim \int_0^t \bigg \Vert 
  \wtF^{-1} \big( e^{is \epsilon_1 \jeta} \frac{\varphi_{j}(\eta) }{\jeta}
  \wt{f_{\epsilon_1}}(s,\eta) \big) \bigg \Vert_{L^{3-}} \bigg \Vert 
  \wtF^{-1} \big(e^{is \epsilon_2 \jsigma}\frac{ \varphi_{k}(\sigma)}{\jsigma} 
  \wt{f_{\epsilon_2}}(s,\s) \big) \bigg \Vert_{L^6} \tau_m (s) ds 
  \\
& \lesssim 2^m \cdot 2^{20 M_0} 2^{-3m/2} \cdot Z^2,
\end{align*}
which is acceptable. 

\smallskip
\textit{Estimate of $I_H^{(i,j,k)}$.} 
The two main cases to consider are when the derivative falls on the exponential and on the profile:
\begin{align*}
\Vert I_H^{(i,j,k)} \Vert_{L^2} & \lesssim \Vert I_{H,1}^{(i,j,k)} \Vert_{L^2} + \Vert I_{H,2}^{(i,j,k)} \Vert_{L^2},
\\
I_{H,1}^{(i,j,k)} &:= \mathcal{R}_{\xi}^{\alpha} \varphi_i (\xi)  \int_0^t  e^{-is \jxi} 
  \\ & \qquad \times
  \mathcal{B} \bigg(\frac{e^{is \epsilon_1 \jeta}}{\jeta} \varphi_{j}(\eta) \mathcal{R}_{\eta}^{\beta} \partial_{\eta^{\beta}} \wt{f_{\epsilon_1}} (s,\eta),
  \frac{e^{is \epsilon_2 \jsigma}}{\jsigma} \varphi_{k}(\s) \wt{f_{\epsilon_2}} (s,\s)  \bigg) \tau_m(s) ds,
  \\
I_{H,2}^{(i,j,k)} &:= \mathcal{R}_{\xi}^{\alpha} \varphi_i (\xi) \int_0^t e^{-is\jxi} 
  \\ & \qquad \times
  \mathcal{B} \bigg( is \epsilon_1 \frac{\eta^{\beta}}{\jeta} \frac{e^{is \epsilon_1 \jeta}}{\jeta} 
  \varphi_{j}(\eta) \mathcal{R}_{\eta}^{\beta} \wt{f_{\epsilon_1}} (s,\eta), 
  \frac{e^{is \epsilon_2 \jsigma}}{\jsigma} \varphi_{k}(\s) \wt{f_{\epsilon_2}} (s,\s) \bigg) \tau_m(s) ds.
\end{align*}

Assume first that $k=-.$ 
In this case we use Sobolev embedding as well as the $H^N$ bound in the bootstrap and \eqref{eng} to write that,
for $p \geqslant 2$,
\begin{align*}
& \Vert \wt{\mathcal{F}}^{-1} \big( \varphi_{-} (\eta) e^{is \jeta} \wt{f}(s,\eta) 
  \big) \Vert_{L^p} 
  \\
  & \lesssim \Vert \wt{\mathcal{F}}^{-1} \big( \varphi_{-} (\eta) e^{is \jeta} \wt{f}(s,\eta) \big) \Vert_{H^2} 
  \lesssim 2^{-M_0 (N-2)} \Vert f \Vert_{H^N} \lesssim 2^{-\frac{5(N-2)}{N}m} \varepsilon^{1-}.
\end{align*}
Therefore we can bound
\begin{align*}
\Vert I_{H,1}^{(i,j,-)} \Vert_{L^2} \lesssim  \int_0^t \big \Vert %\big( 
  	\nabla_{\eta} \wt{f}(s,\eta) %\big)_{j} 
  \big \Vert_{L^2} \Vert \widetilde{\mathcal{F}}^{-1} e^{is\epsilon_1 \jsigma} \varphi_{-} (\sigma) \wt{f}(s,\s) \Vert_{L^{\infty}} \tau_m(s) ds 
  \\ \lesssim 2^{m} \cdot Z \cdot 2^{-\frac{5(N-2)}{N} m} \cdot \e^{1-},
\end{align*}
which yields the desired conclusion.  

We can bound $I_{H,2}^{(i,j,-)}$ %$II_H^{(i,j,k)}$ 
similarly and obtain
\begin{align*}
\Vert I_{H,2}^{(i,j,-)} \Vert_{L^2} \lesssim 2^{2m} \cdot Z \cdot 2^{-\frac{5(N-1)}{N} m} .
\end{align*}

Note that since we can choose which profile to differentiate, %(see \eqref{}) 
the case $j=-$ is handled identically estimating $I_{H}^{(i,-,k)'}$ instead.

If $j,k = +$ then necessarily $i=-$ and in this case we write, 
using boundedness of wave operators and distributing derivatives, 
\begin{align*}
& \Vert I_{H,2}^{(-,+,+)} \Vert_{L^2} 
\\
& \lesssim 2^{-5m}  \int_0^t \bigg \Vert D^N \widetilde{\mathcal{F}}^{-1} 
  \mathcal{B} \bigg( is \epsilon_1 \frac{\eta^{\alpha}}{\vert \eta \vert} \frac{e^{is \epsilon_1 \jeta}}{\jeta} 
  \varphi_{+}(\eta) \wt{f_{\epsilon_1}} (s,\eta) , \frac{e^{is \epsilon_2 \jsigma}}{\jsigma} 
  \varphi_{+}(\s) \wt{f_{\epsilon_2}} (s,\s) \bigg)  \bigg \Vert_{L^2}  \tau_{m}(s)  ds 
  \\
& \lesssim 2^{-5m} \sum_{n_1 + n_2 \leqslant N} \int_0^t s \Vert D^{n_1} L^{-1} P_{\leqslant M_0}w \Vert_{L^2} 
  \Vert D^{n_2} L^{-1} P_{\leqslant M_0}w \Vert_{L^{\infty}} \tau_m(s) ds  
  \\
& \lesssim 2^{-5m} \cdot 2^{2m} \cdot 2^{M_0} \e^{2-},
\end{align*}
where for the last line we used %Bernstein's inequality as well as 
the $H^N$ assumption in the bootstrap \eqref{boot}, and \eqref{eng}. This suffices for the desired estimate.

For the remaining term $I_{H,1}^{(-,+,+)}$ we split $\partial_\eta f = \partial_\eta g + \partial_\eta h$.
For the $h$ part, we use the Strichartz estimate in Lemma \ref{Strichartz} with parameters $(6,3,2/3)$,
Bernstein's inequality to gain a derivative, and then distribute this derivative over the inputs;
for the $g$ part we use %as we did for energy estimates 
the boundedness of
wave operators and distribute derivatives. Overall we obtain
\begin{align*}
& \Vert I_{H,1}^{(-,+,+)} \Vert_{L^2} 
\\
& \lesssim \sum_{\ell \geqslant M_0}  2^{-\ell/3} 
  \Bigg( \Bigg \Vert \langle \nabla \rangle \wtF^{-1}_{\xi\rightarrow x} \mathcal{B} \bigg(\frac{e^{is \epsilon_1 \jeta}}{\jeta} 
  %\vert \eta \vert \, 
  \varphi_{+}(\eta) \mathcal{R}_{\eta}^{\alpha} \partial_{\eta^{\alpha}} \wt{h_{\epsilon_1}} (s,\eta),
  \frac{e^{is \epsilon_2 \jsigma}}{\jsigma} \varphi_{+}(\s) \wt{f_{\epsilon_2}} (s,\s) \bigg)  
  \Bigg \Vert_{L^{6/5}_{s \approx 2^m} L^{3/2}_x}  
%  \\
%& + \Bigg \Vert \wtF^{-1} \mathcal{B} \bigg( \frac{e^{is \epsilon_1 \jeta}}{\jeta}
%  \varphi_{+}(\eta)\mathcal{R}_{\eta}^{\alpha} \partial_{\eta^{\alpha}} \wt{h_{\epsilon_1}} (s,\eta), 
%  \frac{e^{is \epsilon_2 \jsigma}}{\jsigma} %\vert \s \vert \, 
%  \varphi_{+}(\s) \wt{f_{\epsilon_2}} (s,\s) \bigg)
%  \Bigg \Vert_{L^{6/5}_{s \approx 2^m} L^{3/2}_x}  \Bigg) 
  \\
& + \sum_{\ell \geqslant M_0} 2^{-N \ell} \cdot 2^m \cdot \sum_{n_1 + n_2 \leqslant N} \big \Vert D^{n_1} L^{-1} \varphi_{+}(D) 
  \whF^{-1} \partial_{\eta} \wt{g}(s,\eta) 
  \big \Vert_{L^2} \sup_{s \approx 2^m} \Vert D^{n_2} L^{-1} w(s) \Vert_{L^2}.
\end{align*}
%\ctl{Not sure about the $\eta$ and $\sigma$ here. They are not present in the definition on the previous page}
The first term is bounded by $2^{5m/6} \sup_{s \approx 2^m} \Vert \varphi_{+}(L) e^{isL} f(s) \Vert_{L^6_x} \cdot Z,$ 
which is sufficient for %yields 
the desired result.
Using \eqref{eng} we see that the last term is bounded by $2^{-N M_0 +m} \cdot 2^{m/2} \e^{1-}$, %\cdot Z \e,$ 
which is sufficient given our choice of parameter $M_0,$ see \eqref{HLsplit}.

\smallskip
\textit{Estimate of $III_{H}^{(i,j,k)}$.} 
This term can be handled following the same strategy as for $I_{H,2}^{(i,j,k)},$
so we skip the details.
%\comment{Reason for doing this: if we were to do a simple energy estimate and all 
%derivatives fall on the $\partial_{\eta}$ part, there is no gain.  \\
%Need to do a splitting $f = h+g.$}
The proof of the proposition is concluded. $\hfill \Box$
%\end{proof}

\appendix

\medskip
\section{Basic results about the NSD}\label{Appmu}
We recall several results derived in \cite{GHW} for the interaction distribution of three distorted complex exponentials
as defined in \eqref{intromu}.
We first define some useful operators and state bounds that they satisfy:

\begin{lemma}[Theorem 3.4 in \cite{GHW}] \label{ER}
With the definitions \eqref{Wodef}, let
\begin{align*}
R^{\alpha} := \mathcal{W} \frac{x^{\alpha}}{\vert x \vert} \mathcal{W}^{\ast}, 
\ \ \ \mathcal{E} := \big[\vert x \vert , \mathcal{W} \big]  \mathcal{W}^{\ast},
\end{align*}
and 
\begin{align*}
\mathcal{R}^{\alpha} := \widetilde{\mathcal{F}} R^{\alpha} \widetilde{\mathcal{F}}^{-1}, 
\ \ \ E:= \widetilde{\mathcal{F}} \mathcal{E} \widetilde{\mathcal{F}}^{-1}. 
\end{align*}
Both are bounded from $L^p$ to $L^{p+\delta}$ for any $1 \leqslant p < \infty$ and $\delta>0$ small enough. 
\end{lemma}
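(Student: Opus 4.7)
This lemma corresponds to Theorem 3.4 of \cite{GHW}, so the plan is to sketch how it follows from the commutator bound \eqref{WoGHW1} already used in our proof of Theorem \ref{Wobd}. Recall that \eqref{WoGHW1} says that for every radial Lipschitz weight $a$ with $|\nabla a| \leqslant 1$, both $[a(x), \mathcal{W}]$ and $[a(x), \mathcal{W}^\ast]$ map $L^p$ to $L^q$ whenever $1/p - 1/q \leqslant \min(1/6, 1/p)$. Choosing $q$ with $1/p - 1/q$ strictly smaller than this threshold amounts precisely to a small gain of integrability, i.e.\ the mapping $L^p \to L^{p+\delta}$ for any $1 \leqslant p < \infty$ and all $\delta > 0$ small enough (depending on $p$).

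For $\mathcal{E} = [|x|, \mathcal{W}] \mathcal{W}^\ast$ the argument is then immediate: the weight $a(x) = |x|$ satisfies $|\nabla a| = 1$, so \eqref{WoGHW1} gives the desired gain on the commutator, and composing with the $L^p$-bounded wave operator $\mathcal{W}^\ast$ from Theorem \ref{Wobd} yields the claim. For the Fourier-side operator $E = \wtF \mathcal{E} \wtF^{-1}$, one cannot simply conjugate since $\wtF$ is not $L^p$-bounded for $p \neq 2$; however, the intertwining identities $\wtF \mathcal{W} = \whF$, $\mathcal{W}^\ast \wtF^{-1} = \whF^{-1}$, together with $\mathcal{W} \mathcal{W}^\ast = \mathbf{1}$, yield the identity $E = \wtF\,|x|\,\wtF^{-1} - \whF\,|x|\,\whF^{-1}$. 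The content of the lemma is that although each of these two terms is unbounded individually, their difference is a lower-order operator with the small gain of integrability claimed, by the same commutator technology of \cite{GHW} used above, now applied on the Fourier side.

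The main obstacle is the treatment of $R^\alpha = \mathcal{W}(x^\alpha/|x|) \mathcal{W}^\ast$: a direct application of \eqref{WoGHW1} fails because $\nabla(x^\alpha/|x|)$ has a $1/|x|$ singularity at the origin. Following \cite{GHW}, I would split $x^\alpha/|x| = \chi_{|x| \geqslant 1}(x) \, x^\alpha/|x| + \chi_{|x| < 1}(x) \, x^\alpha/|x|$ and treat the two pieces separately: on the outer piece the gradient is bounded and a rescaled version of \eqref{WoGHW1} applies directly, while on the inner piece one uses that the multiplier is bounded (in fact of modulus one) combined with the pseudo-differential representation of $\mathcal{W}$ developed in \cite{GHW}. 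Finally, the Fourier-conjugate $\mathcal{R}^\alpha = \whF(x^\alpha/|x|) \whF^{-1}$, again from the intertwining identities, is in $\xi$-space convolution with the Fourier transform of $x^\alpha/|x|$, which in $\R^3$ is homogeneous of degree $-3$ and has mean zero on the unit sphere; hence it is a classical Calder\'on--Zygmund operator, $L^p \to L^p$-bounded for $1 < p < \infty$, from which the $L^p \to L^{p+\delta}$ statement follows by interpolation.
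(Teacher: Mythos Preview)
The paper does not give its own proof of this lemma; it is cited as Theorem 3.4 of \cite{GHW}. Your sketch for $\mathcal{E}$ is correct: the commutator bound \eqref{WoGHW1} applied to $a(x)=|x|$, composed with the $L^p$-bounded $\mathcal{W}^\ast$, yields the claimed gain.

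There are, however, real problems with the remaining parts. First, your final assertion that $L^p \to L^p$ boundedness of a Calder\'on--Zygmund operator implies $L^p \to L^{p+\delta}$ boundedness ``by interpolation'' is false: the identity operator is a counterexample, and indeed the Riesz transform is \emph{not} bounded $L^p \to L^{p+\delta}$. The Remark immediately following the lemma confirms that $\mathcal{R}^\alpha$ is simply the Euclidean Riesz transform (bounded $L^p \to L^p$ for $1<p<\infty$), so the word ``Both'' in the statement should be read as referring to $\mathcal{E}$ and $E$, not to $\mathcal{R}^\alpha$. Second, your discussion of $R^\alpha$ is off track: $R^\alpha = \mathcal{W}(x^\alpha/|x|)\mathcal{W}^\ast$ is a conjugation, not a commutator, so \eqref{WoGHW1} is not the relevant tool and the gradient singularity of $x^\alpha/|x|$ at the origin is irrelevant. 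Since $|x^\alpha/|x||\leq 1$, this operator is trivially $L^p \to L^p$ by composing bounded maps; no near/far splitting is needed. Third, your argument for $E$ stops at the (correct) identity $E = \wtF\,|x|\,\wtF^{-1} - \whF\,|x|\,\whF^{-1}$, but the claim that the difference gains integrability ``by the same commutator technology\dots on the Fourier side'' is not a proof. Since neither $\wtF$ nor $\whF$ is $L^p$-bounded for $p\neq 2$, one cannot simply conjugate the physical-space bound for $\mathcal{E}$; transferring the gain to $E$ requires the pseudodifferential analysis of the wave operators carried out in \cite{GHW}, which your sketch does not reproduce.
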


\begin{remark}
$\mathcal{R}^{\alpha}$ is the euclidean Riesz transform. It commutes with differentiation.
\end{remark}

%\begin{proof}
%See 
%\end{proof}

The next result is an identity for the distributional derivative of $\mu$.

\begin{lemma}[Identity (3.56) in \cite{GHW}] \label{derM}
We have, in the distributional sense that
\begin{align*}
\partial_{\xi^{\alpha}} \mu &= \mathcal{R}_{\xi}^{\alpha} \mathcal{R}_{\eta}^{\beta} \partial_{\eta^{\beta}} \mu +\mathcal{R}_{\xi}^{\alpha} E_{\eta}^{\ast} \mu - \mathcal{R}_{\xi}^{\alpha} E_{\xi}^{\ast} \mu \\
&=\mathcal{R}_{\xi}^{\alpha} \mathcal{R}_{\s}^{\beta} \partial_{\s^{\beta}} \mu + \mathcal{R}_{\xi}^{\alpha} E_{\s}^{\ast} \mu - \mathcal{R}_{\xi}^{\alpha} E_{\xi}^{\ast} \mu.
\end{align*}
Therefore, we have the following identity
\begin{align} \label{measure-weight}
\begin{split}
\partial_{\xi}^{\alpha} \big( \wt{fg} \big) & = \mathcal{R}_{\xi}^{\alpha} \int_{\mathbb{R}^6}  \mathcal{R}_{\eta}^{\beta} 
\partial_{\eta^{\beta}} \wt{f} (\eta) \wt{g}(\sigma) \mu(\xi,\eta,\sigma) d\eta d\sigma + \mathcal{R}_{\xi}^{\alpha} 
  \int_{\mathbb{R}^6} E \wt{f}(\eta) \wt{g}(\sigma) \mu(\xi,\eta,\sigma) d\eta d\sigma  \\
&- E \mathcal{R}_{\xi}^{\alpha} \int_{\mathbb{R}^6} \wt{f} (\eta) \wt{g}(\sigma)  \mu(\xi,\eta,\sigma) d\eta d\sigma 
\\
&= \mathcal{R}_{\xi}^{\alpha} \int_{\mathbb{R}^6}   \wt{f} (\eta) \mathcal{R}_{\sigma}^{\beta} \partial_{\sigma^{\beta}} 
  \wt{g}(\sigma) \mu(\xi,\eta,\sigma) d\eta d\sigma + \mathcal{R}_{\xi}^{\alpha} \int_{\mathbb{R}^6} \wt{f}(\eta) 
  E \wt{g}(\sigma) \mu(\xi,\eta,\sigma) d\eta d\sigma  \\
&- E \mathcal{R}_{\xi}^{\alpha} \int_{\mathbb{R}^6} \wt{f} (\eta) \wt{g}(\sigma)  \mu(\xi,\eta,\sigma) d\eta d\sigma .
\end{split}
\end{align} 
\end{lemma}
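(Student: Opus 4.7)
The distributional identity for $\partial_{\xi^\alpha}\mu$ is identity (3.56) in \cite{GHW}; I plan to sketch the idea and then derive the product identity \eqref{measure-weight} from it. Starting from $\mu(\xi,\eta,\sigma)=(2\pi)^{-9/2}\int_{\R^3}\overline{\psi(x,\xi)}\psi(x,\eta)\psi(x,\sigma)\,dx$ and the intertwining relation $\psi(\cdot,\xi)=\mathcal{W}[e^{i x\cdot\xi}]$, differentiation in $\xi^\alpha$ produces a factor of $x^\alpha$ in physical space: $\partial_{\xi^\alpha}\overline{\psi(x,\xi)} = -i\,\overline{\mathcal{W}(x^\alpha e^{ix\cdot\xi})}$. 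The key algebraic move is then the splitting $x^\alpha=(x^\alpha/|x|)\cdot|x|$. The first factor gives rise, after conjugation by $\mathcal{W}$ and then by $\wtF$, to the symbol $\mathcal{R}^\alpha_\xi$ from Lemma \ref{ER}. The second factor $|x|$ can be moved under the integral onto one of the other two generalized eigenfunctions, since on $e^{ix\cdot\eta}$ one has $|x|\,e^{ix\cdot\eta}=-i\mathcal{R}^\beta_\eta\partial_{\eta^\beta}e^{ix\cdot\eta}$, which eventually produces the $\mathcal{R}^\alpha_\xi\mathcal{R}^\beta_\eta\partial_{\eta^\beta}\mu$ (or the analogous $\sigma$-version) piece. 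Because $\mathcal{W}$ does not commute with multiplication by $|x|$, each such transfer generates a commutator correction whose origin is precisely $\mathcal{E}=[|x|,\mathcal{W}]\mathcal{W}^\ast$; conjugating by $\wtF$ on the appropriate variable turns these corrections into the operators $E^\ast_\xi$ and $E^\ast_\eta$ (resp.\ $E^\ast_\sigma$) that appear in the statement. The two forms of the identity correspond to placing $|x|$ on the $\eta$- or $\sigma$-variable, respectively.

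The product identity \eqref{measure-weight} is then a direct distributional consequence. The plan is to write $\widetilde{fg}(\xi)=\int \wt{f}(\eta)\wt{g}(\sigma)\mu(\xi,\eta,\sigma)\,d\eta\,d\sigma$ and apply $\partial_{\xi^\alpha}$ in the sense of tempered distributions. Substituting the distributional identity for $\partial_{\xi^\alpha}\mu$, the $\mathcal{R}^\alpha_\xi$ factor comes out of the frequency integral (it acts only in $\xi$), and the $\partial_{\eta^\beta}$ (or $\partial_{\sigma^\beta}$) in the leading term on the right-hand side is transferred to $\wt{f}$ (respectively $\wt{g}$) by integration by parts in frequency. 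The $\mathcal{R}^\beta_\eta$ symbol then acts on $\partial_{\eta^\beta}\wt{f}$ (and similarly with $\sigma$), while the $E^\ast_\eta$ (resp.\ $E^\ast_\sigma$) and $E^\ast_\xi$ distributional factors translate, after this pairing, into the commutator contributions $E\wt{f}$ and $E\mathcal{R}^\alpha_\xi(\cdots)$ displayed in \eqref{measure-weight}; this last step uses that $E$ is the conjugate of $\mathcal{E}$ by $\wtF$, together with the self-adjoint/dual pairing implicit in moving the starred operators off the measure.

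The main obstacle I expect is justifying these manipulations at the level of distributions: the generalized eigenfunctions $\psi(x,\xi)$ are not in $L^2_x$, so each of the steps above must be interpreted in a suitably weak sense, by pairing against Schwartz profiles and using the density, isometry, and boundedness results for $\wtF$ and $\mathcal{W}$ recalled in Proposition \ref{propdFT} and Theorem \ref{Wobd}, together with the $L^p$-boundedness of $\mathcal{R}^\alpha$ and $E$ from Lemma \ref{ER}. Once this functional-analytic care is taken, no further ingredients are needed beyond those already developed in \cite{GHW}.
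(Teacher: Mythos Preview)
Your sketch is correct and captures the argument from \cite{GHW}; the paper itself does not supply a proof of this lemma but simply cites it as identity (3.56) in \cite{GHW}, so there is nothing further to compare.
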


%\begin{proof}
%See identity (3.56) in \cite{GHW}.
%\end{proof}

\begin{remark}
Note that we slightly abused notations above and denoted $\mathcal{R}^{\alpha}_{\xi}$ the symbol of the euclidean Riesz transform.
\end{remark}

\begin{remark}
A convenient feature in \eqref{measure-weight} is that one can pick which profile to differentiate. 
\end{remark}

As a direct consequence we obtain a proof of Lemma \ref{verylowfreq}
%This crude lemma is also enough to handle short times, 
and nonlinear estimates for short times in the next two subsections.

\subsection{Proof of Lemma \ref{verylowfreq}}\label{Ssecverylowfreq}
Note that at least one of the profiles is localized at very low frequency. 
We pick one of them and differentiate the other profile, using \eqref{measure-weight} 
as well as Bernstein's inequality:
\begin{align*}
\Vert \partial_{\xi} F_{L,\epsilon_1,\epsilon_2} \Vert_{L^2} & 
  \lesssim \int_0^t \big( \Vert \mathcal{R}_{\eta}^{\alpha} \langle \eta \rangle^{-1} 
  \partial_{\eta^{\alpha}} \wt{f} \Vert_{L^2} +  \bigg \Vert \mathcal{R}_{\eta}^{\alpha} 
  \frac{s \eta^{\alpha}}{\jeta^{2}} e^{is\jeta} \wt{f} \bigg \Vert_{L^2} 
  + \Vert E f \Vert_{L^2}  \bigg) \Vert f_{\leqslant s^{-5}} \Vert_{L^{\infty}} ds 
  \\
& \lesssim \int_0^t \big(\langle s \rangle^{1/2+} + s \e + \e \big)  \langle s \rangle^{-15/2} ds \, \e,
\end{align*}
which is sufficient to conclude. We used the crude bound $\Vert \partial_{\eta} \wt{f}  \Vert_{L^2} 
\lesssim \langle s \rangle^{1/2+} ,$ which follows from \eqref{growth} and the bootstrap assumption \eqref{boot}.
$\hfill \Box$

\smallskip
\subsection{Local existence}\label{Ssecloc}
Here we prove that \eqref{locboota} implies \eqref{locbootc}.
Since $t \leqslant \e^{-1},$ ($t\geqslant 1$) 
the proof of the bootstrap estimates is much simpler than in the general case.
%see \ref{localT}.
Therefore, we only give an outline of the proof, typically discarding similar and lower order terms.

First, we have from Lemma \ref{decay} that
\begin{align} \label{disp-shortt} 
\Vert e^{itL} f \Vert_{L^q} \lesssim t^{(-1 + 10 \delta_N)\frac{3(q-2)}{2q}} \e^{1-\beta} .  
\end{align}
We bound the terms that appear in \eqref{decompositionh}. The source terms are treated as in Lemma \ref{source},
therefore we skip the details.
For the mixed terms, we write that (treating the worse case where the derivative falls on the phase)
using Strichartz estimates with parameters $(q,r,\gamma) = (2,\infty,0)$
\begin{align*}
\Vert \partial_{\xi} M(t) \Vert_{L^2} & \lesssim \bigg \Vert \int_0^t a(s) s \frac{\xi}{\jxi} e^{is \jxi} 
  \int_{\mathbb{R}^3} \frac{e^{-is \jeta}}{\jeta} \wt{f}(s,\eta)  \nu(\xi,\eta) d\eta ds  \bigg \Vert_{L^2}
  \\
& \lesssim {\big\| s \Vert e^{itL} f \Vert_{L^6_x} \big\|}_{L^2_t} \e 
  \lesssim t^{1/2+20 \delta_N} \e^{2-\beta} \leqslant \e^{3/2-2\beta},
\end{align*}
where we used the dispersive estimate \eqref{disp-shortt},
and disregarded the simpler contribution to the integral for $s\in[0,1]$\color{red}. \color{black}

We come to the quadratic terms. Using Lemma \ref{derM} we find, neglecting the lower order terms,
\begin{align*}
\Vert \partial_{\xi} \wt{F}(t) \Vert_{L^2} & \lesssim \bigg \Vert \int_0^t s \int_{\mathbb{R}^6} 
\frac{\eta^{\alpha}}{\jeta^2} e^{is \jeta} \wt{f}(\eta) \frac{e^{is \jsigma}}{\jsigma} \wt{f}(\s)
\mu(\xi,\eta,\s) d\eta d\s ds \bigg \Vert_{L^2} 
\\
& + \bigg \Vert \int_0^t \int_{\mathbb{R}^6} \frac{e^{is \jeta}}{\jeta} \partial_{\eta^{\alpha}} 
\wt{f}(\eta) \frac{e^{is \jsigma}}{\jsigma} \wt{f}(\s) \mu(\xi,\eta,\s) d\eta d\s ds \bigg \Vert_{L^2} 
\\
& \lesssim \int_0^t s \Vert e^{itL} L^{-1} f  \Vert_{L^6} \Vert e^{itL} L^{-1} f \Vert_{L^3} ds
+ \int_0^t \Vert L^{-1} \mathcal{F}^{-1} \big( \partial_{\xi} \wt{f} \big) \Vert_{L^3} \Vert e^{itL} L^{-1} f \Vert_{L^6} ds 
\\
& \lesssim \int_0^t s \cdot \langle s \rangle ^{-1+10 \delta_N} \cdot 
\langle s \rangle^{-1/2 + 5 \delta_N} ds \, \e^{2-2\beta} + \int_0^t \langle s \rangle ^{-1} ds \, \e^{3/2-2\beta} 
\\
& \lesssim \langle t \rangle^{1/2 + 15 \delta_N} \e^{2-2\beta} + \e^{3/2-3\beta} ,
\end{align*}
where we used the bound, coming from the bootstrap assumption and \eqref{growth},
\begin{align*}
\Vert \partial_{\xi} \wt{f} \Vert_{L^2} \lesssim  \Vert \partial_{\xi} \wt{h} \Vert_{L^2} 
  + \Vert \partial_{\xi} \wt{g}  \Vert_{L^2} \lesssim \e^{1-\beta} + t^{3/2} \e^2 \vert \log \e \vert^2 \leqslant \e^{1/2-},
\end{align*}
which yields the desired result given that $t \leqslant \e^{-1}$ provided $\beta$ is small enough.
$\hfill \Box$

%\end{proof}

\iffalse
Finally we give an identity for the weight of the measure:
\begin{lemma} \label{wM}
We have the decomposition
\begin{align} \label{wMdecomp}
\vert \xi \vert^2 \mu(\xi,\eta,\s) &:= \mu_1 (\xi,\eta,\s) + \mu_2 (\xi,\eta,\s) +\mu_3 (\xi,\eta,\s) + \mu_4 (\xi,\eta,\s) \\
\mu_1 (\xi,\eta,\s)& := \vert \eta \vert^2 \mu (\xi,\eta,\s), \  \  \mu_2 (\xi,\eta ,\s) := \vert \s \vert^2 \mu(\xi,\eta,\s) \\
\mu_3 (\xi,\eta,\s) & := \int_{\mathbb{R}^3}  \overline{\phi(x,\xi)} \big( \nabla_{x} \phi(x,\eta) \big) \big( \nabla_x \phi(x,\s) \big) dx,  \\
\mu_4 (\xi,\eta, \s)  &:= \int_{\mathbb{R^3}} \overline{\phi(x,\xi)} \big( \nabla_x \phi(\eta,x) \big) V(x) \phi(x,\s) dx .
\end{align}
\end{lemma}
\begin{proof}
Integration by parts.
\end{proof}
\fi
%\medskip
%\item[(iii)] (\it{Weighted estimates})
%For any $2 < p,q < \infty$ with $1/p+1/q=1/2$, and $q'<q$, we have
%\begin{align}\label{proGHWestw}
%\begin{split}
%{\| \partial_k \mathcal{B}_{n}(g,h) \|}_{L^2} 
%	\lesssim & \big({\| n\|}_{CM_\delta} + {\| \partial_k n\|}_{CM_\delta}\big) 
%	\min\big( {\|g\|}_{L^p} {\|h\|}_{L^q\cap L^{q'}},  {\|g\|}_{L^q \cap L^{q'}} {\|h\|}_{L^p} \big)
%	\\ 
%	+ & {\| n\|}_{CM_\delta} \min \big( {\|\partial_k g\|}_{L^p} {\|h\|}_{L^q\cap L^{q'}}, 
%		{\|g\|}_{L^q\cap L^{q'}} {\|\partial_k h\|}_{L^p} \big)
%\end{split}
%\end{align}

\medskip
\section{Structure of the NSD and product estimates}\label{Appmu2}
%The crude results of the previous section are not sufficient to deal with the full problem. 
In order to deal with nonlinear problems and understand oscillations in distorted Fourier space,
we need a more refined analysis of (the singularities of) the NSD. %present in the measure is needed. 
We first recall the decomposition result of \cite{PS}, under the more generous assumption $V\in \mathcal{S}$
as in Proposition \ref{mudecomp}; we refer the reader to \cite{PS} for the precise statement under
finite regularity and localization assumptions on the potential $V$.

\begin{proposition}[\cite{PS}] \label{decomp-meas-PS}
The distribution \eqref{muDuh} can be written as
\begin{align} \label{mudecomp-0}
\begin{split}
(2\pi)^{9/2} \mu(\xi,\eta,\sigma) & = (2\pi)^{3/2} \delta_0 (\xi-\eta-\sigma) - \frac{1}{4\pi} \mu_1(\xi,\eta,\sigma) 
\\ 
& - \frac{1}{(4\pi)^2} \mu_2(\xi,\eta,\sigma) - \frac{1}{(4\pi)^3} \mu_3(\xi,\eta,\sigma),
\end{split}
\end{align}
where:

\setlength{\leftmargini}{1em}

\begin{itemize}
\item The distribution $\mu_1$ is given by
\begin{align}
\label{mu1}
\mu_1(k,\ell,m) & = \nu_1(-k+\ell,m) + \nu_1(-k+m,\ell) + \overline{\nu_1(-\ell-m,k)}
\end{align}
where
\begin{align}
\label{Propnu+1}
\nu_1(p,q) = \nu_0(p,q) + \nu_L(p,q) + \nu_R(p,q),
\end{align}
and:
\medskip
\begin{itemize}
\item[(1)] The leading order is
\begin{align}
\label{Propnu+1.0}
\nu_0(p,q) :=  \frac{b_0(p,q)}{|p|} \Big[ i\pi \, \delta(|p|-|q|) + \pv \frac{1}{|p|-|q|} \Big]
\end{align}
with $b_0$ satisfying the bounds 
\begin{align} \label{Propnu+1.1}
\big \vert \varphi_P(p) \varphi_Q(q) \, \nabla_p^\alpha \nabla_q^\beta b_0(p,q) \big \vert 
  \lesssim 2^{-\vert \alpha \vert P} \big(2^{ \vert \alpha \vert Q} + 2^{(1- \vert \beta \vert) Q_{-}}\big) \cdot \mathbf{1}_{\lbrace \vert P-Q \vert < 5 \rbrace},
\end{align}
for all $P,Q \leqslant A$,
and $\vert \alpha \vert + \vert \beta \vert \leqslant  N_2$,
for any arbitrarily large integer\footnote{The value of $N_2$ can be related to 
the localization and smoothness of the potential $V$, and it can be chosen to be approximately $N/2$
when $\jx^N V \in H^N$.
Since we assume, for simplicity, that $V \in \mathcal{S}$, we can let $N_2$ be arbitrarily large.} $N_2$.
Recall our notation $Q^- = \min(Q,0)$.

\medskip
\item[(2)] 
%With 
%\begin{align}\label{NSM2.0}
%\mathcal{J}(A,P,Q) := \mathcal{J}:=\{J \geq 4A, \, J\geq -\min(P,Q)+ 4A\},
%\end{align}
The lower order terms $\nu_L(p,q)$ can be written as %finite combination?
\begin{align}
\label{Propnu+2}
\nu_L(p,q) = \frac{1}{|p|} 
  \sum_{a=1}^{N_2} \sum_{J\in\Z} b_{a,J}(p,q) \cdot 2^J K_a\big(2^J(|p|-|q|)\big) 
\end{align}
with %$b_a \in \mathcal{G}^{N_1-a}$ 
$K_a \in \mathcal{S}$ %(Schwartz class)
and $b_{a,J}$ satisfying
\begin{align}
\label{Propnu+2.1}
\sum_{J\in\Z} 
  \big|  \varphi_P(p) \varphi_Q(q) \nabla_p^\alpha \nabla_q^\beta  b_{a,J}(p,q) \big| \lesssim 
  2^{-|\alpha|P} \big(2^{|\alpha |Q}  + 2^{(1-|\beta|)Q_-}\big) \cdot \mathbf{1}_{\{|P-Q|< 5\}},
\end{align}
for all $P,Q \leqslant A$, $|\alpha|+|\beta| \leqslant N_2$. %N_1-N_2$.
%for any sufficiently large number $N_2$ as above. 

\medskip
\item[(3)] The remainder term $\nu_R$ satisfies the estimates %(note the cutoff before the differentiation)
\begin{align}\label{Propnu+3}
\begin{split}
\big| \varphi_P(p) \varphi_Q(q) \nabla_p^\alpha \nabla_q^\beta \nu_R(p,q) \big| 
  \lesssim 2^{-2\max(P,Q)} \cdot  2^{-(|\alpha|+|\beta|)\max(P,Q)} \cdot 2^{(|\alpha|+|\beta|+2)5A}
\end{split}
\end{align}
for $|P-Q| < 5$, and 
\begin{align}\label{Propnu+3imp}
\begin{split}
\big| \varphi_P(p) \varphi_Q(q) \nabla_p^\alpha \nabla_q^\beta \nu_R(p,q) \big| 
  \lesssim 2^{-2\max(P,Q)} \cdot 2^{-|\alpha|\max(P,Q)} \max(1,2^{(1-|\beta|)Q_-}) \cdot 2^{(|\alpha|+|\beta|+2)5A}
\end{split}
\end{align}
for $|P-Q| \geqslant 5$,
for all $P,Q \leqslant A$, $|\alpha|+|\beta| \leqslant N_2$. %N_2/2-3$.
%for some sufficiently large number $N_2$. 

\end{itemize}

\item The measure $\mu_2$ is given by
\begin{align}
\label{mu2}
\mu_2(k,\ell,m) & = \nu_2^1(k,\ell,m) + \nu_2^2(k,\ell,m) + \nu_2^2(k,m,\ell)
\end{align}
with the following properties:

\begin{itemize}
\item %Fix $N_2 \in [5,N_1/4] \cap \Z$.
Let $k,\ell,m \in\R^3$ with $|k|\approx 2^K, |\ell|\approx 2^L$ and $|m|\approx 2^M$, 
and assume that $K,L,M \leqslant A$ for some $A>0$.
Then we can write
\begin{align}\label{Propnu210}
\nu_2^1(k,\ell,m) = %\nu_{2,0}^1(k,\ell,m) + 
  \nu_{2,0}^1(k,\ell,m) + \nu_{2,R}^1(k,\ell,m),
\end{align}
where:
\begin{itemize}
\medskip
\item[(1)] 
%With 
%\begin{align}\label{NSM2.0}
%\mathcal{J}(A,P,Q) := \mathcal{J}:=\{J \geq 4A, \, J\geq -\min(P,Q)+ 4A\},
%\end{align}
$\nu_{2,0}^1(k,\ell,m)$ can be written as
\begin{align}\label{Propnu211}
\nu_{2,0}^1(k,\ell,m) = \frac{1}{|k|} 
  \sum_{i=1}^{N_2} \sum_{J\in\Z} b_{i,J}(k,\ell,m) \cdot K_i\big(2^J(|k|-|\ell|-|m|)\big) 
\end{align}
with %$b_a \in \mathcal{G}^{N_1-a}$ 
$K_i \in \mathcal{S}$ and the symbols $b_{i,J}$ satisfy
\begin{align}\label{Propnu212}
\begin{split}
  & \big| \varphi_K(k) \varphi_L(\ell) \varphi_M(m) 
  \nabla_k^a \nabla_\ell^\alpha \nabla_m^\beta  b_{i,J}(k,\ell,m) \big| 
  \\ 
  & \lesssim 2^{-|a|K} \cdot \big( 2^{|a|\max(L,M)} + 2^{(1-|\alpha|)L} 2^{(1-|\beta|)M} \big)
  \mathbf{1}_{\{ |K-\max(L,M)|<5 \}},
\end{split}
\end{align}
for all $K,L,M \leqslant A$, and $|a|+|\alpha|+|\beta| \leqslant N_2$. %\leq N_1-N_2$.

\medskip
\item[(2)] For $M \leqslant L$, the remainder term $\nu_{2,R}^1$ satisfies %(note the cutoff before the differentiation)
\begin{align}\label{Propnu213}
\begin{split}
\big| %\varphi_K(k) \varphi_L(\ell)  \varphi_M(m) 
  \nabla_k^a \nabla_\ell^\alpha \nabla_m^\beta \nu_{2,R}^1(k,\ell,m) \big| 
  & \lesssim 2^{-2\max(K,L)} \cdot 2^{-|a|\max(K,L)}
  \\ & \times 2^{-|\alpha|\max(K,L)} \max(1,2^{-(|\beta|-1)M}) \cdot 2^{(|a|+|\alpha|+|\beta|+2)5A}
\end{split}
\end{align}
for all $K,L,M \leqslant A$ and $|a|+|\alpha|+|\beta| \leqslant N_2$. %/2 -2$.
A similar statement holds when $L \leqslant M$ exchanging the roles of $L$ and $M$ (and $\alpha$ and $\beta$).

%Previous version\dots
%%The proof does better in most cases\dots check if allow
%\begin{align}
%\begin{split}
%\big| %\varphi_K(k) \varphi_L(\ell)  \varphi_M(m) 
%  \nabla_k^a \nabla_\ell^\alpha \nabla_m^\beta \nu_{2,R}^1(k,\ell,m) \big| 
%  & \lesssim 2^{-\max(K,L,M)} \cdot 2^{-|a|\max(K,L,M)}
%  \\ & \times \max(1,2^{-(|\a|-1)L}) \max(1,2^{-(|\b|-1)M}) \cdot 2^{(|a|+|\a|+|\b|+2)5A}
%\end{split}
%\end{align}

\end{itemize}
\bigskip
\item %Fix $N_2 \in (5,N_1/4]\cap \Z$ and 
Let $k,\ell,m \in\R^3$ with $|k|\approx 2^K, |\ell|\approx 2^L$ and $|m|\approx 2^M$, 
and $K,L,M \leqslant A$ for some $A>0$.
Then we can write
\begin{align}\label{Propnu220}
\nu_2^2(k,\ell,m) = %\nu_{2,0}^2(k,\ell,m) + 
  \nu_{2,+}^2(k,\ell,m) + \nu_{2,-}^2(k,\ell,m) + \nu_{2,R}^2(k,\ell,m),
\end{align}
where:

%\begin{align}
%\nu_2^2(k,\ell,m) & := \int e^{ix\cdot \ell} \frac{e^{i|x|(-|k|+|m|)}}{|x|^2} \bar{\psi_1(x,k)} \psi_1(x,m) \, \mathrm{d}x; 
%\end{align}

\begin{itemize}
 
\item[(1)] The leading order is
\begin{align}\label{Propnu221}
\nu_{2,\pm}^2(k,\ell,m) = \frac{1}{|\ell|} 
  \sum_{i=1}^{N_2} \sum_{J\in\Z} b_{i,J}^\pm(k,\ell,m) \cdot K_i\big(2^J(|k|\pm|\ell|-|m|)\big) 
\end{align}
with
\begin{align}\label{Propnu222}  
\begin{split}
%\sum_{J\in\Z}  %% Don't need sum
  & \big| \varphi_K(k) \varphi_L(\ell) \varphi_M(m) 
  \nabla_k^a \nabla_\ell^\alpha \nabla_m^\beta  b_{i,J}^\pm(k,\ell,m) \big| 
  \\ 
  & \lesssim 2^{-|\alpha|L} \cdot \big( 2^{|\alpha|\max(K,M)} + 2^{(1-|a|)K} 2^{(1-|\beta|)M} \big)
  \mathbf{1}_{\{\max(K,L,M)-\mbox{\tiny$\mathrm{med}$}(K,L,M)<5\}},
\end{split}
\end{align}
for all $K,L,M \leqslant A$, and $|a| + |\alpha|+|\beta| \leqslant N_2$.
%\leq N_1-N_2$.

\medskip
\item[(2)] The remainder term satisfies, for $K \leqslant M$,
\begin{align}\label{Propnu223}
\begin{split}
\big| \nabla_k^a \nabla_\ell^\alpha \nabla_m^\beta \nu_{2,R}^2(k,\ell,m) \big| 
  & \lesssim 2^{-2\max(L,M,K)} \cdot 2^{-(|\alpha|+|\beta|)\max(L,M,K)}
  \\ & \times \max(1,2^{-(|a|-1)K}) \cdot 2^{(|a|+|\alpha|+|\beta|+2)5A}
\end{split}
\end{align}
for all $K,L,M \leqslant A$ and $|a|+|\alpha|+|\beta| \leqslant N_2$. %N_2/2 -2$. 
A similar estimate holds when $M \leqslant K$ by exchanging the roles of $M$ and $K$ (and $a,\beta$).
\end{itemize}

\end{itemize}

\item %Fix an integer $N_2 \in[10,N_1/4]$. 
Let $k,\ell,m \in\R^3$ with $|k|\approx 2^K, |\ell|\approx 2^L$ and $|m|\approx 2^M$, 
and assume that $K,L,M \leqslant A$ for some $A>0$.
%Under the same assumptions of Proposition \ref{Propnu21}, 
Then we can write
\begin{align}\label{Propmu30}
\mu_3(k,\ell,m) = \mu_{3,0}(k,\ell,m) + \mu_{3,R}(k,\ell,m),
\end{align}
where:

%\medskip
\begin{itemize}

\medskip
\item[(1)] 
The leading order has the form
\begin{align}\label{Propmu31}
\mu_{3,0}(k,\ell,m) =
  \sum_{i=0}^{N_2} \sum_{J\in\Z} b_{i,J}(k,\ell,m) \cdot K_i\big(2^J(|k|-|\ell|-|m|)\big) 
\end{align}
with $K_i \in \mathcal{S}$, and
\begin{align}\label{Propmu31'}  
\begin{split}
& \big| \varphi_K(k) \varphi_L(\ell) \varphi_M(m) 
  \nabla_k^a \nabla_\ell^\alpha \nabla_m^\beta  b_{i,J}(k,\ell,m) \big| 
  \\ & \lesssim \max(1,2^{-(|a|-1)K})  \max(1,2^{-(|\alpha|-1)L}) \max(1,2^{-(|\beta|-1)M})
  \mathbf{1}_{\{|K-\max(L,M)| < 5\}}
  %\big( 2^{(1-|a|)K} 2^{(1-|\alpha|)L} 2^{(1-|\beta|)M} \big),
\end{split}
\end{align}
for all $K,L,M \leqslant A$, and $|a| + |\alpha|+|\beta| \leqslant N_2$. %/2$.

\medskip
\item[(2)] The remainder term satisfies
\begin{align}\label{Propmu3R}
\begin{split}
\big| %\varphi_K(k) \varphi_L(\ell)  \varphi_M(m) 
  \nabla_k^a \nabla_\ell^\alpha \nabla_m^\beta \mu_{3,R}(k,\ell,m) \big| 
  \lesssim %2^{-3\max(L,M,K)} \cdot%2^{-|a|\max(L,M,K)}
  \max(1,2^{-(|a|-1)K})  \max(1,2^{-(|\alpha|-1)L}) \\ \cdot \max(1,2^{-(|\beta|-1)M}) \cdot 2^{4A(|a|+|\alpha|+|\beta|+1)}
\end{split}
\end{align}
for all $K,L,M \leqslant A$ and $|a|+|\alpha|+|\beta| \leqslant N_2$. %/2$.
\end{itemize}

\end{itemize}

\end{proposition}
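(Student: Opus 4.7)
The proof is essentially that of \cite{PS}, which I would organize as follows. The starting point is to expand each generalized eigenfunction as $\psi(x,\xi) = e^{ix\cdot\xi} + \psi_s(x,\xi)$, where the scattered part $\psi_s$ is characterized by the Lippmann-Schwinger equation
\begin{align*}
\psi_s(x,\xi) = -\int_{\R^3} G_0^+(x-y,|\xi|) V(y)\psi(y,\xi)\,dy,
\qquad G_0^+(z,\lambda) = \frac{1}{4\pi}\frac{e^{i\lambda|z|}}{|z|}.
\end{align*}
Substituting this decomposition into the triple product defining $\mu$ produces $2^3 = 8$ terms. The all-plane-wave term integrates to $(2\pi)^{3/2}\delta_0(\xi-\eta-\sigma)$, accounting for the first contribution in \eqref{mudecomp-0}. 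The three terms with exactly one $\psi_s$ factor assemble into $\mu_1$ as in \eqref{mu1}, the three with two $\psi_s$ factors into $\mu_2$ as in \eqref{mu2}, and the triple $\psi_s$ term into $\mu_3$.

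The core of the argument is the fine analysis of the building block $\nu_1(p,q)$. Using the representation of $\psi_s$ above, $\nu_1(p,q)$ can be written as a spatial convolution which, after Fourier-transforming in $x$, is driven by the distributional identity
\begin{align*}
\int_{\R^3} \frac{e^{\pm i|q||x-y|}}{|x-y|}\,e^{ip\cdot x}\,dx
   = \frac{4\pi\,e^{ip\cdot y}}{|p|^2 - |q|^2 \mp i0}.
\end{align*}
Factoring $|p|^2-|q|^2 = (|p|-|q|)(|p|+|q|)$, the Sokhotski-Plemelj formula $(x\mp i0)^{-1} = \pv(1/x) \pm i\pi\delta(x)$ produces the pointwise singular structure $\pi i\,\delta(|p|-|q|) + \pv\frac{1}{|p|-|q|}$ of $\nu_0$, the prefactor $1/|p|$, and an integral coefficient $b_0(p,q) = \int V(y)\psi(y,q)e^{-ip\cdot y}\,dy$. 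The symbol bounds \eqref{Propnu+1.1} for $b_0$ then follow from the pointwise estimates on $\psi$ in \eqref{psiLinfty} combined with integration by parts in $y$ against the Schwartz-class factor $V(y)$, which is what also dictates the range $|P-Q|<5$ once we localize dyadically in the two variables and use momentum conservation up to the weight coming from $V$.

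Iterating the Lippmann-Schwinger equation one more time — that is, replacing $\psi(y,q)$ inside $b_0$ by $e^{iy\cdot q} + \psi_s(y,q)$ — splits $\nu_1 - \nu_0$ into a first iteration piece, which after a dyadic partition of unity $\sum_J K_i(2^J(|p|-|q|))$ around the singular hypersurface produces $\nu_L$ as in \eqref{Propnu+2}, and a second iteration piece which, because each additional scattering supplies an extra factor of $V$, is shown to be the regular remainder $\nu_R$ satisfying \eqref{Propnu+3}. The improved off-diagonal bound \eqref{Propnu+3imp} (valid for $|P-Q|\geqslant 5$) comes from the fact that when $|p|-|q|$ is bounded below by a fixed dyadic scale, one can integrate by parts in $y$ against the oscillation $e^{\pm i|q||x-y|} e^{ip\cdot x}$ without hitting the singularity, gaining arbitrarily many derivatives on $q$ at the expense of powers of $\max(|p|,|q|)^{-1}$. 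The decompositions of $\nu_2^1, \nu_2^2$ and $\mu_3$ are obtained by the same mechanism: two (respectively three) iterations of Lippmann-Schwinger produce convolutions of one (respectively two) resolvent kernels, which are evaluated explicitly modulo smoother error terms, with the singular hypersurfaces being $|k|-|\ell|-|m| = 0$ for $\nu_2^1$ and $\mu_3$, and $|k|\pm|\ell|-|m|=0$ for $\nu_2^2$. At each step the symbol bounds \eqref{Propnu212}, \eqref{Propnu222}, \eqref{Propmu31'} are tracked by combining \eqref{psiLinfty} with integrations by parts against $V$.

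The principal technical obstacle — and the reason this bookkeeping occupies a substantial portion of \cite{PS} — is ensuring that the error terms produced at each iteration genuinely belong to the claimed classes with uniformity in the dyadic scales $P,Q,K,L,M$ up to $A$ and in the parameters $J$. Every integration by parts against $V$ trades oscillation for weights in $\langle y\rangle$, and every such weight must be compatible with the pointwise bounds on $\psi$ and its derivatives; the assumption $V\in\mathcal{S}$ (or a sufficiently strong weighted-Sobolev version thereof, as in \cite{PS}) is used precisely to close these estimates uniformly for $|a|+|\alpha|+|\beta|\leqslant N_2$. The genericity of $V$ enters in the background: it guarantees that $(I + R_0^+(\lambda)V)^{-1}$ is bounded on weighted $L^2$ uniformly in $\lambda$, so that $\psi(\cdot,\xi)$ exists and satisfies the pointwise bounds \eqref{psiLinfty} used throughout. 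All remaining steps — partition of unity in $|p|-|q|$ and its analogues, dyadic localization, repeated integration by parts — are then mechanical applications of stationary phase and symbol calculus.
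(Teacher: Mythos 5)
This proposition is quoted verbatim from \cite{PS} and the present paper gives no proof of it — Appendix \ref{Appmu2} simply restates the result as imported background, and the paper's own contribution (Proposition \ref{mudecomp}) only repackages the pieces listed here into singular/regular parts. Your sketch is a faithful reconstruction of the actual derivation in \cite{PS}: the Lippmann--Schwinger splitting $\psi = e^{ix\cdot\xi}+\psi_s$ producing the $\delta_0$ term plus $\mu_1,\mu_2,\mu_3$ according to the number of scattered factors, the explicit Fourier transform of the free resolvent kernel combined with Sokhotski--Plemelj yielding the $\delta+\pv$ structure of $\nu_0$ with the $1/|p|$ prefactor, and further iteration generating $\nu_L$ and the regular remainder $\nu_R$ — so it is correct and takes essentially the same route as the cited source.
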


%\medskip
%\subsection{Bilinear estimates}
%We will need some estimates 
%Therefore we 
\smallskip
Next, we collect all the needed bilinear 
estimates for the operators associated to the various pieces 
appearing in the decomposition of Proposition \ref{decomp-meas-PS}.   
Again these results are borrowed from \cite{PS}. 
%from \cite{PS} in this section. 

Given a symbol $b=b(\xi,\eta,\s)$, we define the bilinear operators
\begin{align}
\label{theomu10b}
T_0[b](g,h)(x) & := \widehat{\mathcal{F}}^{-1}_{\xi\rightarrow x} \iint_{\R^3\times\R^3} g(\eta) h(\s) 
  \,b(\xi,\eta,\s) \, \delta_0(\xi-\eta-\s) \, d\eta d\s
\end{align}
and
\begin{align}
\label{theomu11bb}
T_1^1[b](g,h)(x) & :=  \widehat{\mathcal{F}}^{-1}_{\xi\rightarrow x} \iint_{\R^3\times\R^3} g(\xi-\eta) h(\sigma) 
  \,b(\xi,\eta,\sigma)\, \nu_1(\eta,\sigma) \, d\eta d\s,
\\
\label{theomu12bb}
T_1^2[b](g,h)(x) &:= \widehat{\mathcal{F}}^{-1}_{\xi\rightarrow x}\iint_{\R^3\times\R^3} g(-\eta-\sigma) h(\sigma) 
  \,b(\xi,\eta,\sigma)\, \overline{\nu_1(\eta,\xi)} \, d\eta d\s, 
  \\
  \label{theomu2bb}
  T_2[b](g,h)(x) & :=  \widehat{\mathcal{F}}^{-1}_{\xi\rightarrow x} \iint_{\R^3\times\R^3} g(\eta) h(\sigma) 
  \,b(\xi,\eta,\sigma)\, \mu_2(\xi,\eta,\sigma) \, d\eta d\s,
  \\
   \label{theomu3bb}
  T_3[b](g,h)(x) & :=  \widehat{\mathcal{F}}^{-1}_{\xi\rightarrow x}\iint_{\R^3\times\R^3} g(\eta) h(\sigma) 
  \,b(\xi,\eta,\sigma)\, \mu_3(\xi,\eta,\sigma) \, d\eta d\s.
\end{align}

\def\AfactorBB1{C_0}
\def\FfactorBB{{\max(L,M)}}
\def\FfactorBB2{{\max(L,K)}}

\medskip
\begin{theorem}[Bilinear bounds 1]\label{theomu1}
Assume that:

\setlength{\leftmargini}{2em}
\begin{itemize}

\medskip
\item The symbol $b$ is such that
\begin{align}\label{theomu1asb1}
\textrm{supp}(b) \subseteq \big\{ (\xi,\eta,\sigma) \in \mathbb{R}^9\,:\, |\xi|+|\eta|+|\sigma| \leqslant 2^A, 
  \, |\eta| \approx 2^{k_1}, \, |\sigma| \approx 2^{k_2}\big\},
\end{align}
for some $A \geqslant 1$.
%Better to say $b = b \chi_{\leq A}$?

%Add localization in $k$ for $T_1$?
%Check/adjust factors below

\medskip
\item For all $|\xi| \approx 2^{k}$, $|\eta|\approx 2^{k_1}$ and $|\sigma| \approx 2^{k_2}$
\begin{align}\label{theomu1asb2}
\vert \nabla_{\xi}^a \nabla^\alpha_\eta \nabla^\beta_\sigma b(\xi,\eta,\sigma) \vert 
	\lesssim 2^{-\vert  a \vert  k} 2^{-\vert \alpha \vert  k_1}2^{- \vert  \beta \vert  k_2} 
	\cdot 2^{(\vert  a \vert +\vert \alpha \vert +\vert  \beta \vert )A}, 
	\qquad \vert  a \vert ,\vert  \alpha \vert ,\vert  \beta \vert \leqslant 5.
\end{align}
%number $5$ okay here because standard $4$ plus $1$ for the derivative when taking difference at the $\pv$.

%Put $\mathcal{S}^\infty$ class if want more generality}

\medskip
\item There is $10A \leqslant D \leqslant  2^{A/10}$ such that
\begin{align}
\label{theomu1asgh}
\begin{split}
\mathcal{D}(g,h) := \Vert g \Vert_{L^2} \Vert  h \Vert_{L^2} 
	+ \min\big( {\Vert \partial_k g \Vert}_{L^2} {\Vert h \Vert}_{L^2}, 
	{\Vert g \Vert}_{L^2} {\Vert \partial_k h \Vert}_{L^2} \big) \leqslant 2^{D}.
\end{split}
\end{align}
%Check this condition in the proof

\end{itemize}

Then, the following estimates hold:
For any $p,q \in [1,\infty]$ and $r> 1$ with %Check endpoint $p=2,\infty$
\begin{align}
\frac{1}{p} + \frac{1}{q} > \frac{1}{r}, \qquad 
\end{align}
we have
\begin{align}
\label{theomu1conc}
\begin{split}
{\big\| T_1^1[b]\big(g,h\big) \big\|}_{L^r} 
  & \lesssim {\big \Vert \widehat{g} \big \Vert}_{L^p} {\big \Vert \widehat{h} \big \Vert}_{L^q} \cdot 2^{\max(k_1,k_2)}
  \cdot 2^{C_0 A} + 2^{-D} \mathcal{D}(g,h), 
\end{split}
\end{align}
and
\begin{align}
\label{theomu1concT12}
\begin{split}
{\big \Vert P_K T_1^2[b](g,h) \big \Vert}_{L^r} 
  & \lesssim {\big \Vert \widehat{g} \big \Vert}_{L^p} {\big \Vert  \widehat{h} \big \Vert}_{L^q} \cdot 2^{\max(k_1,k_2)}
  \cdot 2^{C_0 A} + 2^{-D} \mathcal{D}(g,h),
% \min\big( {\Vert \partial_k g \Vert}_{L^2} {\Vert \varphi_M h \Vert}_{L^\infty}, 
%{\| g \|}_{L^2} {\| \varphi_M \partial_k h \|}_{L^\infty} \big) \cdot 2^{D/2}.
\end{split}
\end{align}
where\footnote{This is a convenient value of the absolute constant $C_0$ 
%that we can choose in our proof, 
but it can certainly be improved. 
In the course of nonlinear estimates for the evolution equation
we %are going to 
impose conditions on the smallness of $C_0 \delta_N$ (or similar quantities); see \eqref{mteps} for the definition of $\delta_N$. Then, a smaller value of the constant $C_0$ in Theorem \ref{theomu1} would reduce the total number of derivatives $N$ required for our initial data.
%We need to impose condition $2^{C_0 A} \approx \js^{6C_0/N} \lesssim \js^{\delta'}$ for small $\delta'$.
} $C_0:=65$.
In the above $P_K$ denotes the flat/regular Littlewood-Paley projection.
Moreover, we have
\begin{align} \label{theomuconcT023}
\Vert T_0[b](g,h) \Vert_{L^r}, \, \Vert T_2[b](g,h) \Vert_{L^r}, 
  \, \Vert T_3[b](g,h) \Vert_{L^r} \lesssim \Vert \widehat{g} \Vert_{L^p} \Vert \widehat{h} \Vert_{L^q} \cdot 2^{(C_0+1)A}.
\end{align}

%Re-check in proof
%Adjust multiple $A$ factors in the bilinear estimate, Theorem \ref{theomu1}
%and \ref{theomu1'} according to \eqref{Propnu+3} for example. %Or redefine $A$\dots
\end{theorem}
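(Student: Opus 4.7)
The plan is to use the structural decomposition of $\nu_1$, $\mu_2$, $\mu_3$ from Proposition~\ref{decomp-meas-PS} to reduce each operator $T_\ast[b]$ to a sum of pieces that are either (i)~standard Coifman--Meyer-type multipliers (up to the innocuous weak symbol loss $2^{C_0 A}$), (ii)~convolutions against Schwartz kernels localized near codimension-one ``resonant'' surfaces $|\xi|-|\eta|-|\sigma|=0$, $|\eta|\pm|\sigma|=0$, etc., or (iii)~a principal value of the form $\mathrm{p.v.}(|\eta|-|\sigma|)^{-1}$. The weak symbol bounds \eqref{theomu1asb2} on $b$ ensure that in each case the $\xi,\eta,\sigma$ dependence of $b$ may be Fourier-expanded at cost $2^{O(A)}$, converting each $T_\ast[b]$ into an absolutely convergent superposition of tensor products of Fourier multipliers acting on $g$ and $h$. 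We then apply H\"older's inequality in the form $L^p\times L^q\to L^r$ under $1/p+1/q>1/r$, using that the relevant symbols are in $\widehat{L^1}$ uniformly on each dyadic scale.

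First I would handle $T_0[b]$, $T_2[b]$, and $T_3[b]$. The case of $T_0$ is essentially the flat Coifman--Meyer bound, since $\delta_0(\xi-\eta-\sigma)$ reduces \eqref{theomu10b} to a standard paraproduct with symbol $b(\eta+\sigma,\eta,\sigma)$, and \eqref{theomu1asb2} gives the needed kernel estimate with loss $2^{(C_0+1)A}$. For $T_2$ and $T_3$ one plugs in \eqref{Propnu210}--\eqref{Propnu211} and \eqref{Propmu30}--\eqref{Propmu31}: the leading sums $\sum_J b_{i,J}(\xi,\eta,\sigma)\,K_i(2^J F(\xi,\eta,\sigma))$ with $K_i\in\mathcal{S}$ and $F$ equal to $|\xi|-|\eta|-|\sigma|$, $|\xi|\pm|\eta|-|\sigma|$ etc.\ are summed using the uniform-in-$J$ symbol bounds \eqref{Propnu212}, \eqref{Propnu222}, \eqref{Propmu31'} together with the Schwartz decay of $K_i$, which reduce us to an integral over a neighborhood of the corresponding ``resonant'' surface of approximate thickness $2^{-J}$; Hausdorff--Young then gives the desired $L^p\times L^q\to L^r$ estimate. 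The remainders in \eqref{Propnu213}, \eqref{Propnu223}, \eqref{Propmu3R} are controlled directly by their pointwise bounds and Schur's test.

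Next I would turn to $T_1^1[b]$ and $T_1^2[b]$, using $\nu_1=\nu_0+\nu_L+\nu_R$ from \eqref{Propnu+1}. The smooth/lower-order contributions $\nu_L$ (treated via \eqref{Propnu+2}--\eqref{Propnu+2.1}) and the remainder $\nu_R$ (treated via \eqref{Propnu+3}--\eqref{Propnu+3imp}) are handled exactly as in the case of $T_3$ above: one sums the Schwartz factors in $J$ for $\nu_L$, and uses the pointwise bounds for $\nu_R$, producing the first term on the right-hand side of \eqref{theomu1conc}--\eqref{theomu1concT12} (with the factor $2^{\max(k_1,k_2)}$ arising from the $|p|^{-1}$ prefactor in \eqref{Propnu+2}, combined with the volume factor $2^{2\max(k_1,k_2)}$ on the resonant surface $|\eta|=|\sigma|$).

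The hard part will be the piece coming from $\nu_0$, i.e.\ the genuine principal value $\mathrm{p.v.}\,(|\eta|-|\sigma|)^{-1}$; this is precisely what forces the appearance of $\mathcal{D}(g,h)$ and the weight $2^{-D}$ in \eqref{theomu1conc}--\eqref{theomu1concT12}. Following the argument in \cite[Thm.~6.1]{PS}, I would dyadically decompose $\mathrm{p.v.}\,(|\eta|-|\sigma|)^{-1} = \sum_{B\in\mathbb{Z}} 2^{-B}\varphi_B(|\eta|-|\sigma|)$ and split the sum at a threshold $B_0\sim -D$. For $B\geqslant B_0$, the singularity is effectively cut off and the dyadic block behaves like a bounded multiplier with bounded Schur norm (times a $2^{C_0 A}$ loss absorbing the $O(A)$ dyadic scales), and standard H\"older estimates give the first term on the right-hand side of \eqref{theomu1conc}. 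For $B<B_0$, one exploits the antisymmetry of $\mathrm{p.v.}$ via the standard commutator identity (writing e.g.\ $g(\xi-\eta)=g(\xi - \sigma|\eta|/|\sigma|) + [g(\xi-\eta)-g(\xi-\sigma|\eta|/|\sigma|)]$ in polar coordinates around the sphere $|\eta|=|\sigma|$): the first term integrates to zero on the singular support by symmetry, while the second gains a factor $||\eta|-|\sigma||$ that cancels the $\mathrm{p.v.}$, leaving a kernel whose $L^2\times L^2\to L^2$ norm is controlled by $\|\partial_\xi g\|_{L^2}\|h\|_{L^2}$ (or, symmetrically, $\|g\|_{L^2}\|\partial_\xi h\|_{L^2}$); this is exactly where the minimum in the definition \eqref{theomu1asgh} of $\mathcal{D}(g,h)$ appears, and summing over $B<B_0$ produces the weight $2^{-D}$. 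The treatment of $T_1^2[b]$ is identical modulo a change of variables and an extra $P_K$ localization, which is needed to extract the factor $2^{-(k_1\vee k_2)}$ from $\overline{\nu_1(\eta,\xi)}$.
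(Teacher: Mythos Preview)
Your proposal is correct and matches the approach of \cite{PS}, which is where this theorem is actually proved; the present paper only states it (see the sentence preceding the theorem, and Lemma~\ref{PSvol}, which the paper explicitly says ``is used in the proof of Theorem~\ref{theomu1}''). The decomposition $\nu_1=\nu_0+\nu_L+\nu_R$, the volume-type bounds on the resonant surfaces via Lemma~\ref{PSvol}, and the dyadic splitting of the $\mathrm{p.v.}$ at a threshold $B_0$ are exactly the ingredients.

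One refinement worth flagging in your treatment of the deep $\mathrm{p.v.}$ region $B<B_0$: the commutator/mean-value step does not directly produce $\|\partial_\xi g\|_{L^2}$, because after writing $g(\xi-\eta)-g(\xi-\sigma|\eta|/|\sigma|)=\int_0^1\nabla\widehat g(\cdots)\cdot\theta\,(|\eta|-|\sigma|)\,dt$ you need a \emph{pointwise} bound on $\nabla\widehat g$ to apply Lemma~\ref{PSvol}. The device in \cite{PS} (reproduced verbatim in this paper's \S\ref{lemmaA}, Case~1 of the estimate of $A_{1,1}$) is to first split $g=\varphi_{\geqslant X}g+\varphi_{<X}g$ in physical space with $X$ coupled to $D$: the far piece is handled directly via $\|\varphi_{\geqslant X}g\|_{L^2}\lesssim 2^{-X}\|\partial_\xi g\|_{L^2}$, while on the near piece one has $|\nabla^\alpha\widehat{g_{<X}}|\lesssim 2^{|\alpha|X}$ pointwise, which makes the mean-value argument go through. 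This is how the minimum in $\mathcal{D}(g,h)$ and the factor $2^{-D}$ genuinely emerge; without the splitting the step as you wrote it would stall.
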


We also recall the following statement from \cite{PS}, which is used in the proof of Theorem \eqref{theomu1}:

\begin{lemma}\label{PSvol}
Let $j \geqslant 1,$ and consider the bilinear operator 
\begin{align*}
B_j[b](g,h)(x) = \mathcal{F}^{-1}_{\xi \mapsto x} \int_{\mathbb{R}^6} g(\eta-\xi) h(\sigma) b(\xi,\eta,\sigma) \chi \big(2^j(\vert \eta \vert - \vert \sigma \vert) \big) \, d\eta d\sigma ,
\end{align*}
where $\chi$ is a Schwartz function. Assume that:

\begin{itemize}
\item For some $A \geqslant 1$, $K,L,M \in \Z$, with $L \gg -j$, 
we have
\begin{align*}
\textrm{supp}(b) \subseteq \big\{ (\xi,\eta,\sigma) \in \mathbb{R}^9\,:\, |\xi|+|\eta|+|\sigma| \lesssim 2^A,
  \, |\xi| \approx 2^K, \, |\eta| \approx 2^L, \, \vert \s \vert \approx 2^M \}; 
\end{align*}
\item The following estimates hold:
\begin{align*}
\vert \nabla_{\xi}^a \nabla_{\eta}^{\alpha} \nabla_{\sigma}^{\beta} b(\xi,\eta,\sigma) \vert 
  \lesssim 2^{-\vert a \vert K - \vert \alpha \vert L - \vert \beta \vert M}, 
  \qquad \vert a \vert, \vert \alpha \vert , \vert \beta \vert \leqslant 4.
\end{align*}
\end{itemize}
Then for $p,q,r \in [1,\infty],$ we have
\begin{align*}
\Vert B_j[b] (g,h) \Vert_{L^r} \lesssim 2^{-j} \cdot 2^{2L} \cdot 2^{8 A} \cdot \Vert \widehat{g} \Vert_{L^p} 
  \Vert \widehat{h} \Vert_{L^q}, \qquad \frac{1}{p} + \frac{1}{q} = \frac{1}{r}.
\end{align*}
\end{lemma}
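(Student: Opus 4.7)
The plan is to write $B_j[b](g,h)$ as a bilinear kernel operator in physical space, extract the shell volume factor $2^{-j+2L}$ from the cutoff $\chi(2^j(|\eta|-|\sigma|))$, and then conclude by Young's convolution inequality.

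First I would apply Fourier inversion to $g$ and $h$ and perform the outer inverse Fourier transform in $\xi$, obtaining
\begin{equation*}
B_j[b](g,h)(x) = C \int_{\R^6} K_j(x,y,z)\, \widehat g(y)\, \widehat h(z)\, dy\, dz,
\end{equation*}
with
\begin{equation*}
K_j(x,y,z) = \int_{\R^9} e^{i[(x-y)\cdot\xi + y\cdot\eta + z\cdot\sigma]}\, b(\xi,\eta,\sigma)\, \chi\bigl(2^j(|\eta|-|\sigma|)\bigr)\, d\xi\, d\eta\, d\sigma.
\end{equation*}
The key step will be a pointwise estimate for $K_j$. The $\xi$-integration is handled by routine integration by parts (four times), using the symbol bounds on $b$, producing a Schwartz-type bump at scale $2^{-K}$ in $x-y$ with $L^1$-norm $\lesssim 2^{C A}$. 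For the $(\eta,\sigma)$-integration I would pass to spherical coordinates $\sigma = \rho \omega$, so that the radial integration against $\rho^2\, d\rho$ on the shell $\rho = |\eta| + O(2^{-j})$ produces $|\eta|^2 \cdot 2^{-j} \approx 2^{2L-j}$ via the identity
\begin{equation*}
\int \chi\bigl(2^j(r-\rho)\bigr)\, \rho^2\, e^{iz\cdot\rho\omega}\, d\rho \;\approx\; 2^{2L-j}\, e^{i r\, z\cdot\omega}\, \widehat\chi(2^{-j} z\cdot\omega).
\end{equation*}
Tangential integration by parts in $\omega$ and in the direction of $\eta$ (which does not affect the $\chi$-cutoff) then yields Schwartz-type decay of $K_j$ in $y$ and $z$ at scale $2^{-L}$. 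Altogether one obtains
\begin{equation*}
|K_j(x,y,z)| \;\lesssim\; 2^{-j+2L}\cdot 2^{8A}\cdot F(x-y)\, G(y)\, H(z),
\end{equation*}
where $F, G, H$ are $L^1$-normalized Schwartz-type bumps at the scales $2^{-K}, 2^{-L}, 2^{-L}$ respectively.

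The conclusion then follows from Young's convolution inequality: for any $p,q,r \in [1,\infty]$ with $1/p + 1/q = 1/r$,
\begin{equation*}
\bigl\| B_j[b](g,h) \bigr\|_{L^r_x} \;\lesssim\; 2^{-j+2L+8A}\, \|\widehat g\|_{L^p}\, \|\widehat h\|_{L^q},
\end{equation*}
which is exactly the claim.

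The main obstacle is the careful decomposition of $\nabla_\eta$ (and $\nabla_\sigma$) into radial and tangential components, in order to separate the two sources of gain: radial integrations use directly the sharp cutoff (via the Plancherel-type identity above) and give the $2^{-j}$ factor, while tangential integrations by parts give the decay in $y$ and $z$ needed for Young. The hypothesis $L \gg -j$ is essential here: it guarantees that the shell $\{|\eta|\approx|\sigma|,\,||\eta|-|\sigma|| \lesssim 2^{-j}\}$ is genuinely thin compared to its radius, so that the moving orthonormal frame $(\hat\eta, T_\eta S^2_{|\eta|})$ is well-defined and stable throughout the support and the Jacobian $r^2$ remains $\approx 2^{2L}$. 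Naively, radial derivatives landing on $\chi$ would produce loss factors of $2^j$; the hypothesis $L\gg -j$ and the use of the exact radial Plancherel identity avoid this loss.
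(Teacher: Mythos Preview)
The paper does not prove this lemma; it is simply recalled from \cite{PS} (Lemma 6.3 there) without argument, so there is no proof in the paper to compare against directly.

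Your central idea is correct: the cutoff $\chi(2^j(|\eta|-|\sigma|))$ restricts $\sigma$ to a spherical shell of thickness $2^{-j}$ and radius $\approx 2^L$, and this shell volume $2^{-j+2L}$ is the source of the gain. However, the claimed pointwise kernel bound
\[
|K_j(x,y,z)| \;\lesssim\; 2^{-j+2L+8A}\, F(x-y)\, G(y)\, H(z),
\]
with $H$ an $L^1$-normalized Schwartz bump at scale $2^{-L}$, cannot hold. The problem is the $z$-decay: the Fourier transform in $z$ of the thin shell $\{|\sigma|=r+O(2^{-j})\}$ is essentially $2^{-j}\, r\, \sin(r|z|)/|z|$, which decays only like $|z|^{-1}$ throughout the range $2^{-L}\ll |z|\ll 2^j$. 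Your radial-Plancherel step does yield the factor $\widehat\chi(2^{-j}z\cdot\omega)$, but that only produces decay in $z$ at scale $2^j$, not $2^{-L}$; tangential integrations by parts in $\omega$ cannot upgrade this radial decay. Concretely, for $j\gg A$ your bound is violated in the regime $2^{8A/3}\lesssim |z|\lesssim 2^j$, and in fact the $L^1_z$-norm of the kernel is $\sim 2^{j+L}$, not $O(1)$. So the final Young step, as written, loses the $2^{-j}$ factor entirely.

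The fix is to avoid the factored pointwise kernel bound altogether. A cleaner route (and the one the [PS]-type arguments take) is to expand $b$ as a rapidly convergent sum of tensor products $b_1(\xi)b_2(\eta)b_3(\sigma)$ via Fourier series on the rescaled torus, using the four derivatives in each variable to sum. For each tensor piece the $g(\eta-\xi)$-structure turns $B_j$ into a Fourier multiplier (symbol $b_1$) applied to a \emph{pointwise product} $\widehat g(x)\cdot \Phi(x)$, where $\Phi = \mathcal{F}^{-1}[b_2(\eta)\Psi(|\eta|)]$ and $\Psi(r)=\int h(\sigma)b_3(\sigma)\chi(2^j(r-|\sigma|))d\sigma$. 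The shell volume $2^{-j+2L}$ then enters cleanly when bounding $\|\Phi\|_{L^q}$ in terms of $\|\widehat h\|_{L^q}$, and the H\"older step $L^p\times L^q\to L^r$ is immediate.
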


\bigskip

\end{document}